\documentclass[11pt, reqno, a4]{amsart}

\usepackage{textcmds,amsmath,amscd} 

\setlength{\textwidth}{5.5in}
\setlength{\textheight}{8.2in}
\setlength{\oddsidemargin}{.2in}
\setlength{\evensidemargin}{.2in}
\setlength{\topmargin}{.1in}

\usepackage[margin=3cm]{geometry}

 \usepackage{graphics,graphicx}  

\usepackage{textcmds}  
\usepackage{amsmath, amssymb, amsfonts, amstext, verbatim, amsthm, mathrsfs}
\usepackage[mathcal]{eucal}
\usepackage{microtype}
\usepackage[all]{xy}
\usepackage[modulo]{lineno}
\usepackage[usenames]{color}
\usepackage{aliascnt}
\usepackage{enumitem}
\usepackage{xspace}
\usepackage{amsfonts}
\usepackage{amssymb}
\usepackage[centertags]{amsmath}
\usepackage{amsthm}
\usepackage[margin=3cm]{geometry}
\usepackage{dsfont}
\usepackage{bm}
\usepackage{xcolor}
\usepackage{subfigure}
\usepackage{amsmath}
\usepackage{array}
\usepackage[all]{xy}
\usepackage{makecell}
\usepackage{mathtools}
\usepackage{nicefrac}
\usepackage{upgreek}
\usepackage{tikz-cd}

\usepackage[T1]{fontenc}
\usepackage{lmodern}

\usepackage{parskip}

\setcounter{tocdepth}{1}

\let\oldtocsection=\tocsection

\let\oldtocsubsection=\tocsubsection

\renewcommand{\tocsection}[2]{\hspace{0em}\oldtocsection{#1}{#2}}
\renewcommand{\tocsubsection}[2]{\hspace{1em}\oldtocsubsection{#1}{#2}}

\usepackage[backref,colorlinks=true,linkcolor=blue,citecolor=blue,urlcolor=blue,citebordercolor={0 0 1},urlbordercolor={0 0 1},linkbordercolor={0 0 1}]{hyperref} 
\usepackage{amssymb}
\usepackage{hyperref}



\newtheorem{theorem}[equation]{Theorem}

\newtheorem{lemma}[equation]{Lemma}

\newtheorem{proposition}[equation]{Proposition}
\newtheorem{corollary}[equation]{Corollary}

\newtheorem{definition-lemma}[equation]{Definition-Lemma}

\theoremstyle{definition}
\newtheorem{definition}[equation]{Definition}

\newtheorem*{theorem*}{Theorem}

\theoremstyle{remark}
\newtheorem{remark}[equation]{Remark}

\numberwithin{equation}{section}
\numberwithin{figure}{section}

\newcommand{\bZ} {\mathbb{Z}}
\newcommand{\bQ} {\mathbb{Q}}
\newcommand{\bR} {\mathbb{R}}
\newcommand{\bC} {\mathbb{C}}

\newcommand{\bN} {\mathbb{N}}
\newcommand{\bP} {\mathbb{P}}
\newcommand{\bF} {\mathbb{F}}
\newcommand{\bD} {\mathbb{D}}
\newcommand{\bH} {\mathbb{H}}
\newcommand{\bM} {\mathbb{M}}

\newcommand {\cA}  {\mathcal{A}}
\newcommand {\cB}  {\mathcal{B}}
\newcommand {\cC}  {\mathcal{C}}
\newcommand {\cD}  {\mathcal{D}}
\newcommand {\cE}  {\mathcal{E}}
\newcommand {\cF}  {\mathcal{F}}
\newcommand {\cH}  {\mathcal{H}}
\newcommand {\cL}  {\mathcal{L}}

\newcommand {\cO}  {\mathcal{O}}
\newcommand {\cP}  {\mathcal{P}}
\newcommand {\cR}  {\mathcal{R}}
\newcommand {\cT}  {\mathcal{T}}

\newcommand {\cW} {\mathcal{W}}
\newcommand {\cX}  {\mathcal{X}}
\newcommand {\cY}  {\mathcal{Y}}

\renewcommand {\ker} {\operatorname{ker}}
\newcommand {\coker} {\operatorname{coker}}

\newcommand {\id}  {\operatorname{id}}
\newcommand {\Id}  {\operatorname{Id}}
\renewcommand {\Re} {\operatorname{Re}}

\newcommand {\Spec} {\operatorname{Spec}}

\newcommand {\Hom}  {\operatorname{Hom}}
\newcommand {\HH}  {\operatorname{HH}}
\newcommand {\Pic}  {\operatorname{Pic}}

\newcommand {\loc}  {\mathrm{loc}}

\newcommand {\pt}  {\mathrm{pt}}

\newcommand {\Bl}  {\mathrm{Bl}}

\newcommand {\spin}  {\mathrm{spin}}
\newcommand {\FS}  {\mathrm{FS}}
\newcommand {\std}  {\mathrm{std}}

\newcommand {\Sing} {\operatorname{Sing}}

\newcommand{\cHom}{\mathcal{H}om}
\DeclareMathOperator{\Ext}{Ext}
\DeclareMathOperator{\Coh}{Coh}
\DeclareMathOperator{\IndCoh}{Ind-Coh}
\DeclareMathOperator{\Perf}{Perf}
\DeclareMathOperator{\PD}{PD}
\DeclareMathOperator{\Auteq}{Auteq}
\DeclareMathOperator{\Aut}{Aut}
\DeclareMathOperator{\BirAut}{BirAut}

\DeclareMathOperator{\cExt}{\mathcal{E}xt}

\newcommand{\dR}{\mathrm{dR}}

\newcommand{\pr}{\mathrm{pr}}

\newcommand{\Crit}{\operatorname{Crit}}

\newcommand{\Symp}{\operatorname{Symp}}

\newcommand{\lra}{\longrightarrow}

\DeclareMathOperator{\Ob}{Ob}
\newcommand{\cTor}{\mathcal{T}or}

\DeclareMathOperator{\Def}{Def}

\DeclareMathOperator{\CY}{CY}

\def\mydate{\ifcase\month \or January\or February\or March\or
April\or May\or June\or July\or August\or September\or October\or 
November\or December\fi \space\number\day,\space\number\year}



\begin{document}

\title{Homological mirror symmetry for projective K3 surfaces}
\author{Paul Hacking}
\address{Department of Mathematics and Statistics, Lederle Graduate Research Tower, University of Massachusetts, Amherst, MA 01003-9305}
\email{hacking@math.umass.edu}
\author{Ailsa Keating}
\address{Department of Pure Mathematics and Mathematical Statistics, Centre for Mathematical Sciences, University of Cambridge, Wilberforce Road, Cambridge, CB3 0WB}
\email{amk50@cam.ac.uk}

\begin{abstract} 
We prove the homological mirror symmetry conjecture of Kontsevich \cite{Kontsevich_ICM} for K3 surfaces in the following form: The Fukaya category of a projective K3 surface is equivalent to the derived category of coherent sheaves on the mirror, which is a K3 surface of Picard rank $19$ over the field $\bC((q))$ of formal Laurent series. 
This builds on prior work of Seidel, who proved the theorem in the case of the quartic surface \cite{Seidel_icm, Seidel_quartic}, Sheridan \cite{Sheridan_versality_survey}, Lekili--Ueda \cite{Lekili-Ueda}, and Ganatra--Pardon--Shende \cite{GPS1, GPS2}.
\end{abstract}

\maketitle

\tableofcontents


\section{Introduction}

\subsection{Overview}

The mirror symmetry duality in string theory states that there are pairs $X$ and $Y$ of Calabi--Yau manifolds such that type IIA string theory on $X$ is equivalent to type IIB string theory on $Y$, and vice versa. 
(Here a Calabi-Yau manifold simply means a compact K\"ahler manifold with trivial canonical bundle, equipped with a K\"ahler class.)
Kontsevich's homological mirror symmetry conjecture \cite{Kontsevich_ICM} gives the following mathematical distillation of this duality: for a mirror pair of Calabi--Yau manifolds $X$ and $Y$, the Fukaya category of the symplectic manifold $X$ (the A-side) should be equivalent to the derived category of coherent sheaves on the complex manifold $Y$ (the B-side), and vice versa. 
In this paper, we study homological mirror symmetry for K3 surfaces, i.e.~simply connected Calabi-Yau manifolds of complex dimension two. Our main contribution can be summarised as follows:

\begin{quote}
   \textit{We prove Kontsevich's homological mirror symmetry conjecture whenever the A-side is a pair $(X,\omega)$ such that $X$ is a K3 surface and $\omega$ is a K\"ahler form with integral class $[\omega] \in H^2(X,\bZ)$.}
\end{quote}
The Fukaya category of $X$, $\cF(X, \omega)$, is defined over the Novikov field $\bC((q))$; on the B-side, we will have a smooth projective K3 surface of Picard rank 19 defined over $\bC((q))$.

The rest of the introduction is organised as follows: in Section \ref{sec:intro-precise-mirror-formulation} we give a precise formulation of our main results; in Section \ref{sec:intro-K3-mirror-symmetry-context} we give further context for mirror symmetry for K3 surfaces; and
in Section \ref{sec:intro-proof-overview} we give an overview of the main steps of our proof, highlighting its connections with the Strominger-Yau-Zaslow perspective on mirror symmetry \cite{SYZ}.

\subsection{Formulation of the main results} \label{sec:intro-precise-mirror-formulation}

Kontsevich's HMS conjecture is only expected to hold near certain limits: one requires the class $\kappa$ of the K\"ahler form on $X$ to be sufficiently large (i.e., for some fixed K\"ahler class $\kappa_0$, we require $\kappa -\kappa_0$ to be K\"ahler), the so-called \emph{large volume limit}; correspondingly, the complex manifold $Y$ should close to a limit point of its complex moduli space, a so-called \emph{large complex structure limit}.

In our case, start with a K3 surface $X$ with an integral K\"ahler form $\omega$. We have $[\omega] = c_1(A)$ for an ample line bundle $A$ on $X$. Pick a global holomorphic section $s$ of $A$ such that $\Sigma  := (s=0) \subset X$ is smooth. (This can always be arranged in our case.)
The large volume limit on the A-side can be interpreted as $M = X \setminus \nu(\Sigma)$, which carries a natural Liouville form $\theta$ with $d \theta = \omega$.
Let $k$ be the divisibility of $[\omega]=PD[\Sigma]$, and let $n$ be such that $[\omega]^2 =2g(\Sigma)-2=2n$.  These can be any pair of positive integers such that $k^2 |n$.
Let $Y$ be a Kulikov type III degeneration of K3 surfaces with split mixed Hodge structure, $|H^3(Y; \bZ)| = k$, and $2n$ triple points. (We'll call $Y$ a type III K3 for short.) 
We'll see that following \cite{Friedman-Scattone}, such a $Y$ exists for any pair $(k,n)$, and is uniquely determined up to moves called elementary modifications.

\begin{theorem} \textbf{HMS at the large volume / complex structure limit.} (Theorems \ref{thm:hms-footballs-wrapped} and \ref{thm:hms-footballs-compact}, and Corollary \ref{cor:Kaehler-compactification}.)
We prove \cite[Conjecture 1]{Lekili-Ueda} by Lekili-Ueda: using the notation above, we have compatible equivalences of $A_\infty$ categories
$$
\xymatrix{
\cW(M) \ar[r]^{\simeq} & \Coh Y \\
\cF(M) \ar@{_{(}->}[u] \ar[r]^{\simeq} & \Perf Y   \ar@{^{(}->}[u]
}
$$
Here $\cF(M)$ is the compact Fukaya category of $M$, $\cW(M)$ the wrapped Fukaya category of $M$, and we are using dg enhancements throughout. 
\end{theorem}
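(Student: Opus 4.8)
The plan is to prove both equivalences by a \emph{local-to-global} argument, matching a sectorial decomposition of the Liouville manifold $M$ against the dual intersection complex of the type III K3 surface $Y$. The guiding heuristic is the Strominger--Yau--Zaslow picture of the introduction: the smooth curve $\Sigma$ removed from $X$ is the fibre ``at infinity'', $M = X \setminus \nu(\Sigma)$ carries a Lagrangian torus fibration, and the base of this fibration is an integral-affine $2$-sphere that coincides with the dual complex of $Y$. Since $Y$ has $2n$ triple points, this complex is a triangulation of $S^2$ with $2n$ triangles, its edges and vertices recording the double curves and the rational-surface components. First I would make this dictionary precise, so that a Weinstein sectorial cover of $M$ (pieces over the vertices, edges, and triangles of the base) corresponds cell-by-cell to a cover of $Y$ by analytic neighbourhoods of its strata.

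The engine of the proof is then descent on each side, reducing everything to the elementary building blocks --- the \emph{footballs} --- for which local homological mirror symmetry is the content of Theorems \ref{thm:hms-footballs-wrapped} and \ref{thm:hms-footballs-compact}. On the A-side I would invoke the sectorial cosheaf property of the wrapped Fukaya category of Ganatra--Pardon--Shende \cite{GPS1, GPS2}: for the cover $\{M_\alpha\}$ one has $\cW(M) \simeq \operatorname{hocolim}_\alpha \cW(M_\alpha)$, with covariant (corestriction) structure maps, and the compact Fukaya category $\cF(M)$ sits inside as the proper/compact objects compatibly with restriction to each $M_\alpha$. On the B-side the corresponding statement is the classical descent of $\Coh$ and $\Perf$ along the cover of $Y$ by its strata. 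Having aligned the two index diagrams, the global equivalences reduce to the local football equivalences together with their behaviour on overlaps.

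Carrying this out, I would (a) establish HMS for a single football --- a Liouville sector modelling a neighbourhood of one component of $Y$, where $\cW$ is generated by an explicit Lagrangian and matched directly against $\Coh$ of the corresponding rational surface; (b) analyse the codimension-one overlaps (neighbourhoods of the double curves) and the codimension-two overlaps (the triple points); and (c) assemble the homotopy colimit, checking simultaneously that the compact objects are carried into $\Perf Y$, so that the square in the statement commutes. The discrete invariants enter here as combinatorial data of the base: $n$ governs the number of triangles, while $k = |H^3(Y;\bZ)|$ --- the divisibility of $[\omega]$ --- fixes the monodromy of the affine structure and, dually, the normal-crossing gluing of $Y$.

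The main obstacle, I expect, is the \emph{coherent compatibility of the gluing data} rather than any single local computation. Descent produces the global categories only once each local equivalence is shown to intertwine the A-side corestriction functors with the B-side restriction functors, up to a compatible system of higher $A_\infty$-homotopies over every multi-fold overlap; verifying that the wall-crossing contributions on the A-side reproduce exactly the gluing automorphisms of $Y$ (which is itself only determined up to elementary modification) is the delicate point, as is the reconciliation of the A-side colimit presentation with the B-side limit presentation. A secondary difficulty is running the wrapped and compact cases in parallel: one must pin down proper/compact generators on both sides and track them through the descent, so as to ensure that $\cF(M)$ lands in $\Perf Y$ and not merely in some larger subcategory of $\Coh Y$.
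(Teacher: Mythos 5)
Your strategy for the \emph{wrapped} equivalence is essentially the paper's: the paper constructs $M$ as a gluing of Weinstein sectors $M_i$ (mirrors to the log Calabi--Yau components $(Y_i,D_i)$, from prior work of Hacking--Keating) along a sector $M_D$ mirror to the singular locus $D$, applies the Ganatra--Pardon--Shende sectorial descent theorem to present $\cW(M)$ as a homotopy pushout, and matches this against the pushout presentation of $\Coh Y$ along closed strata (via Gaitsgory--Rozenblyum, not classical Zariski descent --- note this is descent along \emph{closed} embeddings). One correction of bookkeeping: Theorems \ref{thm:hms-footballs-wrapped} and \ref{thm:hms-footballs-compact} are the \emph{global} statements being proven, not local inputs; the actual local inputs are HMS for maximal log CY surfaces with split MHS and HMS for graph configurations of $\bP^1$s (Theorem \ref{thm:HMS-trivalent-P^1-configurations}), with the gluing compatibilities supplied by the split-MHS condition and cup/cap adjunction (Lemmas \ref{lem:inclusion-boundary-cycle-fibration-HMS}, \ref{lem:HMS-compatibility-fibre}).

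The genuine gap is the compact case. Your plan to run $\cF(M)\simeq \Perf Y$ ``in parallel'' through the same descent fails on both sides: the compact Fukaya category is not a cosheaf under sectorial decompositions, and $\Perf$ of a pushout along closed embeddings is not the pushout of the $\Perf$s (pushforward from a closed stratum of a singular scheme does not preserve perfection --- indeed $i_\ast\cO_{Y_i}$ is coherent but not perfect on $Y$). There is no way to ``track compact generators through the descent,'' because the descent diagram never sees the compact subcategories. The paper's route is entirely different: full faithfulness of $\cF(M)\hookrightarrow \Perf Y$ follows from finiteness of Floer cohomology matched against the characterisation of perfect complexes by finite-dimensional total Ext; essential surjectivity (up to splitting) uses projectivity of $Y$, so that $\Perf Y$ is split-generated by line bundles (Orlov), together with the substantial geometric work of Section \ref{sec:mirrors-to-line-bundles}: the classification of Lagrangian sections of $\pi\colon M\to S^2$ by $H^1(R^1\pi_!\underline{\bZ}_M)$, the construction of Lagrangian-translation symplectomorphisms mirror to $-\otimes\cL$, and the identification of $L_0$ with $\cO_Y$ via iterated Polterovich surgeries on a mirror of the stratification resolution of $\cO_Y$. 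Without this, you cannot conclude that $\cF(M)$ exhausts $\Perf Y$.

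A second gap: you take as a starting ``dictionary'' that $M = X\setminus\nu(\Sigma)$ carries a Lagrangian fibration over the dual complex of $Y$. In the paper this is a theorem, not a definition: $M$ is \emph{constructed} abstractly by gluing, and Proposition \ref{prop:M-compactifies-to-type-III-K3} plus Corollary \ref{cor:Kaehler-compactification} (Gross--Siebert polyhedral decompositions, a projective d-semistable degeneration $\cX/\bD$ with a lifted tropical divisor $\cC$, Moser and parallel-transport comparisons) are needed to show the glued $M$ is Liouville equivalent to the complement of a \emph{holomorphic} ample curve in a \emph{K\"ahler} K3. Since the statement you are proving is phrased for such an $X\setminus\nu(\Sigma)$, this compactification step is part of the proof and cannot be assumed.
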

Note that Lekili-Ueda had checked this in \cite{Lekili-Ueda} for the first two cases.

Let $Y$ be as above. There is a deformation $\cY/\Spec \bC[[q]]$ of $Y$ such that $\Pic \cY \rightarrow \Pic Y$ is surjective and it is universal with that property; it is  a semistable smoothing of $Y$, canonically determined up to an automorphsim of $\bC[[q]]$ (see Lemma \ref{lem:Dolgachev_family}).

\begin{theorem} \textbf{Full HMS.}
(Theorem \ref{thm:main}.) Let $(X, \omega)$ be a K3 surface with a K\"ahler form $\omega$  such that $[\omega]$ is integral, 
and let $\cY/\Spec \bC[[q]]$ be the deformation above. Let $\cY_\eta$ denote its generic fibre, a K3 surface over $\bC((q))$.  
Then there is a $\bC$-algebra automorphism $\psi$ of $\bC[[q]]$ and a $\bC((q))$-linear equivalence of $A_\infty$ categories
$$
\psi^\ast \Coh (\cY_\eta) \simeq \cF (X, \omega).
$$
\end{theorem}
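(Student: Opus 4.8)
The plan is to exhibit both categories in the statement as one-parameter deformations of the large-volume / large-complex-structure limit categories of the first theorem, and to propagate the limit equivalence $\cF(M)\simeq\Perf Y$ to the smooth fibres by a versality argument in the spirit of Seidel and Sheridan \cite{Seidel_quartic, Sheridan_versality_survey}.

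On the A-side, I would first recast $\cF(X,\omega)$ as the specialization of a relative Fukaya category. Since every compact Lagrangian in $X$ may be isotoped into $M=X\setminus\nu(\Sigma)$, the objects of $\cF(X,\omega)$ are those of $\cF(M)$; the structure maps differ from those of $\cF(M)$ by contributions of holomorphic discs that cross $\Sigma$, each crossing weighted by a power of the Novikov parameter $q$ recording its intersection number with $\Sigma$ (equivalently, by $[\omega]=PD[\Sigma]$, its symplectic area). Packaging these counts gives the relative Fukaya category $\cF(X,\Sigma)$, an $A_\infty$ deformation of $\cF(M)$ over $\bC[[q]]$ in a single parameter, as $\Sigma$ is irreducible, whose base change along $\bC[[q]]\to\bC((q))$ (inverting $q$) recovers $\cF(X,\omega)$. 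Its linear term defines a Kodaira--Spencer class $\mathfrak b_A\in HH^2(\cF(M))$, the deformation-by-$\Sigma$ class, which I would identify via the closed--open map with the class of $\Sigma$.

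On the B-side, the universal semistable smoothing $\cY/\Spec\bC[[q]]$ of Lemma~\ref{lem:Dolgachev_family} makes $\Perf\cY$ a deformation of $\Perf Y$ over $\bC[[q]]$ whose generic fibre is $\Perf\cY_\eta=\Coh\cY_\eta$, with its own Kodaira--Spencer class $\mathfrak b_B\in HH^2(\Perf Y)$ pointing in the smoothing direction. The technical heart of the proof is the comparison of these two deformations, and I expect it to be the main obstacle. Two ingredients are needed. First, versality: because $\cY$ is universal for deformations with $\Pic\cY\to\Pic Y$ surjective, and because the mirror has Picard rank $19$ (hence transcendental rank $3$ and a one-dimensional polarized moduli), the family $\Perf\cY$ is a versal deformation of $\Perf Y$ in the single relevant direction. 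Second, matching: transporting $\mathfrak b_A$ across the limit equivalence $\cF(M)\simeq\Perf Y$, I must show it is a nonzero multiple of $\mathfrak b_B$. Nonvanishing of $\mathfrak b_A$ reduces to producing one nonzero disc count through $\Sigma$, while the fact that it points in the smoothing direction (rather than in a noncommutative or B-field deformation direction of $HH^2$) follows from closed-string mirror symmetry, i.e.\ from the explicit form of the limit equivalence together with the identification of $[\Sigma]$ with the smoothing class.

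Once the first-order classes agree up to a nonzero scalar, the versality theorem upgrades this to an equivalence of the two $\bC[[q]]$-linear families after a reparametrization of the base, which is precisely an automorphism $\psi$ of $\bC[[q]]$ absorbing the scalar and the higher-order mirror-map corrections, giving $\cF(X,\Sigma)\simeq\psi^\ast\Perf\cY$. Base-changing to the generic fibre over $\bC((q))$ and using that $\cY_\eta$ is smooth and proper, so that $\Perf\cY_\eta=\Coh\cY_\eta$ and both sides are idempotent-complete, yields the desired $\bC((q))$-linear equivalence $\psi^\ast\Coh(\cY_\eta)\simeq\cF(X,\omega)$.
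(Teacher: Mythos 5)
The high-level skeleton of your proposal (present both sides as one-parameter $\bC[[q]]$-deformations of the limit equivalence $\cF(M)\simeq\Perf Y$, then match the deformations after reparametrizing the base) agrees with the paper, but the comparison step --- which you yourself identify as the technical heart --- has a genuine gap. Your versality claim is not true as stated: the deformation space of $\Perf Y$ is nowhere near one-dimensional. Besides the smoothing direction, $\HH^2(\Perf Y)$ contains noncommutative directions (Poisson bivectors on each component $Y_i$ vanishing on the boundary), gerby directions ($H^2(\cO_Y)\neq 0$), and a commutative deformation space $\Def Y$ of dimension $\geq 20$; the family $\cY/\Spec\bC[[q]]$ of Lemma~\ref{lem:Dolgachev_family} is a single curve inside all of this, characterized by the property that all line bundles lift. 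Consequently there is no versality theorem available that converts ``$\mathfrak{b}_A\neq 0$'' into the conclusion, and the assertion that $\mathfrak{b}_A$ is a nonzero multiple of $\mathfrak{b}_B$ (i.e.\ lies in the smoothing direction rather than a noncommutative or gerby one) is precisely what must be proved. Your appeal to ``closed-string mirror symmetry'' here is circular: no such statement is established in this setting, and the absence of any auxiliary symmetry (the group action that powers Seidel's quartic argument) is exactly the difficulty this paper has to overcome. The paper's key idea, missing from your proposal, is to cut down the deformation space using \emph{objects}: since $X$ is Calabi--Yau of complex dimension two, every compact Lagrangian representing an object of $\cA_0$ (torus fibres, section spheres, and the spheres $S_0,S_1$) is tautologically unobstructed for generic $J$, so its mirror must lift to any deformation arising from the Fukaya category. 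Skyscrapers lifting kills noncommutative deformations, $\cO_Y$ lifting kills gerbes, and all line bundles lifting cuts the commutative deformations down to the one-dimensional locus of Lemma~\ref{lem:Dolgachev_family}. This is formalized as Assumption~(1) of Proposition~\ref{prop:Nick} (the kernel of the obstruction map $\HH^2(\cA_0)\to\prod_A\Hom^2_{\cA_0}(A,A)$ is one-dimensional, verified in Proposition~\ref{prop:Lagrangians_generate} via the closed--open map and symplectic cohomology), and the matching of the two families is then carried out order by order in $q$, not by a one-shot versality theorem.

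There is a second, smaller gap. Your claim that the objects of $\cF(X,\omega)$ are those of $\cF(M)$, so that $\cF(X,\omega)$ is literally the base change of $\cF(X,\Sigma)$ along $\bC[[q]]\to\bC((q))$, is false: $\cF(X,\omega)$ contains Lagrangians (with trivial holonomy of the connection) that are not exact in the complement of $\Sigma$, so base change only produces a quasi-embedding of the deformed subcategory into $\cF(X,\omega)$. To finish one needs a split-generation argument --- Orlov's theorem on the B-side (line bundles split-generate $\Coh\cY_\eta$) transported to the A-side by Ganatra's automatic generation, as in Sheridan--Smith. Relatedly, the order-by-order extension of the functor produces curvature terms (bounding cochains) on A-side objects; the paper disposes of these at the last step by restricting to the line-bundle subcategory, whose mirror spheres have vanishing $\hom^1$, before localizing to $\bC((q))$. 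Your proposal addresses neither point.
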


This generalises Seidel's proof of homological mirror symmetry for the quartic surface \cite{Seidel_quartic}. 
The other known cases of homological mirror symmetry for K3 surfaces are due to Sheridan-Smith \cite{Sheridan-Smith_GP}, for the 27 pairs known as `Greene-Plesser mirrors'. These are all Calabi-Yau hypersurfaces in quotients of weighted projective spaces. Note that \cite{Sheridan-Smith_GP} proves both ‘directions’ of homological
mirror symmetry, namely  $\cF(X) \simeq \Coh (Y)$ and $\cF(Y) \simeq \Coh (X)$. As well as the quartic case from \cite{Seidel_quartic}, one other Greene-Plesser pair falls in our setting (in one direction): the case where $X$ is a sextic hypersurface in $\bP(3,1,1,1)$.

We plan on studying pairs of K3 surfaces equipped with more general K\"ahler forms (on the A-side) and of lower rank Picard groups (on the B-side) in future work.

\subsection{Context on mirror symmetry for K3s}\label{sec:intro-K3-mirror-symmetry-context}

There are classical constructions of mirror K3s going back to e.g.~\cite{Nikulin, Voisin_K3_miroir, Dolgachev}. 
Recall that the moduli space of a (lattice) polarised K3 surface is identified with a hermitian symmetric space 
modulo an arithmetic group by the Global Torelli theorem \cite{PSS_Torelli, Looijenga-Peters}.
Its Baily--Borel compactification has boundary a union of zero and one dimensional strata, see \cite{Baily-Borel}, \cite[$\S$2.1]{Scattone_thesis}, \cite[\S2]{Looijenga_type_IV}. 
The zero strata, or cusps, correspond to Kulikov type III degenerations, and give the large complex structure limits. (The interior of the one-strata correspond to Kulikov type II degenerations.) See \cite{Kulikov}, \cite{Persson-Pinkham}, \cite[$\S$4(d)]{Morrison_Clemens-Schmid}, \cite[$\S$2.2]{Scattone_thesis}.

\emph{Cases with $k=1$.}
The versal deformation $\cY/\Spec \bC[[q]]$ considered above corresponds to the universal family over a neighbourhood of the 0-dimensional boundary stratum of the Baily Borel compactification of the moduli space of lattice polarised K3 surfaces for the lattice 
$\check{\bM}= (U \oplus \bM)^{\perp} $ 
where $\bM=\langle h \rangle$, $ h^2=2n$ , $U=\begin{pmatrix}0 & 1 \\ 1 & 0 \end{pmatrix}$ is the hyperbolic plane, and we have chosen a primitive embedding of $U \oplus \bM$ in $H^2(K3,\bZ)$ with respect to which we compute the orthogonal complement. 
We have $ \bM=\Pic X $ for a general complex structure on $X$: on the A-side, one considers $\bM$-polarised K3 surfaces. (N.B. The lattice  $\bM$ is denoted $M$ in \cite{Dolgachev}.)

The cusp above is `standard' in the terminology from \cite{Ma} (equivalently, `$1$-admissible' in \cite{Dolgachev}), which means the following.
A $0$-dimensional boundary stratum of moduli of $L$-polarized K3s corresponds to an isotropic class $\gamma \in L^{\perp}$. 
In the SYZ picture, $\gamma$ should be the class of the 
SYZ fibre for the SYZ fibration on the nearby smooth fibre of the universal family (here we take $[\omega] \in L \otimes \bR$ general). 
We say the 0-stratum is \textit{standard} if there exists $\gamma' \in L^{\perp}$ such that $\gamma \cdot \gamma' = 1$. 
The corresponds to the expectation that our SYZ fibration should have a Lagrangian section, with class $\gamma'$.  
In the `classical' picture for mirror symmetry for pairs of K\"ahler K3 surfaces, one restricts oneself to standard cusps.  
A non-standard cusp is expected to be mirror to a K3 equipped with a non-trivial B-field (or gerbe) that is $m$-torsion, where $m$ is the minimal positive integer such that there exists a $\gamma' \in L^{\perp}$ such that $\gamma \cdot \gamma' = m$ (equivalently, $m$ is the expected minimal degree of a Lagrangian multisection of the SYZ fibration).

The stabiliser of the generic ample cone in the automorphism group of the lattice $L$ acts birationally on the moduli space of $L$-polarized K3s by changing the marking of the lattice.  
In the case $L=\check{\bM}$, there is a unique standard 0 dimensional stratum up to this action; in the case $L = \bM$, there is simply a unique standard 0 dimensional stratum.

\emph{Cases with $k > 1$.}
 Recall that $k$ is the divisibility of $[\omega]=[\Sigma]=c_1(L)$. In the theory of moduli of polarised K3s $(X,c_1(L))$, it is always assumed that $c_1(L) \in H^2(X,\bZ)$ is primitive, and so we work with the moduli space for 
$(X,\frac{1}{k}c_1(L))$. The new invariants are $n'=n/k^2$ and $k'=1$. We then take an extension $\cY'/\Spec \bC[[q']]$ of the universal family for lattice polarisation $\check{\bM}'$ near a standard cusp (recall they are all isomorphic), and apply a base change $q'=q^k$ and semistable crepant resolution to obtain our $\cY/ \Spec \bC[[q]]$.

\emph{Remarks on more general $\omega$.}
The classical story extends to the cases with $[\omega] \in \Pic X \otimes \bR$ for some complex structure on $X$ (a non-trivial condition). 
In such a case, let $L$ be the smallest primitive sublattice of $H^2(X; \bZ)$  such that $[\omega] \in L \otimes \bR$. Then one works instead with $L$-polarised K3s on the A-side, and $\check{L}$ polarised K3 surfaces on the B-side, where $\check{L} = (L \oplus U)^\perp$ for some choice of primitive embedding of $L \oplus U$ in the K3 lattice as in \cite{Dolgachev}. 
In order to be able to pursue SYZ mirror symmetry, one further requires a primitive isotropic class $\gamma$ in the orthogonal complement of $L$ (the class of the SYZ fibre)  corresponding to a cusp of the moduli space of $L$-polarized K3s. In general, there can be several cusps; these are in one-to-one correspondence (essentially) with Fourier-Mukai partners of a given mirror (working over $\bC$), see \cite{Ma, Hartmann}.

Beyond these cases, one expects that the Fukaya category of a K\"ahler K3 surface $(X,\omega)$ regarded as a symplectic manifold does have a B-model interpretation, but it will be in the realm of generalised complex geometry in the sense of Hitchin \cite{Hitchin_generalized}, \cite{Gualtieri}, \cite[$\S$6.2.5]{Clay2}: loosely, a non-commutative deformation of a complex K3 equipped with a gerbe, possibly non-torsion. 

\subsection{Overview of the proof}\label{sec:intro-proof-overview}

A strong theme throughout is the interplay between the homological and Strominger-Yau-Zaslow (SYZ) perspectives on mirror symmetry, notably as studied in the Gross-Siebert programme.
Concretely, our proof can be broken down as follows.

\textbf{(A)} \textit{HMS at the large volume / structure limit: non-compact categories.}

We start off by working at the large complex structure limit. 
Our input is a type III K3 surface $Y$ with split mixed Hodge structure. 
We then construct a mirror space $M$, which is a Weinstein manifold of finite type such that Theorem \ref{thm:hms-footballs-wrapped} holds:
\begin{equation*} 
\cW(M) \simeq \Coh Y. 
\end{equation*}

En route, we prove a homological mirror symmetry theorem for the singular locus $D$ of $Y$ (on the B-side), which may be of independent interest (Theorem \ref{thm:HMS-trivalent-P^1-configurations}).

The key ingredients are homological mirror symmetry for the log Calabi-Yau surfaces $(Y_i, D_i)$ which are the irreducible components of $Y$ \cite{HK1}; Ganatra-Pardon-Shende's sectorial descent result for wrapped Fukaya categories of Weinstein sectors \cite{GPS2}; and a mirror statement for dg categories of coherent sheaves (e.g.~in \cite{GR2}).
Note our proof here also works for $Y$ a general maximal normal crossing Calabi-Yau surface with split mixed Hodge structure. In particular, the dual complex of $Y$ is a triangulation of a compact orientable topological surface $S$ which may be other than $S^2$. 

The Weinstein manifold $M$ comes equipped with a non-proper Lagrangian fibration $\pi: M \to S$, with isolated nodal singularities, which we think of as its SYZ fibration (Lemma \ref{lem:M-first-properties}).
The locus at  which $\pi$ is non-proper forms a ribbon graph $R \subset S$, which is a thickening of the one-stratum of the intersection complex of $Y$ (in the terminology of Gross-Siebert). Suitably interpreted, the restriction of $M$ to this ribbon is the mirror to $D$.

\textbf{(B)}  \textit{HMS at the large volume / structure limit: compact objects.}

We refine the HMS isomorphism from (A) to pin down the mirrors of specific Lagrangians, verifying SYZ expectations:

\begin{itemize}
    \item[(a)] For each irreducible component $(Y_i, D_i)$, we show that the mirrors of the structure sheaves of certain points in $ Y_i \backslash D_i$ are torus fibres of $\pi$ with suitable brane data (Corollary \ref{cor:mirrors-to-points-Y}).
    \item[(b)] We prove there are one-to-one correspondences as follows:
$$
H^1 (R^1 \pi_! \underline{\bZ}_M) \stackrel{1:1}{\longleftrightarrow} \{ \text{Lagrangian sections of } \pi \} /_\sim   \stackrel{1:1}{\longleftrightarrow}\Pic Y
$$
where the right-hand correspondence is compatible wih HMS, and Lagrangian sections are equivalent if they're related by a path of Lagrangian sections (Proposition \ref{prop:sections-of-pi-classification}, Corollary \ref{cor:mirrors-to-sections-of-pi}).
\end{itemize}

As part of establishing (b), we prove the SYZ expectation that for each line bundle in $\Pic Y$, there is a mirror symplectomorphism of $M$ (usually not compactly supported, but with a well-defined action on $\cW(M)$ up to a shift), generalising the Lagrangian translations of \cite{HK2} (Proposition \ref{prop:Lagrangian-translations-of-M}).

In the case where $Y$ is projective, $\Perf Y$ is split-generated by line bundles, and we combine this with (b) to get the equivalence of `compact' $A_\infty$ categories of Theorem \ref{thm:hms-footballs-compact}:
\begin{equation*} 
    \cF(M) \simeq \Perf Y.
\end{equation*}

\textbf{(C)}\textit{ Compactification of $M$: almost-toric fibration.}

We have \emph{constructed} our Weinstein mirror manifold $M$, by a gluing procedure (essentially, Weinstein handlebody attachments). This means we will need to show that there exists a suitable K3 surface $(X, \omega)$ with integral K\"ahler form $\omega$, and holomorphic submanifold $\Sigma \subset X$ such that $[\omega]= PD [\Sigma]$ and $M \simeq X \backslash (\nu \Sigma)$, where $\nu \Sigma$ is a tubular neighbourhood of $\Sigma$, and we use an appropriate notion of Liouville equivalence (see Definition \ref{def:equivalent-Liouville-domains}).

We first do this in the symplectic category. 
Starting with $Y$ and using the Gross-Siebert programme, we construct an integral affine $S^2$ with singularities, called $B$, 
and a symplectic four-manifold $(X, \omega)$ which is the total space of an almost-toric fibration $\pi_X \colon (X, \omega) \to B$ with nodal singularities at the singular points of $B$ ($\S$\ref{sec:construction-of-X}).
We also construct a symplectic surface $\Sigma \subset X$, fibred over a thickening of a trivalent graph $\Gamma \subset B$, and, by careful comparison, prove that $M \simeq  X \backslash (\nu \Sigma)$, with compatible SYZ fibrations (Proposition \ref{prop:M-compactifies-to-type-III-K3}):
$$
\xymatrix{ M  \ar@{^{(}->}[rr] 
\ar[rd]_{\pi} & & X\ar[ld]^{\pi_X} \\
&B& 
}
$$
The ribbon graph $R$ from Step (A) retracts onto $\Gamma$.

At a technical level, for a given pair of integers $n$ and $k$ such that $k^2|n$, we will want to be able to choose which (split mixed Hodge structure) type III K3 with those invariants we start with. 
On the B-side, any two such type III K3s, say $Y$ and $Y'$, are related by a sequence of explicit moves called elementary modifications (and the generic fibres of the deformations we later consider are all isomorphic), $\S$\ref{sec:type-III-background}. On the A-side, we show that up to symplectomorphism, the $(X, \omega; \Sigma)$ we construct only depend on $n$ and $k$ (Proposition \ref{prop:X-indep-elem-modification}).

\textbf{(D)} \textit{Compactification of $M$: K\"ahler upgrade via Gross-Siebert.}

The next step is to show that $\omega$ is in fact K\"ahler, with $\Sigma$ holomorphic. 
The key input is further ideas from the Gross-Siebert programme: morally speaking, we work with a large complex structure limit for the A-side (and in particular, a degeneration of $X$ to a type III K3).

We give a broad summary. Starting with $Y$, we adapt outputs from \cite{Gross-Siebert-I} to get the following (Proposition \ref{prop:deomposition-to-Q_a-and-Kaehler-form}): 
a polyhedral subdivision $B = \cup_a Q_a$ of our integral affine manifold with singularities $B$ such that the focus-focus singularities of $B$ are in the interior of the $Q_a$ (near edges); 
and a projective  d-semistable type III K3 surface $\cX_0 = \cup_a (X_a, D_a)$  with a $\bQ$-ample line bundle $L$. 
These are compatible in the following sense: each log CY2 $(X_a, D_a)$ is given by non-toric blow-ups on a toric pair, and carries a K\"ahler form $\omega_a$ such that $[\omega_a] = c_1(L|_{X_a})$, and $(X_a, \omega_a)$ is the total space of an almost-toric fibration to $Q_a$. These glue to give a (generalised) almost-toric fibration $\cX_0 \to B$. Essentially by construction, this agrees with $X \to B $ away from the the edges of the $Q_a$. On the other hand, we have a semi-stable smoothing $(\cX_0 \subset \cX ) / (0 \in \bD)$; $L$ lifts to a relatively ample $\bQ$-line on $\cX$, inducing a Fubini-Study form. We then prove that $X$ is symplectomorphic to the smooth fibre $\cX_t$, which is K\"ahler (Corollary \ref{cor:Kaehler-compactification}, Step 1).

For given $n$ and $k$, if we start with a particularly carefully chosen representative for $Y$, we are moreover able to track $\Sigma$, and to compare it with a Cartier divisor $C \subset \cX_0$, which is a union of `tropical' lines in some of the $X_a$. This deforms to  $\cC  \subset \cX$, and we show that our symplectomorphism $X \to \cX_t$ can be arranged to take $\Sigma$ to the smooth curve $\cC_t$ (Corollary \ref{cor:Kaehler-compactification}, Step 2).

\begin{remark}
Reader may be interested in the recent preprint \cite{Chakravarthy-Groman} by Chakravarthy-Groman, which includes results related to (but not directly overlapping with) some of those in this step.
See Remark \ref{rmk:Groman}.
\end{remark}

\begin{remark}
It's been conjectured that any symplectic form on a K3 surface is isotopic to a K\"ahler one \cite[Conjecture 4.2]{Salamon-notes}; however, this question remains in general completely open. See \cite{Donaldson} for Donaldson's proposed geometric approach to the problem.
\end{remark}

\textbf{(E)} \textit{Matching deformations on the A- and B-sides.}

We're now ready to prove Theorem \ref{thm:main}. Heuristically, the key ideas are as follows (the actual proof proceeds slightly differently), 
based on the general strategy going back to \cite{Seidel_icm}. 

The relative Fukaya category $\cF(X,\Sigma)$ gives a deformation of the Fukaya category $\cF(M)$ over $\bC[[q]]$. We want to match this with a deformation of $\Perf Y$. To do this, one needs to cut down its space of first-order deformations, $\HH^2(Y)$, to a one-dimensional subspace. Previous papers have achieved this by using au auxiliary group action. This is not available in our case. Instead, we use that any object of $\Perf Y$ mirror to a single compact Lagrangian in $M$ must lift to the deformation. (This follows from regularity results for $J$-holomorphic curves on the A-side, using that $X$ is Calabi-Yau of complex dimension two.)  This gives the following:

\begin{enumerate}
    \item The skyscraper sheaf of some interior point $p_i \in Y_i$ of each component of $Y_i$ is mirror to a Lagrangian torus (in fact an SYZ fibre). This rules out noncommutative deformations of $Y$. (These are given at first order by a Poisson bracket on each $Y_i$ vanishing along the boundary, and so uniquely determined up to a scalar $\lambda_i \in \bC$.)
    \item The structure sheaf $\cO_Y$ is mirror to a Lagrangian sphere. This implies that $\cO_Y$ lifts, and so the deformation does not involve a non-trivial gerbe.
    \item This reduces us to a commutative deformation of $Y$. We now use the fact that all the other line bundles are also mirror to Lagrangian spheres, and so also lift. Now the deformation theory worked out in \cite{Friedman_thesis} and \cite{Friedman-Scattone} shows that commutative deformations of $Y$ such that all line bundles lift are parametrised by a smooth curve meeting the discriminant locus transversely in a point, corresponding to a semistable smoothing $\cY/\Spec \bC[[q]]$, uniquely determined up to an automorphism of $\bC[[q]]$.
\end{enumerate}

To conclude, we need to show that the A-side deformation is non-trivial at first order. We achieve this by studying a pair of explicit (and carefully chosen!) Lagrangian spheres $S_0, S_1 \subset X \backslash \Sigma$ and their mirrors, see Section \ref{sec:toy-mirror-deformations}. 

Our set-up allows us to verify several SYZ expectations: 
under the equivalence of Theorem \ref{thm:main}, Lagrangian sections of $\pi_X \colon X \to B$ are mirror to line bundles (Corollary \ref{cor:mirrors-to-sections-of-pi}); 
and (at least some) torus fibres of $\pi_X$ are mirror to points (Remark \ref{rmk:mirror-to-Lagrangian-torus}). We also get `Lagrangian translation' symplectomorphisms mirror to tensoring with line bundles, fitting with the expectation for A-side monodromy transformations near large complex limits (Corollary \ref{cor:lagrangian-translations-on-X}).

\subsection*{Organisation of the paper} 

Section \ref{sec:hms-footballs} contains steps (A) and (B): 
step (A) in $\S$\ref{sec:preliminaries}, $\S$\ref{sec:HMS-for-D-overall} and $\S$\ref{sec:hms-wrapped-M}, and  step (B) in $\S$\ref{sec:preliminaries}, $\S$\ref{sec:HMS-for-D-overall} and $\S$\ref{sec:hms-wrapped-M}. 
Section \ref{sec:compactification} contains steps (C) and (D): step (C) in $\S$\ref{sec:type-III-background} and $\S$\ref{sec:compactification-of-M} and step (D) in 
$\S$\ref{sec:compactification-of-M-is-Kaehler}. 
Finally, step (E) takes up all of Section \ref{sec:deformation}.

\textbf{Acknowledgements.} We thank Mark Gross, Alexander Polishchuk and Ivan Smith for helpful conversations. We're especially grateful to Nick Sheridan for discussions of deformation arguments, and communicating Proposition \ref{prop:Nick} to us. 

\textbf{Funding.} P.H. was partially supported by NSF grant DMS-2200875. A.K.~was partially supported by ERC Starting Grant no.~101041249 and by EPSRC Open Fellowship EP/W001780/1. We thank IHES for support and hospitality in summers 2023 and 2024 when parts of this project were completed.

\textbf{Open Access.} For the purpose of open access, the authors have applied a Creative Commons Attribution (CC:BY) licence to any Author Accepted Manuscript version arising from this submission.
\textbf{UKRI data access statement.} 
There is no dataset associated with this paper.

\section{HMS for maximal normal crossing Calabi-Yau surfaces}
\label{sec:hms-footballs}

\subsection{Preliminaries} \label{sec:preliminaries}

\subsubsection{Conventions} We work with triangulated completions throughout. In particular, $\cW$ will denote the triangulated completion of the wrapped Fukaya category: we follow \cite{GPS1} for the definition of the wrapped Fukaya category of a Liouville sector, taking coefficient ring $\bC$ and $\bZ$-gradings, and use the triangulated completion by twisted complexes as set up in \cite[Section 3]{Seidel_book}. 
This means that we use exact spin Lagrangians which have vanishing Maslov class and are conical at infinity, and decorate them with choices of $\spin$ structures and $\bZ$ gradings. We make the analogous choices for the `compact' Fukaya category of a Liouville domain,
whose triangulated completion we denote $\cF$ (same conditions and decorations on the Lagrangians, together with compactness).

Similarly, on the $B$ side, $\Coh$, respectively $\Perf$, will denote the dg category of coherent, respectively perfect, sheaves. All pushforward and pullback functors between such categories will be assumed derived unless otherwise specified. 

Whenever we say a diagram of $A_\infty$ functors between $A_\infty$ categories commutes, this will be meant up to $A_\infty$ homotopy (unless otherwise specified).

\subsubsection{Recurring notation}
The space $Y$ will denote a maximal normal crossing Calabi-Yau surface. (This includes the case of type III K3 surfaces, which we will focus on in Sections \ref{sec:compactification} and \ref{sec:deformation}.) 
The dual complex of $Y$ is a triangulation of a compact orientable topological surface, which we will denote $S$. We are particularly interested
in the case where $S = S^2$, and will restrict ourselves to this setting in Sections \ref{sec:mirrors-to-line-bundles} and \ref{sec:full-compact-HMS}.
We have a decomposition:
$$Y = \bigcup_i (Y_i, D_i)$$
where each $(Y_i, D_i)$ is a smooth log CY2 with maximal boundary. 
Unless otherwise specified, we will assume that $Y$ has split mixed Hodge structure.
Each anti-canonical divisor $D_i$ decomposes into a union of irreducible components, which we denote as:
$$D_i = \bigcup_j D_{ij}$$
where $D_{ij} = Y_i \cap Y_j$ (whenever non-empty); we let $D = \cup_{i,j} D_{ij}$ denote the singular locus of $Y$.

\subsubsection{Sectors}\label{sec:sectors-background}
We'll be using the technology of Liouville and Weinstein sectors as developed in \cite{GPS1, GPS2}. We briefly recall some basics. 
A Liouville sector $X$ is an exact symplectic-manifold-with-boundary, cylindrical near $\infty$, and for which there exists a function $I: \partial X \to \bR$, which is linear near infinity and whose Hamiltonian vector field is outward pointing along $\partial X$. 
This has a well-defined wrapped Fukaya category, $\cW(X)$.
 There's a neighbourhood theorem for the boundary $\partial X$: in a cylindrical neighbourhood of $\partial X$, $X$ is symplectomorphic to $F \times \bC_{\Re \geq 0}$, where $F$ is another Liouville manifold, called the symplectic boundary of $X$. 
 (This mean that $F$ is the symplectic reduction of $\partial X$.)
Gluing on $F \times \bC_{\Re \leq 0}$, we get a (finite type) Liouville manifold $\overline{X}$, together with a stop in its boundary-at-infinity (the copy of $F$ which arises as $ F \times \{ - \infty \}$, or one can just take the Liouville skeleton of $F$). 
Conversely, given a Liouville manifold $\overline{X}$ with a suitably nice stop $\mathfrak{f}$ in its boundary-at-infinity, there's an associated Liouville sector $X$. 
A Weinstein sector $X$ is a Liouville sector $X$ such that, in the notation above, both $\overline{X}$ and $F$ are (up to deformation) Weinstein.

Given a Weinstein (or Liouville) sector $X$, it will often by convenient for us to work with an 
 an (open) truncation of $X_0$ in the infinite-boundary direction, where we truncate at a height for which we already have the ``infinite boundary'' regime (cylindrical-ness, and linearity of $I$: so in particular we can recover back $X$ by gluing on infinite cylindrical ends). 
 As with truncations of finite-type Liouville manifolds, we have a wrapped Fukaya category $\cW(X_0)$ (where objects are complexes of exact Lagrangians in $\cW(X_0)$ which are cylindrical near infinity), and a natural equivalence $\cW(X_0) \simeq \cW(X)$. 
By mild abuse of terminology, we will also refer to such truncations of Weinstein (resp.~Liouville) sectors as Weinstein (resp.~Liouville) sectors. In the case where the finite boundary is empty, we simply call them Weinstein (resp.~Liouville) manifolds.

\subsection{Mirror symmetry for graph configurations of $\bP^1$s} \label{sec:HMS-for-D-overall}

We first prove homological mirror symmetry for $D$. This may be of independent interest. 

We say that a reducible curve $D$ is a \textit{graph configuration of $\bP^1$s} if it is given by gluing together copies of $\bP^1$ at $\{ 0,1 \}$ according to a graph. More precisely, fix a graph $G_D$, and an arbitrary orientation of $G_D$. Then take one copy of $\bP^1$ for each edge $e$, say $\bP^1_e$, and one point $v$ for each vertex. If an edge $e$ is incident to $v$, glue $0 \in \bP^1_e$ to $v$ if $e$ points away from $v$, and $\infty \in \bP^1_e$ to $v$ if $e$ points into $v$. We use the following local model: if we have a vertex of degree $k$, the singularity at the associated point of $D$ is analytically isomorphic to the union of the coordinate axes in $\bC^k$.
Then $D$ is the resulting reducible curve. This includes the singular locus of a maximal normal crossing Calabi-Yau surface, in which case the graph in trivalent.

\subsubsection{Homological mirror symmetry for $\bP^1$ revisited}
First, recall that for the Liouville sector $T^\ast [0,1]$, there is a standard equivalence
$$
\cW(T^\ast [0,1]) \simeq \Coh (\{pt\}). 
$$
Mirror symmetry for $\bP^1$ is also well understood. 
Consider the Liouville manifold $T^\ast S^1$ with stop $\mathfrak{f}$ consisting of one point on each component of the boundary at infinity $\partial_\infty T^\ast S^1$. (The stop $\mathfrak{f}$ is the boundary of a core consisting of the union of the zero-section and a chosen cotangent fibre, say $T^\ast_\star S^1$.) There is a standard equivalence
$$
\cW(T^\ast S^1, \mathfrak{f}) \simeq \Coh \bP^1.
$$
The pair $(T^\ast S^1, \mathfrak{f})$ corresponds to a Liouville sector given by removing a copy of $\{ pt \} \times \bC_{\text{Re} \geq 0}$ for each point (see \cite[Section 2]{GPS1}). Denote this Liouville sector by $(T^\ast S^1)^-$. Both its finite boundary $\partial (T^\ast S^1)^-$ and its infinite boundary $\partial_\infty (T^\ast S^1)^-$ consist of two disjoint copies of $\bR$.

The inclusion of points $\mathfrak{f} \hookrightarrow T^\ast S^1$ induces an inclusion of Liouville sectors 
\begin{equation}\label{eq:stopinclusionP^1}
    T^\ast [0,1] \sqcup T^\ast [0,1] \hookrightarrow  (T^\ast S^1)^-.
\end{equation}

Moreover, $(T^\ast S^1)^-$ has a fibration $\pi_{[0,1]}$ to the interval $[0,1]$ with Lagrangian fibres, diffeomorphic to either $\bR$ or $S^1$, such that the finite boundary this restricts to the two cotangent fibrations $T^\ast[0,1] \to [0,1]$.  
See Figure \ref{fig:P1-point-inclusions}.

\begin{figure}[htb]
\begin{center}
\includegraphics[scale=0.6]{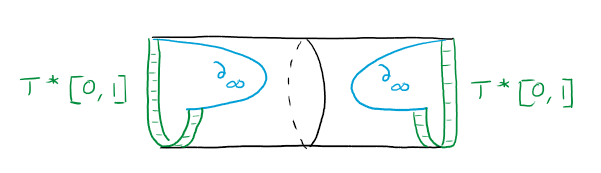}
\caption{The inclusion of Liouville sectors $T^\ast [0,1] \sqcup T^\ast [0,1] \hookrightarrow  (T^\ast S^1)^- $.
The fibration $\pi_{[0,1]}$ is given by projecting down vertically.
}
\label{fig:P1-point-inclusions}
\end{center}
\end{figure}

After deformation, we can choose a Liouville form on $(T^\ast S^1)^-$ so that on each copy of $T^\ast [0,1]$, the Liouville form restricts to $p dq$, where $q$ is a coordinate on $[0,1]$ and $p$ its dual cotangent coordinate. This can be done by hand in this case; it also follows by appealing to the general results in \cite[Section 2]{GPS1}. 
We can arrange for the Liouville core, say $\mathfrak{c}$, to be the union of the zero-section $S^1 \subset (T^\ast S^1)^-$ 
and $T^\ast_\star S^1$ (restricted to $(T^\ast S^1)^-$).
In terms of the fibration $\pi_{[0,1]}$, $\mathfrak{c}$ is the union of the $S^1$ fibre above $1/2$ and a distinguished section.
 
The inclusion $T^\ast [0,1] \sqcup T^\ast [0,1]) \to  (T^\ast S^1)^-$ induces a map
$$
i_{\cW}: \cW(T^\ast [0,1] \sqcup T^\ast [0,1]) \to \cW( (T^\ast S^1)^-).
$$
Using the standard equivalences above, this is mirror to the pushforward of points: we get a commutative diagram:
\begin{equation} \label{eq:hms-P1-points}
\xymatrix{
\cW(T^\ast [0,1] \sqcup T^\ast [0,1]) \ar[r]^-{\simeq} \ar[d]_{i_\cW} & \Coh (\{ 0, \infty \}) \ar[d]_-{i_\ast}
 \\
\cW ( (T^\ast S^1)^-) \ar[r]^-{\simeq} & \Coh \bP^1 
}
\end{equation}

In order to apply this in our setting, we stabilise the picture on the symplectic side by multiplying all terms with the Liouville sector $T^\ast [0,1]$. We refer the reader to \cite{GPS2} for general background results. 
Up to canonical deformation, the product of any pair of Liouville sectors is a Liouville sector (see \cite[Section 7.1]{GPS2}); for instance, for the product $T^\ast [0,1] \times T^\ast [0,1]$, the associated Liouville sector is $T^\ast D^2$ for a closed compact disc $D^2$. 
For our purposes, it is more convenient not to smooth the corners: we work with Liouville sectors with (sectorial) corners, see \cite[Section 12.3]{GPS2}. (Note all the corners which arise here will be smoothable, see \cite[Construction 12.17]{GPS2}.)

The product  $T^\ast [0,1] \times (T^\ast S^1)^-$ has Liouville core $[0,1] \times \mathfrak{c}$, where $[0,1] \subset T^\ast [0,1]$ is the zero-section. (From the cotangent bundle perspective, this is the union of the zero section and the co-normal plane to $[0,1] \times \{ \star \}$.) 
Denote this Liouville sector with corners by $M_{\bP^1}$.
See Figure \ref{fig:stabilised-core}. 
The sector $M_{\bP^1}$ inherits from its two factors a Lagrangian fibration to $[0,1]^2$, say $\pi_{\bP^1}$, with fibres diffeomorphic to either $\bR^2$ or $\bR \times S^1$. The finite boundary (with no corner smoothings) is the preimage of $\partial [0,1]^2$.
There's a preferred section of $\pi_{\bP^1}$: the component of the core which is the co-normal plane to $[0,1] \times \{ \star \}$, say $L_0$.

\begin{figure}[htb]
\begin{center}
\includegraphics[scale=0.5]{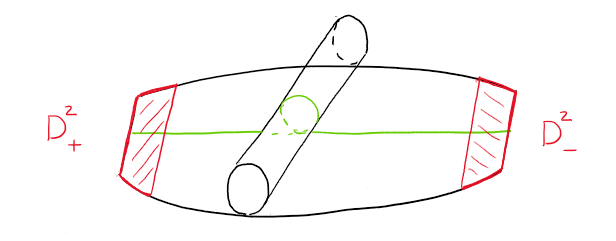}
\caption{Stabilised core for $T^\ast[0,1] \times (T^\ast S^1)^-$. }
\label{fig:stabilised-core}
\end{center}
\end{figure}
 
Under stabilisation, the inclusion of Liouville sectors 
$ T^\ast [0,1] \sqcup T^\ast [0,1] \hookrightarrow  (T^\ast S^1)^-
$
discussed above becomes
\begin{equation} \label{eq:P1stabilisedHMS}
    \bigsqcup T^\ast [0,1]^2_\pm   \hookrightarrow 
    M_{\bP^1}
   \end{equation}
where the domains $[0,1]^2_\pm =: D^2_\pm$ are as in Figure \ref{fig:stabilised-core}. (Note that these are Lagrangian domains in the conormal to  $[0,1] \times \{ \star \}$, not in the zero-section of $T^\ast[0,1] \times (T^\ast S^1)^-$.) 
On each copy of $T^\ast [0,1]^2_{\pm} \subset
M_{\bP^1}
$, 
we keep the product Liouville form $\sum p_i d q_i$, where the $q_i$ are coordinates on $[0,1]^2$ and the $p_i$ the dual cotangent coordinates.

\subsubsection{Homological mirror symmetry for $D$}\label{sec:HMS-for-D}

We make an auxiliary choice: for each vertex $v$ in the graph $G_D$, we pick a cyclic ordering of the edges incident to $v$. We'll say $D$ is a decorated graph of $\bP^1$s. This gives a prescription for thickening $G$ to an oriented ribbon graph, say $R_D$. (When there is no ambiguity we will suppress the subscript and write $R$.) 

Let $v$ be any vertex of the graph $G_D$. Say it has incidence number $i_v$. Fix a closed disc with $2i_v$ convex corners,  say $D^2_v$. 
For each edge $e$, take a copy of 
$M_{\bP^1}$,
denoted 
$M_{\bP^1, e}$,
together with embeddings 
$$
T^\ast D^2_{\pm,e} \hookrightarrow 
M_{\bP^1, e}
$$
as in Equation \ref{eq:P1stabilisedHMS}.

We want to glue these together. We do this by working locally in each $T^\ast D^2_v$, and embedding the $D^2_{\pm, e}$ into the $D^2_v$ in the obvious way, prescribed by the graph and our auxiliary choices of edge orderings. 
See Figure \ref{fig:mirror-D-core}. The induced symplectic embeddings $T^\ast D^2_{\pm, e}  \hookrightarrow T^\ast D^2_v$ respect Liouville forms by assumption. 
Let $M_D$ be the resulting Liouville manifold-with-boundary; using deformation tools from \cite[Section 2]{GPS1}, or by hand, we see that this is a Liouville sector. 
Moreover, $M_D$ carries a Lagrangian fibration to the ribbon $R_D$, with fibres diffeomorphic to either $\bR^2$ or $\bR \times S^1$. Call this $\pi_D: M_D \to R_D$. The finite boundary of $M_D$ is precisely the preimage of $\partial R_D$.
The preferred Lagrangian sections $L_0$ for each 
$M_{\bP^1, e}$,
together with the zero sections for each $T^\ast D^2_{v}$, patch together to give a preferred Lagrangian section of $\pi_D$; we shall also denote this by $L_0$.

\begin{lemma}
The collection
$$
\{T^\ast D^2_v, \, M_{\bP^1, e}   \, | \, e \in e(G), v \in v(G) \}
$$
where $e(G)$ and $v(G)$ denote the edge and vertex sets of $G_D$, 
gives a Weinstein sectorial covering of $M_D$. In particular, $M_D$ is Weinstein: both its convexification, 
and its symplectic boundary, say $S_D$, are Weinstein. 
\end{lemma}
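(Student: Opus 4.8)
The plan is to verify the defining conditions of a Weinstein sectorial covering, in the sense of \cite[Section 12]{GPS2}, directly from the explicit local models, and then to invoke the associated gluing result to deduce that $M_D$ is a Weinstein sector. The first step is to check that each member of the proposed cover is itself Weinstein. Each vertex piece $T^\ast D^2_v$ is the cotangent bundle of a disc with $2i_v$ sectorial corners, which is Weinstein by inspection (one takes the fibrewise quadratic form plus a Morse function on the base as Lyapunov function). Each edge piece $M_{\bP^1,e}=T^\ast[0,1]\times (T^\ast S^1)^-$ is a product of the Weinstein sector $T^\ast[0,1]$ with $(T^\ast S^1)^-$; the latter is $T^\ast S^1$ equipped with a stop and hence Weinstein, so by the product construction for Weinstein sectors \cite[Section 7]{GPS2} the edge piece is Weinstein after the canonical deformation.

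Next I would analyse the overlaps, organised by the Lagrangian fibration $\pi_D\colon M_D\to R_D$. By construction the only non-trivial pairwise intersections are those of an edge piece $M_{\bP^1,e}$ with an adjacent vertex piece $T^\ast D^2_v$, and these are precisely the embedded conormal discs $T^\ast D^2_{\pm,e}$ of \eqref{eq:P1stabilisedHMS}, while triple and higher intersections are empty (the two ends of an edge land in disjoint sub-discs of $D^2_v$, even in the case of a self-loop). Each $T^\ast D^2_{\pm,e}$ is again a cotangent bundle of a disc, hence a Weinstein sector, and the defining embeddings respect the Liouville forms, so the Liouville data glue. It then remains to package the hypersurfaces separating the pieces — the boundaries of the sub-discs $D^2_{\pm,e}$ sitting inside each $D^2_v$ — into a sectorial collection, i.e.\ to check the mutual transversality and characteristic-foliation compatibility conditions of \cite[Section 12]{GPS2}.

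The main obstacle is exactly this last verification at the vertices. One must confirm that, for a vertex of incidence $i_v$, the chosen cyclic ordering of the incident edges arranges the $i_v$ pairs of sub-discs $D^2_{\pm,e}$ as mutually disjoint pieces of $D^2_v$ whose boundary hypersurfaces, together with the $2i_v$ corner strata, assemble into a genuine sectorial collection with jointly transverse characteristic foliations; here one uses that all the corners that arise are smoothable \cite[Construction 12.17]{GPS2}. Granting this local check, the axioms of a Weinstein sectorial covering hold, and the gluing result of \cite{GPS2} produces a global Weinstein structure on $M_D$, exhibiting it as a Weinstein sector. Finally, by the definition of a Weinstein sector recalled in Section \ref{sec:sectors-background}, both the convexification $\overline{M_D}$ and the symplectic boundary $S_D$ of a Weinstein sector are Weinstein, which gives the last assertion of the lemma.
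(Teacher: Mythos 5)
Your proposal follows the same overall strategy as the paper (check each piece is Weinstein, then verify the sectorial covering axioms and invoke the gluing result \cite[Lemma 12.26]{GPS2}), but it stops exactly at the point where the real content of the lemma lies. You write that ``the main obstacle is exactly this last verification at the vertices'' and then proceed by ``granting this local check''. That local check --- that the boundary hypersurfaces of the sub-discs $D^2_{\pm,e}$ inside $D^2_v$, together with the corner strata, satisfy the mutual transversality and characteristic-foliation conditions required of a sectorial collection --- is precisely what must be proved; everything else in your argument (the pieces being Weinstein, the overlaps being the $T^\ast D^2_{\pm,e}$, smoothability of corners via \cite[Construction 12.17]{GPS2}) is routine. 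As written, the proposal is a proof outline with its key step left open, so there is a genuine gap.

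The missing idea, which is how the paper closes this gap, is a reduction to the toy case \cite[Example 1.33]{GPS2}: if $Q$ is a compact manifold-with-boundary and $Q_1,\ldots,Q_n$ are codimension zero submanifolds-with-boundary whose boundaries (and $\partial Q$) are mutually transverse, then $T^\ast Q_1,\ldots,T^\ast Q_n$ is automatically a sectorial covering of $T^\ast Q$. Near a vertex $v$, every piece of the proposed covering is of this form: the vertex piece is $T^\ast D^2_v$ itself, and each adjacent edge piece restricts there to the cotangent bundle $T^\ast D^2_{\pm,e}$ of a sub-disc, with all the relevant boundaries transverse in the base. Applying the example locally, with $Q$ a thickening of $D^2_v$ in the ribbon $R_D$, yields the sectorial covering conditions with no hands-on analysis of characteristic foliations; away from the vertices there is nothing to check. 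If you supplement your argument with this one citation and the observation that the covering is locally of cotangent-bundle type, your proof becomes complete and coincides with the paper's.
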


\begin{proof}
All of the sectors $T^\ast D^2_v$ and $M_{\bP^1, e}$ and of their intersections, namely the  spaces $T^\ast D^2_{\pm, e}$, are certainly Weinstein. 
To see that we have a sectorial covering, we can then use the toy case in \cite[Example 1.33]{GPS2}. This says that for $Q$ any compact manifold-with-boundary and $Q_1, \ldots, Q_n$ codimension zero submanifolds with boundary, then, if the boundaries of the $Q_i$ and $\partial Q$ are mutually transverse, $T^\ast Q_1, \ldots, T^\ast Q_n$ is a sectorial covering in the sense of \cite{GPS2}. We then apply this locally for $Q$ a thickening of $D^2_v$ in $R_D$.
The fact that $M_D$ is a Weinstein sector follows from \cite[Lemma 12.26]{GPS2}. 
\end{proof}

\begin{remark}
The convexification of $M_D$ has an explicit Weinstein handle decomposition given by starting with $D^\ast R_D$ and adding an extra Weinstein one-handle for each edge. From this perspective, our preferred Lagrangian section $L_0$ is the zero-section of $D^\ast R_D$.
\end{remark}

\begin{figure}[htb]
\begin{center}
\includegraphics[scale=0.70]{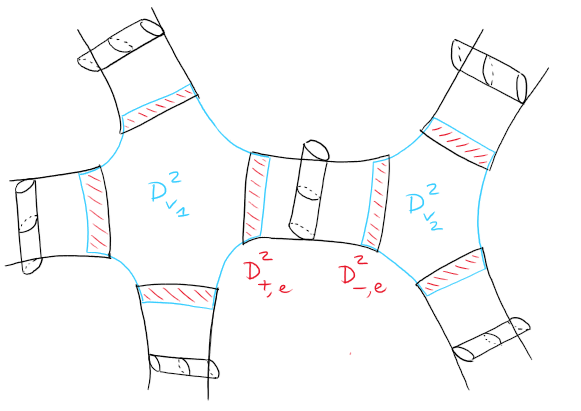}
\caption{Gluing mirrors to $\bP^1$ to get the mirror to $D$: Lagrangian cores.}
\label{fig:mirror-D-core}
\end{center}
\end{figure}

\begin{proposition}\label{prop:Fuk-MG}
The wrapped Fukaya category $\cW (M_D)$ is quasi-isomorphic to the pushout of the diagram
$$
\xymatrix{
\cW\big(\bigsqcup_e T^\ast D^2_{\pm, e} \big)  \ar[r] \ar[d]  &\cW \big(\bigsqcup_v T^\ast D^2_v \big) \\
\cW \big(\bigsqcup_e ( M_{\bP^1,e}  \big) & 
}
$$
\end{proposition}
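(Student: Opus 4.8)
The plan is to show that $\cW(M_D)$ is computed by a Mayer--Vietoris-type pushout arising from the sectorial covering established in the preceding lemma. The key tool is the descent theorem of Ganatra--Pardon--Shende \cite{GPS2}, which states that for a sectorial covering of a Weinstein sector, the wrapped Fukaya category is recovered as the homotopy colimit (in the $\infty$-categorical or dg sense) of the diagram of wrapped Fukaya categories of the pieces and their intersections, where the arrows are the sectorial restriction (corestriction) functors. First I would invoke the lemma to fix the covering $\{T^\ast D^2_v,\, M_{\bP^1,e}\}$, and identify the pairwise intersections: by construction the only nonempty overlaps between a ``vertex'' piece $T^\ast D^2_v$ and an ``edge'' piece $M_{\bP^1,e}$ are the sectors $T^\ast D^2_{\pm,e}$ where the edge $e$ meets the vertex $v$, and distinct vertex pieces (resp.\ distinct edge pieces) are disjoint. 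This is precisely the combinatorial data read off from the ribbon graph $R_D$.

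Next I would assemble the \v{C}ech-type diagram indexed by the covering. The general GPS descent statement expresses $\cW(M_D)$ as the colimit over the full diagram of all multi-intersections; the point here is that because the covering is ``nerve-like'' of cohomological dimension one (overlaps occur only between one edge piece and one vertex piece at a time, with no triple overlaps), this colimit collapses to the stated two-step pushout
$$
\cW(M_D) \simeq \operatorname{colim}\Big( \cW\big(\textstyle\bigsqcup_e M_{\bP^1,e}\big) \longleftarrow \cW\big(\textstyle\bigsqcup_e T^\ast D^2_{\pm,e}\big) \longrightarrow \cW\big(\textstyle\bigsqcup_v T^\ast D^2_v\big)\Big).
$$
Here the two legs are the corestriction maps induced by the inclusions $\bigsqcup_e T^\ast D^2_{\pm,e} \hookrightarrow \bigsqcup_e M_{\bP^1,e}$ (the stabilised stop inclusion from Equation \ref{eq:P1stabilisedHMS}) and $\bigsqcup_e T^\ast D^2_{\pm,e} \hookrightarrow \bigsqcup_v T^\ast D^2_v$ (the local embeddings into the vertex discs prescribed by the ribbon structure). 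I would check that the wrapped categories split as direct sums over edges, vertices, and half-edges respectively, so that the disjoint unions behave correctly, and that the gluing maps match the combinatorics of $R_D$.

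I expect the main obstacle to be verifying the precise hypotheses of the GPS descent theorem for this covering with corners, and checking that the higher-order terms in the \v{C}ech diagram genuinely vanish so that one truly obtains a pushout rather than a larger homotopy colimit. Concretely, one must confirm that $\{T^\ast D^2_v,\, M_{\bP^1,e}\}$ is a sectorial covering in the technical sense (already arranged in the lemma via \cite[Example 1.33]{GPS2} and \cite[Lemma 12.26]{GPS2}), and that there are no nonempty triple intersections among the covering pieces. The latter is a direct consequence of the local structure of the ribbon graph: each half-edge sector $T^\ast D^2_{\pm,e}$ lies in exactly one edge piece and one vertex piece, so the nerve of the covering is a bipartite graph with no higher simplices, and the homotopy colimit over this nerve is exactly a pushout. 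I would also take care that the corner structure (we deliberately do not smooth corners) is compatible with the descent formalism; this is precisely why the lemma records that the corners here are smoothable in the sense of \cite[Construction 12.17]{GPS2}, so that the GPS machinery applies verbatim.
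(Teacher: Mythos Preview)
Your proposal is correct and follows essentially the same approach as the paper: invoke the sectorial covering from the preceding lemma together with the descent theorem \cite[Theorem~1.35]{GPS2}, and observe that because the only nonempty overlaps are the $T^\ast D^2_{\pm,e}$ (with no higher intersections), the homotopy colimit collapses to the stated pushout. The paper's proof is terser but relies on exactly the same ingredients.
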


\begin{proof}
This follows from \cite[Theorem 1.35]{GPS2}, by observing that we can build a sectorial covering by using the toy case in \cite[Example 1.33]{GPS2}. This says that for $Q$ any compact manifold-with-boundary and $Q_1, \ldots, Q_n$ codimension zero submanifolds with boundary, then, if the boundaries of the $Q_i$ and $\partial Q$ are mutually transverse, then $T^\ast Q_1, \ldots, T^\ast Q_n$ is a sectorial covering in the sense of \cite{GPS2}. We then apply this locally using coverings of 2-discs. 
\end{proof}

We get the following consequence.

\begin{theorem}\label{thm:HMS-trivalent-P^1-configurations}
We have a quasi-isomorphism 
$
\cW(M_D) \simeq \Coh D
$
such that for each edge $e$, the following diagram commutes up to $A_\infty$ homotopy:
$$
\xymatrix{
\cW ( M_{\bP^1, e}) \ar[r]_-{\simeq} \ar[d]_-{i_\cW} & \Coh \bP^1_e \ar[d]_-{i_\ast} \\
\cW(M_D) \ar[r]_-{\simeq} & \Coh D
}
$$

\end{theorem}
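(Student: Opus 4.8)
The plan is to deduce the equivalence from the sectorial pushout of Proposition \ref{prop:Fuk-MG} by matching it, term by term and functor by functor, with a corresponding \emph{colimit} presentation of $\Coh D$ on the B-side. Concretely, $D$ is itself a pushout in schemes: writing $\tilde V = \bigsqcup_e \{0_e, \infty_e\}$ for the disjoint union of the endpoints of the edge-$\bP^1$s and $V = \bigsqcup_v \{v\}$ for the reduced vertex points, the incidence data of $G_D$ gives a finite map $\tilde V \to V$ collapsing the half-edges meeting at $v$ to the single point $v$, and
\[
D \;=\; \Big(\bigsqcup_e \bP^1_e\Big)\ \sqcup_{\tilde V}\ V .
\]
First I would record the B-side descent statement: since every gluing map here is a finite (in particular proper) closed immersion or collapse, $\Coh D$ is the homotopy colimit, under the pushforward functors, of the diagram
\[
\xymatrix{
\Coh\big(\bigsqcup_e \{0_e,\infty_e\}\big) \ar[r] \ar[d] & \Coh\big(\bigsqcup_v \{v\}\big) \\
\Coh\big(\bigsqcup_e \bP^1_e\big). &
}
\]
This is an instance of the formalism of \cite{GR2}: $\IndCoh$ with proper pushforwards sends colimits of schemes along closed immersions to colimits of dg categories, and one then checks that the colimit restricts to the bounded coherent subcategories.

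Next I would produce an equivalence between this B-side diagram and the A-side pushout diagram of Proposition \ref{prop:Fuk-MG}. On objects: the stabilised mirror symmetry for $\bP^1$ gives $\cW(M_{\bP^1,e}) \simeq \Coh \bP^1_e$, while the standard equivalence $\cW(T^\ast[0,1]) \simeq \Coh(\pt)$ stabilises, since each disc $D^2_v$ and $D^2_{\pm,e}$ is contractible, to $\cW(T^\ast D^2_v) \simeq \Coh(\{v\})$ and $\cW(T^\ast D^2_{\pm,e}) \simeq \Coh(\{0_e\}\ \text{or}\ \{\infty_e\})$. On functors: the stabilised square \eqref{eq:hms-P1-points} shows that the overlap-into-edge inclusion $i_\cW$ is mirror to the pushforward $\Coh(\{0_e,\infty_e\}) \to \Coh \bP^1_e$; and the overlap-into-vertex inclusion is mirror to the pushforward $\Coh(\pt) \to \Coh(\{v\})$ along the collapse $\tilde V \to V$, which I would verify from the functoriality of $\cW(T^\ast Q) \simeq \Coh(\pt)$ under inclusions of contractible cotangent sectors, both sides being $\Perf \bC$ and the generating cotangent fibre matching the structure sheaf. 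Assembling these into an equivalence of pushout diagrams and passing to homotopy colimits yields $\cW(M_D) \simeq \Coh D$, and the required compatibility square for each edge $e$ is exactly the structure map from the $\bP^1_e$-vertex of the diagram into the colimit.

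The main obstacle is the B-side gluing statement and, in particular, its compatibility with the A-side variance. Sectorial descent \cite{GPS2} presents $\cW$ as a colimit under covariant inclusion functors, whereas gluing schemes along closed subschemes most naturally presents quasi-coherent sheaves as a \emph{limit} under $*$-restriction. The content is that these restriction functors admit pushforward right adjoints satisfying base change, so that the limit can be rewritten as a colimit under pushforward; it is precisely this reconciliation (encoded by the $\IndCoh$-of-correspondences formalism of \cite{GR2}) that produces the covariant presentation matching Proposition \ref{prop:Fuk-MG}, and one must confirm it descends to $\Coh D$ with the boundedness and coherence conditions intact. A secondary, genuinely homotopical point is coherence: one must upgrade the pointwise equivalences to an equivalence of \emph{diagrams} of $A_\infty$ categories, making all overlap squares commute up to coherent homotopy simultaneously so that the induced map on homotopy colimits is well defined; the commuting square \eqref{eq:hms-P1-points} is the key input that makes this bookkeeping go through.
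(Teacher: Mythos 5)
Your proposal is correct and follows essentially the same route as the paper: present $D$ as a pushout of schemes along the edge-endpoints, invoke the \cite{GR2} formalism (as in \cite{Gammage-Shende}) to get $\Coh D$ as the pushout of the induced diagram under pushforwards, and match this against the sectorial pushout of Proposition \ref{prop:Fuk-MG} via the stabilised HMS equivalences of equation \ref{eq:hms-P1-points}. Your additional remarks on the limit-versus-colimit reconciliation and on upgrading pointwise equivalences to an equivalence of diagrams are points the paper leaves implicit, but they do not constitute a different argument.
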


\begin{proof}
For each vertex $v$ in $G$, let $x_v$ be an abstract point. Then $D$ is the pushout  (in the category of schemes) of 
$$
\xymatrix{
   \bigsqcup_e \{ 0, \infty \} \ar[r] \ar[d]  & \  \bigsqcup_v \{ x_v \}  \\
  \bigsqcup_e  \bP^1_e   & 
}
$$

where each point $0, \infty \in \bP^1_e$ maps to the point $x_v$ corresponding to its vertex.
It follows that $\Coh D$ is the pushout of the induced diagram:
$$
\xymatrix{
\Coh \big(\bigsqcup_e \{ 0, \infty \}  \big)  \ar[r] \ar[d]  &\Coh \big( \bigsqcup \{ x_v \} \big) \\
\Coh \big(\bigsqcup_e \bP^1_e  \big) & 
}
$$
This can be proved `by hand', or using \cite[Theorem A.1.2, Chapter 8]{GR2}: 
as observed in \cite[Section 2.1, p.~292]{Gammage-Shende}, this shows that $\IndCoh_\ast$ takes pushout squares of schemes along closed embeddings to pushout squares of $A_\infty$ categories, and, by passing to compact objects, similarly for $\Coh_\ast$. 

The theorem then follows from Proposition \ref{prop:Fuk-MG} together with the HMS equivalences in equation \ref{eq:hms-P1-points}. 
\end{proof}

\begin{remark} \label{rmk:stabilisation-needed}
 Stabilisation (i.e.~taking products with $T^\ast [0,1]$) is needed as soon as the graph $G_D$ has a vertex with valency greater than two. On the other hand, the  two-dimensional setting can be used for chains or cycles of $\bP^1$s (see also Figure \ref{fig:punctured-genus-one} below). In this case one recovers known homological mirror symmetry results, see \cite{Lekili-Polishchuk}.
\end{remark}

\subsubsection{Geometry of the boundary}

Different choices of cyclic orderings of the edges incident at a given vertex give a priori different mirrors -- in particular, there is often more than one  possibility for the topological type of the stop or equivalently for the symplectic boundary $S_D$. 

Given a maximal normal crossing Calabi-Yau surface $Y = \cup_i (Y_i, D_i)$, with singular locus $D = \cup_i D_i$, we can associate to it a graph $G_D$.  In the terminology of Gross--Siebert, $G_D$ is the one-stratum of the intersection complex of $Y$. Concretely, $G_D$ has one vertex for each codimension two  stratum (i.e.~non-empty intersection of $D_i$), and one edge for each codimension one stratum of $D$ (i.e.~one for component $D_i$). Moreover, picking an overall orientation of this complex induces a choice of cyclic ordering of the edges incident to each vertex, which we now fix.

\begin{lemma} \label{lem:description-of-S_G}
For $D$ as above, we have a decomposition of the symplectic boundary of $M_D$:
$$
S_D = \bigsqcup_i S_i
$$
where $S_i$ is a $k_i$ punctured surface of genus one, where $k_i$ is  the number of irreducible components of $D_i$. 
Each $S_i$ has the `standard' Weinstein structure, given e.g.~by starting with $D^\ast S^1$ and attaching Weinstein one-handles to the co-normals to $k_i$ points. 
\end{lemma}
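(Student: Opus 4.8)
\textbf{Proof proposal for Lemma \ref{lem:description-of-S_G}.}

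The plan is to compute the symplectic boundary $S_D$ by localizing the Weinstein handle description of $M_D$ to each irreducible component $D_i$ of the singular locus, and then identify the resulting punctured surface. First I would recall from the remark following the sectorial covering lemma that the convexification of $M_D$ admits an explicit Weinstein handle decomposition: one starts with $D^\ast R_D$ (the unit disc cotangent bundle of the ribbon $R_D$) and attaches one extra Weinstein one-handle for each edge $e$ of $G_D$. The symplectic boundary $S_D$ is the symplectic reduction of the contact boundary-at-infinity $\partial_\infty M_D$ along the finite boundary; concretely it is assembled from the symplectic boundaries of the local pieces $M_{\bP^1,e}$ and $T^\ast D^2_v$ that make up the sectorial cover.

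The key observation is that the decomposition $S_D = \bigsqcup_i S_i$ is dictated by the combinatorics: each component $S_i$ of the boundary is the piece of $\partial_\infty$ lying ``over'' a single edge-loop of the ribbon, i.e.~over a single irreducible divisor $D_i$ of $Y$, together with the half-handles contributed by the vertices (codimension-two strata) lying on $D_i$. I would make this precise as follows. The ribbon $R_D$ is a thickening of $G_D$; since $G_D$ is the one-stratum of the intersection complex of $Y$, each edge corresponds to a component $D_i$ and carries the local model of $(T^\ast S^1)^-$ in the $S^1$-fibre direction. Restricting attention to the boundary $S_1$ associated to $D_i$, the underlying space is built by starting from $D^\ast S^1$ (coming from the $S^1$-factor of the relevant $(T^\ast S^1)^-$ pieces, which glue to a single circle running around the edge corresponding to $D_i$) and then attaching a Weinstein one-handle for each point where another component of $D$ meets $D_i$, i.e.~one handle per irreducible component $D_{ij}$ of $D_i$. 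This is exactly the stated ``standard'' structure: $D^\ast S^1$ with $k_i$ one-handles attached to conormals of $k_i$ points.

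Finally I would identify the diffeomorphism type. Attaching $k_i$ Weinstein one-handles to $D^\ast S^1$ (equivalently, plumbing or handle-attachment that raises the first Betti number) produces the completion of a surface of genus one with $k_i$ punctures: the single one-handle on $D^\ast S^1$ already closes the annulus into a once-punctured torus in the standard model, and each additional handle introduces one more puncture while preserving the genus-one core, so that $k_i$ handles yield a $k_i$-punctured genus-one surface. The Weinstein structure is the standard one precisely because every attaching region is a conormal to a point, so no nonstandard framings or Maslov data enter. The main obstacle I anticipate is bookkeeping the orientation/cyclic-ordering data: different cyclic orderings at the vertices give a priori different ribbon thickenings and hence potentially different topological types of $S_D$ (as flagged in the preceding subsection), so I must check that the specific cyclic ordering induced by an overall orientation of the intersection complex of $Y$ is exactly the one that yields a genus-\emph{one} surface per component (rather than higher genus). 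Concretely this reduces to verifying that, around each edge of $R_D$, the ribbon structure coming from the global orientation glues the local $S^1$-factors consistently into a single nonseparating circle, which is what forces the genus to be one; I would carry this out by tracking the boundary circles of the thickened edge through the pushout of Proposition \ref{prop:Fuk-MG} and matching them against the Euler-characteristic count $\chi(S_i) = \chi(\text{genus-one}) - k_i = -k_i$.
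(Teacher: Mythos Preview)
Your proposal has the right target but contains a factual error and leaves the crucial step undone, and it is more circuitous than the paper's argument. First, the symplectic boundary of a Liouville sector is the symplectic reduction of the \emph{finite} boundary $\partial M_D$, not of $\partial_\infty M_D$ (see \S\ref{sec:sectors-background}); this matters because the extra 4-dimensional one-handles in the convexification of $M_D$ are attached at infinity, so there is no direct passage from those 4D handles to a 2D handle description of $S_i$ as you suggest. Second, edges of $G_D$ correspond to the irreducible $\bP^1$s $D_{ij}$, not to the cycles $D_i$; a boundary divisor $D_i$ corresponds to a \emph{cycle} of edges, equivalently to one boundary circle of the ribbon $R_D$. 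Most importantly, you correctly flag that the cyclic ordering is what forces genus one rather than higher genus, but you do not actually carry out that check---and an Euler-characteristic count alone cannot distinguish a $k_i$-punctured torus from, say, a $(k_i+2)$-punctured sphere.

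The paper sidesteps all of this by a reduction to dimension two. For a single cycle $C_k$ of length $k$, one can pull out the stabilisation by $T^\ast[0,1]$: before taking that product, the pushout of $k$ copies of $(T^\ast S^1)^-$ and $k$ copies of $T^\ast[0,1]$ along the maps of Equation~\eqref{eq:stopinclusionP^1} is already a $2$-dimensional Weinstein manifold, and one reads off directly from the picture (Figure~\ref{fig:punctured-genus-one}) that it is a $k$-punctured torus $S$. Hence $M_{C_k} \simeq S \times T^\ast[0,1]$ has symplectic boundary two disjoint copies of $S$. For general $G_D$, the connected components of $\partial M_D$ are in bijection with the $Y_i$ (equivalently, with boundary circles of $R_D$, using the cyclic ordering from the orientation of the intersection complex), and near the component for $Y_i$ the sector $M_D$ is locally modelled on the mirror to the cycle $D_i$, i.e.\ on $M_{D_i} = S_i \times T^\ast[0,1]$; its symplectic reduction is therefore $S_i$. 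This makes the genus-one identification immediate rather than something to be checked separately.
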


\begin{proof}
Assume first that we are using a graph associated to a single cycle of length $k$, say $C_k$. 
In this case, we can `pull out' the stabilisation given by multiplying by $T^\ast [0,1]$, and recover the mirror symmetry results of Remark \ref{rmk:stabilisation-needed}. Consider the pushout
$$
\xymatrix{
   \bigsqcup_e  ( T^*[0,1] \sqcup T^*[0,1] ) \ar[r] \ar[d] &  \bigsqcup_v T^\ast[0,1]  \\
  \bigsqcup_e  (T^\ast S^1)^-   & 
}
$$
where the vertical maps are taken from Equation \ref{eq:stopinclusionP^1}, and the horizontal ones are prescribed by the graph. By observation, this pushout is a $k$ punctured surface of genus one, say $S$ (a Weinstein manifold with no stops: the only boundary is the one at infinity). See Figure \ref{fig:punctured-genus-one}. Stabilising, we get that in this case $M_{C_k} = S \times T^\ast [0,1]$ has symplectic boundary two disjoint copies of $S$. 

\begin{figure}
    \centering
    \includegraphics[width=0.4\linewidth]{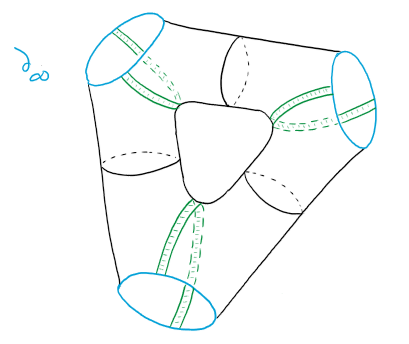}
    \caption{Two-dimensional mirror to $M_{C_k}$ for $k=3$}
    \label{fig:punctured-genus-one}
\end{figure}

Now take $G_D$ to be as above. The connected components of $\partial M_D$ are in one-to-one correspondence with the $Y_i$. Moreover, for each $i$, the corresponding component is locally modelled on the mirror to $D_i$, the cycle of $k_i$ copies of $\bP^1$, described above. In particular, this component of $\partial M_D$ has symplectic reduction $S_i$ as desired.
\end{proof}

Note that for  $M_{D_i} = S_i \times T^\ast [0,1]$ as above, the base of the Lagrangian fibration $\pi_{D_i}: M_{D_i} \to R_{D_i}$ is simply a closed annulus. 

\begin{lemma}\label{lem:inclusion-boundary-cycle-fibration-HMS}
 The inclusion of $D_i$ into $D$ induces an inclusion of Weinstein sectors
$ M_{D_i} \hookrightarrow M_D$, such that the following diagram commutes:
$$
\xymatrix{
   M_{D_i}  \ar[r] \ar[d]_{\pi_{D_i}} &  M_D \ar[d]_{\pi_D}\\
    R_{D_i}  \ar[r]  & R_D
}
$$
This induces an $A_\infty$ functor $ \cW(M_{D_i}) \to \cW( M_D)$, which is compatible with homological mirror symmetry equivalences: the following diagram commutes up to $A_\infty$ homotopy
$$
\xymatrix{
  \cW( M_{D_i} ) \ar[r] \ar[d]  &  \cW(M_D ) \ar[d] \\
   \Coh D_i \ar[r]_{i_\ast}  & \Coh D
}
$$
\end{lemma}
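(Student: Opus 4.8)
The plan is to build the inclusion explicitly from the local pieces used to construct $M_{D_i}$ and $M_D$, and then to deduce compatibility with mirror symmetry from the per-edge compatibility already recorded in Theorem \ref{thm:HMS-trivalent-P^1-configurations}, using that both $\cW(M_{D_i})$ and $\cW(M_D)$ are homotopy pushouts of the same building blocks.

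First I would treat the geometry. The cycle $G_{D_i} = C_{k_i}$ is a subgraph of $G_D$, and the cyclic orderings of the edges at each vertex of $G_D$ fixed in $\S$\ref{sec:HMS-for-D} restrict to the cyclic order of the two cycle-edges at each vertex of $C_{k_i}$; this realises $R_{D_i}$ as a sub-ribbon (an annulus) of $R_D$. I would then construct $M_{D_i} \hookrightarrow M_D$ block by block: on each cycle edge $e$ one takes the identity of $M_{\bP^1,e}$, and at each cycle vertex $v$ one includes the four-cornered disc $D^2_v$ of valency two as a convex sub-disc of the $2i_v$-cornered disc $D^2_v$ used for the full graph, chosen to contain exactly the two cycle-edge neighbourhoods. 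Because all gluings in both constructions use the standard Liouville form $\sum p_i\, dq_i$ on the overlaps $T^\ast D^2_{\pm, e}$, these block inclusions patch to a Weinstein sectorial embedding, and by construction $M_{D_i}$ is identified with $\pi_D^{-1}(R_{D_i})$, so $\pi_D$ restricts to $\pi_{D_i}$; this is the first commuting square. The sub-disc inclusions at vertices are the only non-identity local pieces, and each induces the identity of $\cW(T^\ast D^2_v) \simeq \Coh\{x_v\}$.

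The inclusion of Weinstein sectors then induces the covariant functor $\cW(M_{D_i}) \to \cW(M_D)$ by Ganatra--Pardon--Shende functoriality \cite{GPS1, GPS2}. To check compatibility with mirror symmetry I would use that, for each edge $e$ of the cycle, the sector inclusion $M_{\bP^1,e} \hookrightarrow M_D$ factors through $M_{D_i}$, so the edge-inclusion functor into $\cW(M_D)$ factors as $\cW(M_{\bP^1,e}) \to \cW(M_{D_i}) \to \cW(M_D)$. Applying Theorem \ref{thm:HMS-trivalent-P^1-configurations} once to $D$ and once to $D_i$ gives, for every cycle edge, two commuting squares relating $\cW(M_{\bP^1,e}) \simeq \Coh \bP^1_e$ to $\Coh D$ and to $\Coh D_i$ respectively, compatibly with the factorisations $\Coh \bP^1_e \xrightarrow{i_\ast} \Coh D_i \xrightarrow{i_\ast} \Coh D$. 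The same holds on vertex pieces, trivially, via the identity of $\Coh\{x_v\}$.

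To conclude I would invoke the universal property of the homotopy pushout. By Proposition \ref{prop:Fuk-MG} and Theorem \ref{thm:HMS-trivalent-P^1-configurations} the equivalence $\cW(M_{D_i}) \simeq \Coh D_i$ exhibits both sides as the pushout of the edge and vertex categories, so an $A_\infty$ functor out of $\cW(M_{D_i})$ is determined up to homotopy by compatible functors out of these pieces. Since the two functors $\cW(M_{D_i}) \to \cW(M_D) \simeq \Coh D$ and $\Coh D_i \xrightarrow{i_\ast} \Coh D$ agree up to homotopy on every edge and vertex piece by the previous paragraph, they agree up to $A_\infty$ homotopy, which is the second commuting square. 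The main obstacle is making this last step homotopy-coherent: one must know that the descent equivalence of Proposition \ref{prop:Fuk-MG} is natural enough that term-wise homotopies assemble to a single homotopy of functors out of the pushout, equivalently that the sector-inclusion functor is genuinely the map of homotopy pushouts induced by the inclusion of sectorial coverings rather than merely some functor agreeing with it on the pieces; verifying this naturality of Ganatra--Pardon--Shende descent, together with the homotopy-coherence of the colimit, is the technical heart of the argument.
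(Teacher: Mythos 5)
Your proposal is correct and follows essentially the same route as the paper, whose proof is just two sentences: the sector inclusion "readily follows from the definitions," and the HMS compatibility is "a variation on Theorem \ref{thm:HMS-trivalent-P^1-configurations}, with very similar proof" — i.e., exactly the block-by-block comparison of the A-side (GPS descent, Proposition \ref{prop:Fuk-MG}) and B-side pushouts via the per-edge and per-vertex squares that you spell out. Your explicit flagging of the homotopy-coherence/naturality of sectorial descent is a fair point, but it is implicit in how the paper uses \cite{GPS2} throughout, so it is not a deviation in approach, only in the level of detail.
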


\begin{proof}
    The first part readily follows from the definitions. The claim about HMS equivalences is a variation on Theorem \ref{thm:HMS-trivalent-P^1-configurations}, with very similar proof.
\end{proof}

We will later want to use finite length open truncations of $M_{D_i}$ and $M_D$ (as discussed in Section \ref{sec:sectors-background} on background sectorial notions), together with the restrictions of the Lagrangians fibrations $\pi_{D_i}$ and $\pi_D$. 
This will be clear from context, and we will use the same notation as for the spaces with infinite cylindrical completions.

\subsection{Construction of the mirror $M$ and homological mirror symmetry for the wrapped Fukaya category} \label{sec:hms-wrapped-M}

\subsubsection{Background: homological mirror symmetry for $(Y_i, D_i)$}\label{sec:hms-recalls}
 We start with some background.

\begin{definition}\label{def:toric-model}
    Let $(Y_i,D_i)$ be a maximal log Calabi--Yau surface.
A toric model of $(Y_i,D_i)$ is a pair of maps
$$
(Y_i,D_i) \stackrel{f}{\longleftarrow} (\widetilde{Y}_i,\widetilde{D}_i) \stackrel{g}{\longrightarrow} (\bar{Y}_i,\bar{D}_i)
$$
such that $f \colon \widetilde{Y}_i \rightarrow Y_i$ is a sequence of blow-ups of nodes of the boundary and $\widetilde{D}_i$ is the inverse image of $D_i$, $(\bar{Y}_i,\bar{D}_i)$ is a toric pair consisting of a smooth projective toric surface $\bar{Y}_i$ together with its toric boundary, and $g \colon \widetilde{Y}_i \rightarrow \bar{Y}_i$ is a sequence of blow-ups of smooth points of $\bar{D}_i \subset \bar{Y}_i$ and $D_i$ is the strict transform of $\bar{D}_i$.
\end{definition}

A maximal log Calabi--Yau surface $(Y_i,D_i)$ admits a toric model by \cite[Proposition~1.3]{GHK1}.

We fix an orientation of $D_i$, i.e., an identification $H_1(D_i,\bZ) \simeq \bZ$. Let $\bar{D}_{ij}$ denote the irreducible components of $\bar{D}_i$, with cyclic ordering compatible with the induced orientation of $\bar{D}_i$, and write $\bar{D}_{ij}^\circ:=\bar{D}_{ij} \setminus \bigcup_{k \neq j} \bar{D}_{ik}$.

\begin{lemma}\label{lem:split_MHS_logCY2}\cite[Section 2.2]{HK1}.
With the same notation as above, a maximal log Calabi-Yau surface 
 $(Y_i,D_i)$ has split mixed Hodge structure if and only if there exists a choice of identification $\bar{Y}_i\setminus \bar{D}_i \stackrel{\sim}{\rightarrow} (\bC^*)^2$ such that the birational morphism $g$ is a composition of blow-ups of the points 
$$-1 \in \bC^* = \bar{D}_{ij}^\circ \subset \bP^1 = \bar{D}_{ij}$$
where we use the coordinate on $\bar{D}_{ij}^o$ given by the primitive character $\chi_j$ of the big torus acting on $\bar{Y}_i$ such that $\chi_j$ is regular on $\bar{D}^{\circ}_{ij}$ and $\chi_j|_{\bar{D}_{ij}}$ has a zero at $\bar{D}_{i, j-1} \cap \bar{D}_{ij}$ and a pole at $\bar{D}_{ij} \cap \bar{D}_{i, j+1}$. 
\end{lemma}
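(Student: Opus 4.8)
The plan is to recognise the split mixed Hodge structure condition as the vanishing of a period point in a complex torus, and then to translate that vanishing directly into the positions of the non-toric blow-up centres.

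First I would identify the relevant mixed Hodge structure. Write $U_i = Y_i \setminus D_i$. Since $Y_i$ is rational and $D_i$ is a cycle of rational curves, the weight spectral sequence computing $H^2(U_i)$ has no weight-$3$ contribution (the normalisation of $D_i$ is a disjoint union of $\bP^1$s, with no $H^1$), so the mixed Hodge structure on $H^2(U_i)$ is a two-step extension of the Tate piece $\Gr^W_4$ (of rank one, the ``volume'' direction) by the Tate piece $\Gr^W_2$. Its only invariant is therefore a single class
\[
\phi_{(Y_i,D_i)} \in \Ext^1_{\mathrm{MHS}}\bigl(\Gr^W_4 H^2(U_i),\, \Gr^W_2 H^2(U_i)\bigr) \cong \Hom(\Lambda_i, \bC^*),
\]
for a lattice $\Lambda_i$ canonically attached to $(Y_i,D_i)$, and by definition the mixed Hodge structure is split precisely when $\phi_{(Y_i,D_i)}$ is trivial. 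Following the conventions of \cite{HK1}, I would take this identification as the starting point.

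Next I would compute $\phi_{(Y_i,D_i)}$ from a toric model. Blowing up a node of the boundary leaves $U_i = Y_i \setminus D_i = \widetilde Y_i \setminus \widetilde D_i$ unchanged, so $f$ does not affect $\phi_{(Y_i,D_i)}$; likewise the interior $\bar Y_i \setminus \bar D_i \cong (\bC^*)^2$ of the toric pair carries the split Tate structure of an algebraic torus, so the toric pair is the reference point $\phi = 0$. Thus everything is controlled by $g$. Recording each centre $p_k$ of $g$ by its coordinate $z_k := \chi_{j(k)}(p_k) \in \bC^* = \bar D_{i,j(k)}^\circ$, a direct evaluation of the extension class shows that, for each integral relation $\sum_k a_k \chi_{j(k)} = 0$ among the characters of the blown-up components, $\phi_{(Y_i,D_i)}$ takes the value $\prod_k z_k^{a_k}$, up to the sign $(-1)^{\sum_k a_k}$ produced by the residue normalisation of the holomorphic volume form along the cycle $D_i$ (equivalently by the gluing of the generalised Jacobian $\Pic^0(D_i) \cong \bC^*$). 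This is the content of the period formula.

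Finally I would run the equivalence both ways. The torus $(\bC^*)^2$ acts on the chosen identification $\bar Y_i \setminus \bar D_i \cong (\bC^*)^2$ by $z_k \mapsto \chi_{j(k)}(t)\, z_k$, and placing every centre at $-1$ means solving $\chi_{j(k)}(t) = -1/z_k$ for one $t \in (\bC^*)^2$; since the characters span the rank-two lattice, this system is consistent exactly when each relation $\sum_k a_k \chi_{j(k)} = 0$ is matched by $\prod_k z_k^{a_k} = (-1)^{\sum_k a_k}$, that is, exactly when $\phi_{(Y_i,D_i)}$ is trivial. Conversely, if all centres already sit at $-1$ these products equal the prescribed signs and $\phi$ vanishes, giving split mixed Hodge structure. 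I expect the main obstacle to be the second step, namely pinning down the period formula with the correct sign $(-1)^{\sum_k a_k}$ by matching the multiplicative coordinate $z_k$ to the additive $\Ext^1_{\mathrm{MHS}}$-class around the cycle, since it is precisely this sign that singles out $-1$ rather than $+1$; a secondary point requiring care is arranging, by varying the toric model, that the centres are distributed so that the simultaneous normalisation to $-1$ can actually be carried out.
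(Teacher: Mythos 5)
The paper contains no proof of this lemma: it is quoted verbatim from \cite[Section 2.2]{HK1}, so there is no internal argument to compare against. Your proposal is a correct reconstruction of the standard argument behind that citation (as in Gross--Hacking--Keel and Friedman): splitness of the MHS on $H^2(U_i)$, $U_i = Y_i\setminus D_i$, is equivalent to triviality of the period homomorphism $\phi\in\Hom(\langle D_{ij}\rangle^{\perp},\bC^*)$, $L\mapsto L|_{D_i}\in\Pic^0(D_i)\cong\bC^*$, and one then computes $\phi$ on a toric model. One step you gloss over but which does work out: writing $L=\pi^*\bar L-\sum_k a_kE_k$, the condition $L\in\langle D_{ij}\rangle^{\perp}$ forces $\bar L\cdot\bar D_{ij}=\sum_{j(k)=j}a_k$, and since a toric line bundle is determined by its boundary degrees, subject only to $\sum_j(\bar L\cdot\bar D_{ij})v_j=0$ in $N$, the lattice $\langle D_{ij}\rangle^{\perp}$ is exactly your lattice of relations (relations among the rays $v_{j(k)}$, which agree with relations among the characters $\chi_{j(k)}$ via the rotation $N\cong M$). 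Two remarks on the points you flag as delicate. First, your sign $(-1)^{\sum_k a_k}$ is correct; it arises because a toric line bundle $\bar L$ restricts to the boundary cycle as $\cO_{\bar D_i}\bigl(\sum_j(\bar L\cdot\bar D_{ij})\,q_j\bigr)$ where $q_j$ is precisely the point $-1\in\bar D_{ij}^{\circ}$ (for $\bP^2$ this is the fact that the line $X+Y+Z=0$ meets each coordinate line at coordinate $-1$). Second, your ``secondary point'' about needing to vary the toric model is unnecessary: for the given model, the image of $T\to(\bC^*)^n$, $t\mapsto(\chi_{j(k)}(t))_k$, is exactly the subtorus cut out by the relations (this uses divisibility of $\bC^*$ when the characters span only a finite-index sublattice), and triviality of $\phi$ says precisely that the target point $(-1/z_k)_k$ lies on that subtorus, so the system is solvable as stated. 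Infinitely near centres also cause no trouble: each contributes the coordinate of the boundary point it lies over, and the extra classes produced are interior $(-2)$-curves disjoint from $D_i$, on which $\phi$ is automatically trivial.
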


Homological mirror symmetry 
for a  maximal log Calabi-Yau surface $(Y_i, D_i)$ with split mixed Hodge structure was proved in \cite{Keating-cusps, HK1}.
Let $U_i := Y_i \backslash D_i$.
The mirror to $(Y_i, D_i)$  was first described as the pair of a Weinstein domain $M_{U_i}$ together with a Lefschetz fibration $w_i$ to a disc. 
In the language of \cite{Sylvan, GPS1, GPS2}, it is enough to consider the pair $(M_{U_i}, \mathfrak{f}_i)$, where $\mathfrak{f}_i \subset \partial M_{U_i}$ is an almost-Legendrian stop associated to $w_i$. 
We have the following HMS equivalences \cite{HK1}:
\begin{equation}\label{eq:HMS-isos-M_i-starting-point}
    \cF^{\to} (w_i ) \simeq \cW(M_{U_i}, \mathfrak{f}_i) \stackrel{\text{HMS}}{\simeq} \Coh Y_i \qquad \text{and} \qquad \cW(M_{U_i})  \stackrel{\text{HMS}}{\simeq} \Coh U_i
\end{equation}
where $\cF^{\to} (w_i)$ is the directed Fukaya category of $w_i$, and $\cW(M_{U_i}, \mathfrak{f}_i) $ is the wrapped Fukaya category of $M_{U_i}$ stopped at $\mathfrak{f}_i$. 

We briefly recall key features from \cite{GHK2, HK1, HK2} that we use (see also \cite[Section 2.2]{Keating-Ward} for a detailed exposition).

\emph{Almost-toric fibration.} The Weinstein domain $M_{U_i}$ is the total space of an almost-toric Lagrangian fibration $\pi_{U_i}: M_{U_i} \to B_i$, with nodal singular fibres. 
The base $B_i$ is an integral affine manifold with singularities which is diffeomorphic to a (closed) disc in $\bR^2$, and convex. 
(It can be completed to an integral affine $\bR^2$ with exactly the same singularities.)
The integral affine structure is determined by a choice of toric model for $(Y_i, D_i)$. 
The fibration $\pi_{U_i}$ has one nodal fibre for each interior blow-up, without loss of generality all with the same V\~{u} Ng\d{o}c invariant (see \cite[Section 6.6]{Evans}). The nodal fibre has invariant direction the direction of the toric ray for the corresponding component of $\bar{D}_i$ in the fan for $(\bar{Y}_i,\bar{D}_i)$; all invariant directions are colinear, without loss of generality through the origin.
Different choices of toric models give different almost-toric fibrations, which are related by nodal slides and cut transfers (and in particular, with symplectomorphic total spaces). 

\emph{Conventions for $B_i$.}
In general, we have a choice for exactly how to `cut off' $B_i$ as a convex submanifold of an integral affine $\bR^2$ with singularities.
In this paper, we make the following choices (which will be useful later). First, we assume that all singularities are close to the origin (say, at integral affine distance at most 1/2 from it).  
For each ray in the fan of $(\bar{Y}_i,\bar{D}_i)$, there's a half-line starting at $0$; take the points on these half-lines which are at integral affine distance one from the origin, and take their convex hull. Now let $B_i$ be a very small (closed) thickening of this, with smooth boundary.

\emph{Liouville flow near the boundary of $M_{U_i}$.} Assume that we have coordinates $q_i$ on the central Lagrangian fibre $T^2$ (above the intersection of all the invariant directions), with $p_i$ their dual coordinates. Then the one-form $\sum_i p_i dq_i$
is invariant under pull-back by $SL_2(\bZ)$ matrices, and, away from singular
fibres, descends to a well-defined primitive form for the symplectic form on $M_{U_i}$. 
We can then take this one-form to be the Liouville form in a neighbourhood of the boundary of $M_{U_i}$.

\emph{The stop $\mathfrak{f}_i$.}
This has the topological type of the union of one longitude and $k_i$ disjoint meridians on a torus, where $k_i$ is the number of irreducible components of $D_i$.
In the almost-toric picture, $\mathfrak{f}_i$ can be described explicitly as follows. 
There is a distinguished Lagrangian section of $\pi_{U_i}$, denoted $L_0$ in \cite[Section 4]{HK2} (it is mirror to $\mathcal{O} \in \Coh U_i $). Its boundary is the `longitude' component of $\mathfrak{f}_i$, say $l$. 
For each ray $v_j$ in the fan of $(\bar{Y}_i,\bar{D}_i)$ for which the associated divisor survives in $(Y_i, D_i)$, consider the small segment with the same direction in a neighbourhood of $\partial B_i$. 
(Whenever a component of $D_i$ has an interior blow-up, this agrees with the invariant direction of the associated node.) Above each segment, there is a small Lagrangian annulus, whose restriction to the boundary is an $S^1$ intersecting  $l$ transversally at a single point, say $S^1_j$. These $S^1$s are the meridians of $\mathfrak{f}_i$.  

\emph{Handlebody description.} The space $M_{U_i}$ can also be described as the total space of a Weinstein handlebody given by starting with $D^\ast T^2$ and adding a Weinstein 2-handle for each of the interior blow-ups in the toric model for $(Y_i, D_i)$. 
Here the $T^2$ can be identified with the fibre of $\pi_{U_i}$ above $0$, say $T_i$, and $D^\ast_0 T^2$ can be identified to $L_0$.
Suppose there's an interior blow up on $\bar{D}_i$, with ray $v_j$ in the fan for $(\bar{Y}_i, \bar{D}_i)$; the direction determines a co-oriented linear $S^1$, say  $S_{v_j}^1 \subset T_i$  (technically this is defined up to replacing it by a parallel copy). We glue a Weinstein 2-handle along its Legendrian (co-normal) lift in $S^\ast T^2$ (see e.g.~\cite[Section 6]{HK1} for details). 
(Up to Weinstein deformation equivalence, any co-oriented linear $S^1$ can be replaced by a parallel copy thereof. If there are multiple interior blow-ups on the same component of $D_i$, we can use parallel copies of the same $S^1$, or repeatedly take the same one and use Weinstein handleslides.) 
This is naturally compatible with the almost-toric picture: in particular, suppose $(Y_i', D_i')$ is obtained from $(Y_i, D_i)$ by an interior blow-up on a component of $D_i$ corresponding to ray $v_j$. Then $M_{U_i'}$ is given by adding a Weinstein 2-handle to $M_{U_i}$ with attaching $S^1$ the Legendrian $S^1_j$ which we introduced when describing $\mathfrak{f}_i$ above.

\subsubsection{Sectorial viewpoint}
We now put the above in the framework of \cite{GPS1}. 
Instead of working with $(M_{U_i}, \mathfrak{f}_i)$, using the framework of \cite{GPS1}, we can work with the associated Liouville sector, say  $M_i$. We will primarily work with a finite height open truncation (again as in Section \ref{sec:sectors-background}), which we also denote $M_i$.

\begin{lemma}
    The symplectic boundary of $M_i$ is a copy of $S_i$, with the same Liouville structure as in Lemma \ref{lem:description-of-S_G}.
\end{lemma}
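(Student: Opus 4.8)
The plan is to identify the symplectic boundary of $M_i$ by working in the almost-toric picture and understanding how the Liouville flow behaves near $\partial M_{U_i}$, then matching this against the explicit Weinstein structure on $S_i$ from Lemma \ref{lem:description-of-S_G}. First I would recall that the symplectic boundary is the symplectic reduction of $\partial_\infty M_i$, or equivalently the contact boundary of the Weinstein domain $M_{U_i}$ viewed as the symplectic boundary of the sector associated to the stop $\mathfrak{f}_i$. Concretely, the stop $\mathfrak{f}_i$ is an almost-Legendrian in $\partial_\infty M_{U_i}$, and its symplectic reduction is precisely the symplectic boundary $F$ of the Liouville sector $M_i$. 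So the task reduces to computing this reduction and recognising it as $S_i$.

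The key step is to use the handlebody description recalled in Section \ref{sec:hms-recalls}: $M_{U_i}$ is built from $D^\ast T^2$ by attaching a Weinstein 2-handle along each co-oriented linear $S^1_j \subset T_i$ coming from an interior blow-up, and the stop $\mathfrak{f}_i$ has the topological type of one longitude $l$ together with $k_i$ meridians $S^1_j$. I would argue that passing to the symplectic reduction along $\mathfrak{f}_i$ collapses the longitude-direction and leaves, for each of the $k_i$ meridian circles, a puncture (or equivalently a cotangent-fibre stop), with the underlying surface of genus one coming from $T^2 = T_i$ reduced along $l$. More precisely, the symplectic reduction of $\partial M_{U_i}$ stopped at a longitude plus $k_i$ meridians is a genus-one surface with $k_i$ punctures, carrying exactly the standard Weinstein structure obtained from $D^\ast S^1$ by attaching Weinstein one-handles to the conormals of $k_i$ points — this is the same local model that produced $S_i$ in the proof of Lemma \ref{lem:description-of-S_G}, where the cycle of $k_i$ copies of $\bP^1$ arises as $\partial M_{D_i}$.

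To make the identification rigorous, I would invoke the compatibility already established between the mirror to $(Y_i,D_i)$ and the mirror to its boundary $D_i$. The restriction of the HMS dictionary says that $\Coh D_i$ is recovered from $\Coh Y_i$ by restriction to the anti-canonical boundary, and the cycle $D_i = \bigcup_j D_{ij}$ of $k_i$ copies of $\bP^1$ has mirror $M_{D_i} = S_i \times T^\ast[0,1]$ with symplectic boundary $S_i$ (Lemma \ref{lem:description-of-S_G}). The inclusion $M_{D_i} \hookrightarrow M_D$ and the analogous boundary inclusion realise $S_i$ as the symplectic boundary of the sector $M_i$ mirror to $(Y_i, D_i)$: indeed the stop $\mathfrak{f}_i$ detects exactly the boundary divisor $D_i$, so the symplectic boundary of $M_i$ is the mirror to $D_i$, which is $S_i$. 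Matching Liouville structures then follows because we fixed the Liouville form $\sum_i p_i\, dq_i$ near $\partial M_{U_i}$ in Section \ref{sec:hms-recalls}, and this restricts on the reduction to the standard $p\,dq$ form used to build $S_i$.

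The main obstacle I anticipate is the careful bookkeeping of the Weinstein structure under symplectic reduction: one must check that the $k_i$ meridian components of $\mathfrak{f}_i$ reduce to precisely the conormal one-handles of $k_i$ points on $D^\ast S^1$ (and not, say, to a different handle pattern or a surface of the wrong genus), and that the cyclic ordering of the $D_{ij}$ induced by the orientation of $D_i$ matches the ordering of the punctures produced by the construction of $S_i$ in Lemma \ref{lem:description-of-S_G}. This is a local-to-global compatibility verification rather than a deep difficulty; the genus-one count comes directly from reducing $T^2 = T_i$ along a single longitude, and the puncture count $k_i$ is exactly the number of components of $D_i$, which equals the number of meridians in $\mathfrak{f}_i$ by the description recalled in Section \ref{sec:hms-recalls}.
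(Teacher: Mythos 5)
Your general framework is correct --- the symplectic boundary of the sector associated to $(M_{U_i}, \mathfrak{f}_i)$ is the Liouville hypersurface whose skeleton is the stop, by the local models of \cite[Section 2]{GPS1} --- but the step where you identify that hypersurface with $S_i$ carrying its standard Weinstein structure is exactly the point at issue, and neither of your two routes establishes it. The paper's proof is essentially a citation: by \cite{HK1}, the stop $\mathfrak{f}_i$ is by construction the core of a smooth fibre of the Lefschetz fibration $w_i$ near infinity, and that fibre \emph{is} $S_i$ with its standard structure (the identification of its core with $\mathfrak{f}_i$ is carried out in \cite[Section 2.2]{Keating-Ward}); the claim then follows from the GPS1 local models. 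You never use $w_i$ at all, and instead try to reconstruct the hypersurface from the abstract topological type of $\mathfrak{f}_i$ (one longitude plus $k_i$ meridians). That cannot work as stated: a graph of Euler characteristic $-k_i$ does not determine a surface --- one needs the ribbon structure, i.e.\ precisely how $\mathfrak{f}_i$ sits inside $\partial M_{U_i}$ as the skeleton of a Liouville hypersurface --- and pinning that down is not ``bookkeeping'' but the actual content, which in the paper comes from the Lefschetz-fibration description. Your ``reduction collapses the longitude'' computation is not a symplectic reduction in any precise sense (also, a minor point: the symplectic boundary is the reduction of the \emph{finite} boundary $\partial M_i$, not of $\partial_\infty M_i$).

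Your fallback argument via mirror symmetry is circular. The equivalence $\cF(S_i) \simeq \Perf D_i$ and the heuristic ``the stop detects $D_i$, so the symplectic boundary is the mirror of $D_i$'' relate categories, not Liouville manifolds, so they cannot output a symplectomorphism type; and the inclusion of sectors $M_{D_i} \hookrightarrow M_i$ that you invoke is constructed in the paper immediately \emph{after} this lemma, using it (one needs to know that the symplectic boundary of $M_i$ is $S_i$ with the structure of Lemma \ref{lem:description-of-S_G} before one can attach a collar $M_{D_i} \simeq S_i \times T^\ast[0,1]$ along it). So that paragraph assumes what is to be proved. To repair the argument, replace both routes by the single observation that $S_i$ arises as a fibre of $w_i$ near infinity with core $\mathfrak{f}_i$, citing \cite{HK1} and \cite[Section 2.2]{Keating-Ward}, and then apply \cite[Section 2]{GPS1}.
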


\begin{proof}
    From \cite{HK1}, $S_i$ is also a smooth fibre `near infinity' of $w_i$ (its core is carefully identified with $\mathfrak{f}_i$ in \cite[Section 2.2]{Keating-Ward}). 
The claim the immediately follows from the results on local models from \cite[Section 2]{GPS1}.
    \end{proof}

The boundary $D_i$ is mirror to $S_i$: for instance, there is a quasi-isomorphism $\Perf D_i \cong \cF(S_i)$ \cite{Lekili-Perutz, Lekili-Polishchuk}. This is compatible with homological mirror symmetry for $(Y_i, D_i)$ in several ways; for instance, there is a `restriction' functor $\eta^\ast: \cW(M_{U_i}, \mathfrak{f}_i) \longrightarrow \cF(S_i)$, most readily defined using the directed Fukaya category characterisation of $\cW(M_{U_i}, \mathfrak{f}_i)$, such that the following diagram commutes:
\begin{equation}\label{eq:HMS-compatibility-restrict-to-D_i}
    \xymatrix{
\cW(M_{U_i}, \mathfrak{f}_i) \ar[r]^{\simeq} \ar[d]^{\eta^\ast} & \Coh Y_i \ar[d]^{i^\ast} \\
\cF(S_i) \ar[r]^{\simeq} & \Perf D_i
}
\end{equation}
where $i: D_i \hookrightarrow Y_i$ is the inclusion, and the horizontal quasi-isomorphisms are our HMS equivalences.

We note the following, which immediately follows from $M_{U_i}$ and $S_i$ both being Weinstein.

\begin{corollary}
    The Liouville sector $M_i$ is also a Weinstein sector.
\end{corollary}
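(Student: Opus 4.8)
The plan is to unwind the definition of a Weinstein sector recalled in Section \ref{sec:sectors-background} and verify its two requirements directly. Recall that a Liouville sector $X$ is Weinstein precisely when both its convexification $\overline{X}$ and its symplectic boundary $F$ are, up to deformation, Weinstein. So the entire task reduces to identifying these two spaces for $X = M_i$ and invoking what has already been established.

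First I would recall how $M_i$ arises: via the correspondence of \cite[Section 2]{GPS1} it is the Liouville sector associated to the pair $(M_{U_i}, \mathfrak{f}_i)$, consisting of the Weinstein domain $M_{U_i}$ together with its almost-Legendrian stop $\mathfrak{f}_i$. Under this correspondence the convexification $\overline{M_i}$ is (a finite-type completion of) $M_{U_i}$ itself, while the symplectic boundary $F$ is the Liouville manifold whose Liouville skeleton is $\mathfrak{f}_i$; by the preceding lemma this symplectic boundary is a copy of $S_i$. It then remains only to observe that both pieces are Weinstein. The convexification $\overline{M_i} \simeq M_{U_i}$ is Weinstein by construction: from Section \ref{sec:hms-recalls}, $M_{U_i}$ admits an explicit Weinstein handlebody presentation, obtained by attaching a Weinstein $2$-handle to $D^\ast T^2$ for each interior blow-up in the chosen toric model of $(Y_i, D_i)$. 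The symplectic boundary $S_i$ is Weinstein by Lemma \ref{lem:description-of-S_G}, which exhibits it with the standard Weinstein structure of a $k_i$-punctured genus-one surface (attach Weinstein one-handles to the conormals of $k_i$ points in $D^\ast S^1$). Both requirements of the definition therefore hold, which is the claim.

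There is essentially no obstacle here; this is a bookkeeping verification rather than a substantive result, which is why it is stated as an immediate corollary. The only point deserving minor care is the ``up to deformation'' clause in the definition of a Weinstein sector: one should confirm that the Liouville form chosen near the boundary of $M_{U_i}$ (namely $\sum_i p_i\, dq_i$ near the central fibre, as specified in Section \ref{sec:hms-recalls}) is compatible, after a Liouville deformation, with the standard Weinstein structures on $M_{U_i}$ and on $S_i$, so that the convexification and symplectic boundary are genuinely Weinstein and not merely Liouville. This compatibility is precisely what the local models of \cite[Section 2]{GPS1} provide, as already used in the preceding lemma, so no further work is needed.
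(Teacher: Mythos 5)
Your proposal is correct and matches the paper's argument exactly: the paper disposes of this corollary in one line, noting that it ``immediately follows from $M_{U_i}$ and $S_i$ both being Weinstein,'' which is precisely your verification that the convexification (namely $M_{U_i}$, Weinstein via its handlebody description) and the symplectic boundary (namely $S_i$, Weinstein by Lemma \ref{lem:description-of-S_G}) satisfy the definition of a Weinstein sector recalled in Section \ref{sec:sectors-background}. Your additional remark about the ``up to deformation'' clause and the local models of \cite[Section 2]{GPS1} is a reasonable elaboration of the same point, not a departure from the paper's route.
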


Considering a collar neighbourhood of the finite boundary of $M_i$, we get an inclusion of Weinstein sectors 
$$M_{D_i} \hookrightarrow M_i$$
where we take a suitable open truncation of the left-hand side space.

Recall that $B_i$ is given by (a very small thickening of) the convex hull of a collection of points at integral affine distance one of the origin. If instead we use integral affine distance $1+r$, for small (positive or negative) $r$, we denote the resulting integral affine manifold by $B_i[r]$. For $r' > r$, set $B_i[r, r']$ to be the closed integral affine annulus $B_i[r'] \backslash (B_i[r]^\circ)$.

\begin{lemma} \label{lem:fibration-on-M_i}

For small positive $\epsilon_1$ and $\epsilon_2$, and $\epsilon = \epsilon_1+ \epsilon_2$,
$M_i$ admits a singular Lagrangian fibration $\pi_i:M_i \to B_i$ such that:
\begin{enumerate}
    \item[(i)] When restricted to the annulus $B_i [-\epsilon_1, 0]$, $\pi_i$ agrees with the fibration 
    $$\pi_{D_i}: M_{D_i} \longrightarrow R_{D_i} \simeq B_i [-\epsilon_1, 0].$$
    In particular, $(\pi_i)^{-1} (\partial B_i )$ is the finite boundary of $M_i$. 
    \item[(ii)] When restricted to $B_i[-\epsilon]$, $\pi_i$ agrees with our original almost-toric fibration $\pi_{U_i}$.
\end{enumerate}

Moreover, $\pi_i$ has a preferred Lagrangian section, which we again denote $L_0$. It agrees with the preferred Lagrangian sections that we've already identified for $\pi_{U_i}$ and for $\pi_{D_i}$. 
\end{lemma}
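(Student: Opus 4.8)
The plan is to assemble $\pi_i$ from the two fibrations we already control—the almost-toric fibration $\pi_{U_i}$ on the interior and the ribbon fibration $\pi_{D_i}$ on a collar of the finite boundary—and to interpolate between them across the annular gap $B_i[-\epsilon,-\epsilon_1]$ (note $\epsilon=\epsilon_1+\epsilon_2$ gives $-\epsilon<-\epsilon_1<0$, so $B_i[-\epsilon]$ and $B_i[-\epsilon_1,0]$ are disjoint with this gap in between). To establish (ii), recall that by the conventions for $B_i$ fixed in $\S$\ref{sec:hms-recalls} all the nodal fibres of $\pi_{U_i}$, hence all singular points of $B_i$, lie within integral affine distance $1/2$ of the origin; in particular over the annulus $B_i[-\epsilon,0]$ the fibration $\pi_{U_i}$ is a smooth Lagrangian $T^2$-fibration. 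Moreover, passing from the stopped Weinstein domain $(M_{U_i},\mathfrak{f}_i)$ to the Liouville sector $M_i$ only modifies a collar neighbourhood of the finite boundary, so over the inner disc $B_i[-\epsilon]$ the sector $M_i$ is canonically identified with $M_{U_i}$, and there I simply set $\pi_i:=\pi_{U_i}$, retaining all the nodal fibres.

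For (i) I would use the collar inclusion $M_{D_i}\hookrightarrow M_i$ of a neighbourhood of the finite boundary, together with the identification $R_{D_i}\simeq B_i[-\epsilon_1,0]$, and set $\pi_i:=\pi_{D_i}$ over $B_i[-\epsilon_1,0]$. The point requiring care is that this identification is the geometrically correct one: the cyclic ordering of the components $D_{ij}$ that produces the ribbon $R_{D_i}$ is, by the chosen toric model and orientation of $D_i$, exactly the cyclic ordering of the rays $v_j$ around $\partial B_i$, so that the edge of $R_{D_i}$ labelled by $D_{ij}$ lies over the direction of the ray $v_j$, where the meridian $S^1_j$ of $\mathfrak{f}_i$ sits, while the vertices of $R_{D_i}$ lie over the angular gaps between consecutive rays.

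The substantive step is the interpolation across $B_i[-\epsilon,-\epsilon_1]$ between the smooth $T^2$-fibration at radius $1-\epsilon$ and the fibration $\pi_{D_i}$ at radius $1-\epsilon_1$, whose fibres are $\bR\times S^1$ over the edges of $R_{D_i}$ (the surviving $S^1$ being the meridian $S^1_j$) and $\bR^2$ over the vertices. The key observation is that both fibrations are assembled from the same local cotangent pieces, so the interpolation may be carried out locally and then glued. Concretely, in an angular neighbourhood of each ray direction $v_j$ the passage from $M_{U_i}$ to $M_i$ is precisely the sectorial modification cutting the boundary torus along $\mathfrak{f}_i$, and the resulting local model with its fibration is the stabilised mirror $M_{\bP^1}$ to $\bP^1_e=D_{ij}$ with $\pi_{\bP^1}\colon M_{\bP^1}\to[0,1]^2$ of Equation \ref{eq:P1stabilisedHMS}. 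I would therefore fix, near each $v_j$, a radial interpolation matching the $T^2$-fibre on the inside to the $M_{\bP^1}$-fibre on the outside (opening the complementary longitudinal circle into an $\bR$-factor while retaining the meridian $S^1_j$), and over the angular gaps a radial interpolation opening up both circle factors to produce $\bR^2$. These local interpolations glue around $\partial B_i$: their overlaps occur only over regions where the fibration is a product of cotangent fibrations, so compatibility follows exactly as in the sectorial arguments underlying Proposition \ref{prop:Fuk-MG} and Lemma \ref{lem:description-of-S_G}, and one checks that the patched map is a genuine smooth singular Lagrangian fibration.

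Finally, the preferred section $L_0$ of $\pi_{U_i}$ (the mirror to $\cO_{U_i}$) restricts over $\partial B_i$ to the longitude $l$, which is exactly the locus swept out by the boundary of the preferred section $L_0$ of $\pi_{D_i}$ built from the zero-sections; since the interpolation leaves the longitudinal direction fixed, the two sections glue to a single Lagrangian section of $\pi_i$ agreeing with both, as required. The main obstacle is the gap interpolation: the genuine work lies in checking that the local $M_{\bP^1}$ models can be glued around the boundary circle consistently with the (possibly non-trivial) monodromy of the $T^2$-fibration, rather than in the largely bookkeeping identifications over $B_i[-\epsilon]$ and $B_i[-\epsilon_1,0]$ or the matching of the sections, both of which follow from structures already in place.
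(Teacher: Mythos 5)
Your decomposition into three regions and your reading of the conventions (all nodes within integral affine distance $1/2$ of the origin, so $\pi_{U_i}$ is a smooth $T^2$-fibration over the annulus $B_i[-\epsilon,0]$) are fine, but the proof has a genuine gap at exactly the step you label as the substantive one: the interpolation over $B_i[-\epsilon,-\epsilon_1]$ is asserted, not constructed. A Lagrangian fibration whose fibres pass from $T^2$ to $\bR\times S^1$ to $\bR^2$ cannot be produced by abstractly ``opening circles into $\bR$-factors''; the only available mechanism for such a change of fibre topology is that the non-compact fibres are \emph{open subsets} of the compact ones, i.e.\ the fibration over the transition region must be the restriction of a $T^2$-fibration to an open subset of its total space. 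Your plan instead glues abstractly defined local models ($M_{\bP^1}$ near each ray $v_j$, cotangent-type products over the angular gaps) around the annulus, and you yourself flag the resulting obstruction --- consistency with the global monodromy of the $T^2$-fibration --- without resolving it. The appeal to ``the sectorial arguments underlying Proposition \ref{prop:Fuk-MG} and Lemma \ref{lem:description-of-S_G}'' does not help: those are statements about Weinstein sectorial coverings and wrapped Fukaya categories, not about gluing Lagrangian fibration \emph{maps}, which requires the maps themselves (not just categories) to agree on overlaps. There is a second, related gap: to even set up your matching you need to know how the abstract collar fibration $\pi_{D_i}\colon M_{D_i}\to R_{D_i}$ sits relative to $\pi_{U_i}$ inside $M_i$; you assume this alignment rather than establish it.

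The missing idea, which is the heart of the paper's proof, is that no interpolation is needed at all, because $\pi_{D_i}$ is \emph{already} a restriction of $\pi_{U_i}$. Concretely, $\mathfrak{f}_i$ is the boundary of a union of Lagrangians (the section $L_0$ together with the annuli over the rays $v_j$) which are conical near $\partial B_i$; if $\nu$ is an open neighbourhood of this union, then $\pi_{U_i}|_{\nu}$, over a collar of $\partial B_i$, \emph{is} the fibration $\pi_{D_i}$ --- the fibres $\bR^2$, resp.\ $\bR\times S^1$, arise as fibrewise neighbourhoods of the $L_0$-point, resp.\ of the meridian $S^1_j$, inside the $T^2$-fibres, and this also gives a local model $\bC_{\Re\geq 0}\times S_i$ for the stop neighbourhood compatible with $\pi_{U_i}$. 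Granting this, the whole construction is global and there is nothing to glue: one deletes $D(l)_{\Re\geq 0}\times S_i$ as in the sectorial package, pre-composes $\pi_{U_i}$ with a symplectic isotopy $\psi$ supported in the local model (the product of an area-preserving isotopy of the $\bC$-factor with the identity on $S_i$) so that the finite boundary lies exactly over $\partial B_i$, and then shrinks using the negative Liouville flow; properties (i), (ii) and the claim about $L_0$ hold by construction, with the transition of fibre topologies across the middle annulus governed automatically by how the deleted region and the support of $\psi$ meet the $T^2$-fibres. If you try to make your interpolation rigorous, you will find the only workable route is to take all your local models to be open subsets of $\pi_{U_i}$ restricted over the annulus --- at which point your argument collapses into the paper's.
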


\begin{proof}
Start with $\pi_{U_i}: M_{U_i} \to B_i$. Recall our convention that $M_{U_i}$ is a Weinstein \emph{domain}, with contact boundary equal to $\pi_i^{-1} (\partial B_i)$.
We want to modify this. 
From the general Liouville-sector package, we know that the stop $\mathfrak{f}_i \subset \partial M_{U_i}$ has neighbourhood of the form $\bC_{\Re \geq 0} \times S_i$, with $\mathfrak{f}_i \subset \{ 0 \} \times S_i$ and $i\bR_{\geq 0} \times S_i$ taken to $\partial M_{U_i}$. 
In this case, this can be made more explicit. 
First, recall that $\mathfrak{f}_i$ is the boundary of a union of Lagrangians, conical with respect to the Liouville flow near the boundary of $B_i$. 
Let $\nu$ be an open neighbourhood of them inside $M_{U_i}$. In a neighbourhood of $\partial B_i$, $\pi_{U_i}$ restricts to give a Lagrangian fibration from $\nu$ to the annulus. 
Now observe that this is precisely the Lagrangian fibration $\pi_{D_i}$. It also gives us a local model for the neighbourhood of $\mathfrak{f}_i$ compatible with $\pi_{U_i}$ (which is given locally by projecting the $\bC_{\Re \geq 0}$ factor to $\bR_{\geq 0}$, and projecting the $S_i$ factor to $S^1$).

Deleting $ D(l)_{\Re \geq 0} \times S_i$, where $D(r)$ is an open closed disc, and adjusting the Liouville structure (as prescribed in \cite[Section 2]{GPS1}), we get $M_i^c$, the symplectic manifold-with-corners which is a compactification of $M_i$ in the untruncated Liouville sector associated to $(M_{U_i}, \mathfrak{f}_i)$. We have $\partial {M}_i^c = \partial  M_i \cup  \partial_\infty  M_i $. 

We then proceed in two steps. 
First,  we can modify the fibration so that all of the `finite' boundary $\partial M_i$  lives above $\partial B_i$. 
This can be done by pre-composing $\pi_{U_i}$ with a symplectic isotopy $\psi$ of $M^c_i$ inside $M_{U_i}$, supported in $D(2r)_{\Re \geq 0} \times S_i$, given by the product of an area-form preserving isotopy of the first factor with the identity. 
See Figure \ref{fig:isotopy-boundary-fibration}.
\begin{figure}
    \centering
\includegraphics[width=0.5\linewidth]{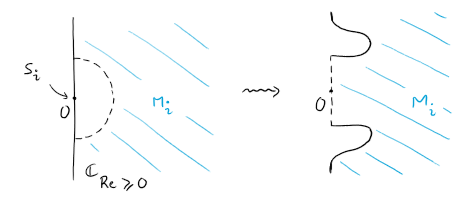}
    \caption{The isotopy $\psi$ of $\bC_{\Re \geq 0} \times S_i$ used in the proof of Lemma \ref{lem:fibration-on-M_i}.}
    \label{fig:isotopy-boundary-fibration}
\end{figure}

Second, consider the negative Liouville flow on $M_i \subset M^c_i$. This has been modified from the original one on $M_{U_i}$ as per sectorial constructions, see \cite[Section 2]{GPS1}. In particular, it is cylindrical inward pointing along $\partial_\infty M_i$; and, away from a thickening of the neighbourhood of $\mathfrak{f}_i$ which was deleted, it agrees with the negative original Liouville flow on $M_{U_i}$. This implies that, by `shrinking' $M_i$ by taking a slightly smaller infinite-direction truncation, we get a fibration $\pi_i: M_i \to B_i$ such that
\begin{enumerate}
    \item Over $B_i[-\epsilon]$, $\pi_i$ agrees with $\pi_{U_i}: M_{U_i} \to B_i$.
    \item For some $\epsilon' < \epsilon$, the fibration $\pi_i$ over $B_i[-\epsilon', 0]$ is simply given by the restriction of $\pi_{U_i}$ to $ ([-\delta, \delta] + i [\delta', \delta']) \times S_i $. (In particular, this is the only preimage of $B_i[-\epsilon', 0]$.)
\end{enumerate}
Set $\epsilon_1 := \epsilon'$ and $\epsilon_2 := \epsilon-\epsilon_1$. The description of $\pi_i$ over $B_i [-\epsilon_1, 0]$ follows from (2) together with our choice of neighbourhood chart for $\mathfrak{f}_i$.
\end{proof}

\begin{lemma} \label{lem:HMS-compatibility-fibre}
    Consider the $A_\infty$ functor $\cW( M_{D_i}) \to \cW(M_i) $ induced by the inclusion of Weinstein sectors. This is compatible with homological mirror symmetry equivalences, in the sense that the following diagram commutes up to $A_\infty$ homotopy:
$$
\xymatrix{
 \cW( M_{D_i}) \ar[r] \ar[d]_{\simeq}  & \cW(M_i) \ar[d]_{\simeq} \\
 \Coh (D_i)  \ar[r]_{i_\ast} & \Coh (Y_i)
}
$$    
where we use the inclusion $i: D_i \to Y_i$ determined by toric coordinates.
\end{lemma}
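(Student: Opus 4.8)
The plan is to verify the homotopy-commutativity on a split-generating collection of objects, reducing the statement to the edge-level compatibility already recorded in Theorem \ref{thm:HMS-trivalent-P^1-configurations} together with the explicit object dictionary of the homological mirror symmetry equivalence for $(Y_i, D_i)$ from \cite{HK1}. The structure closely parallels the proof of Lemma \ref{lem:inclusion-boundary-cycle-fibration-HMS}, and I would present it as a variation thereof.

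First, I would exploit the geometry of Lemma \ref{lem:fibration-on-M_i}: the inclusion $M_{D_i} \hookrightarrow M_i$ is realised as the preimage under $\pi_i$ of the collar annulus $B_i[-\epsilon_1, 0]$, on which $\pi_i$ restricts to $\pi_{D_i}$, and the distinguished sections $L_0$ match. In particular each sub-sector $M_{\bP^1, e} \hookrightarrow M_{D_i}$ (the local model over the boundary segment corresponding to the component $D_{ij} \subset D_i$) includes into $M_i$ as a genuine Weinstein sub-sector. Since the sectorial covering of Proposition \ref{prop:Fuk-MG} shows that $\cW(M_{D_i})$ is generated by the images of the $\cW(M_{\bP^1, e})$, and the induced functor $\cW(M_{D_i}) \to \cW(M_i)$ is exact, it suffices to check commutativity after precomposing with each $\cW(M_{\bP^1, e}) \to \cW(M_{D_i})$.

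Second comes the main computation. After this precomposition, I must identify the composite $\cW(M_{\bP^1, e}) \to \cW(M_{D_i}) \to \cW(M_i)$ with the pushforward $\Coh \bP^1_e \to \Coh Y_i$ along $\bP^1_e = D_{ij} \hookrightarrow Y_i$. Theorem \ref{thm:HMS-trivalent-P^1-configurations} already matches the first leg $\cW(M_{\bP^1, e}) \to \cW(M_{D_i})$ with $\Coh \bP^1_e \xrightarrow{i_\ast} \Coh D_i$, so it remains to match $\cW(M_{\bP^1, e}) \to \cW(M_i)$ with the pushforward to $\Coh Y_i$. The key input is the explicit mirror dictionary of \cite{HK1}: in the almost-toric and handlebody descriptions recalled in Section \ref{sec:hms-recalls}, each boundary divisor $D_{ij}$ is mirror to the Lagrangian supported near the boundary segment of $B_i$ attached to the ray $v_j$, and the distinguished section $L_0$ of $\pi_i$ is mirror to $\cO_{Y_i}$. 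Because $\pi_i$ restricts over the $e$-segment to precisely the local model $M_{\bP^1, e}$ with matching $L_0$, the generator of $\cW(M_{\bP^1, e})$ mirror to $\cO_{\bP^1_e}$ is carried to exactly this Lagrangian, identifying its image in $\cW(M_i)$ with $i_\ast \cO_{D_{ij}}$; the same local identification handles the remaining generators and the morphisms between them, since all relevant Floer-theoretic computations take place within the collar region $M_{\bP^1, e}$.

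Third, to upgrade this object- and morphism-level matching to a diagram of $A_\infty$ functors commuting up to homotopy, I would invoke the covariant sectorial functoriality of wrapped categories under inclusions from \cite{GPS1, GPS2}, exactly as in Theorem \ref{thm:HMS-trivalent-P^1-configurations} and Lemma \ref{lem:inclusion-boundary-cycle-fibration-HMS}: the two composite functors agree on a generating set and on all morphism complexes, hence are quasi-isomorphic. I expect the main obstacle to be precisely this last promotion — producing a coherent homotopy rather than a pointwise identification — together with pinning down the \cite{HK1} mirror of $i_\ast \cO_{D_{ij}}$ unambiguously (including its brane data and grading), so that the local edge equivalence and the global equivalence $\cW(M_i) \simeq \Coh Y_i$ are genuinely compatible and not merely abstractly isomorphic. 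Everything else is a routine variation on the descent arguments already used above.
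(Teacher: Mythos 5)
Your proposal diverges from the paper's actual argument, and it contains two genuine gaps. The paper's proof is much shorter and structural: it starts from the compatibility already established in \cite[Theorem 4.7]{HK1}, which is a homotopy-commutative square for \emph{pullbacks} --- the ``cap'' restriction functor $\cW(M_i) \simeq \cF^{\to}(w_i) \to \cW(S_i)$ is mirror to $i^\ast \colon \Coh Y_i \to \Coh D_i$ --- and then passes to adjoints on both horizontal arrows: on the B-side $i_\ast$ is adjoint to $i^\ast$, and on the A-side the functor $\cW(M_{D_i}) \simeq \cW(S_i) \to \cW(M_i)$ is the ``cup'' functor, which is adjoint to cap. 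A homotopy-commutative square of functors whose horizontal arrows are replaced by their adjoints (with vertical equivalences fixed) remains homotopy-commutative, and this yields the lemma with no generator-by-generator check at all.

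The first gap in your argument is the assertion that ``all relevant Floer-theoretic computations take place within the collar region $M_{\bP^1,e}$.'' This is false, and it matters: the functor $\cW(M_{\bP^1,e}) \to \cW(M_i)$ is mirror to $i_\ast \colon \Coh \bP^1_e \to \Coh Y_i$, which is \emph{not} fully faithful --- for instance $\Ext^\ast_{Y_i}(i_\ast\cO_{D_{ij}}, i_\ast\cO_{D_{ij}})$ is strictly larger than $\Ext^\ast_{\bP^1}(\cO,\cO)$, corresponding on the A-side to the fact that wrapping in the big sector $M_i$ escapes the collar. So morphism complexes between your collar Lagrangians, computed in $\cW(M_i)$, cannot be identified by a purely local computation, and the ``morphism-level matching'' step of your second paragraph does not go through as stated. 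The second gap is the one you yourself flag: even granting agreement on a split-generating set of objects and on cohomology-level morphisms, two $A_\infty$ functors need not be $A_\infty$-homotopic; the higher coherences are precisely what the lemma asserts, and ``covariant sectorial functoriality'' from \cite{GPS1, GPS2} supplies no mechanism for producing them. What does resolve both issues simultaneously is the adjunction argument: the coherence data for the pushforward square is inherited automatically from the (already proved) pullback square, which is exactly why the paper takes that route.
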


\begin{proof}
    Up to unwinding the formalism of Weinstein sectors, this is essentially already contained in \cite[Theorem 4.7]{HK1}. The compatibility therein is established for pullback functors (and at the level of $\Perf$). In particular, we have a commutative diagram:
    $$
\xymatrix{
 \cW(M_i) \ar[r] \ar[d]_{\simeq}  &  \cW( S_i) \ar[d]_{\simeq} \\
  \Coh (Y_i)   \ar[r]_{i^\ast} & \Coh (D_i)
}
$$ 
    On the A-side, the restriction map $\cW(M_i) \simeq \cF^{\to} (w_i) \to \cW(S_i)$ is the `cap' map. 
    The statement we want here comes from passing to adjoints functors for both horizontal maps: on the one hand, (derived) pushforward is left-adjoint to pullback. On the other hand, recall that $\cW( M_{D_i}) \simeq \cW(S_i) \to \cW(M_i)$ is an instance of the `cup' map (first introduced for Lefschetz fibrations), which itself is right-adjoint to cap.
   \end{proof}

\subsubsection{The mirror to $Y$: construction and HMS for the wrapped Fukaya category}

\begin{lemma}\label{lem:split_MHS_CYncs}
Let $Y$ be a maximal Calabi-Yau normal crossing surface.
Let $(Y_i,D_i)$, for varying $i$, denote the irreducible components of the normalisation of $Y$ together with the inverse image of the singular locus of $Y$. Then each $(Y_i,D_i)$ is a maximal log Calabi--Yau surface.

Moreover, the surface $Y$ has split mixed Hodge structure if and only if each $(Y_i,D_i)$ has split mixed Hodge structure and there exists a choice of toric coordinates 
on the components of each $D_i$ (as in the proof of Lemma~\ref{lem:split_MHS_logCY2}) such that the gluing $Y=\bigcup Y_i$ is given by the identification of boundary components via $z \mapsto z^{-1}$. 
\end{lemma}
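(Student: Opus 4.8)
The plan is to prove the two assertions in turn. For the first, that each $(Y_i, D_i)$ is a maximal log Calabi--Yau surface, I would argue by adjunction. Since $Y$ is Calabi--Yau we have $\omega_Y \cong \cO_Y$, and restricting the adjunction isomorphism $\omega_Y|_{Y_i} \cong \omega_{Y_i}(D_i)$ to the component $Y_i$ gives $K_{Y_i} + D_i \sim 0$, so $(Y_i, D_i)$ is log Calabi--Yau. To see that the boundary is maximal, i.e.\ that $D_i$ is a cycle of rational curves, I would use that the dual complex of $Y$ is a triangulation of the closed orientable surface $S$: the link of the vertex corresponding to $Y_i$ is a circle, and this circle is exactly the dual graph of $D_i = \bigcup_j D_{ij}$, so $D_i$ is a cycle. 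Rationality of each $D_{ij}$ then follows from a second adjunction, $K_{D_{ij}} = (K_{Y_i}+D_{ij})|_{D_{ij}} = -(D_i - D_{ij})|_{D_{ij}}$, which has degree $-2$ since $D_{ij}$ meets the rest of the cycle $D_i$ in precisely its two nodes, whence $D_{ij} \cong \bP^1$.

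For the second assertion I would work with the mixed Hodge structure attached to $Y$ through its mixed Hodge complex, computed from the strata $Y^{[0]} = \bigsqcup Y_i$, $Y^{[1]} = \bigsqcup D_{ij}$ and the triple points $Y^{[2]}$. The necessity of split mixed Hodge structure on each piece is the easy direction: the mixed Hodge structure of $U_i = Y_i \setminus D_i$ occurs as a subquotient of that attached to $Y$, so a splitting of the latter restricts to a splitting of the former, which by Lemma \ref{lem:split_MHS_logCY2} is exactly the existence of a toric model with interior blow-ups at $-1 \in \bar{D}_{ij}^\circ$. The content is therefore in the gluing. Having fixed such split toric coordinates on each $(Y_i, D_i)$, the character $\chi_j$ equips each double curve $D_{ij} \cong \bP^1$ with a distinguished coordinate $z_i$ having a zero and a pole at its two nodes (the adjacent triple points), well defined up to the involution $z_i \mapsto z_i^{-1}$ fixing the blow-up point $-1$. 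The orientation of $S$ forces the two nodes to play opposite roles as seen from $Y_i$ and from $Y_j$, so the gluing isomorphism $D_{ij} \cong D_{ji}$ takes the form $z_j = \lambda_{ij}\, z_i^{-1}$ for scalars $\lambda_{ij} \in \bC^\ast$; these are the remaining gluing data, defined modulo the residual coordinate freedom.

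The heart of the proof is to show that the period point of $Y$ — the extension class governing the (non)splitting of its mixed Hodge structure — is, modulo the contributions of the individual pieces $\phi_i$, measured precisely by the collection $\{\lambda_{ij}\}$ (via $\log \lambda_{ij}$), so that it vanishes if and only if every $\phi_i$ is split and every $\lambda_{ij}$ can be normalised to $1$, i.e.\ all gluings are $z \mapsto z^{-1}$. For the ``if'' direction I would build the splitting explicitly: split coordinates on each piece yield a splitting of the mixed Hodge complex on each $Y_i$, and the condition $z_j = z_i^{-1}$ guarantees these agree along the double curves, so they patch to a global splitting. For the ``only if'' direction I would run the comparison backwards, using the Clemens--Schmid/weight spectral sequence to locate the gluing contribution inside $H^2(Y)$ and to see that a nontrivial $\lambda_{ij}$ produces a nonsplit extension. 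I expect the main obstacle to be exactly this bookkeeping: isolating, within the global mixed Hodge complex of the non-normal surface $Y$, the term recording the gluing scalars, and matching it cleanly with the period-point description of Lemma \ref{lem:split_MHS_logCY2} for the pieces, while correctly accounting for the residual $z \mapsto z^{-1}$ gauge freedom and for the integral (as opposed to merely $\bR$- or $\bC$-) meaning of ``split''. The deformation-theoretic description of $Y$ in \cite{Friedman_thesis, Friedman-Scattone} should supply the framework for this identification.
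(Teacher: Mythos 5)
Your treatment of the first assertion (adjunction plus the observation that the link of a vertex of the triangulated dual complex is a circle, so each $D_i$ is a cycle of rational curves) is correct; the paper does not even spell this part out. The genuine problem is with the second assertion, where your proposal stops exactly where the proof has to happen. You correctly identify that everything reduces to showing that the extension class of the Deligne MHS on $H^2(Y,\bZ)$ --- the homomorphism $\phi \colon L \to \bC^*$ on $L = \ker(H^2(Y^{[0]},\bZ) \to H^2(Y^{[1]},\bZ))$ in the Carlson--Friedman picture --- is computed by the component period points together with the gluing scalars $\lambda_{ij}$; but you then declare this identification to be ``the main obstacle'' and defer it. That bookkeeping is the entire content of the lemma, so as written neither implication is proved. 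Two of your supporting steps are also unsound: the MHS on $H^2(U_i)$ (which has weights $2,3,4$) is not a subquotient of the MHS on $H^2(Y)$ (which has only weights $0$ and $2$), so the ``easy direction'' does not follow as you state it --- the true relation is that restricting $\phi$ to the sublattice $\langle D_{ij} \rangle^{\perp} \subset L$ (extending classes by zero to the other components) recovers the period point of $(Y_i,D_i)$, and proving that is again precisely the deferred computation; and ``patching splittings of mixed Hodge complexes along the double curves'' is not a meaningful operation, since a splitting of an integral MHS is a global linear-algebra datum on $H^2(Y,\bZ)$, not a local object that glues.

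It is worth seeing how the paper closes this gap, because it does so by citation plus a trick your plan misses. The ``if'' direction is Lemma~\ref{lem:split_MHS_logCY2} combined with Lutz's period computation for normal crossing Calabi--Yau surfaces \cite[Proposition~3.4]{Lutz_Torelli}, which is exactly the formula relating $\phi$ to the gluing data that you set out to derive by hand. For the ``only if'' direction the paper does not run the computation backwards at all: it invokes the Global Torelli theorem \cite[Theorem~3.7]{Lutz_Torelli}, which says there is a unique surface with split MHS in each locally trivial deformation type. One then takes the standard model $Y'$ in the deformation type of $Y$ (components replaced by their split representatives, all gluings $z \mapsto z^{-1}$), notes that $Y'$ has split MHS by the ``if'' direction, and concludes $Y \simeq Y'$ by uniqueness, so $Y$ itself satisfies the stated conditions. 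This sidesteps the delicate inverse bookkeeping (the gauge freedom in the $\lambda_{ij}$, and the fact that splitness must hold over $\bZ$) that your route would have to confront; completing your plan without Torelli would essentially amount to reproving \cite[Proposition~3.4]{Lutz_Torelli}.
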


Note also that the latter condition holds for a choice of toric coordinates if and only if it in fact holds for all choices. 

\begin{proof}
Recall that we say $Y$ has split mixed Hodge structure if the canonical mixed Hodge structure of Deligne \cite{Deligne_HodgeIII} on $H^2(Y,\bZ)$ is a direct sum of pure Hodge structures (over $\bZ$).
By Lemma~\ref{lem:split_MHS_logCY2} and \cite[Proposition~3.4]{Lutz_Torelli} the conditions imply that $Y$ has split mixed Hodge structure. Moreover by the Global Torelli Theorem in  
\cite[Theorem~3.7]{Lutz_Torelli} there is a unique surface with split mixed Hodge structure in each (locally trivial) deformation type.
\end{proof}

\begin{definition}\label{def:M}
Let $Y = \bigcup_i (Y_i, D_i)$ be a maximal Calabi-Yau normal crossing surface with split mixed Hodge structure. 
Let $G_D$ be the associated graph, as before.  For each $(Y_i, D_i)$, let $M_i$ be its mirror Weinstein sector, with singular Lagrangian fibration $\pi_i: M_i \to B_i$. Define $M$ to be the space given by gluing the $M_i$ to $M_D$ along the $M_{D_i}$, i.e.~the pushout of the diagram: 
$$
\xymatrix{
 \bigsqcup_i  M_{D_i} \ar[r] \ar[d]  & \bigsqcup_i  M_i \\
 M_D & 
}
$$
where both the horizontal and the vertical arrows are given by our inclusions of collar neighourhoods of finite boundary components.
\end{definition}

Recall that the dual complex of $Y$ gives a triangulation of a compacted orientable topological surface $S$; by slight abuse of notation also we use $S$ for the corresponding smooth surface.

\begin{lemma} \label{lem:M-first-properties}
The manifold $M$ in Definition \ref{def:M} is a Weinstein manifold, i.e.~a Weinstein sector whose finite boundary is empty. It admits a singular Lagrangian fibration to $S$, say $\pi$, given by patching together the fibrations $\pi_D$, and $\pi_{D_i}$ and $\pi_i$ for all $i$.
\end{lemma}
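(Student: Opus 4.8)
The plan is to present $M$ as a Weinstein sectorial gluing of the pieces $M_i$ and $M_D$ along collars of their finite boundary, and then to transport the three compatible singular Lagrangian fibrations through that gluing. The construction in Definition \ref{def:M} is precisely a pushout along collar inclusions, so the statement reduces to two tasks: checking that this is a legitimate Weinstein sectorial gluing in the sense of \cite{GPS1, GPS2}, and bookkeeping that all finite boundary is consumed while the fibrations agree on overlaps.

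For the Weinstein structure, I would first record that every building block is Weinstein: each $M_i$ is a Weinstein sector (by the corollary stating exactly this), $M_D$ is a Weinstein sector (by the lemma establishing a Weinstein sectorial covering of $M_D$), and each $M_{D_i} = S_i \times T^\ast[0,1]$ is Weinstein, with $M_{D_i} \hookrightarrow M_i$ and $M_{D_i} \hookrightarrow M_D$ collar inclusions of finite-boundary components. The crucial bookkeeping is that these collars exhaust the finite boundary on both sides: by Lemma \ref{lem:fibration-on-M_i}(i) the finite boundary $\pi_i^{-1}(\partial B_i)$ of $M_i$ is a single collar $M_{D_i}$ (since $\partial B_i$ is a circle), while by Lemma \ref{lem:description-of-S_G} the components of $\partial M_D$ are indexed by the $Y_i$, each collared by $M_{D_i}$. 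Hence the pushout glues $\bigsqcup_i M_i$ to $M_D$ along the \emph{entire} finite boundary of both. Applying the gluing results of \cite{GPS1, GPS2}—legitimate because the Liouville forms match on the collars, where both restrict to $\sum p_i dq_i$ by construction—the result is a Weinstein sector; since no finite boundary survives, it is a Weinstein manifold. Any sectorial corners arising are of the smoothable product type already encountered in the $M_{\bP^1}$ discussion.

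For the fibration, the bases glue to $S$: each $B_i$ is (a thickening of) the star of the vertex of the triangulation $S$ corresponding to $Y_i$, while $R_D$ is a ribbon neighbourhood of the trivalent graph $G_D$ dual to the triangulation, and the overlaps are the annuli $R_{D_i} \simeq B_i[-\epsilon_1,0]$; their union recovers $S$. On overlaps the fibrations agree—$\pi_i$ restricts to $\pi_{D_i}$ over $B_i[-\epsilon_1,0]$ by Lemma \ref{lem:fibration-on-M_i}(i), and $\pi_{D_i}$ is compatible with $\pi_D$ by Lemma \ref{lem:inclusion-boundary-cycle-fibration-HMS}—so they patch to a well-defined $\pi \colon M \to S$. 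This $\pi$ is proper with nodal singular fibres over the interior regions $B_i[-\epsilon]$ (inherited from the almost-toric $\pi_{U_i}$) and non-proper over the ribbon $R_D$ (where the fibres are $\bR^2$ or $\bR \times S^1$). The preferred sections $L_0$ likewise agree on overlaps and patch to a global Lagrangian section of $\pi$.

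The step I expect to be the main obstacle is confirming the \emph{Weinstein compatibility across the gluing regions}: one must check that the collar identifications respect the Liouville data up to the deformations permitted by \cite{GPS1, GPS2}, so that the abstract pushout is realised as an honest Weinstein sector and not merely a smooth gluing. Once this is in place, the identification of the glued base with $S$ and the patching of $\pi$ and of $L_0$ are essentially combinatorial consequences of the overlap compatibilities already established in Lemmas \ref{lem:fibration-on-M_i} and \ref{lem:inclusion-boundary-cycle-fibration-HMS}.
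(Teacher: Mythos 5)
Your proposal is correct and takes essentially the same route as the paper: the paper likewise deduces the Weinstein property from the fact that the pushout diagram in Definition \ref{def:M} gives a Weinstein sectorial covering, invoking \cite[Lemma 12.26]{GPS2}, observes that no finite boundary survives, and patches the fibration using the compatibilities of Lemma \ref{lem:fibration-on-M_i} over the decomposition of $S$ into the ribbon $R_D$ and the balls $B_i$ glued along the annuli $R_{D_i}$. Your extra bookkeeping (collar exhaustion of the finite boundaries and matching of Liouville data on the collars) simply makes explicit what the paper's terser proof leaves implicit.
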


\begin{proof}
    The fact that $M$ is Weinstein follows from \cite[Lemma 12.26]{GPS2} together with the fact that the pushout diagram in Definition \ref{def:M} gives a Weinstein sectorial covering. By observation, it has no finite boundary.
    The claim about the singular fibration $\pi$ follows from the compatibilities established in Lemma \ref{lem:fibration-on-M_i}; the base surface $S$ is decomposed into the union of the ribbon $R_D$ and the balls $B_i$ (glued over the annuli $R_{D_i}$). 
\end{proof}

Let $p_i \in B_i \subset S$ denote the image of the origin in $B_i$. Its fibre $T_i = \pi^{-1} ( p_i)$ is an exact Lagrangian torus in $M$ with vanishing Maslov class.  
The Lagrangian fibration $\pi: M \to S$ has a favourite Lagrangian section, given by gluing together all of our existed preferred sections for the $\pi_{U_i}$ and $\pi_D$. As before, we denote it by $L_0$.

\begin{theorem}\label{thm:hms-footballs-wrapped}
Assume that $Y = \cup_i (Y_i, D_i)$ is a maximal normal crossing  Calabi-Yau surface with split mixed Hodge structure. Let $M$ be the Weinstein manifold constructed in Definition \ref{def:M}. Then we have a quasi-isomorphism of $A_\infty$ categories
$$ \cW (M) \simeq \Coh (Y). $$
Moreover, under pushforward by inclusion of Liouville sectors, respectively closed subvarieties, on each side, this is compatible with the homological mirror symmetry equivalences which we already know, namely the $A_\infty$ isomorphisms
\begin{equation} \label{eq:constituent-HMS-isos}
   \cW(M_i) \simeq \Coh(Y_i) \qquad \cW(M_D) \simeq \Coh D \qquad \cW\big(M_{D_i} \big) \simeq \Coh ( D_i ) 
\end{equation}

\end{theorem}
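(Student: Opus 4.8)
The plan is to realise both sides as homotopy pushouts of one and the same span of $A_\infty$ categories, and then to conclude by the universal property. By Definition \ref{def:M} the Weinstein manifold $M$ is glued from the sectors $M_i$ and the central piece $M_D$ along the collars $M_{D_i}$, and Lemma \ref{lem:M-first-properties} records that this realises $\{M_i\}_i \cup \{M_D\}$ as a Weinstein sectorial covering. Since the discs $B_i \subset S$ are pairwise disjoint and meet the ribbon $R_D$ only in the annuli $R_{D_i}$, the only nonempty intersections in this cover are $M_i \cap M_D = M_{D_i}$; the \v{C}ech nerve therefore truncates, and the sectorial descent theorem of Ganatra--Pardon--Shende \cite[Theorem 1.35]{GPS2} identifies $\cW(M)$ with the homotopy pushout of
\begin{equation*}
\bigsqcup_i \cW(M_i) \longleftarrow \bigsqcup_i \cW(M_{D_i}) \longrightarrow \cW(M_D),
\end{equation*}
the arrows being the corestriction (pushforward) functors induced by the two families of sectorial inclusions.

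On the B-side I would use that $Y$ is recovered from its normalisation by gluing along the double locus. Writing $\nu \colon \bigsqcup_i Y_i \to Y$ for the normalisation, with conductor $\nu^{-1}(D) = \bigsqcup_i D_i$, the conductor (Milnor) square
\begin{equation*}
\xymatrix{
 \bigsqcup_i D_i \ar[r] \ar[d]  & \bigsqcup_i Y_i \ar[d] \\
 D \ar[r] & Y
}
\end{equation*}
is a pushout of schemes in which $\bigsqcup_i D_i \hookrightarrow \bigsqcup_i Y_i$ and $D \hookrightarrow Y$ are closed embeddings while the two remaining maps are finite and surjective. This is precisely the shape of square treated in the proof of Theorem \ref{thm:HMS-trivalent-P^1-configurations}: by \cite[Theorem A.1.2, Chapter 8]{GR2} together with the observation of \cite{Gammage-Shende}, $\IndCoh_\ast$ carries such pushout squares to pushout squares of $A_\infty$ categories, and passing to compact objects does the same for $\Coh_\ast$. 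Hence $\Coh Y$ is the homotopy pushout of $\bigsqcup_i \Coh Y_i \longleftarrow \bigsqcup_i \Coh D_i \longrightarrow \Coh D$ along the pushforward functors $i_\ast$.

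It then remains to identify the two spans. The objectwise equivalences are the constituent HMS isomorphisms of \eqref{eq:constituent-HMS-isos}: $\cW(M_i) \simeq \Coh Y_i$ from \cite{HK1}, $\cW(M_D) \simeq \Coh D$ from Theorem \ref{thm:HMS-trivalent-P^1-configurations}, and $\cW(M_{D_i}) \simeq \Coh D_i$. The two legs are matched by the compatibilities already in hand: Lemma \ref{lem:HMS-compatibility-fibre} identifies $\cW(M_{D_i}) \to \cW(M_i)$ with $i_\ast \colon \Coh D_i \to \Coh Y_i$, and Lemma \ref{lem:inclusion-boundary-cycle-fibration-HMS} identifies $\cW(M_{D_i}) \to \cW(M_D)$ with $i_\ast \colon \Coh D_i \to \Coh D$, both up to $A_\infty$ homotopy. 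An objectwise equivalence of spans induces an equivalence of homotopy pushouts, so assembling these data yields $\cW(M) \simeq \Coh Y$, compatibly with the equivalences of \eqref{eq:constituent-HMS-isos} as claimed.

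The hard part will be promoting the two separately homotopy-commuting squares into a single coherent map of spans. Concretely, I would need to check that the equivalence $\cW(M_{D_i}) \simeq \Coh D_i$ serving as the apex is literally the same identification in both Lemma \ref{lem:HMS-compatibility-fibre} and Lemma \ref{lem:inclusion-boundary-cycle-fibration-HMS} (rather than two a priori distinct ones), and that the chosen homotopies assemble into a $1$-morphism in the relevant functor $\infty$-category, so that the comparison of homotopy colimits is legitimate. Because the indexing span is so simple and every square commutes up to a specified $A_\infty$ homotopy, I expect this coherence to be routine, but it is the step that genuinely requires care.
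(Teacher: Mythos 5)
Your proposal is correct and follows essentially the same route as the paper's proof: sectorial descent \cite[Theorem 1.35]{GPS2} realises $\cW(M)$ as the pushout of $\bigsqcup_i \cW(M_i) \leftarrow \bigsqcup_i \cW(M_{D_i}) \rightarrow \cW(M_D)$, the scheme-theoretic pushout of $Y$ along its double locus together with \cite[Theorem A.1.2, Chapter 8]{GR2} realises $\Coh Y$ as the pushout of the mirror span, and the two spans are matched via Lemmas \ref{lem:HMS-compatibility-fibre} and \ref{lem:inclusion-boundary-cycle-fibration-HMS}. The coherence issue you flag at the end is legitimate but is treated as routine in the paper, which simply asserts that the three constituent equivalences of \eqref{eq:constituent-HMS-isos} are compatible with the two pushout diagrams.
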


\begin{proof} This is structurally similar to the proof of Theorem \ref{thm:HMS-trivalent-P^1-configurations}. On the A-side, by \cite[Theorem 1.35]{GPS2}, $\cW(M)$ is the pushout
$$ 
\xymatrix{
 \bigoplus_i  \cW \big( M_{D_i} \big)\ar[r] \ar[d]  & \bigoplus_i \cW( M_i) \\
\cW( M_D ) & 
}
$$
where all maps are induced by inclusions of Weinstein sectors. On the B-side, the variety $Y$ is the pushout (in the category of schemes):
$$
\xymatrix{
\bigsqcup_i  D_i \ar[r] \ar[d]  & \bigsqcup_i  Y_i \\
D &
}
$$
where the maps in the diagram are determined by the split mixed Hodge structure condition.
Again by applying \cite[Theorem A.1.2, Chapter 8]{GR2}, $\Coh (Y)$ is the pushout 

$$
\xymatrix{
\bigoplus_i  \Coh D_i \ar[r] \ar[d]  & \bigoplus_i  \Coh(Y_i ) \\
\Coh (D) &
}
$$
where all maps are pushforwards. 

Now by Lemmas \ref{lem:inclusion-boundary-cycle-fibration-HMS} and \ref{lem:HMS-compatibility-fibre}, the three HMS equivalences in Equation \ref{eq:constituent-HMS-isos} are compatible with our two pushout diagrams. This completes the proof.
\end{proof}

\subsubsection{Weinstein handle-body decomposition for $M$} \label{sec:handlebody-decomposition-for-M}

The Weinstein handlebody decompositions for the $M_{U_i}$ can be combined with our explicit gluing description of $M$ to give Weinstein handlebody descriptions for $M$ itself (up to Weinstein deformation equivalence). While this is a largely a matter of careful book-keeping, we spell out one set of choices for later use.

\begin{enumerate}
 \item Start with the Weinstein handlebody descriptions for each of the Weinstein domains $M_{U_i}$, as recalled in Section \ref{sec:hms-recalls}. For each $i$,  we start with $D^\ast T_i$ (where $T_i$ is the Lagrangian fibre above the central point  $p_i \in B_i \subset S^2$), and glue Weinstein 2-handles to copies of Legendrians which are conormal lifts of (linear) submanifolds $S^1_{v_{j}} \subset T_i$, determined by our choice of toric model for $(Y_i, D_i)$. 
 We make the following choices:  take linear $S^1_{v_j}$s none of which contain the point $L_0 \cap T_i$; and for iterated interior blow-ups, we use parallel copies of the same $S^1$ rather than repeats. 
        
 \item Whenever $Y_i \cap Y_j = D_{ij} \neq \emptyset$, we glue $M_{U_i}$ and $M_{U_j}$ by a generalised Weinstein handle attachment: we will attach a copy of $D^\ast (S^1 \times [0,1])$. Such a handle attachment is determined by specifying a two-component Legendrian link in $\partial M_{U_i} \sqcup \partial M_{U_j}$, with a relative orientation of the two components, i.e.~an orientation up to an overall sign change.
 (Formally, such generalised attachments can be realised by first attaching a Weinstein one-handle and then attaching a Weinstein 2-handle.) 
Consider the copies of $S^1_{v_j} \subset T_i$, respectively $S^1_{v_i} \subset T_j$, chosen to go through the point $L_0 \cap T_i$, respectively $L_0 \cap T_j$. Then their Legendrian lifts to $S^\ast T_i$, respectively $S^\ast T_j$, survive as Legendrians in $\partial M_{U_i}$, respectively $\partial M_{U_j}$. (This follows from our choices in the previous step, making the attached 2-handles `thin enough'.) Now $S^1_{v_j} \subset T_i$ is both co-oriented (determining the choice of Legendrian lift) and oriented (determining an orientation of the lift). Similarly for $S^1_{v_i} \subset T_j$. We use those choices for our handle attachment. 

\item Let $M^\circ$ be the Weinstein manifold obtained thus far. For each pair $\{ i, j \}$ such that $Y_i \cap Y_j \neq \emptyset$, let $A_{ij}$ be the Lagrangian annulus resulting from gluing the half-conormals to $S^1_{v_j}$ and $S^1_{v_i} $ together with the zero-section of the attaching generalised handle $D^\ast (S^1 \times [0,1])$. 
Let $r_{ij} \subset S$ be the segment given by taking the rays $\bR_{\geq} \cdot v_j \subset B_i$ and $\bR_{\geq 0} \cdot v_i \subset B_j$, and joining them in the obvious manner over $R_D$ (using the projection to the base of the relevant Lagrangian annulus in the core of $M_D$).
Then $M^\circ$ can be embedded into $M$ (compatibly with the identification between the handlebody and almost-toric descriptions of each $M_{U_i}$) in such a way that $A_{ij}$ maps under $\pi: M \to S$ to the segment $r_{ij}$. In each fibre of $\pi$ over the segment $r_{ij}$, $A_{ij}$ restricts to an $S^1$, and we have the freedom to arrange for $A_{ij}$ to contain $L_0 \cap \pi^{-1} (r_{ij})$. 

\item Finally, for each vertex $v \in Y^{[2]}$, glue in a Weinstein 2-handle, with attaching Legendrian the intersections $( \partial M^\circ) \cap L_0 \subset M$. Let $L_{0,v}$ be its Lagrangian core. 
Together with $L_0 \cap M^\circ$, the $L_{0,v}$ glue together to give a Lagrangian sphere, naturally identified with $L_0 \subset M$. 
\end{enumerate}

\subsection{Mirrors to some $(\bC^\ast)^2$ charts} \label{sec:mirrors-to-points}
We want to refine the mirror equivalence of Theorem \ref{thm:hms-footballs-wrapped} to get explicit mirror objects for some preferred coherent sheaves on $Y$. To start, we identify mirrors to some points on each of the $Y_i$, which will be Lagrangian torus fibres of $\pi$.  We will use this as part of our deformation argument in Section \ref{sec:deformation}. 

We recall the following from \cite{HK1} (see also exposition in \cite[Section 2.2]{Keating-Ward}). 
Given an irreducible component $(Y_i, D_i)$ of $Y$, assume we've chosen a toric model for it. This determines a chart $\iota_i: (\bC^\ast)^2 \hookrightarrow U_i$. On the mirror side, as discussed, the toric model determines our almost-toric fibration $\pi_{i}: M_i \to B_i$, with `central' fibre of $\pi_{i}^{-1} (p_i) = T_i$, an exact Lagrangian torus with vanishing Maslov class.

\begin{lemma}\label{lem:mirrors-to-points}
 Under our isomorphism $\cW(M_i) \simeq \Coh Y_i$, the objects $(T_i, \mathfrak{s}_q)$ in $\cW(M_i)$, for $\mathfrak{s}_q$ any choice of brane decoration (i.e.~spin structure and grading), are mirror, up to shifts, to the 
 structure sheaves $(\iota_i)_\ast \cO_q$, for certain points $q \in (\bC^\ast)^2 \subset U_i$. One of these points, say $q_i$, is the identity in the complex torus $(\bC^*)^2=\bar{Y_i}\setminus \bar{D_i}$ determined by our choice of toric model.
 \end{lemma}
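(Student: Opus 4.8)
The plan is to identify $T_i$ with a skyscraper by combining an intrinsic ``point-object'' computation with a reduction to the standard mirror symmetry for the torus. First I would record that $T_i$ is an \emph{exact} Lagrangian torus with vanishing Maslov class, so by Stokes' theorem it bounds no non-constant holomorphic discs; hence it is unobstructed and its self-Floer cohomology computes the ordinary cohomology ring, $HF^{\ast}(T_i, T_i) \cong H^{\ast}(T^2;\bC) \cong \Lambda^\bullet \bC^2$ as a $\bZ$-graded algebra (the Floer product reduces to the cup product in the absence of discs). On the B-side this matches $\Ext^{\ast}_{\Coh U_i}(\cO_q, \cO_q) \cong \Lambda^\bullet T_q U_i \cong \Lambda^\bullet \bC^2$ for any closed point $q$ of the smooth surface $U_i$, with $\dim HF^0 = 1$ matching $\Hom(\cO_q,\cO_q)=\bC$. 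This is the intrinsic reason the mirror of $(T_i,\mathfrak{s}_q)$ must be a point-like object, consistent with being a structure sheaf of a point up to shift; the actual identification is carried out next.

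To pin down \emph{which} sheaf, I would exploit the handlebody/almost-toric description from Section \ref{sec:hms-recalls}: $T_i$ is precisely the zero-section of the central $D^{\ast} T_i = D^{\ast} T^2 \subset M_{U_i}$ out of which $M_{U_i}$ is built by attaching Weinstein $2$-handles. On this central region the equivalence reproduces the standard SYZ equivalence $\cW(D^{\ast} T^2)\simeq \Coh (\bC^{\ast})^2$, under which the zero-section carrying brane data $\mathfrak{s}_q$ is mirror to the skyscraper $\cO_q$, the holonomy of the brane giving the coordinates of $q$; the reference spin structure and trivial grading give holonomy $1$, hence the identity point $q_i \in (\bC^{\ast})^2 = \bar{Y}_i \setminus \bar{D}_i$ (the remaining spin structures shift the coordinates by signs, producing the finitely many ``certain points''). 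Since the chart $\iota_i\colon (\bC^{\ast})^2 \hookrightarrow U_i$ is mirror to this inclusion of the central region and pushforward of $\cO_q$ along an open immersion is again a skyscraper, I conclude that $(T_i,\mathfrak{s}_q)$ is mirror up to shift to $(\iota_i)_{\ast} \cO_q$.

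It then remains to check that $q$ genuinely lies in the chart, i.e.\ avoids both $D_i$ and the interior exceptional curves making up $U_i\setminus (\bC^{\ast})^2$ (a union of affine lines $\bA^1$, one per interior blow-up). I would argue this by support/disjointness: being the fibre over the central point $p_i$, $T_i$ can be taken disjoint from the Lagrangians mirror to $D_i$ (which sit near $\partial B_i$) and from those mirror to the exceptional curves (which sit near the attached handles), so all the corresponding Floer groups vanish; matching with $\Hom^{\ast}(\cO_q,-)$ then forces $q \notin D_i$ and $q \notin \overline{U_i \setminus (\bC^{\ast})^2}$, i.e.\ $q \in (\bC^{\ast})^2$.

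The hard part will be the compatibility in the middle step: verifying that the \emph{global} equivalence $\cW(M_{U_i})\simeq \Coh U_i$ of \cite{HK1} really does restrict to the \emph{local} torus equivalence on the central cotangent region, so that attaching the Weinstein $2$-handles (which passes from $(\bC^{\ast})^2$ to $U_i$ and alters the wrapped category globally) leaves the identification of the central zero-section $T_i$ with the skyscraper untouched. Closely related is the careful bookkeeping of the brane-to-point dictionary — tracking how the four spin structures on $T^2$ and the grading conventions translate into the sign ambiguities in the coordinates of $q$, and confirming the normalisation that sends the reference brane to the identity $q_i$. Everything else is either standard (the torus computation and the exactness argument) or a routine matter of organising the support vanishing.
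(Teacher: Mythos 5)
Your core strategy coincides with the paper's: establish the claim via classical mirror symmetry for the torus (under which the zero-section of $T^\ast T^2$ with its brane data goes to a skyscraper sheaf on $(\bC^\ast)^2$, with spin structures accounting for sign changes in the coordinates), and then transfer this identification through the Weinstein $2$-handle attachments corresponding to interior blow-ups. The ``hard part'' you flag at the end --- that the global equivalence restricts to the local torus equivalence compatibly with handle attachment --- is exactly the ingredient the paper uses, and it is resolved there not by a new argument but by citing the compatibilities of the HMS isomorphisms of \cite{HK1} under interior blow-ups; so you have correctly isolated the crux. The paper also records one point you assert without justification: in the toric case, identifying $T_i$ with the zero-section up to Hamiltonian isotopy (which is what the categorical statement needs) uses the uniqueness of exact Lagrangian tori in $T^\ast T^2$ \cite{DimitroglouRizell-tori}.

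The one genuine gap is that you never connect the equivalence you actually work with, $\cW(M_{U_i}) \simeq \Coh U_i$, to the equivalence the lemma is stated for, $\cW(M_i) \simeq \Coh Y_i$. The paper's first step is precisely this reduction: since $T_i$ and the skyscrapers are compact objects, they lie in the compact category $\cF(M_{U_i})$ and its mirror, and these include fully faithfully and compatibly into $\cW(M_i) \simeq \cW(M_{U_i}, \mathfrak{f}_i)$ and $\cW(M_{U_i})$ on one side, and into $\Coh Y_i$ and $\Coh U_i$ on the other (the stop-removal functor being mirror to restriction); hence it suffices to prove the statement at the $U_i$ level. Your third paragraph does not substitute for this: probing the support of the mirror of $T_i$ under the $Y_i$-equivalence by Floer groups against objects ``mirror to $D_i$'' presupposes that this mirror is already known to be a point-like sheaf on $Y_i$ whose support is detected by such pairings --- which is part of what is being proved --- and in any case Hom-vanishing against finitely many chosen objects does not by itself force the support into the chart. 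Once the compactness reduction is in place, your disjointness argument becomes unnecessary, since the $U_i$-level identification already produces a point of $(\bC^\ast)^2$.
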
 

\begin{remark}
We could instead work with the variation of $\cW(M_i)$ in which the brane data on a Lagrangian additionally contains a choice of flat complex line bundle. As $\cW(M_i)$ is generated by thimbles, the mirror symmetry quasi-isomorphism is unaffected. The correspondence of objects then becomes cleaner: the structure sheaves $(\iota_i)_\ast \cO_q$, for \emph{any} $q \in (\bC^\ast)^2$, are mirror to objects $(T_i, \mathfrak{l}_q)$, for $\mathfrak{l}_q$ is a suitable brane decoration (in particular, a choice of spin structure and flat complex line bundle). Moreover, by varying the choices of toric models, we can get mirrors to different $(\bC^\ast)^2$ patches, related by Lagrangian torus mutations \cite[Section 3.2]{HK2}. 
\end{remark}

\begin{proof}  This is essentially contained in \cite{HK1, HK2} but not  explicitly proved therein, so we briefly spell it out. 
First notice that as the relevant objects are compact, it is enough to establish the claim for the isomorphism $\cW(M_{U_i} ) \simeq \Coh Y_i \backslash D_i$. This is because of the compatibility of the maps $\cW(M_i) \simeq \cW(M_{U_i}, \mathfrak{f}_i) \to \cW(M_i)$ and $  \Coh Y_i  \to \Coh Y_i \backslash D_i $ with the fully faithful inclusions of the compact category $\cF(M_{U_i})$ and its mirror. 

In \cite[Section 5]{HK1}, $T_i$ is first constructed for $(Y_i, D_i)$ toric, in which case $ Y_i \backslash D_i \simeq (\bC^\ast)^2$ and $M_i \simeq T^\ast T_i$ (the former as algebraic varieties and the latter as symplectic ones; for the latter note also that exact Lagrangian tori in $T^\ast T^2$ are unique up to Hamiltonian isotopy \cite{DimitroglouRizell-tori}). In this case, our mirror symmetry claim is classical.  In the case where $(Y_i, D_i)$ is not toric itself, the claim follows from the toric case together with compatibilities of HMS isomorphisms from \cite{HK1} under interior blow-ups / attaching the mirror Weinstein two-handles. 
\end{proof}

The following is then immediate: 
\begin{corollary}\label{cor:mirrors-to-points-Y}

 Let $T_i \subset M$ and $\iota_i: (\bC^\ast)^2 \hookrightarrow Y$ be as above, using our inclusions $M_i \subset M$ and $Y_i \subset Y$. Then under the isomorphism of Theorem \ref{thm:hms-footballs-wrapped}, the objects $(T_i, \mathfrak{s}_q)$ in $\cW(M)$, where as before $\mathfrak{s}_q$ is any brane datum, are mirror (up to shifts) to the structure sheaves $(\iota_i)_\ast \cO_q \in \Coh Y$, for suitable points $q \in (\bC^\ast)^2$. One of these is the identity $q_i$ in the complex torus $(\bC^*)^2=\bar{Y_i}\setminus \bar{D_i}$ determined by our choice of toric model.
\end{corollary}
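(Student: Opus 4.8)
The plan is to deduce this directly from Lemma \ref{lem:mirrors-to-points} by transporting the local correspondence along the compatibility built into Theorem \ref{thm:hms-footballs-wrapped}; indeed, both objects in the desired correspondence are the images, on their respective sides, of objects already matched at the level of the component $(Y_i,D_i)$. First I would record the two functoriality statements. On the A-side, the inclusion of Liouville sectors $M_i \hookrightarrow M$ from Definition \ref{def:M} induces a pushforward functor $\cW(M_i) \to \cW(M)$; since $T_i = \pi^{-1}(p_i)$ is a compact exact Lagrangian torus lying well inside $M_i$ (in the region where $\pi_i$ agrees with $\pi_{U_i}$), this functor carries the object $(T_i,\mathfrak{s}_q) \in \cW(M_i)$ to the object of $\cW(M)$ denoted by the same symbol, i.e.\ the same Lagrangian torus with brane datum, now regarded inside $M$. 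On the B-side, the irreducible component $Y_i$ is a closed subvariety of $Y$, and the associated closed embedding $j_i \colon Y_i \hookrightarrow Y$ induces a pushforward $(j_i)_\ast \colon \Coh Y_i \to \Coh Y$. Because $q$ lies in $(\bC^\ast)^2 \subset U_i = Y_i \setminus D_i$, it avoids the singular locus $D$, so $(j_i)_\ast$ sends the skyscraper sheaf of $q$ in $\Coh Y_i$ to the skyscraper sheaf of the same point $q$ in $\Coh Y$; this is exactly $(\iota_i)_\ast \cO_q$ with $\iota_i \colon (\bC^\ast)^2 \hookrightarrow Y$ the composite appearing in the corollary.

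Next I would invoke the compatibility clause of Theorem \ref{thm:hms-footballs-wrapped}, which asserts, up to $A_\infty$ homotopy, the commutativity of the square
$$
\xymatrix{
\cW(M_i) \ar[r]^-{\simeq} \ar[d] & \Coh Y_i \ar[d] \\
\cW(M) \ar[r]^-{\simeq} & \Coh Y
}
$$
in which the left vertical map is the sector-inclusion pushforward and the right vertical map is $(j_i)_\ast$. Evaluating this diagram on $(T_i,\mathfrak{s}_q)$: going right then down produces $(j_i)_\ast$ applied to the mirror of $(T_i,\mathfrak{s}_q)$ in $\Coh Y_i$, which by Lemma \ref{lem:mirrors-to-points} and the B-side computation above is $(\iota_i)_\ast \cO_q$ up to shift; going down then right produces the mirror in $\Coh Y$ of the image of $(T_i,\mathfrak{s}_q)$ in $\cW(M)$. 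Commutativity identifies these two, establishing that $(T_i,\mathfrak{s}_q) \in \cW(M)$ is mirror (up to shift) to $(\iota_i)_\ast \cO_q$. The final assertion, that one admissible point is the identity $q_i$ of the torus $(\bC^\ast)^2 = \bar{Y_i}\setminus \bar{D_i}$, is inherited verbatim from Lemma \ref{lem:mirrors-to-points}, since the sheaf-theoretic pushforward does not alter which point of $(\bC^\ast)^2$ is being named.

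The only step requiring any care is verifying that the sector-inclusion functor genuinely carries the compact object $(T_i,\mathfrak{s}_q)$ to the object denoted the same way in $\cW(M)$, rather than to some wrapped or deformed version; but this is immediate because $T_i$ is a fixed compact Lagrangian contained in the interior of $M_i$ and the inclusion $M_i \hookrightarrow M$ is the identity on a neighbourhood of it, so it restricts to the usual inclusion $\cF(M_i) \hookrightarrow \cF(M)$ of compact Fukaya categories on such objects. Consequently there is no genuine obstacle, and the corollary follows immediately by combining Lemma \ref{lem:mirrors-to-points} with the pushforward compatibility of Theorem \ref{thm:hms-footballs-wrapped}.
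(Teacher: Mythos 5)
Your proposal is correct and takes essentially the same route as the paper: the paper declares the corollary ``immediate'' from Lemma \ref{lem:mirrors-to-points}, and the intended deduction is exactly what you spell out, namely evaluating the pushforward-compatibility clause of Theorem \ref{thm:hms-footballs-wrapped} (the commuting square relating $\cW(M_i) \to \cW(M)$ and $\Coh Y_i \to \Coh Y$) on the object $(T_i,\mathfrak{s}_q)$. Your additional checks --- that the sector inclusion sends the compact brane to itself and that pushforward along the closed embedding sends the skyscraper of $q$ to the skyscraper of $q$ --- are precisely the routine verifications the paper suppresses.
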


\begin{remark}\label{rmk:mirrors-to-(-2)-curves-Y_i}
    We can also use \cite{HK1} to immediately get mirrors to the sheaves $i_\ast \cO_C (k)$, for any $C$ a $(-2)$ curve in $Y_i \backslash D_i$ and for any $k \in \bZ$; all of these are embedded Lagrangian $S^2$s, and, given any one of them, there is a choice of toric model such that it is fibred over an interval in $B_i$. See \cite[Lemma 4.15 and Proposition 5.2]{HK2}.
\end{remark}

\subsection{Mirrors to line bundles on $Y$}\label{sec:mirrors-to-line-bundles}
We now work under the additional assumption that the dual complex of $Y$ is a triangulation of $S^2$, rather than a general compact orientable surface $S$. 
The overall goal of this section is to identify the mirrors to all line bundles on $Y$ under the equivalent of Theorem \ref{thm:hms-footballs-wrapped}. These will precisely correspond to Lagrangians sections of $\pi: M \to S^2$, up to fibre preserving Hamiltonian isotopy. (See Remark \ref{rmk:line-bundles-higher-genus-speculations} for a brief discussion of the higher genus case.)

We will be analysing Lagrangian sections for a range of Lagrangian fibrations, many of them non-proper. If the base $B$ of the fibration is an annulus rather than a disc, we require these sections to be exact, in the sense that the primitive $\theta$ of the symplectic form integrates to zero around any lift of the waist curve of $B$. (In particular, for $B$ closed, we don't put a condition on $\int_{[c]} \theta$ for $c \in H_1(B, \partial B)$.)
Similarly if the base $B$ is a more general ribbon graph.

In general, we consider two (exact) Lagrangian sections to be equivalent if there is a smooth (exact) one parameter family of Lagrangian sections between them. If the fibration is proper, this is equivalent to having a fibre-preserving Hamiltonian isotopy of the total space taking one section to the other; in the non-proper case similar statements can be obtained but require care with cut-offs. This is a very natural equivalence notion:  we'll see that the fibrations we are interested in are all sufficiently well-behaved so that such equivalence classes of sections are classified by $H^1(R^1 f_! \underline{\bZ})$ (where $f$ is a placeholder fibration name and $\underline{\bZ}$ the constant integral sheaf on its total space).

\subsubsection{Classification of Lagrangian sections of $\pi_{D_i}$ and $\pi_D$}\label{sec:lag-sections-pi_D}

In order to analyse Lagrangian sections of $\pi_i$ (and eventually, $\pi$), one key step is to understand Lagrangian sections of $\pi_{D_i}$.

\begin{lemma}\label{lem:Lag-sections-of-S_ixT*[0,1]}
Consider the fibration $\pi_{D_i}:   M_{D_i} \to R_{D_i}$. 
Then exact Lagrangian sections of $\pi_{D_i}$, up to equivalence, are classified by $H^1 (R^1 (\pi_{D_i})_! \underline{\bZ} )$, where $\underline{\bZ}$ denotes the constant integral sheaf. 
The correspondence is given by taking a Lagrangian section $L$ to the class of $[L] - [L_0]$, where $L_0$ is our preferred section. Explicitly, 
$$H^1 (R^1 (\pi_{D_i})_! \underline{\bZ} ) \cong \ker ((\pi_{D_i})_\ast: H_1(S_i; \bZ) \to H_1 (S^1; \bZ)) \cong \bZ^{k_i},
$$
and $[L]-[L_0]$ is the obvious class generated by meridians of $S_i$.

Moreover, any exact Lagrangian section of $\pi_{D_i}$ can be deformed, through exact Lagrangian sections, to a `constant' section, given by an exact section $l$ of the fibration $S_i \to S^1$ times the zero section in $T^\ast [0,1]$; and $l$ can be taken to be equal to $l_0$ over the locus where  $S_i \to S^1$  is non-proper.
More generally, any exact Lagrangian section can be deformed to a constant section over a small neighbourhood of one boundary component of $R_{D_i}$, while keeping the section over a small neighbourhood of the other component of the boundary of $R_{D_i}$ unchanged.
    \end{lemma}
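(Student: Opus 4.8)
The plan is to exploit the product structure $M_{D_i}=S_i\times T^\ast[0,1]$, under which $\pi_{D_i}=\sigma\times\tau$ is the product of the fibration $\sigma\colon S_i\to S^1$ (with circle or $\bR$ fibres) and the cotangent projection $\tau\colon T^\ast[0,1]\to[0,1]$, and to reduce the whole problem to the one-dimensional factor $\sigma$. The base is the annulus $R_{D_i}\simeq S^1\times[0,1]$, whose waist curve is the core circle, and I will use the explicit Liouville form (restricting to $p\,dq$ on the $T^\ast[0,1]$ factor) throughout.

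\textbf{Step 1 (straightening the cotangent factor).} Given an exact Lagrangian section $L$, I would first deform it, through exact sections, so that its component in the $T^\ast[0,1]$-direction is the zero section. Fibrewise over $S^1$, a section of $\tau$ is the graph of a closed — hence, since $H^1([0,1])=0$, exact — one-form $g(t)\,dt$, and the family $t\mapsto (1-u)\,g(t)\,dt$ contracts it to $0$; performing this contraction smoothly in the $S^1$- and $S_i$-parameters keeps the family exact, as the waist-curve periods pick up only the vanishing contribution $\int p\,dt$. After this, $L$ lies in $S_i\times(\text{zero-section})$, i.e. it is a $[0,1]$-family of sections $l_t$ of $\sigma$; as $[0,1]$ is contractible this family deforms (through exact sections) to a constant one $l\times\{0\}$. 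This produces the ``constant section'' normal form asserted in the penultimate paragraph of the lemma.

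\textbf{Step 2 (sections of $\sigma$, and the non-proper locus).} I would then classify exact Lagrangian sections $l$ of $\sigma\colon S_i\to S^1$ up to exact homotopy. Over the proper locus (circle fibres), the fibrewise comparison of $l$ with the preferred section $l_0$ records a winding/translation in the $S^1$-fibre; over the non-proper locus — the $\bR$-fibres sitting over the punctures of $S_i$ — the conical-at-infinity and exactness conditions rigidify $l$, forcing it to agree with $l_0$ up to a compactly supported isotopy. This is precisely the claim that $l$ may be taken equal to $l_0$ there, and it is the reason the classifying invariant is compactly supported along the fibres. The resulting complete invariant is the difference class $[l]-[l_0]$; since both $l$ and $l_0$ project to the fundamental class of the base, this lies in $\ker\big(\sigma_\ast\colon H_1(S_i;\bZ)\to H_1(S^1;\bZ)\big)$, which is generated by the meridian (fibre) class and the loops around the punctures, hence $\cong\bZ^{k_i}$. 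Surjectivity I would establish by constructing, for each such class, an explicit section obtained by displacing $l_0$ across the relevant meridians; injectivity by contracting the difference once the class vanishes.

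\textbf{Step 3 (matching with $H^1(R^1(\pi_{D_i})_!\underline{\bZ})$) and the relative refinement.} To identify $\ker\sigma_\ast$ with the stated sheaf-cohomology group I would run the Künneth/Leray computation for the product fibration: since $R^0\tau_!\underline{\bZ}=0$ and $R^1\tau_!\underline{\bZ}\cong\underline{\bZ}$, one gets $R^1(\pi_{D_i})_!\underline{\bZ}\cong R^0\sigma_!\underline{\bZ}$ pulled back from $S^1$ to the annulus, where $R^0\sigma_!\underline{\bZ}=j_!\underline{\bZ}$ for $j$ the inclusion of the proper locus; the long exact sequence of $0\to j_!\underline{\bZ}\to\underline{\bZ}_{S^1}\to\underline{\bZ}_Z\to 0$, with $Z$ the $k_i$-point non-proper locus, then gives $H^1\cong\bZ^{k_i}$, matching $\ker\sigma_\ast$ (the connecting map is the diagonal $\bZ\to\bZ^{k_i}$, with $\coker\cong\bZ^{k_i-1}$, extended by the $\bZ=H^1(S^1)$). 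The final relative statement I would obtain by rerunning the contractions of Steps 1--2 after cutting off the homotopy so that it is supported near one boundary component of $R_{D_i}$, leaving the section untouched near the other.

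The step I expect to be the main obstacle is the rigorous treatment of the non-proper locus in Step 2: proving that ``exact and conical at infinity'' genuinely rigidifies a section along the $\bR$-fibres, so that the only surviving deformation freedom is compactly supported along the fibres (hence recorded by $R^1(\pi_{D_i})_!$ rather than $R^1(\pi_{D_i})_\ast$), and that the equivalence by exact homotopies-through-sections is exactly the topological invariant above. Maintaining the exactness condition (vanishing of $\theta$ around lifts of the waist curve) under all the contractions and cut-offs, and controlling the behaviour near the punctures where the fibre topology jumps, is where the genuine care is required; the remainder is essentially Künneth bookkeeping.
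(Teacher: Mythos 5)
There is a genuine gap, and it is in your Step 1, not in the non-proper-locus analysis you flag as the main obstacle. Scaling the $T^\ast[0,1]$-component of a Lagrangian section does not preserve the Lagrangian condition, because the two factors of $M_{D_i}=S_i\times T^\ast[0,1]$ are coupled by that condition. Concretely, write a section as $L(s,t)=\big(x(s,t),\,p(s,t)\big)$ with $x(s,t)\in\sigma^{-1}(s)\subset S_i$ and $p(s,t)$ the cotangent coordinate; then $L$ is Lagrangian if and only if
$$\omega_{S_i}\big(\partial_s x,\partial_t x\big)+\partial_s p \;=\;0 .$$
Replacing $p$ by $(1-u)p$ while leaving $x$ untouched destroys this identity whenever $\partial_t x\not\equiv 0$, i.e.\ precisely in the cases where there is anything to prove. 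The fibre component $p$ is not independent data: up to constants it is the generating Hamiltonian of the isotopy $t\mapsto x(\cdot,t)$ (this is exactly the suspension picture). Indeed, the identity above shows that a Lagrangian section with $p\equiv 0$ automatically has $\partial_t x\equiv 0$: ``straightening the cotangent factor'' is therefore not a preliminary reduction but is equivalent to the full statement, and your proposed homotopy exits the class of Lagrangian submanifolds. The second half of your Step 1 (contracting the family $l_t$ ``since $[0,1]$ is contractible'') has the same defect: a deformation of the $S_i$-component must be accompanied by the corresponding fibre component to stay Lagrangian, so contractibility of the base alone proves nothing.

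The paper's proof repairs exactly this point. One first shows, by Stokes' theorem applied to the primitive $\theta_{S_i}\oplus p\,dq$ on the slices $S^1\times\{t\}$, that the path of Lagrangians $L_t\subset S_i$ obtained by projecting the slices of $L$ has zero flux; hence $L_t=\phi_t(L_0)$ for a Hamiltonian isotopy $\phi_t$ generated by some $\bH=\{H_t\}$, and $L$ is the Hamiltonian suspension $\Phi_{\bH}(L_0)$. The deformation to a constant section is then obtained by rescaling the Hamiltonian, $\bH\mapsto\lambda\bH$: this simultaneously slows the isotopy of the $S_i$-component and scales the fibre component, so it moves through exact Lagrangian sections, which is what your decoupled scaling cannot do. Your Step 3 (the K\"unneth/Leray identification $R^1(\pi_{D_i})_!\underline{\bZ}\cong\bigoplus_j i_!\underline{\bZ}_{I_j\times[0,1]}$ and the resulting $H^1\cong\bZ^{k_i}$) is correct and agrees with the paper's computation; but the geometric core of the lemma is the suspension/flux argument, and without it Steps 1--2 do not go through.
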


The statements above apply for $M_{D_i}$ whether or not we have taken a `finite height' truncation in the direction of the cylindrical infinite boundary. (This will be clear from the proof, noting that the inverse Liouville flow intertwines the fibration here, and so takes exact Lagrangian sections to exact Lagrangian sections.)

\begin{proof}
For notational simplicity, set $\pi = \pi_{D_i}$, and $M  = M_{D_i}$.
Let's first check that $H^1 (R^1 \pi_! \underline{\bZ}) \cong \bZ^{k_i}$ as claimed. Recall that $R^1 \pi_! \underline{\bZ}_M$ is calculated by taking  $H^1_c (\pi^{-1}(\pt); \bZ)$. Let $I_1, \ldots, I_{k_i}$ be disjoint open intervals of $S^1$.
From the definitions, we get that 
$$
R^1 \pi_! \underline{\bZ}_M = \bigoplus_{j=1}^{k_i} i_!\underline{\bZ}_{I_j \times [0,1]}
$$
on the base $S^1 \times [0,1]$. Here there is one summand for each meridian of $S^1$, and $i$ denotes the inclusion $I_j \times [0,1] \hookrightarrow S^1 \times [0,1]$. 
The claim then reduces to calculating $H^1$ of the sheaf $\bigoplus_{j=1}^{k_i} i_! \underline{\bZ}_{I_j}$ on $S^1$, which is standard.

We now want to show that the map $L \mapsto [L]-[L_0]$, the group $H^1 (R^1 \pi_! \underline{\bZ})$ classifies equivalence classes of exact Lagrangian section of $\pi$. 
First note that given any class in  $H^1 (R^1 \pi_! \underline{\bZ})$, have representative given by taking a `constant' exact Lagrangian section $L \times [0,1] $, where $L$ is an exact Lagrangian $S^1$ in $S_i$ and $[0,1] \subset T^\ast [0,1]$ is the zero-section.

Recall the suspension construction for Lagrangians. Suppose that that $L$ is a Lagrangian in a symplectic manifold $X$ and $\{ \phi_t \}_{t \in [0,1]} $ is a Hamiltonian isotopy of $X$ generated by a time-dependent Hamiltonian $\bH = \{ H_t \}_{t \in [0,1]}$. Consider the function
\begin{eqnarray*}
    \Phi: L \times [0,1] & \to &    X \times \bC_{0 \leq \Re z \leq 1}  \\
 (x, t) & \mapsto  &(\phi_t (x), t + i H_t (\phi_t (x)) 
\end{eqnarray*}
Then $ \Phi_{\bH} (L) \coloneq \Phi(L \times [0,1])$ is a Lagrangian submanifold in $X \times \bC_{0 \leq \Re z \leq 1}$. 
Moreover, suppose we have a real-valued function $h(x,t)$ such that $\{ (\phi_t (x), t + i  h(x,t)) \, | \, x \in L, t \in [0,1] \} $ is Lagrangian in $X \times T^\ast[ 0,1]$. Then we must have   $h(x,t) = G_t(\phi_t (x))$, for some time-dependent Hamiltonian $\{ G_t \}_{ t \in [0,1] }$ whose flow restricted to $L$ agrees with $\phi_t$.  

In our case, recall that 
$M_{D_i} \simeq S_i \times T^\ast [0,1]$. 
Suppose $\cL$ is an exact Lagrangian section of $\pi$.
Let $L_t \subset S_i$ be the Lagrangian given by restricting $\cL$ to $S_i \times T^\ast_t[0,1]$ then projecting to the first factor. Call the embeddings $i_t: S^1 \times \{ t\} \to S_i \times T^\ast[0,1]$. 
Say $\theta_{S^i}$ and $\theta_{T^\ast[0,1]}$ are the (standard, given) primitives of the symplectic forms on $S^i$ and on $T^\ast [0,1]$. 

By Stokes' theorem,
$$
\int_{S^1 \times \{ 0\}} i_0^\ast (\theta_{S^i} \oplus \theta_{T^\ast[0,1]} ) =
\int_{S^1 \times \{ t\}} i_t^\ast (\theta_{S^i} \oplus \theta_{T^\ast[0,1]} )
$$
On the other hand, for any $t$, 
$$
\int_{S^1 \times \{ t\}} i_t^\ast \theta_{T^\ast[0,1]}  = 0
$$
as $\theta_{T^\ast[0,1]} = p dq $ where under the pullback $q$ is the constant $t$.
This means that for each $t$, the flux between $L_0$ and $L_t$ is zero. Thus there exists a Hamiltonian isotopy $\phi_t$ of $S_i$ taking $L_0$ to $L_t$, and we have $\cL = \phi_{\bH} (L_0)$ for some Hamiltonian function $\bH$  as above. Now for any $\lambda \in [0,1]$, set 
$\lambda \bH \coloneq \{ \lambda H_t \}_{t \in [0,1]}$, and let $\cL^\lambda$ be the Lagrangian given by $\phi_{\lambda \cH} (L_0)$. Then $\cL^{(1-\lambda)}$ gives a one-parameter family of Lagrangian sections of $\pi$ from $\cL = \cL^1$ to the constant Lagrangian section $L_0 \times \{ 0 \}$.
Moreover, note that if $\cL$ was exact, then each $\cL^\lambda$ is also immediately exact (and indeed, the $L_t$ are exact Lagrangians in $S_i$). 
\end{proof}

More generally, we'll need to classify Lagrangian section of $\pi_D$. First, note the following immediate corollary of Lemma \ref{lem:Lag-sections-of-S_ixT*[0,1]}.

\begin{corollary}\label{cor:sections-pi-P^1}
Lagrangian sections of $\pi_{\bP^1}: M_{\bP^1} \to [0,1]^2$ are classified up to equivalence by $$ H^1 \left( R^1 (\pi_{\bP^1})_! \underline{\bZ}_{ M_{\bP^1} } \right) \cong \bZ $$
where the correspondence is again given by taking a Lagrangian section $L$ to the class of $[L]-[L_0]$. 

Moreover, any Lagrangian section of $\pi_{\bP^1}$ can be deformed through Lagrangian sections to a constant section, i.e. the product of a section of $(T^\ast S^1)^-$ and the zero-section of $T^\ast [0,1]$. Deformations to constant sections can also obtained over a small neighbourhood of any side of the base, while keeping the section constant over a small neighbourhood of the opposite side. 
\end{corollary}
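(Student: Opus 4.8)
The plan is to run the suspension-and-flux argument from the proof of Lemma~\ref{lem:Lag-sections-of-S_ixT*[0,1]} essentially verbatim, now writing $M_{\bP^1} = (T^\ast S^1)^- \times T^\ast[0,1]$ and letting $(T^\ast S^1)^-$ play the role that the surface $S_i$ played there. The only structural difference is that $(T^\ast S^1)^-$ fibres over the \emph{interval} $[0,1]$ under $\pi_{[0,1]}$ rather than over a circle, and this is exactly what turns the $\bZ^{k_i}$ of the Lemma into a single $\bZ$; the product structure, the splitting of the Liouville form as $p\,dq$ on the $T^\ast[0,1]$-factor, and the suspension construction are all identical.

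First I would record the sheaf computation. Writing $(t,s)$ for coordinates on $[0,1]^2$, with $t$ the base of the $T^\ast[0,1]$-factor and $s$ the base of $\pi_{[0,1]}$, the fibre of $\pi_{\bP^1}$ is $\bR \times S^1$ for $s \in (0,1)$ and $\bR \times \bR$ over the two ends $s \in \{0,1\}$, where $\pi_{[0,1]}$ becomes non-proper. Since $H^1_c(\bR \times S^1) \cong \bZ$ while $H^1_c(\bR^2)=0$, the sheaf $R^1(\pi_{\bP^1})_! \underline{\bZ}$ is the extension by zero $j_! \underline{\bZ}$ for the inclusion $j \colon [0,1]\times(0,1) \hookrightarrow [0,1]^2$ of the (open in the $s$-direction) locus carrying the $S^1$-fibres. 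Feeding the short exact sequence $0 \to j_!\underline{\bZ} \to \underline{\bZ}_{[0,1]^2} \to \underline{\bZ}_{[0,1]\times\{0,1\}} \to 0$ into cohomology, and using that $[0,1]\times\{0,1\}$ has two contractible components, the long exact sequence gives $H^0(j_!\underline{\bZ})=0$ and $H^1(R^1(\pi_{\bP^1})_!\underline{\bZ}) \cong \coker(\bZ \to \bZ^2) \cong \bZ$, with the generator recording the winding of a section about the $S^1$-fibres. It is precisely the stabilisation by $T^\ast[0,1]$ that replaces the line fibres $\bR$ at the ends (which have $H^1_c \cong \bZ$) by planes $\bR^2$ with $H^1_c = 0$, so that the sheaf is cleanly an extension by zero; cf.~Remark~\ref{rmk:stabilisation-needed}.

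For the classification itself I would, exactly as in Lemma~\ref{lem:Lag-sections-of-S_ixT*[0,1]}, restrict a section $\cL$ of $\pi_{\bP^1}$ over the slices $\{t\}$ of the $T^\ast[0,1]$-base to obtain a family $L_t$ of sections of $\pi_{[0,1]}$ inside $(T^\ast S^1)^-$. The $T^\ast[0,1]$-factor $p\,dq$ of the Liouville form pulls back to zero on each $\{t\}$-slice, where $q=t$ is constant, so Stokes' theorem shows the flux between $L_0$ and $L_t$ vanishes; hence the $L_t$ are Hamiltonian isotopic and $\cL$ is the suspension of $L_0$. Scaling the generating Hamiltonian to zero then deforms $\cL$ through Lagrangian sections to a constant section $l \times \{0\}$, reducing the problem to classifying Lagrangian sections of $\pi_{[0,1]}\colon (T^\ast S^1)^- \to [0,1]$ up to equivalence. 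The latter is an elementary two-dimensional question: such a section is a curve meeting each fibre once, and two of them are equivalent if and only if they have the same winding about the central $S^1$-region, so they are classified by $\bZ$. This matches the target group and shows that $L \mapsto [L]-[L_0]$ is the claimed bijection. The refinement — deforming to a constant section near one chosen side of $[0,1]^2$ while leaving it unchanged near the opposite side — follows by supporting the suspension Hamiltonian (and the isotopy of $(T^\ast S^1)^-$) away from the relevant boundary region, precisely as in the localised version of the Lemma.

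Everything above is routine; the point requiring the most care — though it is already settled in the proof of Lemma~\ref{lem:Lag-sections-of-S_ixT*[0,1]} and in the sectorial constructions of \cite{GPS1} — is the bookkeeping at the non-proper ends, where the fibre is the line $\bR$ and $(T^\ast S^1)^-$ carries genuine finite boundary: one must check that the Hamiltonian isotopies remain conical at infinity and do not push Lagrangians across the finite boundary, so that suspensions of sections are again sections. Since no exactness condition is imposed here (the base $[0,1]^2$ is a disc), this is the only genuine subtlety, and it is handled identically to the Lemma.
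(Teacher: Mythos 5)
Your proposal is correct and takes essentially the same route as the paper: the paper states this corollary as an immediate consequence of Lemma~\ref{lem:Lag-sections-of-S_ixT*[0,1]}, i.e.\ precisely the suspension-and-flux argument you run, with $(T^\ast S^1)^-$ playing the role of $S_i$ and the winding number of a section of $\pi_{[0,1]}$ giving the $\bZ$. One minor slip worth noting: the locus of $\bR\times S^1$ fibres is a proper sub-rectangle $[0,1]\times(a,b)$ rather than the full interior $[0,1]\times(0,1)$, since the fibres are planes over whole neighbourhoods of the two non-proper sides and not just over those edges (compare the description ``$[a,b]\times[0,1]$'' in the proof of Proposition~\ref{prop:sections-of-pi_D}); this is immaterial here because $H^1_c\bigl([0,1]\times(a,b)\bigr)\cong H^1_c\bigl([0,1]\times(0,1)\bigr)\cong\bZ$, so both your sheaf computation and the classification argument go through unchanged.
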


As before, the corollary holds irrespective of whether we have taken finite height open truncation of $M_{\bP^1}$ in the direction of the cylindrical infinite boundary.

\begin{proposition} \label{prop:sections-of-pi_D}
    Suppose that $D$ is any decorated graph configuration of $\bP^1$s, and let $M_D$ be its mirror Weinstein sector from Section \ref{sec:HMS-for-D}, with its Lagrangian fibration $\pi_D: M_D \to R_D$. Then exact Lagrangian sections of $\pi_D: M_D \to R_D$, up to equivalence, are classified by 
$$
H^1 (R^1 (\pi_D)_! \underline{\bZ}_{M_D}) \cong \bZ^{| e(G_D)|}
$$
where the equivalence is given by mapping $L$ to the class of $[L] - [L_0]$ and, explicitly, 
 there is one generator for each edge of the graph $G_D$.

Moreover, any equivalence class of exact Lagrangian section of $\pi_D$ has a representative which is constant in the following sense: it's equal to $L_0$ whenever the fibre of $\pi_D$ is contractible (in particular, over the discs $D^2_v$), and given by a constant section for each $\pi_{\bP^1, e}$ piece. There is a unique such representative up to deformation through sections satisfying these conditions. `Relative' versions of these statements also hold, e.g.~for representatives equal to constant sections over neighbourhoods of the boundary components of the ribbon graph $R_D$.
 \end{proposition}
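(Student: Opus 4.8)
The plan is to reduce the classification for $\pi_D$ to the already-established local classifications for $\pi_{\bP^1, e}$ (Corollary \ref{cor:sections-pi-P^1}) and the triviality of sections over contractible fibres, using a Mayer--Vietoris / sheaf-cohomology computation together with a gluing argument for Lagrangian sections. First I would compute the cohomology group $H^1(R^1(\pi_D)_! \underline{\bZ}_{M_D})$. As in the proof of Lemma \ref{lem:Lag-sections-of-S_ixT*[0,1]}, the sheaf $R^1(\pi_D)_! \underline{\bZ}_{M_D}$ on the ribbon $R_D$ is supported where the fibre is non-contractible (i.e.~an $\bR \times S^1$ rather than an $\bR^2$), which happens precisely over the edge-pieces $R_{\bP^1, e}$; over each such piece it restricts to the extension-by-zero of a constant rank-one sheaf $i_! \underline{\bZ}$, exactly as in the $\bP^1$ case. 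Since $R_D$ is a ribbon graph with the vertex discs $D^2_v$ contributing contractible fibres, a \v{C}ech/Mayer--Vietoris computation covering $R_D$ by the vertex discs and the edge strips shows each edge contributes one free generator and the vertices impose no relations, giving $H^1 \cong \bZ^{|e(G_D)|}$ with one generator per edge.

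Next I would establish the bijection with equivalence classes of exact Lagrangian sections via the \emph{normal-form} (``constant representative'') statement, which I would prove first and then use to deduce the classification. Given an exact Lagrangian section $L$ of $\pi_D$, I would straighten it piece by piece: over each vertex disc $D^2_v$ the fibre is contractible (a cotangent fibration of a disc), so any exact Lagrangian section is Hamiltonian-isotopic through exact sections to $L_0$, and moreover this can be arranged relative to the boundary of a slightly smaller disc, by the standard fact that exact sections of $T^\ast D^2$ are graphs of exact one-forms $df$ and one can interpolate $f$ to a function constant near the boundary. Over each edge piece, the ``relative'' form of Corollary \ref{cor:sections-pi-P^1} lets me deform $L$ to a constant section while keeping it unchanged (hence equal to $L_0$) near the two ends abutting the vertex discs. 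Carrying these out consistently over the whole ribbon yields a representative equal to $L_0$ over every contractible fibre and constant over each $\pi_{\bP^1,e}$, and the relative statements allow the same over prescribed boundary neighbourhoods of $R_D$.

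For the bijectivity itself I would argue as follows. The map $L \mapsto [L]-[L_0] \in H^1(R^1(\pi_D)_! \underline{\bZ}_{M_D})$ is well-defined on equivalence classes because a one-parameter family of exact sections induces a constant class (flux vanishes, exactly as in Lemma \ref{lem:Lag-sections-of-S_ixT*[0,1]}). Surjectivity is immediate from the description of generators: each edge generator is realised by a constant section differing from $L_0$ by the appropriate meridian class over that edge-piece, and these can be superimposed since their supports are essentially disjoint up to the contractible vertex regions where everything agrees with $L_0$. For injectivity I would use the normal form: two sections with the same class can each be put in the constant normal form, and equality of their classes forces the edge-wise constant sections to agree (each edge's contribution is detected faithfully by the corresponding $\bZ$-summand via Corollary \ref{cor:sections-pi-P^1}), so after the normalising isotopies the two sections coincide up to deformation, and uniqueness of the normal form up to such deformation gives injectivity.

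The main obstacle I expect is the \emph{globalisation} of the piecewise straightening: deforming $L$ to constant form over each vertex disc and each edge strip is easy locally, but I must ensure these local isotopies are compatible on the overlaps $T^\ast D^2_{\pm,e}$ so that they patch to a genuine global exact Lagrangian isotopy, and that at no stage do I disturb the exactness condition (the vanishing of $\theta$ around lifts of waist curves) on the annular edge-pieces. This is precisely why the relative versions of Lemma \ref{lem:Lag-sections-of-S_ixT*[0,1]} and Corollary \ref{cor:sections-pi-P^1} were stated: I would sequence the straightening so that each edge-piece is normalised first relative to one end, then the vertex discs, proceeding outward along a spanning structure of the ribbon, using the ``keep one boundary neighbourhood fixed'' clauses to avoid undoing earlier work. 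Verifying that this ordering terminates and that the accumulated isotopy remains through exact sections is the delicate bookkeeping step, but it is routine once the relative normal forms are in hand.
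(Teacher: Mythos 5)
Your proposal is correct and follows essentially the same route as the paper's proof: reduce to Corollary \ref{cor:sections-pi-P^1} on each edge piece $M_{\bP^1,e}$ (using its relative form to make sections agree with the zero-section over the overlaps $T^\ast D^2_{\pm,e}$), glue over the vertex discs where the fibres are contractible, and compute $H^1(R^1(\pi_D)_!\underline{\bZ}_{M_D})$ as a direct sum of rank-one sheaves supported over the non-proper (edge) loci, one $\bZ$ per edge. The only difference is presentational — you establish the constant normal form first and deduce the classification from it, while the paper runs the restriction/gluing correspondence first — and your extra care about ordering the relative straightenings is exactly the bookkeeping the paper compresses into "the resulting section is clearly well-defined up to deformation."
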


\begin{proof}
  Say $D = \cup_e \bP^1_e$, where $\bP^1_e$ is the copy of $\bP^1$ corresponding to edge $e$ of $G_D$. 
Recall that $M_D$ is given by the pushout of
$$
\xymatrix{
 \bigsqcup_e T^\ast D^2_{\pm, e}    \ar[r] \ar[d]  & \bigsqcup_v T^\ast D^2_v  \\
\bigsqcup_e  M_{\bP^1,e}   & 
}
$$
which is compatible with the Lagrangian fibrations  $\pi_{\bP^1,e}: M_{\bP^1,e} \to [0,1]^2$ and the projections of cotangent bundles onto their zero-sections. 
By restriction, any equivalence class of exact Lagrangian section of $\pi_D$ gives a collection of equivalence classes of Lagrangian sections  of the $\pi_{\bP^1,e}$, for all $e \in e(G_D)$.
Conversely, using Corollary \ref{cor:sections-pi-P^1}, we see that given any collection of Lagrangian sections  of the $\pi_{\bP^1,e}$, they can be deformed to be equal to the zero-section in each $T^\ast D^2_{\pm, e}$, and then glued together to get a Lagrangian section of $\pi_D$. Moreover, we can arrange our deformations so that the resulting Lagrangian section is exact (this can be done by hand, or using arguments from the proof of Lemma \ref{lem:Lag-sections-of-S_ixT*[0,1]}). The resulting exact Lagrangian section of $\pi_D$ is clearly well-defined up to deformation through exact Lagrangian sections. Thus exact Lagrangian sections are classified by $\bZ^{|e(G_D)|}$. The claim about deforming to constant sections follows from the corresponding statement in Corollary \ref{cor:sections-pi-P^1} (see also Lemma \ref{lem:Lag-sections-of-S_ixT*[0,1]} for relative versions).

It remains to check that we have an isomorphism $H^1 (R^1 (\pi_D)_! \underline{\bZ}_{M_D}) \cong \bZ^{|e(G_D)|}$ as claimed. This is extremely similar to the calculation of $ H^1 (R^1 (\pi_{D_i})_! \underline{\bZ}_{M_{D_i}})  \cong \bZ^{k_i}$ in the proof of Lemma \ref{lem:Lag-sections-of-S_ixT*[0,1]}. In this case,  we have that
$$
R^1  (\pi_{D})_! \underline{\bZ}_{M_{D}})  = \bigoplus_e i_! \underline{\bZ}_{[a,b] \times [0,1]}
$$
with $[a,b] \times [0,1] \subset [0,1]^2$ is the locus in the base of $\pi_{\bP^1, e}$ above which the fibres are non-contractible. The claim then follows.
\end{proof}

\subsubsection{Classification of Lagrangian sections of $\pi_i$}\label{sec:lag-sections-pi_i-classificatoi}

Consider the fibration $\pi_i : M_i \to B_i$ from Lemma \ref{lem:fibration-on-M_i}. Without loss of generality, it has no critical points in a closed annulus $B_i[- 2 \epsilon,  - \epsilon]$ (so the fibration above that annulus is simply a thickened mapping torus of the total integral affine monodromy of $B_i$). 

Let $A \subset B_i$ denote the closed annulus $B_i [ - 2\epsilon, 0]$, and let $\pi_A: M_A \to A$ denote the restriction of $\pi_i : M_i \to B_i$ to $A$. 

\begin{corollary}\label{cor:sections-of-buffer}
The exact Lagrangian sections of $\pi_A$, 
up to deformation through exact Lagrangian sections,
are classified by $H^1 (R^1 (\pi_A)_! \underline{\bZ}_{M_A})$; as before, the correspondence is given by mapping a Lagrangian section $L$ to the class $[L]-[L_0]$, where $L_0$ is our preferred reference section. 
\end{corollary}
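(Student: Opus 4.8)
The plan is to follow the same template as Lemma~\ref{lem:Lag-sections-of-S_ixT*[0,1]} and Proposition~\ref{prop:sections-of-pi_D}, adapting it to accommodate the one genuinely new feature of $\pi_A$ compared with those cases: the presence of a region of honest (compact) torus fibres. First I would use Lemma~\ref{lem:fibration-on-M_i} to split the annulus $A = B_i[-2\epsilon, 0]$ into two pieces. Over the outer collar $B_i[-\epsilon_1, 0]$ the fibration $\pi_A$ agrees with $\pi_{D_i}\colon M_{D_i} \to R_{D_i}$, for which exact Lagrangian sections are already classified by Lemma~\ref{lem:Lag-sections-of-S_ixT*[0,1]}. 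Over the inner annulus $B_i[-2\epsilon, -\epsilon]$, where by hypothesis $\pi_i$ has no critical points, $\pi_A$ restricts to a proper Lagrangian $T^2$-fibration, i.e.~a thickened mapping torus for the total integral affine monodromy $T \in SL_2(\bZ)$ of $B_i$. The critical-point-free transition region where the torus and $\pi_{D_i}$ descriptions adjoin will serve as the overlap along which the two classifications are glued.

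The core analytic input is the same flux/suspension argument as in Lemma~\ref{lem:Lag-sections-of-S_ixT*[0,1]}. Over the proper torus region I would introduce local action-angle coordinates (available precisely because there are no critical points) to present an exact Lagrangian section, relative to $L_0$, as a fibrewise-closed $1$-form well-defined modulo the fibrewise period lattice; the Lagrangian condition makes this form closed on the base, and exactness, via Stokes' theorem over loops in the base annulus exactly as in the cited lemma, forces the relevant flux to vanish. This lets me deform any exact Lagrangian section, through exact Lagrangian sections, to a normal form which is $L_0$ over the torus region, so that over the buffer the equivalence classes are governed by $H^1$ of the base annulus with coefficients in the monodromy-twisted local system of fibre cohomology. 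To glue with the $\pi_{D_i}$ piece I would invoke the \emph{relative} deformation statements of Lemma~\ref{lem:Lag-sections-of-S_ixT*[0,1]} and Corollary~\ref{cor:sections-pi-P^1}: any exact section can be put in normal form near one boundary component while being left unchanged near the other, so the two normal forms can be matched over the critical-point-free transition region.

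It then remains to identify the combined set of equivalence classes with $H^1(R^1(\pi_A)_! \underline{\bZ}_{M_A})$ and to verify that the bijection is $L \mapsto [L] - [L_0]$. This I would do by a constructible-sheaf computation on the base, exactly as in the proofs of Lemma~\ref{lem:Lag-sections-of-S_ixT*[0,1]} and Proposition~\ref{prop:sections-of-pi_D}: the stalk of $R^1(\pi_A)_!\underline{\bZ}$ is $H^1_c(T^2)\cong \bZ^2$ over the torus region and drops to $H^1_c$ of $\bR \times S^1$ or $\bR^2$ as the fibres open up over the $\pi_{D_i}$ region, and a Mayer-Vietoris computation over the decomposition of $A$ assembles these into the stated $H^1$. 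Surjectivity of $L \mapsto [L]-[L_0]$ then follows by realising each class by an explicit section (constant $1$-forms modulo periods over the torus region, together with the meridian sections of Lemma~\ref{lem:Lag-sections-of-S_ixT*[0,1]} over the $\pi_{D_i}$ region), while injectivity is precisely the normal-form/flux argument above.

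The main obstacle is the nontrivial affine monodromy $T$ of the buffer torus fibration. Unlike $\pi_{D_i}$, $\pi_D$ and $\pi_{\bP^1}$, which carried a global product structure $S_i \times T^\ast[0,1]$ making the suspension construction immediate, the torus region is only locally a product, so the flux/suspension deformation must be carried out in local action-angle charts and then globalised in a monodromy-equivariant way. The relative deformation statements already established for the constituent pieces are exactly what make this globalisation possible, which is why the result can be packaged as a corollary rather than requiring a fresh argument.
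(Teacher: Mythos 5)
Your proposal breaks down at its central geometric claim: that exactness forces the restriction of a section to the compact-torus region to be deformable to $L_0$. This is false. The Stokes/flux argument only kills the \emph{real-valued} flux around the waist; the \emph{integral} classes in $H^1$ of the inner annulus with coefficients in the period local system (the coinvariants $\bZ^2/\mathrm{im}(W-I)$, where $W$ is the total affine monodromy) survive, because every such class admits exact representatives: translating a section by $s$ times the angular $1$-form of the base annulus (in a fixed fibre direction) preserves the Lagrangian condition and the homotopy class while shifting $\oint_{\mathrm{waist}}\theta|_L$ by an arbitrary real number. (The paper itself uses this observation -- ``any other class must have an exact representative'' -- in the proof of Proposition \ref{prop:sections-of-pi_i}.) Concretely, take $Y_i$ toric, so $M_{U_i}=T^\ast T^2$ and $W=I$: the exact section of $\pi_A$ realising the $j$-th meridian generator of $\bZ^{k_i}$ restricts over the torus region to a section that winds in the direction of the ray $v_j\neq 0$ around the waist, and no deformation whatsoever, exact or not, can bring it to $L_0$ there, since winding is a homotopy invariant. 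These are exactly the sections the corollary must account for, so your normalisation step erases the group you are trying to compute. Your write-up is in fact internally inconsistent on this point: in the same sentence you claim the normal form is $L_0$ over the torus region and that the classification there is governed by the monodromy-twisted $H^1$, which would be vacuous if the former held. Once the torus-region classification is nontrivial, your gluing step must also confront the ambiguity of matching equivalence classes over the overlap -- this is the $\ker(W-I)$ term appearing in the paper's proof of Proposition \ref{prop:sections-of-pi_i} -- and the relative deformation statements alone do not resolve it.

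The paper's actual proof avoids analysing the torus region altogether, and this is the idea your proposal is missing. The non-proper product piece $M_{A'}\cong S_i\times D^\ast[-\epsilon_1,0]$ extends to an embedding $S_i\times D^\ast[-2\epsilon,0]\hookrightarrow M_A$ fibring over \emph{all} of $A$ (its fibres are open subsets of the $\pi_A$-fibres), and radial translations of the base annulus give exact symplectic inclusions sliding this product region across $A$. This shows that restriction to the outer collar induces a bijection on equivalence classes of exact sections, reducing the whole classification to Lemma \ref{lem:Lag-sections-of-S_ixT*[0,1]} (Step 1 of the paper's proof). The identification of $H^1(R^1(\pi_A)_!\underline{\bZ}_{M_A})$ with $H^1(R^1(\pi_{D_i})_!\underline{\bZ}_{M_{D_i}})$ is then a separate sheaf-theoretic computation, using the excision-type short exact sequence $0\to i_!\underline{\bZ}_{M'}\to\underline{\bZ}_{M_A}\to j_\ast\underline{\bZ}_N\to 0$ and the vanishing $H^0(R^1h_\ast\underline{\bZ}_N)=H^1(R^1h_\ast\underline{\bZ}_N)=0$, rather than a Mayer--Vietoris patching of fibrewise $H^1_c$'s (Step 2).
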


\begin{proof}
Our proof has two components. First, we show that there's a natural one-to-one correspondence between equivalence classes of exact Lagrangian sections of $\pi_A$ and exact Lagrangian sections of $\pi_{D_i}$ (Step 1). And separately, we prove that there's a natural isomorphism  $H^1 (R^1 (\pi_A)_! \underline{\bZ}_{M_A}  )
\cong H^1 (R^1 (\pi_{D_i})_! \underline{\bZ}_{M_{D_i}}  )
$ (Step 2).

\underline{Step 1.} 
Let $A'$ denote the closed annulus $ B_i[ - \epsilon_1,  0]$. By construction in Lemma \ref{lem:fibration-on-M_i}, the fibration $\pi_i: M_i \to B_i$ restricts on $A'$ to a fibration $\pi_{A'}: M_{A'} \to A'$ which is just a copy of the fibration $S_i \times D^\ast [-\epsilon_1,0] \to S^1 \times [-\epsilon_1,0]$, whose Lagrangian sections we classified in Lemma \ref{lem:Lag-sections-of-S_ixT*[0,1]}.

Suppose we're given an equivalence class of Lagrangian sections of $\pi_A$. By restriction, we get an equivalence class of Lagrangian sections of $\pi_{A'}$. We want to show that this is one-to-one.
Consider the inclusion $S_i \times D^\ast [-\epsilon_1,0] \cong M_{A'} \hookrightarrow M_A $. We can extend this trivially over $B_i[-2\epsilon, - \epsilon_1]$ to get, say,  $S_i \times D^\ast [-2\epsilon, 0] \hookrightarrow M_A$, such that $\pi_A$ restricted to $S_i \times D^\ast [-2\epsilon, 0]$ simply gives the usual fibration, with the base viewed as $S^1 \times [-2\epsilon, 0]$. 
Now using Lemma \ref{lem:Lag-sections-of-S_ixT*[0,1]}, any family of exact Lagrangian section of $\pi_{A'}$ can be extended to a family of exact Lagrangian sections of $\pi_A: S_i \times D^\ast [-2\epsilon, 0] \to B_i[- 2\epsilon, 0]$, 
and therefore a section of $\pi_A: M_A \to A $. 

On the other hand, given $\pi_A: M_A \to A $, we can also trivially extend this fibration to a larger annulus, by gluing onto the `outer' vertical boundary a fibration of the form, say, $S_i \times D^\ast [0,2\epsilon] \to S^1 \times [0,2\epsilon]$. 
Call $\pi_{A+}$ the resulting extended fibration. 
Using Lemma \ref{lem:Lag-sections-of-S_ixT*[0,1]},  any family of exact Lagrangian sections of $\pi_A$, say $L_t$, $t \in [0,1]$ can be extended to a family of exact Lagrangian sections of $\pi_{A+}$, say $L^+_t$.
On the other hand, for a fixed $t$, we can use $L^+_t$ to write down a one-parameter family of exact Lagrangian sections of $\pi_A$ starting with $L^0 \coloneq L_t$ and ending with a section $L^1$ which lies inside $S_i \times D^\ast [-2\epsilon, 0] \subset M_A$. 
To see this, notice that for $ \lambda \geq 0$, radial translation of base annuli give exact symplectic inclusions $\pi_{A+} ^{-1} (B[- 2\epsilon+ \lambda, f( \lambda)]) \hookrightarrow \pi_{A+} ^{-1} (B[- 2\epsilon,  0])$ (for some monotonically increasing function $f$), intertwining the obvious maps of bases, and agreeing with  $S_i \times D^\ast  [-2\epsilon, 0] \subset M_A$ for $\lambda = 2 \epsilon$. 
Now use images of (the restrictions of) $L^+_t$ for increasing $\lambda$ to get the desired one-parameter family of exact Lagrangian sections.
Thus we indeed have a one-to-one correspondence between equivalence classes of Lagrangian sections of $\pi_A$ and of Lagrangian sections of $\pi_{A'}$.

\underline{Step 2.}
Consider the inclusion $S_i \times D^\ast [-2\epsilon, 0] \hookrightarrow M_A$ from above. For notational simplicity, set $M \coloneq M_A$, $M' = S_i \times D^\ast [-2\epsilon, 0]$, and let $i: M' \hookrightarrow M$ be the open inclusion. Let $N$ denote $M \backslash M'$, and $j: N \hookrightarrow M$ be the (closed) inclusion. Denote $\pi_A$ by $f$, and let $g$ be its restriction to $M'$ and $h$ its restriction to $N$. 
There's a short exact sequence of constructible sheaves on $M$:
$$
0 \lra i_! \underline{\bZ}_{M'} \lra \underline{\bZ}_{M} \lra  j_\ast \underline{\bZ}_{N} \lra 0.
$$

 We have that  $Rf_! \circ j_\ast = Rf_! \circ R j_\ast  = Rh_\ast$. Also, in our case, the open embedding $i$ is such that the closure of $M'$ in $M$ is a manifold with boundary $\bar{M}'$ (with interior $M'$, and such that we can find smooth charts on $M$ which restrict to manifold-with-boundary charts for $\bar{M}'$); from definitions this implies that $R i_! = i_!$, and so 
 and $Rf_! \circ i_! = Rg_!$. 
 Thus applying $R f_!$ to the short exact sequence of sheaves above, we get a long exact sequence of sheaves
 $$
 \ldots \to f_! \underline{\bZ}_{M} \to h_\ast \underline{\bZ}_N \to 
 R^1 g_! \underline{\bZ}_{M'} \to R^1 f_! \underline{\bZ}_{M}
 \to R^1 h_\ast \underline{\bZ}_N
 \to 
 R^2 g_! \underline{\bZ}_{M'} \to R^2 f_! \underline{\bZ}_{M} 
 \to \ldots
 $$
Now notice that $ f_! \underline{\bZ}_{M} \to h_\ast \underline{\bZ}_N $ is surjective (the fibres of $h$ are all connected or empty). 
Also, any fibre $G$ of $g$ has $H^2_c(G, \bZ) \cong \bZ$ 
and any fibre $F$ of $f$ has $H^2_c(F, \bZ) \cong \bZ$, 
and we get that the map $R^2 g_! \underline{\bZ}_{M'} \lra R^2 f_! \underline{\bZ}_{M} $ is an isomorphism. 
This means that we are left with a short exact sequence of constructible sheaves:
 $$
 0 \lra
 R^1 g_! \underline{\bZ}_{M'} \lra R^1 f_! \underline{\bZ}_{M}
 \lra R^1 h_\ast \underline{\bZ}_N
 \lra 0 
 $$
 Now take the associated cohomology long exact sequence. We get
 $$
 \ldots \lra H^0 (R^1 h_\ast \underline{\bZ}_N) \lra H^1 (R^1 g_! \underline{\bZ}_{M'})
 \lra H^1 (R^1 f_! \underline{\bZ}_{M}) \lra  H^1 (R^1 h_\ast \underline{\bZ}_N) \lra \ldots
 $$
To compute $R^1 h_\ast \underline{\bZ}_N$, we want the $\bZ$-cohomology groups of fibres of $h$; these can be $0, \bZ$ or $\bZ^2$. Moreover, by direct observation, we see that
$$
R^1 h_\ast \underline{\bZ}_N \cong \bigoplus_{j=1}^{k_i} l_! \big( \underline{\bZ}_{B^j } \big)
$$
where for each $j$, $l: B^j \hookrightarrow A$ is the inclusion of a half-open disc (for any $j$) as in Figure \ref{fig:decomposition-constructible-sheaves-2}.

\begin{figure}
    \centering
    \includegraphics[width=0.5\linewidth]{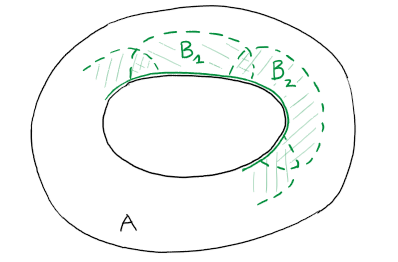}
    \caption{Decomposition of the support of  $R^1 h_\ast \underline{\bZ}_N$ in the proof of Corollary \ref{cor:sections-of-buffer} }
    \label{fig:decomposition-constructible-sheaves-2}
\end{figure}

A standard calculation then gives
$H^0 (R^1 h_\ast \underline{\bZ}_N)  = H^1(R^1 h_\ast \underline{\bZ}_N)  = 0$, 
and so the exact sequence above reduces to an isomorphism
$$
H^1 (R^1 g_! \underline{\bZ}_{M'})
\xrightarrow{\cong} H^1 (R^1 f_! \underline{\bZ}_{M}).
$$
This completes the proof.
\end{proof}

\begin{proposition}\label{prop:sections-of-pi_i}
Equivalence classes of Lagrangian sections of $\pi_i$  are classified by
$$
H^1 (R^1 (\pi_i)_! \underline{\bZ}_{M_i} ),
$$
where the correspondence is given by taking a Lagrangian section $L$ to the class of $[L]-[L_0]$. 
\end{proposition}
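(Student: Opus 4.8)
The plan is to upgrade the buffer-annulus classification of Corollary~\ref{cor:sections-of-buffer} to the whole disc $B_i$ by a Mayer--Vietoris argument, the only genuinely new ingredient being the focus--focus (nodal) fibres of $\pi_i$, which all lie in the interior $B_i[-\epsilon]$. I would cover $B_i$ by the buffer annulus $A=B_i[-2\epsilon,0]$ and the inner disc $B_i^\circ:=B_i[-\epsilon]$; these overlap in $B_i[-2\epsilon,-\epsilon]$, over which $\pi_i$ is a smooth thickened mapping torus. First observe that exactness is automatic here: a Lagrangian section over the disc $B_i$ restricts to an \emph{exact} section over $A$, since the waist of $A$ bounds in $B_i$ and so its flux vanishes by Stokes. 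Hence Corollary~\ref{cor:sections-of-buffer} classifies the restriction to $A$ by $H^1(R^1(\pi_A)_!\underline{\bZ}_{M_A})$, with matching on the overlap governed by Lemma~\ref{lem:Lag-sections-of-S_ixT*[0,1]}, and the problem reduces to controlling sections over $B_i^\circ$ and their restriction to the overlap.

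On the geometric side, the crucial local input is a rigidity statement near each node: over a small disc $\Delta$ around a single focus--focus point, every Lagrangian section of $\pi_i$ is equivalent, through Lagrangian sections, to the restriction of $L_0$. I would prove this from the standard local normal form for an almost-toric nodal fibration (\cite[Section~6.6]{Evans}): after a fibre-preserving isotopy a section can be made to agree with $L_0$ away from a neighbourhood of the invariant ray, and the residual deformation is unobstructed because $\Delta$ is contractible and the fibrewise flux across the vanishing cycle is controlled. Granting this, I would put an arbitrary section into a normal form that equals $L_0$ near every node and matches the constant sections of Corollary~\ref{cor:sections-of-buffer} over the buffer, so that its equivalence class is determined by its restriction to the smooth overlap annulus together with the monodromy identifications imposed by the nodes.

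On the sheaf side I would run the parallel computation for $H^1(R^1(\pi_i)_!\underline{\bZ}_{M_i})$ following Step~2 of Corollary~\ref{cor:sections-of-buffer}: decompose $M_i$ into the product collar and its complement and use the associated excision short exact sequence of constructible sheaves, with the contributions now supported both on the meridian loci of the buffer and on the invariant rays through the nodes (the analogues of the $i_!\underline{\bZ}$ summands computed there). Comparing the resulting cohomology long exact sequences for the two pieces, the assignment $L \mapsto [L]-[L_0]$ intertwines the geometric gluing with the sheaf-theoretic coboundary; a five-lemma diagram chase then promotes the annulus isomorphism of Corollary~\ref{cor:sections-of-buffer} to the isomorphism claimed over all of $B_i$.

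The main obstacle is precisely the local rigidity near the focus--focus fibres: one must show that a Lagrangian section transverse to a nodal fibre carries no invariant beyond what the stalk of $R^1(\pi_i)_!\underline{\bZ}$ at the node records, i.e.\ that sections cannot wind essentially around the vanishing cycle. Once this nodal normal form is established, everything else is a matter of recombining the deformation arguments of Lemma~\ref{lem:Lag-sections-of-S_ixT*[0,1]} and Corollary~\ref{cor:sections-of-buffer} with the constructible-sheaf bookkeeping already developed, and the global classification follows formally.
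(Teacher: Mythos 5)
Your cover of $B_i$ by the buffer annulus and the inner disc, and the plan to compare a geometric gluing with a Mayer--Vietoris sheaf sequence, is exactly the architecture of the paper's proof. But you have left unproved precisely the step that carries the mathematical content, and this is a genuine gap. The local rigidity near a focus--focus fibre, which you yourself flag as ``the main obstacle'', is not established by your sketch: ``unobstructed because $\Delta$ is contractible and the fibrewise flux across the vanishing cycle is controlled'' is a restatement of the claim, not an argument. Rigidity is not automatic from contractibility of the base: over the punctured disc $\Delta^\ast$ around a node the monodromy is $W=\left(\begin{smallmatrix}1&1\\0&1\end{smallmatrix}\right)$, so sections up to equivalence over $\Delta^\ast$ form a group $H^1(S^1,\mathcal{L})\cong \bZ^2/\operatorname{im}(W-I)\cong\bZ$; sections genuinely can wind in the vanishing-cycle direction away from the node, and what must be shown is that no nonzero winding class extends across the nodal fibre. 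The paper never proves this locally at all; instead it imports \cite[Lemma 4.3]{HK2}, which classifies Lagrangian sections of an almost-toric fibration over a disc with focus--focus singularities (applied to the whole inner disc $B_i[-\epsilon]$ at once) by $\ker (\smallfrown F_B \colon H_2(M_B,\partial M_B)\to\bZ)$, identified with $H^1(R^1(\pi_B)_\ast\underline{\bZ}_{M_B})$ via Leray. Either cite that result, as the paper does, or give a genuine analysis of the local model; as written, your proof has a hole exactly where the difficulty lies.

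There is a second, subtler problem with your ``five-lemma diagram chase'': equivalence classes of sections over the two pieces that agree on the overlap annulus $C=B_i[-2\epsilon,-\epsilon]$ do \emph{not} glue uniquely. The set of gluings is a torsor over the exact sections of $\pi_C$ that are fixed near $\partial C$, taken up to fibre-preserving Hamiltonian isotopy supported in the interior of $C$; the paper computes this via \cite[Proposition 4.7]{HK2} to be $H_2(M_C;\bZ)/(\gamma_C\cdot\bZ)\cong\ker(W-I)$, with $W$ the total monodromy of $\pi_C$. This is not formal bookkeeping one can bury in a chase: the term $\ker(W-I)\to H^1(R^1\pi_!\underline{\bZ})$ is the first map of the paper's exact sequence, and matching its image against the geometric gluing ambiguity is what makes the correspondence bijective rather than merely surjective. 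Without it (and without the nodal input above), your argument establishes at best that the map $L\mapsto[L]-[L_0]$ hits every class, not that it separates equivalence classes.
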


\begin{proof}

For notational convenience, set $\widetilde{B} \coloneq B_i$, 
$B \coloneq B_i[-\epsilon]$, and $A \coloneq B_i[ -2\epsilon, 0]$ as before; and set $C \coloneq B_i [- 2\epsilon, -\epsilon] = A \cap B$.  
Let $M \coloneq M_i$, and let $M_A$, $M_B$, and $M_C$ be the subsets of $M$  mapping to $A, B$ and $C$ under $\pi_i$;
finally, we will denote $\pi_i$ simply by $\pi$, and by $\pi_A$, $\pi_B$ and $\pi_C$ the obvious restrictions.

We already have classifications of equivalence classes of (exact) Lagrangian sections over each of $A$ and $B$:
\begin{enumerate}
\item By Corollary \ref{cor:sections-of-buffer}, exact Lagrangian sections of $\pi_A$ are classified by $H^1 (R^1 (\pi_A)_! \underline{\bZ}_{M_A})$. 

\item By \cite[Lemma 4.3]{HK2}, Lagrangian sections of $\pi_B$ are classified by 
$$ \ker (\smallfrown F_B: H_2(M_B, \partial M_B) \to \bZ),$$ 
where $F_B$ is the class of a smooth fibre of $\pi_B$. Using the Leray sequence, this is the same as $H^1 (R^1 (\pi_B)_\ast \underline{\bZ}_{M_B})$.
\end{enumerate}
In each case the correspondence is given by taking a Lagrangian section $L$ to the class of $[L] - [L_0]$. For Lagrangian sections of $\pi_C$, we can apply \cite[Proposition 6.69]{Clay2}. 
This shows that each class in $H^1 (R^1 (\pi_C)_\ast \underline{\bZ})$ has a Lagrangian representative, and that for any given class, if there is an exact Lagrangian representative, it is unique up to fibrewise Hamiltonian isotopy. Then, using the fact that the Lagrangian $L_0$ is exact, we can see that any other class must have an exact representative (e.g.~using the developing map for $C$ and differentials of linear functions). 
Thus $H^1 (R^1 (\pi_C)_\ast \underline{\bZ})$ classifies equivalence classes of exact Lagrangian sections of $\pi_C$.

Consider the short exact sequence of sheaves on $\widetilde{B}$:
$$
    0 \lra R^1 \pi_! \underline{\bZ}_M 
\lra R^1 (\pi_B)_\ast \underline{\bZ}_{M_B} \oplus  R^1 (\pi_A)_! \underline{\bZ}_{M_A} 
\lra R^1 (\pi_C)_\ast \underline{\bZ}_{M_C}
\lra 0
$$

The corresponding long exact sequence in cohomology gives
\begin{multline*}
    \ldots 
\lra H^0 (R^1 (\pi_B)_\ast \underline{\bZ} ) \oplus H^0 (R^1 (\pi_A)_! \underline{\bZ} ) 
\lra H^0 (R^1 (\pi_C)_\ast \underline{\bZ} )  
\lra H^1 (R^1 \pi_! \underline{\bZ}) \\
\lra  H^1 (R^1 (\pi_B)_\ast \underline{\bZ} ) \oplus H^1 (R^1 (\pi_A)_! \underline{\bZ} ) 
\lra H^1 (R^1 (\pi_C)_\ast \underline{\bZ} ) 
\lra H^2 (R^1 \pi_! \underline{\bZ})
\lra \ldots
\end{multline*}

where for legibility we have omitted the domains of constant $\underline{\bZ}$ sheaves, which are clear from context. 
As $H^3 (M; \bZ) = 0$, the Leray sequence implies that $ H^2 (R^1 \pi_! \underline{\bZ})
 = 0$. 
Also, from definitions, we have that $H^0 (R^1 (\pi_C)_\ast \underline{\bZ} )  = \ker ( W - I)$, where $W \in SL_2(\bZ)$ is the monodromy of $\pi_C$ (i.e.~$M_C$ is the mapping torus of $W$ acting on $T^2 = \bR^2 / \bZ^2$, times an interval). Thus the long exact sequence reduces to an exact sequence
\begin{multline}\label{eq:gluing-sections-for-pi^-_i}
   \ker (W-I) \  
\lra H^1 (R^1 \pi_! \underline{\bZ})
\lra  H^1 (R^1 (\pi_B)_\ast \underline{\bZ} ) \oplus H^1 (R^1 (\pi_A)_! \underline{\bZ} ) 
\lra H^1 (R^1 (\pi_C)_\ast \underline{\bZ} ) 
\lra 0    
\end{multline}

Suppose we're given (equivalence classes of) a Lagrangian section of $\pi_B$ and a Lagrangian section of $\pi_A$, such that their restrictions to $C$ agree (again, as equivalence classes). Then we can glue them to give a Lagrangian section of the whole of $\pi$. However, because we're working with equivalence classes, there may be more than one possible gluing: indeed, there are as many choices of gluings as there are exact Lagrangian sections of $\pi_C$ which agree with a fixed exact Lagrangian section at the boundary of $C$, up to fibre-preserving Hamiltonian isotopy \emph{on the interior} of $C$. (All this uses is that fibre-preserving Hamiltonian isotopies can be cut-off using bump functions on the base of the fibration.)
Using \cite[Proposition 4.7]{HK2}, these choices of gluings are classified by $ H_2(M_C; \bZ) / ( \gamma_C \cdot \bZ )$, where $\gamma_C$ is a fibre of $\pi_C$. (Note that the proposition is stated in terms of Lagrangian fibrations with base a disc, but the proof carries over essentially verbatim; and we're using the fact that fixing any reference Lagrangian section of $\pi_C$ equips the space of all Lagrangian sections with an additive structure.) 
In our case, recalling that $M_C$ deformation retracts onto the mapping torus of $W$, we get that
$
H_2 (M_C; \bZ) / ( \gamma_C \cdot \bZ) \cong \ker (W - I).
$
In particular, we've now precisely matched things up with Equation \ref{eq:gluing-sections-for-pi^-_i}: we see that equivalence classes Lagrangian sections of $\pi$ must be in one-to-one correspondence with $H^1 (R^1 \pi_! \underline{\bZ})$.    
\end{proof}

\begin{remark}
    Lagrangian sections of $\pi_i$ have boundaries on the finite boundary of the Liouville sector $M_i$. In particular, they are not (or at least, not readily) objects of the wrapped Fukaya category $\cW(M_i)$, which only allows conical boundaries on $\partial_\infty M_i$. Nevertheless, these sections are natural from the perspective of SYZ mirror symmetry. We will later (geometrically) glue them to get well-defined compact Lagrangian sections of $\pi$, themselves objects of $\cW(M)$.  
\end{remark}

\subsubsection{Relation between line bundles on $Y_i$ and sections of $\pi_i$}
\label{sec:relation-PicY_i-sections-pi_i^--classifications}

Let $(\widetilde{Y}_i, \widetilde{D}_i)$ be the log CY2 surface obtained by starting with $(Y_i, D_i)$ and doing one interior blow up on each irreducible component of $D_i$ (still at the distinguished points). 
Let $\widetilde{U}_i$ denote $\widetilde{Y}_i \backslash \widetilde{D}_i$. 
Let $M_{\widetilde{U_i}}$ be the mirror Weinstein domain, with almost toric fibration $\pi_{\widetilde{U}_i}:  M_{\widetilde{U}_i} \to \widetilde{B}_i$. 
By sliding one nodal fibre on each invariant ray until they lie in $\widetilde{B}_i[-\epsilon_1, 0]$, we see that $\widetilde{B}_i$  can obtained from $\pi_{U_i}: M_{U_i} \to B_i $ replacing modifying the fibration over the annulus $B_i [-\epsilon_1, 0]$ by adding $k_i$ nodal fibres. 
Let $x_j \in \widetilde{B}_i$ be the nodal point corresponding to the additional blow-up of the $j$th component of $D_i$.
Moreover, notice that we have a commutative diagram
$$
\xymatrix{
M_i \ar[d]_{\pi_i} \ar[r]^{\iota} & M_{\widetilde{U}_i} \ar[d]_{\pi_{\widetilde{U}_i}} \\
B_i \ar[r]_{\cong} & \widetilde{B}_i 
}
$$
where $\iota$ is a symplectic embedding, and $\cong$ is a diffeomorphism which is an isomorphism of integral affine manifolds from $B_i[-\epsilon]$ to $\widetilde{B}_i[-\epsilon]$. 
(Note that because of non-compact fibres, $M_i$ only determines an integral affine structure on $B_i$ for the subset $B_i [-\epsilon]$.)
To help visualise things, the inclusion of $M_i$ into $M_{\widetilde{U}_i}$ over the annulus $\widetilde{B}_i [-\epsilon_1, 0] $ (in which $M_i$ restricts to $S_i \times D^\ast [0,1]$) is given in Figure \ref{fig:M--inclusion-to-tildeM-piece}.

\begin{figure}
    \centering
    \includegraphics[width=0.5\linewidth]{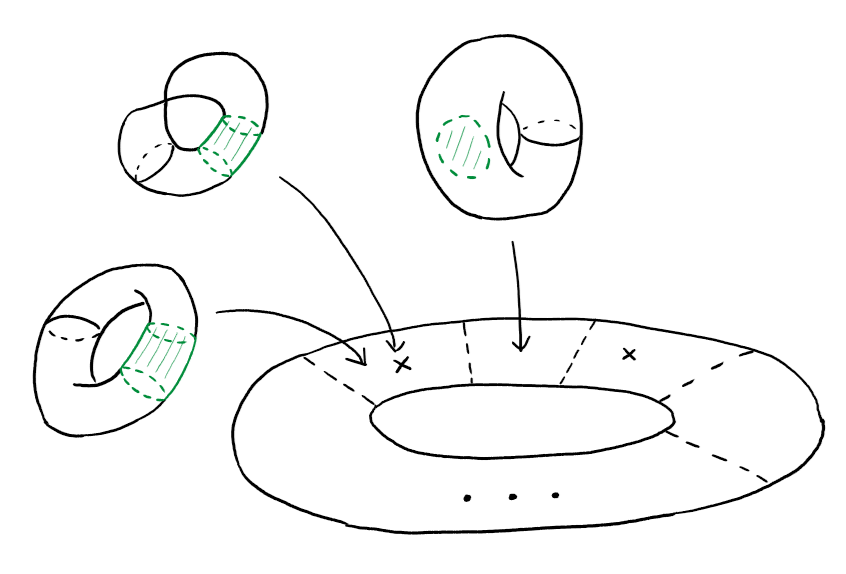}
    \caption{$\iota: M_i \hookrightarrow M_{\widetilde{U}_i}$, restricting to the annulus $\widetilde{B}_i [-\epsilon_1, 0] $. $M_i$ is shaded in green, with the dashes denoting open subsets. (The base is divided up according to the topology of the restriction of $M_i$ in each fibre.)}
    \label{fig:M--inclusion-to-tildeM-piece}
\end{figure}

\begin{proposition}\label{prop:lag-sections-of-pi_i^-and-widetilde-pi_i}
The map $\iota$ induces a one-to-one correspondence between equivalence classes of Lagrangian sections of $\pi_i$ and equivalence classes of Lagrangian sections of $\pi_{\widetilde{U}_i}$.
\end{proposition}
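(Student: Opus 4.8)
The plan is to classify the Lagrangian sections on each side by a sheaf-cohomology group and to show that $\iota$ induces an isomorphism between these groups, compatibly with the normalization $L \mapsto [L]-[L_0]$. On the $M_i$ side, Proposition \ref{prop:sections-of-pi_i} already gives that equivalence classes of Lagrangian sections of $\pi_i$ are classified by $H^1(R^1 (\pi_i)_! \underline{\bZ}_{M_i})$. On the $M_{\widetilde{U}_i}$ side, $\pi_{\widetilde{U}_i}$ is a proper almost-toric fibration over the disc $\widetilde{B}_i$, so \cite[Lemma 4.3]{HK2} applies verbatim and classifies its Lagrangian sections by $\ker(\smallfrown F : H_2(M_{\widetilde{U}_i}, \partial M_{\widetilde{U}_i}) \to \bZ) \cong H^1(R^1 (\pi_{\widetilde{U}_i})_\ast \underline{\bZ}_{M_{\widetilde{U}_i}})$, again via the Leray sequence. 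First I would fix the preferred sections so that $\iota$ carries the distinguished $L_0$ of $\pi_i$ onto the restriction of the distinguished $L_0$ of $\pi_{\widetilde{U}_i}$; this is immediate from the constructions, since both arise from the distinguished section mirror to $\cO$ and $\iota$ covers the affine isomorphism over $B_i[-\epsilon]$.

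With the two classifications in hand, I would realize the correspondence by extension across the nodal fibres. Given a class of Lagrangian sections of $\pi_i$, use the relative form of Corollary \ref{cor:sections-of-buffer} (equivalently Lemma \ref{lem:Lag-sections-of-S_ixT*[0,1]}) to pick a representative that is constant, equal to $L_0$, near $\partial B_i$. Over $B_i[-\epsilon] = \widetilde{B}_i[-\epsilon]$ the two fibrations agree, so there the section is already a section of $\pi_{\widetilde{U}_i}$; over the annulus it extends across the $k_i$ nodal fibres $x_j$ of $\pi_{\widetilde{U}_i}$ precisely because it agrees there with $L_0$, which passes through the nodal fibres. The inverse is obtained by normalizing a section of $\pi_{\widetilde{U}_i}$ to be constant near the boundary and restricting it to $M_i$. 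Independence of the chosen representative, and bijectivity, then reduce to the statement that $\iota$ induces an isomorphism $H^1(R^1 (\pi_i)_! \underline{\bZ}_{M_i}) \xrightarrow{\sim} H^1(R^1 (\pi_{\widetilde{U}_i})_\ast \underline{\bZ}_{M_{\widetilde{U}_i}})$ compatible with $[L]-[L_0]$.

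To prove this cohomological isomorphism I would decompose the base exactly as in the proof of Proposition \ref{prop:sections-of-pi_i}, writing $\widetilde{B}_i = B \cup A$ with $B = B_i[-\epsilon]$, $A = B_i[-2\epsilon,0]$ and overlap $C = B_i[-2\epsilon,-\epsilon]$, and running the Mayer--Vietoris / Leray sequences for both $R^1 (\pi_i)_! \underline{\bZ}$ and $R^1 (\pi_{\widetilde{U}_i})_\ast \underline{\bZ}$. Over $B$ the two fibrations coincide (the extra nodes were slid into $\widetilde{B}_i[-\epsilon_1,0] \subset A$ by the construction in Lemma \ref{lem:fibration-on-M_i}), and there $R^1 (\pi_i)_! \underline{\bZ} = R^1 (\pi_{\widetilde{U}_i})_\ast \underline{\bZ}$ since $\pi_i$ is proper on $B$; over $C$, which lies inside $B_i[-\epsilon]$ and hence is node-free, both restrict to the same proper $T^2$-mapping-torus region, so the gluing term $\ker(W-I)$ is identical on the two sides. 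Thus the entire comparison localizes to the annulus: I must match $H^1(R^1 (\pi_A)_! \underline{\bZ}_{M_A}) \cong \bZ^{k_i}$ coming from Corollary \ref{cor:sections-of-buffer} with $H^1$ of the ordinary $R^1$-pushforward for the proper almost-toric annular fibration carrying the $k_i$ nodes $x_j$.

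The main obstacle is exactly this annular comparison, where one side uses compactly-supported pushforward of a \emph{non-proper} fibration with $S_i$-type fibres and the other uses ordinary pushforward of a \emph{proper} fibration with $k_i$ added focus-focus fibres. The reconciliation is a local computation near each $x_j$: the monodromy around a single node is one shear, so the invariant-cycle contribution of $R^1 (\pi_{\widetilde{U}_i})_\ast \underline{\bZ}$ near $x_j$ is free of rank one, matching the rank-one meridian contribution of $R^1 (\pi_{D_i})_! \underline{\bZ}$ at the $j$-th puncture of $S_i$; summing over the $k_i$ irreducible components of $D_i$ gives $\bZ^{k_i}$ on both sides. The identification of generators under $\iota$ is the correspondence between the $j$-th meridian of $S_i$ and the node $x_j$ visible in Figure \ref{fig:M--inclusion-to-tildeM-piece}, which also shows that the map respects $[L]-[L_0]$. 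Feeding this annular isomorphism back into the two Mayer--Vietoris sequences then yields the isomorphism of the full $H^1$ groups, and hence the claimed bijection of sections.
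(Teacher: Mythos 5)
Your skeleton coincides with the paper's: classify sections of $\pi_i$ by $H^1(R^1(\pi_i)_!\underline{\bZ}_{M_i})$ (Proposition \ref{prop:sections-of-pi_i}), classify sections of $\pi_{\widetilde{U}_i}$ by $\ker(\smallfrown F) \cong H^1(R^1(\pi_{\widetilde{U}_i})_\ast\underline{\bZ}_{M_{\widetilde{U}_i}})$ via \cite[Lemma 4.3]{HK2} and the Leray sequence, and reduce everything to showing that $\iota$ induces an isomorphism between these two groups. (Two asides: your forward map needs no ``extension across the nodal fibres'' --- since the base identification $B_i \cong \widetilde{B}_i$ is a diffeomorphism of the whole discs, composing a Lagrangian section of $\pi_i$ with $\iota$ already yields a Lagrangian section of $\pi_{\widetilde{U}_i}$; and your ``inverse by normalizing a section to be constant near the boundary'' cannot serve as an independent geometric step, since whether a section of $\pi_{\widetilde{U}_i}$ can be pushed off the complement of $M_i$ is precisely what the cohomological isomorphism asserts. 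You implicitly concede both points by reducing bijectivity to the sheaf statement.)

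The genuine gap is in your proof of that sheaf statement. After your Mayer--Vietoris reduction, you must show that the map induced by $\iota$ from $H^j(R^1(\pi_A)_!\underline{\bZ}_{M_A})$ to $H^j$ of the proper annular pushforward is an isomorphism, and your justification --- that near each node the ``invariant-cycle contribution is free of rank one'', matching the meridian contribution, and that these ``sum'' to $\bZ^{k_i}$ --- is not an argument. Cohomology of a constructible sheaf on an annulus is not a sum of local contributions: on the proper side the sheaf is the full direct image of a rank-two local system whose global monodromy $W$ around the annulus can contribute to $H^1$ (already with no nodes at all, $H^1$ of such a local system is $\coker(W-I)$, which is nonzero when $W$ is a shear). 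That both sides equal $\bZ^{k_i}$ and that the natural map identifies them is true, but it needs proof; moreover the five lemma applied to your two Mayer--Vietoris sequences also requires the $H^0$ comparison over $A$, which you never discuss, and it requires the isomorphism over $A$ to be the one induced by $\iota$, which cannot be established by pointing at Figure \ref{fig:M--inclusion-to-tildeM-piece}. The paper supplies exactly the missing mechanism, and does so with no base decomposition at all: writing $N = M_{\widetilde{U}_i} \setminus \iota(M_i)$, with $j \colon N \hookrightarrow M_{\widetilde{U}_i}$ closed and $\iota$ open, it pushes forward the short exact sequence $0 \to \iota_!\underline{\bZ}_{M_i} \to \underline{\bZ}_{M_{\widetilde{U}_i}} \to j_\ast\underline{\bZ}_N \to 0$ along $f = \pi_{\widetilde{U}_i}$, uses $Rf_\ast \circ \iota_! = Rg_!$ and $Rf_\ast \circ j_\ast = Rh_\ast$, checks that the end terms of the resulting long exact sequence degenerate to leave $0 \to R^1 g_!\underline{\bZ}_{M_i} \to R^1 f_\ast \underline{\bZ}_{M_{\widetilde{U}_i}} \to R^1 h_\ast \underline{\bZ}_N \to 0$, and then computes $R^1 h_\ast\underline{\bZ}_N \cong \bigoplus_j i_!\underline{\bZ}_{B^j \setminus \{x_j\}}$, whose $H^0$ and $H^1$ vanish. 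This yields the isomorphism, manifestly induced by $\iota$ and manifestly compatible with $[L]-[L_0]$, over the whole disc in one step. If you insist on the annular reduction, the only honest way to prove the annular comparison is to run this same total-space argument over $A$, at which point the Mayer--Vietoris layer is pure overhead and should be discarded.
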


Note that as the base of the fibration is a disc, any Lagrangian section is automatically exact. 

\begin{proof}
To simplify notation, we denote spaces by $M \coloneqq M_i$, $\widetilde{M} \coloneqq M_{\widetilde{U}_i}$, $N = \widetilde{M}  \backslash M$ and $B \coloneqq \widetilde{B}_i \cong D^2$; and maps by $f \coloneqq \widetilde{\pi}_i : \widetilde{M} \to B$, $g \coloneqq {\pi}_i : M \to B$, and $h: N \to B$ the restriction of $f$. We have an inclusion $\iota: M \hookrightarrow \widetilde{M}$, which is open, and an inclusion $j: N \hookrightarrow \widetilde{M}$, which is closed.

First,  by \cite[Lemma 4.3]{HK2}, Lagrangian sections $L$ of $f$ are in one-to-one correspondence with 
$$
\ker (\smallfrown F: H^2(\widetilde{M}, \partial \widetilde{M}) \to \bZ)
$$
where $F$ denotes the class of a smooth fibre of $f$. This is given by taking $L$ to the class $[L]-[L_0]$. Now notice that by the Leray sequence, there's a natural identification $$
\ker (\smallfrown F: H^2(\widetilde{M}, \partial \widetilde{M}) \to \bZ ) \cong  H^1 (R^1 f_\ast \underline{\bZ}_{\widetilde{M}}).$$

This means we need to compare the two classifying spaces for Lagrangian sections.
Start with the short exact sequence of sheaves on $\widetilde{M}$:
$$
0 \lra \iota_! \underline{\bZ}_{M} \lra \underline{\bZ}_{\widetilde{M}} \lra j_\ast \underline{\bZ}_N \lra 0
$$
Now apply $R f_\ast$ to this. We have that $Rf_\ast \circ j_\ast = Rh_\ast$; also, similarly to the proof of Corollary \ref{cor:sections-of-buffer}, in this case we get that $R \iota_! = \iota_!$ $Rf_\ast \circ \iota_! = Rg_!$, we get a long exact sequence of sheaves
 $$
 \ldots \to f_\ast \underline{\bZ}_{M} \to h_\ast \underline{\bZ}_N \to 
 R^1 g_! \underline{\bZ}_{M} \to R^1 f_\ast \underline{\bZ}_{\widetilde{M}}
 \to R^1 h_\ast \underline{\bZ}_N
 \to 
 R^2 g_! \underline{\bZ}_{M} \to R^2 f_\ast \underline{\bZ}_{\widetilde{M}} 
 \to \ldots
 $$
Now notice that $ f_\ast \underline{\bZ}_{M} \to h_\ast \underline{\bZ}_N $ is surjective (it's the restriction $\underline{\bZ}_{B} \to \underline{\bZ}_{A} $, for $A$ a closed annulus). Also, any fibre $G$ of $g$ has $H^2_c(G, \bZ) \cong \bZ$, and any fibre $F$ of $f$ has $H^2(F, \bZ) \cong \bZ$, so the map $R^2 g_! \underline{\bZ}_{M} \lra R^2 f_\ast \underline{\bZ}_{\widetilde{M}} $ is simply the identity $\underline{\bZ}_{B} \lra \underline{\bZ}_{B} $. This means that we are left with a short exact sequence of constructible sheaves:
 $$
 0 \lra
 R^1 g_! \underline{\bZ}_{M} \lra R^1 f_\ast \underline{\bZ}_{\widetilde{M}}
 \lra R^1 h_\ast \underline{\bZ}_N
 \lra 0 
 $$
 Now take the associated cohomology long exact sequence. We get
 $$
 \ldots \lra H^0 (R^1 h_\ast \underline{\bZ}_N) \lra H^1 (R^1 g_! \underline{\bZ}_{M})
 \lra H^1 (R^1 f_\ast \underline{\bZ}_{\widetilde{M}}) \lra  H^1 (R^1 h_\ast \underline{\bZ}_N) \lra \ldots
 $$
To compute $R^1 h_\ast \underline{\bZ}_N$, we want the $\bZ$-cohomology groups of fibres of $h$; these can be $0, \bZ$ or $\bZ^2$. Moreover, by direct observation, we see that
$$
R^1 h_\ast \underline{\bZ}_N \cong \bigoplus_j i_! \big( \underline{\bZ}_{B^j \backslash \{ x_j \} } \big)
$$
where $i: B^j \hookrightarrow B$ is the inclusion of a closed disc containing the nodal point $x_j$ in its interior (for any $j$) as is Figure \ref{fig:decomposition-constructible-sheaves}.

\begin{figure}
    \centering
    \includegraphics[width=0.5\linewidth]{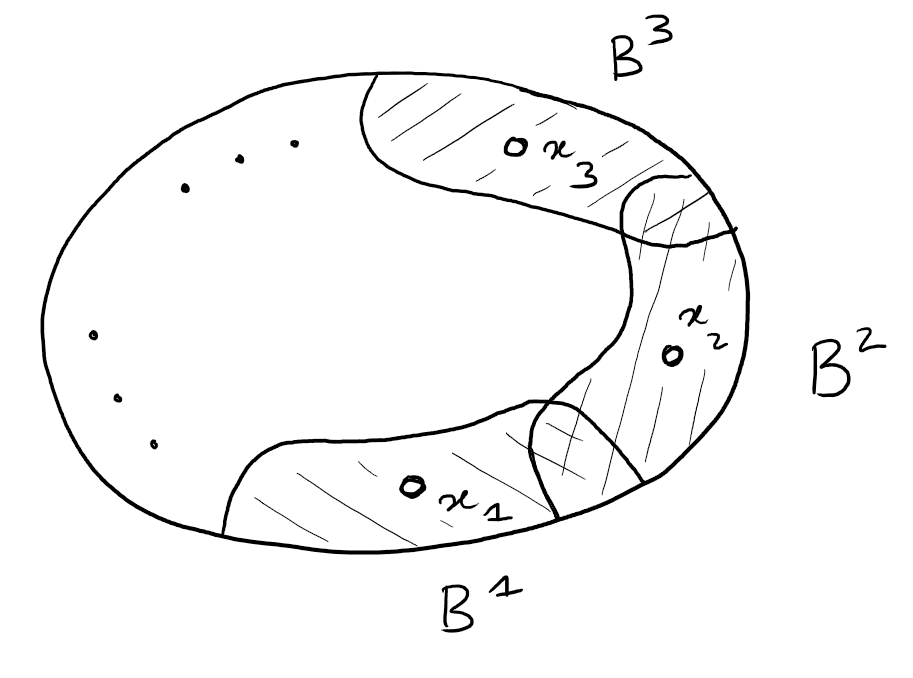}
    \caption{Decomposition of the support of  $R^1 h_\ast \underline{\bZ}_N$.}
    \label{fig:decomposition-constructible-sheaves}
\end{figure}

Finally, using for instance
the short exact sequence of sheaves on $B^j$ given by
$$
0 \lra i_! \underline{\bZ}_{B^j \backslash \{ x_j \}} 
\lra \underline{\bZ}_{B^j}
\lra j_\ast \underline{\bZ}_{ \{ x_j \} } 
\to 0
$$
we get that 
$H^0 (R^1 h_\ast \underline{\bZ}_N)  = H^1(R^1 h_\ast \underline{\bZ}_N)  = 0$.

Thus the exact sequence above reduces to an isomorphism
$$
H^1 (R^1 g_! \underline{\bZ}_{M})
\xrightarrow{\cong} H^1 (R^1 f_\ast \underline{\bZ}_{\widetilde{M}}).
$$
This completes the proof.
\end{proof}

\begin{lemma}
Let $p: \widetilde{Y}_i \to Y_i$ be the blow-down map, and $i: \widetilde{U}_i \hookrightarrow \widetilde{Y}_i$ the inclusion. We have an isomorphism
$$
i^\ast \circ p^\ast: \Pic Y_i \xrightarrow{\cong} \Pic \widetilde{U}_i.
$$  
\end{lemma}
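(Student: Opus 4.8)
The plan is to compute both Picard groups explicitly in terms of $\Pic Y_i$ and the $k_i$ exceptional curves, and to track $i^\ast \circ p^\ast$ through these descriptions. Write $E_1, \dots, E_{k_i}$ for the exceptional $(-1)$-curves of $p$, where $E_j$ arises from blowing up the distinguished smooth point $q_j \in D_{ij}$. Since $p$ is the blow-up of $Y_i$ at the $k_i$ distinct smooth points $q_j$ (they are distinct as they lie on distinct components $D_{ij}$ and are interior, hence not nodes), the standard blow-up formula gives a direct sum decomposition
\begin{equation*}
\Pic \widetilde{Y}_i \;=\; p^\ast \Pic Y_i \,\oplus\, \bigoplus_{j=1}^{k_i} \bZ\,[E_j],
\end{equation*}
in particular $p^\ast$ is injective.

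For the open inclusion $i$, I would use the localisation (restriction) exact sequence for the complement of the pure-codimension-one set $\widetilde{D}_i = \bigcup_j \widetilde{D}_{ij}$ in the smooth surface $\widetilde{Y}_i$:
\begin{equation*}
\bigoplus_{j=1}^{k_i} \bZ\,[\widetilde{D}_{ij}] \;\longrightarrow\; \Pic \widetilde{Y}_i \;\xrightarrow{\;i^\ast\;}\; \Pic \widetilde{U}_i \;\longrightarrow\; 0,
\end{equation*}
so that $i^\ast$ is surjective with kernel generated by the boundary classes $[\widetilde{D}_{ij}]$. The geometric input that ties the two pictures together is the elementary relation for the total transform of a smooth point of $D_{ij}$:
\begin{equation*}
p^\ast[D_{ij}] \;=\; [\widetilde{D}_{ij}] + [E_j], \qquad j = 1, \dots, k_i,
\end{equation*}
valid because each $q_j$ is a smooth point lying on the single component $D_{ij}$ (with multiplicity one), while the remaining blow-ups are isomorphisms near $D_{ij}$.

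With these three ingredients the two directions are short. For surjectivity, $\Pic \widetilde{U}_i$ is generated by $i^\ast p^\ast \Pic Y_i$ together with the classes $i^\ast[E_j]$; but restricting the displayed relation and using $i^\ast[\widetilde{D}_{ij}] = 0$ gives $i^\ast[E_j] = i^\ast p^\ast[D_{ij}]$, so every generator already lies in the image of $i^\ast \circ p^\ast$. For injectivity, suppose $i^\ast p^\ast \alpha = 0$; then $p^\ast \alpha \in \ker i^\ast$, so $p^\ast \alpha = \sum_j c_j [\widetilde{D}_{ij}] = p^\ast\bigl(\sum_j c_j [D_{ij}]\bigr) - \sum_j c_j [E_j]$. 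Comparing the $E_j$-components in the direct sum decomposition forces $c_j = 0$ for all $j$, whence $p^\ast \alpha = 0$ and $\alpha = 0$ by injectivity of $p^\ast$.

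I do not expect a genuine obstacle here: the content is the bookkeeping relation $p^\ast[D_{ij}] = [\widetilde{D}_{ij}] + [E_j]$ together with the fact that the kernel of restriction is spanned exactly by the boundary components. The only points requiring a little care are confirming that the $q_j$ are distinct smooth points (so the clean blow-up decomposition applies) and that $\widetilde{D}_i$ has no components other than the $k_i$ strict transforms $\widetilde{D}_{ij}$; both are immediate from the construction of $(\widetilde{Y}_i, \widetilde{D}_i)$ by one interior blow-up on each component of $D_i$.
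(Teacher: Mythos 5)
Your proof is correct and follows essentially the same route as the paper's: both arguments rest on the blow-up decomposition of $\Pic \widetilde{Y}_i$, the localisation sequence identifying $\Pic \widetilde{U}_i$ as the quotient of $\Pic \widetilde{Y}_i$ by the boundary classes $[\widetilde{D}_{ij}]$, and the total-transform relation $p^\ast[D_{ij}] = [\widetilde{D}_{ij}] + [E_j]$. The only cosmetic difference is that where you compare components in the direct sum $p^\ast \Pic Y_i \oplus \bigoplus_j \bZ[E_j]$, the paper runs the same linear algebra via intersection numbers ($\widetilde{D}_{ij}\cdot E_{il} = \delta_{jl}$ and $p^\ast \Pic Y_i = \langle E_j \rangle^{\perp}$), which is an equivalent bookkeeping device.
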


\begin{proof}
Say $D_{i1}, \ldots, D_{ik_i}$ are the irreducible components of $D_i$. We have $p^\ast D_{ij} = \widetilde{D}_j + E_j$, where $\widetilde{D}_j$ denotes the strict transform and $E_j$ the exceptional divisor. 

There are natural isomorphisms  $$\Pic Y_i \simeq  \Pic \widetilde{Y}_i / \langle E_{i1}, \ldots, E_{ik_j} \rangle $$ and $$\Pic \widetilde{U}_i \simeq \Pic \widetilde{Y}_i / \langle \widetilde{D}_{i1}, \ldots, \widetilde{D}_{ik_i} \rangle.$$

First note that $\ker i^\ast = \langle \widetilde{D}_{i1}, \ldots, \widetilde{D}_{ik_i} \rangle $, and $p^\ast \Pic Y_i = \langle E_1, \ldots, E_{k_i} \rangle^\perp$. As $\widetilde{D}_{ij} \cdot E_{il} = \delta_{jl}$, we get that $(\ker i^\ast ) \cap (p^\ast \Pic Y_i) = \{ 0 \}$. As $p^\ast: \Pic Y_i \to \Pic \widetilde{Y}_i$ is injective, we get that $i^\ast \circ p^\ast$ is injective too.

We want to show that $i^\ast \circ p^\ast$ is also surjective. Given $L \in \Pic \widetilde{Y}_i$, there exists some integers $a_j$ so that $L + \sum a_j \widetilde{D}_{ij} = \pi^\ast L'$, some $L' \in \Pic Y_i$. Equivalently, $L \cdot E_{ij} + a_j = 0$ for all $j = 1, \ldots, k_i$.  
This means that $\pi^\ast L' = L - \sum (L \cdot E_{ij}) \widetilde{D}_{ij}$. In particular, $i^\ast \pi^\ast L' = i^\ast L$, and so $i^\ast \circ p^\ast$ is surjective.
\end{proof}

\begin{corollary}\label{cor:correspondence-Phi_i}
    There is a bijection 
    $$\Phi_i: \Pic Y_i \longrightarrow \{ \text{equiv. classes of Lagrangian sections of } \pi_i \}$$
    determined by our choice of reference Lagrangian section $L_0$. 
\end{corollary}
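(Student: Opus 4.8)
The plan is to realise $\Phi_i$ as a composite of three bijections, routed through the auxiliary surface $\widetilde{U}_i$ and its almost-toric fibration $\pi_{\widetilde{U}_i}$, where the relevant classifications are already available.

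First I would use the Lemma immediately preceding the statement, which gives a group isomorphism $i^\ast \circ p^\ast \colon \Pic Y_i \xrightarrow{\cong} \Pic \widetilde{U}_i$. This reduces the problem to exhibiting a bijection between $\Pic \widetilde{U}_i$ and equivalence classes of Lagrangian sections of $\pi_i$, normalised so that the trivial bundle $\cO$ maps to the class $[L_0]$ of the reference section.

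Next, Proposition \ref{prop:lag-sections-of-pi_i^-and-widetilde-pi_i} shows that the symplectic embedding $\iota \colon M_i \hookrightarrow M_{\widetilde{U}_i}$ induces a one-to-one correspondence between equivalence classes of Lagrangian sections of $\pi_i$ and of $\pi_{\widetilde{U}_i}$; since $\iota$ sends $L_0$ to $L_0$, this correspondence is compatible with the marked reference sections. So it remains to identify $\Pic \widetilde{U}_i$ with the equivalence classes of Lagrangian sections of $\pi_{\widetilde{U}_i}$. For this I would invoke the SYZ classification of sections for log Calabi--Yau surfaces from \cite[Section~4]{HK2}: by \cite[Lemma~4.3]{HK2}, sections of $\pi_{\widetilde{U}_i}$ (automatically exact, since $\widetilde{B}_i$ is a disc) are classified by $\ker(\smallfrown F \colon H^2(M_{\widetilde{U}_i}, \partial M_{\widetilde{U}_i}) \to \bZ) \cong H^1(R^1(\pi_{\widetilde{U}_i})_\ast \underline{\bZ})$, and one identifies this group with $\Pic \widetilde{U}_i$ so that $\cO \mapsto [L_0]$. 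Concretely this identification is forced by the homological mirror symmetry equivalence $\cW(M_{\widetilde{U}_i}) \simeq \Coh \widetilde{U}_i$, under which $\cO$ is mirror to $L_0$ and the remaining line bundles are mirror to (brane-decorated) Lagrangian sections; as each section over the disc $\widetilde{B}_i$ is simply connected, its brane structure is unique, so forgetting brane data gives a clean bijection at the level of geometric sections. Composing the three maps produces $\Phi_i$ with $\Phi_i(\cO_{Y_i}) = [L_0]$.

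The hard part is the last identification, i.e.\ pinning down the correspondence between $\Pic \widetilde{U}_i$ and sections of $\pi_{\widetilde{U}_i}$ canonically and with the correct origin. The cohomological classifications only specify the set of sections up to a torsor ambiguity over the classifying group, so the genuine content is matching the abelian-group structure on $\Pic \widetilde{U}_i$ (with unit $\cO$) to the torsor of sections (with base point $L_0$); this is exactly what the reference section $L_0$ and the explicit SYZ matching of \cite[Section~4]{HK2} supply. Once this is in hand the other two steps are formal.
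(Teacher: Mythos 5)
Your proposal reconstructs exactly the composition the paper intends: the corollary is stated without a written proof precisely because it follows by chaining the immediately preceding lemma ($i^\ast \circ p^\ast \colon \Pic Y_i \xrightarrow{\cong} \Pic \widetilde{U}_i$), the classification of sections of $\pi_{\widetilde{U}_i}$ coming from \cite[Lemma~4.3]{HK2} together with the Leray identification, and Proposition~\ref{prop:lag-sections-of-pi_i^-and-widetilde-pi_i}. So your route is the paper's own, including the normalisation $\cO_{Y_i} \mapsto [L_0]$.

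The one substantive point to repair is your justification of the middle identification $\Pic \widetilde{U}_i \cong \{\text{sections of } \pi_{\widetilde{U}_i}\}$. You say it is \emph{forced} by the HMS equivalence $\cW(M_{\widetilde{U}_i}) \simeq \Coh \widetilde{U}_i$, ``under which $\cO$ is mirror to $L_0$ and the remaining line bundles are mirror to (brane-decorated) Lagrangian sections.'' Within this paper, that object-level statement is not yet available: it is (a relative of) Corollary~\ref{cor:mirrors-to-sections-of-pi}, which is proved \emph{later}, via Lemma~\ref{lem:mirror-to-L_0}, Proposition~\ref{prop:Lagrangian-translations-of-M} and Proposition~\ref{prop:Lag-translations-on-pi_i-properties}; and Proposition~\ref{prop:Lag-translations-on-pi_i-properties} explicitly takes $\Phi_i$ --- i.e.\ the present corollary --- as input (``suppose that $\cL \in \Pic Y_i$ is such that $\Phi_i(\cL)=L$\dots''). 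So defining $\Phi_i$ by appealing to ``line bundles are mirror to sections'' is circular in the paper's logical order; note the corollary deliberately claims only a bijection normalised by $L_0$, with HMS-compatibility deferred. The non-circular way to pin down the middle identification uses HMS only at the level of $K$-theory, with ingredients already in place: by \cite[Lemma~4.20]{HK2} (cf.\ Lemma~\ref{lem:K-theory-stopped-category}) one has $H_2(M_{\widetilde{U}_i}, \partial M_{\widetilde{U}_i}) \cong K_0(\cW(M_{\widetilde{U}_i})) \cong K_0(\Coh \widetilde{U}_i)$; under this isomorphism the pairing $\smallfrown F$ with the fibre class becomes the rank, because the torus fibre is mirror to a skyscraper sheaf (Lemma~\ref{lem:mirrors-to-points}, whose proof applies verbatim to $\widetilde{U}_i$, and which precedes this section); and $\ker(\mathrm{rank}) \cong \Pic \widetilde{U}_i$ via the determinant, since for these open rational surfaces every zero-cycle is trivial in the Chow group, so rank and determinant split $K_0$. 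Composing with \cite[Lemma~4.3]{HK2} then yields the bijection $\Pic \widetilde{U}_i \leftrightarrow \ker(\smallfrown F) \leftrightarrow \{\text{sections of } \pi_{\widetilde{U}_i}\}$ with $\cO \mapsto [L_0]$ and no forward reference. (Alternatively, if the sections-versus-$\Pic$ matching for Weinstein domains mirror to log CY surfaces is recorded as such in \cite[Section~4]{HK2}, citing it directly is equally fine; the only thing you may not lean on is the present paper's own object-level mirror statements.) With that substitution your argument is complete and agrees with the paper.
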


Given a Lagrangian section $L$ of $\pi_i$, we can deform it so that it is locally constant above $B_i[\epsilon_1, \epsilon]$; by restriction, we get an exact section $l$ of $S_i \to S^1$, unique up to (exact) deformation. This gives a restriction map
\begin{equation}
    \eta: \{ \text{equiv.~classes of Lag.~sections of } \pi_i \} \longrightarrow  \{ \text{exact Lagrangians in } S_i \}
\end{equation}
Say $L = \Phi_i \cL$. Then we have $l = \tau l_0$, where $l_0$ is the preferred longitude of $S_i$ (a component of $\mathfrak{f}_i$) and $\tau$ is a composition of Dehn twists in meridians of $S_i$: we apply $\cL \cdot D_{ij}$ twists in the meridian corresponding to the ray of the toric model indexed by $j$, for $j=1, \ldots, k_i$.

\begin{lemma}\label{lem:Lag-section-restriction-HMS-compatibility}
  The correspondence $\Phi_i$ between $\Pic Y_i$ and Lagrangian sections of $\pi_i$ is compatible with HMS for the boundary $D_i$ in the following sense: we have a commutative diagram
    $$
    \xymatrix{
\Pic Y_i \ar[r]^{i^\ast} \ar[d]_-{\Phi_i} & \Pic D_i \ar[d]_{\text{HMS for } D_i} \\
\{ \text{classes of Lagrangian sections of } \pi_i \}
\ar[r]^-{\eta} & \cF (S_i)
        }
        $$   
\end{lemma}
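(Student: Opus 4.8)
The plan is to evaluate the two composites $\Pic Y_i \to \cF(S_i)$ on a line bundle $\cL$ and check that they produce the same exact Lagrangian in $S_i$. Since all four maps in the square respect the natural group structures (addition in $\Pic$, the torsor structure on sections with origin $L_0$ coming from Proposition \ref{prop:sections-of-pi_i}, composition of Dehn twists, and tensor product of line bundles), it is enough to match the images of a set of generators, or equivalently to verify one explicit formula for all $\cL$.

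The left--bottom composite is already understood from the discussion immediately preceding the statement: writing $\mu_j := S^1_j$ for the meridian of $S_i$ attached to the ray $v_j$ of the toric model, i.e.\ to the component $D_{ij}$, the Lagrangian $\eta(\Phi_i\cL)$ equals $\tau_\cL\, l_0$, where $l_0\subset S_i$ is the preferred longitude and $\tau_\cL = \prod_{j=1}^{k_i} t_{\mu_j}^{\,\cL\cdot D_{ij}}$, with $t_{\mu_j}$ the Dehn twist in $\mu_j$. It therefore remains to identify the top--right composite and to check that the equivalence ``HMS for $D_i$'' sends $i^\ast\cL$ to the same Lagrangian $\tau_\cL\, l_0$.

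For this I would invoke homological mirror symmetry for the anticanonical cycle $D_i$: here $D_i$ is a cycle of $k_i$ copies of $\bP^1$, $S_i$ is the $k_i$-punctured genus-one surface, and $\cF(S_i)\simeq \Perf D_i$ is the equivalence of \cite{Lekili-Perutz, Lekili-Polishchuk}. Only two features of this dictionary are needed. First, $l_0\leftrightarrow \cO_{D_i}$, which settles the zero-multidegree case. Second, for each $j$ the Dehn twist $t_{\mu_j}$ along the meridian encircling the node/puncture associated with $D_{ij}$ is mirror to the autoequivalence $(-)\otimes \cN_j$ of $\Perf D_i$, where $\cN_j$ is the line bundle of multidegree $e_j$ (degree $1$ on $D_{ij}$ and $0$ on the remaining components). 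Since $i^\ast\cL$ has multidegree $(\cL\cdot D_{i1},\dots,\cL\cdot D_{ik_i})$, multiplicativity then gives that the equivalence sends $i^\ast\cL$ to $\tau_\cL\, l_0$. Matching this with the left--bottom composite yields commutativity of the square.

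The main obstacle is the bookkeeping that pins the two ``meridian $=$ component'' labellings together, including signs: on the A-side the meridian $\mu_j$ is fixed by the toric ray $v_j$ and its co-orientation, as in the description of $\mathfrak{f}_i$ in Section \ref{sec:hms-recalls}, whereas on the B-side the twist is matched to $(-)\otimes\cN_j$ through the Lekili--Polishchuk identification. I would need to check that these two prescriptions select the same meridian and the same sign, which rests on the compatibility between our fixed orientation of $D_i$ (the identification $H_1(D_i,\bZ)\simeq\bZ$), the chosen cyclic ordering of the components, and the grading and spin conventions on $\cF(S_i)$. Once these are reconciled the identity $\eta\circ\Phi_i = (\text{HMS for }D_i)\circ i^\ast$ follows. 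As a cross-check, the statement can also be deduced functorially: the geometric restriction $\eta$ is the shadow on Lagrangian sections of the categorical ``cap'' functor $\eta^\ast$, and diagram \ref{eq:HMS-compatibility-restrict-to-D_i} already records that $\eta^\ast$ corresponds to $i^\ast$ on the $B$-side.
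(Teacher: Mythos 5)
Your proposal is correct, but your \emph{primary} route is not the one the paper takes: the paper's entire proof is what you relegate to a final ``cross-check''. Namely, the paper proves the lemma functorially: it observes that the map $\eta$ on Lagrangian sections is the geometric shadow of the categorical restriction functor $\eta^\ast \colon \cW(M_{U_i}, \mathfrak{f}_i) \to \cF(S_i)$, which in the model $\cF^{\to}(w_i) \simeq \cW(M_{U_i}, \mathfrak{f}_i)$ is given by restricting Lefschetz thimbles to their boundaries on a fibre copy of $S_i$; after tracing through the identifications of models (the core of that fibre copy of $S_i$ being taken to $\mathfrak{f}_i$), commutativity follows directly from the already-established compatibility \eqref{eq:HMS-compatibility-restrict-to-D_i} of \cite{HK1}. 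Your main route --- computing both composites object-by-object, using the Dehn-twist formula $\eta(\Phi_i\cL) = \tau_\cL\, l_0$ together with the Lekili--Polishchuk dictionary ($l_0 \leftrightarrow \cO_{D_i}$, meridian twists $t_{\mu_j} \leftrightarrow$ tensoring by the multidegree-$e_j$ bundle) and multiplicativity --- is sound and not circular: the twist-versus-tensoring dictionary is independent input, and indeed it is exactly the assertion the paper itself invokes without proof slightly later, in part (iii) of the proof of Proposition \ref{prop:Lag-translations-on-pi_i-properties}. The trade-off is precisely the obstacle you flag: your route requires reconciling the A-side labelling and co-orientation of meridians with the B-side component labelling and sign conventions of the Lekili--Polishchuk equivalence, a genuine (if routine) piece of bookkeeping that you do not carry out; the paper's functorial route sidesteps all of this by working at the level of functors rather than objects, at the cost of being less explicit about what happens to any individual line bundle. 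If you complete the convention-matching, your argument would have the side benefit of making the object-level dictionary explicit, which is what later arguments (e.g.\ Lemma \ref{lem:mirror-to-L_0} and Proposition \ref{prop:Lag-translations-on-pi_i-properties}) actually use; but as written, the shortest complete proof available to you is to promote your last two sentences from cross-check to proof.
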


\begin{proof}
   We start with the HMS compatibilities of Equation \ref{eq:HMS-compatibility-restrict-to-D_i}. 
   The map $\eta^\ast: \cW(M_{U_i}, \mathfrak{f}_i) \to \cF(S_i)$ therein is given by using the model $\cF^{\to}(w_i) \simeq \cW(M_i, \mathfrak{f}_i)$; $\cF^{\to}(w_i)$ is generated by Lagrangian thimbles for the Lefschetz fibration $w_i$, all of which end on a copy of $S_i$, and $\eta^\ast$ is given by restricting the thimbles to their boundaries. Tracing through our different models for $M_i$, the Weinstein core of this copy of $S_i$ gets taken to $\mathfrak{f}_i$, and the claim follows.
\end{proof}

\subsubsection{Symplectomorphisms associated to Lagrangian sections of $\pi_i$}

\begin{proposition}\label{prop:Lag-translations-on-pi_i-contruction}

Given $L$ a Lagrangian section of $\pi_i$, we can associate to it a symplectomorphism of $M_i$, say $\sigma_L$, such that: 
\begin{enumerate}
    \item[(i)] $\sigma_L^\ast \theta = \theta + df$, where the smooth function $f$ needn't have compact support; and, if $I: \partial M_i \to \bR$ is our `Liouville sector' function (linear near infinity, and whose Hamiltonian vector field is outward pointing along $\partial M_i$) then   
    $I \circ \sigma_L: \partial M_i \to \bR$ has the same property.
    \item[(ii)] $\sigma_L$ is linear on each fibre of $\pi_i$ over $B_i$, where it is given by the Lagrangian translation associated to $L$ (and $L_0$),  as defined in \cite[Section 4]{HK2}.
    \item[(iii)] Above $B_i[-\epsilon_1, 0]$,  $\sigma_L$ restricts to the symplectomorphism of $S_i \times D^\ast [0,1]$ given by $\tau_L \times \Id$, where $\tau_L$ is the composition of Dehn twists in meridians of $S_i$ which takes $l_0$ to $l = \eta(L)$. In general Dehn twists are defined up to compactly supported Hamiltonian isotopy; here we will pick a representative for $\tau_L$ which  intertwines the Lagrangian fibration $S_i \to S^1$, and which is constant on the non-proper locus.
\end{enumerate}
The map $\sigma_L$ is independent of auxiliary choices up to deformation through symplectomorphisms satisfying all three properties above.
Similarly, if $L'$ is another Lagrangian section of $\pi_i$ which is in the same equivalence class as $L'$, then $\sigma_L$ and $\sigma_{L'}$ are related through a one-parameter family of symplectomorphisms satisfying all three properties above.
\end{proposition}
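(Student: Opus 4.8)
The plan is to build $\sigma_L$ by combining two fibrewise models over the two regions of $B_i$ identified in Lemma \ref{lem:fibration-on-M_i}, and then gluing them over the critical-point-free collar. First, over the interior $B_i[-\epsilon]$, where $\pi_i$ agrees with the almost-toric fibration $\pi_{U_i}$, I would take $\sigma_L$ to be the Lagrangian translation associated to $L$ and $L_0$ constructed in \cite[Section 4]{HK2}. On each smooth torus fibre this is the linear translation by the element $[L]-[L_0]$ of the fibre, and \cite{HK2} shows it extends across the nodal fibres, the translation being compatible with each node's invariant direction. This gives property (ii) directly; and since the base $B_i[-\epsilon]$ is a disc, any Lagrangian section is exact, so the translation is by a fibrewise-closed difference one-form whose primitive furnishes the function $f$ with $\sigma_L^\ast\theta=\theta+df$ over this region.

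Next, over the collar $B_i[-\epsilon_1,0]$, where $M_i\cong S_i\times D^\ast[0,1]$ and $\pi_i=\pi_{D_i}$, I would set $\sigma_L=\tau_L\times\Id$, where $\tau_L$ is the composition of meridian Dehn twists taking $l_0$ to $l=\eta(L)$ (applying $\cL\cdot D_{ij}$ twists in the $j$-th meridian), in the preferred fibre-preserving, non-proper-locus-constant representative described in Section \ref{sec:relation-PicY_i-sections-pi_i^--classifications}. The conceptual point is that this Dehn-twist model is the degenerate limit of the fibrewise torus translation: on each $S^1$-fibre of $S_i\to S^1$ the twist acts by a translation, and as one traverses the base circle these translations wind up to the meridian twists, while on the non-proper ($\bR$) fibres it is the identity. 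This is property (iii); and since $\tau_L$ is fibre-preserving and exact and the cotangent factor is untouched, property (i) — including the behaviour of $I\circ\sigma_L$, which is governed by the $S_i$-factor — holds over the collar as well.

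The remaining task is to glue the two models over the annulus $B_i[-\epsilon,-\epsilon_1]$, which contains no nodal fibres by the convention on $B_i$. The only real subtlety here is that, because of the sectorial modification performed in Lemma \ref{lem:fibration-on-M_i}, the compact torus fibres of the interior model open up into the non-compact fibres ($S^1\times\bR$, or $\bR^2$ over the non-proper locus) of the collar model. I would check that the torus translation, which near $\partial B_i[-\epsilon]$ preserves the neighbourhood of $\mathfrak{f}_i$ that is excised, descends to the collar as exactly the fibrewise translation whose total winding around the base circle is the meridian Dehn twist $\tau_L$; concretely this amounts to matching the radial winding of the almost-toric translation with the intersection numbers $\cL\cdot D_{ij}$. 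I expect this reconciliation of the two a priori different descriptions of $L$ — linear torus translation on one side versus meridian winding on the other — to be the main technical obstacle, since it is precisely where the almost-toric and sectorial pictures must be made to agree. Interpolating the translation amount smoothly in the radial direction (using the mapping-torus structure of $\pi_i$ over this annulus, cf. Proposition \ref{prop:sections-of-pi_i}) then patches the two models into a single symplectomorphism, and I would arrange the interpolation so that $\sigma_L^\ast\theta-\theta$ stays exact and $I\circ\sigma_L$ retains the Liouville-sector property.

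Finally, for the independence and deformation statements: the auxiliary data — the primitive $f$, the fibre-preserving representative of $\tau_L$, and the radial interpolation — each range over connected (indeed contractible) sets, so any two admissible constructions are joined by a path of symplectomorphisms satisfying (i)--(iii). Given a smooth family $L_t$ of Lagrangian sections realising an equivalence $L\sim L'$, carrying out the entire construction in families over $t$ produces the required one-parameter family of symplectomorphisms relating $\sigma_L$ and $\sigma_{L'}$, each satisfying (i)--(iii).
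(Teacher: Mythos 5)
Your architecture is genuinely different from the paper's, and the difference is exactly where the gap lies. The paper never glues two local models. It first deforms $L$, through Lagrangian sections, to an auxiliary representative $L'$ which over $B_i[-\epsilon_1,0]$ has the standard form $l \times \{0\}$ with $l = l_0$ on the non-proper locus (Lemma \ref{lem:Lag-sections-of-S_ixT*[0,1]} together with the proof of Proposition \ref{prop:sections-of-pi_i}); it then observes that $L_0$ and $L'$ sit inside the \emph{ambient} Weinstein domain $M_{U_i} \supset M_i^c$ as sections of $\pi_{U_i}$, applies the Lagrangian-translation construction of \cite[Section 4]{HK2} once, globally, on $M_{U_i}$, and finally checks that the resulting exact symplectomorphism preserves the boundary chart $\bC_{\Re \geq 0} \times S_i$, restricting there to $\id \times \tau$. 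Hence it preserves the excised pieces $N_1, N_2$ set-wise and descends to $M_i$, and properties (i)--(iii) hold by construction, with no interpolation anywhere.

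Your gluing step, by contrast, has a genuine gap. First, your interior model is the translation attached to $L$ itself, while your collar model $\tau_L \times \Id$ corresponds to the \emph{standard-form} representative of $L$; these are defined on disjoint regions and differ, over the transition annulus, by precisely the (never performed) deformation of $L$ to standard form, so there is no locus on which they agree and can be patched. Second, the proposed fix --- ``interpolating the translation amount smoothly in the radial direction'' --- is not a legitimate operation as stated: fibrewise translation by a base-dependent amount is symplectic if and only if the translating data is itself a Lagrangian section (in action--angle coordinates this is the symmetry of $\partial s_i/\partial p_j$), so the interpolation must itself be a deformation of $L - L_0$ through Lagrangian sections rel the interior, i.e.\ exactly the relative statement of Lemma \ref{lem:Lag-sections-of-S_ixT*[0,1]}, which your proposal does not invoke at this point. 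Third, even granting such a deformation, you still owe a proof that the translation attached to the standard form restricts over the collar to $\tau_L \times \Id$: your winding heuristic is asserted on the annulus $B_i[-\epsilon, -\epsilon_1]$, which is precisely where the fibres change topology from $T^2$ to non-compact leaves, and the mapping-torus structure you cite from Proposition \ref{prop:sections-of-pi_i} lives over $B_i[-2\epsilon,-\epsilon]$ (where fibres are still tori), not there. All three issues disappear in the paper's formulation, because the identification of the map with $\id \times \tau$ near the boundary is read off from the single global construction on $M_{U_i}$ rather than established by patching; if you repair your argument along these lines, it collapses into the paper's proof.
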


\begin{proof}
Start with $L$, any Lagrangian section of $\pi_i$. 
Lemma \ref{lem:Lag-sections-of-S_ixT*[0,1]}
together with the proof of Proposition \ref{prop:sections-of-pi_i} show that $L$ can be deformed, through Lagrangian sections, to a section which, above $B_i[-\epsilon_1, 0]$, is of the form $l \times \{ 0 \} \subset S_i \times D^\ast [0,1]$, where $l \subset S_i$ is an exact Lagrangian which is a section of the fibration $S_i \to S^1$. Further, we can arrange to have $l = l_0$ whenever the fibres $S_i \to S^1$ are not proper, where $l_0$ is the restriction of $L_0$. Let $L'$ be such a deformation of $L$ (this is the auxiliary choice for defining $\sigma_L$).

We now revisit the proof of Lemma \ref{lem:fibration-on-M_i}. Consider the (non-proper) Lagrangian fibration $\pi_{U_i} \circ \psi: M^c_i \to B_i$. 
By construction, this is identified with the Lagrangian fibration $\pi_{U_i}: M_{U_i} \backslash ( N_1 \cup N_2) \to B_i$, where, in coordinates near the boundary the $N_j$ are of the form $S_i$ times a half-disc in $\bC_{\Re \geq 0}$ (one on either side of the origin). 
We get $L_0, L' \hookrightarrow M_i  \backslash ( N_1 \cup N_2) $. Both have images in $S_i \times \bR_{\geq 0}$ in the boundary neighbourhood chart, and both are sections of $\pi_{U_i}$. 

Using the construction of \cite[Section 4]{HK2}, we can use $L'$ and $L_0$ to define a Lagrangian translation of $M_{U_i}$, say $\sigma_L$. This is an exact symplectomorphism.
Now observe our local neighbourhood chart $\bC_{\Re \geq 0} \times S_i$ is preserved by $\sigma_L$: it restricts to $\id \times \tau$, where $\tau$ is a symplectomorphism of $S_i$ which intertwines the fibration to $S^1$ and takes $l_0$ to $l$, i.e.~a composition of Dehn twists in meridians of $S_i$. In particular, this preserves $N_1$ and $N_2$ set-wise. Thus $\sigma_L$ restricts to an exact symplectomorphism of $M_i^c$, and induces an exact symplectomorphism of $M_i$. (Recall that to get $M_i$, in the proof of \ref{lem:fibration-on-M_i} we need to take a slightly smaller `infinite-boundary cut-off'; we get a symplectomorphism by conjugating with a small-time Liouville flow.) 

By construction, $\sigma_L$ satisfies properties (ii) and (iii), is exact (in the sense of (i)), and satisfies the second condition in (i) from our local model description. Finally, the claims about the independence on the auxiliary choice $L'$, and about the choice of representative for an equivalence class of Lagrangian section, are both clear.
\end{proof}

We want to study the action of such maps $\sigma_L$ on $\cW(M_i)$. We will need the following lemma.

\begin{lemma} \label{lem:K-theory-stopped-category}
    There are equivalences
    $$
    \xymatrix{
  H_2 (M_{U_i}, \partial M_{U_i} \backslash \mathfrak{f}_i )    &
   K_0 (\cW(M_{U_i}, \mathfrak{f}_i))  \ar[l]_-{\simeq} \ar[r]^-{\simeq}  &
   K(Y_i)
        }
      $$
    where the first map is given by taking classes of Lagrangians, and the second one by starting with the homological mirror symmetry equivalence $\cW(M_{U_i}, \mathfrak{f}_i) \simeq \Coh(Y_i)$ and passing to $K$-theory on both sides.
\end{lemma}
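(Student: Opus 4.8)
The right-hand equivalence is essentially formal and I would dispose of it first. The Grothendieck group $K_0$ of a pretriangulated $A_\infty$ (equivalently dg) category is invariant under quasi-equivalence, so the homological mirror symmetry equivalence $\cW(M_{U_i},\mathfrak{f}_i)\simeq\Coh Y_i$ recorded in \eqref{eq:HMS-isos-M_i-starting-point} induces an isomorphism $K_0(\cW(M_{U_i},\mathfrak{f}_i))\cong K_0(\Coh Y_i)=K(Y_i)$. (Since $Y_i$ is smooth, $K_0(\Coh Y_i)=K_0(\Perf Y_i)$, so there is no ambiguity in $K(Y_i)$.) This is the content of the second map, and nothing beyond equivalence-invariance of $K_0$ is needed.

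For the left-hand map I would first define the \emph{class map} $c$. Every object of $\cW(M_{U_i},\mathfrak{f}_i)$ is a twisted complex on exact Lagrangian branes that are conical at infinity and disjoint from $\mathfrak{f}_i$ there; such a brane $L$ has Legendrian boundary contained in $\partial M_{U_i}\setminus\mathfrak{f}_i$ and hence carries an oriented relative fundamental class in $H_2(M_{U_i},\partial M_{U_i}\setminus\mathfrak{f}_i)$. I would extend this to twisted complexes by the alternating sum of the classes of the constituent branes (so that a shift reverses orientation, matching $[L[1]]=-[L]$), and check that this assignment is invariant under isomorphism in the homotopy category and additive over mapping cones — both standard, the latter being the statement that Lagrangian surgery realizing an exact triangle $A\to B\to C$ satisfies $[B]=[A]+[C]$ in homology. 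This shows $c$ descends to a well-defined homomorphism on $K_0$ sending $[L]$ to its relative homology class.

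To see that $c$ is an isomorphism I would argue through generators. Using the identification $\cW(M_{U_i},\mathfrak{f}_i)\simeq\cF^{\to}(w_i)$ from \eqref{eq:HMS-isos-M_i-starting-point}, the category is split-generated by the finite set of Lefschetz thimbles $\Delta_1,\dots,\Delta_N$ of $w_i$, where $N$ is the number of critical points of $w_i$. Because $\cF^{\to}(w_i)$ is \emph{directed}, the classes $[\Delta_1],\dots,[\Delta_N]$ form a $\bZ$-basis of $K_0(\cF^{\to}(w_i))$ (the Euler pairing is unipotent upper-triangular; cf. \cite{Seidel_book}), so $K_0(\cW(M_{U_i},\mathfrak{f}_i))\cong\bZ^N$ with $N=2+\rho(Y_i)=\operatorname{rk}K(Y_i)$. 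It then suffices to prove that $H_2(M_{U_i},\partial M_{U_i}\setminus\mathfrak{f}_i)$ is free of rank $N$ and that the thimble classes $c([\Delta_a])$ generate it: a size-$N$ generating set of a free abelian group of rank $N$ is automatically a basis, so $c$ would send a basis to a basis and hence be an isomorphism.

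\textbf{Main obstacle.} The crux is the purely topological computation of $H_2(M_{U_i},\partial M_{U_i}\setminus\mathfrak{f}_i)$ and the verification that the thimbles generate it. I would carry this out from the Weinstein Lefschetz presentation of $M_{U_i}$ (equivalently the handlebody description recalled in Section~\ref{sec:hms-recalls}: $D^\ast T^2$ with a Weinstein $2$-handle per interior blow-up): the thimbles are precisely the Lagrangian cores of the index-$2$ handles, which are the relative $2$-cells of the pair once the stop $\mathfrak{f}_i$ is deleted from the boundary, and there are no higher-index handles. The care required is entirely in handling the stop correctly — tracking that the Legendrian thimble boundaries lie in $\partial M_{U_i}\setminus\mathfrak{f}_i$ and that removing $\mathfrak{f}_i$ does not introduce extra $H_2$ — so that the Morse/handle complex gives $H_2\cong\bZ^N$ freely generated by the thimble classes. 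The well-definedness of $c$ over exact triangles, while standard, is the other point that must be stated carefully.
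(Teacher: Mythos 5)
Your treatment of the right-hand map matches the paper (both just pass the HMS equivalence through $K_0$), but your strategy for the left-hand map — compute both sides explicitly and match thimble bases — is genuinely different from the paper's proof, which instead compares three right-exact sequences (the topological sequence of the triple $(M_{U_i},\partial M_{U_i},\partial M_{U_i}\setminus\mathfrak{f}_i)$, the $K_0$-sequence of the stop-removal sequence $\cW(S_i)\to\cW(M_{U_i},\mathfrak{f}_i)\to\cW(M_{U_i})$, and $K(D_i)\to K(Y_i)\to K(U_i)$), feeds in the known isomorphism $K_0(\cW(M_{U_i}))\cong H_2(M_{U_i},\partial M_{U_i})$ from \cite[Lemma 4.20]{HK2}, and builds an isomorphism $K_0(\cW(S_i))\cong H_1(S_i,\partial S_i)$ out of linking discs and cocore generation \cite{CDRGG, GPS2}. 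Your route could work, but as written it has a concrete error at exactly the step you flag as the crux. The thimbles of $w_i$ are \emph{not} the cores of the index-$2$ handles of the handlebody description recalled in Section \ref{sec:hms-recalls}: that description attaches one Weinstein $2$-handle per \emph{interior blow-up} of the toric model, of which there are $b$, whereas the number of thimbles is $N=2+\rho(Y_i)$. Already for $(Y_i,D_i)=(\bP^2,\text{triangle})$ one has $M_{U_i}=D^\ast T^2$ with no blow-up handles at all, yet $N=3$; the $2$-handle cores of the Section \ref{sec:hms-recalls} decomposition are mirror to the exceptional sheaves of the interior blow-ups, not to the full exceptional collection given by the thimbles. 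So the "Morse/handle complex" you describe would compute a group of the wrong rank ($b+1$ rather than $N$), and the parenthetical "equivalently" is false.

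The statement you are after is nevertheless true, and your argument can be repaired by using the handle decomposition induced by the Lefschetz fibration itself: $M_{U_i}$ deformation retracts onto a fibre $S_i$ with one $2$-cell attached along each vanishing cycle $V_1,\dots,V_N$, and the stop $\mathfrak{f}_i$ is the core of the fibre at infinity, so the inclusion $\mathfrak{f}_i\hookrightarrow M_{U_i}$ is homotopic to a homotopy equivalence onto a fibre. Poincar\'e--Lefschetz duality for the splitting of $\partial M_{U_i}$ into a neighbourhood of $\mathfrak{f}_i$ and its complement then gives
$$
H_2(M_{U_i},\partial M_{U_i}\setminus\mathfrak{f}_i)\cong H^2(M_{U_i},\mathfrak{f}_i)\cong H^2(M_{U_i},S_i)\cong\widetilde{H}^2\Big(\textstyle\bigvee_{j=1}^N S^2\Big)\cong\bZ^N,
$$
and the pushed-off thimbles are dual to the $2$-cells (intersection numbers $\Delta_j\cdot e_k^2=\delta_{jk}$), hence form a basis; combined with your correct observation that directedness makes the thimble classes a $\bZ$-basis of $K_0(\cF^{\to}(w_i))\cong K_0(\cW(M_{U_i},\mathfrak{f}_i))$, this closes the argument. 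Two further points to tighten: honest generation (not split-generation) by thimbles is what you need for the thimble classes to generate $K_0$ — this does hold, since $\cW$ denotes the twisted-complex completion and every object of $\cF^{\to}(w_i)$ is an iterated cone on thimbles, but "split-generated" as written is not enough; and well-definedness of the class map on $K_0$ is cleaner via Euler characteristics (i.e.\ $\chi(HF(T,-))$ computes signed intersection numbers against test objects, so quasi-isomorphic objects have equal classes) than via surgery realizations of particular exact triangles, since not every relation in $K_0$ arises from a surgery.
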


\begin{proof}
Consider the commutative diagram:
$$
\xymatrix{
H_2 (\partial M_{U_i}, \partial M_{U_i} \backslash \mathfrak{f}_i ) \ar[r]   & 
H_2 (M_{U_i}, \partial M_{U_i} \backslash \mathfrak{f}_i) \ar[r] &
H_2 (M_{U_i}, \partial M_{U_i})  \ar[r] & 0 \\
K_0 ( \cW(S_i)) \ar[r]  
&
K_0 (\cW(M_{U_i}, \mathfrak{f}_i)) \ar[r] \ar[u]  &
K_0 (\cW(M_{U_i})) \ar[u]^{\simeq} \ar[r]  & 0\\
K(D_i) \ar[r] \ar[u]^{\simeq} & K(Y_i) \ar[r] \ar[u]^{\simeq} & K(U_i) \ar[u]^{\simeq} \ar[r] & 0
}
$$
The isomorphism between the middle and bottom rows comes from taking the triple of compatible HMS equivalences in \cite{HK1} and passing to $K$ theory.  The vertical maps from the middle to the top row are given by taking homology classes of Lagrangians. 
Each of the rows is exact, and the top-right map is know to be an isomorphism (see e.g.~\cite[Lemma 4.20]{HK2}).

To complete the diagram in the top left corner, observe that we have isomorphisms 
$$
H_2 (\partial M_{U_i}, \partial M_{U_i} \backslash \mathfrak{f}_i) \simeq H^1 (\mathfrak{f}_i) \simeq H^1(S_i) \simeq H_1 (S_i, \partial S_i).
$$
Moreover, observe that the resulting isomorphism $H_1 (S_i, \partial S_i) \to H_2 (\partial M_{U_i}, \partial M_{U_i} \backslash \mathfrak{f}_i) $ can be obtained explicitly by starting with a collection of arc generators for $H_1 (S_i, \partial S_i) $ and taking their images under the linking discs construction of \cite{GPS2} (viewed as a smooth procedure). This means that the top-left square can be completed with a map $a: K_0 (\cW(S_i)) \to H_1 (S_i, \partial S_i)  $, taking each Lagrangian to its homology class, such that the square commutes. As $H_1 (S_i, \partial S_i) $ has a collection of generators given by Lagrangian arcs, the map $a$ is onto. 
Finally, the category $\cW(S_i)$ has an explicit collection of generators, given by cocores of handles for a Weinstein handlebody decomposition of $S_i$ \cite{CDRGG, GPS2}; it then follows that $a$ is an isomorphism. By comparing the top and middle lines, this completes the proof.    
\end{proof}

\begin{proposition}\label{prop:Lag-translations-on-pi_i-properties}
Suppose $L$ is any Lagrangian section of $\pi_i$, and $\sigma_L$ the symplectomorphism constructed in Proposition \ref{prop:Lag-translations-on-pi_i-contruction}. Then $\sigma_L$ induces a well-defined autoequivalence of the wrapped Fukaya category $\cW(M_i)$, which only depends on the equivalence class of $L$. 

Moreover, suppose that $\cL \in \Pic Y_i$ is such that $\Phi_i(\cL) = L$. Then, under the homological mirror symmetry equivalence for $M_i$, $[\sigma_L] \in \Auteq \cW(M_i)$ corresponds to $(-\otimes \cL) \in \Auteq \Coh Y_i$.
\end{proposition}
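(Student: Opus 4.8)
The plan is to treat the two assertions in turn, the identification of $[\sigma_L]$ being the substantive one. For the first assertion, property (i) of Proposition \ref{prop:Lag-translations-on-pi_i-contruction} says that $\sigma_L$ is an exact symplectomorphism whose behaviour at infinity is compatible with the Liouville-sector structure of $M_i$ (it preserves the class of admissible functions $I$). Such a map induces an autoequivalence $[\sigma_L]$ of $\cW(M_i)$ by the standard functoriality of wrapped Fukaya categories under exact symplectomorphisms that are conical and sector-preserving at infinity. That $[\sigma_L]$ depends only on the equivalence class of $L$ then follows from the last sentence of Proposition \ref{prop:Lag-translations-on-pi_i-contruction}: any two admissible choices---be they auxiliary data, or two representatives $L,L'$ of a single class---are joined by a smooth one-parameter family of symplectomorphisms satisfying (i)--(iii), and continuation maps along such a family furnish a natural isomorphism of the induced functors.

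For the identification with $-\otimes\cL$, I would exploit the same gluing philosophy that underlies $\cW(M_i)\simeq\Coh Y_i$, computing the restriction of $[\sigma_L]$ along the two natural functors to the open part and to the boundary. On the open part, $\sigma_L$ restricts to the Lagrangian translation of $M_{U_i}$ associated to $L$ and $L_0$ in the sense of \cite[Section 4]{HK2}; the analysis there identifies its action on $\cW(M_{U_i})\simeq\Coh U_i$ with $-\otimes(\cL|_{U_i})$. On the boundary, property (iii) says $\sigma_L$ restricts to $\tau_L\times\Id$ on $S_i\times D^\ast[0,1]$, where $\tau_L$ is the product of meridional Dehn twists carrying $l_0$ to $\eta(L)$; under $\cF(S_i)\simeq\Perf D_i$ these meridional twists are mirror to tensoring by the line bundle $\cL|_{D_i}$, so by Lemma \ref{lem:Lag-section-restriction-HMS-compatibility} and the compatibility square \ref{eq:HMS-compatibility-restrict-to-D_i} the induced action on $\Coh D_i$ is $-\otimes i^\ast\cL$. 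Both restrictions thus match those of $-\otimes\cL$.

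It remains to assemble these two compatible restrictions into a global statement on $\Coh Y_i$. At the level of objects the match is immediate on the distinguished generators: $\sigma_L(L_0)=L$ gives $\cO_{Y_i}\mapsto\cL=\cO_{Y_i}\otimes\cL$ by the definition of $\Phi_i$ (Corollary \ref{cor:correspondence-Phi_i}); property (ii) shows $\sigma_L$ fixes the torus fibres $T_i$, matching $\cO_q\otimes\cL\cong\cO_q$ on the point-sheaves of Corollary \ref{cor:mirrors-to-points-Y}; and more generally $\Phi_i$ being a group isomorphism together with the additivity of Lagrangian translation shows the two functors agree as maps on the set of all line bundles, which split-generate $\Perf Y_i$. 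As a consistency check, Lemma \ref{lem:K-theory-stopped-category} lets one compute the action of $[\sigma_L]$ on $K_0(\cW(M_i))\cong K(Y_i)$ from the homology-class action of $\sigma_L$, and the translation formula shows this equals multiplication by $[\cL]$.

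The main obstacle is upgrading these object- and $K$-theory-level agreements to a genuine natural isomorphism of $A_\infty$ autoequivalences, i.e.~matching the two functors on morphism complexes and gluing the natural isomorphisms found on the open and boundary pieces. I would handle this by a generation argument: restrict both functors to the full subcategory $\mathcal{G}\subset\cW(M_i)$ spanned by a finite generating set (for instance the Lefschetz thimbles of $\cF^{\to}(w_i)$, or the split-generating line bundles), match them there including their action on morphism spaces using the explicit open- and boundary-restriction computations above, and then extend the resulting natural isomorphism uniquely to the idempotent-completed triangulated closure. The delicate bookkeeping lies in tracking brane data (spin structures and $\bZ$-gradings) under $\sigma_L$ so that the object-level matches are strict rather than merely up to shift; this is precisely the ingredient imported from the treatment of Lagrangian translations in \cite{HK2}, applied here to the stopped category $\cW(M_{U_i},\mathfrak{f}_i)\simeq\cW(M_i)$.
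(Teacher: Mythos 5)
Your first assertion and your two restriction computations are sound and match steps the paper also takes: the paper handles the non-compactly-supported exactness via \cite[Section~2.2]{Keating-Smith} (recording that the autoequivalence is only canonical up to an overall shift), identifies the action over $B_i$ with the Lagrangian translations of \cite[Section 4]{HK2}, and uses Lemma \ref{lem:Lag-section-restriction-HMS-compatibility} for the boundary. The genuine gap is your assembly step. Knowing that $[\sigma_L]$ and $(-\otimes\cL)$ agree on the objects of a split-generating set, agree on $K$-theory, and agree after composing with the restriction functors to $\Coh U_i$ and (via $i^\ast$) to $\Coh D_i$, does not produce a natural isomorphism of autoequivalences: two $A_\infty$ functors can agree on all objects of a generating subcategory, and have abstractly isomorphic morphism complexes, without being isomorphic as functors, and neither restriction functor is faithful in a way that would let you transport the natural isomorphisms you have on the pieces back to $\Coh Y_i$. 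Your plan to ``match them on morphism spaces using the explicit open- and boundary-restriction computations and then extend'' is hand-waving at exactly the point where a theorem is needed.

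The paper closes this gap with a rigidity result for autoequivalences, \cite[Proposition 2.14]{HK2} (resting on Uehara's classification): an autoequivalence $\phi$ of $\Coh Y_i$ that (i) acts as the identity on $K(Y_i)$, (ii) fixes $i_\ast\cO_C$ and $i_\ast\cO_C(-1)$ for \emph{every} $(-2)$-curve $C\subset Y_i$, and (iii) satisfies $i^\ast\circ\phi=i^\ast$, must be the identity; one then applies this to compare $\phi$ with $(-\otimes\cL)$, checking (i) via Lemma \ref{lem:K-theory-stopped-category}, (ii) via the known action of Lagrangian translations on fibred $(-2)$-spheres (\cite[Lemma 4.15 and Proposition 5.2]{HK2}), and (iii) via the boundary computation you also perform. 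The $(-2)$-curve condition is not decorative -- spherical twists about sheaves $i_\ast\cO_C(k)$ for interior $(-2)$-curves commute with $i^\ast$ and act on $K(Y_i)$ only by reflections, so autoequivalence groups of these surfaces contain elements invisible to the coarse invariants you use -- and it creates a second issue your proposal never sees: the A-side verification of (ii) only covers $(-2)$-curves contained in $Y_i\setminus D_i$, so when $D_i$ itself has $(-2)$-components the criterion cannot be applied directly. The paper handles this by passing to an auxiliary surface $(Y_i',D_i')$ obtained by interior blow-ups on $D_i$, using the induced functor $\rho^\ast\colon\cW(M_i)\to\cW(M_i')$ compatible with $p^\ast$, and descending via generation of $\Coh Y_i$ by line bundles. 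Without the rigidity input and this case analysis, your argument does not go through.
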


\begin{proof} 
First, we claim property (i) from Proposition \ref{prop:Lag-translations-on-pi_i-contruction} is enough for $\sigma_L$ to induce a well-defined autoequivalence of $\cW(M_i)$, up to an overall shift (note that as $H^1(M_i; \bZ) = 0$, so there's a unique choice of grading for $\cW(M_i)$ up to an overall shift). For a symplectomorphism $\sigma$ of a Liouville domain $(N, \theta)$ such that $\sigma^\ast \theta = \theta+ df$, where there are no assumptions on the support of $f$, this is carefully done in \cite[Section 2.2]{Keating-Smith}.
In the case here, the property about the behaviour of $\sigma_L$ near the finite boundary $\partial M_i$ ensures that the arguments in op.~cit.~carry through to show that $\sigma_L$ induces an autoequivalence of $\cW(M_i)$, 
well-defined up to an overall shift, say $[\sigma_L]$, and that this is invariant of the deformations allowed in \ref{prop:Lag-translations-on-pi_i-contruction}, in particular the representative of the equivalence class of the section $L$. 

Now suppose that $\cL \in \Pic Y_i$ is such that $\Phi_i(\cL) = L$. We want to show that under the HMS isomorphism $\cW(M_i) \simeq \Coh Y_i$, $[\sigma_L]$ corresponds to $(-\otimes \cL) \in \Auteq (\Coh Y_i)$ (we also use this to pin down the choice of shift / grading for $[\sigma_L]$). By \cite{Uehara2}, autoequivalences of $\Coh Y_i$ are well understood. In our case, we have the following criterion \cite[Proposition 2.14]{HK2}: suppose that $\phi \in \Auteq (\Coh Y_i)$ is such that
\begin{enumerate}
    \item[(i)] $\phi$ induces the identity on the $K$-theory $K(Y_i)$;
    \item[(ii)] for each $(-2)$ curve $C \subset Y_i$, $\phi$ is the identity on $i_\ast \cO_C$ and $i_\ast \cO_C (-1)$;
    \item[(iii)] we have $i^\ast \circ \phi = i^\ast$, where $i: D_i \to Y_i$ is the inclusion;
\end{enumerate}
then $\phi$ must be the identity. 

Now suppose that $\phi$ corresponds to $[\sigma_L]$ under HMS. 
For (i), by Lemma \ref{lem:K-theory-stopped-category}, $\phi$ has the same action on $K(Y_i)$ as $(- \otimes \cL)$. (The map $[\sigma_L]$ the correct action on $H_2(M_{U_i}, \partial M_{U_i}) \simeq K_0(\cW(M_{U_i}))$ by previous work on Lagrangian translations, see e.g.~\cite[Proposition 4.14]{HK2}; and the correct boundary action on $H^1(S_i)$ by direct observation. By the proof of \ref{lem:K-theory-stopped-category}, this determines the action on the whole $K$-theory.)

For (ii), given any $(-2)$ curve $C \subset ( Y_i \backslash D_i)$, $\phi$ has the same action on $i_\ast \cO_C$ and $i_\ast \cO_C(-1)$ as $(- \otimes \cL)$. This too is immediate from known properties of Lagrangian translations and (mirrors to) log CY surfaces, see \cite[Lemma 4.15 and Proposition 5.2]{HK2}.

For (iii), notice that $(-\otimes \cL|_{D_i}) \in \Auteq (\Perf D_i)$ corresponds under mirror symmetry to $\tau \in \Auteq (\cF(S_i))$, where $\tau$ is the composition of Dehn twists in meridians of $S_i$ which takes $l_0$ to $l$. In particular, by Lemma \ref{lem:Lag-section-restriction-HMS-compatibility}, we have that  $i^\ast \circ \phi = i^\ast \circ ( - \otimes \cL)$.  

Taken together, these three points imply that \emph{if} we are in a case there $D_i \subset Y_i$ has no $(-2)$ components, then we can conclude that $[\sigma_L]$ is mirror to $(- \otimes \cL)$.

It remains to consider the case where $D_i$ does contain some $(-2)$ components. 
We use an auxiliary log CY surface with split mixed Hodge structure, say $(Y_i', D_i')$, given by starting with $(Y_i, D_i)$ and making blow-ups at interior points of $D_i$ until $D_i'$ has no $(-2)$ curves. We use the obvious notation for mirror structures associated to $(Y_i', D_i')$. 
The Weinstein Lefschetz fibration $w_i'$ is given by starting with the fibration $w_i$ and adding critical points (keeping the same fibre). This gives a map
$$
\rho^\ast: \cW(M_i) \longrightarrow \cW(M_i').
$$
A key feature of \cite{HK1} is that this is compatible with HMS: we have a commutative diagram
$$
\xymatrix{
\cW(M_i) \ar[r]^{\rho^\ast}  \ar[d]^{\text{HMS}} & \cW(M_i') \ar[d]^{\text{HMS}} \\
\Coh Y_i \ar[r]^{p^\ast} & \Coh Y_i' 
}
$$
where $p: Y_i' \to Y_i$ is the blow-down map.

At the start of Section \ref{sec:relation-PicY_i-sections-pi_i^--classifications},
we constructed an inclusion $\iota: M_i \hookrightarrow M_{\widetilde{U}_i}$, where $(\widetilde{Y}_i, \widetilde{D}_i)$ was given by starting with $(Y_i, D_i)$ and blowing up an interior point on each component of $D_i$. 
Ignoring the extra blow-ups that we don't need, we get an inclusion  $\iota: M_i \hookrightarrow M_{U_i'}$. Moreover, any Lagrangian section $L$ of $\pi_i$ gives a Lagrangian section of $\pi_{U'_i}$, which can be extended to an (equivalence class of) Lagrangian section $L'$ of $\pi_i'$ in a preferred way. 
(Take a representative for $L$ which is constant over $B_i[-\epsilon_1, 0]$ and then extend in the obvious way). 
Under the correspondences $\Phi$ and $\Phi'$, this is the pullback map $p^\ast: \Pic Y_i \to \Pic Y_i'$. 

Assume $L$ and $L'$ are as above (with $L$ constant over $B_i[-\epsilon_1, 0]$). Then we have symplectomorphisms $\sigma_{L}$ of $M_i$ and $\sigma_{L'}$ of $M_i'$ such that, by construction, $\sigma_{L'}$ preserves $M_i \subset M_i'$ and restricts to $\sigma_L$ on it. Again using the interplay between the sectorial and Lefschetz fibration definitions of $\cW(M_i) \cong \cF^{\to}(w_i)$ and  $\cW(M_i') \cong \cF^{\to}(w'_i)$, we get that the autoequivalences $[\sigma_L]$ and $[\sigma_{L'}]$ are compatible with $\rho^\ast$: we have a commutative diagram
$$
\xymatrix{
\cW(M_i) \ar[r]^{\rho^\ast}  \ar[d]_{[\sigma_L]} & \cW(M_i') \ar[d]^{[\sigma_{L'}]} \\
\cW(M_i) \ar[r]^{\rho^\ast}   & \cW(M_i') 
}
$$
Say $[\sigma_{L'}]$ is mirror to $\phi' \in \Auteq Y_i'$ and $[\sigma_{L}]$ is mirror to $\phi \in \Auteq Y_i$. Then the commutative diagram above becomes
$$
\xymatrix{
\Coh Y_i \ar[r]^{p^\ast} \ar[d]_{\phi} & \Coh Y_i' \ar[d]^{\phi'} \\
\Coh Y_i \ar[r]^{p^\ast} & \Coh Y_i' 
}
$$
Our argument for the case where $D_i$ contains no $(-2)$ curves shows that $\phi' = (- \otimes \cL')$, where $\cL' = p^\ast \cL$. Now, using the fact that $\Coh Y_i$ is generated by line bundles (this follows from \cite{Beilinson, Orlov}, see \cite[Corollary 2.15]{HK1} for an explicit description), we see that we must have $\phi = (- \otimes \cL)$. This completes the proof. 
\end{proof}

\subsubsection{Line bundles on $Y$ and sections of $\pi: M \to S^2$}\label{sec:sections-of-pi}

\begin{proposition}\label{prop:sections-of-pi-classification}
Equivalence classes of Lagrangian sections of $\pi: M \to S^2$ are classified by
$$
H^1 (R^1 \pi_! \underline{\bZ}_M)
$$
where the correspondence is given taking a section $L$ to the class of $[L] - [L_0]$. 

Moreover, there is a one-to-one correspondence $\Phi$ between equivalence classes of Lagrangian sections of $\pi$ and elements of $\Pic Y$, such that for each $i$, the following diagram commutes:
$$
\xymatrix{
\{ \text{classes of Lag.~sections of } \pi \} \ar[rr]^-{\text{restriction}} \ar[d]_{\Phi}
& & \{ \text{classes of Lag.~sections of } \pi_i \} \ar[d]_{\Phi_i}
\\
\Pic Y \ar[rr]^{i^\ast} & & \Pic Y_i 
}
$$
\end{proposition}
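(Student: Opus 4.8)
The plan is to prove the two assertions in turn, reusing the gluing-plus-Mayer--Vietoris machinery already developed for the pieces. For the classification by $H^1(R^1\pi_!\underline{\bZ}_M)$, I would argue exactly as in the proofs of Proposition \ref{prop:sections-of-pi_i} and Corollary \ref{cor:sections-of-buffer}, but now globally over $S^2$. Recall from Definition \ref{def:M} and Lemma \ref{lem:M-first-properties} that $S^2$ is covered by the discs $B_i$ and the ribbon $R_D$, overlapping in the annuli $R_{D_i}$, and that $M$ (resp.\ $\pi$) is the pushout of the $M_i$ and $M_D$ along the $M_{D_i}$ (resp.\ glued from $\pi_i$, $\pi_D$, $\pi_{D_i}$). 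A Lagrangian section of $\pi$ restricts to equivalence classes of sections over each $M_i$ and over $M_D$ agreeing on the overlaps $M_{D_i}$; conversely matching sections glue, the gluing ambiguity over each $R_{D_i}$ being controlled by cutting off fibre-preserving isotopies with bump functions on the base, just as before. To package this I would write the Mayer--Vietoris short exact sequence of constructible sheaves on $S^2$
\[
0 \to R^1\pi_!\,\underline{\bZ}_M \to \Big(\bigoplus_i R^1(\pi_i)_!\,\underline{\bZ}_{M_i}\Big)\oplus R^1(\pi_D)_!\,\underline{\bZ}_{M_D} \to \bigoplus_i R^1(\pi_{D_i})_!\,\underline{\bZ}_{M_{D_i}} \to 0
\]
(all three piecewise classifications are stated with the $!$-pushforward, so the sequence is uniform), and take the long exact sequence in cohomology. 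Since $M$ retracts onto its $2$-dimensional Weinstein skeleton we have $H^3(M;\bZ)=0$, whence $H^2(R^1\pi_!\underline{\bZ}_M)=0$ by Leray exactly as in Proposition \ref{prop:sections-of-pi_i}; feeding in Proposition \ref{prop:sections-of-pi_i}, Proposition \ref{prop:sections-of-pi_D} and Lemma \ref{lem:Lag-sections-of-S_ixT*[0,1]} then identifies this exact sequence with the gluing sequence for sections and yields the bijection $L \mapsto [L]-[L_0]$.

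For the correspondence with $\Pic Y$, the plan is to glue the bijections $\Phi_i\colon \Pic Y_i \to \{\text{sections of }\pi_i\}$ of Corollary \ref{cor:correspondence-Phi_i}. On the B-side, $Y$ is the pushout of schemes $\bigsqcup_i Y_i \leftarrow \bigsqcup_i D_i \to D$ (as in the proof of Theorem \ref{thm:hms-footballs-wrapped}), with the identifications along the double curves $D_{ij}$ fixed by the split mixed Hodge structure condition $z\mapsto z^{-1}$ of Lemma \ref{lem:split_MHS_CYncs}; correspondingly $\Pic Y$ sits in a Mayer--Vietoris sequence for $\cO^\ast$ built from the $\Pic Y_i$ and their restrictions to the $D_{ij}$, of the same shape as the A-side sequence above. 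I would then match the two sequences term by term: $\Phi_i$ identifies $\Pic Y_i$ with sections of $\pi_i$, and Lemma \ref{lem:Lag-section-restriction-HMS-compatibility} identifies the B-side restriction $\Pic Y_i \to \Pic D_i \to \cF(S_i)$ with the A-side restriction $\eta$, so that the gluing maps agree. This produces the bijection $\Phi$, and the stated square commutes by construction, since $\Phi$ is defined to restrict to $\Phi_i$ over each $B_i$.

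The main obstacle is to see that the two Mayer--Vietoris sequences genuinely agree --- not merely that they have isomorphic pieces --- and in particular that $\Phi$ is a bijection rather than an injection or surjection with nontrivial cokernel. Two potentially obstructing terms must be shown to vanish: the discrete $H^1$ of the dual complex with $\bZ$-coefficients, and the continuous $H^1$ of the dual complex with $\bC^\ast$-coefficients governing the gluing parameters of line bundles around cycles in $D$ (the ``Jacobian'' part of $\Pic Y$). This is exactly where the hypothesis that the dual complex is $S^2$ enters: both $H^1(S^2,\bZ)=0$ and $H^1(S^2,\bC^\ast)=0$, so $\Pic Y$ is assembled from the discrete edge-matching data alone and carries no extra moduli, while on the A-side the gluing ambiguities over the $R_{D_i}$ assemble without further contributions (for higher-genus $S$ these terms would survive, cf.\ Remark \ref{rmk:line-bundles-higher-genus-speculations}). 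The remaining bookkeeping --- checking that the A-side connecting homomorphisms reproduce the B-side degree/restriction maps on each edge, via the $z\mapsto z^{-1}$ gluing together with Lemma \ref{lem:Lag-section-restriction-HMS-compatibility} --- is routine but must be carried out carefully to pin down $\Phi$ and the commuting square simultaneously.
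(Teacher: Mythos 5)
Your proposal is, in substance, the paper's own proof: for the first claim you use the identical short exact sequence of sheaves on $S^2$ and the same restrict-and-glue argument for equivalence classes of sections (feeding in Lemma \ref{lem:Lag-sections-of-S_ixT*[0,1]}, Proposition \ref{prop:sections-of-pi_D} and Proposition \ref{prop:sections-of-pi_i}), and for the second claim your ``Mayer--Vietoris for $\cO^\ast$ with $H^1(S^2,\bZ)=H^1(S^2,\bC^\ast)=0$'' is exactly the paper's sequence $0 \to \Pic Y \to \bigoplus_i \Pic Y_i \to \bigoplus_{i,j}\Pic D_{ij}$ (Equation \ref{eq:exact-seq-for-PicY}, where split MHS and the $S^2$ hypothesis enter), matched against the A-side kernel via the $\Phi_i$ of Corollary \ref{cor:correspondence-Phi_i} and Lemma \ref{lem:Lag-section-restriction-HMS-compatibility}.

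One intermediate claim, however, is false as stated, though fortunately inessential. You assert $H^2(R^1\pi_!\underline{\bZ}_M)=0$ because $H^3(M;\bZ)=0$. The Leray spectral sequence for $R\pi_!$ over the compact base $S^2$ abuts to \emph{compactly supported} cohomology $H^\ast_c(M;\bZ)$, not ordinary cohomology, and by Poincar\'e duality $H^3_c(M;\bZ)\cong H_1(M;\bZ)\cong \bZ/k$: indeed $M = X\setminus\nu(\Sigma)$ with $H_1(M)\cong\operatorname{coker}\bigl(H_2(X)\xrightarrow{\,\cdot\,[\Sigma]}\bZ\bigr)=\bZ/k$, where $k$ is the divisibility of $[\Sigma]$ (Lemma \ref{lem:characterisation-of-k-divisibility-of-Sigma}). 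Since $R^2\pi_!\underline{\bZ}_M\cong\underline{\bZ}_{S^2}$ and $H^1(S^2;\bZ)=0$, one in fact gets $H^2(R^1\pi_!\underline{\bZ}_M)\cong\bZ/k$, which is nonzero whenever $k>1$ (note this is consistent with $H^1(M;\bZ)=0$, since $H_1(M)$ is torsion). This vanishing was only invoked for surjectivity at the right-hand end of the long exact sequence, which the classification never uses: what is needed is exactness of
$$
0 \lra H^1(R^1\pi_!\underline{\bZ}_M) \lra \bigoplus_i H^1(R^1(\pi_i)_!\underline{\bZ}_{M_i})\oplus H^1(R^1(\pi_D)_!\underline{\bZ}_{M_D}) \lra \bigoplus_i H^1(R^1(\pi_{D_i})_!\underline{\bZ}_{M_{D_i}}),
$$
which is automatic from the sheaf sequence because $H^0$ of the overlap sheaf vanishes (each summand of $R^1(\pi_{D_i})_!\underline{\bZ}$ is an extension by zero from $I_j\times[0,1]$, and $H^0_c(I_j\times[0,1])=0$), together with the geometric statement that matching classes of sections glue uniquely (Proposition \ref{prop:sections-of-pi_D}). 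That is precisely how the paper argues; so delete the vanishing claim and phrase the conclusion in terms of left and middle exactness plus gluing, and the proof is complete.
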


\begin{proof}
The key ingredients are Lemma \ref{lem:Lag-sections-of-S_ixT*[0,1]} and Propositions \ref{prop:sections-of-pi_D} and \ref{prop:sections-of-pi_i}. 

We have the following short exact sequence of sheaves on $S^2$:
$$
    0 \lra R^1 \pi_! \underline{\bZ}_M 
\lra  \bigoplus_i R^1 (\pi_i)_! \underline{\bZ}_{M_i}  \oplus R^1 (\pi_D)_! \underline{\bZ}_{M_D}
\lra  \bigoplus_i  R^1 (\pi_{D_i})_! \underline{\bZ}_{M_{D_i}}
\lra 0
$$

The associated long exact sequence in cohomology reduces to:
\begin{multline}\label{eq:H1R_1-for-pi}
 0 \lra H^1( R^1 \pi_! \underline{\bZ}_M )
\lra  \bigoplus_i H^1 (R^1 (\pi_i)_! \underline{\bZ}_{M_i} )  \oplus  H^1 (R^1 (\pi_D)_! \underline{\bZ}_{M_D} ) 
\lra 
\\  \bigoplus_i  H^1 ( R^1 (\pi_{D_i})_! \underline{\bZ}_{M_{D_i}})
\lra 0
\end{multline}

On the other hand, any equivalence class of Lagrangian section of $\pi$ restricts to give equivalence  classes of an exact Lagrangian section of $\pi_D$, and of Lagrangian sections of each $\pi_i$, which agree when further restricted to  their overlaps $\pi_{D_i}$. Conversely, by deforming to constant sections for each $\pi_{D_i}$, we see that given any equivalences classes  of an exact Lagrangian section of $\pi_D$ and of each $\pi_i$, such that they restrict to the same equivalences of exact Lagrangian sections of $\pi_{D_i}$, we can patch them to get a well-defined equivalence class of Lagrangian section of $\pi$. Together with Equation \ref{eq:H1R_1-for-pi}, this proves the first part of the Proposition.

For the correspondence $\Phi$, we need our assumption that $Y$ has split mixed Hodge structure. This implies that we have an exact sequence
\begin{equation} \label{eq:exact-seq-for-PicY}
    0 \to \Pic Y \to \oplus_i \Pic Y_i \to \oplus_{i,j} \Pic D_{ij}
\end{equation}
where the $D_{ij} \cong \bP^1$ are the irreducible components of $D$, and all maps are given by pullback. On the mirror side, the map $\Pic Y_i \to \Pic D_{ij}$ corresponds to restricting Lagrangian sections of $\pi_i$ to sections of $\pi_{D_{ij}} = \pi_{\bP^1}$. These are just classified by $\bZ$. In particular, suppose we're given equivalence classes of Lagrangian sections $L_i$ of $\pi_i$ (for all $i$) such that their restrictions to sections of $\pi_{D_ij}$ (running over all $i,j$) give the same element of $\bZ^{|e(G_D)|}$. Then from Proposition \ref{prop:sections-of-pi_D}, there's a uniquely determined equivalence class of exact Lagrangian section of $\pi_D$, classified by this element of $\bZ^{|e(G_D)|}$, which allows us to patch the $L_i$ to get a section of $\pi$. Now from the first half of our proof, equivalence classes of Lagrangian sections of $\pi$ must be in one-to-one correspondence with $\Pic Y$; and that correspondence $\Phi$ is determined by its restrictions $\Phi_i$ to each $\Pic Y_i$, as desired.
\end{proof}

Each Lagrangian section of $\pi$ is a Lagrangian $S^2$ in $M$, and, after a choice a grading, gives an object of the Fukaya categories $\cW(M)$ (and $\cF(M)$). We want to show that $\Phi$ is simply the HMS isomorphism of Theorem \ref{thm:hms-footballs-wrapped} for those objects. We first check this just for $L_0$.

\begin{lemma}\label{lem:mirror-to-L_0}
Under the HMS isomorphism of Theorem \ref{thm:hms-footballs-wrapped}, the Lagrangian sphere $L_0$ in $M$, equipped with a suitable grading, is mirror to the structure sheaf $\cO \in \Coh Y$. 
\end{lemma}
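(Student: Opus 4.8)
The plan is to identify $\cO_Y$ and $L_0$ through the \emph{limit} (descent) presentation of both sides, rather than the pushforward presentation used to prove Theorem~\ref{thm:hms-footballs-wrapped}. On the $B$-side, writing $Y=\bigcup_i Y_i$ as a normal crossings union, the structure sheaf $\cO_Y$ is the canonical object of the homotopy pullback of $\bigoplus_i \Coh Y_i$ and $\Coh D$ over $\bigoplus_i \Coh D_i$ along the restriction (pullback) functors $i^\ast$: explicitly $\cO_Y$ corresponds to the compatible family $(\{\cO_{Y_i}\}_i,\ \cO_D)$ together with the canonical identifications $i^\ast\cO_{Y_i}\cong \cO_{D_i}\cong i^\ast\cO_D$. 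This pullback presentation is adjoint to the pushforward pushout appearing in the proof of Theorem~\ref{thm:hms-footballs-wrapped}: the two agree because the closed-embedding pushforwards and their restriction right adjoints interact correctly with the descent square, paralleling the adjunction argument already used in the proof of Lemma~\ref{lem:HMS-compatibility-fibre}. On the $A$-side the sectorial gluing of Definition~\ref{def:M} gives, dually to the pushout of Theorem~\ref{thm:hms-footballs-wrapped}, a description of $\cW(M)$ as the corresponding homotopy pullback along the Viterbo (sectorial) restriction functors $\cW(M)\to\cW(M_i)$, $\cW(M)\to\cW(M_D)$ and their common further restrictions to $\cW(M_{D_i})$. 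Under the equivalences of Lemmas~\ref{lem:inclusion-boundary-cycle-fibration-HMS} and~\ref{lem:HMS-compatibility-fibre} and the diagram~\eqref{eq:HMS-compatibility-restrict-to-D_i}, these Viterbo restrictions are intertwined with the $B$-side restrictions $i^\ast$. It therefore suffices to show that $L_0$ is the object of this pullback given by restricting to the preferred sections.

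Next I would identify the pieces. By construction (Lemma~\ref{lem:M-first-properties} and the gluing of Definition~\ref{def:M}), the restriction of the global section $L_0\subset M$ to the subsector $M_i$ is exactly the preferred Lagrangian section of $\pi_i$, i.e.\ the section denoted $L_0$ for $\pi_{U_i}$ in the stopped picture $\cW(M_i)\simeq\cW(M_{U_i},\mathfrak{f}_i)$; and its restriction to $M_D$ is the preferred section $L_0$ of $\pi_D$. By \cite{HK1} (recalled in Section~\ref{sec:hms-recalls}) the former is mirror to $\cO_{Y_i}\in\Coh Y_i$. For the latter, $\cO_D$ sits in the homotopy pullback presenting $\Coh D\simeq\cW(M_D)$ (Theorem~\ref{thm:HMS-trivalent-P^1-configurations}) as the compatible family $\{\cO_{\bP^1_e}\}_e$; since under the standard equivalence $\cW(M_{\bP^1,e})\simeq\Coh \bP^1_e$ the preferred zero-section is mirror to $\cO_{\bP^1_e}$, the glued preferred section $L_0$ of $\pi_D$ is mirror to $\cO_D$. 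Finally the two restrictions agree over each $M_{D_i}$: the common restriction of $L_0$ is the preferred section of $\pi_{D_i}$, mirror to $\cO_{D_i}$, with the gluing isomorphisms being the identity, matching the canonical $i^\ast\cO_{Y_i}\cong\cO_{D_i}\cong i^\ast\cO_D$ on the $B$-side; this is the trivial-twist case of Lemma~\ref{lem:Lag-section-restriction-HMS-compatibility}. Choosing the gradings on the local sections so that they are mirror to $\cO_{Y_i}$ on the nose forces the gluing data to be identities and so determines a consistent global grading on $L_0$ -- this is the \emph{suitable grading} of the statement. Assembling, $L_0$ and $\cO_Y$ are the same object of the homotopy pullback, hence $L_0$ is mirror to $\cO_Y$.

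The main obstacle is making the passage from the pushforward (colimit) presentation of Theorem~\ref{thm:hms-footballs-wrapped} to the restriction (limit) presentation fully rigorous on the $A$-side, and in particular verifying that Viterbo restriction of the compact section $L_0$ to each subsector produces the stop-ending preferred section representing $\cO_{Y_i}$, rather than a shift or a different gluing. Concretely, one must check that the functors $\cW(M)\to\cW(M_i)$ are the mirrors of $i^\ast\colon\Coh Y\to\Coh Y_i$, which is obtained by passing to right adjoints in the pushforward compatibility of Theorem~\ref{thm:hms-footballs-wrapped} (using that the functors involved are pushforwards along closed embeddings, so the required adjoints exist and satisfy base change across the descent square), exactly as in the cup/cap adjunction in the proof of Lemma~\ref{lem:HMS-compatibility-fibre}. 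An alternative that avoids the limit formalism is to resolve $\cO_Y$ by the iterated cone of pushforwards $\bigoplus_i i_\ast\cO_{Y_i}\to\bigoplus_{i<j} i_\ast\cO_{D_{ij}}\to\cdots$ and to match it, term by term via Theorem~\ref{thm:hms-footballs-wrapped}, with a cone of pushed-forward preferred sections; there the obstacle instead becomes the geometric (Lagrangian surgery) computation showing this iterated cone is represented by the single sphere $L_0$. The remaining bookkeeping -- matching gradings and checking that the gluing data on both sides reduce to identities, so that no nontrivial automorphism of $\cO_Y$ or meridian Dehn-twist decoration on $L_0$ is introduced -- is routine given the trivial-twist case of Lemma~\ref{lem:Lag-section-restriction-HMS-compatibility}.
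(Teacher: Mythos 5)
Your main route has genuine gaps on both sides of the mirror. On the B-side, the claimed adjunction between the pushforward pushout and your $i^\ast$-pullback presentation is not correct: passing to right adjoints of the pushforward colimit (after Ind-completion) yields a limit along the functors $i^!$, not $i^\ast$, and these differ materially here. For the closed embedding $i\colon Y_i \hookrightarrow Y$ one computes $i^!\cO_Y = \cHom_{\cO_Y}(\cO_{Y_i},\cO_Y) \cong \cO_{Y_i}(-D_i)$ (the conductor ideal), not $\cO_{Y_i}$; so under the presentation that actually is adjoint to the pushout, $\cO_Y$ does not have the components you assign to it. The genuine $i^\ast$-descent statement is a different theorem and only makes sense at the level of $\Perf$: derived pullback along $Y_i \hookrightarrow Y$ does not even define a functor $\Coh Y \to \Coh Y_i$, since $\cO_{Y_j}$ has infinite Tor-dimension over $\cO_Y$ along the double curves (locally, $\cO_{Y_i}$ over $\bC[x,y]/(xy)$ has the $2$-periodic infinite resolution). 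Retreating to $\Perf Y \simeq \cF(M)$ would be circular: that is Theorem \ref{thm:hms-footballs-compact}, which is proved \emph{after} this lemma and uses it, via Corollary \ref{cor:mirrors-to-sections-of-pi}. On the A-side the situation is worse: the ``Viterbo (sectorial) restriction'' functors $\cW(M) \to \cW(M_i)$ you need are not available in the GPS framework, which provides only covariant pushforwards along sector inclusions; the cup/cap adjunction invoked in Lemma \ref{lem:HMS-compatibility-fibre} is special to the boundary inclusion $\cW(M_{D_i}) \to \cW(M_i)$ in the Lefschetz/stopped model and is not a general sectorial restriction. Even granting abstract right adjoints at the Ind-complete level, you would still have to compute that the abstract adjoint carries $L_0$ to the preferred geometric section, which is essentially the content to be proved. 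And there is a concrete geometric obstruction to the naive version of this: the restriction of $L_0$ to $M_i$ has boundary on the \emph{finite} boundary of the sector, so it is not an object of $\cW(M_i)$ at all (the paper flags exactly this in the remark after Proposition \ref{prop:sections-of-pi_i}); the actual mirror of $\cO_{Y_i}$ is the Lagrangian $L_{Y_i}$, which agrees with $L_0$ only away from a collar and is modified near the boundary by displacing off the stop.

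The alternative you mention only in passing at the end---resolving $\cO_Y$ by the complex of pushforwards $i_\ast\cO_{Y^{[0]}} \to i_\ast\cO_{Y^{[1]}} \to i_\ast\cO_{Y^{[2]}}$, matching it term by term through Theorem \ref{thm:hms-footballs-wrapped}, and collapsing the resulting twisted complex of Lagrangians by Polterovich surgery---is precisely the paper's proof, and the step you defer there is the substantive one. The paper identifies explicit mirrors to each term ($L_v$ for the points, $l_{\bP^1}$ and $L_{D_i}$ for the one-strata, and the Lagrangian $L_{Y_i}$ just described for $i_\ast\cO_{Y_i}$), then checks in local models of $T^\ast[0,1]^2$ that the clean-intersection surgeries produce only displaceable (hence zero) pieces plus the components that assemble into the single section $L_0$. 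Since your proposal neither carries out this computation nor repairs the limit-presentation route, it does not yet constitute a proof of the lemma.
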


\begin{proof}   
Let $Y^{[i]}$ denote the disjoint union of the normalisations of the codimension $i$ strata of $Y$. Then the structure sheaf $\cO_Y$ can be resolved as
\begin{equation} \label{eq:resolution-of-O}
0 \lra \cO_Y \lra  i_\ast \cO_{Y^{[0]}} \lra i_\ast \cO_{Y^{[1]}} \lra  i_\ast \cO_{Y^{[2]}}\lra 0
\end{equation}
where $i$ always denotes inclusion into $Y$, and all maps are given by evaluations (with the obvious signs coming from an overall choice of orientation on the intersection complex of $Y$).
Using $\{ i_\ast \cO_{Y^{[1]}} \lra  i_\ast \cO_{Y^{[2]}} \} \simeq i_\ast \cO_D$, and similarly for the $D_i$, we get that $\cO_Y$ is also quasi-isomorphic to the twisted complex:
\begin{equation}\label{eq:resolution-of-O-for-us}
    \left\{
\vcenter{\xymatrix{
&  \bigoplus_i i_\ast \cO_{Y_i} \ar[d] \\
i_\ast \cO_D \ar[r] & \bigoplus_i i_\ast \cO_{D_i}
}}
\right\}
\end{equation}
with the obvious maps. The rest of the proof consists of identifying mirrors to the sheaves in this complex, and using Polterovich surgery to simplify the mirror complexes of Lagrangians to recover $L_0$.

Let's identify mirrors to the coherent sheaves which appear in the resolution. First, in $M_{v} = T^\ast D^2_{v}$, the mirror to $\cO_v \in \Coh \{ v \}$ is the cotangent fibre to an interior point (this is classical). This Lagrangian can be deformed to a Lagrangian $L_v$ (still conical at infinity, giving the same object in $\cW(M_v)$) such that on each of the patches $[0,1]^2 \subset D^2_{v} $ used in Section \ref{sec:HMS-for-D}, $L_v$ restricts to a product Lagrangian as in Figure \ref{fig:Lagrangian-L_v}.
\begin{figure}[htb]
\begin{center}
\includegraphics[scale=0.4]{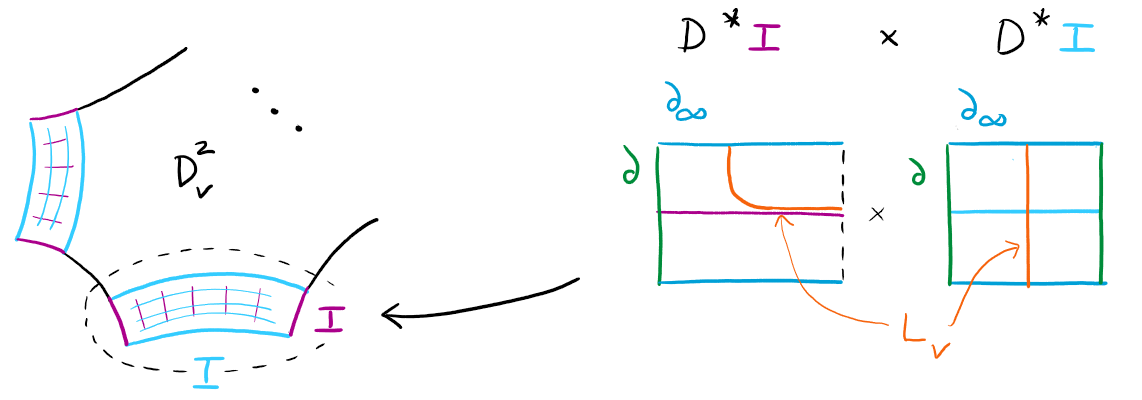}
\caption{The Lagrangian $L_v$ in $T^\ast D^2_v$, which is mirror to $\cO_v$. 
}
\label{fig:Lagrangian-L_v}
\end{center}
\end{figure}

Second, in $(T^\ast S^1)^-$, the mirror to  $\cO_{\bP^1} \in \Coh \bP^1$ is the Lagrangian $l_{\bP^1}$ as in Figure \ref{fig:Lagrangian-L_P^1}. This follows from classical HMS for $\bP^1$, translated to the sectorial set-up. 
\begin{figure}[htb]
\begin{center}
\includegraphics[scale=0.5]{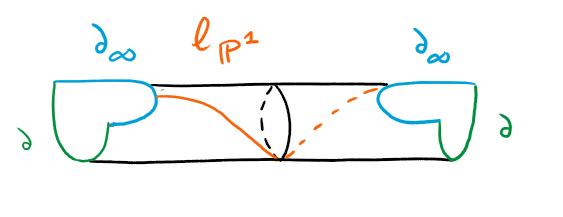}
\caption{The Lagrangian $l_{\bP^1}$ in $(T^\ast S^1)^-$, which is mirror to $\cO_{\bP^1}$.
}
\label{fig:Lagrangian-L_P^1}
\end{center}
\end{figure}
In $M_{\bP^1} = (T^\ast S^1)^- \times T^\ast [0,1]$, the mirror to $\cO$ is $L_{\bP^1} := l_{\bP^1} \times T^\ast_{ 1/2 }$. Similarly, the mirror to $\cO_{D_i} \in \Coh D_i$ is $l_0 \subset S_i$ (see  \cite{Lekili-Polishchuk}, or this can be shown directly from the $\bP^1$ case using a sectorial covering); stabilising, we get $L_{D_i}:= l_0 \times T^\ast_{1/2} \subset S_i \times T^\ast [0,1]$. 

Third, we claim that in $M_i$, the mirror to $\cO_{Y_i} \in \Coh Y_i$ is a Lagrangian, say $L_{Y_i}$, which agrees with $L_0$ away from $B_i[-\epsilon_1, 0]$, and, in $\pi_i^{-1} (B_i[-\epsilon_1, 0]) = S_i \times T^\ast [0,1]$, is given by the product of $l_0$ with the Lagrangian arc $l_d$ of Figure \ref{fig:Lagrangian-arcs-local-models}. (Here $S_i \times T^\ast_{ 0}$ maps to the inner boundary of the annulus  $B_i[-\epsilon_1, 0]$, and $S_i \times T^\ast_{ 1}$ to the outer one.)
\begin{figure}[htb]
\begin{center}
\includegraphics[scale=0.5]{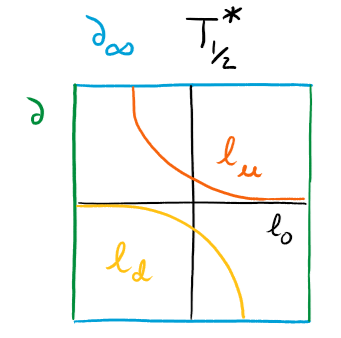}
\caption{Some distinguished Lagrangian arcs in $T^\ast[0,1]$.
}
\label{fig:Lagrangian-arcs-local-models}
\end{center}
\end{figure}
To check the claim, recall from \cite{HK1} that in the directed Fukaya category $\cF^{\to}(w_i)$, the mirror to $\cO$ is a Lagrangian thimble which ends on (our distinguished copy of $S_i$); and under our identification of the total space of $w_i$ with $M_{U_i}$, this thimble is taken to our preferred Lagrangian section $L_0$ (with boundary \emph{on} the stop $\mathfrak{f}_i$). If we switch to working with the Fukaya category stopped at $\mathfrak{f}_i$, the object corresponding to $L_0$ is given by applying a small negative Reeb flow to $L_0$, to displace its boundary off $\mathfrak{f}_i$. Passing from stops to sectors, we delete a very small neighbourhood of $\mathfrak{f}_i$, not intersecting our displaced Lagrangian; we then recognise the resulting Lagrangian as $L_{Y_i}$.

In our scenario, inside the Weinstein sectorial pushout $M$, we get Lagrangians $L_v$ for each $v \in Y^{[2]}$, mirror to $i_\ast \cO_v \in \Coh Y $; $L_{D_{ij}}$ for each irreducible $D_{ij} \subset Y^{[1]}$, mirror to $i_\ast \cO_{D_{ij}} \in \Coh Y$; and $L_{Y_i}$ for each $Y_i \in Y^{[0]}$, mirror to $i_\ast \cO_{Y_i}$. In order to recover the mirror to $\cO_Y$, we now perform (many!) Polterovich surgeries for cleanly intersecting Lagrangians. First, consider the mapping cone
$$
\mathbb{L}_D = \left\{ \bigoplus_{i,j} L_{D_{ij}} \lra \bigoplus_v L_v \right\}
$$
where we're taking a cone over the `obvious' map, mirror to the evaluation in Equation \ref{eq:resolution-of-O}. By standard results about mapping cones in Fukaya categories, this is quasi-isomorphic in $\cW(M_D)$ (or $\cW(M)$) to the Lagrangian given by
$$
\left(\bigoplus_{i,j} L_{D_{ij}}  \right) \# \left( \bigoplus_v L_v \right)
$$
where $\#$ denotes Polterovich surgery (for cleanly intersecting Lagrangians). Locally, using Figure \ref{fig:Lagrangian-arcs-local-models}, each such surgery is modelled on 
$$   \left( (l_d \sqcup l_u) \# T^\ast_{1/2} \right) \times T^\ast_{1/2}
$$
inside $T^\ast [0,1]^2$. Up to compactly supported Hamiltonian isotopy, $(l_d \sqcup l_u) \# T^\ast_{1/2} $ is simply the disjoint union of $l_0$ and of two small arcs. The latter both give the zero object in $\cW(M)$ (e.g.~by displaceability). Let $L_D$ be the non-trivial component of the Lagrangian we get this way. This is quasi-isomorphic to $\mathbb{L}_D$, mirror to $i_\ast \cO_D$, and, when restricted to the annulus $B_i[-\epsilon_1,0] \subset R_D$, is given by $l_0 \subset l_u$. 
It follows from Equation \ref{eq:resolution-of-O-for-us} that $\cO_Y$ is mirror to the mapping cone
$$
\mathbb{L}_0 := \left\{
\vcenter{ \xymatrix{ &  \bigoplus_i L_{Y_i} \ar[d] \\
L_D \ar[r] & \bigoplus_i L_{D_i} }
}
\right\}
$$
where we're again taking the cone over the mirrors to evaluation maps. Now notice that these are modelled on exactly the same Polterovich surgeries as before; moreover, the resulting non-trivial Lagrangian is precisely our preferred Lagrangian section $L_0$ of $\pi$. This completes the proof.
\end{proof}

\begin{proposition} \label{prop:Lagrangian-translations-of-M}
Let $L$ be any Lagrangian section of $\pi$. Then we can associate to it a symplectomorphism of $M$, say $\sigma_L$, well-defined in $\pi_0 \Symp M$, such that:
\begin{enumerate}
    \item[(i)] On each $M_i$, $\sigma_L$ restricts to the symplectomorphism $\sigma_{L_i}$ associated to $L_i := L|_{M_i}$ from Proposition \ref{prop:Lag-translations-on-pi_i-contruction}. In particular, $\sigma_L$ maps the equivalence class of $L_0$  to the equivalence class of $L$.

    \item[(ii)] The map $\sigma_L$ induces an autoequivalence of the wrapped Fukaya category $\cW(M)$,  well-defined up to an overall shift.

    \item[(iii)] Under the HMS isomorphism of Theorem \ref{thm:hms-footballs-wrapped}, $[\sigma_L] \in \Auteq \cW(M)$, for a suitable choice of shift, is mirror to $(- \otimes \cL ) \in \Auteq \Coh Y$, where $\cL = \Phi(L) \in \Pic Y$. 
\end{enumerate}
\end{proposition}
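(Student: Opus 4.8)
The plan is to build $\sigma_L$ by gluing together the local translations already produced in Proposition \ref{prop:Lag-translations-on-pi_i-contruction}. Set $L_i := L|_{M_i}$, a Lagrangian section of $\pi_i$, and let $\sigma_{L_i}$ be the associated symplectomorphism of $M_i$. By part (iii) of Proposition \ref{prop:Lag-translations-on-pi_i-contruction}, over the collar annulus $B_i[-\epsilon_1,0]$ the map $\sigma_{L_i}$ has the product form $\tau_{L_i} \times \Id$ on $S_i \times D^\ast[0,1] = \pi_i^{-1}(B_i[-\epsilon_1,0])$, where $\tau_{L_i}$ is the composition of Dehn twists in the meridians of $S_i$, applying $i^\ast\cL \cdot D_{ij}$ twists in the meridian indexed by $D_{ij}$. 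To glue these over the ribbon region I would construct a matching translation $\sigma_{L_D}$ of $M_D$ for the section $L_D := L|_{M_D}$: over each vertex piece $T^\ast D^2_v$ the fibres are contractible and every section is equivalent to $L_0$, so $\sigma_{L_D}$ can be taken to be the identity there up to isotopy, while over each edge piece $M_{\bP^1,e}$ it is the appropriate power of the fibrewise Dehn twist, damped to the identity near the vertices. The key compatibility is that on the overlaps $M_{D_i}$ the two prescriptions agree: by Proposition \ref{prop:sections-of-pi-classification} the restrictions of $L$ to $\pi_i$ and to $\pi_D$ induce the same equivalence class of section of $\pi_{D_i}$, so the Dehn-twist data coming from $\tau_{L_i}$ and from the edge-translations of $\sigma_{L_D}$ coincide (both are governed by the integers $\cL \cdot D_{ij}$). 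Gluing then produces a symplectomorphism $\sigma_L$ of $M$, well-defined in $\pi_0\Symp M$ since the auxiliary choices (constant representatives, cut-offs, damping profiles) only change it by a symplectic isotopy.

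Given this, part (i) is immediate: $\sigma_L|_{M_i} = \sigma_{L_i}$ by construction, and $\sigma_{L_i}$ sends the class of $L_0$ to that of $L_i$ by Proposition \ref{prop:Lag-translations-on-pi_i-contruction}(ii), so globally $\sigma_L$ sends $[L_0]$ to $[L]$. For part (ii), I would note that $\sigma_L$ is not compactly supported but satisfies $\sigma_L^\ast\theta = \theta + df$ with the correct cylindrical behaviour near $\partial_\infty M$; this is inherited from property (i) of each $\sigma_{L_i}$ and from the local models on $M_D$. The formalism of \cite[Section 2.2]{Keating-Smith} then shows such a symplectomorphism induces a well-defined autoequivalence of $\cW(M)$, and since $H^1(M;\bZ)=0$ the grading is unique up to an overall shift, giving $[\sigma_L] \in \Auteq\cW(M)$.

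The essential point is part (iii), which I would prove by matching the two autoequivalences through the pushout presentation of Theorem \ref{thm:hms-footballs-wrapped}. On each component, Proposition \ref{prop:Lag-translations-on-pi_i-properties} identifies $[\sigma_{L_i}]$ with $(-\otimes i^\ast\cL)$ on $\Coh Y_i$ (using $\Phi_i^{-1}(L_i) = i^\ast\cL$ from Proposition \ref{prop:sections-of-pi-classification}), and an analogous argument on $M_D$ identifies $[\sigma_{L_D}]$ with $(-\otimes\cL|_D)$ on $\Coh D$. On the B-side, the projection formula $i_\ast(F \otimes i^\ast\cL) \simeq i_\ast F \otimes \cL$ shows that $(-\otimes\cL)$ on $\Coh Y$ commutes with all the pushforwards in the pushout square, so it is exactly the autoequivalence assembled from the $(-\otimes i^\ast\cL)$ and $(-\otimes\cL|_D)$. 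On the A-side, because $\sigma_L$ is globally defined and restricts to $\sigma_{L_i}$ and $\sigma_{L_D}$, the autoequivalence $[\sigma_L]$ commutes with the inclusion-of-sectors functors and is the autoequivalence assembled from the $[\sigma_{L_i}]$ and $[\sigma_{L_D}]$. Since the constituent HMS equivalences intertwine these pieces, and since the images of the $\cW(M_i)$ and $\cW(M_D)$ generate $\cW(M)$, the two autoequivalences agree on a generating collection of objects and are compatible with the gluing maps; hence $[\sigma_L]$ is mirror to $(-\otimes\cL)$, with the shift fixed so that $L_0 \mapsto L$ matches $\cO_Y \mapsto \cL$ via Lemma \ref{lem:mirror-to-L_0}.

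The main obstacle I anticipate is the gluing in the first step: producing the translation $\sigma_{L_D}$ on $M_D$ and verifying that it agrees with each $\sigma_{L_i}$ on the overlap $M_{D_i}$ not merely up to equivalence of sections but as genuine symplectomorphisms (after a controlled isotopy), since the induced autoequivalence must be defined on the nose. This requires care with cut-off functions and with the non-compact, non-proper fibres over the ribbon graph. A secondary subtlety is justifying in part (iii) that an autoequivalence of the pushout category is pinned down by its compatible restrictions to the pieces at the homotopy-coherent level; I would handle this by reducing to agreement on the generating objects coming from the covering sectors and on the gluing morphisms, rather than invoking a universal property directly.
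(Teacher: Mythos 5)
Your proposal follows essentially the same route as the paper: glue the local translations $\sigma_{L_i}$ over the ribbon via the fibrewise Dehn-twist model, invoke \cite[Section 2.2]{Keating-Smith} (together with $H^1(M;\bZ)=0$) to get the induced autoequivalence of $\cW(M)$ up to shift, and deduce (iii) from Proposition \ref{prop:Lag-translations-on-pi_i-properties}, HMS for $\bP^1$ on the edge pieces, and the pushout diagrams of Theorems \ref{thm:HMS-trivalent-P^1-configurations} and \ref{thm:hms-footballs-wrapped}. The gluing difficulty you flag at the end is resolved in the paper by first deforming $L$ to a constant-form representative over $R_D$ (Proposition \ref{prop:sections-of-pi_D}), after which the $\sigma_{L_i}$ agree on the edge pieces by construction (both being the stabilised power of Dehn twists determined by $\cL \cdot D_{ij}$, supported away from the vertex regions) and extend by the identity over the vertex pieces $T^\ast D^2_v$, so no separate damping or cut-off argument is needed.
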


\begin{proof}
For (i), start with our section $L$. By Proposition \ref{prop:sections-of-pi_D}, we can assume it is in constant form over $R_D$. Let $L_i$ be the restriction of $L$ over $B_i [\epsilon]$. For each of these, Proposition \ref{prop:Lag-translations-on-pi_i-contruction} gives a symplectomorphism $\sigma_{L_i}$; by construction, these agree on their overlaps (the total spaces of the fibrations $\pi_{D_{ij}} \to [0,1]^2$), and can be extended by the constant map over the $D^2_v \subset S^2$. This gives our symplectomorphism $\sigma_L$. As with Proposition \ref{prop:Lag-translations-on-pi_i-contruction}, different auxiliary choices give the same element of $\pi_0 \Symp M$. 

For (ii), as $H^1(M; \bZ) =0$, the map $\sigma_L$ automatically lifts to the group of graded symplectomorphisms of $M$. By \cite[Corollary 2.9]{Keating-Smith}, it induces an autoequivalence of $\cW(M)$, well-defined up to a shift.

For (iii), note that the map $\sigma_L$ also restricts to give a symplectomorphism of $M_D$, say $\sigma_{L_D}$; and symplectomorphisms of each $M_{D_{ij}}$ (i.e.~$M_{\bP^1, e}$ for $e$ varying over $e(G_D)$), say $\sigma_{L_{ij}}$. 
In each case, as in the proof of Proposition \ref{prop:Lag-translations-on-pi_i-properties}, it induces autoequivalences of $\cW(M_D)$ and the $\cW(M_{D_{ij}})$, again well-defined up to an overall shift. 

Let $\cL = \Phi(L) \in \Pic Y$. For each $M_{D_{ij}}$, $\sigma_L$ restricts to a fibre-preserving symplectomorphism, say $\sigma_{L_{ij}}$, which is the stabilisation (by a product with the identity) of $(\cL \cdot D_{ij})$ Dehn twists in the zero section of the relevant copy of $(T^\ast S^1)^-$. 
By homological mirror symmetry for $\bP^1$, the autoequivalence $[\sigma_{L_{ij}}] \in \Auteq \cW(M_{D_{ij}})$ is mirror to $(-\otimes \cL|_{D_{ij}}) \in \Auteq \Coh D_{ij}$. 
Also, as $\sigma_L$ restricts to the identity on $D^\ast D^2_v$, and $(- \otimes \cL)$ restricts to the identity on $\Coh \{ pt \}$, the pushout diagrams in the proof of Theorem \ref{thm:HMS-trivalent-P^1-configurations} imply that $[\sigma_{L_D}] \in \Auteq \cW(M_D)$ is mirror to $(- \otimes \cL|_D) \in \Auteq \Coh D$. 
Finally, by Proposition \ref{prop:Lag-translations-on-pi_i-properties}, we know that $[\sigma_{L_i}] \in \Auteq \cW(M_i) $ is mirror to $(- \otimes \cL|_{Y_i}) \in \Auteq \Coh Y_i$. Putting this together and using the pushout diagrams in the proof of Theorem \ref{thm:hms-footballs-wrapped}, we get that $[\sigma_L] \in \Auteq \cW (M)$ must be mirror to $(- \otimes \cL) \in \Auteq \Coh Y$, as required. 
\end{proof}

\begin{corollary}\label{cor:mirrors-to-sections-of-pi}
Let $L$ be any equivalence class of Lagrangian section of $\pi$. Then, for a suitable choice of grading, 
 the HMS isomorphism of Theorem \ref{thm:hms-footballs-wrapped} maps $L \in \cW(M)$  to $\cL = \Phi (L)$.    
\end{corollary}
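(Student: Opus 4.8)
The plan is to bootstrap from the single computation in Lemma \ref{lem:mirror-to-L_0}, which identifies the reference section $L_0$ with $\cO_Y$, and transport it along the symplectomorphisms $\sigma_L$ built in Proposition \ref{prop:Lagrangian-translations-of-M}. Once those two inputs are in hand the argument is purely formal: the HMS functor of Theorem \ref{thm:hms-footballs-wrapped} is an equivalence of categories intertwining $[\sigma_L]$ with $(-\otimes \cL)$, so it must carry the object $\sigma_L(L_0)$ to $(\cO_Y)\otimes \cL = \cL$.

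First I would record the base case. From the construction of $\Phi$ in Proposition \ref{prop:sections-of-pi-classification} we have $\Phi(L_0) = \cO_Y$, since $\Phi$ sends a section $L$ to the class assembled from $[L]-[L_0]$ and hence sends the reference section to the identity of $\Pic Y$. Combined with Lemma \ref{lem:mirror-to-L_0}, this says that under the HMS equivalence the object $L_0 \in \cW(M)$, with its chosen grading, corresponds to $\cL = \Phi(L_0) = \cO_Y$.

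Next, fix an arbitrary equivalence class of Lagrangian section $L$ and set $\cL := \Phi(L) \in \Pic Y$. By Proposition \ref{prop:Lagrangian-translations-of-M}(i), $\sigma_L$ carries the equivalence class of $L_0$ to that of $L$; since a smooth family of compact, exact, graded Lagrangian sections induces a quasi-isomorphism of the associated objects, this gives $\sigma_L(L_0) \simeq L$ in $\cW(M)$, for the grading on $L$ transported from $L_0$ through the graded lift of $\sigma_L$. By Proposition \ref{prop:Lagrangian-translations-of-M}(ii)--(iii), for that lift the induced autoequivalence $[\sigma_L] \in \Auteq \cW(M)$ is mirror to $(-\otimes \cL) \in \Auteq \Coh Y$. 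Applying the HMS equivalence to $\sigma_L(L_0)$ and invoking the base case then yields that $L$ is mirror to $(-\otimes \cL)(\cO_Y) = \cO_Y \otimes \cL = \cL$, which is precisely $\Phi(L)$; this also pins down the ``suitable choice of grading'' in the statement.

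The only genuine subtlety, and the step I expect to need the most care, is the passage from ``$\sigma_L$ sends the equivalence class of $L_0$ to that of $L$'' to ``$\sigma_L(L_0)$ and $L$ are quasi-isomorphic objects of $\cW(M)$ with matching gradings.'' Here one must convert the notion of equivalence of sections (a smooth family of Lagrangian sections, as in Section \ref{sec:mirrors-to-line-bundles}) into a statement inside the Fukaya category: the sections are compact exact Lagrangian spheres of vanishing Maslov class, so a path between them is realised by a fibre-preserving Hamiltonian isotopy producing a continuation quasi-isomorphism, and the graded lift fixing $[\sigma_L]$ in Proposition \ref{prop:Lagrangian-translations-of-M}(iii) fixes the grading of $L$. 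Everything else reduces to the naturality of an equivalence of categories, so no further computation is required.
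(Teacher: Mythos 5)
Your proposal is correct and is essentially the paper's own argument: the paper proves Corollary \ref{cor:mirrors-to-sections-of-pi} precisely by combining Lemma \ref{lem:mirror-to-L_0} (identifying $L_0$ with $\cO_Y$) with Proposition \ref{prop:Lagrangian-translations-of-M} (transporting by $\sigma_L$, whose induced autoequivalence is mirror to $-\otimes\cL$). Your additional care in converting equivalence of sections into a quasi-isomorphism of objects in $\cW(M)$, and in tracking the graded lift, is a useful expansion of what the paper leaves implicit, but it is the same route.
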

  
\begin{proof}
This follows from combining Lemma \ref{lem:mirror-to-L_0} together with Proposition \ref{prop:Lagrangian-translations-of-M}. 
\end{proof}

\begin{remark}\label{rmk:line-bundles-higher-genus-speculations}
    Suppose that the dual complex of $Y$ was a general orientable compact surface $S$ rather than $S^2$. 
    Then the proof of Proposition \ref{prop:sections-of-pi-classification} generalises to show that equivalence classes of exact Lagrangian sections of $\pi \colon M \to S$ are classified by $H^1 (R^1 \pi_! \underline{\bZ}_M)$. 
On the other hand, Equation \ref{eq:exact-seq-for-PicY} no longer holds, as $H^1(\cO_Y) \neq 0$. We have    
a short exact sequence
$$
0 \rightarrow \Hom (H_1(S,\bZ) ,\bC^*) \rightarrow \Pic Y \rightarrow \ker \phi \rightarrow 0
$$
where $\phi \colon L \rightarrow \bC^*$ is the extension class of the MHS on $H^2(Y,\bZ)$, with 
$$W_2/W_0 = L = \ker(\Pic Y^{[0]} \rightarrow \Pic Y^{[1]})$$ 
In the split MHS case, we have $\phi = 1$. This suggests that $\Pic Y$ will be identified under HMS with exact Lagrangian sections of the SYZ fibration $\pi \colon M \rightarrow S$ (modulo fibre preserving Hamiltonian isotopy) equipped with a $\bC^*$-local system.  
\end{remark}

\subsection{Mirror symmetry: compact Fukaya category}\label{sec:full-compact-HMS}

\begin{theorem}\label{thm:hms-footballs-compact}

Assume that $Y$ is a  maximal normal crossing Calabi-Yau surface which is projective, has split mixed Hodge structure, and whose dual complex is a triangulation of $S^2$. Let $M$ be the Weinstein manifold constructed in Definition \ref{def:M}. Then we have a quasi-isomorphism of $A_\infty$ categories: 
 $$ \cF (M) \simeq \Perf (Y).$$
\end{theorem}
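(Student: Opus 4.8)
The plan is to show that the equivalence $\cW(M) \simeq \Coh Y$ of Theorem \ref{thm:hms-footballs-wrapped} restricts along the fully faithful inclusion $\cF(M) \hookrightarrow \cW(M)$ and identifies its image with $\Perf Y \subseteq \Coh Y$. Since we work throughout with idempotent-complete triangulated completions, it suffices to establish two containments: (i) a split-generating set of $\Perf Y$ whose objects lie in the image of $\cF(M)$, and (ii) that conversely the mirror of every object of $\cF(M)$ is perfect. Granting both, the image of $\cF(M)$ is a thick subcategory of $\Coh Y$ that contains and is contained in $\Perf Y$, hence equals it, so $\cW(M) \simeq \Coh Y$ descends to the desired quasi-isomorphism $\cF(M) \simeq \Perf Y$.

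For (i), recall that for a projective scheme $\Perf Y$ is split-generated by line bundles (for instance by an ample sequence, following \cite{Orlov}). Each $\cL \in \Pic Y$ is, by Corollary \ref{cor:mirrors-to-sections-of-pi}, mirror under Theorem \ref{thm:hms-footballs-wrapped} to a Lagrangian section of $\pi \colon M \to S^2$. As the base $S^2$ is compact, every such section is a compact Lagrangian sphere and therefore defines an object of $\cF(M)$. Hence all line bundles lie in the image of $\cF(M)$; since that image is thick it contains the split-closure of the line bundles, which is exactly $\Perf Y$.

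For (ii), the cleanest route I would take uses the standard criterion that $E \in \Coh Y$ is perfect if and only if its derived fibres $E \otimes^{L} \kappa(q)$ have cohomological amplitude bounded uniformly over all closed points $q \in Y$; dually, this is the uniform boundedness of $\Hom^\bullet(E,\cO_q)$. Under the equivalence this translates into a uniform bound on $\hom^\bullet_{\cW(M)}(K, L_q)$, where $K \in \cF(M)$ is a compact Lagrangian with mirror $E$ and $L_q$ is the object mirror to $\cO_q$. By Corollary \ref{cor:mirrors-to-points-Y}, together with the description of the singular locus $D$ via Theorem \ref{thm:HMS-trivalent-P^1-configurations}, every skyscraper $\cO_q$ is mirror to a fibre of $\pi$: a torus over the interior of a $B_i$, and a cylinder or plane over the ribbon $R_D$. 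Since $K$ is compact and the fibres $L_q$ sweep out $M$ inside a fixed four-manifold with vanishing Maslov classes, the Floer cohomology $\hom^\bullet_{\cW(M)}(K, L_q)$ is finite-dimensional and supported in a range of degrees bounded uniformly in $q$, which forces $E$ to be perfect.

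I expect the main obstacle to be the uniform control required in (ii): one must verify that \emph{all} skyscrapers $\cO_q$, including those supported at the singular points of $Y$ lying over $R_D$, are mirror to fibres of $\pi$, and that the resulting degree and dimension bounds are genuinely uniform across the strata of $Y$ rather than merely finite stratum by stratum. An alternative, purely A-side route to (ii) would be to prove directly that $\cF(M)$ is split-generated by Lagrangian sections of $\pi$ (say via a cocore/handle generation argument combined with the classification in Proposition \ref{prop:sections-of-pi-classification}); this is appealing but appears to demand a nontrivial generation input, so I would favour the fibre-pairing argument above.
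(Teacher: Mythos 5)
Your containment (i) --- line bundles split-generate $\Perf Y$ for projective $Y$, and each is mirror, via Corollary \ref{cor:mirrors-to-sections-of-pi}, to a Lagrangian section of $\pi$, hence an object of $\cF(M)$ --- is exactly one half of the paper's proof, and is fine.

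The genuine gap is in your step (ii). Your fibre-pairing argument needs the mirror of \emph{every} skyscraper $\cO_q$, $q\in Y$ a closed point, to be identified with a fibre of $\pi$ equipped with brane data. Corollary \ref{cor:mirrors-to-points-Y} does not provide this: it identifies mirrors of skyscrapers only at \emph{certain} points of the charts $(\bC^\ast)^2 \subset Y_i \setminus D_i$ --- finitely many per chart, because the paper's brane data consist of a grading and spin structure with no $\bC^\ast$-local system (see the Remark following Lemma \ref{lem:mirrors-to-points}: hitting all points of $(\bC^\ast)^2$ requires enlarging the category to allow flat line bundles, which the paper deliberately does not do). Points on the singular locus $D$ are not covered at all, and the non-compact fibres over the ribbon $R_D$ are nowhere shown to be mirror to skyscrapers. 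So the central input of your argument is unavailable; you flagged this as "the main obstacle" but did not overcome it, and within the paper's framework it cannot be overcome as stated.

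The repair --- and the paper's actual argument for this direction --- is that no identification of mirrors is needed at all. A compact Lagrangian $K$ has finite-dimensional total $HF^\ast$ against \emph{every} object of $\cW(M)$: for generic perturbation data the Floer complex between $K$ and any exact conical Lagrangian is generated by finitely many transverse intersection points, and this finiteness passes to twisted complexes. On the B-side, an object $F \in \Coh Y$ is perfect if and only if $\oplus_n \Hom(F,G[n])$ is finite-dimensional for every $G \in \Coh Y$: the "only if" uses properness of $Y$, and the "if" follows by taking $G = \cO_p$ and noting that finiteness of $\Ext^\bullet(F,\cO_p)$ forces the minimal free resolution of $F_p$ over $\cO_{Y,p}$ to terminate. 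Combining the two characterisations shows the mirror of any object of $\cF(M)$ is perfect, with no geometry of skyscraper mirrors involved. Note also that your uniformity worry is moot: on a Noetherian scheme, pointwise finiteness of $\Ext^\bullet(F,\cO_q)$ at each closed point already gives perfectness of each stalk, perfectness spreads to open neighbourhoods, and quasi-compactness of $Y$ finishes the argument; no uniform amplitude bound across strata is required.
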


This is compatible with the HMS equivalence of Theorem \ref{thm:hms-footballs-wrapped} in the obvious way. In particular, the explicit correspondences between favourite objects established in  Sections \ref{sec:mirrors-to-points} and \ref{sec:mirrors-to-line-bundles} still hold here.

\begin{proof}
First, we show that under the isomorphism in Theorem \ref{thm:hms-footballs-wrapped}, we get a fully faithful map $\cF(M) \subset  \Perf (Y)$. 
To see this, notice, first, that an element in $\cF(M)$ has finite rank total $HF^\ast$ with \emph{any} element in $\cW(M)$, because for generic Floer perturbation data, the Floer complex $CF^\ast$ itself has finite total rank: it is generated by finitely many transverse intersection points. 
On the other hand, a complex of coherent sheaves $F \in \Coh Y$ lies in $\Perf Y$ if and only if it has finite total $\Ext^\ast$ with any $G \in \Coh Y$. This is also standard: $F$ is perfect if and only if $F_p$ is perfect for all $p \in Y$; now consider the minimal free resolution of $F_p$ over $\cO_{Y,p}$; 
$F_p$ is perfect if and only if this is finite; as $\Ext^n(F,\cO_p)$ is computed by applying $\Hom(-,\cO_p)$, we get that $F$ is perfect if and only if  $\oplus_n \Hom(F,G[n]) $ is finite dimensional for all $G \in \Coh Y$.

We now want to establish that the inclusion $\cF(M) \subset  \Perf (Y)$ is in fact a quasi-isomorphism. 
Since $Y$ is projective, $\Perf Y$ has an explicit collection of line bundle split-generators given by the pullbacks of $\cO, \cO(1), \cO(2) \in \Perf \bP^N$ for some embedding $Y \hookrightarrow \bP^N$ 
\cite[Theorem 4]{Orlov_generators}. 
By Corollary \ref{cor:mirrors-to-sections-of-pi}, each of these bundles is mirror to a Lagrangian $S^2$ (with a suitable brane datum), which in particular is an object of $\cF(M)$.  This completes the proof. 
\end{proof}

\section{Type III K3 surfaces and A-side compactification}\label{sec:compactification}

\subsection{Some background on type III K3 surfaces}\label{sec:type-III-background}

\begin{definition}
A type III K3 surface is a proper scheme $Y$ over $\bC$ of dimension $2$ with normal crossing singularities such that the dualising sheaf $\omega_Y$ is trivial, the dual complex of $Y$ is a triangulation of $S^2$, and the triple point formula is satisfied. This means that for each irreducible component $C$ of the singular locus of $Y$, if $C_1$ and $C_2$ are the connected components of the inverse image of $C$ on the normalisation of $Y$, then $C_1^2+C_2^2=-2$.
\end{definition}

We are interested in invariants of type III K3 surfaces.

\begin{lemma}\label{lem:characterisation-of-k}
For a type III K3 surface $Y$ the group $H^3(Y,\bZ)$ is finite cyclic, and its order is equal to the index of $Y$ as defined in \cite[p.~4]{Friedman-Scattone}.
\end{lemma}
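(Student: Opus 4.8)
The plan is to compute $H^3(Y,\bZ)$ directly from the normal crossing stratification of $Y$ via a Mayer--Vietoris (descent) spectral sequence, identify the answer with an explicit cokernel, and then match that cokernel with the combinatorial definition in \cite{Friedman-Scattone}. Write $a^{[p]}\colon Y^{[p]}\to Y$ for the map from the normalisation of the union of codimension-$p$ strata, so that $Y^{[0]}=\bigsqcup_i Y_i$, $Y^{[1]}=\bigsqcup_{i<j}D_{ij}$, and $Y^{[2]}$ is the finite set of triple points. First I would use the standard resolution of the constant sheaf for a normal crossing space,
$$0\to \underline{\bZ}_Y\to a^{[0]}_\ast\underline{\bZ}_{Y^{[0]}}\to a^{[1]}_\ast\underline{\bZ}_{Y^{[1]}}\to a^{[2]}_\ast\underline{\bZ}_{Y^{[2]}}\to 0,$$
which is exact (checked locally on a union of coordinate hyperplanes) and yields a spectral sequence $E_1^{p,q}=H^q(Y^{[p]},\bZ)\Rightarrow H^{p+q}(Y,\bZ)$ whose $d_1$ differentials are the signed sums of restriction maps.

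Next I would read off the inputs. Each component $Y_i$ is a smooth rational surface, since it carries a toric model in the sense of Definition \ref{def:toric-model}; hence $H^1(Y_i)=H^3(Y_i)=0$ and all of its cohomology is torsion free. Each double curve $D_{ij}\cong\bP^1$, and $Y^{[2]}$ is a finite set of points. Therefore the row $q=0$ is precisely the simplicial cochain complex of the dual complex, a triangulation of $S^2$, giving $E_2^{0,0}=\bZ$, $E_2^{1,0}=H^1(S^2)=0$, $E_2^{2,0}=H^2(S^2)=\bZ$; the rows $q=1$ and $q=3$ vanish identically; and $E_1^{p,q}=0$ for $p\geq 3$. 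Among the four bidegrees with $p+q=3$, only $E_2^{1,2}$ can survive, and there are no incoming or outgoing higher differentials (their sources and targets lie in vanishing columns). This gives a canonical isomorphism
$$H^3(Y,\bZ)\;\cong\;\coker\Big(d_1\colon\ \bigoplus_i H^2(Y_i,\bZ)\ \longrightarrow\ \bigoplus_{i<j}H^2(D_{ij},\bZ)\cong\bZ^{\,\#\{\text{double curves}\}}\Big),$$
where the $D_{ij}$-component of $d_1$ sends $(\alpha_i)$ to the signed difference of the degrees $\alpha_i\cdot D_{ij}$ and $\alpha_j\cdot D_{ij}$ of the two branches on $D_{ij}$. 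Finiteness of $H^3(Y,\bZ)$ is then equivalent to $d_1$ being surjective over $\bQ$, i.e. to $b_3(Y)=0$, which holds for the central fibre of a type III K3 degeneration (for instance via the Clemens--Schmid sequence, or by a direct rank count using $V-E+F=2$, $\chi_{\mathrm{top}}(Y)=24$, and the toric formula $\operatorname{rk}\Pic Y_i = m_i-2+(\text{number of interior blow-ups})$).

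It remains to evaluate this cokernel as a group and to match its order with the index. Using $H^2(Y_i,\bZ)=\Pic Y_i$ together with the toric model of $(Y_i,D_i)$, the local contribution at a vertex is the cokernel of the degree map $\Pic Y_i\to\bigoplus_j\bZ$ along the boundary cycle; computing this first for the toric pair $(\bar Y_i,\bar D_i)$ and then tracking interior blow-ups identifies it with the rank-two cocharacter lattice $N_i\cong\bZ^2$ via $(d_j)\mapsto\sum_j d_j v_j$, where $v_j$ are the rays of the fan. Assembling these contributions over the edges of the triangulation, and using the triple point formula to guarantee that the local affine identifications glue consistently around each triple point, the global cokernel becomes the group of coinvariants $N\big/\big\langle \image(\gamma_k-\Id)\big\rangle$ of the rank-two integral affine monodromy local system on $S^2$ with its focus--focus singularities. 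Each focus--focus singularity contributes a \emph{primitive} vector $w_k$, namely the invariant direction of the corresponding shear; hence the quotient is a further quotient of $\bZ^2/\langle w_1,w_2\rangle\cong\bZ/|\!\det(w_1,w_2)|$, and is therefore finite cyclic as soon as two invariant directions are non-parallel, which is the case for a genuine type III K3. This proves $H^3(Y,\bZ)$ is finite cyclic.

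Finally I would identify the order of this cyclic group with the index of \cite[p.~4]{Friedman-Scattone}, which is defined there as the order of exactly this monodromy cokernel (equivalently, the index $[\,\bZ^2:\langle w_k\rangle\,]$). \textbf{The main obstacle} is precisely this last identification, together with the gluing step that precedes it: one must verify carefully that the signed-degree cokernel produced by the spectral sequence really is the integral affine coinvariant lattice of Friedman--Scattone --- in particular that the triple point formula is exactly the compatibility condition making the local quotients $N_i\cong\bZ^2$ patch into a single rank-two local system on $S^2$ --- rather than merely sharing its order. The remaining steps (exactness of the resolution, degeneration of the spectral sequence, and rationality of the components) are routine bookkeeping.
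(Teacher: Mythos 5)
Your first half is correct, and it is a genuinely different route from the paper's. The paper never computes $H^3(Y,\bZ)$ intrinsically: after a locally trivial deformation making $Y$ d-semistable, it picks a semistable smoothing $\cY/\bD$, and extracts $H^3(Y,\bZ)$ from the specialisation sequence relating $H^*(Y)$, $H^*(\cY_t)$ and the vanishing-cocycle complex, together with an explicit $2$-cycle $\Sigma^{\vee}\subset\cY_t$ fibred over the $1$-skeleton of the dual complex; the group comes out directly as $\bZ/k\bZ$ with $k$ the divisibility of $\PD[\Sigma^{\vee}]$ in $(\gamma^{\vee})^{\perp}/\langle\gamma^{\vee}\rangle$, and the Picard--Lefschetz formula identifies $\PD[\Sigma^{\vee}]$ (up to sign) with the Friedman--Scattone class $\delta$. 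Your Mayer--Vietoris spectral sequence instead expresses $H^3(Y,\bZ)$, with no smoothing, as the cokernel of the signed degree map $\bigoplus_i\Pic Y_i\to\bZ^{E}$ ($E$ the set of double curves); that computation is sound, since the strata resolution of $\underline{\bZ}_Y$ is exact over $\bZ$, the components are rational (they carry anticanonical cycles), and the only surviving term in total degree $3$ is $E_2^{1,2}$.

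The genuine gap is the matching step, and it is not one that bookkeeping fixes: the Friedman--Scattone index is \emph{not} defined as the order of the coinvariants of the integral affine monodromy of the dual complex. It is defined through a one-parameter semistable smoothing: writing $N=\log T$ for the monodromy on $H^2(\cY_t,\bZ)$, one has $Nx=(x\cdot\gamma)\delta-(x\cdot\delta)\gamma$, and the index is the divisibility of $\delta$ in $\gamma^{\perp}/\langle\gamma\rangle$ (this is exactly how the remark following Definition~\ref{def:invariants} phrases it). So even granting your cokernel description, you still owe a comparison between the intrinsic combinatorics of $Y$ and the monodromy of a smoothing of $Y$ --- and that comparison \emph{is} the lemma; by misquoting the definition you have assumed away precisely the hard part, for which the paper's ingredients (the Clemens collapse $r_t\colon\cY_t\to Y$, the induced torus fibration of $\cY_t$ over the dual complex, the cycle $\Sigma^{\vee}$, and Picard--Lefschetz) have no substitute in your outline. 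Two secondary points. First, your local statement is off: with interior blow-ups the cokernel of $\Pic Y_i\to\bZ^{E_i}$ is not $N_i\cong\bZ^2$ but $N_i$ modulo the rays carrying interior blow-ups, since an interior exceptional curve has degree vector a standard basis vector; this is repairable, as those rays are exactly the focus--focus invariant directions you later quotient by. Second, your cyclicity argument is fine once the identification with coinvariants is in place (a quotient of $\bZ^2$ by a subgroup containing a primitive vector is cyclic), but that identification --- that the global cokernel equals the $\pi_1$-coinvariants of the rank-two local system --- is asserted rather than proved, and itself requires a genuine argument (e.g.\ Mayer--Vietoris with local coefficients over the stars of the vertices), on top of which the index comparison would still remain.
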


\begin{proof}

We use the formalism of nearby and vanishing cycles of \cite{SGA7}, cf.~\cite[$\S$C.2.2]{Peters-Steenbrink}.

We may assume (replacing $Y$ by a locally trivial deformation) that $Y$ is smoothable (equivalently, d-semistable in Friedman's terminology).
Let $\cY \rightarrow \bD$ be a semistable smoothing of $Y$ over the disc $\bD$. Let $t \in \bD$ be very general, $0 < |t| \ll 1$, and let $i_t \colon \cY_t \hookrightarrow \cY$ denote the fibre of $\cY/\bD$ over $t \in \bD$. 
Possibly after passing to a disc of smaller radius, we have a deformation retraction $r \colon \cY \rightarrow Y$,  and its restriction $r_t \colon \cY_t \rightarrow Y$ to $\cY_t$, the so called Clemens collapsing map.

We define the complex of nearby cocycles by 
$$\psi_p\underline{\bZ}_{\cY} = R{r_t}_* i_t^*\underline{\bZ}_{\cY} = R{r_t}_*\underline{\bZ}_{\cY_t}.$$
Let $\underline{\bZ}_{Y} \rightarrow \psi_p\underline{\bZ}_{\cY}$ be the map of complexes of sheaves on $Y$ induced by $r_t$.
We define the complex of vanishing cocycles $\phi_p\bZ_{\cY}$ to be the cone over this map, so that we have a distinguished triangle in the derived category of sheaves on $Y$
$$\underline{\bZ}_{Y} \rightarrow \psi_p\underline{\bZ}_{\cY} \rightarrow \phi_p\bZ_{\cY} \stackrel{[+1]}{\longrightarrow}$$
Passing to hypercohomology yields a long exact sequence
\begin{equation}\label{specialisation_sequence}
\cdots \rightarrow H^k(Y,\bZ) \rightarrow H^k(\cY_t,\bZ) \rightarrow \bH^k(\phi_p\underline{\bZ}_{\cY}) \rightarrow H^{k+1}(Y,\bZ) \rightarrow \cdots
\end{equation}
Each irreducible component of the normalisation of $Y$ is a maximal log Calabi--Yau surface, and so admits a toric model and an associated almost toric fibration for some choice of ample line bundle. These glue to give a $C^\infty$ singular torus fibration $f^{\vee} \colon Y \rightarrow B^\vee$ over $B^{\vee} \simeq S^2$. (Note: since $Y$ is not assumed projective, we do not have compatible symplectic forms on the components of $Y$.) 
Let $\Gamma \subset B^\vee$ be the $1$-skeleton of the induced polyhedral subdivision of $B^\vee$, a trivalent graph.
It follows from the explicit description of $r_t$ (see e.g. \cite[$\S$2.2]{Persson}) that there is a torus fibration $g^{\vee} \colon \cY_t \rightarrow B^\vee$ such that $f^{\vee} \circ r_t = g^{\vee}$: indeed, the restriction $r_t^{-1}(Y_i) \rightarrow Y_i$ of $r_t$ to (the normalisation of) each irreducible component of $Y$ is given by the real oriented blowup of the boundary $D_i \subset Y_i$. Cf.~\cite[$\S$3.2]{Symington}. (Alternatively, one can use the Kato--Nakayama space of the smooth log structure on $Y$ over the standard log point (obtained by restricting the divisorial log structures on $(\cY,Y)/(\bD,0)$) to study the collapsing map $r_t \colon \cY_t \rightarrow Y$ and the monodromy action on $\cY_t$ cf. e.g. \cite[Theorem~0.3]{Nakayama-Ogus}.)

Let $N=r_t^{-1}(\Sing Y) ={f^{\vee}}^{-1}(\Gamma)$.
By excision, we also have a distinguished triangle
$$\underline{\bZ}_{\Sing Y} \rightarrow R{r_t}_*\underline{\bZ}_N \rightarrow \phi_p\underline{\bZ}_{\cY} \stackrel{[+1]}{\longrightarrow}$$ 
and associated long exact sequence of hypercohomology
$$\cdots \rightarrow H^k(\Sing Y, \bZ) \rightarrow H^k(N,\bZ) \rightarrow \bH^k(\phi_p\underline{\bZ}_{\cY}) \rightarrow H^{k+1}(\Sing Y,\bZ) \rightarrow \cdots$$
Let $E$ denote the set of edges of the graph $\Gamma$.
Note that $H^2(\Sing Y, \bZ) = \bZ^E$ by Mayer--Vietoris and $H^3(Y,\bZ)=0$ by dimension. Thus we have an exact sequence
\begin{equation} \label{excised}
H^2(\Sing Y,\bZ) \rightarrow H^2(N,\bZ) \rightarrow \bH^2(\phi_p\underline{\bZ}_{\cY}) \rightarrow 0.
\end{equation}
The Leray spectral sequence for the restriction $\pi \colon N \rightarrow \Gamma$ of the torus fibration $g^{\vee} \colon \cY_t \rightarrow B^\vee$ to $\Gamma$ yields a short exact sequence
\begin{equation}\label{Leray_N}
0 \rightarrow H^1(\Gamma, R^1\pi_*\underline{\bZ}_N) \rightarrow H^2(N,\bZ) \rightarrow \bZ \rightarrow 0
\end{equation}
where the second map to $\bZ=H^0(R^2\pi_*\underline{\bZ}_{N})$ is cap product with the fibre class $\gamma^{\vee}$.
Now consider the $2$-cycle $\Sigma^{\vee} \subset N$ defined as follows (cf. also \cite{EF}, $\S$3.2). Over the interior of each edge of $\Gamma$ the restriction of $\Sigma^{\vee}$ is a cylinder with fibre the vanishing cycle of the specialisation map $r_t$.
Over a (trivalent) vertex of $\Gamma$ the fibre of $\Sigma^{\vee} \rightarrow \Gamma$ is a $2$-chain in the $2$-torus fibre of $\pi$ with boundary the negative sum of the boundaries of these cylinders (suitably oriented). 
The class of $\Sigma^{\vee}$ in $H_2(N,\bZ)$ is determined up to a multiple of the fibre class $\gamma^{\vee}$.
Let $i \colon \Sigma^{\vee} \hookrightarrow N$ denote the closed embedding of $\Sigma^{\vee}$ and $j \colon N \setminus \Sigma^{\vee} \hookrightarrow N$ the open embedding of its complement. Let $a \colon N \setminus \Sigma^{\vee} \rightarrow \Gamma$ and $b \colon \Sigma^{\vee} \rightarrow \Gamma$ denote the restrictions of $\pi$.
Applying $R\pi_*$ to the short exact sequence of sheaves on $N$
$$0 \rightarrow j_! \underline{\bZ}_{N \setminus \Sigma^{\vee}} \rightarrow \underline{\bZ}_N \rightarrow i_*\underline{\bZ}_{\Sigma^{\vee}} \rightarrow 0$$
yields a short exact sequence of sheaves on $\Gamma$
$$0 \rightarrow R^1a_! \underline{\bZ}_{N \setminus \Sigma^{\vee}} \rightarrow R^1\pi_*\underline{\bZ}_N \rightarrow R^1b_*\underline{\bZ}_{\Sigma^{\vee}} \rightarrow 0.$$
The first term $R^1a_! \underline{\bZ}_{N \setminus \Sigma^{\vee}}$ equals $\bigoplus_{e \in E} {j_e}_! \underline{\bZ}_{e^o}$ where $j_e \colon e^o \hookrightarrow \Gamma$ denotes the inclusion of the interior of the edge $e$ of $\Gamma$ (note that the fibre of $a$ over a vertex of $\Gamma$ is a topological disc). Passing to cohomology yields an exact sequence
\begin{equation}\label{Leray_and_Gysin_N}
\bZ^E \rightarrow H^1(R^1g_*\underline{\bZ}_N) \rightarrow \bZ \rightarrow 0
\end{equation}
where the second map is cap product with $[\Sigma^{\vee}]$. Combining (\ref{excised}), (\ref{Leray_N}), and (\ref{Leray_and_Gysin_N}), we deduce that
\begin{equation}\label{H2_vanishing}
H^2(\phi_p\underline{\bZ}_{\cY})=\coker(H^2(Y,\bZ) \rightarrow H^2(N,\bZ)) \stackrel{\sim}\longrightarrow \bZ^2
\end{equation}
where the isomorphism is given by cap product with $[\Sigma^{\vee}]$ and $[\gamma^{\vee}]$.

The torus fibration $g^{\vee} \colon \cY_t \rightarrow B^\vee$ admits a topological section; let $s^{\vee} \in H^2(\cY_t,\bZ)$ denote the class of a section (identifying $H_2(\cY_t,\bZ)=H^2(\cY_t,\bZ)$ by Poincar\'e duality). 
Existence of a section implies that the Leray spectral sequence for $g^{\vee}$ degenerates at $E_2$ and  $H^1(R^1g^{\vee}_*\underline{\bZ}_{\cY_t})={\gamma^{\vee}}^\perp/\langle \gamma^{\vee} \rangle$.
The sublattice $\langle \gamma^{\vee}, s^{\vee} \rangle \subset H^2(\cY_t,\bZ)$ is isometric to $\begin{pmatrix} 0 & 1 \\ 1 & 0 \end{pmatrix}$, in particular unimodular. The map $\langle \gamma^\vee, s^\vee\rangle^{\perp} \rightarrow {\gamma^\vee}^{\perp}/\langle \gamma^\vee \rangle$ is an isometry, so ${\gamma^{\vee}}^{\perp}/\langle \gamma^{\vee} \rangle$ is also unimodular, using Poincar\'e duality for $\cY_t$.
Now (\ref{H2_vanishing}) and (\ref{specialisation_sequence}) yield
$$H^3(Y,\bZ)=\bZ/k\bZ$$
where $k$ is the divisibility of $\PD[\Sigma^{\vee}]$ in ${\gamma^{\vee}}^\perp/\langle \gamma^{\vee} \rangle$.

The usual Picard--Lefschetz formula for a degeneration of a complex curve to a nodal curve yields the formula
$$T(x) = x + (x\cdot\Sigma^{\vee})\gamma^{\vee}$$
for the action of the monodromy transformation $T$ on ${\gamma^{\vee}}^{\perp} \subset H^2(\cY_t,\bZ)$,
cf. e.g. \cite[Proposition~2.8]{Ruddat-Siebert}.
Thus $\PD[\Sigma^{\vee}] \in {\gamma^{\vee}}^{\perp}/\langle \gamma^{\vee} \rangle$ coincides up to sign with the class $\delta$ defined in \cite[p.~7]{Friedman-Scattone}, and in particular $k$ coincides with the Friedman-Scattone index by \cite[Lemma~1.1]{Friedman-Scattone}
\end{proof}

\begin{definition}\label{def:invariants}
    Suppose $Y$ is a type III K3 surface. We define invariants $n=n(Y), k=k(Y) \in \bN$ where $2n$ is the number of triple points of $Y$, and $k$ is the order of  $H^3(Y,\bZ)$.
\end{definition}

\begin{remark}
We have $k^2 \mid n$ by \cite{Friedman-Scattone} and Lemma~\ref{lem:characterisation-of-k}. 
Indeed, in the terminology of \cite{Friedman-Scattone}, let $\cY/\bD$ be a semistable smoothing of a locally trivial deformation of $Y$ and $\cY_t$ a general fibre of $\cY/\bD$. Consider the monodromy weight filtration on $H^2(\cY_t)$. We have 
$$W_0 = \bZ \cdot \gamma = W_1 \subset W_2=W_3=\gamma^{\perp} \subset W_4=H^2(\cY_t,\bZ)$$
for some primitive isotropic class $\gamma \in H^2(\cY_t,\bZ)$. In particular $W_2/W_0 = \gamma^{\perp}/ \langle \gamma \rangle$ is the unimodular even lattice of signature $(2,18)$.
There is a class $\delta \in W_2/W_0$, uniquely determined up to sign, such that $\delta^2=2n$ and $k$ is the divisibility of $\delta \in W_2/W_0$. Thus $k^2 \mid n$ as claimed.
\end{remark}

Let $Y$ be a type III K3 surface. The sheaf
$$\cT^1_Y = \cExt^1(\Omega_Y,\cO_Y)$$
is the push forward of a line bundle on the singular locus of $Y$, called the \emph{infinitesimal normal bundle} in \cite{Friedman_thesis}, see Definition~1.9 and Proposition~2.3 therein.
Following \cite[Definition~1.13]{Friedman_thesis}, we say $Y$ is \emph{d-semistable} if $\cT^1_Y \simeq \cO_{\Sing Y}$.
The following are equivalent (1) $Y$ is d-semistable, (2) $Y$ is smoothable, and (3) $Y$ admits a semistable smoothing, by \cite{Friedman_thesis}, Lemma~1.11, Proposition~2.5, and Theorem 5.10. Following \cite[$\S$3]{Friedman-Scattone}, let $Y^{[i]}$ denote the disjoint union of the normalisations of the codimension $i$ strata of $Y$ and write
$$L= \ker(H^2(Y^{[0]},\bZ) \rightarrow H^2(Y^{[1]},\bZ)).$$
The Deligne mixed Hodge structure on $H^2(Y,\bZ)$ is classified by the extension class for the graded pieces of the weight filtration, which is given by a homomorphism $\phi \colon L \rightarrow \bC^*$. The Picard group of $Y$ is identified with $\ker(\phi) \subset L$
by restriction. Let $Y= \bigcup Y_i$ denote the irreducible components of $Y$. There are classes $\xi_i \in L$ such that, in case $Y$ admits a semistable smoothing $(Y \subset \cY)/(0 \in \bD)$, $\xi_i$ equals the class of the restriction of $\cO_{\cY}(Y_i)$.
Then $Y$ is d-semistable if and only if the classes $\xi_i \in L$ are realised by line bundles on $Y$, that is, if and only if $\phi(\xi_i)=1$ for all $i$ \cite[p.~25]{Friedman-Scattone}. In particular, if $Y$ has split mixed Hodge structure (equivalently, $\phi \colon L \rightarrow \bC^*$ is trivial), then $Y$ is d-semistable.

    Given a type III d-semistable K3 surface $Y$, following \cite{Friedman-Scattone}, there are two local moves one can perform to get another type III K3 surface $Y'$, called elementary modifications of types 1) and 2). See   Figures \ref{fig:type-1-elem-modif} and \ref{fig:type-2-elem-modif}. Type 1) elementary modifications are clearly uniquely determined. In the case of type 2), the elementary modification is uniquely determined by requiring that $Y'$ is d-semistable by Lemma~\ref{lem:elementary_modification_type_2} below. 
 
\begin{figure} 
    \centering
    \includegraphics[width=0.7\linewidth]{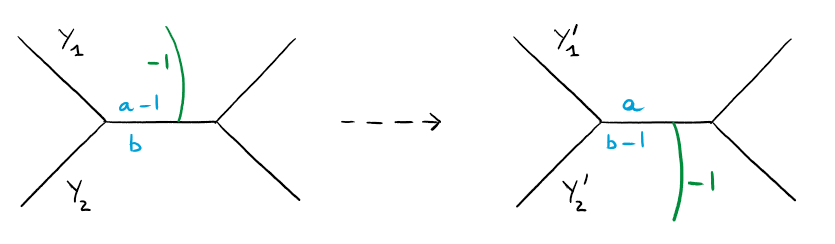}
    \caption{Type 1) elementary modification of a type III K3 surface. Intersection numbers are in blue.}
    \label{fig:type-1-elem-modif}
   \end{figure}

\begin{figure}
    \centering
    \includegraphics[width=0.7\linewidth]{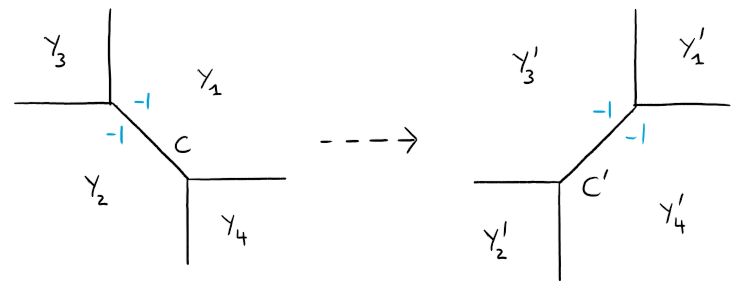}
    \caption{Type 2) elementary modification of a type III K3 surface.}
     \label{fig:type-2-elem-modif}
   \end{figure}

\begin{lemma}\label{lem:elementary_modification_type_2}
Let $Y$ be a type III K3 surface and $C$ an irreducible component of the singular locus of $Y$ such that the connected components of the inverse image of $C$ in the normalisation of $Y$ are $(-1)$-curves. 

Working locally analytically in a neighbourhood of $C$, let $Y_1,Y_2$ be the components of $Y$ containing $C$ and $Y_3,Y_4$ the components intersecting $C$ transversely in a point. For $i=1,2$ let $Y'_i$ be the contractions of the $(-1)$-curve $C \subset Y_i$.
For $i=3,4$ let $Y'_i \rightarrow Y_i$ be the blowup of the point $C \cap Y_i$, with exceptional divisor $C'_i$.
Let $Y'$ be the normal crossing surface and $C' \subset Y'$ the component of the singular locus obtained as follows:
$Y \setminus C = Y' \setminus C'$, and locally analytically near $C' \subset Y'$ the surface $Y'$ is obtained by glueing the surfaces $Y'_3,Y'_4$ via an identification $\theta \colon C'_3 \stackrel{\sim}{\longrightarrow} C'_4$, with $C' \subset Y'$ the resulting component of the singular locus. Note that $\theta$ is determined up to $\lambda \in \bC^* = \Aut(\bP^1,0,\infty)$.
See Figure \ref{fig:type-2-elem-modif}.

If $Y$ is d-semistable then there is a unique choice of the identification $\theta$ such that $Y'$ is also d-semistable.
\end{lemma}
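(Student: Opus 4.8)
The plan is to characterise d-semistability through the sheaf $\cT^1_{Y'}=\cExt^1(\Omega_{Y'},\cO_{Y'})$, a line bundle on $\Sing Y'$, and to show that as $\theta$ ranges over the $\bC^*$-torsor $\Aut(\bP^1,0,\infty)$ the bundle $\cT^1_{Y'}$ is trivial for exactly one value. Recall that $Y'$ is d-semistable precisely when $\cT^1_{Y'}\simeq \cO_{\Sing Y'}$, and that on each double curve $C''=Y_a'\cap Y_b'$ the restriction of $\cT^1_{Y'}$ is $N_{C''/Y_a'}\otimes N_{C''/Y_b'}$ twisted by the triple points on $C''$; since $Y'$ satisfies the triple point formula by construction, this restriction has degree $0$ on every component of $\Sing Y'$. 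The modification is an isomorphism away from $C$ and $C'$ — in the intersection complex it is the edge flip replacing $\{Y_1Y_2\}$ by $\{Y_3Y_4\}$ — so the only freedom in $\cT^1_{Y'}$ relative to the d-semistable datum on $Y$ is concentrated along $C'$ and in the gluing of $C_3'$ to $C_4'$ at the two triple points $C'\cap Y_1'$ and $C'\cap Y_2'$.

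For existence I would produce a d-semistable $Y'$ geometrically, by a flop. Since $Y$ is d-semistable it admits a semistable smoothing $\cY/\bD$ with smooth total space, in which $Y_1,Y_2$ are smooth divisors meeting transversally along $C$. Using $\cO_{\cY}(\sum_i Y_i)=\cO_{\cY}(\cY_0)\simeq \cO_{\cY}$ together with the fact that $C$ meets exactly the two further components $Y_3,Y_4$, each transversally in a single (triple) point, a short degree computation gives $\deg\big(\cO_{\cY}(Y_1)|_C\big)=\deg\big(\cO_{\cY}(Y_2)|_C\big)=-1$; combined with $(C^2)_{Y_1}=(C^2)_{Y_2}=-1$ this yields $N_{C/\cY}\simeq \cO_{\bP^1}(-1)\oplus\cO_{\bP^1}(-1)$ (the defining extension $0\to N_{C/Y_1}\to N_{C/\cY}\to N_{Y_1/\cY}|_C\to 0$ splits, as $\Ext^1(\cO_{\bP^1}(-1),\cO_{\bP^1}(-1))=0$), so $C$ is a $(-1,-1)$-curve. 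Performing the Atiyah flop of $C$ (blow up $C$ and contract the exceptional $\bP^1\times\bP^1$ along its other ruling) produces a smooth semistable family $\cY'/\bD$ agreeing with $\cY$ away from $C$, whose central fibre is obtained from $Y$ by contracting $C$ in $Y_1,Y_2$ and blowing up $C\cap Y_3$, $C\cap Y_4$ in $Y_3,Y_4$, i.e. it is exactly the type $2)$ modification for one specific identification $\theta_0$. As the central fibre of a semistable degeneration, this $Y'=Y'_{\theta_0}$ is d-semistable.

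For uniqueness I would compare the candidates $Y'_\theta$ directly. They all coincide away from $C'$, and the line bundles $\cT^1_{Y'_\theta}$ differ only through the identification $\theta$ of $C_3'$ with $C_4'$ along $C'$. Fixing the trivialisation of $\cT^1_{Y'}$ on the complement of $C'$ inherited from $\theta_0$, the obstruction to extending it across $C'$ is a single ratio in $\bC^*$ comparing the prescribed fibres at the two triple points $C'\cap Y_1'$ and $C'\cap Y_2'$ through the degree-$0$ bundle $\cT^1_{Y'_\theta}|_{C'}\simeq \cO_{\bP^1}$. In the local model of a triple point, rescaling $\theta$ by $\lambda\in\bC^*$ rescales the coordinate on the $C_4'$-branch relative to the $C_3'$-branch, and the transition function of $\cT^1$ across the node picks up $\lambda$ to the first power; hence this obstruction ratio is $\lambda^{\pm1}$ times its value at $\theta_0$. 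Since the obstruction vanishes at $\theta_0$, it is nontrivial for every $\theta\neq\theta_0$, so $\theta_0$ is the unique d-semistable choice.

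The main obstacle is the last local computation: verifying that the dependence of the gluing obstruction on $\lambda$ is exactly first order. This requires writing $\cT^1$ in the standard triple-point chart $\{xyz=0\}$, tracking how the contractions in $Y_1,Y_2$ and the blow-ups in $Y_3,Y_4$ transform its canonical local generator, and checking that $\theta\mapsto\lambda\theta$ acts on the relevant transition function as multiplication by $\lambda^{\pm1}$, rather than trivially or with higher weight. Once this is in place the $\bC^*$-equivariance argument gives existence and uniqueness simultaneously; the flop then provides an independent geometric confirmation of existence and identifies the distinguished $\theta_0$.
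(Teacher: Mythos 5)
Your existence argument is correct and coincides with the paper's: the paper also produces the d-semistable gluing by flopping the $(-1,-1)$-curve $C$ inside a semistable smoothing $\cY/\bD$ of $Y$ (whose existence uses Friedman's theorem that d-semistability implies smoothability, \cite{Friedman_thesis}). The normal-bundle computation $N_{C/\cY}\simeq\cO(-1)\oplus\cO(-1)$ that you do inline is recorded in the paper in Lemma~\ref{lem:Dolgachev_family_modifications}, and the fact that the flopped family is again semistable and realises the type 2) modification for one distinguished gluing $\theta_0$ is exactly the mechanism invoked via Proposition~\ref{prop:MHS_flop}.

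The uniqueness half, however, has a genuine gap, and it is precisely the one you flag yourself. Your reduction is sound: since $\Sing Y'\setminus C'$ is connected and $\cT^1$ is intrinsic, hence canonically the same sheaf on $Y'_\lambda\setminus C' = Y'_{\theta_0}\setminus C'$ for all $\lambda$, triviality of $\cT^1_{Y'_\lambda}$ is governed by a single ratio in $\bC^*$; and since the $Y'_\lambda$ form an algebraic family over $\bC^*$ with ratio equal to $1$ at $\lambda=1$, this ratio has the form $\lambda^n$ for some fixed $n\in\bZ$. But the entire content of the lemma is the claim $n=\pm1$: if $n=0$, every gluing would be d-semistable and your argument proves nothing, while if $|n|\ge 2$ there would be exactly $|n|$ d-semistable gluings and uniqueness would be false. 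So the deferred triple-point chart computation is not a routine verification but the decisive step, and as written the proof is incomplete. The paper avoids this computation by encoding the obstruction in the extension class $\phi_{Y'}\colon L'\to\bC^*$ of the mixed Hodge structure on $H^2(Y',\bZ)$ rather than in $\cT^1$: d-semistability is equivalent to $\phi_{Y'}(\xi_i)=1$ for the classes $\xi_i$ of the components \cite[p.~25]{Friedman-Scattone}, and the required weight computation $\phi_{Y'_\lambda}(\xi_3)=\lambda^{\pm1}$ is exactly \cite[Definition~3.1 and Proposition~3.4]{Lutz_Torelli}. To finish along your lines you would either have to carry out the $\{xyz=0\}$-chart computation in full, or identify your holonomy obstruction with $\phi(\xi_3)$ and then invoke those references; granted Lutz's formula, the weight reduces to the degree of $\cO_{\cY'}(Y'_3)|_{C'}=N_{C'/Y'_4}$, which is $-1$.
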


\begin{proof}
If $Y$ is d-semistable, existence of $\theta$ such that $Y'$ is also d-semistable follows from Proposition~\ref{prop:MHS_flop} (recall that d-semistability is equivalent to smoothability). Uniqueness of $\theta$ follows from \cite[Proposition~3.4]{Lutz_Torelli}. Indeed, let $Y'$ be a d-semistable glueing, $L'=\ker(H^2({Y'}^{[0]},\bZ) \rightarrow H^2({Y'}^{[1]},\bZ))$, and $\phi_{Y'} \colon L' \rightarrow \bC^*$ the homomorphism representing the extension class of the mixed Hodge structure on $H^2(Y',\bZ)$.
Let $\xi_3 \in L'$ denote the restriction of $c_1(\cO_{\cY'}(Y'_3))$, where $(Y' \subset \cY')/(0 \in \bD)$ is a semistable smoothing of $Y'$.
Then $\phi_{Y'}(\xi_1)=1$. If now $Y'_\lambda$ is obtained from $Y'$ by modifying the glueing along $C'$ by $\lambda \in \bC^*$
then $\phi_{Y'_{\lambda}}(\xi_3) = \lambda^{\pm 1}$ (with the sign determined by the choice of orientations) by \cite[Definition~3.1 and Proposition~3.4]{Lutz_Torelli}. In particular, $Y'_{\lambda}$ is not d-semistable for $\lambda \neq 1$.
\end{proof}

Let $Y$ be a d-semistable type III K3 surface. 
Let $(Y \subset \cY)/(0 \in \bD)$ be a semistable smoothing of $Y$.
Let $t \in \bD$, $0< |t| \ll 1$, and let $\cY_t$ be the fibre of $\cY/\bD$ over $t \in \bD$.
Let $T \colon H^2(\cY_t,\bZ) \rightarrow H^2(\cY_t,\bZ)$ be the monodromy action and $N = \log T$ (recall $T$ is unipotent since $\cY/\bD$ is semistable).
Possibly after shrinking the radius of the disc, we have a retraction
$$
r \colon \cY \rightarrow Y
$$
and the induced map
$$
r_t = r \circ i_t \colon \cY_t \rightarrow Y
$$
where $i_t \colon \cY_t \hookrightarrow \cY$ is the inclusion.

Let $Y= \bigcup_{i=1}^V Y_i$ be the irreducible components of $Y$.
We have the exact sequence
\begin{equation}\label{Clemens-Schmid}
0 \rightarrow \bZ^V/\bZ \rightarrow H^2(Y,\bZ) \rightarrow \ker N \rightarrow 0,
\end{equation}
where the first arrow is given by $$
e_i \mapsto \xi_i := c_1(\cO_{\cY}(Y_i)|_Y)
$$
and the second arrow is given by $r_t^*$, see \cite[(4.13)]{Friedman-Scattone}. 
(This is part of the Clemens-Schmid exact sequence \cite[Corollary~11.44]{Peters-Steenbrink}, with the caveats that we are working over $\bZ$ instead of $\bQ$ and the total space $\cY$ is not necessarily K\"ahler.)
The map $r_t^* \colon H^2(Y,\bZ) \rightarrow \ker N \subset H^2(\cY_t,\bZ)$ is a morphism of mixed Hodge structures, where we endow $H^2(\cY_t,\bZ)$ with the limiting mixed Hodge structure \cite[Theorem~11.29]{Peters-Steenbrink}.

\begin{lemma}\label{F1MHS}
The Hodge filtration on $H^2(Y,\bC)$ is given by $F^1 \subset F^0=H^2(Y,\bC)$, and $F^1H^2(Y,\bC)$ is the full inverse image of $F^1\ker N_{\bC}$ under the surjection $r_t^* \colon H^2(Y,\bC) \rightarrow \ker N_{\bC}$. 
\end{lemma}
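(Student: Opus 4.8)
The plan is to leverage the fact recorded immediately above the lemma, that
$r_t^* \colon H^2(Y,\bC) \to \ker N_{\bC}$ is a \emph{surjective morphism of mixed Hodge structures}, once we equip $H^2(\cY_t,\bC)$ with the limiting mixed Hodge structure. Morphisms of mixed Hodge structures are strictly compatible with the Hodge filtration, so surjectivity gives $r_t^*(F^1 H^2(Y,\bC)) = F^1 \ker N_{\bC}$. Consequently the full preimage satisfies $(r_t^*)^{-1}(F^1 \ker N_{\bC}) = F^1 H^2(Y,\bC) + \ker r_t^*$, and the asserted equality reduces to the single inclusion $\ker r_t^* \subseteq F^1 H^2(Y,\bC)$. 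So everything comes down to (i) pinning down the shape of the Hodge filtration, and (ii) showing that the kernel classes lie in $F^1$.

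First I would verify the first clause, namely $F^0 = H^2(Y,\bC)$ and $F^2 H^2(Y,\bC) = 0$, by inspecting the weight graded pieces of the Deligne mixed Hodge structure via the spectral sequence $E_1^{p,q} = H^q(Y^{[p]},\bC) \Rightarrow H^{p+q}(Y,\bC)$ (degenerating at $E_2$), under which $\Gr^W_q H^2 = E_2^{2-q,q}$. For a type III K3 surface: $\Gr^W_0 H^2$ is a subquotient of $H^0(Y^{[2]},\bC)$, hence of type $(0,0)$; $\Gr^W_1 H^2$ is a subquotient of $H^1(Y^{[1]},\bC)$, which vanishes since each double curve $D_{ij}$ is a $\bP^1$; and $\Gr^W_2 H^2$ is a sub-Hodge-structure of $H^2(Y^{[0]},\bC) = \bigoplus_i H^2(Y_i,\bC)$. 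Since each component $(Y_i,D_i)$ is a rational log Calabi--Yau surface (it admits a toric model), $p_g(Y_i)=0$ and $H^2(Y_i,\bC)$ is purely of type $(1,1)$, so $\Gr^W_2 H^2$ carries no $(2,0)$ part. Using the standard compatibility $\dim F^p H^2 = \sum_q \dim F^p \Gr^W_q H^2$ for a mixed Hodge structure, the above shows $F^2 H^2(Y,\bC)=0$, while all pieces have $p \ge 0$ so $F^0 = H^2(Y,\bC)$.

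It then remains to show $\ker r_t^* \subseteq F^1$. By the Clemens--Schmid sequence $(\ref{Clemens-Schmid})$, $\ker r_t^*$ is the $\bC$-span of the classes $\xi_i = c_1(\cO_{\cY}(Y_i)|_Y)$. Each $\xi_i$ is a first Chern class of a line bundle, hence a Hodge class of type $(1,1)$, so lies in $F^1 H^2(Y,\bC)$; thus $\ker r_t^* \subseteq F^1$. Combining this with strictness: given $x$ with $r_t^*(x) \in F^1 \ker N_{\bC}$, strictness produces $y \in F^1$ with $r_t^*(y) = r_t^*(x)$, whence $x - y \in \ker r_t^* \subseteq F^1$ and so $x \in F^1$; the reverse inclusion $F^1 \subseteq (r_t^*)^{-1}(F^1 \ker N_{\bC})$ is immediate since $r_t^*$ is a morphism of mixed Hodge structures. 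This gives $F^1 H^2(Y,\bC) = (r_t^*)^{-1}(F^1 \ker N_{\bC})$.

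The genuinely external inputs are the two cited structural facts---that $r_t^*$ is a morphism of mixed Hodge structures for the limiting structure on $H^2(\cY_t)$, and the Clemens--Schmid identification of $\ker r_t^*$---so the work internal to this lemma is light and essentially formal. I expect the only delicate point to be the bookkeeping for the Hodge filtration on the weight graded pieces: one must confirm that $h^{2,0}(Y_i)=0$ really forces $F^2\Gr^W_2 = 0$ and hence $F^2 H^2(Y,\bC)=0$, which rests on the compatibility of $F$ with $W$ in a mixed Hodge structure. Granting that, the conclusion is a formal consequence of strictness of mixed Hodge structure morphisms together with the fact that divisor classes are of type $(1,1)$.
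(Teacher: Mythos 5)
Your proof is correct and follows essentially the same route as the paper's: both rest on strictness of morphisms of mixed Hodge structures to obtain surjectivity of $F^1H^2(Y,\bC) \rightarrow F^1\ker N_{\bC}$, combined with the containment $\ker r_t^* \subseteq F^1H^2(Y,\bC)$, from which the statement follows formally. The only difference is that where the paper simply cites Friedman--Scattone (\S 3 and the proof of Proposition~3.1) for the shape of the Hodge filtration and for the kernel containment, you prove these inputs directly --- via the weight spectral sequence plus rationality of the components, and via the fact that the $\xi_i$ are Chern classes of line bundles (which on a normal crossing, hence du Bois, variety lie in $F^1$ by the exponential sequence) --- and both of your arguments are sound.
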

\begin{proof}
The Hodge filtration on $H^2(Y,\bC)$ is described in \cite[$\S$3]{Friedman-Scattone}.
The kernel of $r_t^*$ is contained in $F^1H^2(Y,\bC)$ by \cite[Proof of Proposition~3.1]{Friedman-Scattone}.
The map $F^1H^2(Y,\bC) \rightarrow F^1\ker N_{\bC}$ is surjective by strictness of morphisms of mixed Hodge structures \cite[Corollary 3.6]{Peters-Steenbrink}.
\end{proof}

Let $f \colon Y \dashrightarrow Y'$ be an elementary modification of $Y$, with exceptional curves $C \subset Y$ and $C' \subset Y'$.  Then $f$ is induced by the (analytic) flop $F \colon \cY \dashrightarrow \cY'$ of the $(-1,-1)$-curve $C \subset \cY$. Let $p \colon \tilde{\cY} \rightarrow \cY$ be the blowup of $C$, with exceptional divisor $E \rightarrow C$ isomorphic to $\bP^1 \times \bP^1$ and $q \colon \tilde{\cY} \rightarrow \cY'$ the contraction of $E$ along the other ruling.
We have an isomorphism
$$
H^2(\cY,\bZ) \stackrel{\sim}{\longrightarrow} H^2(\cY',\bZ)
$$ 
given by the composition $q_*p^*$ , and so an isomorphism
$$
\Phi \colon H^2(Y,\bZ) \stackrel{\sim}{\longrightarrow} H^2(Y',\bZ)
$$
using the retractions of the total spaces of the fibrations onto the special fibres.

\begin{proposition}\label{prop:MHS_flop}
The isomorphism $\Phi \colon H^2(Y,\bZ) \rightarrow H^2(Y',\bZ)$ of abelian groups is an isomorphism of mixed Hodge structures.
\end{proposition}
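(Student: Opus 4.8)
The plan is to recognise $\Phi$ as a composite of maps each of which is manifestly a morphism of mixed Hodge structures, so that the only remaining point is the (already granted) bijectivity. Transporting everything to the proper special fibres via the retractions, $\Phi$ is computed by first pulling back along the blow-up $p\colon \tilde{\cY}\to\cY$ and then pushing forward along the contraction $q\colon\tilde{\cY}\to\cY'$. Pullback $p^{*}$ is always a morphism of MHS, and since $p$ and $q$ are proper morphisms of \emph{smooth} threefolds with $\dim\tilde{\cY}=\dim\cY'$, the Gysin (wrong-way) map $q_{*}$ is a morphism of MHS with trivial Tate twist. Hence $q_{*}p^{*}$, and therefore $\Phi$, is a morphism of MHS; being an isomorphism of abelian groups it is then automatically an isomorphism of MHS. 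This is the cleanest route precisely because both $p^{*}$ and $q_{*}$ are strict for $W_{\bullet}$ and holomorphic for $F^{\bullet}$, so the two filtrations are handled simultaneously.

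Making this rigorous requires two bookkeeping steps. First, the total spaces $\cY,\tilde{\cY},\cY'$ live over a disc $\bD$ and are not a priori algebraic, so to invoke Deligne's functoriality I would algebraise the flop over the germ of a smooth curve — the blow-up of the $(-1,-1)$-curve $C$, its contraction along the other ruling, and the flop itself are all algebraic — and then identify $H^{2}$ of each total space with the Deligne MHS of its proper special fibre via the retraction $r$. Since $r$ is a homotopy equivalence, restriction $H^{2}(\cY)\to H^{2}(Y)$ is a bijective morphism of MHS, hence an isomorphism, and likewise for $\cY'$. Second, I would check that $q_{*}$ is compatible with restriction to the special fibre, which is proper base change for the Cartesian square cutting out the central fibre, so that the transported map really is the $\Phi$ of the statement.

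As an independent check, and to locate where the content lies, one can try to verify the two filtrations separately. The flop is an isomorphism over $\bD\setminus\{0\}$, so it identifies the two families there and induces an isomorphism $\iota$ of limiting mixed Hodge structures commuting with the monodromy logarithms; in particular $\iota$ preserves $F^{1}\ker N$. Combined with the compatibility ${r'_{t}}^{*}\circ\Phi=\iota\circ r_{t}^{*}$ and Lemma~\ref{F1MHS}, this shows that $\Phi$ preserves $F^{1}$, and since the irreducible components of $Y$ are rational surfaces one has $F^{2}H^{2}(Y,\bC)=0$, so the entire Hodge filtration is preserved. The genuinely harder part is the weight filtration: the structures here are of Hodge–Tate type, for which $F^{\bullet}$ does \emph{not} determine $W_{\bullet}$, and the specialisation sequence (\ref{Clemens-Schmid}) pins down $W_{0}$ only modulo $K=\ker r_{t}^{*}$. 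A purely period-theoretic argument would therefore need extra geometric input — for instance the identification $\Gr^{W}_{0}H^{2}(Y)=H^{2}(S^{2})$ with the fundamental class of the dual complex, which the elementary modification changes only by a combinatorial move preserving its orientation, so that $\Phi$ acts by $\pm 1$ on $\Gr^{W}_{0}$.

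I expect this weight-filtration step to be the main obstacle, and it is exactly the reason to phrase the proof through the birational geometry of the total space (where strictness of $p^{*}$ and $q_{*}$ for $W_{\bullet}$ is automatic) rather than through the special fibres and limiting data alone.
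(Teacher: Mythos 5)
Your main route has a genuine gap at its central step: the claim that $q_{*}\colon H^2(\tilde{\cY},\bZ)\to H^2(\cY',\bZ)$ is ``manifestly'' a morphism of mixed Hodge structures. Deligne functoriality (for pullbacks, and for Gysin maps between smooth varieties of equal dimension) applies to complex algebraic varieties, whereas $\cY$, $\tilde{\cY}$, $\cY'$ are analytic germs over a disc: the only mixed Hodge structures in play on their $H^2$ are the ones you transport from the special fibres via the retractions, and for those transported structures the MHS-compatibility of $q_{*}$ is essentially the statement to be proved, not an input. Your proposed fix --- algebraisation --- does not repair this. If you algebraise over $\Spec \bC[[q]]$, the base is not of finite type over $\bC$ and Deligne's theory does not apply; if you algebraise over an honest affine curve $T$, the deformation retraction onto the special fibre no longer exists (it is an analytic-germ phenomenon), and the restriction $H^2(\cY^{\mathrm{alg}},\bZ)\to H^2(Y,\bZ)$ is in general neither injective nor surjective: the Leray spectral sequence over $T$ contributes $H^1(T,R^1f_*\bZ)$ to the source, and $H^0(T,R^2f_*\bZ)\to (R^2f_*\bZ)_0$ need not be onto. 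So the identification on which your ``composite of MHS morphisms'' rests is unavailable in the algebraic category, and your own fallback check concedes that the weight filtration is the hard point and leaves it unfinished; as written the proof is incomplete.

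For comparison, the paper never puts an MHS on the total spaces: it uses only the pullbacks $p_0^*$, $q_0^*$ between the \emph{proper special fibres} $Y$, $\tilde{Y}$, $Y'$ (honest Deligne functoriality), handles the weight filtration by noting $W_0\simeq H^2(|\Delta|,\bZ)\simeq\bZ$ for all three surfaces and chasing $W_0$ through $p_*p^*=q_*q^*=\id$, and handles the Hodge filtration exactly as in your ``independent check'': the commutative diagram \eqref{MHS_flop} over $\ker N$ plus Lemma~\ref{F1MHS}. Your underlying idea can in fact be salvaged without ever leaving the special fibres: $q_0^*$ is an injective morphism of MHS; the class $[E]|_{\tilde{Y}}=c_1(\cO_{\tilde{\cY}}(E))|_{\tilde{Y}}$ lies in $F^1\cap H^2(\tilde{Y},\bZ)$, so that $m\mapsto m\,[E]|_{\tilde{Y}}$ is a morphism of MHS from $\bZ(-1)$; and the topological blow-up formula, transported by the retractions, gives $H^2(\tilde{Y},\bZ)=q_0^*H^2(Y',\bZ)\oplus\bZ\,[E]|_{\tilde{Y}}$. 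Hence $H^2(Y',\bZ)\oplus\bZ(-1)\to H^2(\tilde{Y},\bZ)$ is a bijective morphism of MHS, therefore an isomorphism (MHS form an abelian category with strict morphisms), so the projection onto the first summand --- which \emph{is} the transported $q_{*}$, since $q_*[E]=0$ --- is a morphism of MHS, and $\Phi=(q_0^*)^{-1}\circ\mathrm{pr}\circ p_0^*$ is an isomorphism of MHS. That repaired argument is genuinely different from the paper's and arguably cleaner, but it is not the one you wrote.
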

\begin{proof}
The isomorphism $\Phi$ fits into a commutative diagram
\begin{center}
\begin{equation}\label{MHS_flop}
\begin{CD}
0 @>>> \bZ^V/\bZ @>>> H^2(Y,\bZ) @>>> \ker N @>>> 0\\
@. @| @V\Phi VV @| \\
0 @>>> \bZ^V/\bZ @>>> H^2(Y',\bZ) @>>> \ker N @>>> 0
\end{CD}
\end{equation}
\end{center}

Let $p_0 \colon \tilde{Y} \rightarrow Y$ and $q_0 \colon \tilde{Y} \rightarrow Y'$ be the restrictions of $p$ and $q$ to the special fibre $\tilde{Y}$ of $\tilde{\cY}$ (with its reduced structure).
The maps $p_0^*$ and $q_0^*$ are morphisms of mixed Hodge structures \cite[Proposition~8.2.2]{Deligne_HodgeIII} or \cite[Theorem~5.33]{Peters-Steenbrink}. They coincide with $p^*$ and $q^*$ under the identifications given by the retractions of the total spaces of the fibrations to the special fibres.
For each of $Y$, $Y'$, and $\tilde{Y}$, we have $W_0 =H^2(|\Delta|,\bZ) \simeq \bZ$ by the Mayer--Vietoris spectral sequence, where $\Delta$ denotes the dual complex of the normal crossing surface.
We deduce that $p_0^*$ and $q_0^*$ induce isomorphisms on $W_0$ using $p_*p^*=q_*q^*=\id$ \cite[Theorem~5.2(4)]{Bredon} (note $p_* \colon H_6(\tilde{\cY},\partial \tilde{\cY}) \rightarrow H_6(\cY,\partial\cY)$ satisfies $p_*[\tilde{\cY}]=[\cY]$ since $p$ is a birational morphism).
Using $q_*q^*=\id$ again, we find that the isomorphism
$\Phi \colon H^2(Y,\bZ) \rightarrow H^2(Y',\bZ)$ induced by $q_*p^*$ sends $W_0$ to $W_0$. 
The weight filtration on $H^2(Y,\bZ)$ and $H^2(Y',\bZ)$ is given by $W_0 =W_1 \subset W_2=H^2$. So the isomorphism $\Phi \colon H^2(Y,\bZ) \rightarrow H^2(Y',\bZ)$ respects the weight filtrations, and the induced map $H^2(Y,\bC) \rightarrow H^2(Y',\bC)$ respects the Hodge filtrations by the commutativity of the diagram \ref{MHS_flop} and Lemma~\ref{F1MHS}, thus $\Phi$ is an isomorphism of mixed Hodge structures.
\end{proof}

\begin{lemma}(See \cite[Theorem~0.6]{Friedman-Scattone} and \cite[Theorem~3.7]{Lutz_Torelli})\label{lem:elementary_modifications_connect}
Suppose we are given two type III K3 surfaces $Y$ and $Y'$ with the same invariants $k$ and $n$, and such that 
both $Y$ and $Y'$ have split mixed Hodge structure. Then $Y$ and $Y'$ are related by a sequence of elementary modifications.
\end{lemma}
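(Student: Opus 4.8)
The plan is to combine the combinatorial connectivity result of Friedman--Scattone with the Global Torelli theorem of Lutz, using Proposition~\ref{prop:MHS_flop} as the bridge between them. The guiding principle is that the pair $(k,n)$ determines a type III K3 surface up to locally trivial deformation and elementary modification, while the split mixed Hodge structure hypothesis then pins down a unique representative in each deformation class. First I would note that both invariants are preserved by either type of elementary modification: the number $2n$ of triple points is unchanged by inspection of Figures~\ref{fig:type-1-elem-modif} and~\ref{fig:type-2-elem-modif}, and $k=|H^3(Y,\bZ)|$ is preserved because, by Lemma~\ref{lem:characterisation-of-k}, $k$ equals the Friedman--Scattone index, an invariant of the monodromy weight filtration of a smoothing and hence unaffected by the flop $F\colon\cY\dashrightarrow\cY'$ underlying an elementary modification. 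With this in hand, \cite[Theorem~0.6]{Friedman-Scattone} provides the combinatorial input: two type III K3 surfaces with the same invariants $(k,n)$ lie in locally trivial deformation types connected by a finite chain of elementary modifications. This reduces matters to realising each step of that chain geometrically while staying within split surfaces, and to identifying the endpoints of the resulting geometric chain with $Y$ and $Y'$.

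The substantive step is the geometric lifting. Given a type III K3 surface $Z$ with split mixed Hodge structure and a prescribed elementary modification of its combinatorial type, I would realise it as follows. Split MHS implies d-semistability (as recorded before Lemma~\ref{lem:elementary_modification_type_2}), so $Z$ admits a semistable smoothing $(Z\subset\cZ)/(0\in\bD)$; I then perform the analytic flop of the relevant $(-1,-1)$-curve $C\subset\cZ$ and take the new central fibre $Z'$. By Proposition~\ref{prop:MHS_flop} the induced map $\Phi\colon H^2(Z,\bZ)\xrightarrow{\sim}H^2(Z',\bZ)$ is an isomorphism of mixed Hodge structures, so in particular $Z'$ again has split mixed Hodge structure. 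In the type~2 case the gluing parameter $\theta$ produced by the flop is exactly the unique one for which $Z'$ is d-semistable, by Lemma~\ref{lem:elementary_modification_type_2}, so this construction agrees with the elementary modification as defined in the excerpt. Thus each elementary modification of combinatorial types is realised by an honest elementary modification relating split type III K3 surfaces.

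Finally I would close the argument with uniqueness. By the Global Torelli theorem \cite[Theorem~3.7]{Lutz_Torelli} (as already invoked in Lemma~\ref{lem:split_MHS_CYncs}), each locally trivial deformation type contains exactly one type III K3 surface with split mixed Hodge structure. Running the chain of elementary modifications supplied by Friedman--Scattone starting from $Y$, and applying the geometric lifting of the previous paragraph at each stage, produces a chain of split type III K3 surfaces beginning at $Y$ and ending at a split representative of the deformation type of $Y'$. By uniqueness this terminal surface must be $Y'$ itself, which exhibits the required sequence of elementary modifications connecting $Y$ and $Y'$.

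The main obstacle I anticipate is reconciling the two notions of elementary modification: Friedman--Scattone's moves are naturally phrased in terms of the combinatorial and lattice-theoretic data of the degeneration, whereas the moves used here are geometric flops of a smoothing. One must check that the combinatorial hypotheses guaranteeing a move can be performed --- principally that the double curve being modified has $(-1)$-curve preimages in the normalisation, so that Lemma~\ref{lem:elementary_modification_type_2} applies --- hold all along the chain, and that the combinatorial and geometric operations genuinely coincide. Proposition~\ref{prop:MHS_flop} and Lemma~\ref{lem:elementary_modification_type_2} are precisely the tools that dissolve this difficulty, so what remains is careful bookkeeping rather than a new idea.
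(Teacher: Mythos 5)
Your proof is correct and takes essentially the same route as the paper: Friedman--Scattone \cite[Theorem~0.6]{Friedman-Scattone} gives a chain of elementary modifications followed by a locally trivial deformation, Proposition~\ref{prop:MHS_flop} shows that split mixed Hodge structure is preserved along the chain, and the Global Torelli theorem \cite[Theorem~3.7]{Lutz_Torelli} (uniqueness of the split representative in each locally trivial deformation class) renders the final locally trivial deformation unnecessary. The only difference is presentational: the paper reads Friedman--Scattone as producing modifications of the actual surfaces, so your extra ``geometric lifting'' layer (realising each move by a flop of a semistable smoothing, with Lemma~\ref{lem:elementary_modification_type_2} fixing the type~2 gluing) is careful bookkeeping rather than an additional idea.
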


\begin{proof}
By \cite[Theorem~0.6]{Friedman-Scattone}, $Y$ and $Y'$ are related by a sequence of elementary modifications followed by a locally trivial deformation. There is a unique type III K3 in each locally trivial deformation type with split mixed Hodge structure by the Global Torelli theorem for type III K3 surfaces \cite[Theorem~3.7]{Lutz_Torelli}. Elementary modifications preserve the condition that the mixed Hodge structure is split by Proposition~\ref{prop:MHS_flop}. Thus the locally trivial deformation is not needed in this case.
\end{proof}

\begin{lemma} \label{lem:reduce_to_projective}
Let $Y$ be a type III K3 surface with split mixed Hodge structure.
There exists a sequence of elementary modifications $Y \dashrightarrow Y'$ such that $Y'$ is projective (and also has split mixed Hodge structure).
\end{lemma}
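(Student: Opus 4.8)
The plan is to reduce to the construction of a single projective model and then appeal to the connectivity of elementary modifications. It suffices to exhibit one projective type III K3 surface $Y'$ with split mixed Hodge structure and with $k(Y')=k(Y)$, $n(Y')=n(Y)$. Granting this, $Y$ and $Y'$ are two type III K3 surfaces with split mixed Hodge structure and the same invariants, so by Lemma~\ref{lem:elementary_modifications_connect} they are related by a sequence of elementary modifications. As the relation ``related by a sequence of elementary modifications'' is symmetric---each elementary modification of type 1) or 2) admits an inverse which is again an elementary modification---we may read the chain as a sequence $Y \dashrightarrow Y'$, which is what we want.

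To produce $Y'$ I would first produce \emph{some} projective type III K3 surface $Z$ with invariants $(k,n)$, not yet worrying about its mixed Hodge structure. For $k=1$ such a $Z$ occurs as the central fibre of a projective semistable (Kulikov) type III degeneration of a primitively polarised K3 surface of degree $2n$; projectivity of the central fibre is inherited from that of the total space of the family (for instance the universal family over a neighbourhood of the $0$-dimensional cusp of the Baily--Borel compactification of the relevant moduli space). For $k>1$ one starts from the degree $2n/k^2$, index $1$ case and applies the base change $q \mapsto q^k$ followed by a semistable crepant resolution, as sketched for the cases $k>1$ in the introduction, choosing a relatively ample line bundle on the resolved family so that its central fibre remains projective.

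Finally I would pass from $Z$ to a split representative using that projectivity is invariant under locally trivial deformation. A locally trivial deformation fixes the components $Y_i$ and the double curves $D_{ij}\cong\bP^1$ and only varies the gluing data recorded by the extension class. An ample line bundle on $Z$ restricts to ample line bundles $L_i$ on the components whose degrees along each $D_{ij}$ agree, and this purely numerical datum is unchanged under locally trivial deformation. Since for a split mixed Hodge structure the extension class is trivial---so that any degree-matching collection of component line bundles glues to a genuine line bundle on $Y$ (cf.~Equation~\ref{eq:exact-seq-for-PicY})---and since ampleness is detected by the Nakai--Moishezon criterion on the components, the unique type III K3 surface with split mixed Hodge structure in the locally trivial deformation class of $Z$, which exists by the Global Torelli theorem \cite[Theorem~3.7]{Lutz_Torelli}, is again projective; moreover it still has invariants $(k,n)$, since these (the number $2n$ of triple points and the order $k=|H^3(Y,\bZ)|$, by Lemma~\ref{lem:characterisation-of-k}) are topological and hence locally-trivial-deformation invariant. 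Taking $Y'$ to be this representative finishes the proof.

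I expect the main obstacle to be this last reconciliation: a priori projectivity and the split condition pull in opposite directions, and the crucial---but easily overlooked---point is that the positivity witnessing projectivity depends only on the components and their double curves, not on the gluing data encoded by the mixed Hodge structure. Constructing the projective model $Z$ with the \emph{exact} invariants, in particular controlling the triple points and the index through the base change and crepant resolution when $k>1$, is the other place where care is needed.
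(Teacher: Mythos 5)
Your reduction (exhibit one projective type III K3 with split mixed Hodge structure and the invariants $(k,n)$ of $Y$, then connect it to $Y$ by Lemma~\ref{lem:elementary_modifications_connect}) has the same skeleton as the paper's proof; the gap lies in how you produce that projective model. The fatal step is the last one: the claim that projectivity passes to the split representative of the locally trivial deformation class of $Z$ because ampleness is a \emph{purely numerical} condition on the components. First, a locally trivial deformation of a type III K3 does \emph{not} fix the components: the locally trivial deformation space is identified with the germ of $\Hom(L,\bC^*)$ at the extension class $\phi$ (the local Torelli theorem of Friedman--Scattone, quoted in the proof of Lemma~\ref{lem:Dolgachev_family}), and varying $\phi$ varies both the gluings \emph{and} the moduli of the pairs $(Y_i,D_i)$, e.g.\ the positions of the interior blow-up points; the split representative is the most special point of this family. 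Second, and consequently, while the intersection numbers of the transported class are deformation invariant, the set of irreducible curves against which Nakai--Moishezon must be tested is not: on the split representative extra interior $(-2)$-curves become effective (by Lemma~\ref{lem:split_MHS_logCY2} all interior blow-ups sit over the points $-1$, producing special configurations), and a class that is ample on the nearby fibres can specialise to a class that is nef and big but of degree zero on these curves, hence not ample. The paper's own proof exhibits exactly this phenomenon: on the split $(-1)$-form representative the glued anticanonical bundle is only semi-ample --- it contracts configurations of interior $(-2)$-curves to Du Val singularities --- and one must subtract small rational multiples of those $(-2)$-curves to obtain an ample $\bQ$-line bundle. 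Your argument, applied verbatim, would ``prove'' that the glued anticanonical bundle itself is ample, which is false in general; so the conclusion you want requires precisely this perturbation step, which is missing.

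Your construction of the projective model $Z$ is also not routine. Kulikov models are obtained from projective semistable models by flops, which destroy projectivity, so a type III Kulikov degeneration (and in particular its central fibre) is in general only an algebraic space; moreover the number of triple points of the central fibre equals $\delta^2$ for the Friedman--Scattone monodromy invariant $\delta$, which is not the same thing as the polarisation degree of the degenerating family, so ``a degeneration of degree-$2n$ polarised K3s'' does not automatically have $2n$ triple points --- one must choose the degeneration so that $\delta$ is primitive (resp.\ of divisibility $k$ after base change) with $\delta^2=2n$. Arranging projectivity, the triple-point count, and the index simultaneously is essentially the content of Lemma~\ref{lem:existence-of-type-III-K3s} combined with the present lemma. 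The paper's route --- use Lemmas~\ref{lem:elementary_modifications_connect} and~\ref{lem:existence-of-type-III-K3s} to modify $Y$ into the $(-1)$-form split representative, then write down an ample $\bQ$-line bundle by hand (glued anticanonical, perturbed by the $(-2)$-curves) --- avoids both of these gaps.
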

\begin{proof}
Elementary modifications induce isomorphisms of MHS and in particular preserve the condition that the MHS is split by Proposition~\ref{prop:MHS_flop}.
By Lemma~\ref{lem:elementary_modifications_connect} and Lemma~\ref{lem:existence-of-type-III-K3s} there exists a sequence of elementary modifications $Y \dashrightarrow Y'$ such that $Y'$
is in $(-1)$-form. We claim that $Y'$ is projective. There is a line bundle $L$ on $Y'$ such that the restriction of $L$ to the normalisation of each irreducible component $Y'_i$ of $Y'$ is isomorphic to the anti-canonical line bundle $-K_{Y'_i}$ of $Y'_i$. Indeed by the definition of $(-1)$-form and the adjunction formula, $-K_{Y'_i}$ has degree $1$ on each component of the boundary, so these line bundles glue to a line bundle $L$ on $Y'$ since $Y'$ has split MHS.
A positive multiple of $-K_{Y'_i}$ defines a birational morphism $Y'_i \rightarrow \bar{Y}'_i$ with exceptional locus a configuration of $(-2)$-curves $C_{ij}$ (contained in the complement of the boundary of $Y'_i$) that are contracted to Du Val singularities on $\bar{Y}'_i$. So there exist rational numbers $0<\epsilon_{ij} \ll 1$ such that the line bundle 
$$L'=L^{\otimes N} \otimes \cO_Y(-N(\textstyle{\sum \epsilon_{ij}C_{ij}}))$$
is ample for $N \in \bN$ sufficiently divisible, and $Y'$ is projective as claimed.
\end{proof}

\begin{remark} \label{rem:modifications_are_derived_equivalent}
As already observed in \cite{Lekili-Ueda}, if $Y$ and $Y'$ are d-semistable type III K3 surfaces related by an elementary modification then there is an equivalence
$$\Coh Y \stackrel{\sim}{\longrightarrow}  \Coh Y'$$
obtained as follows.
Let $\cY/\bD$ be a semistable smoothing of $Y$ over the disc. Then the elementary modification $Y \dashrightarrow Y'$ with exceptional curve $C \subset Y$ is induced by the (analytic) flop $\cY \dashrightarrow \cY'/\bD$ of the $(-1,-1)$-curve $C \subset \cY$.
By \cite{Bridgeland_flops} the flop induces an equivalence
$$ \Coh \cY \stackrel{\sim}{\longrightarrow}  \Coh \cY',$$
and this yields an equivalence
$$ \Coh Y \stackrel{\sim}{\longrightarrow} \Coh Y'$$
by \cite{Baranovsky-Pecharich}.
\end{remark}

\subsection{Integral affine compactification of $M$} 
\label{sec:compactification-of-M}

\subsubsection{Construction of the closed symplectic K3 surface $X$}\label{sec:construction-of-X}
Start with a type III K3 surface $Y$. 
The dual complex of $Y$, say $\Delta$, gives a triangulation of $S^2$. 
$\Delta$ has a vertex for each irreducible component $(Y_i, D_i)$ of the normalisation of $Y$. We use \cite{GHK1} to associate to $Y$ an integral affine $S^2$ with singularities. 
Let $S$ denote the subset of vertices of $\Delta$ which correspond to non-toric components. 
 The construction of \cite[$\S$1.2]{GHK1} endows the open star of each vertex with the structure of an integral affine manifold,  with a singularity at the vertex if if belongs to $S$. Here we take each edge of $\Delta$ to have integral affine length one, and each (open) face of $\Delta$ to be a standard integral affine triangle.
 The triple point condition implies that these integral affine structures are compatible on overlaps, and so we can glue the stars of different vertices. We call $B^\natural$ the resulting integral affine $S^2$ with singularities at $S$.

Next, we perturb $B^\natural$ so as to get an integral affine $S^2$ all of whose singularities are focus-focus, following \cite[pp.~107--108, Step~IV]{GHK1}. 
Choose a toric model for each $Y_i$. 
Near each point $s$ in the singular locus of $B^\natural$, we replace the singularity of the integral affine structure at $s \in B^\natural$  by singularities of focus-focus type: one for each interior blow up in the toric model for $Y$, with invariant direction given by the corresponding ray in the toric model. (This is usually an edge of $\Delta$, so long as the divisor for that ray does not come from a corner blow-up in the toric model.)
The exact position of the singularities along their invariant rays is immaterial, but we will assume that they are close to $s$.

Note that the construction above only depends on the complex deformation type of $Y$. 
In the case where $Y$ has split mixed Hodge structure, we are also in the setting of Section \ref{sec:hms-footballs}.  We let $(Y_i,D_i)$, for varying $i$, denote the irreducible components of the normalisation of $Y$ together with the inverse image of the singular locus of $Y$. Each $(Y_i,D_i)$ is a maximal log Calabi--Yau surface. 
For each $(Y_i, D_i)$ (together with a choice of toric model), we have an integral affine manifold with singularities $B_i$, as first introduced in Section \ref{sec:hms-recalls}. 
This is compatible with the integral affine manifold $B$ as follows: by construction, on the star of each vertex of $\Delta$, the integral affine manifold with singularities $B$ restricts to $B_i^\circ$, i.e.~the interior of $B_i$. 
(With our choice of cut-offs for $B_i$, this holds `on the nose', except that near the corners of the star of the vertex, $B_i$ -- and so also $B_i^\circ$ -- `spills out' by a very small amount, allowing it to have smooth boundary.)

Different choices of toric model for $(Y_i, D_i)$ (and different choices of positions for the focus-focus singularities when perturbing from $B^\natural$) give different integral affine manifolds $B$. As with the $B_i$, these are related by nodal slides and cut transfers (see \cite[Section 3.2]{HK2} for definitions).

Moreover, we have the following.

\begin{lemma}\label{lem:existence-of-X} Let $Y$ be any type III K3 surface, and let $B$ be the integral affine $S^2$ with singularities associated to its complex deformation type as above.
We have the following:

\begin{enumerate}
    \item[(i)]There is a symplectic manifold $(X, \omega)$ which is the total space of an almost-toric fibration $\pi_X: X \to B$, with focus-focus critical fibres at the singularities of $B$ and smooth otherwise, and which admits a Lagrangian section $L_0$. 
    \item[(ii)] Suppose that $(\widetilde{X}, \widetilde{\omega})$ is any other such symplectic manifold, with almost-toric fibration $\pi_{\widetilde{X}}: \widetilde{X} \to B$ and section $\widetilde{L}_0$. Then  $(X, \omega)$ and $(\widetilde{X}, \widetilde{\omega})$ are symplectomorphic, and the symplectomorphism between them can be taken to be fibred away from an arbitrary small neighbourhood of each of the nodal fibres, and to match $L_0$ and $\widetilde{L}_0$. 
    \item[(iii)] Suppose that $B'$ is an integral affine manifold with singularities obtained from $B^\natural$ by making different auxiliary choices, with associated almost-toric fibration $\pi_{X'}: X' \to B'$, and section $L'_0$. Then there is a symplectomorphism from $(X, \omega)$ to $(X', \omega')$, which can be taken to intertwine the diffeomorphism $B \xrightarrow{\simeq} B'$, given by nodal slides and cut transfers, away from an arbitrary neighbourhood of the nodes.
\end{enumerate}
\end{lemma}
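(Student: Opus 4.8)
The plan is to assemble $(X,\omega)$ from the standard local models of almost-toric geometry and to establish uniqueness by combining the global action-angle classification over the smooth locus with the focus-focus normal form near the nodes. For (i), write $B_{\mathrm{sm}} = B \setminus \{s_1,\dots,s_N\}$ for the complement of the focus-focus points and let $\Lambda^\vee \subset T^\ast B_{\mathrm{sm}}$ be the period lattice of the integral affine structure. I would take over $B_{\mathrm{sm}}$ the tautological symplectic torus bundle $X_{\mathrm{sm}} = T^\ast B_{\mathrm{sm}} / \Lambda^\vee$, which carries the zero-section as a Lagrangian section. Near each $s_j$ one glues in the standard nodal model $\mathcal{N}_j$, fibred over a disc $U_j \ni s_j$ with a single focus-focus fibre; since the monodromy of $X_{\mathrm{sm}}$ around $s_j$ is exactly the focus-focus shear, the restrictions of $X_{\mathrm{sm}}$ and $\mathcal{N}_j$ to the punctured disc $U_j^\ast$ are fibrewise symplectomorphic, and as $\mathcal{N}_j$ admits a Lagrangian section extending the zero-section the gluing can be made compatible with $L_0$ (see \cite{Evans} and \cite[Section 3.2]{HK2} for these local models). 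Because $B \simeq S^2$ is compact, the result is a closed symplectic four-manifold $(X,\omega)$ with the asserted fibration $\pi_X$ and section $L_0$.

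For (ii), I would first work over $B_{\mathrm{sm}}$. Both $X$ and $\widetilde{X}$ restrict there to Lagrangian torus fibrations inducing the integral affine structure of $B$, each equipped with a Lagrangian section. By the global action-angle theorem, a Lagrangian torus fibration carrying a Lagrangian section is fibrewise symplectomorphic to $T^\ast B_{\mathrm{sm}}/\Lambda^\vee$ by a map taking the section to the zero-section; applying this to both yields a fibred symplectomorphism $\Phi_0 \colon X|_{B_{\mathrm{sm}}} \xrightarrow{\sim} \widetilde{X}|_{B_{\mathrm{sm}}}$ with $\Phi_0(L_0)=\widetilde{L}_0$. Near each node the focus-focus normal form, with the common V\~{u} Ng\d{o}c invariant (see \cite[Section 6.6]{Evans}), supplies a symplectomorphism $\Phi_j \colon X|_{U_j} \xrightarrow{\sim} \widetilde{X}|_{U_j}$, which after adjustment by an automorphism of the local model may be taken to respect the sections. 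It then remains to patch $\Phi_0$ with the $\Phi_j$ across the overlaps $U_j^\ast$. Over each $U_j^\ast$ the two identifications differ by a symplectomorphism of the torus bundle that extends to a symplectomorphism of the full nodal neighbourhood isotopic to the identity; using such an isotopy — which near the vanishing cycle cannot be kept fibre-preserving — one patches the maps together. This is precisely why the conclusion asserts only that the resulting global symplectomorphism is fibred away from arbitrarily small neighbourhoods of the nodal fibres.

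For (iii), the auxiliary data — the toric models of the $(Y_i,D_i)$ and the positions of the focus-focus points along their invariant rays — affect $B$ only through nodal slides and cut transfers, as recalled in \cite[Section 3.2]{HK2}. Each such move is realised by a symplectomorphism of the total space covering the corresponding piecewise integral-affine identification of the bases away from the nodes: a nodal slide is the ambient effect of transporting a focus-focus fibre along its invariant line, and a cut transfer is the standard re-gluing across an invariant ray, each supported near the fibres it affects. Composing the symplectomorphisms induced by a sequence of such moves with the identification of part (ii) for any residual ambiguity produces the required symplectomorphism $(X,\omega)\to(X',\omega')$ intertwining the diffeomorphism $B \xrightarrow{\sim} B'$ away from neighbourhoods of the nodes, and carrying $L_0$ to $L_0'$.

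The main obstacle is the node-patching in (ii). Away from the focus-focus points the identifications are rigid — a fibre-preserving symplectomorphism covering the identity and fixing a Lagrangian section is the identity — so all the genuine content lies in the neighbourhoods of the nodal fibres, where the collapse of a vanishing cycle obstructs fibre-preserving isotopies and forces one to work with general symplectomorphisms of the local nodal model. Controlling these interpolations, and verifying they can be made compatibly at all nodes with a single choice of $\Phi_0$, is the technical heart of the argument; this is where one leans on the explicit structure of the Auroux–Symington nodal model together with the results recalled in \cite[Section 3.2]{HK2} and \cite{Evans}.
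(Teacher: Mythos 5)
Your parts (i) and (iii) are fine and are in substance the paper's own argument: the paper likewise reduces everything to the standard almost-toric toolkit (it assembles $X$ from a two-disc cover of $B$ using \cite[Corollary 5.4]{Symington} and \cite[Theorem 2.26]{Evans}, rather than your smooth-locus-plus-nodal-models gluing, and quotes the standard facts about nodal slides and cut transfers for (iii)). The genuine problem is in (ii), where you write that near each node ``the focus-focus normal form, with the common V\~{u} Ng\d{o}c invariant'' supplies a symplectomorphism $\Phi_j$. Nothing in the hypothesis of (ii) guarantees that $X$ and $\widetilde{X}$ have equal V\~{u} Ng\d{o}c (Taylor series) invariants at the nodes. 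This invariant is not determined by the induced integral affine structure on the base: the germ of an integral affine node is standard (any two are isomorphic as affine manifolds with singularities), whereas V\~{u} Ng\d{o}c's theorem says precisely that germs of focus-focus fibrations are classified up to \emph{fibred} symplectomorphism by an arbitrary formal power series. So two fibrations over the same $B$ can have different invariants, and in that case no fibred $\Phi_j$ exists near the nodal fibre, and your $\Phi_0$ --- which is fibred over all of $B_{\mathrm{sm}}$ --- provably cannot be extended or corrected across the nodal fibres by any fibre-preserving procedure. This is not an edge case: it is the entire content of the uniqueness statement, and it is exactly why the conclusion of (ii) only asserts a symplectomorphism that is fibred away from arbitrarily small neighbourhoods of the nodal fibres.

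The missing ingredient is the result the paper cites, \cite[Theorem 6.17]{Evans} (used together with \cite[Theorem 2.26]{Evans}): two almost-toric fibrations with the same base, the same nodes, and the same invariant directions have symplectomorphic total spaces, via a symplectomorphism fibred outside prescribed neighbourhoods of the nodal fibres; equivalently, the symplectomorphism type of a neighbourhood of a focus-focus fibre is independent of the Taylor series invariant even though its fibred symplectomorphism type is not. With that statement in hand, your patching scheme goes through; without it, your argument only proves uniqueness within the class of fibrations whose V\~{u} Ng\d{o}c invariants agree with those of $X$, which is strictly weaker than (ii). Relatedly, your final patching claim --- that the discrepancy over $U_j^\ast$ ``extends to a symplectomorphism of the full nodal neighbourhood isotopic to the identity'' --- is asserted rather than proved; once the invariants are allowed to differ, this extension statement is essentially equivalent to the cited theorem, so it cannot be taken for granted as the glue in the argument.
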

 
\begin{proof} 
This follows from standard results on almost-toric fibrations in \cite{Evans, Symington, Zung}. Existence of $(X, \omega)$ (together with $\pi_X$ and $L_0$) follows for instance by taking a cover of $B$ by two open discs (intersecting, say, in an annulus containing no singular points), and using \cite[Corollary 5.4]{Symington} together with e.g.~\cite[Theorem 2.26]{Evans} for the gluing. 
The only choice available is the V\~{u} Ng\d{o}c invariants of each singular fibre. In particular, our uniqueness statement follows e.g.~from \cite[Theorem 6.17 and Theorem 2.26]{Evans}. Finally, (iii) follows from standard results about nodal slides and cut transfers (also called mutations), see e.g.~\cite[Section 8]{Evans} for exposition. 
\end{proof}

\begin{lemma}
    Given a type III K3 surface $Y$, let $(X, \omega)$ be the symplectic manifold constructed above. Then $X$ is a diffeomorphic to a K3 surface.
\end{lemma}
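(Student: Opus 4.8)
The plan is to pin down the diffeomorphism type of $X$ through the classification of closed almost toric symplectic four-manifolds (Leung--Symington, building on \cite{Symington, Evans}), for which the relevant data are the base, the number of focus--focus fibres, and the fundamental group. First I would record the structural facts that come for free from the construction. Since $B \simeq S^2$ is closed and $\pi_X \colon X \to B$ is a proper Lagrangian torus fibration whose only singular fibres are of focus--focus (nodal) type, $X$ is a closed, oriented, smooth four-manifold. It is simply connected: the base is simply connected and $\pi_X$ admits the Lagrangian section $L_0$, so $\pi_1(X)$ is the quotient of $\pi_1(T^2) = \bZ^2$ by the subgroup generated by the vanishing cycles of the nodal fibres, and in our situation these vanishing cycles run through all the toric ray directions and hence generate $H_1(T^2;\bZ)$. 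Since every smooth fibre is a $2$-torus ($\chi = 0$) and every nodal fibre is a pinched torus ($\chi = 1$), the Euler characteristic $\chi(X)$ equals the number of focus--focus singular fibres of $\pi_X$.

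The main point is then to show that this number is exactly $24$, and here I would argue combinatorially on the base. By construction the focus--focus singularities attached to a component $(Y_i,D_i)$ are in bijection with the interior (non-toric) blow-ups of a chosen toric model, so their number $m_i$ is governed by the self-intersections of $D_i$. Using the toric identity $\sum_l \bar D_{il}^2 = 12 - 3N_i$ for the boundary of the smooth complete toric surface $\bar Y_i$ (with $N_i$ boundary components), together with the effect of the corner and interior blow-ups of the toric model on self-intersection numbers, one obtains the clean formula $m_i = 12 - 3k_i - \sum_j D_{ij}^2$, where $k_i$ is the number of components of $D_i$. Summing over all components $i$, the corner-blow-up contributions cancel, and writing $V$ for the number of vertices and $E$ for the number of edges of the dual complex $\Delta$ one finds $\sum_i m_i = 12V - 3\sum_i k_i - \sum_{i,j} D_{ij}^2$. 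The triple point formula gives $\sum_{i,j} D_{ij}^2 = -2E$ (each double curve contributes $D_{ij}^2 + D_{ji}^2 = -2$), while $\sum_i k_i = 2E$, so $\sum_i m_i = 12V - 6E + 2E = 12V - 4E$. Finally, since $\Delta$ is a triangulation of $S^2$ we have $3F = 2E$ and $V - E + F = 2$, hence $E = 3V - 6$ and $\sum_i m_i = 12V - 4(3V-6) = 24$.

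With the node count in hand, I would finish by invoking the classification of almost toric four-manifolds: a closed, simply connected almost toric symplectic four-manifold fibring over $S^2$ with exactly $24$ nodal fibres is diffeomorphic to the K3 surface. This singles out K3 rather than, say, the Enriques surface, which fibres over $S^2$ with only $12$ nodes and has $\pi_1 = \bZ/2$. The conclusion is consistent with the uniqueness statement in Lemma \ref{lem:existence-of-X}, which already shows $X$ is well-defined up to symplectomorphism independently of the auxiliary choices.

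I expect the main obstacle to be the node-counting step, and in particular the careful bookkeeping of how corner versus interior blow-ups in the chosen toric models affect boundary self-intersections; the pleasant feature is that the corner contributions cancel in the global sum, leaving the purely topological count $24$. I would also emphasise that appealing to the almost toric classification is essential here: the homeomorphism-type invariants alone (simple connectivity, spin, and intersection form $2(-E_8)\oplus 3H$) would only yield a homeomorphism to K3 via Freedman's theorem and cannot exclude exotic smooth structures, whereas the almost toric structure determines the diffeomorphism type outright.
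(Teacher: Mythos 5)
Your argument reaches the right conclusion and is correct in its main substance, but it takes a genuinely different route from the paper, and one of its steps is not actually justified as written.

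The paper's proof is two citations: the count of $24$ nodal fibres comes from the integral affine Gauss--Bonnet theorem of Kontsevich--Soibelman (valid for \emph{any} almost toric fibration over $S^2$ with only focus--focus singularities), and the identification of the smooth manifold comes from the Moishezon--Livne classification of elliptic Lefschetz fibrations over $S^2$, whose total spaces are determined by the number of singular fibres. You replace both inputs. Your combinatorial count is correct, and I verified the bookkeeping: passing from $(\bar{Y}_i,\bar{D}_i)$ to $(Y_i,D_i)$, each interior blow-up lowers the sum of boundary self-intersections by $1$ while each corner blow-up lowers it by $3$ and raises the number of boundary components by $1$, so the toric identity $\sum_l \bar{D}_{il}^2 = 12-3N_i$ descends to $m_i = 12 - 3k_i - \sum_j D_{ij}^2$ with the corner contributions cancelling; combined with the triple point formula, $\sum_i k_i = 2E$, and Euler's formula for a triangulation of $S^2$, this gives $\sum_i m_i = 24$. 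This buys a self-contained verification, tied directly to the triple point formula, of what the paper gets by citation; conversely, the paper's route needs nothing about $Y$ beyond the fact that the base is a sphere. Your final input, the Leung--Symington classification of closed almost toric four-manifolds, is a legitimate alternative to Moishezon--Livne (and is in fact a close cousin: their treatment of the $S^2$ case runs through a Lefschetz-type argument), and your closing remark is apt --- homeomorphism invariants plus Freedman would not pin down the smooth structure.

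Two corrections. First, your simple connectivity argument has a genuine gap: the vanishing cycles are the invariant directions only of those toric rays carrying \emph{interior} blow-ups, not ``all the toric ray directions'', and for a single component these can all be parallel (e.g.\ a toric model of $\bF_0$ with interior blow-ups only on the two opposite horizontal rays). Moreover, even a collection of non-parallel primitive classes may generate only a finite-index sublattice of $H_1(T^2;\bZ)$, which would leave a finite but possibly nontrivial $\pi_1(X)$; ruling this out needs a real argument, which you do not supply. Fortunately the gap is inessential: in Leung--Symington's actual theorem the hypothesis that the base is $S^2$ already forces the total space to be diffeomorphic to K3, with no simple connectivity (or node count) assumption, so the clean fix is to invoke their result in that form and delete the $\pi_1$ discussion. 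Second, the Enriques surface does not appear over $S^2$ in their classification; it is the almost toric manifold with base $\bR\bP^2$ (with $12$ nodes). The misstatement is harmless since you exclude that case anyway, but it should not stand as written.
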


\begin{proof}
Any almost-toric fibration over the sphere with only focus-focus singularities automatically has 24 singular fibres: see for instance \cite[Theorem 2]{Kontsevich-Soibelman} on the `integral affine Gauss--Bonnet' formula. 
This means that smoothly, $X$ is the total space of a torus fibration over sphere with Leschetz singularities. By a classical result of Moishezon and Livne \cite{Moishezon}, the diffeomorphism type of any such space is determined by the number of singularities (a multiple of 12); here $X$ must be a K3 surface. 
    \end{proof}

\subsubsection{Compactifying $M$}
We will need the following notion.

\begin{definition}\label{def:equivalent-Liouville-domains}
    Let $(N, \theta_N)$ and $(N', \theta_N')$ be Liouville domains. We say that they are Liouville equivalent if, possibly after adding a finite cylindrical collar to $N'$, there exists a codimension zero proper symplectic embedding $\phi: N' \hookrightarrow \hat{N}$, where $\hat{N}$ is the cylindrical completion of $N$, such that $\phi(\partial N')$ is a hypersurface in $\hat{N} \backslash N$, transverse to the Liouville flow $Z_{\hat{N}}$; and $\phi^\ast \theta_{\hat{N}} = \theta_{N'} + df$, where the function $f$ is not assumed to have support in the interior of $N'$ (formally, it is defined on a small open thickening of $N'$).
\end{definition}
This is clearly a reflexive condition. Also, the condition on the pull-back of the Liouville form is automatically satisfied if $H^1(N; \bR) = 0$ (or equivalently, $H^1(N'; \bR) = 0$). 

\begin{lemma}
    Suppose that $(N, \theta_N)$ and $(N', \theta_N')$ are Liouville equivalent domains, with symplectic embedding $\phi: N' \hookrightarrow \hat{N}$. Then $\phi$ induces a quasi-isomorphism $\cW(N) \simeq \cW(N')$. 
\end{lemma}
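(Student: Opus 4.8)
The plan is to factor the equivalence induced by $\phi$ through the intermediate Liouville subdomain $R := \phi(N') \subset \hat N$, reducing the statement to two standard invariance properties of the wrapped Fukaya category: invariance under the choice of Liouville domain filling a fixed completion, and invariance under changing the Liouville primitive by a (possibly non-compactly-supported) exact form. First I would record the geometric consequences of Definition \ref{def:equivalent-Liouville-domains}. Since $\phi(\partial N') = \partial R$ is transverse to the Liouville vector field $Z_{\hat N}$ and lies in $\hat N \setminus N$, the domain $N$ — and hence the skeleton of $\hat N$ — is contained in the interior of $R$, and $Z_{\hat N}$ is outward-pointing along $\partial R$. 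Thus $(R, \theta_{\hat N}|_R)$ is itself a Liouville domain, the region $R \setminus \mathrm{int}(N)$ is a trivial Liouville cobordism $\partial N \times [0,1]$ swept out by the flow of $Z_{\hat N}$, and the completion of $(R, \theta_{\hat N}|_R)$ is canonically identified with $(\hat N, \theta_{\hat N})$.

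For the first equivalence $\cW(N) \simeq \cW(R)$, I would invoke the fact that $\cW$ is an invariant of the Liouville manifold obtained by completion: the inclusion $N \hookrightarrow R$ is a deformation-equivalence of Liouville domains in the sense of \cite{GPS1} (the two share the same completion $\hat N$, with complement a trivial cobordism), so the induced functor $\cW(N) \to \cW(R)$ is a quasi-isomorphism. Concretely, exact conical Lagrangians in $N$ and in $R$ correspond under the Liouville flow, and the wrapping Hamiltonians agree outside the trivial collar.

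The remaining and main point is to show that $\phi$ induces $\cW(N') \simeq \cW(R) = \cW(N)$. By hypothesis $\phi$ is a symplectomorphism onto its image with $\phi^\ast \theta_{\hat N} - \theta_{N'} = df$ exact; after extending $\phi$ by the Liouville flow to an exact symplectomorphism of completions (possible precisely because $\phi(\partial N')$ is transverse to $Z_{\hat N}$, which also ensures that $\theta_{N'}+df$ is again of Liouville type with $\partial N'$ of contact type), this is exactly a symplectomorphism intertwining the two Liouville forms up to the exact term $df$. I expect the crux to be that $f$ is not compactly supported, so that $df$ is a genuine non-compactly-supported change of Liouville primitive that could a priori affect the wrapping and the $\bZ$-gradings in the cylindrical ends. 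This is precisely the situation analysed in \cite[Section 2.2]{Keating-Smith}: a symplectomorphism $\sigma$ with $\sigma^\ast \theta = \theta + df$, with no support hypothesis on $f$, nonetheless acts on the wrapped Fukaya category by a quasi-isomorphism, well-defined up to an overall shift. Applying that analysis to $\phi$ yields the quasi-isomorphism $\cW(N') \simeq \cW(R)$, and composing with $\cW(R) \simeq \cW(N)$ from the previous paragraph gives the equivalence $\cW(N) \simeq \cW(N')$ induced by $\phi$, as claimed.
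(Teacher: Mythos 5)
Your proposal is correct and takes essentially the same approach as the paper: the paper's (very terse) proof likewise treats the case of compactly supported $f$ as standard invariance of $\cW$ and then invokes \cite[Lemma 2.2]{Keating-Smith} to remove the support hypothesis, exactly as you do. Your factorization through the intermediate domain $R = \phi(N')$ just spells out what the paper summarizes as ``standard''.
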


\begin{proof}
We use the notation of Definition \ref{def:equivalent-Liouville-domains}.
    Under the additional assumption that the smooth function $f$ has support in the interior of $N$, this is standard. The condition on $f$ can be weakened by using \cite[Lemma 2.2]{Keating-Smith}.
\end{proof}

We now return to our construction.

\begin{proposition}\label{prop:M-compactifies-to-type-III-K3}
Let $Y$ be a type III K3 surface with split mixed Hodge structure. Let $(X, \omega)$ be the symplectic manifold of Construction \ref{sec:construction-of-X}, and let $(M, \theta_M)$ be the Weinstein domain mirror to $Y$ which we studied in Section \ref{sec:hms-footballs}, first introduced in Definition \ref{def:M}. 
Fix choices of toric models for the components of $Y$, and let $\pi_X: X \to B$ be the associated almost-toric fibration on $X$, and $\pi:M \to S^2$ be the singular Lagrangian fibration on $M$.
Let $\Gamma \subset B$ the trivalent graph with vertices at the barycenters of faces of $\Delta$, and straight-line edges.

Then there exists a smooth symplectic surface $\Sigma \subset X$ such that:
\begin{enumerate}
    \item[(i)] We have $[\Sigma] = [\omega] \in H^2(X; \bR)$. This implies that for $\nu \Sigma$ a tubular neighbourhood of $\Sigma$, $X \backslash ( \nu \Sigma )$ is a Liouville domain, with Liouville form, say, $\theta_{X \backslash (\nu \Sigma)}$. (See e.g.~\cite{Diogo-Lisi} for a careful overview of such Liouville forms.)

    \item[(ii)]  $(M, \theta_M)$ and $(X \backslash (\nu \Sigma), \theta_{X \backslash (\nu \Sigma) })$ are Liouville equivalent domains.

    \item[(iii)] The surface $\Sigma$ projects to the graph $\Gamma$ thickened at its vertices (for an arbitrarily small thickening). 

    \item[(iv)]  The Lagrangian torus fibration $\pi_X: X \to B$ extends the Lagrangian torus fibration $\pi: M \to S^2$, compatibly with the inclusions of the $B_i$ in $B$. Moreover, under the identification in (ii), the reference Lagrangian section $L_0 \subset M$ for $\pi$ gets mapped to the reference Lagrangian section $L_0 \subset X$ for $\pi_X$.
\end{enumerate}
\end{proposition}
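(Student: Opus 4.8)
The plan is to realise both $M$ and $X\setminus\nu\Sigma$ as built from the \emph{same} almost-toric data over $B$ and to compare them fibrewise, region by region. Over the complement of the ribbon $R_D$, i.e.\ over the interiors $B_i^\circ$ of the discs around the vertices of $\Delta$, the almost-toric fibration $\pi_X$ restricts (by the compatibility of integral affine structures recorded in \S\ref{sec:compactification-of-M}, together with Lemma~\ref{lem:fibration-on-M_i}(ii)) to exactly the total space of $\pi_{U_i}\colon M_{U_i}\to B_i$ used in Section~\ref{sec:hms-footballs}; there is nothing to remove and $X$ already agrees with $M$. All of the work therefore concentrates over $R_D$, equivalently over a thickening of $\Gamma$, and there I construct $\Sigma$ fibrewise, analogously to the cycle $\Sigma^{\vee}$ of Lemma~\ref{lem:characterisation-of-k}: over the interior of each edge of $\Gamma$ (a double curve $D_{ij}$) I take the cylinder swept out by the vanishing cycle $S^1$ inside the $T^2$-fibre, and over each trivalent vertex (a triple point) I take the $2$-chain in the fibre torus whose boundary is the sum of the three incident vanishing cycles, filling in over a small thickening of the vertex so that the three edge-cylinders are capped off. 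Since the edge of $\Gamma$ is transverse to the invariant (vanishing-cycle) direction, each cylinder is spanned by one fibre direction and the dual base direction, so $\omega|_\Sigma\neq0$; after smoothing the matchings at the vertices, $\Sigma$ is a smooth symplectic surface projecting exactly to $\Gamma$ thickened at its vertices, which is (iii).

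With $\Sigma$ in hand, the comparison over $R_D$ is fibrewise. Over an edge interior, $\Sigma$ meets the fibre in the vanishing cycle, so the complement $T^2\setminus\nu(S^1)\cong\bR\times S^1$ matches the fibre of $M_{\bP^1,e}$; over a thickened vertex the complement reproduces the $\bR^2$-fibred model $T^\ast D^2_v$. Assembling these with the decorated-graph and cyclic-ordering data — which under the split-MHS hypothesis (Lemma~\ref{lem:split_MHS_CYncs}) is simultaneously the gluing data for $R_D$ and for $Y=\bigcup_iY_i$ — identifies $X\setminus\nu\Sigma$ over $R_D$ with $M_D$ compatibly with $\pi_D$, and over the collars $B_i[-\epsilon_1,0]$ with the model $S_i\times T^\ast[0,1]$ of Lemma~\ref{lem:fibration-on-M_i}(i). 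Patching with the direct identification over the $B_i^\circ$ yields a diffeomorphism $M\xrightarrow{\sim}X\setminus\nu\Sigma$ intertwining $\pi$ and $\pi_X$, which is the fibration statement of (iv). The reference sections match because on both sides $L_0$ is the same distinguished almost-toric section (the $L_0$ over each $B_i$ glued to the zero-section over $\Gamma$), which by construction may be taken disjoint from the vanishing-cycle cylinders, hence from $\Sigma$; this is the last clause of (iv).

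To upgrade the diffeomorphism to a Liouville equivalence (ii), I compare primitives. Assuming (i), $\omega$ is exact on $X\setminus\nu\Sigma$ and carries a Liouville form $\theta_{X\setminus\nu\Sigma}$ conical outward along $\partial(\nu\Sigma)$ in the sense of \cite{Diogo-Lisi}; on each local model this can be normalised to the standard almost-toric primitive $\sum_i p_i\,dq_i$, which is exactly the primitive used to build $\theta_M$ in Section~\ref{sec:hms-footballs}. Hence the identification of the previous paragraph pulls $\theta_{X\setminus\nu\Sigma}$ back to $\theta_M+df$ for some (not necessarily compactly supported) $f$, and since $H^1(M;\bR)=0$ the precise $f$ is immaterial; by the remark following Definition~\ref{def:equivalent-Liouville-domains} (invoking \cite[Lemma~2.2]{Keating-Smith}) this is precisely a Liouville equivalence, giving (ii).

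It remains to prove $[\Sigma]=[\omega]$ in $H^2(X;\bR)$, which I would verify by pairing $PD[\Sigma]$ and $[\omega]$ against a basis of $H_2(X;\bR)$. On the Lagrangian classes produced by the fibration — a generic fibre $F$, the section class $[L_0]$, and the matching spheres joining nodes within a single $B_i$ — both pairings vanish, since these classes are $\omega$-isotropic and can be made disjoint from $\Sigma$. On the complementary, transverse classes (thimble-type cycles running from $L_0$ across $\Gamma$ and meeting $\Sigma$), the $\omega$-area equals the integral-affine length traversed, and by the normalisation of the affine structure on $B$ (edges of $\Delta$ of affine length one, standard affine triangles) together with the vanishing-cycle construction of $\Sigma$ this equals the intersection number with $\Sigma$; matching these gives (i), and then $X\setminus\nu\Sigma$ is genuinely Liouville, closing the small circularity in the previous paragraph. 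The main obstacle is exactly this bundle of compatibilities over $R_D$: one must choose the fibrewise local models for $\Sigma$, for $\nu\Sigma$, and for the Liouville form \emph{simultaneously} so that, across edges and especially across the trivalent vertices (where the cyclic ordering and the three vanishing cycles have to reproduce the sectorial pushout description of $M_D$), the complement glues to $\theta_M$ up to $df$ while at the same time $\Sigma$ stays symplectic and lands in the class $[\omega]$. Arranging all three at once is where essentially all the difficulty lies.
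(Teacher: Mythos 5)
There are two genuine gaps in your proposal, both concentrated at the trivalent vertices of $\Gamma$, and they are exactly the points where the paper's proof does its real work.

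First, your construction of $\Sigma$ at the vertices does not produce a symplectic surface. Your edge-cylinders are fine: a cylinder over an edge of $\Gamma$ with linear fibres in the vanishing-cycle class of the transversally crossed edge of $\Delta$ is indeed symplectic, since that class does not annihilate the direction of the edge of $\Gamma$. But your cap at a vertex is a $2$-chain contained in a single torus fibre, and any such chain is isotropic ($\omega$ vanishes identically on a Lagrangian fibre), so the resulting piecewise surface is maximally degenerate, not merely non-smooth, near the vertex. ``Smoothing the matchings'' is therefore not a soft perturbation argument; producing a symplectic cap whose amoeba stays near the vertex is precisely the nontrivial content of the tropical-vertex local model. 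You are imitating the cycle $\Sigma^{\vee}$ of Lemma~\ref{lem:characterisation-of-k}, but that object is only a topological $2$-cycle on the B-side and was never claimed to be a symplectic submanifold. The paper instead takes the honest pair of pants: it identifies $\pi_X^{-1}(\Delta_v^\circ)\cong(\bC^\ast)^2$, takes the line $\{X+Y+Z=0\}\subset\bP^2$, and applies a Hamiltonian isotopy making it visible and linear over the legs; symplecticity is then automatic because the local model is an actual (perturbed) complex curve.

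Second, your fibrewise identification of $X\setminus\nu\Sigma$ with $M_D$ over thickened vertices is false. Near a vertex, $\Sigma$ is a pair of pants whose amoeba is genuinely two-dimensional there, so over an interior point of that region the fibre torus meets $\Sigma$ in a finite set of points; the fibre of $X\setminus\nu\Sigma$ is a torus minus discs, not the plane $\bR^2$ which is the fibre of $\pi_D$ over $D^2_v$. Hence there is no fibre-preserving matching over the vertices, and your Liouville-form comparison via the action-angle primitive $\sum_i p_i\,dq_i$ is meaningless there, since no action-angle description of the complement exists in that region. The paper's proof of (ii) is structured to avoid exactly this: it uses the Weinstein handlebody decomposition of $M$ from Section~\ref{sec:handlebody-decomposition-for-M} to build a symplectic embedding $\iota\colon M\hookrightarrow X$ disjoint from $\Sigma$ (matching the annuli $A_{ij}$ with visible Lagrangian annuli $A_{ij}^X$ and $L_0$ with $L_0$, which also gives (iv)), and then compares $\iota(M)$ with $X\setminus\nu\Sigma$ \emph{locally} by identifying the complement of the pair of pants, $N^{\loc}=(\bC^\ast)^2\setminus\Sigma_v$, with its known Weinstein handlebody structure (cf.\ \cite{Gammage-Nadler, Gammage-Shende, Zhou}): disjoint disc cotangent bundles of tori, three generalised handles $D^\ast(S^1\times[0,1])$, and one $2$-handle with core $L^{\loc}$. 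This local-model step is the core of the argument and is absent from your proposal. (Your treatment of (i) by pairing against a basis of $H_2(X)$ is a reasonable alternative sketch, but it too leans on unproved area-versus-intersection identities; in the paper the identity $\PD[\Sigma]=[\omega]$ is tied to the radiance obstruction of $B$, made precise in Lemma~\ref{lem:characterisation-of-k-divisibility-of-Sigma}.)
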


Note that  $H^1(M; \bR) = 0$, which simplifies the requirements for Liouville equivalence. 

\begin{proof}
The integral affine manifold $B$ has two different interesting covers by integral affine balls:

\begin{enumerate}
    \item[(i)] Using balls centred at each of the vertices of the triangulation, we have $B = \bigcup_i B_i (-\delta)$, where $B_i(-\delta)$ denotes the interior of $B_i[-\delta]$ (to use the notation of Section \ref{sec:hms-footballs}). We arrange for it to contain all the singular points emanating from the vertex at which it's centred, but none other. See Figure \ref{fig:cover-of-B-and-Gamma}.
    \item[(ii)] For each face of $\Delta$, let $\Delta_v \subset B$ be the corresponding closed subset of $B$ (where the index $v$ is in $Y^{[2]}$), and let $\Delta_v(\delta')$ be a small open thickening of it in $B$ (with no additional critical singularities). Then $B = \bigcup_v \Delta_v(\delta')$ is also an open cover.
\end{enumerate}

\begin{figure}\label{fig:cover-of-B-and-Gamma}
    \centering
    \includegraphics[width=0.7\linewidth]{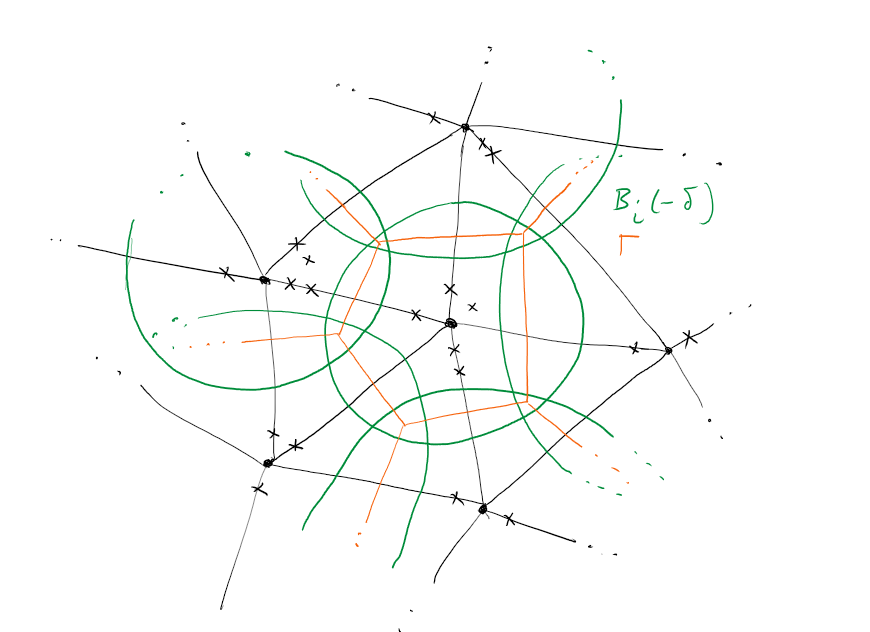}
    \caption{The triangulation $\Delta$ (in black) with the singularities of $B$, the covering sets $B_i(-\delta)$ (in green), and $\Gamma \subset B$ (in orange).}
   \end{figure}

The curve $\Sigma$ can be constructed by working with the cover $\bigcup_v \Delta_v (\delta')$ and using standard techniques from tropical and (almost-)toric geometry, \`a la Mikhalkin--Viro. We spell this out in order to introduce relevant notation. 
Draw a tropical line in each simplex of $\Delta$ with trivalent vertex at the barycenter and legs intersecting the edges of the triangle at the midpoints. The resulting graph is $\Gamma$. Let $\Gamma_v$ be its restriction to the simplex indexed by $v \in Y^{[2]}$. 

Let $\Delta_v^\circ$ be the interior of $\Delta_v$. 
Assume first that there are no nodes in $\Delta_v^\circ$. Standard toric geometry gives an identification $\pi_X^{-1} (\Delta_v^\circ) \cong (\bC^\ast)^2$, where $ (\bC^\ast)^2$ is equipped with the restriction of the Fubini-Study form and we're free to choose to take $L_0$ to $(\bR_{>0})^2$. 
Let $\Delta_{\bP^2}$ be a toric polytope for $\bP^2$, and $\Gamma_{\bP^1} \subset \Delta_{\bP^2}$ the graph for the standard line in $\bP^2$, say $\bP^1_{\text{std}} = \{ X + Y + Z = 0\}$ (which is disjoint from the image of $L_0$).  
In the symplectic category, there is a small Hamiltonian isotopy of $\bP^2$, supported on a neighbourhood of $\bP^1_{\text{std}}$ and fixing the toric anticanonical divisor in $\bP^2$ pointwise, which takes $\bP^1_{\text{std}}$ to a symplectic curve $\Sigma_{\bP^2}$  with the following properties.
In a neighbourhood of the boundary of $\Delta_{\bP^2}$, $\Sigma_{\bP^2}$ is given by a `visible' symplectic surface (in the sense of \cite[Section 7.1]{Symington}) over each of the three legs of $\Gamma_{\bP^1}$; moreover, we can take these to be linear (in terms of the affine linear structures on toric fibres), and diagonally opposite, on each fibre, to the parallel linear $S^1$ containing the point on $L_0$. 

Deleting the toric anti-canonical divisor in $\bP^2$, we get a symplectic pair-of-pants in $(\bC^\ast)^2$, and hence in $\pi_X^{-1} (\Delta_v^\circ)$, say $\Sigma_v$. This is (almost) fibred over a graph $\Gamma_v \subset \Delta_v$, with a small smearing at the barycenter  of $\Delta_v$.
If there are nodes  $\Delta_v^\circ$, pushing them (along their invariant rays) into the corner of $\Delta_v$ instead gives a degeneration of  $\pi_X^{-1} (\Delta_v^\circ)$ to $(\bC^\ast)^2$. 
As this happens away from a neighbourhood of $\Gamma_{\bP^1}$, we can similarly get a symplectic pair-of-pants $\Sigma_v  \subset \pi_X^{-1} (\Delta_v^\circ)$, fibred over a trivalent graph $\Gamma_v \subset \Delta_v$. 
Extending linearly over each of the legs, we get pairs-of-pants $\Sigma_v$ (for all $v \in Y^{[2]}$) which can be glued together to get a symplectic surface $\Sigma$. 
Let $\Gamma \subset B$ be the trivalent graph given by patching together the $\Gamma_v$. 
The surface $\Sigma$ is fibred over $\Gamma$ away from neighbourhoods of the vertices of $\Gamma$ (which can be taken to be arbitrarily small).
Also, note that $\Sigma$ is independent of the auxiliary choices made above, including choices of toric models for the $(Y_i, D_i)$, up to (small) symplectic isotopy. (Recall that in this dimension, a tubular neighbourhood of a closed smooth symplectic surface in $X$ is determined by its genus, its symplectic area and its self-intersection number.)

We want to show that $X \backslash (\nu \Sigma)$ and $M$ are Liouville equivalent. We'll use the Weinstein handlebody decomposition of $M$ from Section \ref{sec:handlebody-decomposition-for-M} to
construct a suitable symplectic embedding from $M$ (with an appropriate `finite height' Liouville cut-off) to $X \backslash (\nu \Sigma)$. 
We start with our given embeddings, compatible with Lagrangian fibrations:
\begin{equation}\label{eq:M-to-X-initial-embeddings}
    \xymatrix{
X  \ar[d]_{\pi_X} &  \ar@{_{(}->}[l] \, \, \bigsqcup_i M_{U_i} \, \, \ar[d]_{\pi_{U_i}}  \ar@{^{(}->}[r]& M \ar[d]_{\pi} \\
B & \ar@{_{(}->}[l] \, \, \bigsqcup_i  B_i[-\delta] \, \, \ar@{^{(}->}[r] & S^2
}
\end{equation}
For each $i, j$ such that $Y_i \cap Y_j \neq \emptyset$, let $r_{ij}^X \subset B $ be the segment joining $p_i$ and $p_j$, and let $A_{ij}^X$ be the linear Lagrangian annulus above $r_{ij}^X$ containing the point in $L_0$ in each fibre.
Then, using the description of Section \ref{sec:handlebody-decomposition-for-M}, we extend the embeddings in Equation \ref{eq:M-to-X-initial-embeddings} to get an embedding $M^\circ \hookrightarrow X$, mapping $A_{ij}$ to $A_{ij}^X$. 
Finally, by taking the distinguished Lagrangians $L_0 \subset M$ to $L_0 \subset N$ in the obvious way, this extends to a symplectic embedding $\iota: M \subset X$. 
Given the choices we made when constructing $\Sigma$ (in particular, the fact that it is disjoint from $L_0$), by taking our Weinstein handles to be sufficiently thin, we may assume that $\iota(M) \cap \Sigma = \emptyset$. 

It remains to compare $\iota(M)$ with $X \backslash ( \nu \Sigma )$. 
First, notice that over neighbourhoods of each intersection point $\Gamma \cap r_{ij}^X$, our explicit choices for $\Sigma$, and for the linear annuli $A_{ij}^X$, mean that we can readily get $\iota(M)$ and $X \backslash (\nu \Sigma )$ to match up
(including, up to Liouville deformation their outward-pointing Liouville forms). 
This reduces the question to a local problem, in the neighbourhood of each $\Sigma_v \subset \Delta_v(\delta')$. 

Let $N^\loc$ be the space $(\bC^\ast)^2 \backslash (  \Sigma_{\bP^2} \cap (\bC^\ast)^2) $ (identified using our Hamiltonian isotopy with $(\bC^\ast)^2 \backslash ( \bP^1_{\text{trop}} \cap (\bC^\ast)^2) $),
i.e.~the complement of the pair-of-pairs in $(\bC^\ast)^2 \subset \bP^2$. Experts will recognise this as a central toy example for many flavours of mirror symmetry (including through its natural identification with the generalised pair-of-pants one dimension up, i.e.~$\{ x + y + z = 1\} \subset (\bC^\ast)^3$). As such, it
has a well-studied Weinstein handlebody decomposition (playing a role in for instance in \cite{Gammage-Nadler, Gammage-Shende}, with a very careful treatment in \cite{Zhou}). 
This can be described as follows: let $p_{1}^{\loc}, p_2^\loc$ and $p_3^\loc$ be the barycentres of the three components of $\Delta_{\bP^2} \backslash \Gamma_{\text{trop}}$; let $r_{ij}^\loc$ be the segment joining $p_i^\loc$ and $p_j^\loc$; and let $V^\loc \subset \Delta_{\bP^2}$ be the subset enclosed by the three $r_{ij}^\loc$. For each $i$ and $j$, let $T_i^\loc$ be the toric fibre above $p_i^\loc$, let $A_{ij}^\loc$ be the linear Lagrangian annulus above $r_{ij}^\loc$ which contains the totally real point in each fibre, and let $L^\loc := \pi_{\bP^2}^{-1}(V) \cap (\bR_{\geq 0}^2)$. 
Then $N^\loc$ can be described by (generalised) Weinstein handlebody attachments. Start with $\sqcup_i D^\ast T_i^\loc$. 
Then, glue on three generalised Weinstein handles of the form $D^\ast (S^1 \times [0,1])$, with attaching Legendrians $A_{ij}^\loc \cap ( D^\ast T_i^\loc \sqcup D^\ast T_j^\loc)$. 
Taking their cores to the $A_{ij}^\loc$, we get an embedding of the resulting space, $(N^{\loc})^{\circ}$, into $N^\loc$.
Finally, glue on a Weinstein 2-handle, with attaching Legendrian $L^\loc \cap (N^{\loc})^{\circ}$; making the core of this handle to $L^\loc$ to extend our map $(N^\loc)^\circ \hookrightarrow N^\loc$, we get the desired description of $N^\loc$ (up to Weinstein deformation equivalence). 

Returning to our comparison of $M$ and $X \backslash \nu \Sigma$, the Weinstein handlebody $N^\loc$ readily gives (local) Liouville equivalences between $M$ and $X \backslash \nu \Sigma$, on patches indexed by the vertices $v \in Y^{[2]}$, with the obvious identifications suggested by our notational choices. In particular, this completes our proof.
\end{proof}

\begin{definition}\label{def:compactifying-mirror-K3 surface}
    Let $Y$ be a type III K3 surface with split mixed Hodge structure, and let  $(X, \omega; \Sigma)$ be as constructed in Proposition \ref{prop:M-compactifies-to-type-III-K3}. Then we say that $(X, \omega; \Sigma)$ is the mirror compactifying K3 surface with divisor for $Y$. 
    \end{definition}

\begin{lemma} \label{lem:characterisation-of-k-divisibility-of-Sigma}
Let $Y$ be a type III K3 surface with split mixed Hodge structure, and $(X, \omega; \Sigma)$ its compactifying mirror K3 surface with divisor. Then $k(Y)$, the order of $H^3(Y; \bZ)$, is also equal to the divisibility of $\Sigma$ in $H^2(X; \bZ)$.    
\end{lemma}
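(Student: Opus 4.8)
The plan is to read off both invariants from a single tropical $1$-cycle on the trivalent graph $\Gamma$, and to identify them through the SYZ duality relating the Lagrangian fibration $\pi_X \colon X \to B$ to the dual torus fibration $g^{\vee} \colon \cY_t \to B^{\vee}$ used in the proof of Lemma~\ref{lem:characterisation-of-k}. First I would reduce the divisibility of $[\Sigma]$ in the full K3 lattice to a divisibility in a unimodular sublattice. Write $\gamma = [\pi_X^{-1}(b)] \in H_2(X;\bZ)$ for the class of a smooth torus fibre and $s = [L_0]$ for the Lagrangian section. Since $L_0$ is a section, $\gamma \cdot s = 1$, so $\gamma$ is primitive and $U' := \langle \gamma, s \rangle$ is unimodular; hence $H^2(X;\bZ) = U' \oplus {U'}^{\perp}$ with ${U'}^{\perp}$ unimodular and ${U'}^{\perp} \cong \gamma^{\perp}/\langle\gamma\rangle$. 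Because $\Sigma$ projects into an arbitrarily small thickening of $\Gamma$ (Proposition~\ref{prop:M-compactifies-to-type-III-K3}(iii)), a generic fibre misses $\Sigma$, so $[\Sigma]\cdot\gamma = 0$; and because $L_0$ is Lagrangian while $[\Sigma] = [\omega]$ in $H^2(X;\bR)$ (Proposition~\ref{prop:M-compactifies-to-type-III-K3}(i)), we get $[\Sigma]\cdot s = \langle[\omega],[L_0]\rangle = \int_{L_0}\omega = 0$. Thus $[\Sigma] \in {U'}^{\perp}$, and its divisibility in $H^2(X;\bZ)$ equals its divisibility in the unimodular lattice $\gamma^{\perp}/\langle\gamma\rangle$ --- precisely the kind of quantity computing $k$ on the $B$-side.

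Next I would set up the two cycles over $\Gamma$ in parallel. On the $B$-side, Lemma~\ref{lem:characterisation-of-k} identifies $k$ with the divisibility of $\PD[\Sigma^{\vee}]$ in $\gamma^{\vee\perp}/\langle\gamma^{\vee}\rangle \cong H^1(R^1 g^{\vee}_*\underline{\bZ})$, through the Leray sequence (\ref{Leray_N}) and the Gysin sequence (\ref{Leray_and_Gysin_N}) over $\Gamma$, where $\Sigma^{\vee}$ has fibre the vanishing cycle over each edge and a balancing $2$-chain over each trivalent vertex. On the $A$-side I would run the identical pair of sequences for $\pi_X$ restricted to $N_X := \pi_X^{-1}(\Gamma)$: the Leray sequence gives $\gamma^{\perp}/\langle\gamma\rangle \cong H^1(R^1\pi_{X*}\underline{\bZ}_{N_X})$, and the Gysin sequence for the closed embedding $\Sigma \subset N_X$ expresses $[\Sigma]$ through its circle-fibre classes $c_e$ over the edges $e \in E$ and the pair-of-pants $2$-chains over the vertices, exactly as $\Sigma$ is built as a visible symplectic surface in Proposition~\ref{prop:M-compactifies-to-type-III-K3}.

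To match the two, I would invoke SYZ duality between the fibrations. Both have base the same integral affine sphere $B = B^{\vee} \cong S^2$ with the same focus-focus singularities, and their fibres are dual tori, so the local systems $R^1\pi_{X*}\underline{\bZ}$ and $R^1 g^{\vee}_*\underline{\bZ}$ carry Picard--Lefschetz transvection monodromy in matching invariant directions and are identified over $B$. This yields an isomorphism of unimodular lattices $\gamma^{\perp}/\langle\gamma\rangle \cong \gamma^{\vee\perp}/\langle\gamma^{\vee}\rangle$ under which $[\Sigma] \mapsto \pm\PD[\Sigma^{\vee}]$, since both are the image of the same tropical $1$-cycle on $\Gamma$: the edge datum $c_e$ of $\Sigma$ corresponds to the vanishing cycle of $\Sigma^{\vee}$, and the vertex $2$-chains agree via the balancing condition. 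Equal divisibility then gives $\operatorname{div}_{H^2(X;\bZ)}[\Sigma] = k = k(Y)$.

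The hard part will be the local matching underlying Step~3: verifying node by node and vertex by vertex that the visible symplectic surface $\Sigma$ of Proposition~\ref{prop:M-compactifies-to-type-III-K3} represents, under the identification of the two dual fibrations, the same class (up to sign and multiples of $\gamma$) as the vanishing-cycle cycle $\Sigma^{\vee}$ of Lemma~\ref{lem:characterisation-of-k}. This requires comparing the explicit local models for $\pi_X$ and $g^{\vee}$ near each focus-focus fibre and tracking the orientations and co-orientations entering the Picard--Lefschetz formula $T(x) = x + (x\cdot\Sigma^{\vee})\gamma^{\vee}$; everything else is the same bookkeeping already carried out on the $B$-side.
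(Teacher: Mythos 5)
Your overall architecture coincides with the paper's proof of this lemma: reduce the divisibility of $[\Sigma]$ in $H^2(X;\bZ)$ to its divisibility in $\gamma^{\perp}/\langle\gamma\rangle$ via the unimodular sublattice $\langle\gamma,s\rangle$ (the paper takes $s$ to be the class of a section of $\pi_X$ disjoint from $\Sigma$, so $[\Sigma]\cdot s=0$ by disjointness rather than by your calibration argument $\int_{L_0}\omega=0$ --- both work), identify $R^1(\pi_X)_*\underline{\bZ}_X$ with $R^1 g^{\vee}_*\underline{\bZ}_{\cY_t}$ by SYZ duality composed with Poincar\'e duality on the torus fibres, match $\PD[\Sigma]$ with $\pm\PD[\Sigma^{\vee}]$, and conclude by Lemma~\ref{lem:characterisation-of-k}. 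The genuine divergence is at the matching step, which is also exactly where your proposal stops: you defer it to a node-by-node comparison of local models, flagged as ``the hard part''. The paper avoids that computation entirely. It identifies $\PD[\Sigma]=[\omega]$ with the radiance obstruction of the integral affine structure on $B$ (citing Kontsevich--Soibelman), using $\bZ$-simplicity of the focus-focus singularities to identify $i_*T^{\bZ}_{B^{\circ}}$ with $R^1(\pi_X)_*\underline{\bZ}_X$ and integrality of $[\omega]$ to work over $\bZ$; the correspondence with $\PD[\Sigma^{\vee}]$ (up to sign) then comes out of the construction of the dual fibrations, with $\Sigma^{\vee}$ encoding the affine monodromy via the Picard--Lefschetz formula $T(x)=x+(x\cdot\Sigma^{\vee})\gamma^{\vee}$ established in Lemma~\ref{lem:characterisation-of-k}. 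So your ``hard part'' is real, but the efficient resolution is the radiance-obstruction argument, not orientation bookkeeping in local charts.

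Beyond this, one statement in your outline is wrong as written: the Leray sequence for $\pi_X$ restricted to $N_X=\pi_X^{-1}(\Gamma)$ computes $H^1(\Gamma, R^1(\pi_X)_*\underline{\bZ}|_{\Gamma})$, the cohomology of a graph with coefficients in a rank-two local system, and this cannot be isomorphic to the rank-$20$ lattice $\gamma^{\perp}/\langle\gamma\rangle$. The identification $\gamma^{\perp}/\langle\gamma\rangle \cong H^1(B,R^1(\pi_X)_*\underline{\bZ}_X)$ holds over the \emph{whole} base and uses the degeneration of the Leray spectral sequence forced by the existence of the section $L_0$ --- exactly as on the B-side in the proof of Lemma~\ref{lem:characterisation-of-k}, where the graph-restricted sequences (\ref{Leray_N}) and (\ref{Leray_and_Gysin_N}) compute the vanishing cohomology, not the lattice in which divisibility is measured. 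Your restriction to $\Gamma$ can still serve to produce a cocycle representative of $\PD[\Sigma]$ supported near $\Gamma$, but the divisibility comparison must take place in $H^1(B,\cdot)$. Finally, note that the two $R^1$ sheaves are not literally ``identified'': SYZ duality produces the $\underline{\bZ}$-dual $\Hom_B(R^1g^{\vee}_*\underline{\bZ}_{\cY_t},\underline{\bZ}_B)$, and one must compose with fibrewise Poincar\'e duality to land in $R^1g^{\vee}_*\underline{\bZ}_{\cY_t}$; this composition is the source of both the overall sign ambiguity and the co-orientation issues you mention.
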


\begin{proof}
Let $i \colon B^\circ \hookrightarrow B$ denote the smooth locus of the integral affine manifold with singularities $B$, and $T^{\bZ}_{B^\circ}$ the sheaf of integral tangent vectors on $B^\circ$. Let $g \colon X^\circ \rightarrow B^\circ$ denote the restriction of the Lagrangian torus fibration $g \colon X \rightarrow B$ to $B^\circ$.
The radiance obstruction $[\rho]$ of $B$ lies in $H^1(B,i_*T_{B^\circ}^{\bZ} \otimes_\bZ \bR)$ (because $B$ has focus-focus singularities so the radiance obstruction is locally trivial). We have identifications $T_{B^\circ}^{\bZ}=R^1g^\circ_*\underline{\bZ}_{X^\circ}$ by construction and 
$i_*R^1g^\circ_*\underline{\bZ}_X^\circ = R^1g_*\underline{\bZ}_B$ by $\bZ$-simplicity of focus-focus singularities in the terminology of Gross \cite[Definition~2.1]{Gross_specialI}.
Under these identifications the radiance obstruction $[\rho] \in H^1(i_*T^\bZ_{B^\circ} \otimes \bR)$ corresponds to the class of the symplectic form $[\omega] \in H^1(R^1g_*\underline{\bR}_X)$, cf.~\cite[$\S$3.1.1]{Kontsevich-Soibelman}, or equivalently (the Poincar\'e dual of) the class of $\Sigma$. In fact we may work with $\bZ$ coefficients here because the class $[\omega]\in H^2(X,\bR)$ is integral, cf.~\cite[$\S$6.7.3]{Kontsevich-Soibelman}.

Under the identifications
$$R^1g_*\underline{\bZ}_X \stackrel{\sim}{\longrightarrow} \Hom_B(R^1g^{\vee}_*\underline{\bZ}_{\cY_t},\underline{\bZ}_B)
\stackrel{\sim}{\longrightarrow} R^1g^{\vee}_*\underline{\bZ}_{\cY_t},$$
the first (SYZ duality) by construction of $g$ and the second by Poincar\'e duality on the fibres of $g^{\vee}$,
one finds that the class $\PD[\Sigma] \in H^1(R^1g_*\underline{\bZ}_X)$ corresponds (up to sign) to $\PD[\Sigma^{\vee}] \in H^1(R^1g^{\vee}_*\underline{\bZ}_{\cY_t})$, where $\Sigma^{\vee}$ is the $2$-cycle on $\cY_t$ constructed in the Proof of Lemma~\ref{lem:characterisation-of-k}.
Recall that $M$ is the complement of a tubular neighbourhood of $\Sigma \subset X$ and the restriction $f \colon M \rightarrow B$ of the torus fibration $g \colon X \rightarrow B$ admits a section. Thus there is a section of $g$ disjoint from $\Sigma$; let $s$ denote the class of one such section. Let $\gamma$ denote the class of a fibre of $g$. Then, as in the Proof of Lemma~\ref{lem:characterisation-of-k}, we have 
$$H^1(R^1g_*\underline{\bZ}_X)=\gamma^{\perp}/\langle \gamma \rangle  \stackrel{\sim}{\longleftarrow} \langle \gamma,s \rangle^{\perp},$$
and $\langle \gamma,s \rangle$ is unimodular. So we have a splitting 
$$H^2(X,\bZ)=\langle \gamma,s \rangle \oplus \langle \gamma,s \rangle^{\perp},$$
with $\PD[\Sigma] \in \langle \gamma,s \rangle^{\perp}$, and so the divisibility of $\PD[\Sigma] \in H^2(X,\bZ)$ equals the divisibility of $\PD[\Sigma] \in H^1(R^1g_*\underline{\bZ}_{X})=\gamma^{\perp}/\langle \gamma \rangle$.
The latter is equal to the divisibility of the class $\PD[\Sigma^{\vee}] \in H^1(R^1g^{\vee}_*\underline{\bZ}_{\cY_t})=(\gamma^{\vee})^{\perp}/\langle \gamma^{\vee} \rangle$, which is the Friedman--Scattone index of $Y$ by the Proof of Lemma~\ref{lem:characterisation-of-k}.
\end{proof}

\begin{proposition}\label{prop:X-indep-elem-modification}
Suppose that $Y$ and $Y'$ are type III K3 surfaces with split mixed Hodge structure. Let $(X, \omega; \Sigma)$ and $(X', \omega'; \Sigma')$ be their respective mirror compactifying K3 surfaces with divisors. Then any sequence of elementary modifications taking $Y$ to $Y'$ induces a symplectomorphism $X \stackrel{\simeq}{\lra} X'$, taking $\Sigma$ to $\Sigma'$, uniquely determined up to symplectic isotopy.  
\end{proposition}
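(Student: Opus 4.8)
The plan is to reduce to the case of a single elementary modification and to analyze its effect on the integral affine base $B$ of the almost-toric fibration $\pi_X \colon X \to B$. By Lemma~\ref{lem:elementary_modifications_connect} it suffices to treat one elementary modification $f \colon Y \dashrightarrow Y'$ of type 1) or 2); a general sequence is then handled by composing the resulting symplectomorphisms. Since the construction of $B$ in \S\ref{sec:construction-of-X} depends only on the complex deformation type of $Y$, and since $f$ is supported in an analytic neighbourhood of its exceptional curve $C$ (see Lemma~\ref{lem:elementary_modification_type_2} and Figures~\ref{fig:type-1-elem-modif}, \ref{fig:type-2-elem-modif}), the integral affine manifolds $B$ and $B'$, together with their focus-focus singularities, agree away from an arbitrarily small neighbourhood of the corresponding region of $S^2$. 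The whole problem is therefore local.

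The main step is to show that, near this region, $B$ and $B'$ are related by nodal slides and cut transfers. For a type 2) modification this is an edge flip of the dual complex $\Delta$: the double curve $C=D_{12}$, an edge joining the vertices of $Y_1,Y_2$ and shared by the triangles $\{1,2,3\},\{1,2,4\}$, is replaced by $C'=Y_3'\cap Y_4'$, giving triangles $\{3,4,1\},\{3,4,2\}$. Contracting the $(-1)$-curves on $Y_1,Y_2$ and blowing up the transverse intersection points on $Y_3,Y_4$ alters the toric models of the four affected components in a controlled way; a direct inspection of the GHK perturbation (\cite[pp.~107--108]{GHK1}) shows that the focus-focus singularities on the two sides differ precisely by sliding the relevant nodes across the flipped edge and applying the associated cut transfers, so that $B$ and $B'$ present the same integral affine $S^2$ up to the moves of Lemma~\ref{lem:existence-of-X}(iii). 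The type 1) case is treated by the same method, and is a more elementary combinatorial check (indeed the two bases often coincide outright). In either case Lemma~\ref{lem:existence-of-X}(ii),(iii) supplies a symplectomorphism $X \stackrel{\simeq}{\lra} X'$ intertwining $\pi_X$ and $\pi_{X'}$ away from an arbitrarily small neighbourhood of the nodes and carrying $L_0$ to $L_0'$.

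It remains to track the divisor and establish uniqueness. By the final remark in the proof of Proposition~\ref{prop:M-compactifies-to-type-III-K3}, the isotopy class of the tropical surface $\Sigma$ over $\Gamma \subset B$ is pinned down in this dimension by its genus, its symplectic area, and its self-intersection $[\Sigma]^2=2n$, together with the divisibility of $[\Sigma]$, which equals $k(Y)$ by Lemma~\ref{lem:characterisation-of-k-divisibility-of-Sigma}. Since elementary modifications preserve both invariants $n$ and $k$, the symplectomorphism $\phi \colon X \to X'$ above can be corrected by an ambient symplectic isotopy of $X'$ supported near the modified region -- where $\phi(\Sigma)$ and $\Sigma'$ are two tropical pairs-of-pants in the same class over graphs $\Gamma,\Gamma'$ differing only near the flip -- so as to send $\Sigma$ to $\Sigma'$; away from that region the two divisors already agree under the identification. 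Uniqueness up to symplectic isotopy of the resulting map follows by assembling the uniqueness clauses of Lemma~\ref{lem:existence-of-X}(ii),(iii) for the section $L_0$ with the isotopy-uniqueness of the tubular neighbourhood of $\Sigma$ from Proposition~\ref{prop:M-compactifies-to-type-III-K3}.

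I expect the main obstacle to be the local matching in the second paragraph, namely verifying that the type 2) flop is realised on the integral affine base by an explicit composition of nodal slides and cut transfers. The combinatorics of the edge flip interact delicately with the choices of toric models for the four affected components and with the perturbation of the singular integral affine point into focus-focus singularities; keeping careful track of the invariant directions and of the branch cuts through the flip, so that the two presentations of $B$ are genuinely identified rather than merely abstractly isomorphic, is the technical heart of the argument.
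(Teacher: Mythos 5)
Your overall architecture (reduce to a single modification, compare integral affine bases, then match the divisors locally) parallels the paper's, but two of your steps contain genuine gaps. The most serious one is the divisor-tracking step: you propose to correct $\phi$ by an ambient symplectic isotopy on the grounds that $\phi(\Sigma)$ and $\Sigma'$ have the same genus, symplectic area, self-intersection and divisibility, citing the tubular-neighbourhood remark in the proof of Proposition \ref{prop:M-compactifies-to-type-III-K3}. That remark only says that the germ of the symplectic structure near a \emph{given} embedded surface is determined by those invariants; it does not say that two homologous symplectic surfaces are symplectically isotopic --- that is the symplectic isotopy problem, which is open in this generality and cannot be invoked. The paper never needs it: for a type 1) modification it chooses compatible toric models (via \cite[Proposition~1.3]{GHK1}) so that $X'$ is obtained from $X$ by a \emph{single} nodal slide along the invariant ray of $D_{12}$, and then observes that at the moment the node crosses $\Gamma$ its vanishing cycle is a parallel, disjoint copy of the circle fibre of $\Sigma$; hence the slide is performed in the complement of $\Sigma$ and automatically carries $\Sigma$ to $\Sigma'$. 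For a type 2) modification, the local statement $\Sigma_a \cup \Sigma_b \simeq \Sigma_{a'} \cup \Sigma_{b'}$ is an isotopy between \emph{explicit} tropical curves in an identified region (Figure \ref{fig:mirror-to-elem-mutation}), not a consequence of homological invariants.

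The second gap is your treatment of type 1): you call it ``a more elementary combinatorial check'' with the bases ``often coinciding outright''. They do not coincide: in a type 1) modification the focus-focus singularity mirror to the exceptional curve moves from $B_1$ to $B_2$, so the node must slide \emph{across} $\Gamma$, and the whole content of this case is the compatibility of that crossing with $\Sigma$, as above. Relatedly, what you identify as the technical heart for type 2) --- verifying that the edge flip is realised by nodal slides and cut transfers, with careful tracking of invariant directions and branch cuts --- is precisely what the paper's choice of toric models is designed to circumvent: by arranging, via elementary transformations of toric models (\cite[Definition~3.24]{HK1}), that $C$, respectively $C'$, is the pullback of a toric boundary component with \emph{no} interior blow-ups, the bases $B$ and $B'$ become isomorphic up to small nodal slides near the vertices, with no cut transfers or bookkeeping across the flipped edge needed. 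If you adopt that normalisation, your delicate second paragraph collapses to a short observation, and the remaining work is exactly the two divisor-matching arguments described above, neither of which can be replaced by the genus/area/self-intersection count.
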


\begin{proof}
Let's first start with elementary modifications of type 1. Fix a type III K3 surface $Y$ with split mixed Hodge structure, and suppose it has components $(Y_1, D_1)$ and $(Y_2, D_2)$ such that $Y_1 \cap Y_2 = D_{12} \neq \emptyset$, and $E \subset Y_1$ a $(-1)$ curve intersecting $D_{12}$ transversally in a point, and otherwise contained in $Y_1 \backslash D_1$. Then we can use this configuration to perform a type 1 elementary modification. Let $Y'$ be the resulting K3 surface, with modified components $(Y_1', D_1')$, $(Y_2', D_2')$. 

The proof that any maximal log Calabi-Yau surface admits a toric model \cite[Proposition 1.3]{GHK1} shows that we can find toric models for $(Y_1, D_1)$ and $(Y_1', D_1')$ which are compatible, in the sense that $(Y_1, D_1)$ has exactly the same toric model as $(Y_1', D_1')$, with one additional interior blow-up, giving the exceptional curve $E$. Similarly for $(Y_2, D_2)$ and $(Y_2', D_2')$. Now let $(X, \omega; \Sigma)$, respectively $(X', \omega';\Sigma')$, be the mirror compactifying K3 surface with divisor for $Y$, respectively for $Y'$. With our choices of toric models, it's immediate that $X$ is obtained from $X'$ by a nodal slide, along the invariant ray associated to $D_{12}$, and, using our previous notation, crossing (a segment of) $\Gamma$. At the point where the node crosses $\Gamma$, its vanishing cycle is a parallel (and disjoint) copy of the restriction of $\Sigma$. In particular, we immediately get a symplectomorphism from $X$ to $X'$ taking $\Sigma$ to $\Sigma'$.

Now suppose that we're given a type III K3 surface $Y$ with split mixed Hodge structure, and an elementary modification from $Y$ to another type III K3 surface with split mixed Hodge structure, say $Y'$.
We use the notation of Figure \ref{fig:type-2-elem-modif} for the components involved in the modification.

The components $Y_1$ and $Y_2$ intersect in a $(-1, -1)$ curve $C$.  By using elementary transformations of toric models (see \cite{GHK1} or \cite[Definition 3.24]{HK1}), we can ensure that both $(Y_1, D_1)$ and $(Y_2, D_2)$ are given toric models such that $C$ is the pull-back of a component of the toric divisor, with no interior blow-ups applied.  Similarly for $(Y_3', D_3')$ and $(Y_4', D_4')$ with $C'$. 
 Let $(X, \Sigma)$ and $(X', \Sigma')$ be as before, and let $B$, $B'$ be the integral affine $S^2$s with singularities associated to $Y$ and $Y'$. Given our choices of toric models, $B$ and $B'$ are isomorphic, up to the usual ambiguity of small nodal slides near each of the vertices. Thus $X$ and $X'$ are naturally symplectomorphic. 

 \begin{figure}[htb]
\begin{center}
\includegraphics[scale=0.7]{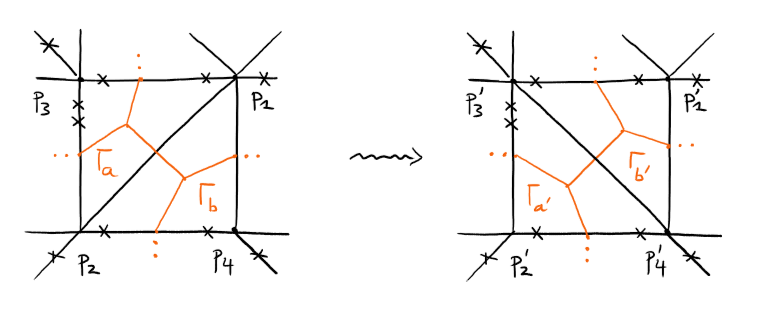}
\caption{Mirrors to type 2 elementary mutations: triangulations and tropical graphs.
}
\label{fig:mirror-to-elem-mutation}
\end{center}
\end{figure}

 Let $a$ and $b$ denote the triple points $C \cap Y_3$ and $C \cap Y_4$, respectively; and let $a'$ and $b'$ denote $C' \cap Y_1'$ and $C' \cap Y_4'$ respectively.
Then using the same notation as the proof of Proposition \ref{prop:M-compactifies-to-type-III-K3}, we have an identification
 $ \Delta_a(\delta') \cup \Delta_b(\delta') = \Delta_{a'}(\delta') \cup \Delta_{b'}(\delta')$ 
 and, inside this, the curve 
 $\Sigma_a \cup \Sigma_b $ is symplectic isotopic to  $\Sigma_{a'} \cup \Sigma_{b'} $ using standard tropical results. See Figure \ref{fig:mirror-to-elem-mutation} for an illustration. This completes the proof. 
\end{proof}

\begin{remark}
We expect that the induced equivalence $\cW(M) \rightarrow \cW(M')$ corresponds to the equivalence $\Coh Y \rightarrow \Coh Y'$
of Remark~\ref{rem:modifications_are_derived_equivalent}. As we don't need this for our main results we have not checked this.
Also, different sequences of elementary modifications from $Y$ to $Y'$ will a priori induce different classes of symplectomorphisms from $X$ to $X'$. In particular, in the case $Y = Y'$, we expect there is a well-defined group homomorphism
$$
\BirAut^{\CY}(Y) \lra \pi_0(\Symp(M)) 
$$
from the group of birational modifications of $Y$ acting trivially on $H^0(\omega_Y) \simeq \bC$, similarly to the story in the log CY2 case studied in \cite{HK2}. 
The group $\BirAut^{\CY}(Y)$ coincides with the group $\Aut^{\CY}(\cY_{\eta})$ of automorphisms of $\cY_{\eta}$ over $\bC((q))$ acting trivially on $H^0(\omega_{{\cY}_\eta})$, where $\cY_{\eta}/\bC((q))$ is the generic fibre of the smoothing $\cY/\bC[[q]]$ such that all line bundles lift.
\end{remark}

\subsection{Proof that the compactification of $M$ is K\"ahler}
\label{sec:compactification-of-M-is-Kaehler}

\begin{definition}\cite[p.~3]{Friedman-Scattone} \label{def:minus-one-form}
A type III K3 surface $Y$ is in $(-1)$-form if for each irreducible double curve $C$, $C^2= -1$ on both components
on which it lies, unless $C$ is a singular nodal curve, in which case $C^2= 1$ on the smooth component which contains it and $\widetilde{C}^2=-1$ on the normalisation of the other component.
\end{definition}

\begin{lemma}\label{lem:existence-of-type-III-K3s}
    Let $n$ and $k$ be positive integers such that $k^2 \, | \, n$. Then there exists a type III degenerate K3 surface $Y$ in $(-1)$ form with $2n$ triple points, $|H^3(Y,\bZ)|=k$, and split mixed Hodge structure. 
\end{lemma}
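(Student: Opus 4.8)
The plan is to reduce the existence question to a purely lattice-theoretic one, invoke the realisation results of Friedman--Scattone, and then upgrade to a split mixed Hodge structure using the Global Torelli theorem. First I would recall the numerical description of type III K3 surfaces in $(-1)$-form from \cite{Friedman-Scattone}. As in the Proof of Lemma~\ref{lem:characterisation-of-k} and the Remark following Definition~\ref{def:invariants}, the governing datum is a class $\delta$, well defined up to sign, in the even unimodular lattice $\Lambda := W_2/W_0$ of signature $(2,18)$ (so $\Lambda \cong U^{\oplus 2} \oplus E_8(-1)^{\oplus 2}$), subject to $\delta^2 = 2n$ and with $k$ equal to the divisibility of $\delta$ in $\Lambda$.

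The key elementary point is that such a $\delta$ exists precisely when $k^2 \mid n$. Writing $\delta = k\delta'$ with $\delta'$ primitive, one has $\delta^2 = k^2(\delta')^2 = 2n$, so $(\delta')^2 = 2n/k^2$; since $\Lambda$ is even, this is a (necessarily even) integer exactly when $k^2 \mid n$. Conversely, set $m = n/k^2$ and fix a hyperbolic summand $U = \langle e, f \rangle \subset \Lambda$. Then $\delta' := e + mf$ is primitive with $(\delta')^2 = 2m$, and $\delta := k\delta'$ satisfies $\delta^2 = 2k^2 m = 2n$ with divisibility exactly $k$: primitivity of $\delta'$ and unimodularity of $\Lambda$ give a $w$ with $\delta' \cdot w = 1$, so $\delta'$ has divisibility $1$ and hence $k\delta'$ has divisibility $k$. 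By the construction of Friedman--Scattone, this class is realised by a genuine type III K3 surface $Y_1$ in $(-1)$-form with $2n$ triple points and index $k$; in particular $Y_1$ is d-semistable.

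It remains to arrange that the mixed Hodge structure is split. Here I would appeal to the Global Torelli theorem for type III K3 surfaces \cite[Theorem~3.7]{Lutz_Torelli}: there is a unique type III K3 surface with split mixed Hodge structure in each locally trivial deformation type. Replacing $Y_1$ by the split-MHS representative $Y$ of its locally trivial deformation class, one checks that the remaining properties are undisturbed. A locally trivial deformation fixes the dual complex and the isomorphism classes of the components $(Y_i, D_i)$, hence fixes the self-intersection numbers of all double curves, so $Y$ is still in $(-1)$-form with $2n$ triple points; and by Lemma~\ref{lem:characterisation-of-k} the index $k = |H^3(Y,\bZ)|$ is a topological invariant, so it too is preserved. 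This produces the required $Y$.

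The main obstacle is the geometric realisation in the second paragraph: passing from the abstract lattice datum $\delta$ to an honest type III K3 surface in $(-1)$-form, which is exactly the content we import from \cite{Friedman-Scattone}. The rest is either elementary lattice arithmetic (the equivalence of solvability with $k^2 \mid n$, which simultaneously reproves the necessity of that divisibility condition) or the bookkeeping verification that replacing $Y_1$ by its split-MHS representative preserves the combinatorial type together with the invariants $n$ and $k$.
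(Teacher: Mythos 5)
Your proof is correct in substance and ultimately rests on the same two inputs as the paper's: the constructions of \cite[$\S$2]{Friedman-Scattone} for existence, and the Global Torelli theorem \cite[Theorem~3.7]{Lutz_Torelli} (packaged in the paper as Lemma~\ref{lem:split_MHS_CYncs}) to replace the resulting surface by the split-MHS representative of its locally trivial deformation class. The differences are in packaging rather than route, but two points deserve flagging. First, your lattice-theoretic preamble is logically idle: Friedman--Scattone's realisation takes the pair $(n,k)$ as combinatorial input (explicit triangulations of $S^2$, then base change), not a class $\delta$ in $U^{\oplus 2}\oplus E_8(-1)^{\oplus 2}$, and the existence of such a $\delta$ is equivalent to the hypothesis $k^2 \mid n$ you are handed anyway, so that paragraph could be deleted without loss. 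Second, the single sentence in which you invoke ``the construction of Friedman--Scattone'' is precisely where the paper does its work: it takes the triangulations of Figures~3 and 4 of op.~cit., checks that these have index $1$ using the main theorem of \cite{Friedman83} (a type III K3 in $(-1)$-form with index $k$ must contain special $k$-bands of hexagons, which these complexes visibly lack since they have adjacent non-hexagonal cells), and then applies an abstract base change of order $k$ with semistable resolution, citing \cite[Theorem~0.6]{Friedman-Scattone} for the fact that the index of the result is $k$. You do flag this import honestly, and the citation is defensible since the assembly appears in Friedman--Scattone, but as written it black-boxes the one step of the lemma that is not routine. Finally, a small inaccuracy in your last paragraph: a locally trivial deformation does \emph{not} fix the isomorphism classes of the pairs $(Y_i,D_i)$ (the components have positive-dimensional moduli within the locally trivial locus); what it fixes is the dual complex and the self-intersection numbers of the double curves, which is all you actually need to keep the $(-1)$-form and the triple-point count, while $|H^3(Y,\bZ)|=k$ persists because the family is topologically trivial.
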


\begin{proof} We use the construction of \cite[$\S2$]{Friedman-Scattone}. Specifically, Figures $3$ and $4$ in op. cit. show the dual cell complex of triangulations of $S^2$ with $t=2n$ triangles for $n$ even and odd respectively corresponding to (trivial) deformation types of type III K3 surfaces in $(-1)$-form with $2n$ triple points. 
As explained in \cite{Friedman-Scattone}, these type III K3s have monodromy invariant $k=1$ by the main theorem of \cite{Friedman83}: this implies that a type III K3 in $(-1)$-form with monodromy invariant $k$ has so called `special $k$-bands of hexagons' (see \cite[p.~4]{Friedman-Scattone},  for the definition); in this case, 
since the cell complexes we are considering contain adjacent cells which are not hexagons, it follows that $k=1$.

To obtain a type III K3 $Y$ with $2n$ triple points and $|H^3(Y,\bZ)|=k$, where $n=mk^2$, some $m \in \bN$, we begin with a type III K3 $W$ with $2m$ triple points and $|H^3(W,\bZ)|=1$, as above and perform an abstract base change and semi-stable resolution of order $k$. In terms of the dual complex, this has the effect of subdividing each triangle in the dual complex for $W$ into $k^2$ triangles, as in \cite[Figure B]{Friedman-Scattone}. It's immediate that $Y$ has $2m\times k^2$ triple points; moreover, it has the correct monodromy invariant by \cite[Theorem 0.6]{Friedman-Scattone}. 

Finally, by Lemma \ref{lem:split_MHS_CYncs}, within a given (trivial) deformation class, there exists a unique representative with split mixed Hodge structure, which we are free to pick.
\end{proof}

\begin{definition}\label{def:type-III-K3s-Y'-and-Y''}
Given $n$ and $k$ positive integers such that $k^2 | n$, $Y$ the associated type III K3 surface from Lemma \ref{lem:existence-of-type-III-K3s}, and $d$ an integer, we define:
\begin{itemize}
    \item $Y'$ to be the type III K3 surface given by applying to $Y$ an abstract base change of order $d$ and semi-stable resolution; this has split mixed Hodge structure and is in $(-1)$ form;
    \item $Y''$ to be the  type III K3 surface with split mixed Hodge structure given as follows: if $\nicefrac{n}{k^2}$ is even, we take $Y''=Y'$; and if $\nicefrac{n}{k^2}$ is odd, we apply  elementary modification of type 2) to $Y'$ at each of the double curves meeting, but not contained in, the components of $Y'$ with boundary cycle of length $2$ (all such double curves will be of $(-1,-1)$ type).
\end{itemize}
\end{definition}

\begin{proposition}\label{prop:type_III_toric_model}
Let $n$ and $k$ be positive integers such that $k^2 \, | \, n$. Then there exists an integer $d$ such that for $Y''$ the type III K3 surface of Definition \ref{def:type-III-K3s-Y'-and-Y''}, 
there exists a birational morphism $Y'' \rightarrow Z$,
 with exceptional locus a disjoint union of chains of smooth rational curves, such that
 \begin{itemize}
     \item[(i)]  each chain is contained in a unique component of $Y''$ and has self-intersection numbers $-1,-2,\ldots,-2$;
     \item[(ii)] each component $(Z_i, \sum_{j \neq i} Z_j \cap Z_i)$ is a toric surface together with its toric boundary;
     \item[(iii)] $Z$ is projective.
 \end{itemize}
Moreover, if $\nicefrac{n}{k^2}$ is even we may take any $d \ge 1$, and if $\nicefrac{n}{k^2}$ is odd we may take any $d \ge \nicefrac{4}{k}$.
\end{proposition}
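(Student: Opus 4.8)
The plan is to use the explicit combinatorial description of $Y$ from the proof of Lemma~\ref{lem:existence-of-type-III-K3s} (the triangulations of $S^2$ in Figures~3 and~4 of \cite{Friedman-Scattone}) and to track how it changes under the base change and type~2 modifications defining $Y''$. First I would record the effect on the dual complex of the abstract base change of order $d$ followed by semistable resolution: it subdivides each triangle into $d^2$ standard triangles, multiplying the number of triple points by $d^2$ and the index $k$ by $d$, so that the ratio $n/k^2$ — and in particular its parity — is preserved. I would then present each irreducible component $(Y_i'', D_i'')$ as a rational surface with an anticanonical cycle, and use the split mixed Hodge structure together with Lemma~\ref{lem:split_MHS_logCY2} to fix a toric model in which all interior blow-ups occur at the single distinguished point $-1 \in \bar{D}_{ij}^{\circ}$ of each boundary component.

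Next I would construct the birational morphism $Y'' \to Z$ component by component. For each $i$, the (infinitely near) interior blow-ups at the point $-1$ on each boundary divisor assemble into chains of rational curves attached to the boundary with self-intersection sequence $-1,-2,\dots,-2$; contracting exactly these chains produces the smooth toric surface $\bar{Y}_i =: Z_i$, with $D_i''$ mapping onto its toric boundary. Because the chains lie in the interior of a single component and the contractions are compatible with the $z \mapsto z^{-1}$ identifications of the double curves supplied by Lemma~\ref{lem:split_MHS_CYncs}, they glue to a global morphism $Y'' \to Z$ onto a normal crossing surface all of whose components are toric with their toric boundary. This yields (i) and (ii); the role of the choice of $d$ is precisely to guarantee the existence of such a globally compatible system of contractions, which is where the even/odd dichotomy enters.

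For projectivity (iii) I would argue as in Lemma~\ref{lem:reduce_to_projective}. Each toric $Z_i$ is projective, and the split mixed Hodge structure identifies $\Pic Z$ with $\ker\!\big(\bigoplus_i \Pic Z_i \to \bigoplus_{ij}\Pic D_{ij}\big)$, so it suffices to choose ample toric polarizations $L_i$ on the $Z_i$ whose degrees agree on each double curve $D_{ij} \cong \bP^1$. This is a linear feasibility problem over the nonempty open toric ample cones; I would solve it by starting from the boundary degrees induced by an ample class on $Y''$ (which exists by the argument of Lemma~\ref{lem:reduce_to_projective}) and correcting by boundary divisors, exactly as in that lemma, to land inside the ample cones while keeping the boundary degrees matched. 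The resulting collection glues, via the trivial extension class of the split mixed Hodge structure, to a line bundle $L$ that is ample on each component and hence ample on $Z$.

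The main obstacle, and the reason for the precise hypotheses on $d$, is the odd case $n/k^2$ odd. There the Figure~4 construction forces components whose boundary cycle has length $2$, which can never be toric, since a complete toric surface has at least three boundary divisors; this is exactly why $Y''$ is defined by applying type~2 elementary modifications to $Y'$ along the $(-1,-1)$-curves meeting such components. I would show that after a base change of order $d \ge \nicefrac{4}{k}$ there are enough triangles adjacent to the short cycles for these type~2 modifications to be carried out consistently over all of $S^2$ — converting every length-$2$ cycle into a cycle of length $\ge 3$ admitting a toric contraction — while preserving the split mixed Hodge structure by Proposition~\ref{prop:MHS_flop}. Verifying that the threshold $d \ge \nicefrac{4}{k}$ is both sufficient and natural, through a careful count of the hexagon/triangle combinatorics in the subdivided complex, is the delicate part of the argument; by contrast the even case is essentially immediate, since already $d=1$ produces a structure that is globally toric after contraction.
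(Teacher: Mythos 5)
Your proposal has the right skeleton (contract interior chains component by component, handle the length-$2$ boundary cycles via the type 2 modifications, glue matching ample bundles), which is essentially the paper's outline, but it has a genuine gap at its central step and the projectivity argument does not work as sketched.

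The central gap is in your second paragraph. A toric model in the sense of Definition \ref{def:toric-model} --- which is all that Lemma \ref{lem:split_MHS_logCY2} provides --- is a roof $(Y_i,D_i) \leftarrow (\widetilde{Y}_i,\widetilde{D}_i) \rightarrow (\bar{Y}_i,\bar{D}_i)$ in which corner blow-ups are allowed; it does \emph{not} give a birational morphism $Y_i'' \rightarrow \bar{Y}_i$, and when corner blow-ups are genuinely needed no such morphism exists, because the interior exceptional chains you want to contract live on $\widetilde{Y}_i$, not on $Y_i''$. The existence of corner-free toric models, simultaneously for all components and compatibly with the gluings, is exactly the content of the proposition, and as written your paragraph 2 proves too much: it would give the result with no condition on $d$ and no type 2 modifications, contradicting your own paragraph 4 (a component with boundary cycle of length $2$ can never contract to a toric pair). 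What the paper does instead is use the explicit Friedman--Scattone structure of the components that you cite at the outset but never actually invoke: in the even case the non-toric components have boundary a $3$-cycle of $(-1)$-curves and are $\bP^2$ blown up at two interior points of each boundary line, so they contract to $\bP^2$; in the odd case they have boundary a $4$-cycle (contracting to $\bP^1 \times \bP^1$) or a $2$-cycle, and for the $2$-cycles the needed corner blow-ups $\tilde{S} \rightarrow S$ are realised globally precisely by the type 2 elementary modifications --- a corner blow-up on one component is, at the level of the normal crossing surface, a type 2 modification which simultaneously contracts $(-1,-1)$-curves on adjacent components. This is exactly where the hypothesis $kd \ge 4$ enters: it guarantees that the components affected by these modifications around distinct non-toric vertices are distinct and toric, so the modifications do not interfere. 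You explicitly defer this verification ("the delicate part"), so the odd case --- the only case where the statement has real content in $d$ --- is not addressed.

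Second, your projectivity argument is partially circular and fails in the odd case. You propose to induce boundary degrees from an ample class on $Y''$ "by the argument of Lemma \ref{lem:reduce_to_projective}", but that argument applies to type III K3s in $(-1)$-form, and after the type 2 modifications $Y''$ is no longer in $(-1)$-form; moreover projectivity of $Y''$ itself is not known at this stage (in the paper it is a consequence of the proposition, not an input). Matching degrees on double curves is also not a soft linear feasibility statement: one must exhibit a matching collection that actually lies in the ample cones of the specific components of $Z$. The paper does this by hand, listing the four component types of $Z$ in the odd case (the toric surface with boundary a $6$-cycle of $(-1)$-curves, the $7$-cycle toric surface, $\bF_1$, and $\bP^1 \times \bP^1$) and writing explicit ample $\bQ$-line bundles with equal degrees on shared boundary curves, e.g.\ $-2K_S$ on the first type and $\cO(1) \boxtimes \cO(3)$ on $\bP^1 \times \bP^1$. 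Some such explicit check is unavoidable, and your sketch does not supply it.
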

This can be interpreted as giving compatible toric models for the components of $Y''$, with the feature that none of them involve corner blow-ups.

\begin{proof}
Let $Y$ and $Y'$ be as in Lemma \ref{lem:existence-of-type-III-K3s} and Definition \ref{def:type-III-K3s-Y'-and-Y''}. 
If $\nicefrac{n}{k^2}$ is even, then the non-toric components of $Y$ have boundary a cycle of $(-1)$-curves of length $3$, so are obtained from $\bP^2$ together with its toric boundary by blowing up six points, two in the interior of each boundary curve, and taking the strict transform of the boundary. 
Thus, in this case, we can define $Z$ by replacing each such component with its blowdown $\bP^2$. To see that $Z$ is projective (for an appropriate choice of gluing), it suffices to note that the line bundle $-K_S$ is ample on the toric surface $S$ with boundary a cycle of $6$ $(-1)$-curves, and has degree $1$ on each boundary component. 
This means that there is an ample line bundle on $Z$ whose restriction to each irreducible component of type $S$ is $-K_S$ and to each irreducible component $\bP^2$ is $\cO(1)$. 

If $\nicefrac{n}{k^2}$ is odd, then the non-toric components $S$ of $Y$ have boundary a cycle of $(-1)$-curves of length $4$ or $2$.
In the former case, $S$ is obtained from $\bP^1 \times \bP^1$ together with its toric boundary by blowing up $4$ points, one in the interior of each boundary curve, and we can replace $S$ by its blowdown $\bP^1 \times \bP^1$ in $Z$. 
In the latter case, $S$ may be described as follows. 
Let $\tilde{S} \rightarrow S$ be the blowup of both nodes of the boundary of $S$ together with the full inverse image of the boundary. Then $\tilde{S}$ has boundary a $4$-cycle of smooth rational curves with self-intersection numbers $-3,-1,-3,-1$, and is obtained from $\bP^1 \times \bP^1$ together with its toric boundary by blowing up $8=3+1+3+1$ points in the interior of these boundary curves. 
In order to realise this birational modification on the surface $Y$, we first apply an abstract base change and semi-stable resolution $Y' \rightarrow Y$ of order $d \in \bN$ such that the vertices of the dual complex corresponding to non-toric components are at distance $kd \ge 4$ (that is, we require $d \ge \lceil 4/k \rceil$). 
Now for each component $S$ of $Y'$ with boundary cycle of length $2$ we perform the elementary modifications of the two double curves of $Y'$ meeting $S$ but not contained in it, as shown in Figure~\ref{fig:case-with-length-2-cycle}. 

\begin{figure}\label{fig:case-with-length-2-cycle}
    \centering
    \includegraphics[width=0.7\linewidth]{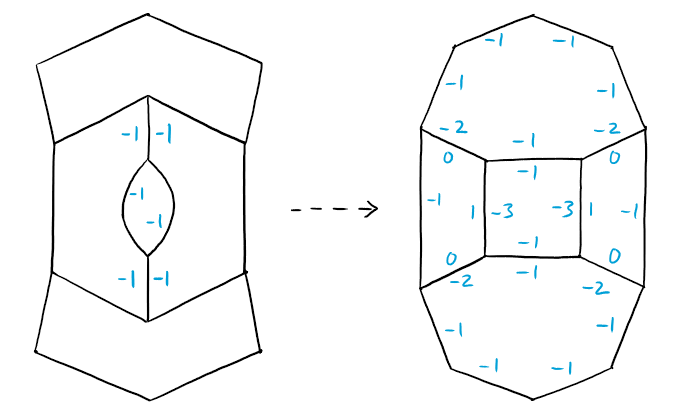}
    \caption{Case with a cycle of length $2$}
   \end{figure}

Now we blowdown the strict transform $\tilde{S}$ of $S$ to $\bP^1 \times \bP^1$ as above. 
To check that the resulting degenerate surface $Z$ is projective (for suitable gluing), observe that the irreducible components of $Z$ are the toric surfaces (together with their toric boundary) of the following types:
\begin{enumerate}
\item The toric surface with boundary a cycle of $6$ $(-1)$-curves.
\item The toric surface with boundary a cycle of $7$ curves with self-intersections $-1$, $-1$, $-1$, $-1$, $-2$, $-1$, $-2$ (obtained by blowing up a surface of type (1) at a node of the boundary).
\item $\bF_1=\Bl_p \bP^2$
\item $\bP^1 \times \bP^1$
\end{enumerate}
Moreover, the surfaces of the latter $3$ types are arranged as in the figure, with adjacent surfaces of type (1).
Now we specify an ample line bundle $A$ on $Z$ by its restriction $A|_S$ to each of the types of component $S$ as follows: on type (1), we take $A|_S=-2K_S$, with degree $2$ on each boundary curve; on type (2) we take $A|_S=\pi^*(-2K_{\bar{S}})-E$ where $S \rightarrow \bar{S}$ is the blowdown to a surface of type $(1)$ and $E$ is the exceptional curve, with degrees $2,2,2,2,1,1,1$ on the boundary curves in the order listed above; on type (3) we take $A|_S=\pi^*(-K_{\bar{S}})-E$, where $S \rightarrow \bar{S}$ is the blowdown to $\bP^2$ and $E$ the exceptional curve, with degrees $2,1,3,1$ on the boundary curves with self-intersections $-1,0,1,0$; on type (4) we take $A_S=\cO(1) \boxtimes \cO(3)$ with degrees $1,3,1,3$ on the boundary curves, chosen so that the degrees on adjacent components of types (2) and (3) match.
\end{proof}

For the rest of this section, we fix positive integers $n$ and $k$ such that $k^2|n$, and type III K3 surfaces $Y$, $Y'$ and $Y''$ and a morphism $Y'' \to Z$ as above. (These are all in $(-1)$ form with split mixed Hodge structure.) Let $\Delta$ denote the dual complex of $Y$, $\Delta'$ the dual complex of $Y'$, and $\Delta''$ the dual complex of $Y''$. The following is immediate from our constructions.

\begin{lemma}\label{lem:relation-between-dual-complexes}
    The dual complex $\Delta'$ is obtained from $\Delta$ by dividing each triangle into $d^2$ triangles in the standard, as in Figure \ref{fig:Delta-Delta'-Delta''} or \cite[p.~4, Figure~B]{Friedman-Scattone}.  
    The dual complex $\Delta''$ is obtained from $\Delta'$ by the following re-triangulation: for each triangle $\sigma \in \Delta'$ with a vertex of valency $2$, we flip the edge of $\sigma$ not containing that vertex.
\end{lemma}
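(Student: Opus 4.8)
The plan is to prove both assertions as purely combinatorial statements about the dual complexes, by reading off the effect on $\Delta$ of each operation used to build $Y'$ and then $Y''$. Recall that $Y'$ is obtained from $Y$ by an abstract base change $z \mapsto z^{d}$ applied to a semistable smoothing followed by a semistable resolution, which is exactly the order-$k$ procedure already used in the proof of Lemma~\ref{lem:existence-of-type-III-K3s}, now carried out with order $d$. The effect of such a base change and resolution on the dual complex of a type III degeneration is classical: the triple point formula guarantees that the local models glue to a semistable family, so that a single combinatorial recipe applies uniformly to every $2$-simplex, and each simplex of $\Delta$ is replaced by its standard subdivision into $d^2$ sub-triangles (subdivide each edge into $d$ equal segments and fill in the resulting triangular lattice). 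This is the picture of \cite[p.~4, Figure~B]{Friedman-Scattone}, which I would cite directly, observing in addition that this subdivision preserves the valency of every original vertex and creates only new vertices of valency $6$.

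For the second sentence I would first record the local effect of a single type 2) elementary modification on the dual complex. In the notation of Lemma~\ref{lem:elementary_modification_type_2} and Figure~\ref{fig:type-2-elem-modif}, the double curve $C = Y_1 \cap Y_2$ is an edge joining $[Y_1]$ and $[Y_2]$, and it lies in exactly the two triangles on $[Y_1],[Y_2],[Y_3]$ and on $[Y_1],[Y_2],[Y_4]$ coming from its two triple points. After the modification, $C$ is deleted and replaced by the edge $C' = Y_3' \cap Y_4'$ joining $[Y_3]$ and $[Y_4]$, and the two triangles become those on $[Y_3],[Y_4],[Y_1]$ and on $[Y_3],[Y_4],[Y_2]$. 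Thus a type 2) modification is precisely the diagonal flip of the edge $C$ inside the quadrilateral with cyclically ordered vertices $[Y_1],[Y_3],[Y_2],[Y_4]$.

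It then remains to match the modifications defining $Y''$ with the flips described in the lemma. A component $S$ of $Y'$ with boundary cycle of length $2$ is, by definition of the dual complex, exactly a valency-$2$ vertex $v=[S]$ of $\Delta'$, and its star consists of the two triangles $\sigma_p,\sigma_q$ attached to the two triple points $p,q$ of $\partial S$. The edge of $\sigma_p$ (resp.\ $\sigma_q$) opposite $v$ is the unique double curve through $p$ (resp.\ $q$) that meets $S$ but is not contained in it, so these opposite edges are exactly the double curves to which Definition~\ref{def:type-III-K3s-Y'-and-Y''} applies a type 2) modification. By the previous paragraph each such modification flips precisely that opposite edge, which gives the stated re-triangulation. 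When $\nicefrac{n}{k^2}$ is even, $\Delta$ has no valency-$2$ vertex (the non-toric components have boundary cycle of length $3$) and subdivision preserves valencies, so $\Delta'$ has none either; the flip is then vacuous and $\Delta''=\Delta'$, consistently with $Y''=Y'$.

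The only genuine care needed is to see that the flips are mutually independent and the recipe unambiguous. Here I would use the fact, built into the construction of $Y'$, that the non-toric vertices lie at pairwise integral affine distance $kd \ge 4$: this guarantees that each triangle of $\Delta'$ contains at most one valency-$2$ vertex (so that "the edge not containing that vertex" is well defined) and that the two triangles flipped at a single valency-$2$ vertex, together with the triangles on the far sides of their opposite edges, are pairwise distinct, so that all flips can be performed simultaneously. I would also check that the flipped double curves are of $(-1,-1)$ type, as required for Lemma~\ref{lem:elementary_modification_type_2} to apply; this follows from $Y'$ being in $(-1)$-form together with the triple point formula, exactly as already noted in Definition~\ref{def:type-III-K3s-Y'-and-Y''}.
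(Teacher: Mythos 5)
Your proof is correct: the paper itself offers no argument beyond declaring the lemma immediate from its constructions, and what you have written is exactly the intended unpacking --- base change plus semistable resolution subdivides each triangle of $\Delta$ into $d^2$ standard sub-triangles, while a type 2) elementary modification acts on the dual complex as precisely the diagonal flip of the corresponding edge. Your supplementary checks (preservation of valencies under subdivision, well-definedness and mutual independence of the flips via the distance bound $kd \ge 4$, and the $(-1,-1)$ condition needed to invoke Lemma \ref{lem:elementary_modification_type_2}) are accurate and consistent with Definition \ref{def:type-III-K3s-Y'-and-Y''}.
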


\begin{figure}
    \centering
    \includegraphics[width=0.75\linewidth]{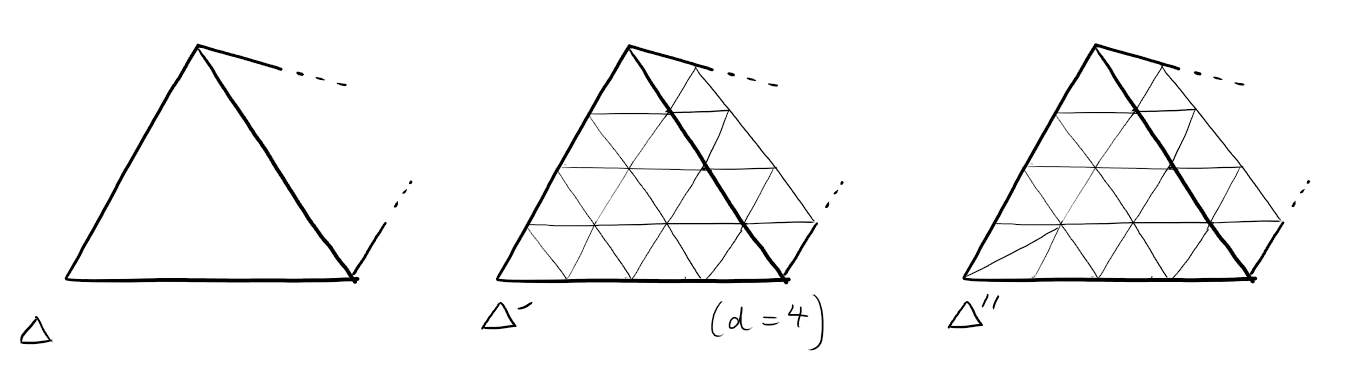}
    \caption{Local picture for the complexes $\Delta$, $\Delta'$ (for $d=4$) and $\Delta''$. Here we assume the original bottom-left vertex in $\Delta$ had valency 2.}
    \label{fig:Delta-Delta'-Delta''}
\end{figure}

\begin{definition}
    Let $B = B_Z$ be the integral affine $S^2$ with singularities associated, by the construction of Section \ref{sec:construction-of-X}, to $Y''$ with the choices of toric models given by $Y'' \to Z$. 
\end{definition}

\begin{definition}\label{def:bar{calX}_0}
  Let $\bar{\cX}_0$ be the scheme obtained by glueing copies of $\bP^2$ along their toric boundaries, with combinatorics dictated by  $\Delta''$. We use the standard gluing (identifying boundary components with $\bP^1$ in the obvious way) so that there is a line bundle $\bar{L}$ on $\bar{\cX}_0$ restricting to $\cO(1)$ on each component.  
\end{definition}

\begin{remark}
    As $\Delta''$ is a simplicial complex in our case,
     $\bar{\cX}_0$ can be realised in the projective space with homogeneous coordinates indexed by the vertices of $\Delta''$, such that the $0$-strata of $\bar{\cX}_0$ correspond to the standard basis of the associated vector space, each stratum is a projective linear subspace, and $\bar{L}$ is the restriction of $\cO(1)$.
\end{remark}

\begin{definition} \label{def:line-bundle-A_v}
    For each vertex $v \in \Delta''$, let $A_v$ be an ample line bundle on the corresponding irreducible component $Z_v$ of $Z$, such that if $e=vw$ is an edge of $\Delta''$ then, writing $Z_e=Z_v \cap Z_w$ for the associated $1$-stratum of $Z$,
we have $A_v \cdot Z_e = A_w \cdot Z_e$. 
\end{definition}

\begin{remark}\label{rmk:choices-of-ample-line-bundles}
    These line bundles exists by our assumption that $Z$ is projective. 
For the purposes of Proposition \ref{prop:construction-of-hatcalX_0-and-hatDelta} below, and more generally for showing that $(X, \omega)$ is K\"ahler, any choices of $A_v$ are suitable. 
However, to later compare the symplectic submanifold $\Sigma \subset X$ with a holomorphic curve, it will be helpful to specifically work with the  ample line bundles $A_v$ as described in the Proof of Proposition~\ref{prop:type_III_toric_model}. 
\end{remark}

\begin{proposition}\label{prop:construction-of-hatcalX_0-and-hatDelta}
Fix $Y'' \to Z$ from Proposition \ref{prop:type_III_toric_model} and our choices of ample line bundles $A_v$ from Definition \ref{def:line-bundle-A_v}. Then one can construct, via 
the Gross-Siebert programme, the following structures: 
a new polyhedral subdivision $\hat{\Delta}$ of $B_Z$, with focus-focus singularities in the interior of edges of $\hat{\Delta}$; 
a normal crossing maximal Calabi--Yau surface $\hat{\cX}_0$ with split MHS; 
and a ample $\bQ$-line bundle $\hat{L}$ on $\hat{\cX}_0$, such that these have the following properties:

\begin{enumerate}
\item[(i)] The polyhedral subdivision $\hat{\Delta}$ of $B_Z$ is such that the (focus-focus) singularities of $B_Z$ lie in the interior of edges of $\hat{\Delta}$.
    \item[(ii)] $\hat{\cX}_0$ is a normal crossing union of toric surfaces glued along their toric boundaries, 
    such that the irreducible components of $\hat{\cX}_0$, together with the restriction of $\hat{L}$, are the polarised toric surfaces with moment polytopes the maximal cells in the polyhedral subdivision $\hat{\Delta}$.
    \item[(iii)] There is a  a proper morphism with connected fibres $g: \hat{\cX}_0 \rightarrow \bar{\cX}_0$, determined by replacing $\Delta''$ by $\hat{\Delta}$.
\end{enumerate}

The surface $\hat{\cX}_0$ is uniquely determined by $Z$ and the $A_v$ up to elementary modifications of type 2. (Its precise representative depends additionally on choice of an auxiliary function $\varphi$ which will be described in the proof.)

\end{proposition}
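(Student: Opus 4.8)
The plan is to realise $(\hat{\cX}_0,\hat{L})$ as the output of the Gross--Siebert cone-picture (intersection complex) construction \cite{Gross-Siebert-I} applied to the integral affine sphere $B_Z$ equipped with a suitable polarised polyhedral decomposition $\hat\Delta$, and to obtain $g$ from a refinement of cell structures. \textbf{Step 1 (the decomposition $\hat\Delta$).} Starting from $B_Z$, I would produce a polyhedral decomposition $\hat\Delta$ whose maximal cells are lattice polytopes and in whose edge interiors all the focus-focus singularities of $B_Z$ lie. Concretely, the focus-focus points sit on the invariant rays recorded by the toric models $Y''\to Z$; using nodal slides and cut transfers \cite[Section 3.2]{HK2} I would route the edges of $\hat\Delta$ through these points, subdividing cells as needed so that each maximal cell becomes an honest lattice (moment) polytope. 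This realises property (i).

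\textbf{Step 2 (the polarisation $\varphi$).} Next I would transfer the polarisation data to $B_Z$. The ample line bundles $A_v$ of Definition \ref{def:line-bundle-A_v} make each $(Z_v,A_v)$ a polarised toric surface, and the matching condition $A_v\cdot Z_e=A_w\cdot Z_e$ guarantees that the induced piecewise-linear data glue consistently along double curves; via the discrete Legendre transform of \cite{Gross-Siebert-I} this produces a multivalued, strictly convex, rational piecewise-linear function $\varphi$ on $(B_Z,\hat\Delta)$, compatible with the decomposition of Step 1. This is the auxiliary function named in the statement: its strict convexity is exactly the transform of the ampleness of the $A_v$ (equivalently the projectivity of $Z$, Proposition \ref{prop:type_III_toric_model}), and $\varphi$ has rational rather than integral slopes, which is why $\hat{L}$ will only be a $\bQ$-line bundle.

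\textbf{Step 3 (the surface $\hat{\cX}_0$ and the morphism $g$).} With $(B_Z,\hat\Delta,\varphi)$ in hand I would invoke the cone-picture construction of \cite{Gross-Siebert-I} with the standard (trivial) gluing data. This yields a projective normal crossing surface $\hat{\cX}_0$ whose irreducible components are precisely the polarised toric surfaces with moment polytopes the maximal cells of $\hat\Delta$ (property (ii)), together with the ample $\bQ$-line bundle $\hat{L}$ determined by $\varphi$, ampleness being equivalent to the strict convexity from Step 2. That $\hat{\cX}_0$ is maximal and Calabi--Yau is inherited from $B_Z$ being a closed $S^2$ carrying the triple-point balancing condition, and that it has split mixed Hodge structure follows from Lemma \ref{lem:split_MHS_CYncs}: the components are toric (hence have split mixed Hodge structure) and the standard gluing identifies boundary components by $z\mapsto z^{-1}$. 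Finally, $\hat\Delta$ refines the cell structure underlying $\bar{\cX}_0$ of Definition \ref{def:bar{calX}_0} (the Stanley--Reisner-type degeneration attached to $\Delta''$, with components the $\bP^2$'s indexed by its maximal simplices). A refinement of lattice polyhedral decompositions induces over each coarse cell a proper birational toric morphism from the union of the refining toric pieces, and these glue to the required proper morphism with connected fibres $g\colon\hat{\cX}_0\to\bar{\cX}_0$ (property (iii)).

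\textbf{Uniqueness and the main obstacle.} For the uniqueness clause I would analyse the dependence on $\varphi$: varying $\varphi$ inside an ampleness chamber leaves $\hat{\cX}_0$ unchanged, whereas crossing a wall amounts to flipping an edge of $\hat\Delta$, which on the geometric side is exactly a type~2 elementary modification (cf.\ the edge-flips of Lemma \ref{lem:relation-between-dual-complexes}); combined with the uniqueness of the d-semistable (hence split mixed Hodge structure) representative in a given combinatorial type from Lemma \ref{lem:elementary_modification_type_2} and Proposition \ref{prop:MHS_flop}, this pins $\hat{\cX}_0$ down up to type~2 elementary modifications. I expect the genuine difficulty to lie in Steps 1 and 2 carried out \emph{simultaneously}: one must force all the focus-focus singularities onto edge interiors while keeping every maximal cell a lattice polytope \emph{and} retaining a strictly convex $\varphi$. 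Strict convexity is precisely what can fail across the new edges that absorb the affine monodromy of the singularities, so the positions of the singularities along their invariant rays and the slopes of $\varphi$ must be chosen compatibly; this is the technical heart of adapting the Gross--Siebert machinery to the present $S^2$ setting.
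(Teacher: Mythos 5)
Your proposal gets the framework right (Gross--Siebert, a convex piecewise-linear datum, type~2 modifications governing the ambiguity, split MHS via Lemma \ref{lem:split_MHS_CYncs}), but it has a genuine gap at its core, which you flag yourself and never close: Steps 1 and 2 --- producing $\hat{\Delta}$ with all focus-focus singularities in edge interiors, with lattice-polytope cells, \emph{and} simultaneously carrying a strictly convex multivalued PL function --- are not actually carried out. ``Routing edges through the singular points by nodal slides and cut transfers'' is not a construction, and your Step 2 does not typecheck: the discrete Legendre transform of the data $(B_Z,\Delta'',\check{\psi})$ determined by the $A_v$ produces a decomposition and function on the \emph{dual} sphere, not a function on $(B_Z,\hat{\Delta})$ for a $\hat{\Delta}$ you built independently in Step 1. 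The paper closes exactly this gap by never working directly on $B_Z$: it glues the moment polytopes $P_v$ of the $(Z_v,A_v)$ (the matching condition $A_v\cdot Z_e=A_w\cdot Z_e$ of Definition \ref{def:line-bundle-A_v} makes edge lengths agree) into a Legendre-dual sphere $\check{B}$ with decomposition $\cP$, whose dual focus-focus singularities lie on edges of $\cP$. The auxiliary function $\varphi$ of the statement is an \emph{integer-valued function on the lattice points} $\check{B}_{\bZ}$ --- not a convex multivalued function on $(B_Z,\hat{\Delta})$ --- subject to two elementary conditions: on each $P_v$ its lower convex hull induces a unimodular triangulation $\hat{\cP}$, and on each dual edge carrying singularities it takes equal values at two adjacent lattice points; such $\varphi$ visibly exists. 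Setting $\hat{\psi}=\psi+\epsilon\varphi$ (with $\psi$ the PL function of $\bar{L}$), placing each singularity inside the sub-edge on which $\varphi$ is constant, and applying the discrete Legendre transform to $(\check{B},\hat{\cP},\hat{\psi})$ returns $B_Z$ with the new decomposition $\hat{\Delta}$; the equal-values condition is precisely what puts the singularities in interiors of edges of $\hat{\Delta}$ and what makes $\hat{L}=g^*\bar{L}+\epsilon\sum_a\varphi(a)\hat{X}_a|_{\hat{\cX}_0}$ a well-defined $\bQ$-line bundle. Your ``simultaneous convexity'' problem never arises, because convexity is imposed cell-by-cell on the dual side by the lower-hull construction.

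A second, concrete error: your mechanism for (iii) assumes $\hat{\Delta}$ refines $\Delta''$, and it does not. The maximal cells of $\hat{\Delta}$ are indexed by \emph{all} lattice points of $\check{B}$; the cells dual to lattice points interior to edges (resp.\ interiors) of the $P_v$ straddle the edges (resp.\ vertices) of $\Delta''$ rather than subdividing its triangles --- compare Figure \ref{fig:Delta-hat-local-decorated.png}, where $\hat{\Delta}$ (blue) and $\Delta''$ (black) are superimposed and neither refines the other. Hence $g$ cannot be assembled from toric morphisms induced by a refinement over each cell of $\Delta''$. The refinement that produces $g$ lives on the dual side: $\hat{\cP}$ refines $\cP$, which in the fan picture corresponds to toric modifications of the $\bP^2$-components of $\bar{\cX}_0$ (at vertices of $\cP$) together with new components $\hat{X}_a$, for $a\in\check{B}_{\bZ}$ not a vertex of $\cP$, which $g$ contracts onto the one- and zero-dimensional strata of $\bar{\cX}_0$. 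Your uniqueness sketch (wall-crossing for the convexity datum $=$ edge flips $=$ type~2 modifications) is consistent with Remark \ref{rmk:choices-of-varphi} and is the one part that survives essentially unchanged.
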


\begin{proof}

These are central constructions in the Gross-Siebert programme, as first set up in \cite{Gross-Siebert-I}. To summarise for experts, using the terminology of that programme: the ample line bundles $A_v$ on the components $Z_v$ of $Z$ determine a smooth log structure near the vertices $v \in \bar{\cX}_0$. This determines, up to elementary modifications, our normal crossing surface $\hat{\cX}_0$ together with the proper morphism with connected fibres $\hat{\cX}_0 \rightarrow \bar{\cX}_0$, with the following property: $\hat{\cX}_0$ admits a log smooth structure over the standard log point $(\Spec \bC, \bN \oplus \bC^*)$ away from a finite set of points contained in the interior of the $1$-strata of $\hat{\cX}_0$ corresponding to the focus-focus singularities of $B_Z$ contained in the associated edges of $\Delta''$.

We now spell things out in more detail. 
First, let $P_v$ denote the moment polytope of the toric surface $Z_v$ with respect to the ample line bundle $A_v$, a lattice polygon. 
Note that by our assumption for each edge $e=vw$ of $\Delta''$, the length of the corresponding (dual) edges of $P_v$ and $P_w$ coincide. Identifying these edges, we obtain a polyhedral decomposition  of $S^2$, say $\check{B}=|\cP|$.
Moreover, $\check{B}$ inherits an integral affine structure away from finitely many points on the edges dual to the edges of $\Delta''$ containing focus-focus singularities of $B_Z$, as follows. In the interior of each cell $P_v$, we use the integral affine structure on the lattice polygon $P_v$. 
At a vertex of $\cP$ corresponding to a triangle $\sigma \in \Delta''$, we locally identify the subdivision $\cP$ with the toric fan of the associated component $(\bar{\cX}_0)_{\sigma} =\bP^2$ of $\bar{\cX}_0$. The resulting ambiguity in the integral affine structure along the edges of $\cP$ dual to edges of $\Delta''$ containing focus-focus singularities is resolved by introducing an equal number of dual focus-focus singularities, with invariant direction along the edge of $\cP$.

Let $\check{B}_{\bZ}$ denote the set of integral points of the integral affine manifold $\check{B}$. 
We will need the  following additional data: we require a function $\varphi \colon \check{B}_{\bZ} \rightarrow \bZ$ with the following properties:
\begin{enumerate}
    \item For each vertex $v \in \Delta''$, the restriction 
    $$\varphi^0_v \colon   \check{B}_{\bZ} \cap P_v \rightarrow \bZ$$
    determines an integral convex piecewise linear function
    $$\varphi_v \colon P_v \rightarrow \bR$$
    (defined by taking the graph of $\varphi_v$ to be the lower boundary of the convex hull of the graph of $\varphi^0_v$)
    such that $\varphi_v(a)=\varphi^0_v(a)$ for all $a \in  \check{B}_{\bZ}$ and the maximal domains of linearity of $\varphi_v$ are cells of a unimodular triangulation of $P_v$ (that is, the maximal domains of linearity are lattice triangles with no integral points besides the vertices).
    \item For each edge $e=vw \in \Delta''$ there are (necessarily adjacent) integral points $a,b$ in the dual edge $e^{\vee} = P_v \cap P_w$ of $\cP$ such that $\varphi(a)=\varphi(b)$.
\end{enumerate}
To see that such a function $\varphi$ exists, notice that the values of $\varphi^0$ on the integral points of the $1$-skeleton of $\cP$ can be assigned arbitrarily subject to the following conditions: on each edge $e$ the function $\varphi_e$ defined by the lower convex hull of the graph of $\varphi^0$ satisfies $\varphi_e(a)=\varphi^0(a)$ for $a$ integral, and $\varphi^0(a)=\varphi^0(b)$ for some integral $a,b \in e$.

The ample line bundle $\cO(1)$ on $\bar{\cX}_0$ determines a multi-valued convex $\cP$-piecewise integral affine linear function, say $\psi$, on the integral affine manifold $\check{B}$: this is given in a neighbourhood of a vertex of $\cP$ by the integral piecewise linear function on the fan of $\bP^2$ corresponding to $\cO(1)$, which is determined up to an integral affine linear function. 
We replace the polyhedral subdivision $\cP$ of $\check{B}$ by the triangulation $\hat{\cP}$ obtained by replacing each cell $P_v$ by the triangulation induced by the convex function $\varphi_v$, and the function $\psi$ by the multi-valued $\hat{\cP}$-piecewise linear function $\hat{\psi}:=\psi + \epsilon \varphi$, for $0 < \epsilon \ll 1$, $\epsilon \in \bQ$, where $\varphi|_{P_v}:=\varphi_v$.
We further assume that the position of the focus-focus singularities on the edges of $\cP$ have been chosen such that they lie in the interior of the edge of $\hat{\cP}$ contained in the given edge of $\cP$ along which $\varphi$ is constant. 

Now we consider the discrete Legendre transform of the integral affine manifold with singularities $\check{B}$ together with polyhedral subdivision $\hat{\cP}$ and multi-valued piecewise linear function $\hat{\psi}$, as defined in \cite[Section 1.4]{Gross-Siebert-I}.
The key point is that this recovers the integral affine manifold with singularities $B_Z$ but with a different polyhedral subdivision $\hat{\Delta}$ (and multi-valued piecewise linear function) to that considered initially. Note that, initially, the polyhedral subdivision of $B_Z$ was given by the triangulation $\Delta''$, and the multi-valued $\Delta''$-piecewise linear function $\check{\psi}$ was given in a neighbourhood of a vertex $v \in \Delta''$ by the convex piecewise linear function on the fan of $Z_v$ associated to the ample line bundle $A_v$ (although it was not explicitly considered earlier). 

This new polyhedral subdivision $\hat{\Delta}$ corresponds to the morphism $g \colon \hat{\cX}_0 \rightarrow \bar{\cX}_0$. 
(Here we use the standard toric identifications of the $1$-strata (or equivalently such that $\hat{\cX}_0$ has split mixed Hodge structure by Lemma~\ref{lem:split_MHS_CYncs}.)
The faces of $\hat{\Delta}$ are indexed by points of $\check{B}_\bZ$.
Let $\hat{X}_a$ denote the irreducible component of $\hat{\cX}_0$ associated to the integral point $a \in \check{B}$, and define
$$
\hat{X}_a |_{\hat{X}_b} := 
\begin{cases} 
\cO_{\hat{X}_b}(\hat{X}_a \cap \hat{X}_b ) & \text{if $a \neq b$} \\ 
\cO_{\hat{X}_b}(-\sum_{c \neq b} \hat{X}_{c} \cap \hat{X}_b) & \text{if $a = b$} 
\end{cases}
$$
Then the line bundle $\hat{L}$ is given by 
\begin{equation} \label{eq:line-bundle-hat{L}}
    \hat{L}=g^*\bar{L}+\epsilon\sum_{a \in A} \varphi(a)\hat{X}_a|_{\hat{\cX}_0}
\end{equation}
where $\epsilon$ is as above. 
To check that $\hat{L}$ is a well-defined line bundle, it's enough to notice that that $\sum \lambda_a \hat{X}_a|_{\hat{\cX}_0}$ defines a line bundle on $\hat{\cX}_0$ (or equivalently, that its degree on each component of the double curve of $\hat{\cX}_0$ is well-defined) if and only if $\lambda_a=\lambda_b$ whenever $ab$ is an edge of $\hat{\cP}$ containing focus-focus singularities of $\check{B}$.
\end{proof}

\begin{remark}\label{rmk:choices-of-varphi}
For the purposes of showing that the symplectic form on $X$ is K\"ahler, any choice of $\varphi$ is suitable; in general, different choices for $\varphi$ give representatives for $\hat{\cX}_0$ which are related by type 2 elementary modifications.
However, to later compare the symplectic submanifold $\Sigma \subset X$ with a holomorphic curve, it will be helpful to work with 
a specific choice. Assume that we are using our favourite ample line bundles $A_v$, as in Remark \ref{rmk:choices-of-ample-line-bundles}. 
This gives us  polarised toric surfaces of the form ($Z_v,A_v) = (\Bl^3 \bP^2, -2K)$, with moment polytope the hexagon with side length 2. (The $Z_v$ are the components of type (1) in the proof of Proposition \ref{prop:type_III_toric_model}.) We will want to choose $\varphi$ so that for each such $v$, the induced unimodular triangulation of the side length 2 hexagon is the standard one. A local picture for the resulting $\hat{\Delta}$ is given in Figure \ref{fig: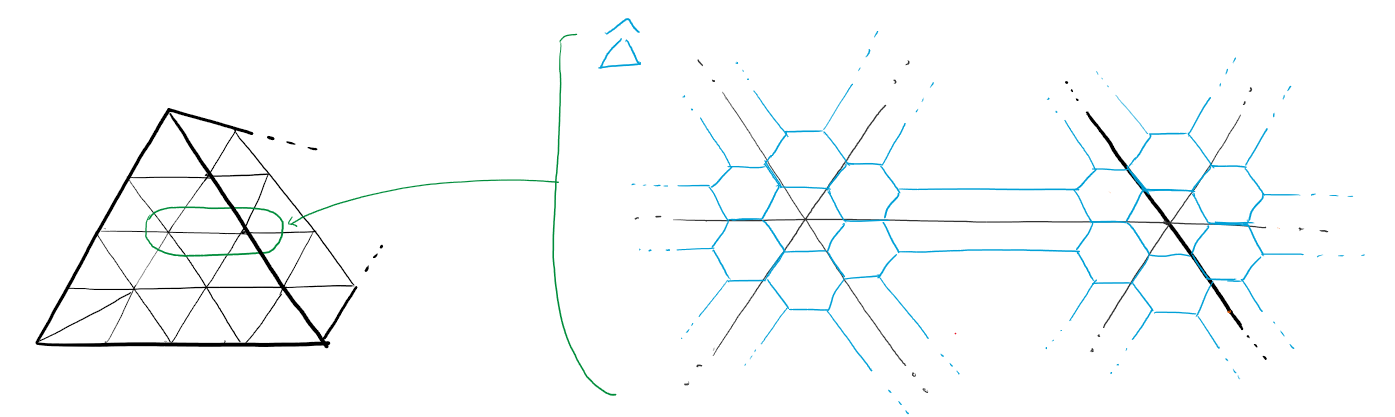}.

\end{remark}

\begin{figure}
    \centering
    \includegraphics[width=0.8\linewidth]{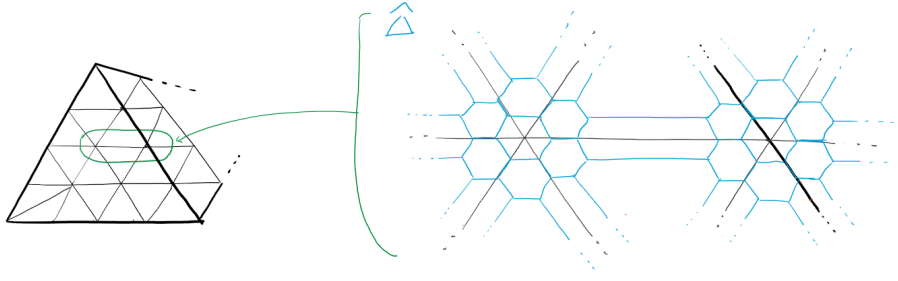}
    \caption{Local picture for $\hat{\Delta}$ with the choices of Remark \ref{rmk:choices-of-varphi}.}
    \label{fig:Delta-hat-local.png}
\end{figure}

\begin{definition}\label{def:construction-of-calX_0}
Suppose we're given $Y'' \to Z$ as in Proposition \ref{prop:type_III_toric_model}, and that we've fixed auxiliary data 
(the ample line bundles $A_v$ on components of $Z$, and a function $\varphi: \check{B}_\bZ \to \bZ$), to construct 
$\hat{\Delta}$, $\hat{\cX}_0$ and $\hat{L}$ as in Proposition \ref{prop:construction-of-hatcalX_0-and-hatDelta}. 
We define a type III K3 surface $\cX_0$ as follows: 
 for each edge of $\hat{\Delta}$ containing a focus-focus singularity of $B_Z$, we pick an irreducible component of $\hat{\cX}_0$ containing the corresponding $1$-stratum, and blowup points in the interior of this $1$-stratum, one for each focus-focus singularity. (For each focus-focus singularity we get to choose the component, and also exactly which point to blow up in the interior of the stratum.)

 Let $h: \cX_0 \to \hat{\cX}_0$ be given by the blow-down maps (this is sometimes called a birational modification). 
 For the face of $\hat{\Delta}$ indexed by $a \in \check{B}_\bZ$, let $X_a$ denote the corresponding irreducible component of $\cX_0$, let $D_a \subset X_a$ denote the strict transform of the toric anti-canonical divisor $\hat{D}_a$ in $\hat{X}_{a}$, and let $h_a: X_a \to \hat{X}_a$ denote the restriction of $h$. 
\end{definition}

\begin{proposition} \label{prop:first-properties-of-calX_0}
Using the same notation as above, we can choose the blow-ups $h \colon \cX_0 \to \hat{\cX}_0$ so that

\begin{enumerate}

\item[(i)] $\cX_0$ is projective.

\item[(ii)] There's a ample $\bQ$-line bundle $L$ on $\cX_0$ given by
$$
L=(h \circ g)^*\bar{L} + \epsilon\sum \lambda_a X_a|_{\cX_0}
$$
where as before, we define 
$$
X_a|_{X_b} =
\begin{cases} 
\cO_{{X}_b}({X}_a \cap {X}_b ) & \text{if $a \neq b$} \\ 
\cO_{{X}_b}(-\sum_{c \neq b} {X}_{c} \cap {X}_b) & \text{if $a = b$} 
\end{cases}
$$
and where $\epsilon$ is the same as in the definition of $\hat{L}$, and the $\lambda_a$ are suitable rational perturbation of the $\varphi(a)$.

\item[(iii)] $\cX_0$ is d-semistable in the sense of Friedman \cite{Friedman_thesis}.

\end{enumerate}
\end{proposition}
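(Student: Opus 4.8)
The plan is to establish (ii) first, since projectivity (i) is then immediate: a proper scheme carrying an ample $\bQ$-line bundle is projective. Part (iii) I would treat by an essentially independent argument, showing that the blow-ups can be chosen so that $\cX_0$ has split mixed Hodge structure, and then invoking that split MHS implies d-semistability (as recorded in the type III background, via Lemma~\ref{lem:split_MHS_CYncs}).

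For (ii), I would start from the ample $\bQ$-line bundle $\hat L = g^*\bar L + \epsilon\sum_a \varphi(a)\,\hat X_a|_{\hat\cX_0}$ on $\hat\cX_0$ produced in Proposition~\ref{prop:construction-of-hatcalX_0-and-hatDelta}. As $h\colon\cX_0\to\hat\cX_0$ is a composition of blow-ups at smooth points lying in the interiors of the $1$-strata, $h^*\hat L$ is nef and big but not ample; the correction is governed by the identity $X_a|_{X_a}=K_{X_a}=h_a^*K_{\hat X_a}+\sum_E E$, so that the diagonal terms in $\sum_a\lambda_a X_a|_{\cX_0}$ are precisely what supply the exceptional-divisor contributions. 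Concretely, I would check that for $\lambda_a$ a sufficiently small rational perturbation of $\varphi(a)$ the class $L=(h\circ g)^*\bar L+\epsilon\sum_a\lambda_a X_a|_{\cX_0}$ equals $h^*\hat L$ minus a small effective combination of the exceptional curves, and is therefore ample by openness of the ample cone. Two bookkeeping points must be verified: that $\sum_a\lambda_a X_a|_{\cX_0}$ is a genuine $\bQ$-line bundle, which (as for $\hat L$) holds provided $\lambda_a=\lambda_b$ whenever $ab$ is an edge of $\hat\Delta$ through a focus-focus singularity, a constraint compatible with a small perturbation of $\varphi$; and that the perturbation keeps the exceptional coefficients strictly negative and small, which is where the convexity built into $\varphi$ and the freedom in the $\lambda_a$ are used.

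For (iii), I would prove d-semistability through split MHS. By Lemma~\ref{lem:split_MHS_CYncs} it suffices to show that each component $(X_a,D_a)$ has split MHS and that the gluings along the $1$-strata are given by $z\mapsto z^{-1}$ in toric coordinates. Each $(X_a,D_a)$ is the log Calabi--Yau pair obtained from the toric pair $\hat X_a$ by the interior blow-ups assigned to the focus-focus singularities on its boundary $1$-strata, and by Lemma~\ref{lem:split_MHS_logCY2} it has split MHS exactly when every such blow-up is performed at the distinguished point (the point $-1$ in the toric coordinate of that lemma). This is precisely the freedom left open in Definition~\ref{def:construction-of-calX_0}: I would choose each blow-up point to be this distinguished point. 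Since the gluings of $\hat\cX_0$ are the standard toric identifications (hence $z\mapsto z^{-1}$, as in the proof of Proposition~\ref{prop:construction-of-hatcalX_0-and-hatDelta}) and the involution $z\mapsto z^{-1}$ fixes $-1$, the distinguished point agrees when viewed from either of the two components sharing a double curve, so the blow-ups do not disturb the gluing; it is also irrelevant which of the two components is chosen to carry each blow-up. Thus $\cX_0$ has split MHS and is d-semistable. Note that, all double curves of $\cX_0$ being rational, the degree of $\cT^1_{\cX_0}$ on each component is pinned down by the triple point formula, and the only remaining obstruction to $\cT^1_{\cX_0}\simeq\cO_{\Sing\cX_0}$ is the gluing data at the triple points, which is exactly what splitness of the MHS kills.

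The main obstacle is the ampleness bookkeeping in (ii): one must confirm that the passage $\varphi(a)\mapsto\lambda_a$ can be performed so as to simultaneously (a) keep $\sum_a\lambda_a X_a|_{\cX_0}$ a well-defined line bundle subject to the focus-focus edge constraints, and (b) realize $h^*\hat L$ minus a strictly small effective combination of exceptional curves. In contrast, (iii) should be comparatively routine once the distinguished-point choice is made, the only subtlety—consistency of that choice across adjacent components—being resolved cleanly by the fixed point $-1$ of the gluing involution.
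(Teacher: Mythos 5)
There is a genuine gap in your treatment of part (ii), and it sits exactly where the paper's proof does its real work. You impose the constraint $\lambda_a = \lambda_b$ whenever $ab$ is an edge through a focus-focus singularity, carrying it over from the analogous constraint for $\hat{L}$ on $\hat{\cX}_0$. This is both unnecessary and fatal. It is unnecessary because that constraint on $\hat{\cX}_0$ is forced by the failure of the triple point formula along focus-focus edges; after the blow-ups, $\cX_0$ is a type III K3 surface (the blow-ups repair the triple point formula), so the degrees of $\sum_a \lambda_a X_a|_{\cX_0}$ on the two sides of every double curve match for \emph{arbitrary} $\lambda_a$. It is fatal because with $\lambda_a = \lambda_b$ the bundle $L$ has degree exactly zero on every exceptional curve: on the component $X_a$ carrying a blow-up along $\hat{D}_{ab}$, the only term of
$$
\epsilon\sum_c \lambda_c\, X_c|_{X_a} \;=\; \epsilon\sum_{c \neq a}(\lambda_c - \lambda_a)\,D_{ac}
$$
meeting $E_{ab,i}$ is $\epsilon(\lambda_b - \lambda_a)D_{ab}$, while $(h\circ g)^\ast \bar{L}$ is an $h_a$-pullback and hence trivial on $E_{ab,i}$; so $\deg_{E_{ab,i}} L = \epsilon(\lambda_b-\lambda_a) = 0$, and $L$ cannot be ample. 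The paper does the opposite: it perturbs so that $\lambda_a \neq \lambda_b$ on each focus-focus edge, and---this is the key step missing from your proposal---uses the sign of $\lambda_a - \lambda_b$ to decide on which of the two components each blow-up is performed (on $\hat{X}_a$ precisely when $\lambda_a < \lambda_b$). That correlation is what makes every exceptional curve appear with small \emph{negative} coefficient, i.e.\ $L|_{X_a} = h_a^\ast(\text{ample}) - \epsilon(\lambda_b - \lambda_a)\sum_i E_{ab,i}$ with $\lambda_b - \lambda_a > 0$, which is ample (this is the content of Corollary \ref{cor:description-of-hatL'_a}). In particular your remark that ``it is also irrelevant which of the two components is chosen to carry each blow-up'' is wrong: the component choice is forced by ampleness. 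A smaller point: ``$h^\ast\hat{L}$ minus a small effective exceptional combination is ample by openness of the ample cone'' is not a valid argument, since $h^\ast\hat{L}$ is only nef, not ample; one needs the standard blow-up fact (e.g.\ via Nakai--Moishezon).

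Your route to (iii)---placing all blow-up points at the distinguished point $-1$, concluding split MHS from Lemmas \ref{lem:split_MHS_logCY2} and \ref{lem:split_MHS_CYncs}, hence d-semistability---is genuinely different from the paper's, which leaves the blow-up points free and invokes \cite[Proposition 4.14]{Friedman-Scattone} to position them so that $\cX_0$ is d-semistable. Your idea is plausible for d-semistability in isolation, but note two costs. First, all three parts must hold for a \emph{single} choice of blow-ups, so (iii) has to be run after the component assignments are pinned down by (ii), not ``essentially independently'' as you propose (the consistency issue is not the fixed point of $z \mapsto z^{-1}$, which is a red herring here, but compatibility with the sign rule above). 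Second, when an edge carries several focus-focus singularities, your prescription forces repeated (infinitely near) blow-ups at $-1$, whereas the paper's argument arranges blow-ups at \emph{distinct} points, a fact it uses later (Corollary \ref{cor:description-of-hatL'_a}, Proposition \ref{prop:deomposition-to-Q_a-and-Kaehler-form}).
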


\begin{proof}
We first want to show that we can arrange for $\cX_0$ to be projective with such an ample $\bQ$-line bundle $L$. 
In order to do so, start by (rationally) perturbing $\varphi(a)$ to $\lambda_a$, for each $a \in \check{B}_\bZ$ so that, for each edge $ab$ of $\hat{\cP}$ containing focus-focus singularities, we have
$\lambda_a \neq \lambda_b$. 
Now choose blow-ups $h \colon \cX_0 \to \hat{\cX}_0$ so that for each focus-focus singularity on the edge $ab$, the corresponding blow-ups are done on the component $\hat{X}_a$ if $\lambda_a < \lambda_b$, and on $\hat{X}_b$ otherwise. (Note that this means that if there are multiple focus-focus singularities on the same edge, the corresponding blow-ups are all on the same component.) 

Assume now that we've made such choices. We still have the freedom to choose the exact location of the points we blow up (so far we've only fixed the components on which they get blown up). Using \cite[Proposition 4.14]{Friedman-Scattone}, we see that we can choose them in such a way that $\cX_0$ is d-semistable; moreover, if there are multiple blow-ups on the same components, we see that we have the freedom to make these at distinct points.
\end{proof}

For each $a \in \check{B}_\bZ$, let $\delta_a$ be the constant such that $\lambda_a = \varphi(a) + \delta_a$, and for each pair $a \neq b \in \check{B}_\bZ$ such that $\hat{X}_a \cap \hat{X}_b \neq \emptyset$, set $\hat{D}_{ab} := \hat{X}_a \cap \hat{X}_b$. The following is immediate.

\begin{corollary}\label{cor:description-of-hatL'_a}
For each component 
$(X_a, D_a)$ 
of $\cX_0$, the restriction $h_a \colon X_a \rightarrow \hat{X}_a$ is given by non-toric blow-ups of distinct points of the toric pair 
$(\hat{X}_a, \hat{D}_a)$. 
Moreover, we have 
$$
L|_{X_a} = h_a^*\hat{L}'_a - \epsilon  
\sum_{ab \in \hat{\cP
}, \, \delta_b - \delta_a > 0}  ( \delta_b - \delta_a) \sum_i E_{ab,i} 
$$
where the $E_{ab,i}$ are the exceptional curves of $h_a$ resulting from blow-ups on $\hat{D}_{ab}$, and for each component $\hat{X}_a$ of $\hat{\cX}_0$, we set $\hat{L}'_a$ to be the following ample  $\bQ$-line bundle on $\hat{X}_a$: 
\begin{equation}\label{eq:line-bundle-hatL'_a}
    \hat{L}'_a := \hat{L}|_{\hat{X}_a} + \epsilon \sum_{ ab \in \hat{\cP}} (\delta_a - \delta_b) \hat{D}_{ab}
\end{equation}
\end{corollary}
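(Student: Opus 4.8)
The plan is to derive the formula for $L|_{X_a}$ by restricting the two line bundles to the single component $X_a$ and comparing them through the blow-down $h_a \colon X_a \to \hat{X}_a$. Both are given as a pullback of $\bar{L}$ plus $\epsilon$ times a combination of the ``component divisors'': $\hat{L}$ by equation \eqref{eq:line-bundle-hat{L}} on $\hat{\cX}_0$, and $L$ by the formula of Proposition~\ref{prop:first-properties-of-calX_0}(ii) on $\cX_0$. So the whole argument reduces to a restriction computation with these divisors, which is why the statement is recorded as immediate.

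First I would restrict on the $\hat{\cX}_0$ side. Using the definition of $\hat{X}_c|_{\hat{X}_a}$, the diagonal term ($c=a$, with its minus sign) and the off-diagonal terms telescope, giving
\[
\hat{L}|_{\hat{X}_a} = (g^*\bar{L})|_{\hat{X}_a} + \epsilon \sum_{c \neq a}(\varphi(c)-\varphi(a))\,\hat{D}_{ac}.
\]
Combining this with the definition \eqref{eq:line-bundle-hatL'_a} of $\hat{L}'_a$ and with $\lambda_a = \varphi(a) + \delta_a$ expresses $\hat{L}'_a$ purely in terms of $(g^*\bar{L})|_{\hat{X}_a}$ and the boundary curves $\hat{D}_{ac}$. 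The same telescoping applied to the formula of Proposition~\ref{prop:first-properties-of-calX_0}(ii) gives $L|_{X_a} = h_a^*\big((g^*\bar{L})|_{\hat{X}_a}\big) + \epsilon \sum_{c \neq a}(\lambda_c - \lambda_a)\,(X_a \cap X_c)$.

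Next I would pass from $\hat{X}_a$ to $X_a$. The geometric input is the rule of Proposition~\ref{prop:first-properties-of-calX_0} governing where each focus-focus blow-up sits: a singularity on the edge $ac$ is blown up on the $\hat{X}_a$ side exactly when $\lambda_a < \lambda_c$, and since $\varphi$ is constant along the edges of $\hat{\cP}$ carrying focus-focus singularities (condition~(2) of Proposition~\ref{prop:construction-of-hatcalX_0-and-hatDelta}), this is precisely the condition $\delta_c - \delta_a > 0$. For such edges $X_a \cap X_c$ is the strict transform of $\hat{D}_{ac}$, so $h_a^*\hat{D}_{ac} = (X_a \cap X_c) + \sum_i E_{ac,i}$; for the other edges $h_a$ is an isomorphism near $\hat{D}_{ac}$ and $X_a \cap X_c = h_a^*\hat{D}_{ac}$. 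Substituting both cases, pulling $h_a^*$ out of the non-exceptional part and identifying the resulting line bundle on $\hat{X}_a$ with $\hat{L}'_a$ via \eqref{eq:line-bundle-hatL'_a}, and using $\lambda_c - \lambda_a = \delta_c - \delta_a$ on the focus-focus edges, collects the exceptional divisors with the claimed coefficients and yields the stated expression for $L|_{X_a}$.

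The only real obstacle is the sign and side-of-blow-up bookkeeping, which must be carried through consistently: the telescoping sign from the diagonal term $X_a|_{X_a}$; the total-versus-strict transform correction $h_a^*\hat{D}_{ac} = (X_a \cap X_c) + \sum_i E_{ac,i}$; and the sign of $\delta_c - \delta_a$, which simultaneously selects which of $\hat{X}_a, \hat{X}_c$ carries the blow-ups and fixes the sign of their coefficient, so that the exceptional contributions assemble exactly into $-\epsilon\sum_{ab \in \hat{\cP},\, \delta_b - \delta_a > 0}(\delta_b - \delta_a)\sum_i E_{ab,i}$ and the conventions match \eqref{eq:line-bundle-hatL'_a}. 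Everything else is routine.
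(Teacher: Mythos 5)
Your strategy---restrict both defining formulas to $X_a$, telescope the diagonal term of the $\sum \lambda_c X_c|_{\cX_0}$ convention, split $h_a^*\hat{D}_{ac}$ into strict transform plus exceptional curves exactly on those edges where Proposition \ref{prop:first-properties-of-calX_0} places the blow-ups on the $\hat{X}_a$ side, and use that $\varphi$ is constant along focus-focus edges so that $\lambda_c-\lambda_a=\delta_c-\delta_a$ there---is the intended computation (the paper records the statement as immediate and gives no other argument), and every geometric input you cite is correct. The flaw is in the one step you declare routine. Carrying your own substitution through, the non-exceptional part of $L|_{X_a}$ is
$$
h_a^*\Bigl[(g^*\bar{L})|_{\hat{X}_a}+\epsilon\sum_{c\neq a}(\lambda_c-\lambda_a)\hat{D}_{ac}\Bigr]
\;=\;h_a^*\Bigl[\hat{L}|_{\hat{X}_a}+\epsilon\sum_{c\neq a}(\delta_c-\delta_a)\hat{D}_{ac}\Bigr],
$$
since $\lambda_c-\lambda_a=(\varphi(c)-\varphi(a))+(\delta_c-\delta_a)$ and $\hat{L}|_{\hat{X}_a}=(g^*\bar{L})|_{\hat{X}_a}+\epsilon\sum_{c\neq a}(\varphi(c)-\varphi(a))\hat{D}_{ac}$. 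The boundary coefficient that comes out is $(\delta_b-\delta_a)$, the \emph{opposite} of the coefficient $(\delta_a-\delta_b)$ printed in \eqref{eq:line-bundle-hatL'_a}. So the identification you assert (``the conventions match \eqref{eq:line-bundle-hatL'_a}'') fails: with the printed definition of $\hat{L}'_a$, the displayed identity for $L|_{X_a}$ is off by $2\epsilon\sum_{c\neq a}(\delta_c-\delta_a)h_a^*\hat{D}_{ac}$, which is nonzero in general. What your computation actually proves is the formula with $\hat{L}'_a$ defined using $(\delta_b-\delta_a)$.

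What this reveals is a sign typo in \eqref{eq:line-bundle-hatL'_a}, which a careful write-up should flag rather than paper over. The cross-check is Proposition \ref{prop:deomposition-to-Q_a-and-Kaehler-form}: there the moment polytope $\hat{Q}_a$ of $(\hat{X}_a,\hat{L}'_a)$ is obtained from $\hat{\Delta}_a$ by displacing the edge dual to $ab$ \emph{outwards} by $\epsilon(\delta_b-\delta_a)$, and the non-toric blow-ups then cut out triangles of exactly that size, so that the exceptional curves $E_{ab,i}$ have symplectic area $\epsilon(\delta_b-\delta_a)>0$. Since adding $t\hat{D}_{ab}$ with $t>0$ to an ample toric line bundle moves the corresponding facet of the moment polytope outwards by $t$, the bundle with polytope $\hat{Q}_a$ is $\hat{L}|_{\hat{X}_a}+\epsilon\sum(\delta_b-\delta_a)\hat{D}_{ab}$---agreeing with your computation and with the exceptional term $-\epsilon\sum_{\delta_b-\delta_a>0}(\delta_b-\delta_a)\sum_i E_{ab,i}$ in the Corollary (that coefficient is forced, since $c_1(L|_{X_a})\cdot E_{ab,i}$ must equal the area $\epsilon(\delta_b-\delta_a)$), but disagreeing with \eqref{eq:line-bundle-hatL'_a} as printed. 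In short: right approach, correct inputs, but the sign bookkeeping you deferred is precisely where the statement as printed breaks, and completing the proof requires correcting the sign in the definition of $\hat{L}'_a$ rather than asserting that everything matches.
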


\begin{proposition}\label{prop:deomposition-to-Q_a-and-Kaehler-form}
We can construct the following structures:
\begin{enumerate}
    \item[(a)] a polyhedral subdivision of the integral affine manifold $B_Z$ with focus-focus singularities with maximal cells integral affine polygons $Q_a$ indexed by $a \in \check{B}_{\bZ}$;
    \item[(b)] for each $a \in \check{B}_{\bZ}$, a symplectic form $\omega_a$ on $X_a$;
\end{enumerate}
such that the following properties hold:

\begin{enumerate}
    \item[(i)] The decomposition $B_Z = \cup_a Q_a$ is obtained by a small perturbation of the decomposition $\hat{\Delta}$, and is such that the focus-focus singularities of $B_Z$ all lie in the interior of the $Q_a$; let $S_a$ denote the singularities inside $Q_a$.

    \item[(ii)] For each $a \in \check{B}_{\bZ}$, 
    there is an almost-toric fibration $\pi_a: (X_a, \omega_a) \to (Q_a, S_a, \partial Q_a)$, where we use the conventions in \cite{Symington} for the components of an almost-toric base.

    \item [(iii)] The symplectic form $\omega_a$ is K\"ahler, and satisfies $[\omega_a] = c_1 (L|_{X_a}) \in H^2(X_a; \bQ)$.

    \item[(iv)] Fix an edge of the polyhedral decomposition $B_Z = \cup_a Q_a$, say dual to $ab \in \hat{\cP}$, and consider the inclusions $i_a: D_{ab} \hookrightarrow X_a$, $i_b: D_{ab} \hookrightarrow X_b$ of the corresponding divisor. Then we have $i^\ast \omega_a = i^\ast \omega_b$.

\end{enumerate}

In particular, the $\pi_a$ patch together to give a generalised almost-toric fibration from $\cX_0$ to $B_Z$, which we call $\pi_L$. 

\end{proposition}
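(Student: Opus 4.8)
The plan is to build the structures $Q_a$, $\omega_a$, and the fibration $\pi_L$ component-by-component from the data already produced, using the toric interpretation of $\hat{L}'_a$ together with the almost-toric technology of \cite{Symington, Evans} to account for the non-toric blow-ups $h_a \colon X_a \to \hat{X}_a$. The key organizing principle is that each ample $\bQ$-line bundle $\hat{L}'_a$ on the toric surface $\hat{X}_a$ (from Corollary \ref{cor:description-of-hatL'_a}) has an associated moment polytope, and the non-toric blow-ups of Definition \ref{def:construction-of-calX_0} are exactly encoded on the almost-toric side by introducing focus-focus singularities.

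\textbf{Step 1: Construction of the polytopes $Q_a$.} For each $a \in \check{B}_{\bZ}$, the toric pair $(\hat{X}_a, \hat{D}_a)$ with the ample $\bQ$-line bundle $\hat{L}'_a$ of Equation \ref{eq:line-bundle-hatL'_a} has a moment polytope $P_a^\circ$, a rational polygon. I would take $Q_a$ to be (a small perturbation of) $P_a^\circ$, with the edge-matching guaranteed by the compatibility condition $A_v \cdot Z_e = A_w \cdot Z_e$ from Definition \ref{def:line-bundle-A_v} together with the construction of $\hat{L}$; the dual-edge lengths across each shared edge coincide by property (iv) of $\hat{L}$, so the polytopes glue along $\partial Q_a$. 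The perturbation moving from $\hat{\Delta}$ to $\cup_a Q_a$ is dictated by sliding the focus-focus singularities off the edges of $\hat{\cP}$ (where they sat in Proposition \ref{prop:construction-of-hatcalX_0-and-hatDelta}) into the interiors $S_a$; this is the nodal-slide operation of \cite[Section 8]{Evans}, done compatibly on both sides of each edge. This gives (a) and (i).

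\textbf{Step 2: The almost-toric fibrations $\pi_a$ and the K\"ahler forms $\omega_a$.} This is the heart of the argument. For each focus-focus singularity in $S_a$, the blow-up $h_a \colon X_a \to \hat{X}_a$ at a point of $\hat{D}_{ab}$ corresponds on the almost-toric side to replacing a toric corner/edge structure by a nodal fibre with invariant direction along the relevant ray; this is precisely the correspondence between non-toric blow-ups of log Calabi--Yau pairs and almost-toric fibrations recalled in Section \ref{sec:hms-recalls} and treated in \cite{Symington, Evans}. Concretely, starting from the toric K\"ahler form on $(\hat{X}_a, \hat{L}'_a)$ (a genuine K\"ahler form since $\hat{L}'_a$ is ample) and performing the symplectic blow-ups with sizes matched to the coefficients $\epsilon(\delta_b - \delta_a)$ appearing in the formula for $L|_{X_a}$ in Corollary \ref{cor:description-of-hatL'_a}, one obtains a K\"ahler form $\omega_a$ on $X_a$ with $[\omega_a] = c_1(L|_{X_a})$, which is the total space of an almost-toric fibration $\pi_a \colon (X_a, \omega_a) \to (Q_a, S_a, \partial Q_a)$. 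Since each $X_a$ is a blow-up of a projective toric surface at finitely many points, it is projective, and $L|_{X_a}$ ample implies $\omega_a$ is K\"ahler; this gives (ii) and (iii). The size-matching between the symplectic blow-up parameter and the line-bundle coefficients is exactly what forces $[\omega_a] = c_1(L|_{X_a})$, so these two requirements are pinned down simultaneously.

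\textbf{Step 3: Boundary matching and gluing.} For an edge dual to $ab \in \hat{\cP}$, property (iv) requires $i_a^* \omega_a = i_b^* \omega_b$ on $D_{ab}$. Since $D_{ab}$ is a common $1$-stratum, its symplectic area as computed from $\omega_a$ is $L|_{X_a} \cdot D_{ab}$ and from $\omega_b$ is $L|_{X_b} \cdot D_{ab}$; these agree because $L$ is a genuine ($\bQ$-)line bundle on $\cX_0$, so its restriction to the double curve has a well-defined degree (the degree-matching condition built into Equation \ref{eq:line-bundle-hat{L}} and its analogue for $L$). More precisely, both restrictions are the K\"ahler form on $\bP^1 = D_{ab}$ of the appropriate total area, and any two K\"ahler forms on $\bP^1$ with equal area and the same (linear/toric) structure near the fixed points agree after the standard identification; so I would match them on the nose using the toric coordinates, invoking property (iv) of the line bundle $L$ to guarantee equal areas. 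Once (iv) holds, the fibrations $\pi_a$ agree over shared boundary edges (both being the restriction of the standard cylindrical torus fibration over the common edge of $Q_a$ and $Q_b$), and they patch to a single generalised almost-toric fibration $\pi_L \colon \cX_0 \to B_Z$ in the sense of Symington's conventions \cite{Symington}.

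\textbf{The main obstacle} I expect is Step 2 combined with the area-matching in Step 3: one must choose the symplectic blow-up sizes and the K\"ahler forms $\omega_a$ so that they are simultaneously compatible with $c_1(L|_{X_a})$ \emph{and} match along every double curve, with no accumulated discrepancy around the vertices of $\hat{\Delta}$ where several components meet. The saving grace is that all the numerical constraints are already encoded in the single global $\bQ$-line bundle $L$ on $\cX_0$ (constructed in Proposition \ref{prop:first-properties-of-calX_0}), so the consistency of the areas is automatic from the existence of $L$; the work is to realize these prescribed cohomological areas by \emph{honest} K\"ahler forms on each toric-blow-up component and to check that the almost-toric structures glue smoothly across edges, which is a standard—if delicate—application of the local normal-form results for focus-focus fibres and toric boundaries in \cite{Symington, Evans, Zung}.
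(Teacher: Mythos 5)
Your proposal follows the same architecture as the paper's proof: $Q_a$ is the moment polytope of the twisted bundle $\hat{L}'_a$ from Corollary \ref{cor:description-of-hatL'_a}, the non-toric blow-up cuts have size $\epsilon(\delta_b-\delta_a)$ matched to the coefficients of the exceptional curves in $L|_{X_a}$, and (iv) is deduced from toric invariance plus equal areas on $D_{ab}$. However, two steps as written contain genuine errors.

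First, your mechanism for claim (i) fails. You propose to move the focus-focus singularities ``off the edges \ldots into the interiors $S_a$'' by nodal slides. A nodal slide moves a singularity along its invariant direction, and here the invariant direction of each node is \emph{parallel to the edge of $\hat{\Delta}$ on which it sits} (it is the direction of the ray corresponding to the blow-up), so a nodal slide can never move a node transversally off its edge. More fundamentally, nothing is allowed to move: the positions of the focus-focus singularities are fixed data of $B_Z$, and what must be checked is that your construction reproduces them. The actual mechanism is an arithmetic coincidence you have set up but never exploit: the edge of the moment polytope of $\hat{L}'_a$ dual to $ab$ is displaced \emph{outward} from the corresponding edge of $\hat{\Delta}_a$ by exactly $\epsilon(\delta_b-\delta_a)$, which equals the depth of the triangle cut out for each blow-up on that edge; hence the apex of each cut (the node) lands precisely back on the original edge of $\hat{\Delta}$, i.e.\ at the prescribed singular point of $B_Z$, which is now interior to $Q_a$ because $Q_a$'s edge has moved past it (and $Q_b$'s edge has receded by the same amount, so the $Q_a$ still tile $B_Z$). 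Without this comparison, claim (i) and the assertion that the $\pi_a$ patch to a fibration of $\cX_0$ over the \emph{given} $B_Z$ are unproved.

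Second, your justification of K\"ahlerness in (iii) is a non-sequitur: projectivity of $X_a$ and ampleness of $L|_{X_a}$ give the existence of \emph{some} K\"ahler form in the class $c_1(L|_{X_a})$, not that the particular symplectic form $\omega_a$ produced by the almost-toric blow-up construction is K\"ahler. The paper settles this by invoking the explicit K\"ahler local models for non-toric blow-ups in \cite{Auroux-slag} and \cite{AAK}. Two smaller omissions: you need the freedom (noted in the paper) to shrink the perturbations $\lambda_a$, hence the $\delta_a$, so that disjoint triangles of depth $\epsilon(\delta_b-\delta_a)$ actually fit along an edge carrying several blow-ups; and the node positions must be chosen to match those fixed in Proposition \ref{prop:first-properties-of-calX_0}.
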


By `generalised' almost-toric fibration, we mean the following: we extend the definition from \cite{Symington, Evans} to allow local models of singularities which appear when restricting toric fibrations of smooth toric manifolds to their toric boundaries. 

\begin{proof}
For each $a \in \check{B}_\bZ$, let $\hat{\Delta}_a$ be the corresponding face of the polyhedral decomposition $\hat{\Delta}$. This  is the moment polytope for the polarised toric surface $(\hat{X}_a, \hat{L}_a)$.  Its edges are labelled by the (dual) edges $ab \in \hat{\cP}$, for varying $b$. 
For each $a \in \check{B}_\bZ$, we can also consider the polarised toric surface $(\hat{X}_a, \hat{L}'_a)$. Say this has moment polytope $\hat{Q}_a$, with almost-toric fibration $\hat{\pi}_a: \hat{X}_a \to \hat{Q}_a$. From Equation \ref{eq:line-bundle-hatL'_a}, we get that $\hat{Q}_a$ can be obtained by starting with the integral affine polygon $\hat{\Delta}_a$ (with rational vertices), say viewed inside $\bR^2$, and replacing the edge labelled by $ab \in \hat{\cP}$ with a parallel copy of it displaced by integral affine distance $\epsilon(\delta_b - \delta_a)$. (Here the sign is chosen so that the distance is measured outwards from the inside of the polygon.) 
Let $\hat{\omega}_a$  be the toric K\"ahler form on $\hat{X}_a$ corresponding to $\hat{Q}_a$. By definition, we have $c_1(\hat{L}'_a) = [\hat{\omega}_a]$.

Now fix $a \in \check{B}_\bZ$ and consider $\hat{\pi}_a: \hat{X}_a \to \hat{Q}_a$. 
We want to modify this using the standard local model for non-toric symplectic blow-ups, following \cite[Section 5.4]{Symington} (see also \cite[Section 6.1]{Evans}).
For each blow-up which arises in $h_a: X_a \lra \hat{X}_a$, we modify the fibration $\hat{\pi}_a: \hat{X}_a \to \hat{Q}_a$ by `cutting out a triangle' from the corresponding edge of $\hat{Q}_a$, and inserting a nodal singularity at the apex of the triangle, with invariant direction parallel to the edge. See \cite[Figure 9.2]{Evans}. For each blow-up, we get to choose the exact position of the node, all of which are related by nodal slides; we choose ours to match the choices in Proposition \ref{prop:first-properties-of-calX_0}. Also, we get to choose the affine edge-length of the triangle we cut out (or, equivalently, the symplectic area of the newly-created $(-1)$ curve), so long as it is sufficiently small. Say we're working on an edge indexed by $ab \in \hat{\cP}$; then we choose our cut so that the $(-1)$ curve has symplectic area $\epsilon (\delta_b - \delta_a)$. Inspecting the construction of $L$ in Proposition \ref{prop:first-properties-of-calX_0}, we see that for a fixed $\epsilon$ (and so polyhedral decomposition $\hat{\Delta}$ of $B_Z$), we are free to choose our perturbations $\lambda_a$ of $\varphi(a)$ so that $\delta_a$ is arbitrarily small. In particular, we may assume that $\epsilon (\delta_b - \delta_a)$ is small enough so as to have room to cut out (disjoint) almost-toric triangles for all of the blow-ups in $h_a$ which appear on the corresponding component. 

This gives a symplectic form $\omega_a$ on $X_a$ together with an almost-toric fibration $\pi_a: X_a \to Q_a$.
The fact that $\omega_a$ is K\"ahler follows from worked examples of Auroux for the non-toric blow-up, see \cite[Example 3.9]{Auroux-slag} and \cite[$\S$3.2]{AAK}.
For claim (i), recall that $\hat{Q}_a$ has edges displaced from those of $\hat{\Delta}_a$ by integral affine distance $\epsilon(\delta_b - \delta_a)$. As this matches the affine side-length of the triangles we cut out for the blow-up, we see that the resulting singular points precisely land back on the original edge for $\hat{\Delta}_a$ (and with the same invariant direction as in $B_Z$). Claim (i) is then clear. Claim (ii) follows from the standard blow-up formula, and (iv) from the fact that $\omega_a$ and $\omega_b$ both pull back to toric symplectic forms with the same moment maps.
\end{proof}

\begin{corollary}\label{cor:Kaehler-compactification}
Suppose $Y$ is a type III K3 with split mixed Hodge structure, and let $(X, \omega; \Sigma)$ be its mirror compactifying K3 surface with divisor.
Then there exists a  complex structure on $X$ such that the symplectic form $\omega$ is K\"ahler, and $\Sigma$ is a holomorphic submanifold.     
\end{corollary}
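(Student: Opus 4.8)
The plan is to realise $(X,\omega;\Sigma)$ as a smooth fibre of the semistable smoothing of the Gross--Siebert degeneration $\cX_0$ built in Propositions \ref{prop:first-properties-of-calX_0} and \ref{prop:deomposition-to-Q_a-and-Kaehler-form}, and then to track $\Sigma$ through this identification. By Proposition \ref{prop:X-indep-elem-modification} the pair $(X,\omega;\Sigma)$ depends, up to symplectomorphism (and up to the overall rescaling of $\omega$ induced by the order-$d$ base change), only on the invariants $(n,k)$; so I would first reduce to the carefully chosen representative $Y=Y''$ of Definition \ref{def:type-III-K3s-Y'-and-Y''}, equipped with the toric model $Y''\to Z$ and the specific choices of $A_v$ and $\varphi$ from Remarks \ref{rmk:choices-of-ample-line-bundles} and \ref{rmk:choices-of-varphi}. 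This puts us in the setting of Section \ref{sec:compactification-of-M-is-Kaehler}, where $B=B_Z$ and all the relevant structures are available.

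\emph{Step 1 (that $\omega$ is K\"ahler).} Since $\cX_0$ is projective and d-semistable (Proposition \ref{prop:first-properties-of-calX_0}), it admits a semistable smoothing $(\cX_0\subset\cX)/(0\in\bD)$, and after shrinking $\bD$ and choosing the smoothing appropriately the ample $\bQ$-line bundle $L$ extends to a relatively ample $\bQ$-line bundle on $\cX$ (embed $\cX_0$ by a power of $L$ and smooth inside the resulting projective space). The induced Fubini--Study form restricts to a K\"ahler form $\omega_t$ on a smooth fibre $\cX_t$, a complex K3 surface. The geometric heart of this step is to show that $(\cX_t,\omega_t)$ is the total space of an almost-toric fibration over $B_Z$ with focus-focus fibres exactly at the singular points of $B_Z$: the K\"ahler pieces $(X_a,\omega_a)$ are almost-toric over the cells $Q_a$ and patch to the generalised almost-toric fibration $\pi_L\colon\cX_0\to B_Z$ of Proposition \ref{prop:deomposition-to-Q_a-and-Kaehler-form}, and the semistable smoothing opens the normal-crossing necks over the edges of the $Q_a$ into genuine $T^2$-necks. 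Using property (iv) of Proposition \ref{prop:deomposition-to-Q_a-and-Kaehler-form} (matching of the forms along the $D_{ab}$) together with the standard symplectic local models for smoothing a toric normal-crossing boundary (cf.\ the Clemens collapse / real-oriented-blowup description in the proof of Lemma \ref{lem:characterisation-of-k}, and \cite{Symington, Evans, Gross_specialI}), these pieces glue to an honest Lagrangian torus fibration $\cX_t\to B_Z$ admitting a Lagrangian section, whose integral affine structure and singularities agree with those of $B_Z$. Both $(X,\omega)$ (from Lemma \ref{lem:existence-of-X} and Proposition \ref{prop:M-compactifies-to-type-III-K3}) and $(\cX_t,\omega_t)$ are then almost-toric fibrations over $B_Z$ with sections, so the uniqueness statement Lemma \ref{lem:existence-of-X}(ii) yields a symplectomorphism $\Psi\colon X\xrightarrow{\simeq}\cX_t$, fibred away from small neighbourhoods of the nodal fibres and matching the sections $L_0$. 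Pulling back the complex structure of $\cX_t$ along $\Psi$ exhibits $\omega$ as K\"ahler.

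\emph{Step 2 (that $\Sigma$ is holomorphic).} Next I would produce a holomorphic model for $\Sigma$ on the B-side. On the central fibre $\cX_0$ I construct a Cartier divisor $C$ which restricts on the relevant (type (1), hexagonal) components $X_a$ to a tropical line, i.e.\ a curve whose image under $\pi_a$ is the trivalent graph $\Gamma_a$, glued compatibly across the double curves so that $\cO_{\cX_0}(C)$ has numerical type matching $[\Sigma]$ (self-intersection $2n$ and, by Lemma \ref{lem:characterisation-of-k-divisibility-of-Sigma}, divisibility $k$). The specific choices of $A_v$ and $\varphi$ ensure that the tropicalisation of $C$ is exactly $\Gamma\subset B_Z$ over the regions where $\Sigma$ lives. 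Because $\cX_0$ is d-semistable, $\cO_{\cX_0}(C)$ lifts along a suitable smoothing, so $C$ deforms to a relative divisor $\cC\subset\cX$, and the tropical (Mikhalkin--Viro) correspondence guarantees that the general fibre $\cC_t\subset\cX_t$ is a smooth holomorphic curve of the correct genus $g=n+1$. Finally, both $\Psi(\Sigma)$ and $\cC_t$ are surfaces in $\cX_t$ fibred over the thickened graph $\Gamma$, agreeing with the same tropical local model (a symplectic, resp.\ holomorphic, pair-of-pants) in each toric chart; since $\Psi$ is fibre-preserving near $\Gamma$, a fibered symplectic isotopy supported near $\Gamma$ (of the type used in the proof of Proposition \ref{prop:X-indep-elem-modification}) carries $\Psi(\Sigma)$ onto $\cC_t$. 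Composing $\Psi$ with this isotopy gives a symplectomorphism $\Psi'\colon X\to\cX_t$ taking $\Sigma$ to the holomorphic curve $\cC_t$; the complex structure $J:=(\Psi')^{*}J_{\cX_t}$ then makes $\omega$ K\"ahler and $\Sigma$ holomorphic, as required.

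The main obstacle I expect is the geometric core of Step 1: rigorously promoting the generalised almost-toric fibration on the singular fibre $\cX_0$ to an honest almost-toric fibration on the nearby smooth fibre $\cX_t$ over the same integral affine base $B_Z$, i.e.\ controlling the symplectic geometry of the smoothing of the triple points and double curves so that the V\~{u} Ng\d{o}c data and integral affine structure match those of $B_Z$ on the nose (up to the nodal-slide ambiguity already permitted by Lemma \ref{lem:existence-of-X}). A secondary delicate point is ensuring in Step 2 that the degeneration $\cC\to\bD$ has smooth general fibre in the prescribed numerical class while its tropicalisation stays pinned to $\Gamma$, so that the fibre-preserving isotopy matching $\Psi(\Sigma)$ with $\cC_t$ genuinely exists.
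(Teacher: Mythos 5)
Your overall architecture (reduce to the special representative $Y''$ via Proposition \ref{prop:X-indep-elem-modification}, pass to the Gross--Siebert degeneration $\cX_0$ with its ample $\bQ$-line bundle, smooth it, and lift the tropical divisor $C$ to $\cC\subset\cX$) matches the paper. But your Step 1 takes a genuinely different route at the crucial point, and that route has a real gap. You propose to show that the smooth fibre $(\cX_t,\omega_t)$ of the semistable smoothing \emph{itself} carries an honest almost-toric Lagrangian fibration over $B_Z$ with focus-focus fibres at the singular points and a Lagrangian section, and then invoke the uniqueness statement Lemma \ref{lem:existence-of-X}(ii). The claim that ``the semistable smoothing opens the normal-crossing necks into genuine $T^2$-necks'' so that the pieces ``glue to an honest Lagrangian torus fibration'' does not follow from the standard local models you cite: producing a global Lagrangian torus fibration on a compact smooth fibre of such a degeneration, with the prescribed integral affine base, is a hard theorem in its own right --- it is essentially \cite[Theorem 1.6]{Chakravarthy-Groman}, which the paper explicitly flags in Remark \ref{rmk:Groman} as recent, independent work that its argument does \emph{not} rely on (and even that theorem requires the symplectic form on $\cX$ to be toric on the components and standard near triple points, which the Fubini--Study form is not without a further Moser-type correction). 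You correctly identify this as ``the main obstacle,'' but the proof as written treats it as routine gluing, which it is not.

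The paper avoids this obstacle entirely: it never constructs a Lagrangian fibration on $\cX_t$. Instead it builds the symplectomorphism $l\colon X''\to\cX_t$ directly as $l^{\circ}=g^{-1}\circ\rho\circ f$ away from the preimage of the one-skeleton $Q^{[1]}$, where $f$ is the toric collapse map $X''\to\cX_0$, $\rho$ is a componentwise Moser isotopy matching $\omega_a$ with the Fubini--Study restriction $\upvarpi_a$, and $g$ is symplectic parallel transport $\cX_t\to\cX_0$; it then extends $l^{\circ}$ over the critical locus using the holomorphic local models $\{xy=t\}$, $\{xyz=t\}$ for the degeneration and their collapse maps, and concludes that the extension is a symplectomorphism by a closedness argument on the vanishing locus of $l^\ast\upvarpi_t-\omega''$. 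If you wanted to salvage your route you would need to either prove the fibration-on-the-smooth-fibre statement (essentially redoing Chakravarthy--Groman with the Moser correction built in) or switch to the paper's transport-and-extend strategy. A secondary, smaller issue: in Step 2 you match $\Psi(\Sigma)$ with $\cC_t$ by an ambient isotopy ``since both fibre over $\Gamma$,'' which implicitly requires knowing they are homologous and symplectically isotopic in $\cX_t$; the paper instead carries a one-parameter family of symplectic divisors $\Sigma_s$ from $f(\Sigma)$ to $C$ inside $\cX_0$ through the same maps $f_s,\rho,g_s$, using the symplectic neighbourhood theorem at each stage, which is what makes the matching rigorous.
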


\begin{proof}
By Proposition \ref{prop:X-indep-elem-modification}, we are free to modify $Y$ by any sequence of elementary modifications. In particular, we may assume it is in $(-1)$ form.
Let $Y'$ and $Y''$ be the auxiliary type III K3 surfaces introduced in Definition \ref{def:type-III-K3s-Y'-and-Y''}, choosing $d$ as follows: for $\nicefrac{n}{k^2}$, we take $d = 1$; and for $\nicefrac{n}{k^2}$ odd, we take $d = 4$. 
These are in the range for which Proposition \ref{prop:type_III_toric_model} holds, and we have our auxiliary birational morphism $Y'' \to Z$. Assume also that for the constructions of $\hat{\cX}_0$ and $\cX_0$, we have chosen auxiliary data as in Remarks \ref{rmk:choices-of-ample-line-bundles} and \ref{rmk:choices-of-varphi}. 

Let $(X, \omega; \Sigma)$, $(X', \omega'; \Sigma')$ and $(X'', \omega''; \Sigma''')$ be the mirror compactifying K3 surfaces with divisors for, respectively, $Y$, $Y'$ and $Y''$. Say these are equipped with almost-toric fibrations $\pi_X: X \to B$, $\pi_{X'}: X' \to B'$ and $\pi_{X''}: X'' \to B''$. 

\underline{Step 1:}  $(X, \omega)$ is K\"ahler.
From Lemma \ref{lem:relation-between-dual-complexes}, we see, first, that the integral affine manifold $B'$ is given by scaling $B$ by a factor of $d$, and that there's a diffeomorphism $X \to X'$, intertwining the fibrations, which is a conformal symplectomorphism with constant conformal factor $d$. As $Y''$ is obtained from $Y'$ by type II modifications, by Proposition \ref{prop:X-indep-elem-modification}, it's enough to show that $(X'', \omega'')$ is K\"ahler.

We will now input Propositions \ref{prop:first-properties-of-calX_0} and \ref{prop:deomposition-to-Q_a-and-Kaehler-form}. First, let $f: X'' \to \cX_0$ be the standard `toric collapse map'  such that the following diagram commutes:
$$
\xymatrix{
X'' \ar[rr]^f \ar[rd]_{\pi_{X''}} && \cX_0 \ar[ld]^{\pi_L}  \\
& B_Z &
}
$$
(Here we implicitly assume that the V\~{u} Ng\d{o}c invariants of nodal fibres above the same singular point of $B_Z$ have been chosen to agree, which we are free to do.)
Let $Q^{[1]} \subset B_Z$ be the union of the one-strata in the polyhedral decomposition $B_Z = \cup_a Q_a$. The map $f$ gives a symplectomorphism when restricted to the complement of the preimages of $Q^{[1]}$:
$$ X'' \backslash (\pi_{X''}^{-1} (Q^{[1]}) )  \stackrel{\simeq}{\lra} \cX_0  \backslash ( \pi_L^{-1} (Q^{[1]}) ).$$

On the other hand, we know that $\cX_0$ is semistable. Take any semistable smoothing $(\cX_0 \subset \cX)/(0 \in \bD)$, with $L$ lifting to a relatively ample $\bQ$-line bundle  on $\cX$, say $\cL$. 
(Note that for now we may work with any such smoothing; in Step 2 we will add an extra condition.)
This determines an embedding $\cX \hookrightarrow \bP^N \times \bD$, and we get a K\"ahler form $\upvarpi$ on the total space by setting
$$
\upvarpi :=(\pr_1^*\omega_{\FS}+K\pr_2^*\omega_{\std})|_{\cX}
$$
where $\omega_{\FS}$ denotes the Fubini-Study form on $\bP^N$, $\omega_{\std}$ the standard symplectic form on $\bD$, and $K \gg 0$. 
Let $\cX_t$  be the smooth fibre above $t \in \bD^\ast$; then $\upvarpi$ restricts to a K\"ahler form, say $\upvarpi_t$, in the class of $c_1(\cL_t)$; similarly,  on each irreducible component $X_a$ of $\cX_0$, it restricts to a K\"ahler form, say $\upvarpi_a$, in the class of $c_1(L|_{X_a})$. 
Now symplectic parallel transport using $\upvarpi$ (and a choice of path in $\bD$ from $t$ to the origin) determines a continuous map $g: \cX_t \lra \cX_0$, which is a symplectomorphism (for the restrictions of $\upvarpi$) away from $\Crit \cX_0$ and its preimage.

For each $a \in \check{B}_\bZ$, as $\omega_a$ and $\upvarpi_a$ are cohomologous K\"ahler forms, we can perform Moser's trick using their linear interpolation. This gives an isotopy $\rho_a$ of $X_a$  such that $\rho_a^\ast \upvarpi_a = \omega_a$.
 Moreover, observe that in setting up the Moser vector field, we have the freedom to arrange for $\rho_a$ to fix each of the irreducible components of $D_a$ setwise, and, on $D_{ab} = D_a \cap D_b \neq \emptyset$, for $\rho_a$ and $\rho_b$ to agree. Let $\rho$ be the induced self-map of $\cX_0$. 
We now have a commutative diagram
$$
\xymatrix{X'' \ar[rd]_{\rho \circ f} \ar@{-->}[rr]^{l^\circ}  && \cX_t \ar[ld]^g \\
& \cX_0 & 
}
$$
where the map $l^\circ \coloneqq g^{-1} \circ \rho \circ f$ is defined (and a symplectomorphism) from $X'' \backslash  ( \pi_{X''}^{-1}(Q^{[1]}) )$ to  $ \cX_t \backslash (\pi_L^{-1}(Q^{[1]}))$.

We want to extend $l^\circ$ to a diffeomorphism $X'' \to \cX_t$. 
Let's first work locally. 
Holomorphically, there are two local models for neighbourhoods in $\cX$ of points in $\Crit \cX_0$: the degeneration $\{ xyz=t \} \subset \bC^3_{x,y,z} \times \bD_t$ (for triple points), and the degeneration $\{ xyz=t \} \subset \bC^3_{x,y,z} \times \bD_t$ (otherwise). 
Fix $c \in \Crit \cX_0$, and the relevant local model. 
For each local model, the `straight-line' symplectic parallel transport map, say $g_\loc$, from the smooth fibre, say $\cX_{t, \loc}$, to the singular fibre, say $\cX_{0, \loc}$, is well understood, and known to agree with the corresponding local model for the collapse map $X'' \to \cX_0$. Concretely, the preimages $(\rho \circ f)^{-1} (c)$ and $g_\loc^{-1} (c)$ are diffeomorphic, and there is a diffeomorphism $l_\loc$ from a neighbourhood of $(\rho \circ f)^{-1} (c)$  to a neighbourhood of $g_\loc^{-1} (c)$ such that locally, $ \rho \circ f = l_\loc \circ g_\loc$. 

As we used a \emph{holomorphic} local model for our critical points, we will a priori have changed the symplectic parallel transport map. However, note that we will still get the same local models for (neighbourhoods of) generalised vanishing cycles above points, and collapse maps. 
(This is similar to the argument for a standard vanishing cycle, and well-known to experts. 
One way to set this up formally would be to adapt the arguments which show that the Milnor fibre of an isolated hypersurface singularity, as a Liouville domain, only depends on the germ of the singularity as a holomorphic map, as in \cite[Lemma 2.7]{Keating_tori}.) 
The upshot is that we get a local diffeomorphism $l_{\loc, c}$  from a neighbourhood of $(\rho \circ f)^{-1} (c)$  to a neighbourhood of $g^{-1} (c)$ such that locally, $ \rho \circ f = l_\loc \circ g$. 
It's immediate that $l_{\loc,c}$ agrees with $l^\circ$ where they are both defined. 
Moreover, as every point in $\pi_{X''}^{-1}(Q^{[1]})$ is a limit of points in the complement $X'' \backslash \pi_{X''}^{-1}(Q^{[1]}) $, by uniqueness of limits, we see that for $c \neq c' \in \Crit \cX_0$, $l_{\loc, c}$ and $l_{\loc, c'}$ must agree on the overlaps of their domains of definition. 
Patching them together, we get a diffeomorphism $l: X'' \to \cX_0$ which agrees with $l^\circ$ on $X'' \backslash \pi_{X''}^{-1}(Q^{[1]}) $. Now using the fact that the vanishing locus of the 2-form $l^\ast \upvarpi_t - \omega''$ is closed in $X''$, we see that $l$ is a symplectomorphism from $(X'', \omega'')$ to $(\cX_t, \upvarpi_t)$. In particular,  $(X'', \omega'')$, and hence $(X, \omega)$, is K\"ahler. 

\underline{Step 2:}  holomorphic representative for $\Sigma$. 
Recall the construction of $\Sigma$ in the proof of Proposition \ref{prop:M-compactifies-to-type-III-K3}.
We have a tropical graph $\Gamma$ obtained from drawing a tropical line in each simplex of $\Delta$ with trivalent vertex at the barycenter and legs intersecting the edges of the triangle at the midpoints. 
The curve $\Sigma$ was given by taking tropical lines over each face of $\Delta$ and gluing them together over edges of $\Delta$ to be a symplectic submanifold in $X$.  
(In particular, we get $\Gamma \subset B_Z$, and $\pi_X$ maps $\Sigma$ to an amoeba which retracts onto $\Gamma$.)

Let's first track $\Gamma$ as we modify our triangulations. 
If $\nicefrac{n}{k^2}$ is even, the triangulations $\Delta$, $\Delta'$ and $\Delta''$ all agree. 
If $\nicefrac{n}{k^2}$ is odd, then  $\Delta'$ is obtained by starting with $\Delta$ and subdividing each triangle into $d^2 = 16$ triangles, and $\Gamma$ now passes through some vertices of valency 6.
See Figure \ref{fig:Delta-Delta'-Delta''-decorated.png}.
The triangulation $\Delta''$ is obtained from $\Delta'$ by flipping some edges; as we've taken $d = 4$, observe that all of these edge flips happen away from the simplices intersecting $\Gamma$. (Using the notation of the proof of Proposition \ref{prop:construction-of-hatcalX_0-and-hatDelta}, formally, $\Gamma$ is a subset of the one-skeleton of $\cP$.)
\begin{figure}
    \centering
    \includegraphics[width=0.8\linewidth]{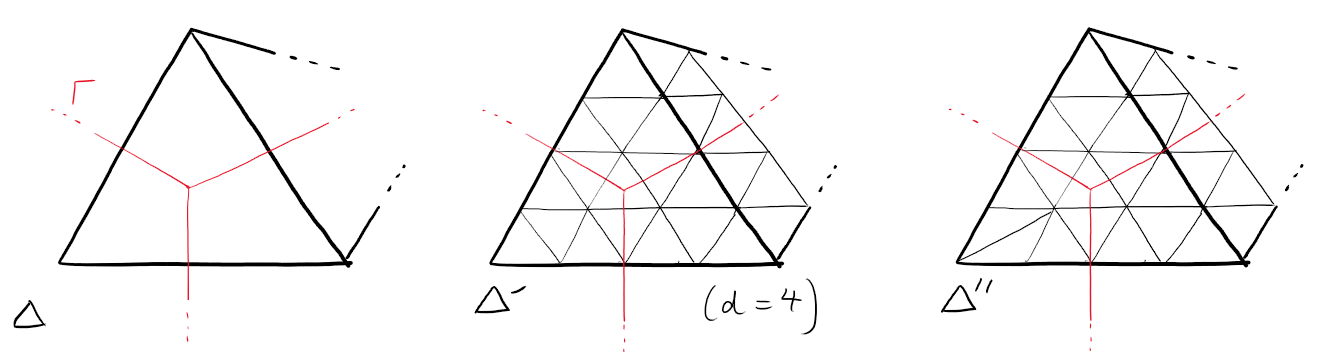}
    \caption{Local picture for $\Delta$, $\Delta'$ and $\Delta''$ together with $\Gamma$.}
    \label{fig:Delta-Delta'-Delta''-decorated.png}
\end{figure}

Recall that $\bar{\cX}_0$ denotes the surface given by the union of $\bP^2$'s with intersection complex $\Delta''$ (and such that there exists a line bundle $\bar{L}$ restricting to $\cO(1)$ on each component).
The graph $\Gamma$ determines a Weil divisor $\bar{C}$ in $\bar{\cX}_0$, which is a union of lines in components of $\bar{\cX}_0$, passing through some vertices of $\bar{\cX}_0$ of valency 6. Under the toric moment map for each component of $\bar{\cX}_0$, the corresponding component of $\bar{C}$ maps to an amoeba which retracts onto the restriction of $\Gamma$.

The divisor $\bar{C}$ can be obtained explicitly as follows. As $\Delta''$ is a simplicial complex, we can  embed $\bar{\cX}_0$ in the projective space $\bP^V$ with homogeneous coordinates indexed by the set of vertices $V$ of the $\Delta''$, using the line bundle $\bar{L}$ and the distinguished basis of $H^0(\bar{\cX}_0, \bar{L})$ indexed by $V$. 
For each component $S=\bP^2$ of $\bar{\cX}_0$ with homogeneous toric coordinates $X,Y,Z$ corresponding to a $2$-simplex $\sigma \in \Delta''$, consider the restriction of the graph $\Gamma$ to $\sigma$, which is either a $Y$-vertex, a line segment passing through a vertex of $\sigma$ (without loss of generality corresponding to $(1:0:0) \in \bP^2$), or empty. Then the restriction of $\bar{C}$ to $S$ is the line $X+Y+Z=0$, the line $Y+Z=0$, or empty, respectively.

The next step is to pass from $\Delta''$ to the subdivision $\hat{\Delta}$. Recall this determines a normal crossing surface $\hat{\cX}_0 $ together with  $h: \hat{\cX}_0 \rightarrow \bar{\cX}_0$, which is a proper morphism with connected fibres. (There are exceptional surfaces over the vertices of $\hat{\cX}_0$, and curves over points in the interior of the one-stratum of $\hat{\cX}_0$.)
The choices of Remarks \ref{rmk:choices-of-ample-line-bundles} and \ref{rmk:choices-of-varphi} now become relevant: these imply that 
$\bar{C}$ lifts to a Cartier divisor $\hat{C}$ on the normal crossing surface $\hat{\cX}_0$ meeting each stratum transversely, 
with properties as follows. Recall that the faces of $\hat{\Delta}$ are index by the points in $\check{B}_{\bZ}$; formally, $\Gamma$ is a subset of the one-skeleton of $\hat{\cP}$ (whose vertices are the points in $\check{B}_{\bZ}$). 
Let $\hat{C}_a$ be the restriction of $\hat{C}$ to $\hat{X}_a$. 
We have the following:
\begin{enumerate}

\item  If $a$  is a vertex of $\Gamma$, then it corresponds to a  dual triangle in $\Delta''$, say $\sigma \in \Delta''$ be the corresponding dual triangle of $\Delta''$; then  $\hat{C}_a$ is the pullback of 
a line under the birational morphism $\hat{X}_a \rightarrow (\bar{\cX}_0)_{a}=\bP^2$.
        
\item If $a$ is an interior point of an edge of $\Gamma$, then $\hat{C}_a$ is a smooth fibre of a (birational) ruling 
 $\hat{X}_a \rightarrow \bP^1$. In the case where $\nicefrac{n}{k^2}$ is odd (and so $d=4$), these are all rulings of copies of $\Bl^3 \bP^2$; in the case where  $\nicefrac{n}{k^2}$ is even, we just need to worry about the case where $\hat{X}_a$ is an exceptional surface contracting to  $\bP^1 \subset \Sing \bar{\cX}_0$ corresponding to an edge of $\Delta''$, in which case $\hat{C}_a$ is the fibre above a point.

\item Finally, $\hat{C}_a$ is empty if $a \notin \Gamma$.

\end{enumerate}

See Figure \ref{fig: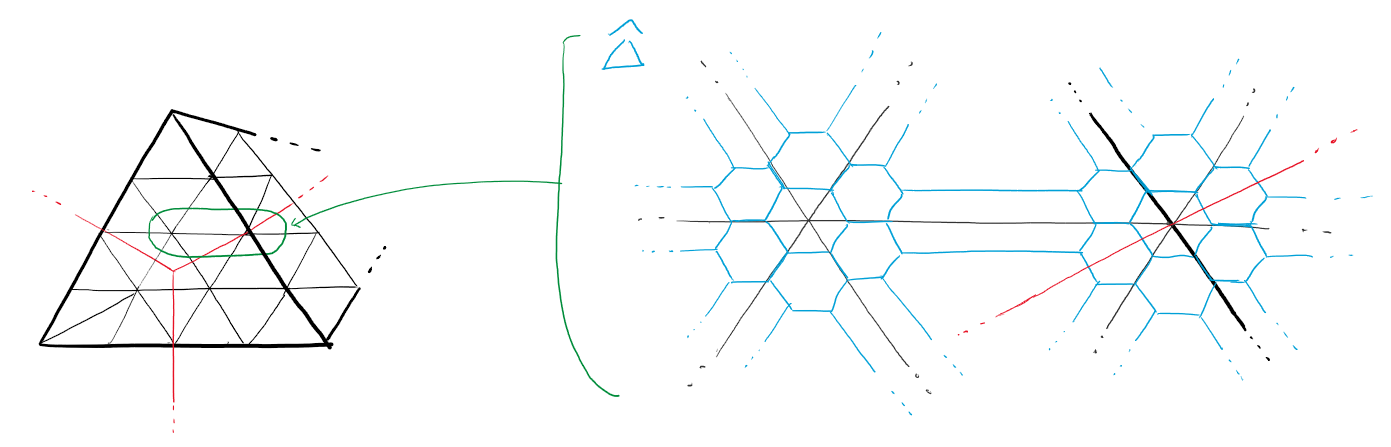}.

\begin{figure}
    \centering
    \includegraphics[width=0.8\linewidth]{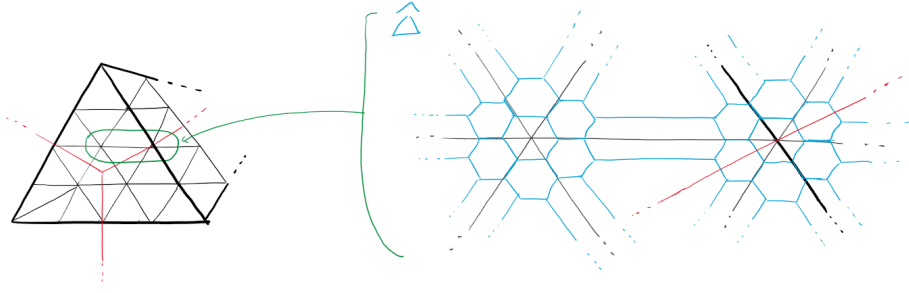}
    \caption{Local picture for $\hat{\Delta}$ (in blue) and $\Gamma$ (in red). The black triangulation on the left is part of $\Delta''$.}
    \label{fig:Delta-hat-local-decorated.png}
\end{figure}

The modification $\cX_0 \rightarrow \hat{\cX}_0$ is an isomorphism near $\hat{C}$; 
let $C$ denote the inverse image of $\hat{C}$ in the $d$-semistable type III K3 $\cX_0$.
We can choose the semi-stable smoothing $\cX/\bD$ of $\cX_0$ (which we were already working with in Step 1) to satisfy the additional property that the line bundle  $\cO_{\cX_0}(C)$ lifts to $\cX$ by \cite[Lemma~5.5]{Friedman-Scattone}.

As the line bundle $\cO_{\cX_0}(C)$ deforms to $\cX_t$, we have that $C$ deforms too: one computes that $H^1(\cO_{\cX_0}(\cC_0))=0$, which implies that all sections of the line bundle $\cO_{\cX_0}(C)$ lift, by cohomology and base change \cite[$\S$ III.12]{Hartshorne}. 
Such a deformation, say $\cC$, is a (semistable) smoothing of $\cC_0$ because $\cX$ is a semistable smoothing of $\cX_0$ and $\cC_0$ is transverse to the strata of $\cX_0$. Let $\cC_t \subset \cX_t$ be the fibre over $t \in \bD$.

Let's now revisit Step 1, together with our construction of $\Sigma$ in the proof of Proposition \ref{prop:M-compactifies-to-type-III-K3}. 
First, consider the image of $\Sigma$ under the `collapse map' $f: X'' \lra \cX_0$. 
Comparing our tropical constructions, we see that this essentially agrees with $C$: more precisely, with respect to the almost-toric symplectic form, there's a one parameter family  $\Sigma_s$ of symplectic divisors in $\cX_0$ with $\Sigma_s = \{ \Sigma_{s,a} \}_{a \in \check{B}_{\bZ}} $ for smooth symplectic surfaces $\Sigma_{s,a} \subset X_a$ such that:
\begin{enumerate}
\item we have $\Sigma_0 = f(\Sigma)$ and $\Sigma_1 = C$;
\item for each $s$, $\Sigma_s$ transversally intersects all strata of $\cX_0$;
\item any combinatorially adjacent  $\Sigma_{s,a}$ and  $\Sigma_{s,b}$ intersect at a point;
\end{enumerate}
 Recall the symplectic neighbourhood theorem in this dimension: the local invariants of a closed symplectic surface in a symplectic 4-manifold are  
the symplectic area of the surface together with its symplectic normal bundle. This implies that we can find compatible Hamiltonian isotopies of the $(X_a, \omega_a)$ whose composition with $f$ gives a one-parameter family of maps $f_s: X'' \to \cX_0$ such that $f_s = f$ on $f^{-1}( \Crit \cX_0) = f_s^{-1}( \Crit \cX_0) $; $f_s$ is a symplectomorphism away from this locus; $f_0 = f$; and $f_1$ maps $\Sigma$ to $C$.

Second, for each $a$, the curve $C_a$ must be a symplectic surface for both Kaehler forms, i.e.~$\omega_a$ and  $\upvarpi_a$ (and have the same symplectic area for both), so, again the symplectic neighbourhood theorem, we can additionally arrange for the Moser isotopy $\rho$ to fix each $C_a$ set-wise. Assume we've made such a choice. 

Third, consider the path from $\cX_t$ to $\cX_0$ which gets used for parallel transport in Step 1. Consider the restrictions of $\cC$ to fibres of $\cX$ over this path.
Their images under parallel transport give a one-parameter family of symplectic divisors (with respect to the $\upvarpi_a$) from $C$ to $g(\cC_t)$, with the same properties as the path $\Sigma_s$ above.
Similarly to the first step, we can use this to get compatible Hamiltonian isotopies of the $(X_a, \upvarpi_a)$ whose composition with $g$ is a one-parameter family of maps $g_s: \cX_t \to \cX_0$ 
which are symplectomorphisms away from $g_s^{-1} (Q^{[1]})$, and such that $g_0 = g$ and $g_1$ takes $\cC_t$ to $C$.

Putting everything together, we get a family $l^\circ_s \coloneq g^{-1}_s \circ \rho \circ f_s$, defined on $X'' \backslash \pi^{-1}_{X''}(Q^{[1]})$, and a symplectomorphism onto its image. The same arguments as in Step 1 then show that for each $s$, $l^\circ_s$ extends uniquely to a symplectomorphism $l_s \colon (X'', \omega'') \to (\cX_t, \cC_t)$. Moreover, by construction, $l_0 = l$ and $l_1$ takes $\Sigma$ to $\cC_t$. 
This concludes our proof.
\end{proof}

\begin{remark}\label{rmk:Groman}
The reader may with to compare Step 1 in the proof of Corollary \ref{cor:Kaehler-compactification} with \cite[Theorem 1.6]{Chakravarthy-Groman}, which appeared when the present article was at the final stages of completion. It shows the following. Suppose that you're given a smoothing $X \subset \cX / \bD$ of a type III K3, together with a tame symplectic form $\omega$ on $\cX$. Then provided it restricts to the toric form on the components of $X$, and has a standard form near triple intersection points, then $(\cX_t, \omega|_{\cX_t})$ admits an almost-toric fibration with integral affine base (in our notation) $B$. 
\end{remark}

Assume that $Y$ is a type III K3 surface with split mixed Hodge structure. 
Let $(X, \omega; \Sigma)$ be the mirror compactifying K3 surface.
Suppose $Y$ has $2n$ triple points, and Friedman-Scattone index $k$. 
Then by construction, $\Sigma$ has genus $n+1$; and by Lemma \ref{lem:characterisation-of-k-divisibility-of-Sigma}, it has divisibility $k$ in $H^2(X; \bZ)$. 
Combining our results, we see that \cite[Conjecture 1]{Lekili-Ueda} by Lekili and Ueda is correct:

\begin{corollary}\label{cor:Lekili-Ueda-holds}
Suppose that $M$ is the complement of a smooth ample divisor of genus $g=n+1$ and divisibility $k$ in a K3 surface $X$, and that $Y$ is a type III K3 surface with $2n$ triple points and Friedman-Scattone index $k$, with split mixed Hodge structure.  Then there are quasi-isomorphisms
$$
\cW(M) \simeq \Coh Y \qquad \text{and} \qquad \cF(M) \simeq \Perf Y
$$    
\end{corollary}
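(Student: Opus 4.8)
The plan is to deduce the statement by combining the homological mirror symmetry theorems of Section~\ref{sec:hms-footballs} with the compactification results of the present section, the point being that both the A-side manifold $M$ and the B-side category $\Coh Y$ are, up to the relevant notions of equivalence, determined by the pair of invariants $(n,k)$. Concretely, starting from $Y$ I would form the Weinstein mirror $M_Y$ of Definition~\ref{def:M}; Theorem~\ref{thm:hms-footballs-wrapped} gives $\cW(M_Y)\simeq\Coh Y$, and, once $Y$ is projective, Theorem~\ref{thm:hms-footballs-compact} gives $\cF(M_Y)\simeq\Perf Y$. By Proposition~\ref{prop:M-compactifies-to-type-III-K3} the domain $M_Y$ is Liouville equivalent to $X_Y\setminus(\nu\Sigma_Y)$ for the mirror compactifying K3 surface with divisor $(X_Y,\omega_Y;\Sigma_Y)$, and Corollary~\ref{cor:Kaehler-compactification} shows $(X_Y,\omega_Y)$ is K\"ahler with $\Sigma_Y$ holomorphic. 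By construction $\Sigma_Y$ has genus $n+1$, and by Lemma~\ref{lem:characterisation-of-k-divisibility-of-Sigma} it has divisibility $k$ in $H^2(X_Y;\bZ)$. Since Liouville equivalences of domains induce quasi-isomorphisms of both the wrapped and the compact Fukaya categories, it remains to identify the given $M$ with $M_Y$.

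Before doing so I would dispose of the projectivity hypothesis needed for the compact statement. If $Y$ is not projective, Lemma~\ref{lem:reduce_to_projective} produces a sequence of elementary modifications $Y\dashrightarrow Y'$ with $Y'$ projective and still of split mixed Hodge structure. On the B-side these induce an equivalence $\Coh Y\simeq\Coh Y'$ (Remark~\ref{rem:modifications_are_derived_equivalent}), restricting to $\Perf Y\simeq\Perf Y'$; on the A-side Proposition~\ref{prop:X-indep-elem-modification} gives a symplectomorphism $(X_Y,\Sigma_Y)\to(X_{Y'},\Sigma_{Y'})$, hence a Liouville equivalence $M_Y\simeq M_{Y'}$. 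Thus we may freely replace $Y$ by a projective representative with the same $(n,k)$ without changing either side of the claimed equivalences. Lemma~\ref{lem:elementary_modifications_connect} moreover guarantees that any two type III K3 surfaces with the same $(n,k)$ and split mixed Hodge structure are related in this way, so the construction is genuinely a function of $(n,k)$.

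The main remaining step, and the one requiring the most care, is to show that the Liouville equivalence class of $X\setminus(\nu\Sigma)$ depends only on $n$ and $k$, so that the $M$ of the statement agrees with the constructed $M_Y$. Here I would argue as follows. Writing $[\Sigma]=c_1(A)$ for the ample class, adjunction on the K3 gives $[\Sigma]^2=2g-2=2n$, while $[\Sigma]$ has divisibility $k$; such polarized K3 surfaces are parametrized by a connected (indeed irreducible) moduli space, obtained from the irreducible moduli of primitively polarized K3 surfaces of degree $2n/k^2$ by passing to the $k$-th multiple of the polarization. Along a path in this moduli space one may choose a smooth family of K\"ahler forms representing the fixed class $c_1(A)$ together with a smooth family of generic smooth divisors in the corresponding linear system; Moser's argument then yields a family of symplectomorphisms of the pairs $(X,\Sigma)$, and hence a Liouville equivalence of the complements $X\setminus(\nu\Sigma)$. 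Since $\Sigma_Y$ realises one such pair, any $(X,\Sigma)$ as in the statement has $X\setminus(\nu\Sigma)$ Liouville equivalent to $X_Y\setminus(\nu\Sigma_Y)\simeq M_Y$.

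Combining these three points, for the given $M$ we obtain $\cW(M)\simeq\cW(M_Y)\simeq\Coh Y$ and $\cF(M)\simeq\cF(M_Y)\simeq\Perf Y$, as required. The delicate ingredient throughout is the last step: one must know both that the relevant moduli of pairs $(X,\Sigma)$ is connected and that the complement behaves well under symplectic deformation of the pair, so that the purely topological data $(n,k)$ really pins down the Liouville domain up to the equivalence used in Proposition~\ref{prop:M-compactifies-to-type-III-K3}.
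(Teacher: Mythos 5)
Your proposal is correct, and its skeleton coincides with the paper's own proof: replace $Y$ by a projective representative of its deformation type using Lemma~\ref{lem:reduce_to_projective}, Remark~\ref{rem:modifications_are_derived_equivalent} and Proposition~\ref{prop:X-indep-elem-modification}, then combine Theorems~\ref{thm:hms-footballs-wrapped} and \ref{thm:hms-footballs-compact} with Proposition~\ref{prop:M-compactifies-to-type-III-K3} and Corollary~\ref{cor:Kaehler-compactification}. The genuine difference is your last step. The paper's proof ends after these citations, which only produce \emph{one} pair $(X_Y,\Sigma_Y)$ with invariants $(n,k)$ whose complement is Liouville equivalent to the constructed Weinstein mirror $M_Y$; since the corollary quantifies over an arbitrary K3 surface $X$ with an ample divisor $\Sigma$ of genus $n+1$ and divisibility $k$, the statement as written also needs the fact you supply, namely that the Liouville equivalence class of $X\setminus\nu(\Sigma)$ depends only on $(n,k)$. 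Your route to it --- irreducibility of the moduli of primitively polarized K3 surfaces of degree $2n/k^2$, connectedness of the locus of smooth members of the $k$-th multiple of the polarization, and a family Moser argument for the pairs --- is the standard one, and it is the same fact the paper implicitly relies on when upgrading Theorem~\ref{thm:main}, proved for the constructed $(X,\omega)$, to the formulation for arbitrary integral K\"ahler K3 surfaces in the introduction; so your write-up is more faithful to the literal statement, while the paper's terser proof buys brevity at the cost of leaving this reduction unspoken. Two details to nail down if you flesh this out: a path of K\"ahler pairs gives a symplectomorphism matching the divisors only after invoking the symplectic isotopy extension theorem (Moser alone matches the ambient forms, not the curves), and the flop equivalence $\Coh Y \simeq \Coh Y'$ of Remark~\ref{rem:modifications_are_derived_equivalent} does restrict to $\Perf$, as you assert, because perfectness is intrinsic: an object is perfect if and only if its total $\Ext$ with every object of $\Coh$ is finite-dimensional, exactly as used in the proof of Theorem~\ref{thm:hms-footballs-compact}.
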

\begin{proof}
By Lemma~\ref{lem:reduce_to_projective}, Remark~\ref{rem:modifications_are_derived_equivalent}, and Proposition~\ref{prop:X-indep-elem-modification}, we may assume that $Y$ is projective.
Now the result follows from Corollary \ref{cor:Kaehler-compactification} with Theorems \ref{thm:hms-footballs-wrapped} and \ref{thm:hms-footballs-compact}.
\end{proof}

\section{Deformation argument}
\label{sec:deformation}

\subsection{Overview and properties of smoothings of type III K3 surfaces}

Let $Y$ be a type III K3 with split mixed Hodge structure. Let $Y \subset \cY / \Spec \bC[[q]]$ be the versal deformation of $Y$ with the property that all line bundles lift. See Lemma~\ref{lem:Dolgachev_family} below. Let $\cY_{\eta}$ denote the generic fibre of $\cY/\Spec \bC[[q]]$, a K3 surface over the field $\bC((q))$.

Let $(M,\omega = d\theta)$ be the Liouville domain of Definition~\ref{def:M} associated to $Y$. From Section \ref{sec:compactification}, we know there exists a complex K3 surface $X$ equipped with a K\"ahler form $\omega$ such that the class $[\omega] \in H^2_{\dR}(X,\bR)$ is integral, a smooth complex curve $\Sigma \subset X$ such that $[\omega]=\PD[\Sigma]$, a tubular neighbourhood $\nu(\Sigma) \subset X$ of $\Sigma \subset X$, and a choice of primitive $\omega|_{X \setminus \nu(\Sigma)}=d \theta'$, such that $(X\setminus \nu(\Sigma),\theta')$ is a Liouville manifold and is Liouville equivalent to $(M,\theta)$.  

There exists an ample line bundle $\cL/X$ such that $c_1(\cL)=[\omega] \in H^2(X,\bZ)$
and a hermitian metric $h$ on $\cL$ such that the associated Chern connection $\nabla$ on $\cL$ has curvature $\Theta$ with $\omega=\frac{i}{2\pi}\Theta$. See e.g. \cite{Griffiths-Harris}.
Let $\cF(X,\omega)$ denote the (derived, split-closed) Fukaya category of $(X,\omega)$ generated by oriented Lagrangian submanifolds $L \subset X$ such that the flat connection $\nabla|_L$ has trivial holonomy (equipped with a grading, spin structure, covariantly constant section $\lambda_L$ of the circle bundle of $\cL|_L$, and choice of $\omega$-compatible almost complex structure $J_L$ such that $L$ is regular with respect to $J$). Cf. \cite[$\S$8c]{Seidel_quartic}.

\begin{remark} \label{Seidel_coefficients}
Seidel works over $\bigcup_{d \in \bN} \bC((q^{1/d}))$ (which is the algebraic closure of the field $\bC((q))$ of formal Laurent series) and considers Lagrangian submanifolds $L \subset X$ such that $\nabla|_L$ has finite monodromy (equipped with a covariantly constant multi-section of the circle bundle of $\cL|_L$). 
We work over $\bC((q))$ in the same manner by restricting to Lagrangian submanifolds such that the flat connection $\nabla|_L$ has trivial holonomy. 
Cf.~\cite[Remark~1.9]{GHHPS_integrality}, and \cite{Fukaya_Galois}. (Alternatively, we can work over $\bigcup_{d \in \bN} \bC((q^{1/d}))$ as in \cite{Seidel_quartic}, restricting to Lagrangian submanifolds $L \subset X$ such that $\nabla|_L$ has finite holonomy, or over the Novikov field 
$$\Lambda:=\left\{ \sum a_iq^{\lambda_i} \ | \ a_i \in \bC, \lambda_i \in \bR, \lim_{i \rightarrow \infty} \lambda_i = \infty\right\}$$
with no restriction on the holonomy.)
\end{remark}

The purpose of this section is to prove our main theorem:

\begin{theorem}\label{thm:main}
There is a $\bC$-algebra automorphism $\psi$ of $\bC[[q]]$ and a $\bC((q))$-linear equivalence of $A_\infty$-categories
$$\psi^\ast \Coh (\cY_{\eta}) \simeq \cF(X,\omega).$$
\end{theorem}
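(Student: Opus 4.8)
The plan is to match two $A_\infty$ deformations over $\bC[[q]]$: on the A-side, the relative Fukaya category $\cF(X,\Sigma)$ deforms $\cF(M)\simeq\Perf Y$ (Theorem~\ref{thm:hms-footballs-compact}); on the B-side, the family $\cY/\Spec\bC[[q]]$ deforms $\Perf Y\simeq\Coh Y$. Following the strategy of \cite{Seidel_icm, Seidel_quartic}, both deformations are governed by Hochschild cohomology $\HH^2(\Perf Y)\cong\HH^2(\Coh Y)$, which via HKR contains $H^0(\wedge^2 T_Y)\oplus H^1(T_Y)\oplus H^2(\cO_Y)$ (noncommutative, commutative, and gerbe directions respectively). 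The first step is therefore to identify the first-order deformation class $[\mu^2_{q}]\in\HH^2$ of the A-side and show it lies in the one-dimensional commutative subspace corresponding to the smoothing $\cY$.

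First I would use the mirror dictionary established in Sections~\ref{sec:mirrors-to-points} and \ref{sec:mirrors-to-line-bundles} to constrain the deformation. By Corollary~\ref{cor:mirrors-to-points-Y}, for each component $Y_i$ the skyscraper $(\iota_i)_\ast\cO_{q_i}$ is mirror to a Lagrangian torus fibre $T_i$; being a compact Lagrangian torus in the Calabi--Yau surface $X$, it is unobstructed, so the mirror object lifts to $\cF(X,\Sigma)$. Since the Poisson (noncommutative) first-order deformations of $Y$ act nontrivially on generic skyscrapers $\cO_{q_i}$ (each $Y_i$ carries a Poisson structure vanishing on $D_i$, unique up to scalar $\lambda_i$), liftability of all the $(\iota_i)_\ast\cO_{q_i}$ forces $\lambda_i=0$ for all $i$, ruling out the noncommutative directions. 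Next, by Lemma~\ref{lem:mirror-to-L_0} the structure sheaf $\cO_Y$ is mirror to the Lagrangian sphere $L_0$, which lifts; liftability of $\cO_Y$ kills the gerbe class in $H^2(\cO_Y)$. This reduces the A-side class to the commutative part $H^1(T_Y)$. Then, invoking Corollary~\ref{cor:mirrors-to-sections-of-pi}, \emph{all} line bundles on $Y$ are mirror to Lagrangian spheres (sections of $\pi$), hence all lift; by the deformation theory of \cite{Friedman_thesis, Friedman-Scattone}, the locus in $H^1(T_Y)$ along which every line bundle lifts is precisely the tangent line to the smoothing $\cY/\Spec\bC[[q]]$ (a smooth curve meeting the discriminant transversely). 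This pins the A-side deformation to $\cY$ up to the scalar ambiguity absorbed by the automorphism $\psi$ of $\bC[[q]]$.

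The remaining and hardest step is to prove the A-side deformation is \emph{nonzero} at first order, since everything above only locates it inside a one-dimensional space. I would follow the outline of Section~\ref{sec:toy-mirror-deformations}: exhibit a carefully chosen pair of Lagrangian spheres $S_0,S_1\subset X\setminus\Sigma$ whose Floer differential in $\cF(M)$ is trivial but which, in $\cF(X,\Sigma)$, acquires a nonzero $q$-weighted contribution from a holomorphic disc (or curve) crossing $\Sigma$ exactly once. Computing this count and identifying the corresponding $\Ext$ on the B-side shows the deformation parameter is a genuine unit times $q$, not zero. Concretely, one wants the leading-order term of some matrix coefficient $\mu^2(S_0,S_1)$ to be $c\,q^{m}$ with $c\neq0$, matching the nontriviality of the extension class of the mirror sheaves across the family. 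This is the main obstacle because it requires an explicit, regular $J$-holomorphic curve count rather than a formal dimension argument; the Calabi--Yau dimension-two regularity (used throughout to guarantee liftings) does not by itself produce nonvanishing, so one must genuinely compute.

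Finally, once the first-order match is secured, I would upgrade it to an equivalence of $A_\infty$-deformations over the \emph{formal} disc $\Spec\bC[[q]]$. The key input is an unobstructedness/versality statement: because $\HH^2(\Coh Y)$ restricted to the relevant (commutative, line-bundle-preserving) directions is one-dimensional and the deformation is nonzero there, a standard obstruction-theory argument (in the spirit of \cite{Sheridan_versality_survey}, and using Proposition~\ref{prop:Nick} communicated by Sheridan) shows that any two $A_\infty$-deformations of $\Perf Y$ with the same nonzero first-order class and the same (vanishing) obstructions are gauge-equivalent over $\bC[[q]]$, after reparametrising $q$ by some $\psi\in\Aut_{\bC}\bC[[q]]$. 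Applying this to $\cF(X,\Sigma)$ and to the family $\cY$, passing to the generic fibre over $\bC((q))$, and invoking the closed-string degeneration/Galois descent conventions of Remark~\ref{Seidel_coefficients} to control coefficients, yields the desired $\bC((q))$-linear equivalence $\psi^\ast\Coh(\cY_\eta)\simeq\cF(X,\omega)$, completing the proof of Theorem~\ref{thm:main}.
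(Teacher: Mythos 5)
Your proposal reproduces the skeleton of the paper's argument --- it is essentially the heuristic outline of Step (E) in the introduction: deformation matching \`a la Seidel, nontriviality at first order via the explicit spheres $S_0,S_1$ (your third paragraph is exactly Section \ref{sec:toy-mirror-deformations} and Proposition \ref{prop:non-trivial-def-Floer-side}), and Proposition \ref{prop:Nick} for the order-by-order matching. However, the paper itself warns that ``the actual proof proceeds slightly differently,'' and your middle step is where the gap lies. What Proposition \ref{prop:Nick} actually requires is its assumption (1): that the kernel of the obstruction map $\HH^2(\cA_0)\to\prod_A \Hom^2_{\cA_0}(A,A)$ is one-dimensional. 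You propose to establish the equivalent B-side statement by decomposing $\HH^2(\Coh Y)$ via HKR into Poisson, commutative, and gerbe pieces and killing each piece by liftability of skyscrapers, $\cO_Y$, and line bundles. But $Y$ is a singular normal crossing surface: there is no off-the-shelf HKR splitting of $\HH^2(\Coh Y)$ (at best a filtration whose graded pieces involve $\cExt$ sheaves), the smoothing direction sits in $H^0(\cT^1_Y)=H^0(\cExt^1(\Omega_Y,\cO_Y))$ rather than in $H^1(T_Y)$ as you write, and you would additionally need injectivity of the obstruction map on every non-smoothing piece. None of this is standard, and the paper deliberately avoids it: assumption (1) is verified on the A-side (Proposition \ref{prop:Lagrangians_generate}) using $\HH^*(\cF(M))=\HH^*(\cW(M))\cong SH^*(M)$, the Ganatra--Pomerleano sequence giving $\dim SH^2_+(M)=1$ (for $(X,\Sigma)$ generic in moduli, so no rational curves), and a Leray spectral sequence computation showing the classes of the chosen Lagrangians span $H_2(M,\bQ)$. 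Your route is not obviously completable and in any case constitutes a substantial missing argument, not a citation.

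The endgame is also incomplete. Proposition \ref{prop:Nick} is not a statement that ``two deformations with the same nonzero first-order class are gauge-equivalent'': it constructs $\psi$ and a \emph{curved} functor order by order, and this requires its assumption (2) --- which you never verify; the paper does so using that the chosen Lagrangians are spheres or tori (perfect Morse functions, formality of cochains on a torus) --- and it outputs bounding cochains that must be avoided by restricting to the line-bundle objects (spheres, so $\hom^1=0$). More importantly, what one obtains is an equivalence of the \emph{chosen subcategories} $\psi^*\cB''\otimes_{\bC[[q]]}\bC((q))\simeq\cA''\otimes_{\bC[[q]]}\bC((q))$, together with a quasi-embedding of the latter into $\cF(X,\omega)$. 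To reach Theorem \ref{thm:main} one still needs that $\cB''$ split-generates $\Coh \cY_\eta$ (Orlov, using projectivity of $\cY$) and, crucially, that $\cA''$ split-generates $\cF(X,\omega)$ --- this is Ganatra's automatic generation for Fukaya categories of Calabi--Yau manifolds, imported via Sheridan--Smith. Your proposal ends with ``yields the desired equivalence'' without this step, and without it the argument only identifies a full subcategory of $\cF(X,\omega)$, not the whole Fukaya category.
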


We start by identifying the deformation $Y \subset \cY / \Spec \bC[[q]]$. 

\begin{lemma}\label{lem:Dolgachev_family}
Let $Y$ be a type III K3 with split mixed Hodge structure. Then the locus $S \subset \Def Y$ in the universal deformation space of $Y$ such that all line bundles lift is smooth of dimension $1$ and the restriction of the universal family to this locus has smooth total space.
\end{lemma}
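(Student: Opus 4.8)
<br>

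The plan is to realize the locus $S \subset \Def Y$ explicitly using the deformation theory of normal crossing (Calabi--Yau) surfaces developed by Friedman and by Friedman--Scattone, which is already invoked elsewhere in the paper. First I would recall the structure of the universal deformation space $\Def Y$. Since $Y$ is a d-semistable type III K3 surface (d-semistability follows from the split MHS assumption, as noted in Section~\ref{sec:type-III-background}), we have good control on $T^1_Y = \Ext^1(\Omega_Y, \cO_Y)$ and on the local-to-global obstruction theory. There is the standard exact sequence relating the locally trivial deformations, parametrized (at first order) by $H^1(Y, \Theta_Y)$ where $\Theta_Y = \cHom(\Omega_Y, \cO_Y)$ is the sheaf of log derivations, to the ``smoothing'' directions recorded by $H^0(\cT^1_Y)$. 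Concretely, there is an exact sequence
\begin{equation*}
0 \to H^1(Y,\Theta_Y) \to T^1_Y \to H^0(\cT^1_Y) \to H^2(Y,\Theta_Y),
\end{equation*}
and the image of a first-order deformation in $H^0(\cT^1_Y) = H^0(\cO_{\Sing Y}) = \bC^{E}$ (one factor for each double curve / edge $e$ of the dual complex) records the ``smoothing parameter'' in each node-direction. The key classical input is that $Y$ smooths in a given direction of $H^0(\cT^1_Y)$ precisely when that component is nonzero, and d-semistability of $Y$ (i.e.\ $\cT^1_Y \simeq \cO_{\Sing Y}$) guarantees the diagonal smoothing direction is unobstructed by Friedman's results.

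Next I would identify the condition ``all line bundles lift'' with a system of linear equations cutting out $S$ inside $\Def Y$. The obstruction to lifting a line bundle $\cL \in \Pic Y$ to first order across a deformation lives in $H^2(Y, \cO_Y) \cong \bC$ (using $H^2(\cO_Y) \cong H^0(\omega_Y)^\vee \cong \bC$ by Serre duality and triviality of $\omega_Y$), and the pairing of the deformation class against $c_1(\cL)$ governs whether $\cL$ survives. Using the period/extension-class description of the MHS on $H^2(Y,\bZ)$ from Section~\ref{sec:type-III-background} (the homomorphism $\phi\colon L \to \bC^*$ whose kernel is $\Pic Y$), the locus where all line bundles lift is exactly the sublocus where the first-order deformation is orthogonal (under the cup-product/period pairing) to the image of $\Pic Y$ in $H^2(Y,\cO_Y)$. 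Because $Y$ has split MHS and $\Pic Y$ has corank one inside $L$ (the one missing direction being the smoothing direction recorded by the class $\delta$, equivalently $\PD[\Sigma^\vee]$ of Lemma~\ref{lem:characterisation-of-k}), these conditions cut out a one-dimensional smooth subspace; this is precisely the Dolgachev--Nikulin mirror family parametrizing the $\check{\bM}$-polarized K3 degeneration discussed in Section~\ref{sec:intro-K3-mirror-symmetry-context}. Here I would appeal directly to \cite{Friedman-Scattone} and \cite{Friedman_thesis}, where this one-parameter smoothing with all line bundles lifting is constructed, together with the fact that the relevant obstruction map is surjective so that $S$ is cut out transversally and is therefore smooth of dimension $1$.

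Finally, I would verify that the total space of the restricted universal family $\cY \to S$ is smooth. Smoothness of the total space is equivalent to the statement that $\cY/\Spec\bC[[q]]$ is a semistable smoothing, i.e.\ that the parameter $q$ is a uniformizer transverse to the discriminant and the total space has at worst the expected normal-crossing model $\{xy = q\}$ (resp.\ $\{xyz = q\}$) along the double (resp.\ triple) loci of the central fibre $Y$. This follows because the chosen smoothing direction has all components of its image in $H^0(\cT^1_Y) = \bC^E$ nonzero --- equivalently, it smooths every node of $Y$ simultaneously to first order --- which is exactly the d-semistability/transversality statement of \cite{Friedman_thesis}. The transversality of $S$ to the discriminant locus then gives, by the local analytic model, a smooth total space.

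The main obstacle I anticipate is the bookkeeping needed to pin down that the ``all line bundles lift'' locus is \emph{exactly one-dimensional and smooth}, rather than merely containing a one-dimensional smooth component: one must check both that the linear conditions imposed by lifting the finitely many generators of $\Pic Y$ have the expected corank (so the solution locus is $1$-dimensional), and that the resulting smoothing genuinely smooths all nodes (so the total space is smooth and $q$ is a coordinate transverse to the discriminant). Both of these are controlled by the explicit period computations and the surjectivity/strictness statements for the mixed Hodge structures in \cite{Friedman_thesis, Friedman-Scattone} --- and it is this identification of $S$ with the versal Dolgachev mirror family that supplies the uniqueness up to an automorphism of $\bC[[q]]$ asserted in the statement.
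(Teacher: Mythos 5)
Your overall framework (Friedman/Friedman--Scattone deformation theory, line-bundle obstructions in $H^2(\cO_Y)$, transversality to the discriminant for smoothness of the total space) is the right one, but the proposal has a genuine gap at its core. The actual content of the lemma --- that $S$ is smooth of dimension exactly $1$ --- is never argued: you defer it to ``explicit period computations and surjectivity/strictness statements'' in the references, which is precisely what needs proving. The paper supplies this via two inputs you never invoke: (a) Friedman's structure theorem that $\Def Y = V_1 \cup V_2$ with $V_1$ the locally trivial deformations, $V_2$ smooth of dimension $20$ containing the smoothings, and $V_1 \cap V_2$ a transverse intersection of dimension $19$; and (b) the local Torelli theorem identifying $V_1$ with the germ of $\Hom(L,\bC^*)$ at $\phi$. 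Since split MHS gives $\Pic Y = L$ of rank $19+(v-1)$, and the locus where the $v-1$ classes $\xi_i$ lift is exactly $V_2$, the locus $S$ is cut out inside the $20$-dimensional $V_2$ by $19$ further holomorphic functions, so every component of $S$ has dimension at least $1$. On the other hand, by (b) the only first-order locally trivial deformation preserving all of $\Pic Y = L$ is zero, so $T_0S \cap T_0V_1 = 0$; since $T_0(V_1 \cap V_2)$ is a hyperplane in $T_0V_2$, this forces $\dim T_0 S \le 1$. Together these give that $S$ is smooth of dimension $1$ and transverse to $V_1 \cap V_2$ --- and it is exactly this transversality (nonzero image of $T_0S$ in the normal space $H^0(\cExt^1(\Omega_Y,\cO_Y))$) that feeds into \cite[Proposition~2.5]{Friedman_thesis} to give smoothness of the total space. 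Without (a) and (b), your assertion that ``$S$ is cut out transversally'' has no support: the $19$ lifting conditions could a priori be dependent, giving a singular or higher-dimensional $S$.

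There are also two factual errors. First, $H^0(\cT^1_Y)$ is not $\bC^E$: $\cT^1_Y$ is a line bundle on the \emph{connected} curve $\Sing Y$ (the double locus of a type III K3 is connected, since the dual complex triangulates $S^2$), trivial by d-semistability, so $H^0(\cT^1_Y) \cong \bC$. This correction actually works in your favour: a nonzero section of $\cO_{\Sing Y}$ is automatically nowhere vanishing, so ``nontrivial image'' already implies ``smooths every node'', whereas your $\bC^E$ picture would require a component-by-component check you never perform. Second, ``$\Pic Y$ has corank one inside $L$'' contradicts the split MHS hypothesis: split MHS means $\phi \colon L \to \bC^*$ is trivial, i.e.\ $\Pic Y = \ker\phi = L$ exactly, and this equality (not a corank-one statement) is what drives the dimension count. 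The corank-one phenomenon you are gesturing at --- the missing direction $\delta = \pm\PD[\Sigma^{\vee}]$ --- lives in the rank-$20$ lattice $\gamma^{\perp}/\langle\gamma\rangle$ of the limit mixed Hodge structure, where the image of $L$ is $\delta^{\perp}$; conflating that lattice with $L$ makes your dimension heuristic unsound as stated.
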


\begin{remark}
If $Y$ is projective then the restriction $\cY/S$ of the universal family to $S \subset \Def Y$ is projective over $S$ (because an ample line bundle on $Y$ lifts to $\cY$ by definition of $S$).
\end{remark}

\begin{proof}
The versal deformation space $\Def Y$ of $Y$ is universal because $H^0(T_Y)=0$ \cite[$\S$7(2)]{Friedman-Scattone}.

By \cite[Theorem~5.10]{Friedman_thesis} the deformation space $V=\Def Y$ of a d-semistable type III K3 $Y$ is a union $V=V_1 \cup V_2$ of two smooth irreducible components such that $V_1$ corresponds to locally trivial deformations of $Y$, $V_2$ has dimension $20$ and contains the smoothings, and $V_1$ and $V_2$ intersect transversely in a smooth germ of dimension $19$.

Following \cite[$\S$3]{Friedman-Scattone}, let $Y^{[i]}$ denote the disjoint union of the normalisations of the codimension $i$ strata of $Y$ and write
$$L= \ker(H^2(Y^{[0]},\bZ) \rightarrow H^2(Y^{[1]},\bZ)).$$
Let $v$ denote the number of components of $Y$.
We claim that the lattice $L$ has rank $19+(v-1)$.
This follows by combining $\S$3, (4.13), and Lemma~1.1 from \cite{Friedman-Scattone}.
More precisely, as in \cite[$\S$3]{Friedman-Scattone}, 
by the Mayer--Vietoris spectral sequence for the normal crossing scheme $Y$ (cf.~\cite{Morrison_MHS}), the lattice $L$ is equal to the graded quotient $W_2/W_0$ of the weight filtration of the Deligne mixed Hodge structure $W_0=W_1 \subset W_2=H^2(Y,\bZ)$ on $H^2(Y,\bZ)$, and $W_0=H^2(|\Delta_Y|,\bZ)=\bZ$ where $\Delta_Y$ is the dual complex of $Y$. Let $(Y \subset \cY)/(0 \in \bD)$ be a semistable smoothing of $Y$ and let $\cY_t$ be a general fibre of $\cY/\bD$.
Let $T \colon H^2(\cY_t,\bZ) \rightarrow H^2(\cY_t,\bZ)$ denote the monodromy, and $N=\log T$ (note $T$ is unipotent since $\cY/\bD$ is semistable).
Let $Y = \bigcup Y_i$ denote the irreducible components of $Y$ and $\xi_i = c_1(\cO_{\cY}(Y_i))|_Y \in H^2(Y,\bZ)$. 
By \cite[(4.13)]{Friedman-Scattone}, we have an exact sequence
$$
\textstyle{ 0 \rightarrow (\bigoplus \bZ \cdot \xi_i )/ \bZ \cdot \sum \xi_i \rightarrow H^2(Y,\bZ) \rightarrow H^2(\cY_t,\bZ) \stackrel{N}{\rightarrow} H^2(\cY_t,\bZ).}
$$
Consider the monodromy weight filtration on $H^2(\cY_t,\bZ)$.
Let $\gamma$ be a primitive generator of $W_0 \simeq \bZ$, choose $\gamma' \in H^2(\cY_t,\bZ)$ such that $\gamma \cdot \gamma'=1$ and define $\delta=N \gamma'$ (then $\gamma$ is determined up to sign and $\delta$ is determined modulo $W_0=\bZ \cdot \gamma$ given $\gamma$).
With these notations, we have \cite[Lemma~1.1]{Friedman-Scattone}
$$N x = (x \cdot \gamma)\delta - (x \cdot \delta)\gamma.$$
In particular, $\ker N = \langle \gamma, \delta \rangle^{\perp} \subset H^2(\cY_t,\bZ)$ has rank $20$.
(Note that $\gamma$ and $\delta$ are linearly independent since $T$ is maximally unipotent, i.e., $(T-I)^2 \neq 0$.)
Combining, we deduce that $L$ has rank $19+(v-1)$, which establishes our claim.

The mixed Hodge structure on $H^2(Y,\bZ)$ is classified by the class of the extension of pure Hodge structures
$$0 \rightarrow W_0 \rightarrow H^2(Y,\bZ) \rightarrow W_2/W_0 \rightarrow 0,$$
which corresponds to a homomorphism $\phi \colon L \rightarrow \bC^*$.
We have an injective homomorphism $\Pic Y \rightarrow L$ given by the first Chern class and restriction.
Then $\ker \phi = \Pic Y$ \cite[Proposition~3.4]{Friedman-Scattone}. In particular, $Y$ has split mixed Hodge structure if and only if $\Pic Y = L$.
The component $(0 \in V_1)$ of the versal deformation space of $Y$ corresponding to locally trivial deformations of $Y$ is naturally identified with the germ of the algebraic torus $\Hom(L,\bC^*)$ at the point $\phi$ \cite[(3.9)]{Friedman-Scattone} (the local Torelli theorem for type III K3s).

For any line bundle $A \in \Pic Y$, the locus where $A$ deforms is the zero locus of a holomorphic function on $\Def Y$ by \cite[Proof of Lemma~5.5]{Friedman-Scattone}.  
The subgroup of line bundles generated by the restriction of the components of $Y$ in a semistable smoothing $\cY/\bD$ has rank $v-1$, and the locus where these line bundles deform is the component $V_2$ of $V=\Def Y$ corresponding to smoothable (or equivalently d-semistable) surfaces \cite[p.~25]{Friedman-Scattone}.

In the case that $Y$ has split MHS, we have $\Pic Y = L$ of rank $19+(v-1)$.
This implies that the locus $S \subset \Def Y$ where all line bundles deform is contained in $V_2$ and has dimension at least $1$.
The locus $S$ intersects $V_1$ transversely in the point $0 \in \Def Y$ by the local Torelli theorem for type III K3s recalled above. 
Thus $S$ is a smooth curve in $V_2$ intersecting $V_1 \cap V_2$ transversely at $0$. 

The fibre of the normal bundle to $V_1 \cap V_2 \subset V_2$ at $0 \in V_2$ is identified with the quotient  $H^0(\cExt^1(\Omega_Y,\cO_Y)) = \Ext^1(\Omega_Y,\cO_Y)/H^1(Y,T_Y)$ of the space of first order deformations of $Y$ by the locally trivial deformations. The image of $T_0 S$ in $H^0(\cExt^1(\Omega_Y,\cO_Y))$ is non-trivial by transversality of $S$ and $V_1 \cap V_2$ in $V_2$. It follows that the restriction of the universal family to $S$ has smooth total space by \cite[Proposition~2.5]{Friedman_thesis}.
\end{proof}

\begin{lemma}\label{lem:Dolgachev_family_modifications}
Let $Y$ be a type III K3 surface with split MHS, and $Y \subset \cY/\Spec \bC[[q]]$ the versal deformation of $Y$ such that the restriction map $\Pic \cY \rightarrow \Pic Y$ is surjective, see Lemma~\ref{lem:Dolgachev_family}. 

Let $Y \dashrightarrow Y'$ be an elementary modification of $Y$ of type 1) or 2) with exceptional locus $C \subset Y$.
Let $\cY \dashrightarrow \cY'/\Spec \bC[[q]]$ be the flop of the $(-1,-1)$-curve $C \subset \cY$. Then $Y'$ has split mixed Hodge structure and $Y' \subset \cY'/\Spec \bC[[q]]$ is the versal deformation of $Y'$ such that $\Pic \cY' \rightarrow \Pic Y$ is surjective.
In particular, the elementary modification $Y \dashrightarrow Y'$ induces an isomorphism $\cY_{\eta} \rightarrow \cY'_{\eta}$
of the generic fibres of $\cY/\Spec \bC[[q]]$ and $\cY'/\Spec \bC[[q]]$.
\end{lemma}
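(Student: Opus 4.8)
The plan is to show that an elementary modification $Y \dashrightarrow Y'$, realized via the flop $F\colon \cY \dashrightarrow \cY'$ of the $(-1,-1)$-curve $C \subset \cY$, preserves all the structures in sight: the split mixed Hodge structure on the central fibre, and the versality property that $\Pic \cY' \to \Pic Y'$ is surjective. First I would record that $Y'$ has split mixed Hodge structure. This is exactly the content of Proposition~\ref{prop:MHS_flop}: the induced isomorphism $\Phi \colon H^2(Y,\bZ) \xrightarrow{\sim} H^2(Y',\bZ)$ is an isomorphism of mixed Hodge structures, and since splitness of the MHS is an intrinsic property of $H^2$, it transfers across $\Phi$. (Alternatively, one can cite Lemma~\ref{lem:elementary_modifications_connect}, where the same point is used.)

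Next I would verify that $\cY'/\Spec\bC[[q]]$ is again the versal deformation of $Y'$ with $\Pic \cY' \to \Pic Y'$ surjective. The flop $F$ is an isomorphism in codimension one and induces an isomorphism of Picard groups $\Pic \cY \xrightarrow{\sim} \Pic \cY'$ (via $q_* p^*$, in the notation preceding Proposition~\ref{prop:MHS_flop}); likewise the elementary modification induces $\Phi\colon H^2(Y,\bZ) \xrightarrow{\sim} H^2(Y',\bZ)$ restricting to an isomorphism $\Pic Y \xrightarrow{\sim} \Pic Y'$ (as $\Pic = \ker\phi$ inside $L$, and $\Phi$ respects the weight filtration and Hodge filtration). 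These fit into a commutative square with the two restriction maps $\Pic \cY \to \Pic Y$ and $\Pic \cY' \to \Pic Y'$, so surjectivity of the first forces surjectivity of the second. By the uniqueness clause in Lemma~\ref{lem:Dolgachev_family}---the locus $S \subset \Def Y'$ where all line bundles lift is a smooth curve, determining the deformation up to automorphism of $\bC[[q]]$---the family $\cY'/\Spec\bC[[q]]$ is identified with this versal deformation.

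For the final assertion, that the elementary modification induces an isomorphism $\cY_\eta \to \cY'_\eta$ of generic fibres, I would observe that the flop $F\colon \cY \dashrightarrow \cY'$ is an isomorphism away from the single $(-1,-1)$-curve $C$, which lies entirely in the central fibre $Y = \cY_0$. Since $C \subset \cY_0$, the birational map $F$ restricts to a biregular isomorphism over the punctured disc $\Spec \bC((q))$, i.e.\ on the complement of the closed point; explicitly, $\cY \setminus C$ and $\cY' \setminus C'$ are isomorphic and contain the respective generic fibres. Passing to the generic fibre gives the desired isomorphism $\cY_\eta \xrightarrow{\sim} \cY'_\eta$ over $\bC((q))$.

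The main obstacle is the bookkeeping in the second step: one must be careful that the flop $F$, which is only a birational map, genuinely induces a \emph{group} isomorphism on Picard groups compatible with restriction to the central fibre, and that this is compatible with the Hodge-theoretic isomorphism $\Phi$ of Proposition~\ref{prop:MHS_flop}. The compatibility is guaranteed because both maps are built from the same geometric data ($q_* p^*$ on $H^2$ of the total spaces, transported to the special fibres via the retractions), so the key is simply to note that the line-bundle lifting property and splitness are both read off from $H^2$ and are therefore preserved by $\Phi$; once this is in place, the versality identification via Lemma~\ref{lem:Dolgachev_family} and the generic-fibre isomorphism are formal.
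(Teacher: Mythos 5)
Your proposal is correct and follows essentially the same route as the paper's proof: split MHS via Proposition~\ref{prop:MHS_flop}, the compatible isomorphisms $\Pic \cY \xrightarrow{\sim} \Pic \cY'$ and $\Pic Y \xrightarrow{\sim} \Pic Y'$ transferring surjectivity of the restriction map, and the generic-fibre isomorphism coming from the flop being biregular away from the curve $C$ in the central fibre. The one point the paper verifies that you take for granted is that the hypotheses of the flop set-up actually hold: using smoothness of the total space $\cY$ (from Lemma~\ref{lem:Dolgachev_family}), a direct calculation shows $C \subset \cY$ has normal bundle $\cO(-1)\oplus\cO(-1)$ and that the flop restricts on central fibres to the elementary modification $Y \dashrightarrow Y'$ --- i.e.\ that the central fibre of $\cY'$ really is $Y'$, which your argument presupposes throughout.
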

\begin{proof}
The modification $Y'$  of $Y$ has split MHS by Proposition~\ref{prop:MHS_flop}. The total space $\cY$ of the deformation $Y \subset \cY/\Spec \bC[[q]]$ is smooth by Lemma~\ref{lem:Dolgachev_family}. Now a direct calculation shows that $C \subset \cY$ has normal bundle $\cO(-1) \oplus \cO(-1)$ and the flop $\cY \dashrightarrow \cY'/\Spec \bC[[q]]$ of $C$ restricts to the elementary modification $Y \dashrightarrow Y'$.
The flop $\cY \dashrightarrow \cY'$ induces an isomorphism $\Pic \cY \stackrel{\sim}{\longrightarrow} \Pic \cY'$ on Picard groups  and the elementary modification $Y \dashrightarrow Y'$ induces a compatible isomorphism $\Pic Y \stackrel{\sim}{\longrightarrow} \Pic Y'$ on Picard groups by Proposition~\ref{prop:MHS_flop}. So $\Pic \cY \rightarrow \Pic Y$ surjective implies $\Pic \cY' \rightarrow \Pic Y'$ surjective.
\end{proof}

This implies that to prove Theorem \ref{thm:main}, for each pair $n,k$, it is enough to establish the claim for a single type III K3 surface $Y$ with these invariants (and split mixed Hodge structure). In particular, we will take it to be projective.

\subsection{Mirror deformations for some preferred objects} \label{sec:toy-mirror-deformations}

We start by identifying the projective type III K3 surfaces we will work with.

\begin{lemma} \label{lem:local-model-for-non-trivial-deformation-existence-B-side}
Let $n$ and $k$ be any positive integers such that $k^2 | n$. Then there exists a projective type III K3 surface $Y$ with split mixed Hodge structure, $2n$ triple points and $|H^3(Y; \bZ)| = k$, with the following property. There exists a Cartier divisor $C \subset Y$ that is a union of two $(-1)$-curves, say $C_1$ and $C_2$, which lie in adjacent components of $Y$, say $Y_1$ and $Y_2$, and meet the common $1$-stratum transversely in one point.

\end{lemma}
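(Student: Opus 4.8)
The plan is to build $Y$ by the constructions already established in Lemma~\ref{lem:existence-of-type-III-K3s} and Proposition~\ref{prop:type_III_toric_model}, and then to locate the required configuration $C = C_1 \cup C_2$ inside the explicit combinatorial model supplied by the dual complex $\Delta$. First I would invoke Lemma~\ref{lem:existence-of-type-III-K3s} to produce a type III K3 surface $Y$ in $(-1)$-form with $2n$ triple points, $|H^3(Y;\bZ)| = k$, and split mixed Hodge structure; by Lemma~\ref{lem:reduce_to_projective} (applying elementary modifications as needed, which preserve the split MHS condition by Proposition~\ref{prop:MHS_flop} and the invariants $n,k$) I may further assume $Y$ is projective. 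The key observation is then combinatorial: in the definition of $(-1)$-form (Definition~\ref{def:minus-one-form}), a double curve $C$ that is \emph{not} a singular nodal curve satisfies $C^2 = -1$ on both components on which it lies. So I need to exhibit, in the dual complex $\Delta$ (a triangulation of $S^2$ with $2n$ triangles), an edge $e = v_1 v_2$ whose corresponding $1$-stratum $D_{12} = Y_1 \cap Y_2$ is an ordinary (non-nodal) double curve; the triple point formula then forces it to be a $(-1)$-curve on each of $Y_1$ and $Y_2$.

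The main work is producing the single divisor $C = C_1 + C_2$ rather than two disconnected $(-1)$-curves, and ensuring it is \emph{Cartier}. Here I would use the split MHS hypothesis decisively. Recall from the discussion preceding Lemma~\ref{lem:reduce_to_projective} that when $Y$ is in $(-1)$-form there is a line bundle $L$ on $Y$ restricting to $-K_{Y_i}$ on each component, precisely because split MHS guarantees that compatible line bundles on the components glue (the gluing obstruction $\phi\colon L \to \bC^\ast$ vanishes). I would run the analogous argument for $C$: choose $C_1 \subset Y_1$ and $C_2 \subset Y_2$ to be $(-1)$-curves that both meet the common $1$-stratum $D_{12}$ transversely at a single point $p$, arrange $C_1 \cap D_{12} = C_2 \cap D_{12} = \{p\}$ so that the local divisors agree along $D_{12}$, and then appeal to the split MHS condition to conclude that the Weil divisor $C = C_1 + C_2$ is in fact Cartier. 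Concretely, the class $[C] \in \bigoplus_i \Pic Y_i$ restricts compatibly to the $D_{ij}$, hence lies in the kernel of $\oplus_i \Pic Y_i \to \oplus_{i,j}\Pic D_{ij}$, so by the exact sequence~\eqref{eq:exact-seq-for-PicY} (valid in the split MHS case) it lifts to a genuine element of $\Pic Y$.

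The remaining point is to guarantee that such adjacent $(-1)$-curves $C_1, C_2$ meeting a common $1$-stratum at one point actually exist on the components $Y_1, Y_2$. This is where I would use the toric-model description: by Proposition~\ref{prop:type_III_toric_model} the components admit compatible toric models with no corner blow-ups, so each non-toric $Y_i$ is obtained from a toric surface by interior blow-ups of the boundary. A $(-1)$-curve meeting $D_{12}$ transversely at one point is produced by a single non-toric blow-up at an interior point of the boundary stratum $\bar D_{ij}^\circ$ corresponding to the edge $e$; performing one such blow-up on each of $Y_1$ and $Y_2$ at the point lying over the same location on $D_{12}$ yields $C_1$ and $C_2$ with the desired intersection pattern. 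Since I am free to choose the representative $Y$ within its deformation type and to choose which blow-ups appear in the toric model, I can always arrange at least one such edge $e$ to carry interior blow-ups on both adjacent components.

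\textbf{Expected obstacle.} The genuinely delicate step is the Cartier/gluing claim: verifying that $C_1 + C_2$ descends to a Cartier divisor on the non-normal surface $Y$ rather than merely a Weil divisor on $Y_1 \sqcup Y_2$. The transversality and single-point-of-intersection conditions are exactly what is needed to make the local equations match across $D_{12}$, and the split MHS hypothesis (through the exactness of~\eqref{eq:exact-seq-for-PicY}, equivalently the vanishing of the extension class $\phi$) is what upgrades a compatible collection of local data to an honest line bundle. I expect the bookkeeping to confirm this cleanly, but it is the conceptual heart of the statement and the step most in need of care.
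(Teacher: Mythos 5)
There is a genuine gap, and it sits exactly at the step you describe as "the remaining point": producing the two $(-1)$-curves through a \emph{common} point of the $1$-stratum. Your proposed mechanism --- "performing one such blow-up on each of $Y_1$ and $Y_2$" at matching points of $D_{12}$ --- is not available. An interior blow-up of a point of $D_{12}$ on $Y_1$ drops the self-intersection of $D_{12}$ on that component by $1$, so after blowing up on both sides the self-intersections along $D_{12}$ sum to $-4$; the triple point formula fails and the result is not a type III K3 surface at all (in particular it is not smoothable, and it is certainly not the surface $Y$ you fixed). Nor can you "choose which blow-ups appear in the toric model": changing the toric model does not change $Y$, only its presentation, and with split mixed Hodge structure the surface is unique in its locally trivial deformation type (the Global Torelli theorem of Lutz, used in Lemma~\ref{lem:split_MHS_CYncs}), so the only genuine freedom is to move along the chain of elementary modifications (Lemma~\ref{lem:elementary_modifications_connect}). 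This matters because on the standard Friedman--Scattone representatives the non-toric components (the only ones carrying interior exceptional curves) are typically not adjacent to one another, so the configuration you want is simply not present and must be \emph{created}. That is what the paper's proof does: it first finds, on some component of a suitable projective representative, a $(-1)$-curve $C_1'$ meeting the boundary transversally in one point together with an interior $(-2)$-curve $C_2'$ meeting $C_1'$ once (this existence is nontrivial and is quoted from Looijenga's analysis of log CY surfaces with boundary a short cycle of $(-1)$-curves, applied to the Friedman--Scattone models after base change); it then performs a type 1) elementary modification with exceptional locus $C_1'$. The flop contracts $C_1'$ to a point $p$ of the double curve and blows up the matching point on the adjacent component, so the new exceptional curve and the strict transform of $C_2'$ (now a $(-1)$-curve) are the required $C_1$, $C_2$, and they pass through the same point $p$ by construction. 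Projectivity of the modified surface is then not free --- it is re-proved by exhibiting an explicit ample $\bQ$-line bundle on the contraction --- whereas in your outline projectivity was secured only for the surface you were not allowed to keep.

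Your treatment of the Cartier condition also misplaces the difficulty. The exact sequence \eqref{eq:exact-seq-for-PicY} shows that the bundles $\cO_{Y_1}(C_1)$, $\cO_{Y_2}(C_2)$ (and trivial bundles elsewhere) glue to a line bundle on $Y$, but this holds whether or not $C_1$ and $C_2$ meet $D_{12}$ at the same point: on $D_{12}\cong\bP^1$ one has $\cO(p_1)\cong\cO(p_2)$ for any two points, so the gluing criterion cannot see the matching condition. Existence of such a line bundle does not make the Weil divisor $C_1+C_2$ Cartier; Cartier-ness is a local condition at $C\cap D_{12}$, which genuinely fails when the points differ, and which, when the points coincide and both intersections are transverse, follows from the elementary local computation (locally $Y\cong\{uv=0\}\subset\bC^3_{u,v,w}$ and $C=\{w=0\}\cap Y$) with no Hodge-theoretic input whatsoever. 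So the split MHS hypothesis is not what makes $C$ Cartier --- it is what pins down the gluings (and, via Proposition~\ref{prop:MHS_flop}, what guarantees the property survives the elementary modifications); the "conceptual heart" is the same-point condition, which is precisely what your existence step fails to deliver and what the flop produces automatically.
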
 

\begin{proof}
First, we claim that there exists a projective type III K3 surface $Y'$ with split mixed Hodge structure and the invariants $n$ and $k$ with the following property: there exists an irreducible component $Y'_2$ of $Y'$ containing a $(-1)$-curve $C'_1$ meeting the boundary transversely in one point and a $(-2)$-curve $C'_2$ disjoint from the boundary and intersecting $C'_1$ transversely in one point. 

A log Calabi--Yau surface with boundary a cycle of $n \le 3$ $(-1)$-curves and split mixed Hodge structure contains such a configuration $C'_1 \cup C'_2$ intersecting any given component of the boundary,
see e.g. \cite[Chapter~I, Theorem~1.1, $\S$2.1, and Proposition~5.2]{Looijenga_cycle}. Our claim then follows from \cite[Figures~3 and 4]{Friedman-Scattone} together with the abstract base change construction as in the Proof of Lemma~\ref{lem:existence-of-type-III-K3s}. 

Now perform a type 1) elementary modification with exceptional locus $C'_1$, say $f \colon Y' \dashrightarrow Y$. The pair $C_1$, $C_2$ of $(-1)$-curves on $Y$ given by the exceptional locus of $f^{-1}$  and the strict transform of $C'_2$ then satisfy the required properties.

We claim that $Y$ is again projective. 
Let $\pi \colon Y' \rightarrow \bar{Y}$ be the contraction of $C'_1$. 
It suffices to prove that $\bar{Y}$ is projective.
Recall that the type III K3 $Y'$ is in $(-1)$-form, so in particular admits an ample $\bQ$-line bundle $A$ such that $A$ has degree $1$ on each irreducible component of the singular locus (cf. Proof of Lemma~\ref{lem:reduce_to_projective}). Moreover, since $Y'$ has split MHS, any choice of line bundles $L_i$ on the irreducible components $Y'_i$ of $Y'$ glue to give a line bundle $L$ on $Y'$. Let $\pi_{0,2} \colon Y_2' \rightarrow \bar{Y}_2$ denote the contraction of $C'_1$. It suffices to describe an ample $\bQ$-line bundle $\bar{A}_2$ on $\bar{Y}_2$ such that $\bar{A}_2$ has degree $1$ on each irreducible component of the boundary of $\bar{Y}_2$. Indeed, let $L$ be the line bundle on $Y'$ given by glueing $A^{\otimes d}|_{Y'_i}$ on $Y'_i$ for $i \neq 2$ and ${\pi_{0,2}}^*\bar{A}_2^{\otimes d}$ on $Y'_2$ for $d$ sufficiently divisible, then $\bar{L}={\pi_0}_*L$ is an ample line bundle on $\bar{Y}$. In each case the $\bQ$-line bundle $\bar{A}_2$ on $\bar{Y}_2$ can be constructed as follows: there is a unique effective rational linear combination $B$ of the boundary curves with degree $1$ on each boundary curve. This $\bQ$-divisor defines a birational contraction with exceptional locus the union of the $(-2)$-curves $E_i$ in the interior of $\bar{Y}_2$, cf. \cite[Lemma~6.9]{GHK1}. Now the $\bQ$-divisor $\bar{A}_2:=B-\sum \epsilon_i E_i$ is ample on $\bar{Y}_2$ for suitable $0 < \epsilon_i \ll 1$, $\epsilon_i \in \bQ$.
\end{proof}

\begin{lemma}\label{lem:underformed-coho-products}
    Let $Y$ be as in Lemma \ref{lem:local-model-for-non-trivial-deformation-existence-B-side}, with Cartier divisor $C=C_1 \cup C_2$. Let  $E=i_\ast \cO_C$, and let $F$ be the pushforward to $\Coh Y$ of the holomorphic line bundle on $C$ with degree $-1$ on $C_1$ and degree $1$ on $C_2$. In other words,  $F = E \otimes \cL$, where $\cL$ is the line bundle $\cO_{\cY}(Y_1)|_{Y}$.  
  Then $\Hom(E,E)$,  $\Hom(E,F)$ and $\Hom(F,E)$ are $1$-dimensional, and the product
$$
\Hom (E,F) \otimes \Hom (F,E) \to \Hom (E,E)
$$
is trivial.
\end{lemma}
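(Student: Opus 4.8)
The plan is to reduce everything to a computation on the reduced nodal curve $C=C_1\cup C_2$, which is a chain of two copies of $\bP^1$ meeting transversely in the single point $p:=C_1\cap C_2$ lying on the double locus $D_{12}=Y_1\cap Y_2$. Since $i\colon C\hookrightarrow Y$ is a closed immersion, pushforward is fully faithful on honest sheaf homomorphisms, so $\Hom(E,E)=\Hom_C(\cO_C,\cO_C)=H^0(\cO_C)$, $\Hom(E,F)=\Hom_C(\cO_C,G)=H^0(G)$ and $\Hom(F,E)=\Hom_C(G,\cO_C)=H^0(G^{\vee})$, where $G:=\cL|_C$; moreover the product in the statement becomes the natural multiplication pairing $H^0(G)\otimes H^0(G^\vee)\to H^0(\cO_C)=\bC$ induced by $G\otimes G^\vee\cong\cO_C$. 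Here I use that $E$, $F$ are sheaves and that $\Hom$ denotes degree-zero morphisms (equivalently $\cO_Y$-module homomorphisms), so there is nothing derived to track.

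First I would pin down the bidegree of $G$. Using $\cL=\cO_{\cY}(Y_1)|_Y$ one has the standard normal-bundle identities for a semistable degeneration, $\cL|_{Y_1}=N_{Y_1/\cY}=\cO_{Y_1}(-D_1)$ and $\cL|_{Y_2}=\cO_{Y_2}(D_{12})$, while $C_1\cdot D_1=1$ and $C_2\cdot D_{12}=1$ since $C_1$ meets the boundary of $Y_1$ transversely in one point and $C_2$ meets $D_{12}$ transversely in one point. Hence $\deg_{C_1}G=-1$ and $\deg_{C_2}G=+1$, confirming the description of $F$ given in the statement.

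Next I would compute the three $H^0$'s from the normalisation sequence $0\to \cF\to \cF|_{C_1}\oplus \cF|_{C_2}\to \cF_p\to 0$ applied to the relevant line bundle. For $\cF=\cO_C$, connectedness and reducedness give $H^0(\cO_C)=\bC$. For $G$ we have $G|_{C_1}=\cO_{\bP^1}(-1)$ and $G|_{C_2}=\cO_{\bP^1}(1)$, so $H^0(G|_{C_1})=0$ forces any section to vanish on $C_1$, and on $C_2$ it must be a section of $\cO_{\bP^1}(1)$ vanishing at $p$; the long exact sequence then yields $H^0(G)=\bC$, with the generator supported entirely on $C_2$ and zero on $C_1$. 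The symmetric computation gives $H^0(G^\vee)=\bC$, with generator supported on $C_1$ and zero on $C_2$. This establishes the three one-dimensionality claims.

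Finally, the triviality of the product is immediate: writing a generator $\alpha\in\Hom(E,F)$ as a section $s_\alpha\in H^0(G)$ and $\beta\in\Hom(F,E)$ as $s_\beta\in H^0(G^\vee)$, the composite $\beta\circ\alpha$ corresponds to the product section $s_\alpha\cdot s_\beta\in H^0(\cO_C)$. Since $s_\alpha|_{C_1}=0$ and $s_\beta|_{C_2}=0$, the product vanishes identically on each component of $C$, hence $s_\alpha\cdot s_\beta=0$ and the product map is trivial. I expect no serious obstacle: the only points requiring care are the fully-faithful reduction to $C$ (so that composition genuinely becomes multiplication of sections) and the two normal-bundle identities, both of which are standard; the rest is the elementary observation that the two generators are supported on complementary components of $C$.
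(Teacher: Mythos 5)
Your proof is correct and follows essentially the same route as the paper's: reduce to $C$ via full faithfulness of $i_*$ for closed immersions, compute the three $H^0$'s from the normalisation sequence $0\to L\to L|_{C_1}\oplus L|_{C_2}\to L|_p\to 0$, and conclude triviality of the product because the generators of $\Hom(E,F)$ and $\Hom(F,E)$ are supported on complementary components. The only addition is your verification of the bidegree of $\cL|_C$ via the normal-bundle identities, which the paper takes as given from the statement; this is harmless extra diligence.
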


\begin{proof}
We repeatedly use that for any sheaves $A, B \in \Coh C$, we have
$$
\Hom_Y(i_*A,i_*B)=\Hom_C(i^*i_*A,B)=\Hom_C(A,B)
$$

It's then immediate that $\Hom (E,E) = \bC$. Also, we get that  $\Hom(E,F)=\Hom(\cO_C,F)=H^0(C,F)$. If $L$ is a line bundle on $C$ with restriction $L_i$ to $C_i$ for $i=1,2$, then tensoring the short exact sequence of sheaves on $C$ 
$$
0 \lra \cO_C \lra \cO_{C_1} \oplus \cO_{C_2} \lra \cO_p \lra 0
$$
with $L$ yields a short exact sequence
$$
0 \lra L \lra L_1 \oplus L_2 \lra L|_p \lra 0
$$
and so an exact sequence
$$
0 \lra H^0(L) \lra H^0(L_1) \oplus H^0(L_2) \lra H^0(L|_p)
$$
In the case $L=F$ we have $H^0(L_1)=H^0(\cO_{C_1}(-1))=0$ and thus $$H^0(L)=\ker (H^0(L_2) \to H^0(L|_p)) = H^0(L_2(-p)) \simeq H^0(\cO_{C_2}) = \bC.$$
Reversing the roles of $C_1$ and $C_2$ (or using Serre duality), we get $\Hom(F,E)=H^0(F^{\vee}) \simeq \bC$.

Finally, the product
$$
\Hom (E,F) \otimes \Hom (F,E) \lra \Hom (E,E)
$$
is trivial using the fact that the global sections of $\cHom(E, F)$ vanish on $C_1$ and the global sections of $\cHom(F,E)$ vanish on $C_2$.
\end{proof}

\begin{remark}
The Cartier divisor $C \subset Y$ smooths to an irreducible $(-2)$ curve $\cC_\eta \subset \cY_\eta$, and $E$ and $F$ both deform to the (spherical) sheaf $i_\ast \cO_{\cC_\eta} \in \Coh \cY_\eta$.    \end{remark}

Suppose $Y$ is as in Lemma \ref{lem:local-model-for-non-trivial-deformation-existence-B-side}. Choose toric models for the irreducible components of $Y$ so that $C_1$, respectively $C_2$, come from blowing up interior points for $Y_1$, respectively $Y_2$. (Note that one can always do this: if $E$ is an internal $(-1)$-curve on a maximal log Calabi--Yau surface $(S,B)$ then there is a toric model of $(S,B)$ contracting $E$, because we can first contract $C$ to obtain $(S',B')$ and take a toric model of $(S',B')$.)

Let $(X, \omega; \Sigma)$ be the mirror compactifying K3 surface.
 We work with the relative Fukaya category of the pair $(X,\Sigma)$ as in \cite[Section 8d]{Seidel_quartic}, say $\cF(X, \Sigma)$. Objects are given by compact exact Maslov zero Lagrangians $L$ in $X \backslash \Sigma$, equipped with a grading, a spin structure, and an $\omega_X$-compatible almost complex structure $J_L$ on $X$ which is regular with respect to $L$ and such that $\Sigma$ is almost-complex for $J_L$.  
Recall that we say an $\omega$-compatible almost complex structure $J$ on $X$ is \emph{regular} with respect to a Lagrangian $L \subset X$ if there are no nonconstant $J_L$-holomorphic spheres or discs with boundary on $L$. In our case, the set of $\omega$-compatible almost complex structures $J$ on $X$ that are regular with respect to a graded Lagrangian $L \subset X$ is dense within the space of all $\omega$-compatible complex structures \cite[Lemma~8.4]{Seidel_quartic}, using $\dim_{\bR} X = 4$ and $c_1(X)=0$.

From the description of the almost-toric fibration $\pi_X \colon X \to B$ in Section \ref{sec:construction-of-X},
there is an open subset of $B$ homeomorphic to a disc, say $B_\loc$, such that the restriction of $\pi_X$ to it, say $\pi_\loc \colon X_\loc \lra B_\loc$, has exactly two nodal fibres, say above points $p_1$ and $p_2$ associated to the blow-ups for $C_1$ and $C_2$. These fibres have the same invariant direction, and $\Sigma$ restricts to a visible annulus which is fibred over a line which intersects the segment between $p_1$ and $p_2$ transversally, say $\Sigma_\loc$. See Figure \ref{fig:local-models-2-spheres-SYZ}.

\begin{figure}
    \centering
    \includegraphics[width=0.6\linewidth]{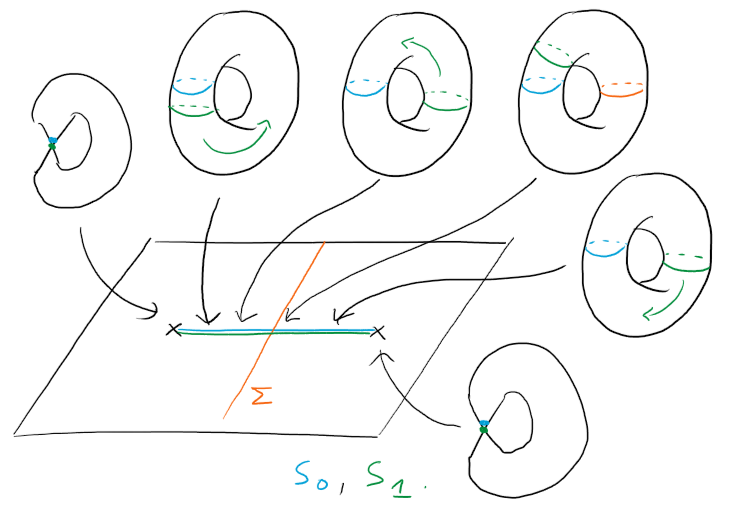}
    \caption{The almost-toric fibration $\pi_\loc \colon X_\loc \lra B_\loc$, with the spheres $S_0$ and $S_1$.}
    \label{fig:local-models-2-spheres-SYZ}
\end{figure}

We consider Lagrangian spheres $S_0$ and $S_1$ in $X_\loc \backslash \Sigma_\loc$, defined as follows.
Both are fibred over the segment joining the two nodes. We're free to choose $S_0$ to be any such fibred sphere. 
Given $S_0$, we take $S_1$ to be its image under the generalised Lagrangian translation $\sigma_L$, from Proposition \ref{prop:Lagrangian-translations-of-M}, 
corresponding under mirror symmetry to $\otimes \cO_{\cY}(Y_1)|_{Y} \in \Auteq \Coh Y$. 
Note that $S_0$ and $S_1$ are isotopic inside $X_\loc$, by a fibre-preserving Hamiltonian isotopy.
We can describe them explicitly in terms of the SYZ fibration $\pi_\loc \colon X_\loc \to B_\loc$ by using \cite[Proposition 5.2]{HK2} (adapted to the relative case, where we separately work with the portions of $B_\loc$ on either side of $\pi_\loc (\Sigma_\loc)$).
This is given in Figure \ref{fig:local-models-2-spheres-SYZ}.

\begin{proposition}\label{prop:non-trivial-def-Floer-side}

Let $Y$ be the projective type III K3 with split mixed Hodge structure from Lemma \ref{lem:local-model-for-non-trivial-deformation-existence-B-side}, and let $(X, \omega; \Sigma)$ be its mirror compactifying surface. Set $M = X \backslash \nu \Sigma$, and let $S_0$, $S_1$ be the Lagrangian spheres in $M$ defined above.

Then there exists brane data on $S_0$, $S_1$ such that:
\begin{enumerate}
    \item The Floer cohomology groups $HF^0 (S_0, S_1)$ and $HF^0(S_1, S_0)$ are both one-dimensional.
    \item The product
    $$
    HF^0 (S_0, S_1) \otimes HF^0(S_1, S_0) \lra HF^0(S_1, S_1)
    $$
    is trivial in $\cF(M)$.
    \item The same product is non-trivial to first order in $q$ in the relative Fukaya category $\cF(X, \Sigma)$: letting $\hom$ denote morphisms in this category, we have that the following product is non-zero (as in \cite[Lemma 3.11]{Seidel_quartic}):
 $$
     H^0 (\hom(S_0, S_1) \otimes \bC[q] / q^2) \otimes H^0 (\hom(S_1, S_0) \otimes \bC[q] / q^2) 
\lra
 H^0 (\hom(S_1, S_1) \otimes \bC[q] / q^2) 
 $$
 where the tensor products are all over $\bC[[q]]$. 

\end{enumerate}

\end{proposition}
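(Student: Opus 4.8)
The plan is to deduce parts (1) and (2) directly from the compact mirror equivalence $\cF(M) \simeq \Perf Y$ of Theorem \ref{thm:hms-footballs-compact} together with Lemma \ref{lem:underformed-coho-products}, so the first task is to identify the mirror objects of $S_0$ and $S_1$. The sphere $S_0$ is fibred over the segment joining the two nodes $p_1,p_2$ coming from the interior blow-ups that produce $C_1 \subset Y_1$ and $C_2 \subset Y_2$; using the tropical-to-sheaf dictionary of Remark \ref{rmk:mirrors-to-(-2)-curves-Y_i} and \cite[Lemma 4.15, Proposition 5.2]{HK2}, applied separately on the two sides of $\pi_\loc(\Sigma_\loc)$, I would check that a suitable brane structure makes $S_0$ mirror to $E = i_\ast \cO_C$ placed in degree zero. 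Since $S_1 = \sigma_L(S_0)$ with $\sigma_L$ the Lagrangian translation mirror to $-\otimes \cL$, $\cL = \cO_{\cY}(Y_1)|_Y$ (Proposition \ref{prop:Lagrangian-translations-of-M}(iii)), the sphere $S_1$ is then mirror to $E \otimes \cL = F$. Parts (1) and (2) follow at once: the groups $HF^0(S_0,S_1)$, $HF^0(S_1,S_0)$, $HF^0(S_1,S_1)$ are computed by $\Hom(E,F)$, $\Hom(F,E)$, $\Hom(F,F)$, which are one-dimensional, and the composition $HF^0(S_0,S_1)\otimes HF^0(S_1,S_0)\to HF^0(S_1,S_1)$ corresponds to $\Hom(E,F)\otimes\Hom(F,E)\to\Hom(F,F)$, which vanishes by the manifestly symmetric support argument of Lemma \ref{lem:underformed-coho-products} (global sections of $\cHom(E,F)$ vanish on $C_1$, those of $\cHom(F,E)$ vanish on $C_2$).

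The substance of the proposition is part (3). The relative Fukaya category $\cF(X,\Sigma)$ deforms $\cF(M)$ over $\bC[[q]]$, and the first-order coefficient of the deformed product on the relevant degree-zero generators counts rigid pseudoholomorphic triangles with boundary on $S_0$ and two copies of $S_1$ (with a small Hamiltonian pushoff for transversality) whose intersection number with $\Sigma$ equals one, weighted by $q$. Since the $q^0$ term vanishes by part (2), establishing part (3) amounts to showing this weighted count of once-crossing triangles is nonzero. I would first localise: by positivity of intersection with the $J_L$-holomorphic divisor $\Sigma$, together with an area bound coming from the fact that $S_0,S_1$ are contained in $X_\loc$, any triangle contributing at order $q^1$ is confined to an arbitrarily small neighbourhood of $X_\loc$, so the count reduces to an explicit computation in the almost-toric model of Figure \ref{fig:local-models-2-spheres-SYZ}.

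To carry out this local count I would follow the toy computation of \cite[Lemma 3.11]{Seidel_quartic}. Because $S_0$ and $S_1$ are Hamiltonian isotopic inside $X_\loc$ through a fibre-preserving isotopy whose trace meets $\Sigma_\loc$ algebraically once, the configuration $(S_0,S_1,\Sigma_\loc)$ is modelled on a standard picture in which there is a unique immersed holomorphic triangle through the three generating intersection points that crosses $\Sigma$ exactly once; I would verify its regularity (automatic in real dimension four for index reasons, using $c_1(X)=0$ as in \cite[Lemma 8.4]{Seidel_quartic}) and that it contributes with coefficient $\pm 1$, so that the first-order product is $\pm q \neq 0$ modulo $q^2$. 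As a robustness check and conceptual anchor, the product is in any case nonzero in the Fukaya category $\cF(X)$ of the closed K3 surface, where the fibre-preserving isotopy gives $S_0 \cong S_1$ and identifies it with the unital product on $HF^\ast(S_1,S_1) \cong H^\ast(S^2)$; combined with the vanishing at $q=0$ this forces the product to have positive $q$-adic valuation, and the once-crossing property of the isotopy pins the valuation at one.

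The hard part will be precisely this last quantitative point: confirming that the leading contribution sits at order exactly $q^1$ rather than being pushed to higher order by cancellation among signed triangle contributions. The purely qualitative closed-string argument only shows the valuation is positive, so the sign-sensitive local triangle count in $X_\loc$ — and the identification of a single regular once-crossing triangle — is what genuinely does the work.
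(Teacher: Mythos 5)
Your parts (1) and (2) follow the same route as the paper: identify $S_0$ and $S_1$ as mirror to $E$ and $F$, then quote Lemma \ref{lem:underformed-coho-products}. One caveat: Remark \ref{rmk:mirrors-to-(-2)-curves-Y_i} concerns interior $(-2)$-curves of a single component $Y_i$, whereas $E$ and $F$ are pushforwards from the boundary-crossing curve $C = C_1 \cup C_2$; the paper needs a dedicated argument (Lemma \ref{lem:fibred-spheres-mirror-to-C_1-C_2}), which identifies the mirror of $i_\ast \cO_{C_i}(-1)$ with a thimble/co-core via Orlov's blow-up decomposition, passes to $i_\ast \cO_{C_i}(k)$ by Lagrangian translations, and then assembles $E$ and $F$ by Polterovich surgery with the mirror $A_{12}$ of $\cO_{p_{12}}$. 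Your one-line citation would not suffice as written, but the structure of the argument is the same.

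For part (3) there is a genuine gap, and you flag it yourself: your proof rests on the assertions that there is a \emph{unique} immersed holomorphic triangle crossing $\Sigma$ once, that it is regular, and that it contributes $\pm 1$; none of these is established, and your fallback claim that ``the once-crossing property of the isotopy pins the valuation at one'' is not a valid argument --- the $q$-power attached to a triangle is its intersection number with $\Sigma$, a property of the triangle (a priori varying over the moduli space of contributing triangles), not of the Hamiltonian isotopy trace, and topologically relative classes with different $\Sigma$-intersection numbers exist, so only holomorphic input can constrain it. The paper (Lemma \ref{lem:non-trivial-deformed-product-for-S_0-and-S_1}) closes exactly this gap by a different mechanism that needs no uniqueness, no sign bookkeeping, and no explicit count: in the Lefschetz model $X_\loc = \{ xy + (z-1)(z-2) = 0 \} \subset \bC^2 \times \bC^\ast$, with $\Sigma_\loc$ a fibre of the projection $f$ to $z$ and $S_0$, $S_1$ matching spheres, the open mapping theorem applied to $f \circ u$ for any contributing triangle $u$ (with $J$ the standard complex structure, or a small perturbation thereof) shows that \emph{every} such triangle meets $\Sigma$ transversally in exactly one point. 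Hence the whole product is concentrated in $q$-degree exactly one, and its coefficient equals the total signed count, which is nonzero by the unitality argument you also give ($S_0$ Hamiltonian isotopic to $S_1$ in $X$, so at $q=1$ the product is the unit product on $HF_X^\ast(S_1,S_1) \cong H^\ast(S^2)$). In other words, the paper never performs the ``sign-sensitive local triangle count'' you correctly identify as the hard step; it replaces it with the observation that all contributions live in the same $q$-degree, after which nonvanishing comes for free.
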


We will prove this in several steps. The first result we need is as follows.

\begin{lemma} \label{lem:fibred-spheres-mirror-to-C_1-C_2}

Let $Y$ is as in Lemma \ref{lem:local-model-for-non-trivial-deformation-existence-B-side}, let $(X, \omega; \Sigma)$ be its mirror compactifying K3 surface, and let $S_0$ and $S_1$ be the Lagrangian spheres introduced above. Then we can choose $S_0$, and gradings for $S_0$ and $S_1$, so that under the homological mirror symmetry isomorphism of Theorem \ref{thm:hms-footballs-wrapped}, $S_0$ is mapped to $E$ and $S_1$ is mapped to $F$. 
\end{lemma}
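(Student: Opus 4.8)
The plan is to identify the two Lagrangian spheres $S_0$ and $S_1$ with the coherent sheaves $E$ and $F$ by leveraging the explicit mirror dictionary built in Sections \ref{sec:mirrors-to-points} and \ref{sec:mirrors-to-line-bundles}, together with the functorial properties of the Lagrangian translation $\sigma_L$ established in Proposition \ref{prop:Lagrangian-translations-of-M}. First I would fix a convenient starting sphere $S_0$. Recall the divisor $E = i_\ast \cO_C$ with $C = C_1 \cup C_2$, where $C_1 \subset Y_1$ and $C_2 \subset Y_2$ are the $(-1)$-curves coming from interior blow-ups in the chosen toric models. By Remark \ref{rmk:mirrors-to-(-2)-curves-Y_i} and \cite[Lemma~4.15 and Proposition~5.2]{HK2}, the structure sheaf $i_\ast \cO_{C_j}$ of each individual $(-1)$-curve (viewed after passing to the relevant log CY component, or more precisely the $(-2)$-curve it becomes in $\widetilde{U}_j$) is mirror to an embedded Lagrangian $S^2$ which is fibred over an interval in $B_j$. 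The key geometric input is that $C_1$ and $C_2$ meet the common $1$-stratum $D_{12}$ transversely in one point, so that $C$ is supported on a connected nodal configuration straddling the edge of $\Delta$ dual to $D_{12}$; on the mirror side this translates, via the gluing construction of $M$ and the compatibility of the almost-toric fibrations in Proposition \ref{prop:M-compactifies-to-type-III-K3}(iv), into a single Lagrangian sphere fibred over the segment joining the two nodal points $p_1, p_2$. I would take this sphere, with a suitable grading, to be $S_0$, and check that under Theorem \ref{thm:hms-footballs-wrapped} it is mirror to $E = i_\ast \cO_C$.

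The most efficient way to organise the mirror computation for $E$ is to use the resolution strategy already deployed in the proof of Lemma \ref{lem:mirror-to-L_0}. Concretely, $i_\ast \cO_C$ fits into a short exact sequence built from $i_\ast \cO_{C_1} \oplus i_\ast \cO_{C_2} \to i_\ast \cO_p$ (the node), and I would identify the mirror of each term: the two summands $i_\ast \cO_{C_j}$ are the fibred Lagrangian spheres over the respective intervals, the skyscraper at the intersection point is mirror to a torus fibre (or, after the relevant reduction, a fibre of $\pi$ over the crossing point), and the connecting map corresponds to a clean intersection. Then a Polterovich surgery, exactly as in the proof of Lemma \ref{lem:mirror-to-L_0}, resolves the mapping cone into the single embedded sphere $S_0$ fibred over the full segment $[p_1, p_2]$. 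This pins down the grading on $S_0$ making it mirror to $E$ on the nose.

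Once $S_0 \simeq E$ is established, the identification of $S_1$ with $F$ is essentially formal. By definition $S_1 = \sigma_L(S_0)$, where $\sigma_L$ is the generalised Lagrangian translation of Proposition \ref{prop:Lagrangian-translations-of-M} associated to the line bundle $\cL = \cO_{\cY}(Y_1)|_Y \in \Pic Y$. Proposition \ref{prop:Lagrangian-translations-of-M}(iii) states precisely that, under the HMS isomorphism of Theorem \ref{thm:hms-footballs-wrapped} and for a suitable choice of shift, the induced autoequivalence $[\sigma_L] \in \Auteq \cW(M)$ is mirror to $(-\otimes \cL) \in \Auteq \Coh Y$. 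Applying this to $S_0 \simeq E$ immediately gives that $S_1 = \sigma_L(S_0)$ is mirror to $E \otimes \cL = F$ (recalling from Lemma \ref{lem:underformed-coho-products} that $F$ is defined as exactly this twist), again up to a grading which I would fix to match. Since all the relevant objects are compact Lagrangian spheres and hence objects of $\cF(M) \subset \cW(M)$, the identification takes place compatibly in both the wrapped and compact categories, as noted after Theorem \ref{thm:hms-footballs-compact}.

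The main obstacle I anticipate is not in the formal application of $\sigma_L$, but in verifying carefully that the particular fibred sphere $S_0$ chosen in the local almost-toric model $\pi_\loc\colon X_\loc \to B_\loc$ (Figure \ref{fig:local-models-2-spheres-SYZ}) genuinely corresponds, under the compactification Liouville equivalence $M \simeq X \setminus \nu(\Sigma)$ of Proposition \ref{prop:M-compactifies-to-type-III-K3}, to the Polterovich-surgered sphere mirror to $E$ — i.e.\ that the two toric-model descriptions of the local geometry around the edge dual to $D_{12}$ are matched with the correct invariant directions and nodal data. This requires tracking that the blow-ups producing $C_1$ and $C_2$ are interior blow-ups sharing the invariant direction of the edge $D_{12}$ (which is exactly why I insist on choosing the toric models so that $C_1, C_2$ arise from interior points, as arranged just before the proposition), so that their two nodal fibres are colinear and the fibred sphere over $[p_1, p_2]$ is embedded and disjoint from $\Sigma_\loc$. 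Once this bookkeeping is in place the grading choices are determined by requiring compatibility with the $\bZ$-gradings on both sides, and the statement follows.
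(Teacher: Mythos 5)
Your overall architecture matches the paper's: both resolve $E$ by the short exact sequence $0 \to E \to i_\ast\cO_{C_1}\oplus i_\ast\cO_{C_2} \to i_\ast\cO_{p_{12}} \to 0$, identify mirrors of the three constituents, and assemble the mirror of $E$ by Polterovich surgery exactly as in the proof of Lemma \ref{lem:mirror-to-L_0}. Your treatment of $S_1$ is also valid, and in fact cleaner than the paper's: since $S_1$ is defined as $\sigma_L(S_0)$ for $\cL = \cO_{\cY}(Y_1)|_Y$, Proposition \ref{prop:Lagrangian-translations-of-M}(iii) immediately gives that $S_1$ is mirror to $E\otimes\cL = F$ up to a shift, whereas the paper constructs the mirror of $F$ by a second surgery (with pieces $V_1(0)$, $A_{12}$, $V_2(2)$) and then verifies the translation relation by checking intersection numbers.

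The genuine gap is in your identification of the constituent mirrors, and it is not cosmetic. You claim that $i_\ast\cO_{C_j}$ is mirror to a closed Lagrangian sphere fibred over an interval, citing Remark \ref{rmk:mirrors-to-(-2)-curves-Y_i}. That remark applies only to $(-2)$-curves contained in $Y_j \setminus D_j$; here $C_j$ is a $(-1)$-curve meeting $D_j$, and your parenthetical fix (passing to the $(-2)$-curve strict transform in $\widetilde{U}_j$) changes both the sheaf and the ambient category, with no argument transferring the identification back to $\Coh Y_j \simeq \cW(M_j)$. In fact the claim is false: by the compatibility of HMS with restriction (Equation \ref{eq:HMS-compatibility-restrict-to-D_i}), a compact Lagrangian in the interior of $M_{U_j}$ is killed by the cap functor $\eta^\ast$, so its mirror $G$ satisfies $i^\ast G = 0$ in $\Perf D_j$; but $i^\ast i_\ast\cO_{C_j} \neq 0$ precisely because $C_j$ meets $D_j$. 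The correct mirror is a \emph{non-compact} Lagrangian thimble: the paper obtains it from Orlov's blow-up semiorthogonal decomposition $\Coh Y_j = \langle \cO_{C_j}(-1), Lf^\ast \Coh \bar{Y}_j\rangle$ together with the description in \cite{HK1} of the mirror to an interior blow-up as an added Lefschetz thimble (equivalently, the co-core of the attached Weinstein $2$-handle), followed by a Lagrangian translation (Proposition \ref{prop:Lag-translations-on-pi_i-properties}) to pass from $\cO_{C_j}(-1)$ to $\cO_{C_j}$. Likewise, the skyscraper $i_\ast\cO_{p_{12}}$ at the node is not mirror to a torus fibre of $\pi$: torus fibres are mirror to points of the open strata $(\bC^\ast)^2 \subset Y_i\setminus D_i$ (Corollary \ref{cor:mirrors-to-points-Y}), and a point on the double locus of $Y$ has infinite-dimensional self-Ext, unlike $H^\ast(T^2)$; its mirror is the stabilised zero-section annulus $A_{12}$. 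These corrections are exactly what make the surgery produce a sphere, since $S_0$ arises as disc $\cup$ annulus $\cup$ disc; surgering two closed spheres and a torus fibre could not yield a sphere at all. So as written your identification $S_0 \leftrightarrow E$ does not go through, and the missing input is the thimble identification.
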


\begin{proof}
First, let's work with each of the irreducible components $(Y_i, D_i)$. 
We claim that for any $k \in \bZ$, we can identify the mirror to $i_\ast \cO_{C_i} (k) \in \Coh Y_i$. Let's start with $i_\ast \cO_{C_i} (-1)$. (This is a key step in \cite{HK1} when inducting on the number of interior blow-ups to get the general homological mirror symmetry theorem.) 
On the B-side, by \cite[Theorem 4.3]{Orlov}, we have a semi-orthogonal decomposition
$$
\Coh Y_i = \langle \cO_{C_i}(-1), Lf^\ast \Coh \bar{Y}_i \rangle
$$
where $f: Y_i \to \bar{Y}_i$ is given by contracting $C_i$. 
Say our blow-up is on $D_{ij} \subset D_i$ corresponding to a toric ray $v_j$.
On the A-side, in terms of Weinstein Lefschetz fibrations, the mirror to $\bar{Y}_i$ has smooth fibre $S_i$ and some distinguished collection of vanishing cycles; 
and the Weinstein Lefschetz fibration mirror to $Y_i$ is obtained by adding to this one more critical fibre, with vanishing cycle the meridian $S^1_j \subset S_i$ corresponding to the toric ray $v_j$ (in the notation of \cite{HK1}, this is $W_j$). See \cite[Proposition 3.8 and Remark 3.9]{HK1}. 
Under the HMS equivalence of \cite[Theorem 1.1]{HK1}, $i_\ast \cO_{C_i} (-1)$ is mirror to the added Lefschetz thimble, say $V_i$. Equivalently, in terms of Weinstein handlebody decompositions, $V_i$ can be identified with the Lagrangian co-core of the additional handle added to go from the mirror of $\bar{Y}_i$ to the mirror of $Y_i$.
In terms of almost-toric fibrations, $V_i$ is fibred over the segment from $p_i$ to $\pi_i(S_j^1)$. Here we are still using the directed Fukaya category $\cF^{\to} (w_i)$, so that $V_i$ has boundary equal to $S_j^1$. To get the corresponding object under the sequence of quasi-equivalences 
$\cF^{\to} (w_i) \simeq \cW(M_{U_i}, \mathfrak{f}_i) \simeq \cW(M_i)
$
we push $V_i$ off the core $\mathfrak{f}_i$ by a small Reeb flow before deleting the (even smaller) neighbourhood of $\mathfrak{f}_i$, similar to the mirror to $\cO_{Y_i}$ in the proof of Lemma \ref{lem:mirror-to-L_0}. 
Given $V_i$, we can now get the mirrors to the sheaves $i_\ast \cO_{C_i} (k)$ for $k \neq -1$ by applying Proposition \ref{prop:Lag-translations-on-pi_i-properties} (using for instance the line bundle $\cO(D_{ij})$), say $V_i(k+1)$.

Say $p_{12} \subset D_{12} \subset D_i$ (for $i=1,2$) is the point which is blown up. Under the mirror isomorphism $\Coh D_{12} \simeq (T^\ast S^1)^-$, $i_\ast \cO_{p_{12}}$ is mirror to the zero section in $ (T^\ast S^1)^-$ (with a preferred brane structure), which can be stabilised to get its mirror in 
$T^\ast[0,1] \times (T^\ast S^1)^-$, say $A_{12}$. 

We can then proceed as in the proof of Lemma \ref{lem:mirror-to-L_0}, using the exact sequence
$$
0 \lra E \lra i_\ast \cO_{C_1} \oplus i_\ast \cO_{C_2} \lra i_\ast \cO_{p_{12}} \lra 0
 $$
and similarly for $F$. The mirror to $E$ will be obtained by the Polterovich surgeries (for cleanly intersecting Lagrangians) for $V_1(1)$, $A_{12}$ and $V_2(1)$; and the mirror to $F$, by using  $V_1(0)$, $A_{12}$ and $V_2(2)$. The fact that they are related by the Lagrangian translation mirror to $\otimes \cO_{\cY}(Y_1)|_{Y} \in \Auteq \Coh Y$ follows by checking intersection numbers.
\end{proof}

Lemmas \ref{lem:underformed-coho-products} and \ref{lem:fibred-spheres-mirror-to-C_1-C_2}, taken together, imply that $S_0$ and $S_1$ satisfy points (1) and (2) for Proposition \ref{prop:non-trivial-def-Floer-side}.
To check point (3), we will use the following alternative viewpoint $X_\loc$, based on \cite[Section 5]{Auroux_Gokova} and carefully revisited in  \cite[Section 6.4]{HK2}. 
Up to truncating conical ends, $X_\loc$ is given by
$$
X_\loc = \{ xy + (z-1)(z-2) = 0 \} \subset \bC^2 \times \bC^\ast
$$
equipped with the K\"ahler form with potential $|x|^2 + |y|^2 + (\log |z|)^2$. 
The projection to $z$, say $f \colon X_\loc \to \bC^\ast$, 
makes it the total space of a Lefschetz fibration. 

In terms of these coordinates, the singular Lagrangian torus fibration is given by $(x, y, z) \mapsto (|z|, \delta_z(x,y))$, where $\delta_z(x,y)$ is the signed area between the equator $\{ |x'| = |y'| \}  \subset f^{-1} (z)$ and the orbit of $(x,y,z)$ under the $S^1$-action
$(x,y,z) \mapsto (e^{i \theta} x, e^{-i \theta}y,z)  $, see \cite[Section 5.1]{Auroux_Gokova}.

Comparing the two fibrations, we see that without loss of generality $\Sigma$ can be described locally as the subset $\{ z = \nicefrac{3}{2} \, i \} \subset X_\loc$. The sphere $S_0$ corresponds to the `standard' matching sphere between the critical fibres of $f$, as in Figure \ref{fig:local-models-2-spheres-Lefschetz}.
Moreover, we already understand the effect of (relevant) Lagrangian translations on $S_0$, again by comparing with Section \cite[Section 6.4]{HK2}. In particular, we see that $S_1$ is also a matching sphere, with the matching path given in Figure \ref{fig:local-models-2-spheres-Lefschetz}.

\begin{figure}
    \centering
    \includegraphics[width=0.6\linewidth]{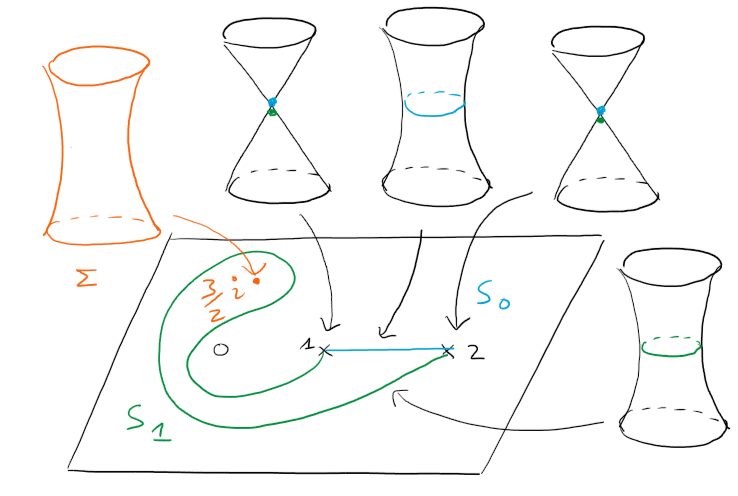}
    \caption{Local model for the spheres $S_0$ and $S_1$ in $X$: Lefschetz fibration.}
    \label{fig:local-models-2-spheres-Lefschetz}
\end{figure}

\begin{lemma}\label{lem:non-trivial-deformed-product-for-S_0-and-S_1}
The Lagrangian spheres $S_0$ and $S_1$ satisfy property (3) in the statement of Proposition \ref{prop:non-trivial-def-Floer-side}: letting $\hom$ denote morphisms in the relative  Fukaya category $\cF(X, \Sigma)$, we have that: 
 \begin{equation}\label{eq:first-order-Floer-product}
     H^0 (\hom(S_0, S_1) \otimes \bC[q] / q^2) \otimes H^0 (\hom(S_1, S_0) \otimes \bC[q] / q^2) 
\lra
 H^0 (\hom(S_1, S_1) \otimes \bC[q] / q^2)
 \end{equation}
 
\end{lemma}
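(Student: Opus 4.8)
The plan is to compute the relevant holomorphic discs directly in the local Lefschetz model $X_{\loc} = \{ xy + (z-1)(z-2) = 0 \}$, where $S_0$ and $S_1$ are matching spheres with the matching paths of Figure \ref{fig:local-models-2-spheres-Lefschetz}, and $\Sigma$ is the fibre $\{ z = \tfrac{3}{2} i \}$. The point of the relative Fukaya category is that the differential and products are deformed by counting holomorphic discs weighted by $q^{u \cdot \Sigma}$, where $u \cdot \Sigma$ is the intersection number of the disc with $\Sigma$; working modulo $q^2$ means I only need to record discs meeting $\Sigma$ exactly once. By part (2) (which follows from Lemmas \ref{lem:underformed-coho-products} and \ref{lem:fibred-spheres-mirror-to-C_1-C_2}), the undeformed ($q=0$) product vanishes, so the first-order coefficient is governed entirely by a count of rigid holomorphic triangles with the three inputs/output being the generators of $HF^0(S_0,S_1)$, $HF^0(S_1,S_0)$ and $HF^0(S_1,S_1)$, and intersecting $\Sigma$ once.

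First I would fix the geometric picture: the two matching paths each pass between the critical values $z=1$ and $z=2$, and (after the Lagrangian translation $\sigma_L$) $S_0$ and $S_1$ intersect cleanly or transversely in the two points over which the matching paths cross, giving the one-dimensional $HF^0$ groups. I would identify the generator of each Floer group with a specific intersection point, and then locate the immersed holomorphic triangle(s) with boundary on $S_0 \cup S_1$ (appropriately ordered) whose corners are those three intersection points. The key geometric input is that $S_0$ and $S_1$ are \emph{fibred} over the base of the Lefschetz fibration, so such triangles project to immersed triangles in the $z$-plane $\bC^*$ with vertices at the appropriate intersection points of the projected matching paths; by the Riemann mapping / standard Lefschetz-fibration disc-counting technology (as in Seidel's work and \cite{Auroux_Gokova}), the holomorphic triangles lift uniquely once the boundary conditions and the behaviour in the Morse--Bott $S^1$-fibre directions are pinned down.

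The crucial computation is then to verify that the unique such triangle contributing to the product \emph{does} cross $\Sigma = \{ z = \tfrac{3}{2} i \}$ exactly once, so that it is weighted by $q^1$ and not $q^0$ — this is precisely what makes the undeformed product vanish but the first-order product nonzero. I would argue this by examining the image in the $z$-plane: the two matching paths and their crossing points are positioned so that the minimal-area triangle realizing the product has its $z$-projection sweeping across the line $\operatorname{Im} z = \tfrac{3}{2}$ once, which I can read off from the explicit positions of $S_0$, $S_1$, and $\Sigma$ recorded in Figures \ref{fig:local-models-2-spheres-SYZ} and \ref{fig:local-models-2-spheres-Lefschetz}. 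The main obstacle will be turning this into a rigorous count with correct signs and ensuring there are no \emph{cancelling} contributions or additional $q$-power-one discs — I would control this using the density of regular almost-complex structures for which $\Sigma$ is holomorphic (Seidel, \cite[Lemma 8.4]{Seidel_quartic}), the gradings to fix degree $0$, and a maximum-principle argument (the fibration structure and the exactness of $S_0, S_1$ in $X \setminus \Sigma$) to confine holomorphic curves to the local model $X_{\loc}$, so that the local count is the global one. Provided the single contributing triangle crosses $\Sigma$ once and survives, the product in \eqref{eq:first-order-Floer-product} is a nonzero multiple of $q$, establishing property (3).
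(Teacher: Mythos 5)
Your strategy is the direct-enumeration route: locate the holomorphic triangle(s) in the local Lefschetz model, check that each meets $\Sigma$ once, and verify that nothing cancels. The paper only gestures at this route in a closing parenthesis (``alternatively, one can consider the holomorphic disc in the base $\bC^\ast$ bounded by the matching paths and directly consider its holomorphic lifts''); its actual proof is different and sidesteps every step you flag as an obstacle. It uses that $S_1 = \sigma_L(S_0)$ is Hamiltonian isotopic to $S_0$ inside $X$ (not inside $X \setminus \Sigma$ --- the isotopy sweeps across $\Sigma$), together with the ring isomorphism $HF^\ast_X(S_0,S_0) \simeq H^\ast(S^2)$. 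Specialising the relative Fukaya product at $q=1$ computes the product in the absolute Fukaya category of $X$, where $S_1 \cong S_0$, so the composition $HF^0(S_0,S_1) \otimes HF^0(S_1,S_0) \to HF^0(S_1,S_1)$ becomes multiplication of units in $H^\ast(S^2)$ and is nonzero; in particular the total \emph{signed} count of contributing triangles is nonzero, with all orientation issues absorbed into this one statement. Then the open mapping theorem (for small perturbations of the standard integrable $J$ on $X_\loc$, with the maximum principle confining curves to $X_\loc$) shows every contributing triangle meets $\Sigma$ transversally in exactly one point, so the entire nonzero count sits in the coefficient of $q^1$. The two ingredients are nonvanishing of the total count and uniformity of the intersection number; neither requires identifying any individual triangle.

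Measured against this, your proposal has a genuine gap exactly where you concede one: the ``no cancelling contributions'' step. Regularity of $J$, gradings, and the maximum principle rigidify and localise the moduli space, but they do not prevent two rigid triangles from contributing with opposite signs. To close this directly you would need to prove \emph{uniqueness} of the holomorphic section over the relevant disc in the base (a Riemann-mapping plus degree-of-lift argument \`a la Khovanov--Seidel), which is the real content of the direct approach and is left unexecuted in your sketch; the paper's $q=1$ trick exists precisely so that the signed count is known to be nonzero without any enumeration. A smaller but symptomatic point: in the Lefschetz model $\Sigma_\loc$ is the fibre $\{z = \tfrac{3}{2}i\}$, so it projects to a single point of the base $\bC^\ast$, not to the line $\Imag z = \tfrac{3}{2}$ --- the latter description belongs to the SYZ picture, where $\Sigma$ is fibred over a line in the almost-toric base. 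The statement you actually need is that the disc in the base bounded by the two matching paths covers the point $\tfrac{3}{2}i$ with multiplicity one, i.e.\ that lifted triangles hit the fibre exactly once, which is the open-mapping-theorem assertion the paper invokes.
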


\begin{proof}
The local patch $X_\loc$ comes with the standard complex structure on $\{ xy + (z-1)(z-2) = 0 \} \subset \bC^2 \times \bC^\ast$ (which can be extended to an almost-complex structure on the whole of $X$). The Lagrangian spheres $S_0$ and $S_1$ are both regular for this choice. Using the maximal principle, the product in Equation \ref{eq:first-order-Floer-product} can be calculated by working in $X_\loc$. Experts will recognise that this brings us back to a setting in which holomorphic curves have been very well studied, going back to e.g.~\cite{Seidel_LES, Khovanov-Seidel}. There are now many ways to proceed. 
For instance, using the fact that $S_0$ is Hamiltonian isotopic to $S^1$ in $X$ (respecting their gradings) and that we have a ring isomorphism $HF_X^\ast(S_0, S_0) \simeq H^\ast (S^0)$, we see that setting $q=1$, some holomorphic triangles must exist for the product 
\begin{equation}
\label{eq:full-product}
\hom_0(S_0, S_1)  \otimes \hom_0(S_1, S_0)   \lra \hom_0(S_1, S_1) \end{equation}
in $\cF(X, \Sigma)$ (here we assume we're using the obvious minimal models). Suppose we're working with a small Hamiltonian perturbation for the second copy of $S_1$. Then the generator of $H^0 (\hom(S_0, S_1))$ corresponds to one of the singular points of $f$, and the generator of $H^0 (\hom(S_1, S_0))$ corresponds to the other one. Using for instance the open mapping theorem (working with small perturbations of the holomorphic $J$), we get that all of the holomorphic triangles we are counting must intersect $\Sigma$ transversally in a single point, and so contribute to the $q$-order term in the product, which establishes our claim. (Alternatively, one can consider the holomorphic disc in the base $\bC^\ast$ which is bounded by the matching paths for $S_0$ and $S_1$, and directly consider its holomorphic lifts with suitable boundary conditions.)
\end{proof}

This concludes the proof of Proposition \ref{prop:non-trivial-def-Floer-side}.

\subsection{Conclusion of argument} \label{sec:conclusion-proof-of-main}
We may assume that $Y$ satisfies the conclusions of Lemma~\ref{lem:local-model-for-non-trivial-deformation-existence-B-side} by Lemma~\ref{lem:Dolgachev_family_modifications}.

\begin{definition}\label{def:B_0}
Let $\cB_0 \subset \Perf Y$ be the full $A_\infty$-subcategory with objects the line bundles on $Y$, the skyscraper sheaves $\cO_{q_i}$ for $q_i \in Y_i \setminus D_i$ as in Corollary~\ref{cor:mirrors-to-points-Y}, and the sheaves $E$ and $F$ of Lemma~\ref{lem:fibred-spheres-mirror-to-C_1-C_2}.
\end{definition}

\begin{remark}
In fact, instead of taking all line bundles on $Y$, it suffices to take  $\cO_Y$, $A$, $A^{\otimes 2}$ where $A$ is very ample, together with a basis of $\Pic Y$. This is because $\cO_Y,A,A^{\otimes 2}$ split generate $\Perf Y$ by \cite[Theorem 4]{Orlov_generators}, and these objects $E$ have the property that the kernel of the obstruction map
$$
\HH^2(\Perf Y) \rightarrow \bigoplus_E \Hom^2(E,E)
$$
has dimension $1$, as required by Proposition~\ref{prop:Nick} below. In particular, if desired, we may assume that $\cB_0$ has finitely many objects.
\end{remark}

For each line bundle $L$ on $Y$, there exists a unique lift $\cL$ of $L$ to a line bundle on $\cY$ by construction. 
(Note that the restriction map $\Pic \cY \rightarrow \Pic Y$ is injective for any deformation $(Y \subset \cY)/(0 \in S)$ of a type III K3 $Y$ since $H^1(\cO_Y)=0$.) 
For each (smooth) point $q_i \in Y$ as above we choose a section $\sigma_i$ of $\cY \rightarrow \Spec \bC[[q]]$ such that $\sigma_i(0)=q_i$. Let $\cO_{\sigma_i}$ denote the structure sheaf of (the image of) $\sigma_i$, which is in particular a coherent sheaf on $\cY$, flat over $\bC[[q]]$. 
Let $\cE_i^{*}$ be a finite locally free resolution of $\cO_{\sigma_i}$ (which exists since $\cY$ is smooth). Since $\cO_{\sigma_i}$ is flat over $\bC[[q]]$, $E_i^{*} := \cE_i^{*} \otimes_{\bC[[q]]} \bC$ is a (finite locally free) resolution of $\cO_{\sigma_i} \otimes_{\bC[[q]]} \bC = \cO_{q_i}$.
(This is because the cohomology sheaf of the complex $E_i^{*}$ in degree $-k$ computes $\cTor_k^{\bC[[q]]}(\cO_{\sigma_i}, \bC)$, which vanishes for $k>0$ by flatness of $\cO_{\sigma_i}$ over $\bC[[q]]$; by \cite[Lemma~3.2.8]{Weibel}, $\mathrm{Tor}$ can be computed using resolutions by flat (but not necessarily projective) modules.) Similarly, we define lifts of the sheaves $E$ and $F$ of Lemma~\ref{lem:fibred-spheres-mirror-to-C_1-C_2} as follows. Recall that there is a Cartier divisor $C=C_1 \cup C_2$ on $Y$ such that, for each $i=1,2$, $C_i$ is a $(-1)$-curve on an irreducible component $Y_i$ of $Y$ meeting the boundary transversely at a point of the $1$-stratum $Y_1 \cap Y_2$, and $E=i_*\cO_C$ and $F=i_*\cO_C \otimes \cO_{\cY}(Y_1)|_Y$ where $i \colon C \subset Y$ denotes the inclusion. The Cartier divisor $C \subset Y$ lifts uniquely to a Cartier divisor $\cC \subset \cY$ (with generic fibre a $(-2)$-curve on $\cY_{\eta}$). Indeed, the line bundle $\cO_Y(C)$ lifts by definition of $Y \subset \cY/\Spec \bC[[q]]$, and the restriction map on global sections is surjective since $H^1(\cO_Y(C))=0$; uniqueness follows from $h^0(\cO_Y(C))=1$. Now as before, since $\cC/\Spec \bC[[q]]$ is flat, we can take $\cE^*$ and $\cF^*$ finite locally free resolutions of the coherent sheaves $\cE:=i_*\cO_{\cC}$ and $\cF:=i_*\cO_{\cC} \otimes \cO_{\cY}(Y_1)$ on $\cY$,
where $i \colon \cC \subset \cY$ denotes the inclusion.
Then the restrictions $E^*$, $F^*$ to $Y$ are finite locally free resolutions of $E$, $F$. 

These choices of lifts of the objects of $\cB_0 \subset \Perf Y$ to $\Coh \cY = \Perf \cY$ define an uncurved $A_\infty$ deformation $\cB$ of $\cB_0$ over $\bC[[q]]$. 
Here we define the $A_\infty$ structure using finite locally free resolutions and the Cech complex for an affine open covering, see \cite[$\S$5a]{Seidel_quartic}. 
(Note that we are in fact defining a dg category here, which we regard as an $A_\infty$ category with $m^k=0$ for $k > 2$.)

\begin{remark}
This is quasi-equivalent to the definition using injective resolutions by \cite[Lemma~5.1]{Seidel_quartic}. But we use the definition in terms of locally free resolutions and Cech complexes because then it is clear that $\cB$ is an uncurved deformation of the $A_\infty$ category $\cB_0$ in the sense of \cite[$\S$3.3]{Sheridan_versality_survey} (here an \emph{uncurved} deformation of an $A_\infty$ category is a deformation for which $m^0=0$).
\end{remark}

The deformation $\cB$ of $\cB_0$ is non-trivial because $\cB_0$ split generates $\Perf Y$, so that $\HH^2(\cB_0)=\HH^2(\Perf Y)$, the deformation $\cY/\bC[[q]]$ of $Y$ is non-trivial at first order by construction, and the map $\Ext^1(\Omega_Y,\cO_Y) \rightarrow \HH^2(\Perf Y)$ from first order deformations of $Y$ to first order deformations of $\Perf Y$ is injective by \cite[Theorem~3.1.3]{Buchweitz-Flenner}.

\begin{definition}\label{def:A_0}
Let $\cA_0 \subset \cF(M)$ be the full $A_\infty$-subcategory corresponding to $\cB_0 \subset \Perf Y$ under the limit HMS equivalence
$$
\Perf Y \stackrel{\sim}{\longrightarrow} \cF(M)
$$
of Theorem~\ref{thm:hms-footballs-compact}. 
\end{definition}

Explicitly, by Corollary~\ref{cor:mirrors-to-sections-of-pi}
and Corollary \ref{cor:mirrors-to-points-Y}, the objects of $\cA_0$ are represented by Lagrangian sections of the SYZ fibration $f \colon M \rightarrow S^2$, (exact) Lagrangian torus fibres of $f$, and the Lagrangian spheres $S_0$ and $S_1$ of Lemma~\ref{lem:fibred-spheres-mirror-to-C_1-C_2}, all equipped with suitable brane data.

Consider the relative Fukaya category of the pair $(X, \Sigma)$ as in Section \ref{sec:toy-mirror-deformations}. 
Let $\cA \subset \cF(X,\Sigma)$ be the full subcategory of the relative Fukaya category of the pair $(X,\Sigma)$ with the same objects as $\cA_0$ together with a choice of $\omega$-compatible almost complex structure $J_L$ which is regular with respect to the Lagrangian $L \subset X$ representing the object. Then $\cA$ is an uncurved $A_\infty$ deformation of $\cA_0$ over $\bC[[q]]$. See \cite[Proposition 8.7]{Seidel_quartic}.
The deformation $\cA$ of $\cA_0$ is non-trivial at first order by Proposition~\ref{prop:non-trivial-def-Floer-side} above and \cite[Lemma~3.11]{Seidel_quartic}.

The following proposition was communicated to us by Nick Sheridan, building on \cite{GHHPS_integrality}.

\begin{proposition} \label{prop:Nick}
Let $F_0 \colon \cB_0 \rightarrow \cA_0$ be a quasi-equivalence of $A_\infty$ categories. 
Let $\cB$ and $\cA$ be uncurved $A_\infty$ deformations of $\cB_0$ and $\cA_0$ respectively over $\bC[[q]]$, each non-trivial at first order. We further assume that $\cA_0$ is split generated by a finite set $S$ of objects such that $\hom^1_{\cA}(A,A)=0$ for all $A \in S$.

Assume that
\begin{enumerate}
\item The kernel of the obstruction map
$$
\HH^2(\cA_0) \rightarrow \prod_{A \in \Ob \cA_0} \Hom^2_{\cA_0}(A,A)
$$
has dimension $1$.
\item For all $k \in \bN$, objects $A$ of $\cA$, and $\alpha \in \hom^1_{\cA}(A,A)$, we have
$$
m^k(\alpha,\alpha,\cdots,\alpha)=0.
$$ 
\end{enumerate} 
Then there exists a $\bC$-algebra automorphism $\psi \colon \bC[[q]] \rightarrow \bC[[q]]$ and a curved $A_\infty$-functor
$$
F \colon \psi^*\cB \rightarrow \cA
$$
extending $F_0$. Equivalently, let $\cA'$ be the $A_\infty$-category obtained from $\cA$ by equipping each object $A$ with the bounding cochain $F^0_A \in \hom^1_{\cA}(A,A)$. Then we have an uncurved $A_\infty$-functor
$$
F^{\ge 1} \colon \psi^*\cB \rightarrow \cA',
$$
which is a quasi-equivalence.
\end{proposition}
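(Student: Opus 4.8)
The plan is to deduce the statement from the versality machinery of \cite{GHHPS_integrality} and \cite{Sheridan_versality_survey}, the point being that conditions (1) and (2) are exactly the hypotheses that make $\cA$ a versal uncurved deformation of $\cA_0$ and that make the curved-functor formalism over $\bC[[q]]$ behave well. First I would use $F_0$ to transport $\cB$ to an uncurved deformation $\tilde{\cB} := (F_0)_\ast \cB$ of $\cA_0$; since $F_0$ induces an isomorphism $\HH^\bullet(\cB_0) \cong \HH^\bullet(\cA_0)$ compatible with the obstruction maps, it suffices to produce a $\bC$-algebra automorphism $\psi$ of $\bC[[q]]$ together with a curved $A_\infty$-functor $\psi^\ast \tilde{\cB} \to \cA$ extending the identity. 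Throughout, condition (2) guarantees that every $\alpha \in \hom^1_{\cA}(A,A)$ automatically solves the Maurer--Cartan equation $\sum_k m^k(\alpha,\dots,\alpha)=0$, so that equipping objects with bounding cochains is unobstructed and $\cA'$ is a well-defined uncurved category; this is precisely what lets the $F^0_A$-components of a curved functor be chosen freely.

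For the first-order step, recall that deformations of $\cA_0$ over $\bC[[q]]$ are governed by Maurer--Cartan elements of the Hochschild DGLA $CC^\bullet(\cA_0)$, with first-order class living in $\HH^2(\cA_0)$. Because both $\cA$ and $\tilde{\cB}$ are uncurved deformations in which the given objects persist as objects, their first-order classes have vanishing length-zero component and therefore lie in the kernel $K$ of the obstruction map of condition (1); by hypothesis $\dim_{\bC} K = 1$. Both classes are nonzero, since each deformation is non-trivial at first order. Hence they are proportional, and after rescaling $q$ by a suitable unit (the linear term of $\psi$) I can arrange that the first-order terms of $\psi^\ast \tilde{\cB}$ and of $\cA$ agree in $\HH^2(\cA_0)$.

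Next I would construct $F$ and the higher coefficients of $\psi$ simultaneously, by induction on the order in $q$. Suppose a curved functor $\psi^\ast \tilde{\cB} \to \cA$ extending $F_0$ has been built modulo $q^N$. The obstruction to extending it to order $N+1$ is a Hochschild $2$-cocycle of $\cA_0$ whose class $o_N \in \HH^2(\cA_0)$ again lies in $K$, since both sides are uncurved with persisting objects and so $o_N$ has no length-zero part. Modifying the coefficient of $q^{N+1}$ in $\psi$ alters $\psi^\ast \tilde{\cB}$ at order $N+1$ by an arbitrary multiple of the first-order class, which spans the one-dimensional space $K$; this lets me cancel $o_N$, while the remaining coboundary ambiguity is absorbed by adjusting the order-$N$ functor terms $F^{\geq 1}$ and the bounding cochains $F^0$ (legitimate by condition (2)). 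Completeness of $\bC[[q]]$ then lets me pass to the $q$-adic limit, producing the curved functor $F \colon \psi^\ast \cB \to \cA$ and hence the uncurved functor $F^{\geq 1} \colon \psi^\ast \cB \to \cA'$.

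Finally, $F^{\geq 1}$ reduces modulo $q$ to $F_0$, which is a quasi-equivalence. Using that $\cA_0$ is split-generated by the finite set $S$ with $\hom^1_{\cA}(A,A)=0$ for $A \in S$ --- which forces $F^0_A = 0$, so that $F^{\geq 1}$ is an honest uncurved functor on $S$ and $\cA'$ coincides with $\cA$ there --- a $q$-adic Nakayama argument over the complete ring $\bC[[q]]$ upgrades the mod-$q$ quasi-equivalence on $S$ to a quasi-equivalence of the split-closed categories. The crux of the whole argument, and the step I expect to be hardest, is the inductive claim that every obstruction class $o_N$ lies in the one-dimensional kernel $K$ and is therefore killable by a single base-change coefficient: this is exactly where uncurvedness (which keeps the length-zero part of the deformation and of all obstructions trivial), condition (1) (which makes $K$ small enough that one scalar of $\psi$ suffices), and condition (2) (which keeps the bounding-cochain bookkeeping consistent) must be combined, and it is precisely the content of the versality results of \cite{GHHPS_integrality, Sheridan_versality_survey} that the argument invokes.
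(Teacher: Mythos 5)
Your proposal follows the same architecture as the paper's proof (which itself follows \cite[Proposition~A.6]{GHHPS_integrality}): build $\psi$ and a curved functor order by order in $q$, show the obstruction at each order lies in a one-dimensional space, cancel it with the next coefficient of $\psi$ using first-order non-triviality of the deformation of $\cB_0$, absorb the exact remainder into the functor terms, and conclude via the quasi-equivalence criterion for filtered functors. Your device of matching the two first-order classes at the outset (so that the linear coefficient of $\psi$ is a nonzero scalar, making $\psi$ an automorphism for free) is a legitimate reordering of the paper's closing argument, which instead deduces invertibility of $\psi$ at the very end, using the finite split-generating set $S$ with $\hom^1_{\cA}(A,A)=0$ to see that passing to $\cA'$ cannot trivialise the deformation at first order.

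However, there is a genuine gap at exactly the step you flag as the crux. You claim the obstruction class $o_N$ has no length-zero part ``since both sides are uncurved with persisting objects,'' and in your final paragraph you attribute the vanishing of the length-zero parts of all obstructions to uncurvedness, reserving condition (2) for ``bounding-cochain bookkeeping.'' This is backwards. The functor you are extending is itself curved: it carries components $F^0_{N,A} \in q\cdot\hom^1_{\cA}(A,A)$, and the length-zero component of the obstruction cocycle $\delta(F_N)$ at an object $A$ is
$$
\sum_{k \ge 1} m^k_{\cA}\bigl(F^0_{N,A},\ldots,F^0_{N,A}\bigr),
$$
where uncurvedness of $\cA$ and of $\psi^\ast\cB$ only kills the $m^0_{\cA}$ term and the terms involving $m^0$ of the source --- it says nothing about this sum. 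It is precisely hypothesis (2) that forces this sum to vanish, and hence forces the obstruction cocycle into the subcomplex of cochains with vanishing length-zero part, whose $H^2$ is the one-dimensional space furnished by hypothesis (1); this is how the paper uses (2). Your argument, read literally, would prove the proposition with (2) deleted (keeping it only to make $\cA'$ well defined), which is false: if some $m^k(F^0_{N,A},\ldots,F^0_{N,A})$ were nonzero, the obstruction would acquire a component in $\prod_A \Hom^2_{\cA_0}(A,A)$, and no base change can remove it, since changing $\psi$ only alters the obstruction by classes pulled back from the uncurved deformation $\cB$, which all have vanishing length-zero part and so only move you along the kernel $K$. To repair the proof you must insert, at each inductive stage, the computation displayed above and invoke (2) to conclude that the obstruction lies in $K$.
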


\begin{proof}
We use the formalism of $A_\infty$ pre-functors described in \cite{GHHPS_integrality} and follow the proof of op. cit, Proposition~A.6 closely. For $\bC[[q]]$-linear $A_\infty$ categories $\cC$ and $\cD$, an $A_\infty$ pre-functor $F \colon \cC \rightarrow \cD$ is a map $F \colon \Ob \cC \rightarrow \Ob \cD$ and an element
$$F=(F^s)_{s \ge 0} \in CC^1(\cC,F^*\cD),$$
where
\begin{multline*}
CC^*(\cC,F^*\cD):=  
\\
\prod_{X_0,\ldots,X_s} \Hom(\hom(X_0,X_1)[1] \otimes \cdots \otimes \hom(X_{s-1},X_s)[1],\hom(F(X_0),F(X_s))[1])[-1]
   \end{multline*}
such that $F^0_X \in q \cdot \Hom_{\cD}(F(X),F(X))$ for all $X \in \Ob \cC$. Define an element
$$\delta(F) \in CC^2(\cC,F^*\cD)$$
by the formula
   \begin{multline*}
           \delta(F)(c_1,\ldots,c_s)= \\
  \sum m_{\cD}(F^*(c_1,\ldots,c_{s^1}),\ldots,F^*(\ldots,c_{s^j}))-\sum (-1)^{\dagger} F^*(c_1,\ldots,c_{s_0},m_{\cC}( \ldots,c_{t_1}),\ldots,c_{s_1})
   \end{multline*}

where the sums are over $0 \le s^1 \le s^2 \le \cdots \le s^j=s$ and $0 \le s_0 < t_1 \le s_1=s$.
The signs (which we will not need) are given by $\dagger=|c_1|'+\cdots+|c_{s_0}|'$ where $|c_i|'$ denotes the degree of $c_i$ in the shifted complex $\hom(X_{i-1},X_i)[1]$. Then $\delta(F)=0$ if and only if $F$ is a (non-unital, filtered, curved) $A_\infty$ functor.
See \cite[$\S$A.2]{GHHPS_integrality}.

We construct a sequence of $(\psi_n)_{n \ge 0}$ of $\bC$-algebra homomorphisms $\psi_n \colon \bC[[q]] \rightarrow \bC[[q]]$ and a sequence $(F_n)_{n \ge 0}$ of $A_\infty$ pre-functors $F_n \colon \psi_n^*\cB \rightarrow \cA$ such that, for all $n \ge 0$, $\psi_{n+1} = \psi_{n} \bmod q^{n+1}$, $F_{n+1} = F_{n} \bmod q^{n+1}$, and $\delta(F_n) = 0 \bmod q^{n+1}$, so that $F_n$ defines an $A_\infty$ functor $\overline{F}_n$ modulo $q^{n+1}$. Given this data, let $\overline{\psi}_n \colon \bC[q]/(q^{n+1}) \rightarrow \bC[q]/(q^{n+1})$ be the reduction of $\psi_n$ modulo $q^{n+1}$ and $\psi:=\varprojlim \overline{\psi}_n \colon \bC[[q]] \rightarrow \bC[[q]]$. Then $F:=\varprojlim \overline{F}_n \colon \psi^*\cB \rightarrow \cA$ is an $A_\infty$-functor such that $F = F_0 \bmod q$.

Given $\psi_n$ and $F_{n}$ we construct $\psi_{n+1}$ and $F_{n+1}$ as follows. Consider the maps of cochain complexes
$$
CC^*(\cB_0) \stackrel{\cL^1_{F_0}}{\longrightarrow} C_0^*:=CC^*(\cB_0,F_0^*\cA_0) \stackrel{\cR^1_{F_0}}{\longleftarrow} CC^*(\cA_0)$$
given by left composition and right composition with $F_0$, see \cite[$\S$A.3]{GHHPS_integrality}, cf. \cite[$\S$1e]{Seidel_book}.
These maps are quasi-isomorphisms \cite[Lemma~A.12]{GHHPS_integrality}.
Consider $\delta(F_n) \in CC^2(\psi_n^*\cB,F_n^*\cA)$. We fix an identification $\hom_{\cA}(A_1,A_2)=\hom_{\cA_0}(A_1,A_2) \hat{\otimes}_{\bC} \bC[[q]]$ for each pair of objects $A_1,A_2 \in \cA_0$ and the corresponding objects in the deformation $\cA$ of $\cA_0$ (also denoted by $A_1,A_2$). Similarly for the deformation $\psi_n^*\cB$ of $\cB_0$. This gives an identification 
$CC^*(\psi_n^*\cB, F_n^*\cA)=CC^*(\cB_0,F_0^*\cA_0) \hat{\otimes}_{\bC} \bC[[q]]=C_0^*\hat{\otimes}_{\bC} \bC[[q]]$.
Using this identification, we have $\delta(F_{n})=q^{n+1} \cdot \delta(F_{n})_{n+1} \mod q^{n+2}$ for some $\delta(F_n)_{n+1} \in C_0^2$. The cochain $\delta(F_{n})_{n+1} \in C_0^2$ is closed, see \cite[Proof of Proposition~A.6]{GHHPS_integrality}. Consider the subcomplex  
$$\tilde{C}^*_0:=\ker \left( C^*_0 \rightarrow \prod_{A \in \Ob \cA_0} \hom^2_{\cA_0}(A,A)\right)$$
of $C_0^*$. The cocycle $\delta(F_{n})_{n+1}$ lies in $\tilde{C}^*_0$ by our assumption (2). Indeed, the projection of $\delta(F_{n})$ to $\hom^2_{\cA}(A,A)$ for $A \in \Ob \cA = \Ob \cA_0$ equals
$$\sum_{k \ge 1} m^k_{\cA}(F^0_{n,A},\cdots,F^0_{n,A})$$
which vanishes by assumption (2).
Let $b \in CC^2(\cB_0)$ denote the first order term of the $A_\infty$-deformation $\cB$ of $\cB_0$, i.e $m^*_\cB=m^*_{\cB_0}+q \cdot b \bmod q^2$. We have a commutative diagram
\begin{equation*}
\begin{CD}
CC^*(\cB_0) @>\cL^1_{F_0}>> C^*_0 @<\cR^1_{F_0}<< CC^*(\cA_0)\\
@VVV @VVV  @VVV \\
\displaystyle{\prod \hom^2_{\cB_0}(B,B)} @>F_0>> \displaystyle{\prod \hom^2_{\cA_0}(F_0(B),F_0(B))} @= \displaystyle{\prod \hom^2_{\cA_0}(F_0(B),F_0(B))}
\end{CD}
\end{equation*}
where the products are over $B \in \Ob \cB_0$.
So the cocycle $\cL^1_{F_0}(b)$ lies in the subcomplex $C^*_0$ because $\cB$ is uncurved.
The vector space $H^2(\tilde{C}^*_0)$ is one dimensional by our assumption (1) (recall $F_0$ is a quasi-equivalence and $\cL^1_{F_0}$ and $\cR^1_{F_0}$ are quasi-isomorphisms).
Thus $[\delta(F_{n})_{n+1}]=c_{n+1} \cdot \cL^1_{F_0}([b])$ for some $c_{n+1} \in \bC$, using non-triviality of the deformation $\cB$ of $\cB_0$ at first order. 
Define $\psi_{n+1}$ by $\psi_{n+1}(q)=\psi_{n}(q)+c_{n+1}q^{n+1}$, then, regarding $F_{n}$ as an $A_\infty$ pre-functor $F_{n} \colon \psi_{n+1}^*\cB_0 \rightarrow \cA_0$, we have $[\delta(F_{n})_{n+1}]=0$, cf. \cite[Proof of Prop.~A.6, Step~1]{GHHPS_integrality}. 
Choose $f_{n+1} \in C^1_0$ such that $\mu^1(f_{n+1})=\delta(F_{n})_{n+1}$ and define $F_{n+1}=F_{n}-f_{n+1}q^{n+1}$. Then one computes that $\delta(F_{n+1})=0 \bmod q^{n+2}$, cf. op. cit.

The assertion that $F \colon \psi^*\cB \rightarrow \cA$ determines an uncurved $A_\infty$ functor $F^{\ge 1} \colon \psi^*\cB \rightarrow \cA'$ that is a quasi-equivalence is a special case of \cite[Lemma~2.16]{Sheridan_versality}.

Finally, since the deformation $\cA$ of $\cA_0$ is non-trivial at first order, the same is true of the induced deformation $\cA''$ of the full subcategory $\cA''_0$ of $\cA$ with objects the subset $S \subset \Ob \cA_0$ in the statement. Indeed we have an identification $\HH^2(\cA_0)=\HH^2(\cA''_0)$ since $S$ split generates $\cA_0$. Now when we pass to the category $\cA'$ obtained from $\cA$ by equipping each object $A$ with the bounding cochain $F^0_A$, we still have $\cA'' \subset \cA'$ a full subcategory because $\hom^1_{\cA_0}(S,S)=0$ for all $A \in S$ so there are no bounding cochains for objects of $\cA''$. Thus the deformation $\cA'$ is non-trivial at first-order.
The quasi-isomorphism $F^{\ge 1} \colon \psi^*\cB  \rightarrow \cA'$ implies that the homomorphism $\psi$ is non-trivial at first order, thus $\psi$ is an automorphism.
\end{proof}

\begin{proposition}\label{prop:Lagrangians_generate}
Assumption (1) of Proposition~\ref{prop:Nick} is satisfied for the $A_\infty$ category $\cA_0$ of Definition~\ref{def:A_0}.
\end{proposition}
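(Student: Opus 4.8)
The plan is to transport the entire computation to the B-side and analyze the obstruction map one class of objects at a time. First I would use the HMS equivalence of Definition~\ref{def:A_0}, under which $\cA_0 \simeq \cB_0$, together with the fact that $\cB_0$ contains the line bundles $\cO_Y, A, A^{\otimes 2}$ that split-generate $\Perf Y$ by \cite[Theorem 4]{Orlov_generators}. Since Hochschild cohomology is invariant under passage to a split-generating full subcategory, this gives $\HH^2(\cA_0) \cong \HH^2(\cB_0) \cong \HH^2(\Perf Y)$ and identifies the obstruction map with the map $\HH^2(\Perf Y) \to \prod_A \Ext^2_Y(A,A)$ recording, for each object $A$ of $\cB_0$, the obstruction to lifting $A$ to the given first-order deformation of $\Perf Y$. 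Thus the kernel is exactly the space of first-order deformations of $\Perf Y$ along which every object of $\cB_0$ lifts, and the goal becomes to show this space is one-dimensional.

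Next I would use the decomposition of first-order deformations of $\Perf Y$ into a noncommutative (Poisson) part $\bigoplus_i \bC \cdot \sigma_i$, one scalar per component $Y_i$ (with $\sigma_i \in H^0(Y_i, \wedge^2 T_{Y_i})$ the Poisson bivector vanishing on $D_i$), a commutative geometric part $\Ext^1_Y(\Omega_Y,\cO_Y)$, and a gerbe part $H^2(\cO_Y) \cong \bC$, where $\Ext^1_Y(\Omega_Y,\cO_Y) \to \HH^2(\Perf Y)$ is injective by \cite[Theorem 3.1.3]{Buchweitz-Flenner}. I would then impose the lifting conditions class by class. The skyscraper $\cO_{q_i}$ has $\Ext^2(\cO_{q_i},\cO_{q_i}) = \wedge^2 T_{q_i} \cong \bC$; under the obstruction map the geometric and gerbe contributions vanish (a smooth point deforms with $Y$, and a class in $H^2(\cO_Y)$ restricts to $0$ at a point), while $\sigma_i$ maps to $\sigma_i(q_i)$, which is nonzero because $q_i$ is the identity of the torus $(\bC^*)^2=\bar{Y}_i\setminus\bar{D}_i$, where the Poisson structure is nonvanishing. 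Hence lifting all $\cO_{q_i}$ forces $\lambda_i=0$ for every $i$, killing the noncommutative part. The structure sheaf $\cO_Y$ has $\Ext^2(\cO_Y,\cO_Y)=H^2(\cO_Y)$, and its obstruction is exactly the gerbe class (the geometric contribution $\langle v, c_1(\cO_Y)\rangle$ vanishes), so lifting $\cO_Y$ kills the gerbe part.

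After these two steps the kernel lies in the geometric part $\Ext^1_Y(\Omega_Y,\cO_Y)$, where the surviving condition is that every line bundle lifts, i.e. $\langle v, c_1(L)\rangle = 0 \in H^2(\cO_Y)$ for all $L \in \Pic Y$. Here I would invoke Lemma~\ref{lem:Dolgachev_family}: the locus $S \subset \Def Y$ along which all line bundles lift is a smooth curve, and $T_0 S$ is precisely this first-order locus. To see it is one-dimensional, I would separate locally trivial from smoothing directions: since $Y$ has split MHS, $\Pic Y = L$ has full rank $19+(v-1) = \dim H^1(T_Y)$, and the pairing $H^1(T_Y) \times L \to \bC$ is the nondegenerate pairing identifying $H^1(T_Y)$ with the tangent space to $V_1 = \Hom(L,\bC^*)$, so no nonzero locally trivial direction lifts all line bundles (equivalently $S \cap V_1 = \{0\}$ by transversality). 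This leaves the one-dimensional smoothing direction $T_0 S \cong \bC$ as the full geometric kernel. For the matching lower bound I would verify that $T_0 S$ genuinely lifts \emph{every} object of $\cB_0$: line bundles and skyscrapers lift along $S$ by construction, and $E,F$ lift because the Cartier divisor $C$ lifts to $\cC \subset \cY$ (its class is in $\Pic Y$, hence deforms on $S$, and $H^1(\cO_Y(C))=0$ lets sections lift), so $E = i_*\cO_C$ and $F = E \otimes \cO_{\cY}(Y_1)|_Y$ deform. Combining, the kernel equals $T_0 S$ and is one-dimensional.

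I expect the main obstacle to be making the HKR-type decomposition of $\HH^2(\Perf Y)$ and the precise form of the obstruction map rigorous for the singular normal crossing surface $Y$ — in particular justifying that the noncommutative summand is exactly $\bigoplus_i \bC$, that the gerbe obstruction on a line bundle is simply its image under $H^2(\cO_Y) \to \Ext^2(L,L)$, and that the geometric obstruction on $L$ is $\langle v, c_1(L)\rangle$. A second delicate point is confirming that $T_0 S$ coincides with the scheme-theoretic first-order locus $\{v : \langle v, c_1(L)\rangle = 0\ \forall L\}$ rather than being strictly smaller, which is exactly where the smoothness of $S$ and its transversality to $V_1$ from Lemma~\ref{lem:Dolgachev_family} do the essential work.
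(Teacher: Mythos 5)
Your proposal is essentially the heuristic sketched in step (E) of the introduction, which the authors explicitly flag with ``the actual proof proceeds slightly differently'' --- and the difference is the whole point. The paper's proof of Proposition~\ref{prop:Lagrangians_generate} never attempts a decomposition of $\HH^2(\Perf Y)$. Instead it transports everything to the A-side: $\HH^*(\cF(M))=\HH^*(\cW(M))$ via \cite{BZFN}, the closed--open map identifies this with $SH^*(M)$ \cite{Ganatra_thesis, GPS2}, and for $(X,\Sigma)$ general in complex moduli (so $\Pic X$ has rank $1$ and $X$ contains no rational curves) the results of \cite{Ganatra-Pomerleano1, Ganatra-Pomerleano2} give a short exact sequence $0 \to H^2(M) \to SH^2(M) \to SH^2_+(M) \to 0$ with $\dim SH^2_+(M)=1$. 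Since on the subspace $H^2(M)$ the composition $SH^2(M)\to HF^2(L,L)$ is the classical restriction $H^2(M)\to H^2(L)$ (via a Weinstein neighbourhood and Viterbo restriction), the bound $\dim\ker\le 1$ reduces to the purely topological statement that $H_2(M,\bQ)$ is spanned by the classes of the Lagrangians representing $\Ob\cA_0$; this is proved by a Leray spectral sequence computation for $\pi\colon M\to B$, using exactly the objects in $\cB_0$: the section $L_0$ mirror to $\cO_Y$, torus fibres over the components of $B\setminus R$ mirror to the $\cO_{q_i}$, and sections realising a basis of $\Pic Y$.

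The genuine gap in your route is the step you yourself flag as ``the main obstacle'': for the singular normal crossing surface $Y$ there is no HKR theorem, and the three-part decomposition of $\HH^2(\Perf Y)$ together with the per-object obstruction formulas is the mathematical content of the statement, not a routine verification. In the local-to-global spectral sequence $H^p(Y,\mathcal{HH}^q)\Rightarrow \HH^{p+q}$, the sheaf $\mathcal{HH}^2$ contains, besides bivectors, local terms supported on $\Sing Y$ --- already $H^0(\cT^1_Y)$, which carries the smoothing direction, plus potential further contributions along the double curves and at the triple points which you would have to show do not survive --- and the resulting filtration need not split. Your obstruction formulas ($\sigma_i(q_i)$ for skyscrapers, the gerbe class acting through $H^2(\cO_Y)\to\Ext^2(L,L)$, and $\langle v, c_1(L)\rangle$ for the commutative part) are standard only on smooth varieties; moreover the last one covers only the $H^1(T_Y)$-part of $\Ext^1(\Omega_Y,\cO_Y)$, whereas the lifting conditions along smoothing directions are governed by the non-linear functions of \cite[Lemma~5.5]{Friedman-Scattone} that underlie Lemma~\ref{lem:Dolgachev_family}. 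It is telling that the paper uses from this circle of ideas only the much weaker injectivity of $\Ext^1(\Omega_Y,\cO_Y)\to\HH^2(\Perf Y)$ from \cite{Buchweitz-Flenner} (to see the deformation $\cB$ is non-trivial at first order). Your lower bound is fine --- the class of $\cB$ lies in the kernel since all objects of $\cB_0$ lift to $\cY$ by construction --- but the upper bound $\dim\ker\le 1$, which is what Assumption (1) really asserts, is exactly where the proposal assumes what needs to be proved; the A-side argument exists precisely to replace it by the two geometric inputs above (no rational curves on the general fibre, and spanning of $H_2(M,\bQ)$ by Lagrangian classes).
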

\begin{proof}
Recall that in Theorems~\ref{thm:hms-footballs-wrapped} and \ref{thm:hms-footballs-compact} we have constructed an equivalence 
$\cW(M) \simeq \Coh Y$ and proved that it restricts to an equivalence $\cF(M) \simeq \Perf Y$.
The inclusion $\Perf Y \subset \Coh Y$ induces an identification $\HH^*(\Perf Y) = \HH^*(\Coh Y)$ by \cite[Theorem~1.2]{BZFN}, cf.~\cite{BZ}. 
This means the inclusion $\cF(M) \subset \cW(M)$ induces an identification $$\HH^*(\cF(M))=\HH^*(\cW(M)).$$
The closed--open map $\cC\cO \colon SH^*(M) \rightarrow \HH^*(\cW(M))$ is an isomorphism for a Liouville manifold $M$ by \cite[Theorem~1.1]{Ganatra_thesis} and \cite[Theorem~1.13]{GPS2}. 

For $(X,\Sigma)$ general in complex moduli, $\Pic X$ is rank $1$, generated by the primitive ample class $\frac{1}{k}[\Sigma]$.
In particular there are no rational curves on $X$.
Now by \cite[Theorem~1.4]{Ganatra-Pomerleano2} and \cite[(1.2)]{Ganatra-Pomerleano1}
we have a short exact sequence
$$0 \rightarrow H^2(M) \rightarrow SH^2(M) \rightarrow SH^2_+(M) \rightarrow 0$$
with $\dim SH^2_+(M)=1$. (Note that in \cite[(1.2)]{Ganatra-Pomerleano1} the variable $t$ has degree $2$, see \cite[(3.5),(3.20)]{Ganatra-Pomerleano1}.)

Let $L \subset M$ be an exact compact Lagrangian. Let $L \subset W \subset M$ be a Weinstein neighbourhood of $L \subset M$, symplectomorphic to a tubular neighbourhood of the zero section in the cotangent bundle of $L$. We have a commutative diagram
\begin{equation}
\begin{CD}
H^*(M) @>>> SH^*(M) @>>> HF_M^*(L,L)\\
@VVV          @VVV            @| \\
H^*(W) @>>> SH^*(W) @>>> HF_W^*(L,L)
\end{CD}
\end{equation}
where the left horizontal arrows are the canonical maps $c^*$ defined in e.g. \cite[$\S$5]{Ritter}, the top right horizontal arrow is the composition of the isomorphism $SH^*(M) \stackrel{\sim}{\longrightarrow} \HH^*(\cF(M))$ given by the closed--open map $\cC\cO$ discussed above and the natural map $\HH^*(\cF(M))\rightarrow HF^*(L,L)$, the bottom right horizontal arrow is the same composition for $L \subset W$, and the vertical arrows are given by the restriction map on cohomology and the Viterbo restriction map on symplectic cohomology. So the composition $H^*(M) \rightarrow SH^*(M) \rightarrow HF^*(L,L)$ given by the top row coincides with the restriction map $H^*(M) \rightarrow H^*(L)$ under the identification $H^*(L)=H^*(W) \stackrel{\sim}{\longrightarrow} HF^*_W(L,L)$ given by the bottom row (which coincides with the identification given by the Morse complex).

Let $\{L_i \ | \ i \in I\}$ denote the set of Lagrangians in $M$ representing the objects of $\cA_0$.
It suffices to show that $H^2(M,\bQ) \rightarrow \oplus_{i \in I} H^2(L_i,\bQ)$ is injective, or equivalently that $H_2(M,\bQ)$ is spanned by the classes $[L_i]$, $i \in I$. 
We consider the Leray spectral sequence 
$$E_2^{p,q}=H^p(B,R^q\pi_!\underline{\bZ}_M) \Rightarrow H^{p+q}_c(M,\bZ)$$ 
for the Lagrangian fibration $\pi \colon M \rightarrow B$ (first introduced in Lemma \ref{lem:M-first-properties})
computing the compactly supported cohomology of $M$. 
Recall that we have in Section \ref{sec:hms-footballs} a detailed description of the Lagrangian fibration $\pi \colon M \rightarrow B$. 
The Lagrangian fibration $\pi \colon M \rightarrow B$ is the restriction of the almost toric fibration $\pi_X \colon X \rightarrow B$ to $M = X \backslash \nu(\Sigma)$. 
Let  $R:=\pi_X(\nu(\Sigma)) \subset B$; this is a ribbon graph which is a thickening of $\Gamma$.
Recall that we described  $\pi \colon M \rightarrow B$ in detail in Section \ref{sec:hms-footballs}. 
In particular, we can compute, first, that $H^0(B,R^2\pi_!\underline{\bZ}_M)=\bZ$, using the fact that $R^2 \pi_! \underline{\bZ}_M$ is given at a point $\pt$ by $H^2_c (\pi^{-1} (\pt), \bZ) \simeq H_0 (\pi^{-1} (\pt), \bZ) = \bZ $. 
And second, we compute that $H^2(B,\pi_!\underline{\bZ}_M)=\bZ^{g+1}$, corresponding to the connected components of $B \setminus R$ (the locus where the fibres of $\pi$ are compact tori), where $g$ is the genus of $\Sigma$. 

The map $H^2_c(M,\bZ) \rightarrow H^0(B,R^2\pi_!\underline{\bZ}_M)=H^0(B,\bZ)$ corresponds under Poincar\'e duality to the map $\pi_* \colon H_2(M,\bZ) \rightarrow H_2(B,\bZ)$. 
The map $H^2(B,\pi_!\underline{\bZ}_M) \rightarrow H^2_c(M,\bZ)$ corresponds under Poincar\'e duality to the map $\bZ^{g+1} \rightarrow H_2(M,\bZ)$ given by $e_i \mapsto \gamma_i$, where $\gamma_i$ is the class of the fibre over the $i$th connected component of $B \setminus R$. 
The graded piece $H^1(B,R^1\pi_!\underline{\bZ}_M)$ of the Leray filtration on $H^2_c(M,\bZ)=H_2(M,\bZ)$ is identified with $\Pic Y$ via the map
$$\Pic Y \rightarrow H^1(B,R^1\pi_!\underline{\bZ}_M), \quad \cL \mapsto [L] - [L_0],$$
where $L$ and $L_0$ are the Lagrangian sections of $\pi$ corresponding to $\cL$ and $\cO_Y$ under the HMS equivalence of Theorem~\ref{thm:hms-footballs-compact}, by Corollary~\ref{cor:mirrors-to-sections-of-pi}.
Since $\cB_0$ includes the objects $\cO_Y$, $\cO_{p_i}$, and a basis of $\Pic Y$,
corresponding under the equivalence $\Perf Y \stackrel{\sim}{\longrightarrow} \cF(M)$ to a reference Lagrangian section of $\pi$, a Lagrangian torus fibre over the $i$th connected component of $B \setminus R$, and Lagrangian sections $L$ of $\pi$ such that the classes $[L]-[L_0]$ give a basis of $H^1(B,R^1\pi_!\underline{\bZ}_M)$, we deduce that $H_2(M,\bZ)$ is generated by the classes of the Lagrangians representing the objects of $\cA_0$.
\end{proof}

\begin{proposition}
Assumption (2) of Proposition~\ref{prop:Nick} is satisfied for the $A_\infty$ category $\cA_0$ of Definition~\ref{def:A_0}.
\end{proposition}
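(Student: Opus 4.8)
The plan is to verify hypothesis (2) object by object, exploiting that every object of $\cA_0$ is represented either by a Lagrangian $2$-sphere (the sections of $\pi$ mirror to line bundles, together with $S_0$ and $S_1$) or by a Lagrangian $2$-torus (the SYZ fibres $T_i$ mirror to the skyscrapers $\cO_{q_i}$). Throughout I would work in a minimal model of $\cA$ over $\bC[[q]]$; since the conclusion of Proposition~\ref{prop:Nick} is invariant under quasi-equivalence, it suffices to check assumption (2) in any convenient quasi-isomorphic model. For a sphere $A$, the complex $\hom_{\cA}(A,A)$ has cohomology $HF^*_M(A,A)\otimes_{\bC}\bC[[q]]=H^*(S^2)\otimes_{\bC}\bC[[q]]$ (recall $A$ is exact and bounds no discs in $M$), which is concentrated in degrees $0$ and $2$. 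Hence $\hom^1_{\cA}(A,A)=0$ and assumption (2) is vacuous for these objects. This reduces the proposition to the torus fibres $T=T_i$.

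For a torus $T$, the key input I would use is that the self-Floer $A_\infty$-algebra of a closed, oriented, spin Lagrangian admits a strictly unital, cyclically symmetric minimal model, with an invariant pairing $\langle\cdot,\cdot\rangle$ of degree $-2$ (Poincar\'e duality on $HF^*(T,T)=H^*(T^2)$). Here $\hom^0(T,T)=\bC[[q]]\cdot e_T$ with $e_T$ the unit, $\hom^2(T,T)=\bC[[q]]\cdot\mu_T$, and the pairing restricts to a perfect pairing $\langle\cdot,e_T\rangle\colon\hom^2(T,T)\xrightarrow{\sim}\bC[[q]]$. Fix $\alpha\in\hom^1(T,T)$; then $m^k(\alpha,\dots,\alpha)\in\hom^2(T,T)$ for every $k$, so it suffices to show $\langle m^k(\alpha^{\otimes k}),e_T\rangle=0$. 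Cyclic symmetry gives, up to sign, $\langle m^k(\alpha,\dots,\alpha),e_T\rangle=\pm\langle m^k(\alpha,\dots,\alpha,e_T),\alpha\rangle$, i.e.\ one may rotate the unit into the inputs.

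The three cases then follow uniformly. For $k\ge 3$, strict unitality forces $m^k(\alpha,\dots,\alpha,e_T)=0$, so the pairing vanishes and $m^k(\alpha^{\otimes k})=0$ by nondegeneracy. For $k=2$, the right unit relation $m^2(\alpha,e_T)=\alpha$ gives $\langle m^2(\alpha,\alpha),e_T\rangle=\pm\langle\alpha,\alpha\rangle$, and since the degree $-2$ pairing is graded symmetric it is antisymmetric on the odd part $\hom^1(T,T)$, whence $\langle\alpha,\alpha\rangle=0$; equivalently this reflects the graded-commutativity of the Floer cohomology ring $HF^*(T,T)$. For $k=1$, $m^1=0$ since the model is minimal. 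Thus $m^k(\alpha^{\otimes k})=0$ for all $k\ge 1$, establishing (2) for the tori and completing the proof. The main obstacle I anticipate is the technical one of producing the cyclically symmetric minimal model in the $\bC[[q]]$-linear relative Fukaya category $\cF(X,\Sigma)$ and arranging it to be simultaneously strictly unital; this is where one must invoke (and adapt to the present coefficient ring and relative setting) Fukaya's cyclic symmetry for Fukaya $A_\infty$-algebras, and confirm that checking (2) in this model is legitimate for the application of Proposition~\ref{prop:Nick}.
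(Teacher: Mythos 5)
Your algebraic core is correct, but your route is genuinely different from the paper's, and it defers the hardest step to an input the paper deliberately avoids. The paper argues geometrically: by \cite[Lemma~8.4]{Seidel_quartic}, every graded Lagrangian in a Calabi--Yau surface is tautologically unobstructed for generic $J$ --- the dimension count kills \emph{all} non-constant discs and spheres, including those meeting $\Sigma$ --- so the $q$-deformed self-Floer algebra is quasi-isomorphic to the classical algebra $C^*(L)\otimes_{\bC}\bC[[q]]$; a perfect Morse function then handles the spheres ($\hom^1=0$ at chain level), and formality of $C^*(T^2)$ (Sullivan: $T^2$ is a Lie group) handles the tori, since in a formal model $m^k=0$ for $k\neq 2$ and $m^2(\alpha,\alpha)=\alpha\cup\alpha=0$ for odd $\alpha$ in characteristic zero. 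You replace unobstructedness-plus-formality by strict unitality plus cyclic symmetry: rotating the unit into the inputs kills $k\ge 3$, antisymmetry of the degree $-2$ pairing on odd classes kills $k=2$, and minimality kills $k=1$. This is correct algebra and pleasingly avoids formality; but its price is the existence of a strictly unital, strictly cyclic minimal model of the torus algebra over $\bC[[q]]$ in the relative Fukaya category --- a Calabi--Yau-type structure that the Seidel-style framework this paper works in does not provide, and whose construction (adapting Fukaya's cyclic symmetry machinery to a $\bZ$-graded, $\bC[[q]]$-linear relative setting) is substantially heavier than the proposition itself. You flag this as the main obstacle, and as written it is a genuine gap; the paper's two inputs, by contrast, are off-the-shelf.

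There is also a smaller but real justification gap earlier in your argument. For the spheres you assert that $\hom_{\cA}(A,A)$ has cohomology $H^*(S^2)\otimes_{\bC}\bC[[q]]$ because ``$A$ is exact and bounds no discs in $M$''. Exactness in $M$ only excludes discs contained in $M$; the deformed operations of $\cF(X,\Sigma)$ count discs in $X$ crossing $\Sigma$, and those are not excluded by exactness. The missing ingredient is exactly the paper's first step (tautological unobstructedness for generic $J$), or, for the spheres alone, an algebraic patch: the Floer complexes are finitely generated over $\bC[[q]]$, so $HF^1_{\cA_0}(A,A)=0$ forces $H^1(\hom_{\cA}(A,A))=0$ by Nakayama. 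The same omission is more serious for the tori, where your cyclic minimal model presupposes that $H^*(\hom_{\cA}(T,T))$ is free of rank $4$ over $\bC[[q]]$ carrying a perfect pairing; nothing you state guarantees this, whereas unobstructedness gives it immediately. Finally, your reduction to a convenient quasi-isomorphic model is legitimate in spirit (the paper makes the same implicit move when it picks Morse and formal models), but since assumption (2) of Proposition~\ref{prop:Nick} is a chain-level condition used inductively in its proof, transporting the conclusion back across the quasi-equivalence does require the standard filtered-functor and bounding-cochain bookkeeping rather than a one-line appeal to invariance.
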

\begin{proof}
Every graded Lagrangian $L \subset X$ is tautologically unobstructed for generic $J$ since $X$ is Calabi--Yau of complex dimension $2$,
\cite[Lemma~8.4]{Seidel_quartic}. It follows that $CF^*(L,L)$ is quasi-isomorphic to $C^*(L)$ as $A_\infty$-algebras (via the Morse complex) \cite{Abouzaid_topological}. The Lagrangians representing the objects of $\cA_0$ are either spheres or tori.
We use a perfect Morse function so that $m^1=0$. Then $CF^1(L,L)=0$ in case $L$ is a sphere. 
$C^*(L)$ is formal for a torus (this holds for any Lie group, see \cite[$\S$12]{Sullivan}), so $m^2=\cup$ and $m^k=0$ for $k \neq 2$. 
Assumption (2) follows.
\end{proof}

Now we localise, passing to $(\cdot ) \otimes_{\bC[[q]]} \bC((q))$, and also restrict to $\cB'' \subset \cB$ corresponding to line bundles, so that the associated full $A_\infty$ subcategory $\cA'' \subset \cA$ has no bounding cochains (because we may assume $\hom^1_{\cA}(A,A)=0$ for $A \in \Ob \cA''$ since $A$ is represented by a Lagrangian $S^2$ in $X$). 

We have a quasi-embedding $\cA'' \otimes_{\bC[[q]]} \bC((q)) \rightarrow \cF(X,\omega)$ cf.~\cite[Proposition~8.8]{Seidel_quartic}.

Since $\cY/\bC[[q]]$ is projective, $\cB''$ split generates $(\Coh \cY_{\eta})$  \cite[Theorem~4]{Orlov_generators}. Then by Ganatra's proof of automatic generation for Fukaya categories of Calabi--Yau manifolds recalled in \cite[Proof of Proposition~4.8]{Sheridan-Smith_GP}, $\cA''$ split generates the Fukaya category $\cF(X,\omega)$. Note that the assumptions of \cite[$\S$2.5]{Sheridan-Smith_GP} concerning Fukaya categories needed to apply Ganatra's results are verified for K3 surfaces by \cite[Remarks~2.5, 2.6, and 2.7]{Sheridan-Smith_GP}.
Thus we obtain an $A_\infty$ quasi-equivalence $\psi^*(\Coh \cY_{\eta}) \simeq \cF(X,\omega)$ as claimed. This completes the proof of Theorem~\ref{thm:main}.

\begin{remark}\label{rmk:mirror-to-Lagrangian-torus}
One can show that the HMS isomorphism of Theorem \ref{thm:main} takes 
a Lagrangian torus to the structure sheaf of a point. 
One just observes that for one of our exact Lagrangian tori $L_i$ in $M$, instead of equipping $L_i$ with a bounding cochain as in Proposition~\ref{prop:Nick} we can change the choice of section $\sigma_i$ of $\cY/\Spec \bC[[q]]$ with $\sigma_i(0)=p_i$ to $\sigma_i'$ so that $L_i$ (with trivial bounding cochain) corresponds to $\cO_{\sigma'_i} \otimes \bC((q))$. This implies that the main theorem of
\cite{Sheridan-Smith_tori} applies in our situation. (Beyond this, one expects to get mirrors to a dense subset of the SYZ fibres by using the flux homomorphism, as in the Family Floer approach of Fukaya and Abouzaid cf.~e.g.~\cite{Abouzaid_family_Floer_III}, provided we are willing to work over the Novikov field.)
\end{remark}

We can also explicitly identify mirrors to line bundles in this case. 

\begin{proposition} \label{prop:mirror-line-bundles-of-cY_eta}
The equivalence $\Coh \cY_{\eta} \stackrel{\sim}{\longrightarrow} \cF(X,\omega)$ of Theorem~\ref{thm:main} induces an isomorphism from the Picard group $\Pic \cY_\eta$ of $\cY_{\eta}$ to the group of equivalence classes of Lagrangian sections of the almost toric fibration $\pi_X \colon (X,\omega) \rightarrow B$ up to fibre-preserving Hamiltonian isotopy.
\end{proposition}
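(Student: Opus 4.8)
The plan is to prove the statement by comparing two independent classifications of Lagrangian sections — of $\pi \colon M \to S^2$ and of $\pi_X \colon X \to B$ — and matching the resulting groups with $\Pic Y$ and $\Pic \cY_\eta$ respectively, then checking the comparison agrees with the equivalence of Theorem~\ref{thm:main}.

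First I would pin down the $B$-side group. By Lemma~\ref{lem:Dolgachev_family} the restriction $\Pic \cY \to \Pic Y$ is an isomorphism (surjective by the defining property of $\cY/\Spec \bC[[q]]$, injective since $H^1(\cO_Y)=0$). As $\cY$ is regular and projective over $\Spec\bC[[q]]$, the restriction $\Pic \cY \to \Pic \cY_\eta$ is surjective with kernel generated by the vertical divisors $\cO_{\cY}(Y_i)$, subject to the single relation $\sum_i Y_i = \cY_0 \sim 0$. Writing $\xi_i = c_1(\cO_{\cY}(Y_i)|_Y) \in \Pic Y$ as in the Clemens--Schmid sequence~\eqref{Clemens-Schmid}, this gives a canonical isomorphism
$$\Pic \cY_\eta \cong \Pic Y / \langle \xi_1, \ldots, \xi_V \rangle.$$

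Next I would classify Lagrangian sections of the compact fibration $\pi_X$. Since $\pi_X$ is proper, fibre-preserving Hamiltonian isotopy agrees with the equivalence of \S\ref{sec:mirrors-to-line-bundles}, and the sheaf-theoretic arguments of Proposition~\ref{prop:sections-of-pi-classification} (together with \cite[Lemma 4.3, Proposition 4.7]{HK2}) apply verbatim to identify equivalence classes of Lagrangian sections of $\pi_X$ with $H^1(B, R^1(\pi_X)_*\underline{\bZ}_X)$, via $L \mapsto [L]-[L_0]$. The inclusion $j \colon M \hookrightarrow X$ then induces the comparison map I want: from the short exact sequence $0 \to j_!\underline{\bZ}_M \to \underline{\bZ}_X \to \iota_*\underline{\bZ}_{\overline{\nu\Sigma}} \to 0$ (with $\iota$ the closed inclusion of a tubular neighbourhood of $\Sigma$), using properness of $\pi_X$ to rewrite $R(\pi_X)_* j_! = R\pi_!$, I obtain a long exact sequence of sheaves on $B$ relating $R^1\pi_!\underline{\bZ}_M$, $R^1(\pi_X)_*\underline{\bZ}_X$ and the terms supported on the ribbon $R = \pi_X(\nu\Sigma)$. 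On cohomology this produces a map $\Pic Y \cong H^1(R^1\pi_!\underline{\bZ}_M) \to H^1(R^1(\pi_X)_*\underline{\bZ}_X)$. Geometrically this is simply ``regard a Lagrangian section of $\pi$ over $M$, taken disjoint from $\nu\Sigma$, as a Lagrangian section of $\pi_X$''; it is surjective because every section of $\pi_X$ can be isotoped off the codimension-two locus $\Sigma$, while the extra Hamiltonian isotopies now available across the filled-in region $\nu\Sigma$ kill precisely the meridian windings around $\Sigma$. The core computation is to identify this kernel with $\langle \xi_1,\ldots,\xi_V\rangle$: the classes supported on $R$ are the fibre classes over the $V = g+1$ components of $B\setminus R$ catalogued in the proof of Proposition~\ref{prop:Lagrangians_generate}, and under $\Phi$ these match the $\xi_i$ (both families being indexed by the components $Y_i$ and subject to one relation). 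This yields an isomorphism $(\text{sections of } \pi_X)/\!\sim \; \cong \Pic Y/\langle \xi_i\rangle \cong \Pic\cY_\eta$.

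Finally I would check this isomorphism is the one induced by Theorem~\ref{thm:main}. In the deformation argument the localised subcategory $\cA'' \subset \cF(X,\omega)$ consists of Lagrangian $S^2$'s carrying no bounding cochain (as $\hom^1(A,A)=0$ for a sphere), and these are exactly the Lagrangian sections of $\pi$ on $M$ viewed in $X$, i.e.\ Lagrangian sections of $\pi_X$. The quasi-equivalence $\psi^*\cB'' \simeq \cA''$ thus matches line bundles on $\cY_\eta$ with equivalence classes of such sections, and its reduction modulo $q$ is the correspondence $\Phi$ of Corollary~\ref{cor:mirrors-to-sections-of-pi}; that tensoring by a line bundle intertwines with the group law on sections follows from Proposition~\ref{prop:Lagrangian-translations-of-M} and Corollary~\ref{cor:lagrangian-translations-on-X}. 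Hence the induced map $\Pic\cY_\eta \to (\text{sections of } \pi_X)/\!\sim$ is precisely the composite constructed in the previous two steps, and is an isomorphism. The main obstacle will be Step~2: setting up the excision sequence with the correct $!$/$*$ bookkeeping for the non-proper $\pi$, and above all rigorously matching the kernel of the comparison map with the lattice $\langle \xi_i\rangle$ — that is, verifying that the fibre/meridian classes appearing over the ribbon $R$ on the symplectic side correspond under $\Phi$ exactly to the restrictions $\xi_i = c_1(\cO_{\cY}(Y_i)|_Y)$, including the single relation among them. Handling the equivalence relation carefully (fibre-preserving Hamiltonian isotopy, with the requisite cut-offs for the non-compact ends of $M$) is the other delicate point.
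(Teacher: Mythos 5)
Your overall architecture (compare the two section-classifications, identify the kernel of the comparison map with the vertical divisor classes $\xi_i$, then match with HMS) is close in spirit to the paper's proof, and your Step 1 and the identification of the kernel with $\langle \xi_1,\dots,\xi_V\rangle$ are fine. But Step 2 contains a genuine error, and it is exactly at the crux of the proposition. You claim that the arguments of Proposition~\ref{prop:sections-of-pi-classification} apply \emph{verbatim} to the compact fibration $\pi_X$, so that equivalence classes of Lagrangian sections of $\pi_X$ are classified by all of $H^1(B,R^1(\pi_X)_*\underline{\bZ}_X)$, and correspondingly that the comparison map $H^1(B,R^1\pi_!\underline{\bZ}_M) \to H^1(B,R^1(\pi_X)_*\underline{\bZ}_X)$ is surjective. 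Both statements are false. The classification over $M$ relies on exactness of the symplectic form; on the compact $X$ the class $[\omega]=\PD[\Sigma]$ is nonzero, and since a Lagrangian section $L$ satisfies $\int_L \omega = 0$, the difference class $[L]-[L_0]$ of any two Lagrangian sections must lie in the kernel $K$ of cup product with $\PD[\Sigma]$ (this is the radiance obstruction). A rank count makes the failure concrete: $H^1(B,R^1(\pi_X)_*\underline{\bZ}_X)\cong \gamma^\perp/\langle\gamma\rangle$ has rank $20$, whereas $\Pic \cY_\eta$ has rank $19$, so your claimed chain of bijections cannot all hold. The paper's proof is precisely about this point: it uses the Gysin sequence for $M\subset X$ to show the comparison map has image equal to $K$ (not everything), and then combines \cite[Proposition 6.69]{Clay2} with \cite[Proof of Lemma~4.3]{HK2}, together with the existence of the section $L_0$, to show that $K$ is what classifies Lagrangian sections of $\pi_X$.

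In your write-up the two errors cancel — you quotient the wrong target by the right kernel and land on the right group — but the proof as written cannot be repaired locally: your surjectivity argument (``isotope sections of $\pi_X$ off $\nu\Sigma$'') at best shows that classes realized by Lagrangian sections lie in the image of the comparison map; to conclude anything you would first need to know \emph{which} classes in $H^1(B,R^1(\pi_X)_*\underline{\bZ}_X)$ are realized by Lagrangian sections, and that is the flux/radiance computation you skipped. Once you replace your Step 2 with the statement ``Lagrangian sections of $\pi_X$ up to fibre-preserving Hamiltonian isotopy are classified by $K=\ker\bigl(\,\cdot\smallfrown\PD[\Sigma]\colon H^1(B,R^1(\pi_X)_*\underline{\bZ}_X)\to\bZ\bigr)$, which coincides with the image of the comparison map,'' the rest of your argument (kernel $=\langle\xi_i\rangle$, compatibility with Theorem~\ref{thm:main}) goes through essentially as in the paper.
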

\begin{proof}
Let $\cY/\Spec \bC[[q]]$ be a projective semistable model of $\cY_{\eta}$ as above. Then the restriction map $\Pic \cY \rightarrow \Pic Y$ is an isomorphism by construction and we have the exact sequence
$$0 \rightarrow \bZ^{g+1}/\bZ \rightarrow \Pic \cY \rightarrow \Pic \cY_{\eta} \rightarrow 0.$$
Note that the Picard group of a K3 surface over any field is torsion-free, see e.g. \cite[Remark~I.2.5]{Huybrechts_K3}.
In particular the composition
$$\Pic Y = \Pic \cY \rightarrow \Pic \cY_{\eta}$$
is surjective with kernel a primitive subgroup of rank $g$.

Using the Leray spectral sequence for $\pi_X \colon X \rightarrow B$ and the Leray spectral sequence with compact supports for its restriction $\pi \colon M \rightarrow B$ to $M$, together with the Gysin sequence for $M \subset X$, we find that we have an exact sequence 
$$0 \rightarrow \bZ^{g+1}/\bZ \rightarrow H^1(B,R^1\pi_!\underline{\bZ}_M) \rightarrow H^1(B,R^1\pi_{X,*}\underline{\bZ}_X) \rightarrow H^2(\Sigma,\bZ),$$
cf. Proof of Proposition~\ref{prop:Lagrangians_generate}. In particular the map 
$$H^1(B,R^1\pi_!\underline{\bZ}_M) \rightarrow H^1(B,R^1\pi_{X,*}\underline{\bZ}_X)$$
has kernel a primitive subgroup of rank $g$, and image equal to the kernel $K$ of the map
$$H^1(B,R^1 (\pi_{X})_\ast\underline{\bZ}_X) \rightarrow \bZ$$
given by cup product with the class $\PD[\Sigma]$.
Recall that $\PD[\Sigma]=[\omega]$ represents the radiance obstruction of the singular integral affine structure on $B$ associated to the almost toric fibration $\pi_X \colon (X,\omega) \rightarrow B$.

Combining \cite[Proposition 6.69]{Clay2} and \cite[Proof of Lemma~4.3]{HK2} (which extends the result of op.~cit.~in real dimension $4$ to the case where we allow focus--focus singularities of the Lagrangian torus fibration), together with the fact that we already know that $\pi_X$ has one Lagrangian section $L_0$, we get that the group $K$ classifies Lagrangian sections of $\pi_X$ up to fibre-preserving Hamiltonian isotopy.

Now consider the commutative diagram

\begin{equation}
\begin{CD}
\Pic Y =\Pic \cY @>>> \Pic \cY_{\eta} \\ 
@VVV              @VVV \\
H^1(B,R^1\pi_!\underline{\bZ}_M) @>>> H^1(R^1\pi_{X,*}\underline{\bZ}_X)
\end{CD}
\end{equation}
with the vertical arrows being induced by the HMS equivalences of Theorem~\ref{thm:hms-footballs-compact} and Theorem~\ref{thm:main}. 
The left vertical arrow is the isomorphism of Proposition~\ref{prop:sections-of-pi_i} and Corollary~\ref{cor:mirrors-to-sections-of-pi} that identifies isomorphism classes of line bundles on $Y$ and Lagrangian sections of $\pi$ up to fibre-preserving Hamiltonian isotopy.
Now since the horizontal arrows both have primitive kernel of rank $g$ and the top horizontal arrow is surjective, we deduce that $\Pic \cY_{\eta}$ maps isomorphically onto the image $K$ of the bottom horizontal arrow, which classifies Lagrangian sections of $\pi_X$ up to fibre-preserving Hamiltonian isotopy.
\end{proof}

\begin{corollary}\label{cor:lagrangian-translations-on-X}
For each $\cL \in \Pic \cY_\eta$, there is a symplectomorphism $\sigma_L$ of $(X,\omega)$, intertwining the almost toric fibration $\pi_X \colon (X, \omega) \to B$, such that $\sigma_L$ induces a well-defined autoequivalence of $\cF(X, \omega)$, and, under the equivalence $\Coh \cY_\eta  \stackrel{\sim}{\longrightarrow} \cF(X,\omega)$, $[ \sigma_L ]\in \Auteq \cF(X, \omega) $ is mirror to $\otimes \cL \in \Auteq \Coh \cY_\eta$.
    \end{corollary}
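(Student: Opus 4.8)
The plan is to produce the symplectomorphism $\sigma_L$ by transporting the Lagrangian-translation construction already developed on the A-side through the SYZ/mirror dictionary established in the previous results. First I would invoke Proposition~\ref{prop:mirror-line-bundles-of-cY_eta}, which identifies $\Pic \cY_\eta$ with the group of equivalence classes of Lagrangian sections of $\pi_X \colon (X,\omega) \to B$ up to fibre-preserving Hamiltonian isotopy. Thus, given $\cL \in \Pic \cY_\eta$, I obtain a distinguished equivalence class of Lagrangian section $L$ of $\pi_X$. The symplectomorphism $\sigma_L$ should then be built exactly as the global analogue of the construction in Proposition~\ref{prop:Lagrangian-translations-of-M}: indeed, that proposition already produces, for any Lagrangian section $L$ of $\pi \colon M \to S^2$, a symplectomorphism $\sigma_L$ of $M$ intertwining the fibration, well-defined in $\pi_0 \Symp M$, and inducing on $\cW(M)$ (hence on $\cF(M)$) the autoequivalence mirror to $- \otimes \Phi(L)$. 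The main task is to extend this $\sigma_L$ across the compactifying divisor $\Sigma$ to a symplectomorphism of all of $X$ intertwining $\pi_X$.

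The key steps, in order, would be as follows. First, using the decomposition $B = \bigcup_i B_i(-\delta)$ and the ribbon graph $R = \pi_X(\nu \Sigma)$ from Proposition~\ref{prop:M-compactifies-to-type-III-K3}, I would arrange the section $L$ in a constant form near $\Gamma$ and define $\sigma_L$ piecewise: on each $M_i \subset X$ it is the fibred Lagrangian translation $\sigma_{L_i}$ of Proposition~\ref{prop:Lag-translations-on-pi_i-contruction}, and over a neighbourhood of each vertex/edge of $\Gamma$ it is extended by a fibre-preserving map determined by the local toric model. Second, over the locus of $B$ meeting $\Gamma$, the local almost-toric model is the pair-of-pants complement $N^{\loc}$ studied in the proof of Proposition~\ref{prop:M-compactifies-to-type-III-K3}, together with the visible symplectic surface $\Sigma_v$; here the Lagrangian translation acts as a fibrewise linear shear on the torus fibres, which extends across the nodal and boundary fibres that compactify $M$ to $X$ because it preserves the invariant directions, and one checks that it can be taken to preserve $\Sigma$ setwise. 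Third, I would verify that $\sigma_L$ satisfies the analogue of property~(i) of Proposition~\ref{prop:Lag-translations-on-pi_i-contruction} (namely $\sigma_L^\ast \theta = \theta + df$ for an $f$ not assumed compactly supported), so that, since $X$ is compact and $H^1(X;\bR)=0$, it lifts canonically to a graded symplectomorphism and, by the argument of Proposition~\ref{prop:Lagrangian-translations-of-M}(ii) together with \cite[Corollary~2.9]{Keating-Smith}, induces a well-defined autoequivalence $[\sigma_L] \in \Auteq \cF(X,\omega)$ up to shift.

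For the identification of $[\sigma_L]$ with $- \otimes \cL$, I would argue by restricting to generating objects. By the split-generation results used in the proof of Theorem~\ref{thm:main}, $\cF(X,\omega)$ is split-generated by Lagrangian spheres which are the sections of $\pi_X$ mirror to line bundles. Since $\sigma_L$ is fibre-preserving and acts on sections by the translation realising $L$, it sends the section mirror to $\cL'$ to the section mirror to $\cL \otimes \cL'$ (this is the A-side shadow of the group law on $\Pic$, compatible with the correspondence $\Phi$ of Proposition~\ref{prop:mirror-line-bundles-of-cY_eta}); matching this with tensoring by $\cL$ on $\Pic \cY_\eta$ and using that both autoequivalences agree on a split-generating set pins down the shift and completes the identification. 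The hard part will be the gluing/extension across $\Sigma$: one must check that the fibred translation on $M$, which is non-compactly supported and a priori only defined away from $\Sigma$, genuinely extends to a smooth symplectomorphism of the compact $X$ preserving both $\pi_X$ and $\Sigma$, and that the resulting map is independent of the auxiliary constant-form and cut-off choices up to the relevant equivalence — precisely the compactification-of-symplectomorphism issue that the proof of Proposition~\ref{prop:Lagrangian-translations-of-M} handled only for the open manifold $M$.
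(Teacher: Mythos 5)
Your overall skeleton matches the paper's: both proofs take the Lagrangian section $L$ mirror to $\cL$ from Proposition \ref{prop:mirror-line-bundles-of-cY_eta}, produce a fibre-preserving symplectomorphism realising the translation by $L$ relative to $L_0$, and then identify $[\sigma_L]$ with $-\otimes\cL$ via the action on sections plus split-generation of $\cF(X,\omega)$ by the sphere sections mirror to line bundles (exactly as in the proof of Theorem \ref{thm:main}). Where you diverge is the construction step, and there you have inverted the actual difficulty. The paper builds $\sigma_L$ \emph{directly} on the compact $X$: since $\pi_X \colon (X,\omega) \to B$ is a \emph{proper} almost-toric fibration admitting the two sections $L_0$ and $L$, the fibrewise Lagrangian translation of \cite[Section 4]{HK2} (the same construction as in Proposition \ref{prop:Lag-translations-on-pi_i-contruction}) applies verbatim; properness means there are no cut-offs, no gluing over the ribbon, and no extension problem whatsoever. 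Your plan instead starts from the translation on $M$ of Proposition \ref{prop:Lagrangian-translations-of-M} and tries to extend it across $\Sigma$, flagging that extension as ``the hard part''. Besides being unnecessary, this is delicate for a reason you do not address: near the boundary the $M$-translation is, by Proposition \ref{prop:Lag-translations-on-pi_i-contruction}(iii), realised as compositions of Dehn twists in meridians of $S_i$ times the identity, which is \emph{not} the restriction to $M$ of the fibrewise translation on $X$ (a genuine translation shifts the circles $\Sigma \cap \pi_X^{-1}(b)$ inside each fibre, so it does not preserve $M$, and conversely the Dehn-twist representative is not fibrewise-affine). So ``extending the map on $M$'' forces you to first change representative, at which point you are effectively redoing the direct construction on $X$. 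Relatedly, your insistence that $\sigma_L$ preserve $\Sigma$ setwise is not needed: the target category in the corollary is the absolute Fukaya category $\cF(X,\omega)$ of the compact K3, not the relative category $\cF(X,\Sigma)$.

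The second genuine problem is your justification that $\sigma_L$ acts on $\cF(X,\omega)$: you invoke property (i) of Proposition \ref{prop:Lag-translations-on-pi_i-contruction}, i.e.\ $\sigma_L^\ast\theta = \theta + df$, together with \cite[Corollary~2.9]{Keating-Smith}. That machinery is for exact symplectomorphisms of Liouville manifolds and is vacuous here: on the compact $X$ the form $\omega$ is not exact, there is no $\theta$, and the cited results do not apply. The correct statement is different in kind: objects of $\cF(X,\omega)$ carry brane data relative to the line bundle with connection $(\cL,\nabla)$ of curvature $-2\pi i\,\omega$, and a symplectomorphism acts on this category once one identifies $\sigma_L^\ast(\cL,\nabla)$ with $(\cL,\nabla)$; this is automatic up to gauge because $\sigma_L^\ast[\omega]=[\omega]$ and the space of such connections is a torsor over $H^1(X;\bR)/H^1(X;\bZ)=0$ for a K3, while gradeability follows from $c_1(X)=0$ and $H^1(X;\bZ)=0$. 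With those two repairs — construct $\sigma_L$ directly on $X$ via the proper-fibration translation, and justify the action on $\cF(X,\omega)$ by the brane-data argument rather than exactness — your argument collapses to the paper's proof.
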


\begin{proof}
Let $L$ be the Lagrangian section of $\pi_X$ mirror to $\cL$ from Proposition \ref{prop:mirror-line-bundles-of-cY_eta}, and as before let $L_0$ be the mirror to $\cO$. These determine a Lagrangian translation $\sigma_L \in \Symp (X, \omega)$, defined as in \cite[Section 4]{HK2} (or Proposition \ref{prop:Lag-translations-on-pi_i-contruction}). We readily get that $\sigma_L$ has the required action on Lagrangian sections of $\pi_X$, and so that its mirror has the correct action on line bundles on $\cY_\eta$. As these split-generate, this proves the claim.     
\end{proof}

One can similarly `import' other results about autoequivalences. For instance, suppose $\phi_0$ is 
an automorphism of an irreducible component $Y_i$ of $Y$, fixing $D_i$ pointwise. This lifts to an isomorphism $\phi \colon \cY \lra \cY$ over $\Spec \bC[[q]]$. 
In \cite[Section 6]{HK2} we construct a mirror symplectomorphism $\rho \in \pi_0 \Symp_c (M_{U_i})$, called a `nodal slide recombination'. As this is a compactly supported symplectomorphism, it gives by inclusion a symplectomorphism of $X$, say $\tilde{\rho}$. This preserves the set of Lagrangian sections of $\pi_X$, and induces a well-defined autoequivalence of $\cF(X, \omega)$. Moreover, by comparing the action of $(\phi_\eta)_\ast$ on $\Pic \cY_\eta$ with the action of $\tilde{\rho}$ on Lagrangian sections of $\pi_X$, we see that $[\widetilde{\rho}] \in \Auteq (\cF(X, \omega))$ is
mirror to $[(\phi_\eta)_\ast] \in \Auteq (\Coh \cY_\eta)$.

\bibliography{bib}{}
\bibliographystyle{alpha}

\end{document}